\newtheorem{thm}{Theorem}
\newtheorem{prop}{Proposition}
\newtheorem{lem}{Lemma}
\newtheorem{defn} {Definition}
\newtheorem{exa}{Example}
\newtheorem{exe}{Exercise}
\newtheorem{rem} {Remark}
\newtheorem{cor}{Corollary}
\newtheorem{prob}{Problem}
\theoremstyle{definition}
\newtheorem{defi}{Definition}
\newtheorem{question}{Question}
\def\ss{$\mathfrak s$}
\newcommand\Gal{\mathop{\rm Gal}\nolimits}
\newcommand\Sym{\mathop{\rm Sym}\nolimits}
\newcommand\Spec{\mathop{\rm Spec}\nolimits}
\renewcommand\O{\mathcal{O}}
\newcommand\ord{\mathop{\rm ord}\nolimits}
\newcommand\lcm{\mathop{\rm lcm}\nolimits}
\newcommand{\fix}{\operatorname{Fix}}
\newcommand{\Div}{\operatorname{Div}}
\newcommand{\Pic}{\operatorname{Pic}}
\renewcommand{\div}{\operatorname{div}}
\newcommand{\Aut}{\operatorname{Aut}}
\DeclareMathOperator\Hom{\mbox{Hom }}
\DeclareMathOperator\End{\mbox{End }}
\DeclareMathOperator\Img{\mbox{Img }}
\DeclareMathOperator\mult{mult}
\newcommand\N{\mathbb N}
\newcommand\Z{\mathbb Z}
\newcommand\Q{\mathbb Q}
\newcommand\R{\mathbb R}
\newcommand\C{\mathbb C}
\newcommand\w{\mathfrak w}
\newcommand\m{\mathfrak m}
\newcommand\an{\mathfrak a}
\newcommand\bn{\mathfrak b}
\newcommand\en{\mathfrak e}
\newcommand\hn{\mathfrak h}
\newcommand\p{\mathfrak p}
\newcommand\f{{\mathfrak a}}
\newcommand\g{\mathfrak c}
\newcommand\HS{\mathfrak H}
\newcommand\embd{\hookrightarrow}
\newcommand\G{\bar{G}}
\newcommand\bG{\bar G}
\newcommand\e{\varepsilon}
\newcommand\normal{\triangleleft }
\newcommand\om{\omega}
\newcommand\s{\sigma}
\newcommand\J{\mathcal J}
\def\P{\mathbb P}
\def\H{\mathcal H}
\newcommand\M{\mathcal M}
\newcommand\F{\mathcal F}
\def\O{\mathfrak O}
\newcommand\T{\theta}
\newcommand\D{\Delta}
\newcommand\cR{\mathcal R}
\newcommand\<{\langle}
\def\>{\rangle}
\def\a{\alpha}
\def\b{\beta}
\def\g{\gamma}
\def\d{{\delta }}
 \def\e{\epsilon}
\def\s{\sigma}
\def\v{\mathfrak v} 
\def\u{\mathfrak u}  
\def\sn{\mbox{sn }}
\def\cn{\mbox{cn }}
\def\dn{\mbox{dn }}
\def\T{\mathfrak T}
\def\B{\mathcal B}
\def\E{\mathcal E}
\def\bC{\mathbf{C}}
\def\Z{\mathbb{Z}}
\def\N{\mathbb N}
\def\Z{\mathbb Z}
\def\Q{\mathbb Q}
\def\R{\mathbb R}
\def\C{\mathbb C}
\def\P{\mathbb P}
\def\F{\mathbb F}
\def\B{\mathcal B}
\def\L{\mathcal L}
\def\B{\mathfrak B}
\def\Y{\textbf{Y}}
\def\B{\textbf{B}}
\def\I{\textbf{I}}
\def\U{{\mathcal U}}
\def\W{{\mathcal W}}
\def\D{\Delta}
\def\O{\mathcal O}
\def\B{\mathcal B}
\def\E{\mathcal E}
\def\B{\mathfrak B}
\def\G{G}
\def\M{\mathcal M}
\def\cH{{\mathcal H}}
\def\fa{{\frak a}}
\def\X{\mathcal X}
\def\A{\mathbb A}
\def\T{\mathcal T}
\def\O{\mathcal O}
\def\m{\mathfrak m}
\def\a{\alpha}          
\def\b{\beta}
\def\g{\gamma}
\def\s{\sigma}
\def\d{\delta}
\def\t{\tau}
\def\l{\lambda}
\def\th{\theta}
\def\w{\mathbf w}
\def\0{\mathbf 0}
\def\u{\mathbf u}
\def\v{\mathbf v}
\def\e{\varepsilon} 
\def\p{\mathfrak p}
\def\m{\mathfrak m}
\def\chara{ \mbox{char }}
\def\det{\mbox{det }}       
\def\normal{\triangleleft }     
\def\<{\langle}
\def\>{\rangle}
\def\iso{{\, \cong\, }}
\def\min{\mbox{ \textit{min} }}
\def\fa{{\mathfrak a}}       
\def\l{\lambda}              
\def\lcm{\mbox{lcm }}
\def\det{\mbox{det }}
\newcommand{\ch}[2]
{\begin{bmatrix}
 #1 \\
 #2\\
\end{bmatrix}}
\newcommand{\chr}[4]
{\begin{bmatrix}
 #1 & #2\\
 #3 & #4
\end{bmatrix}}
\newcommand{\chs}[6]
{\begin{bmatrix}
 #1 & #2 & #3\\
 #4 & #5 & #6
\end{bmatrix}}
\def\o{\overline}
\def\deg{\mbox{ deg }}
\def\min{\mbox{ \textit{min} }}
\def\<{\langle}
\def\>{\rangle}
\def\F{\mathbb F}
\def\Img{\mbox{Img }}
\def\lcm{\mbox{lcm }}
\def\o{\overline}
\def\deg{\mbox{ deg }}
\def\H{\mathcal H}
\def\L{\mathcal L}
\def\Y{\mathfrak Y}
\def\v{\mathfrak v}
\def\u{\mathfrak u}
\def\w{\mathfrak w}
\def\a{\alpha}
\def\t{\tau}
\def\d{{\delta }}
\def\Jac{\mbox{Jac }}
\def\mod{\mbox{ mod }}
\def\deg{\mbox{deg }}
\def\embd{\hookrightarrow}
\def\p{\mathfrak p}
\def\O{\mathcal O}
\def\div{\mbox{div}}
\def\Z{\mathbb Z}
\def\Q{\mathbb Q}
\def\C{\mathbb C}
\def\cA{\mathcal A}
\def\L{\mathcal L}
\def\cR{\mathcal R}
\def\H{\mathcal H}
\def\M{\mathcal M}
\def\N{\mathcal N}
\def\w{\widetilde}
\def\l{\lambda}
\def\s{\sigma}
\def\a{\alpha}
\def\b{\beta}
\def\p{\mathfrak p}
\def\e{\varepsilon}
\def\iso{\equiv}
\def\bG{\overline G}
\def\g{\gamma}
\def\u{\mathfrak u}
\def\iso{{\, \cong\, }}
\def\<{\langle}
\def\>{\rangle}
\def\Y{\mathcal Y}
\def\normal{\triangleleft}
\def\D{\Delta}
\def\s{s}
\def\t{t}
\def\r{r}
\def\g{\gamma}
\def\bAut{\overline {\mathrm{Aut}}}
\def\e{\varepsilon}
\def\bC{\mathbf{C}}
\def\a{\alpha}
\def\b{\beta}
\def\g{\gamma}
\def\d{\delta}
\def\F{\mathbb F}
\def\iso{\cong}
\def\e{\zeta}
\def\G{\overline G}
\def\M{\mathcal M}
\def\PP{\mathcal P} 
\DeclareMathOperator\diff{Diff}
\DeclareMathOperator\degrm{\mathrm{deg}}
\def\ch{\mbox{\rm{char }}} 
\def\K{{\mathcal K}}          
\def\IA{\mathfrak A}
\def\IB{\mathfrak  B}
\def\IC{\mathfrak C}
\def\ID{\mathfrak  D}
\crefname{thm}{Thm.}{}
\crefname{prop}{Prop.}{}
\crefname{lem}{Lem.}{}
\crefname{cor}{Cor.}{}
\crefname{prob}{Problem}{}
\crefname{figure}{Fig.}{}
\crefname{equation}{Eq.}{}
\def\ff{\phi}
\def\<{\langle}
\def\>{\rangle}
\def\iso{\cong}
\def\w{\omega}
\def\nf{\Psi}
\def\df{\Upsilon}
\def\o{\otimes}
\def\X{\mathcal{C}}
\DeclareMathOperator\dv{div }
\DeclareMathOperator\psl{\mathrm{PSL}}
\DeclareMathOperator\pgl{\mathrm{PGL}}
\def\div{\mathfrak d}
\def\ddiv{\bar \div}
\def\wP{\mathbb\W}
\DeclareMathOperator\wh{\mathfrak{h}}   
\DeclareMathOperator\awh{\mathfrak{\tilde h} }   
\DeclareMathOperator\wt{wt}
\DeclareMathOperator\Proj{Proj}
\DeclareMathOperator\wgcd{wgcd \, }
\def\val{\mathbf{val}}
\def\I{\mathcal I}
\def\dis{\mathfrak D}
\def\N{\mathbb N}
\def\bs{\bar \sigma}
\def\issn{{\sc \textbf{ISSN: }} 1930-1235; }
\def\issueyear{\textbf{2019}}
\title{From hyperelliptic to  superelliptic curves}
\author{A. Malmendier}
\address{Department of Mathematics and Statistics, \\    Utah State University,  Logan, UT 84322.}
\email{andreas.malmendier@usu.edu}
\author{T. Shaska}  
\address{
Department of Mathematics and Statistics\\  Oakland University, Rochester, MI 48309.}
\email{shaska@oakland.edu}
\begin{document}

\maketitle

\begin{center}
\emph{Dedicated to the memory of  Kay Magaard}
\end{center}

\vspace{.4cm}

\vskip3truept\hrule   

\hrule

\begin{abstract}
The theory of elliptic and hyperelliptic curves has been of crucial importance in the development of algebraic geometry. Almost all fundamental ideas were first obtained and generalized from computations and constructions carried out for elliptic or hyperelliptic curves. 

In this long survey, we show that this theory can be extended naturally to all superelliptic curves. We focus on automorphism groups, stratification of the moduli space $\M_g$, binary forms, invariants of curves, weighted projective spaces, minimal models for superelliptic curves, field of moduli versus field of definition, theta functions, Jacobian varieties, addition law in the Jacobian,  isogenies among Jacobians, etc. Many recent developments on the theory of superelliptic curves are provided as well as many open problems. 

\end{abstract}

\vspace{.3cm}

\hrule

\vspace{.4cm}

\begin{small}

\noindent \textsc{MSC 2010:} 14-02, 14H10,  14H37,   14H40   \\
\noindent \textsc{Keywords:} Hyperelliptic curves, superelliptic curves

\end{small}

\vspace{.4cm}

\hrule


\tableofcontents

\section{Introduction}\label{sect-1}
The theory of elliptic and hyperelliptic curves has been of crucial importance in the development of algebraic geometry. Almost all fundamental ideas were first obtained and generalized from computations and constructions carried out for elliptic or hyperelliptic curves. Examples are elliptic or hyperelliptic integrals, theta functions, Thomae's formula, the concept of Jacobians, etc. Some of the classical literature on the subject \cites{Mu0, Mu1, Mu2, Mu3} as well as the seminal work of Jacobi focus almost entirely on hyperelliptic curves. 

So what is so special about a hyperelliptic curve?  To begin with, a generic curve in the hyperelliptic locus admits a cyclic Galois cover to the projective line. This cover, which is called the \textbf{hyperelliptic projection} is of  degree $n=2$   and its branch points determine the curve in question (up to isomorphism). Hence, studying hyperelliptic curves over algebraically closed fields amounts to studying degree two coverings of the projective line.  

A natural generalization of the above is to study degree $n\geq 2$ cyclic Galois covers.  This means that for a curve $\X$ with automorphism group $\Aut (\X)$ there is a cyclic subgroup $H=\< \tau \>$ normal in $\Aut(\X)$ such that the quotient $\X/H$ is isomorphic to $\P^1$.  Such curves $\X$ are called \textbf{superelliptic curves}. The automorphism $\tau$ is called the \textbf{superelliptic automorphism} of $\X$. 

The goal of this paper  is to focus on the natural generalization of the theory of hyperelliptic curves to superelliptic curves, to highlight the theories that can be extended 
and all the open problems that come with this generalization.   It is a long survey on results of the last two decades of both authors, their collaborators, and other researchers. 

There are  similarities among superelliptic and hyperelliptic curves, but also differences. The obvious similarities are  that such curves have affine equations (over an algebraically closed field of characteristic relatively prime to $n$) of the form  $y^n=f(x)$, the list of full automorphism groups of such curves can be determined,  in most cases their equations can be determined over their field of moduli, and most importantly the full machinery of classical invariant theory of binary forms can be used to determine their isomorphism classes. It is such theory that makes the study of the moduli space of such curves much more concrete than for general curves. More importantly the invariant theory connects the theory of superelliptic curves to the weighted projective spaces.  

In \cref{sect-2} we give some basic generalities of algebraic curves and their function fields.  Most of the material is basic and it can be found in most of the classic books on the subject; see \cite{stichtenoth}, \cite{Mu0}. Throughout most of this paper we will assume that our curves are smooth, irreducible, defined over an algebraically closed field $k$ of characteristic $p \geq 0$. Certain restrictions on the field of definition $k$ or the characteristic $p$ will be assumed on certain sections. 

In \cref{sect-3} we focus on Weierstrass points.  Weierstrass points are an important tool in studying the automorphisms groups of curves.  For hyperelliptic curves with equation $y^2=f(x)$, the projection of Weierstrass points are exactly the roots of $f(x)$.  In \cref{sect-5} we will show that such roots are also Weierstrass points of superelliptic curves with equation $y^n = f(x)$.   

In \cref{sect-4} we focus on full automorphism groups of curves. The theory of automorphisms is especially important for superelliptic curves since the very motivation of superelliptic curves comes from the existence the superelliptic automorphism.  The automorphism groups of all superelliptic curves over any characteristic are fully classified.  We give complete list of these groups based on results from \cite{Sa}.

In \cref{sect-5} we introduce superelliptic curves, which are a generalization of hyperelliptic curves. Such curves have a degree $n\geq 2$, cyclic Galois covering $\pi : \X_g to \P^1$. We denote the branched points of this cover by the roots of some polynomial $f(x)$ and show that the curve has equation $y^n=f(x)$. 
We determine the list of possible full automorphism group of a superelliptic curve $\X_g$ of genus $g\geq 2$. Furthermore, we study the Weierstrass points of superelliptic curves and show that they are projected to the roots of $f(x)$ as in the hyperelliptic case.  

In \cref{sect-6} we study the loci of superelliptic curves in the moduli space.  We briefly introduce the moduli space of curves $\M_{g_0, r}$  and its Deligne-Mumford compactification $\overline \M_{g_0, r}$.  Then we focus on points of the $\overline \M_{g_0, r}$ which correspond to curves with automorphisms. We discuss the inclusions between such loci and give the complete stratification of the moduli space for genii $g=3, 4$. 

In \cref{sect-7} is considered the following problem: for a group $G$ which occurs as an automorphism group of a genus $g\geq 2$ algebraic curve $\X$, determine an equation of $\X$. We discuss in detail how this is accomplished for superelliptic curves. 

In \cref{sect-8} are given the preliminaries of classical invariant theory of binary forms and in  \cref{sect-9} it is shown how such invariants describe a point in the weighted moduli space  $\wP_{\w}^n (k)$.  It is shown that this is a much more convenient approach to study superelliptic curves. Weighted greatest common divisor and weighted height are introduced in \cref{sect-9} in order to study the arithmetic properties of $\wP_{\w}^n (k)$; see  \cite{b-g-sh} for further details. 

In \cref{sect-10} we study minimal models of superelliptic curves when a moduli point is given. This is well known, due to work of Tate, for elliptic curves and Liu for genus two.  We describe briefly Tate's algorithm.  For superelliptic curves we we say that a curve has minimal model when it has a minimal moduli point as in \cite{super-min}. We give necessary and sufficient condition on the set of invariants of the curve  that the curve has a minimal model.  Moreover an algorithm is provided how to find such minimal model. 
In \cref{sect-11} is discussed when the field of moduli is a minimal field of definition for superelliptic curves. 

Theta functions of superelliptic curves are discussed in \cref{sect-12}.  We give a quick review of the theory of theta functions including the Thomae's formula for hyperelliptic curves.  It is a natural question to generalize such results for cyclic or superelliptic curves. To further investigate such interesting topic one should continue with \cite{book}. 

In \cref{sect-13} we study Jacobian varieties and briefly describe Mumford's representation of divisors and Cantor's algorithm for addition of points on a hyperelliptic Jacobian; see \cite{frey-shaska} for how this fact is used on hyperelliptic curve cryptography.  Whether this algorithm can be generalized to all superelliptic Jacobians is the main focus of \cref{sect-13}. 

In \cref{sect-14} we study the Jacobian varieties with complex multiplication.  Most of the efforts here have been on determining which curves with many automorphisms have complex multiplication. Hyperelliptic Jacobians with many automorphisms which have complex multiplication have been determined (see \cite{muller-pink}). We list all superelliptic curves with many automorphisms. From such list the ones with complex multiplication are determined in \cite{obus-shaska}.

While this paper is for the most part a survey, it also includes many new results and recent developments.  It lays out a general approach of using cyclic coverings in the study of algebraic curves.  One can  attempt to further generalize the theory to more general coverings.   The  beginnings  of this program start with \cite{kyoto}. Most of the data for the list of groups, inclusion among the loci were obtained by K. Magaard.   We dedicate this paper to his memory.  

\medskip

\noindent \textbf{Acknowledgments:}  Authors want to thank Mike Fried for helpful suggestions and conversations during the process that this paper was written. 
 
\part{Curves and hyperelliptic curves}

\section{Algebraic curves and their function fields}\label{sect-2}

We assume that the reader is familiar with the basic definitions of field extensions.  This section is intended to establish the notation used throughout the rest of the paper, rather than as a comprehensive introduction to algebraic curves. Throughout $k$ is a perfect field. For more details, the reader is encouraged to consult \cite{stichtenoth} or \cite{frey-shaska} among other places.  Let us establish some notation and basic facts about algebraic curves and their function fields.

\subsection{Algebraic curves}
The following definitions are easily extended to  any algebraic variety, but we will focus on the curve case. Let $k$ be a perfect field and $\X$ an algebraic curve defined over $k$.   Then there is a homogeneous ideal  $I_\X \subset k[X_0, X_1 , \ldots ,X_n]$ defining $\X$,  and the curve $\X$ is irreducible if and only if $I_\X$ is a \textbf{prime} ideal in  $k[X_0, X_1 ,\ldots, X_n]$.  The (homogenous) coordinate ring of $\X$ is $\Gamma_h (\X) := k[X_0, X_1 , \ldots , X_n]/I_\X$,  which is an integral domain. The function field of $\X$ is the quotient field of $\Gamma_h (\X)$ and denoted by $k(\X)$. Since $\X$ is an algebraic variety of dimension one, the field $k(\X)$ is an algebraic function field of one variable.

Let $P=(a_0, a_1 ,\ldots , a_n)\in \X$. The ring
\[ \O_P (\X) = \{ f\in k(\X) \, | \, f \text{ is defined at } P \} \subset k(\X) \]
is a local ring with maximal ideal
\[ M_P (\X) = \{ f \in \O_P (\X) \, | \, f(P) = 0 \}   . \]
The point $P \in \X$ is a \textbf{non-singular point} if the local ring $\O_P (\X)$ is a discrete valuation ring. There is a 1-1 correspondence between points $P \in \X$ and the places of $k(\X)/k$, given by $P \mapsto M_P (\X)$.
This correspondence makes it possible to translate definitions from algebraic function fields to algebraic curves and vice-versa.

\subsection{Algebraic extensions of function fields}\label{subsec-algextfuncfield}
An algebraic function field $F/k$ of one variable over $k$ is a finite algebraic extension of $k(x)$ for some $x\in F$ which is transcendental over $k$. A \textbf{place} $\p$ of the function field $F/k$ is the maximal ideal for some valuation ring $\O$ of $F/k$.  We will denote by $\PP_F$ the set of all places of $F/k$.  Equivalently $\Sigma_\X(k)$ will denote the set of $k$-points of $\X$.

An algebraic function field $F^\prime / k^\prime$ is called an algebraic extension of $F/k$ if $F^\prime$ is an algebraic extension of $F$ and $k \subset k^\prime$.

A place $\p^\prime \in \PP_{F^\prime}$ is said to \textbf{lie over} $\p \in \PP_F$ if $\p \subset \p^\prime$. We write $\p^\prime | \p$.  In this case there exists an integer $e \geq 1$ such that $v_{\p^\prime} (x) = e \cdot v_\p (x)$, for all $x\in F$.  This integer is denoted by $e (\p^\prime  | \p) :=e$ and is called the \textbf{ramification index} of $\p^\prime$ over $\p$.
We say that $\p^\prime | \p$ is \textbf{ramified} when $e (\p^\prime | \p) > 1$ and otherwise \textbf{unramified}.

For any place $\p \in \PP_F$ denote by $F_\p:= \O/\p$.  The integer $f( \p^\prime | \p) := [F^\prime_{\p^\prime} : F_\p]$ is called the \textbf{relative degree} of $\p^\prime | \p$.

\begin{thm}
\label{th-fundeq}
Let $F^\prime/k^\prime$ be a finite extension of $F/k$ and $\p$ a place of $F/k$.  Let $\p_1, \dots , \p_m$ be all the places in $F^\prime/k^\prime$ lying over $\p$ and $e_i:= e(\p_i | \p)$ and $f_i :=  f(\p_i | \p)$ the relative degree of $\p_i | \p$. Then
\[ \sum_{i=1}^m e_i f_i = [F^\prime : F].\]
\end{thm}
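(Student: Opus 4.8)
\medskip
\noindent\textbf{Proof strategy.}
This is the \emph{fundamental identity}: the ramification and residue data of the places above $\p$ must add up to $n:=[F':F]$. The plan is to exhibit a single finite-dimensional $F_\p$-vector space and compute its dimension in two different ways. First I would fix notation: let $\mathcal{O}_\p\subset F$ be the valuation ring of $\p$, with $\p=t\,\mathcal{O}_\p$ and residue field $F_\p=\mathcal{O}_\p/\p$; for $1\le i\le m$ let $\mathcal{O}_{\p_i}\subset F'$ be the valuation ring of $\p_i$, so that $e_i=v_{\p_i}(t)$ and $F'_{\p_i}=\mathcal{O}_{\p_i}/\p_i$ has $F_\p$-dimension $f_i$. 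Since $\p_1,\dots,\p_m$ are \emph{all} the places of $F'$ above $\p$, the ring $\mathcal{O}':=\bigcap_{i=1}^m\mathcal{O}_{\p_i}\subset F'$ is precisely the integral closure of $\mathcal{O}_\p$ in $F'$, localized so as to retain only the primes above $\p$. The vector space I would use is $V:=\mathcal{O}'/\p\mathcal{O}'$.

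\medskip
\noindent The first computation gives $\dim_{F_\p}V=n$. The key input here is that $\mathcal{O}'$ is a \emph{finitely generated} $\mathcal{O}_\p$-module. Granting this, $\mathcal{O}'$ is free over the principal ideal domain $\mathcal{O}_\p$ since it is torsion-free, and its rank is $n$ because $t^N\alpha\in\mathcal{O}'$ for every $\alpha\in F'$ once $N\gg0$, so that $\mathcal{O}'\otimes_{\mathcal{O}_\p}F=F'$; reducing a free basis modulo $t$ then gives $\dim_{F_\p}V=n$.

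\medskip
\noindent The second computation gives $\dim_{F_\p}V=\sum_{i=1}^m e_if_i$. I would first note $\p\mathcal{O}'=t\mathcal{O}'=\{x\in\mathcal{O}':v_{\p_i}(x)\ge e_i\ \text{for all }i\}$, so the natural map $\mathcal{O}'\to\bigoplus_{i=1}^m\mathcal{O}_{\p_i}/\p_i^{e_i}$ has kernel $\p\mathcal{O}'$ and is surjective by the Weak Approximation Theorem for the pairwise inequivalent valuations $v_{\p_1},\dots,v_{\p_m}$ (lift prescribed residues to elements $x_i\in\mathcal{O}_{\p_i}$ and choose $x\in F'$ with $v_{\p_i}(x-x_i)\ge e_i$ for all $i$; such $x$ lies in $\mathcal{O}'$ automatically). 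Hence $V\cong\bigoplus_{i=1}^m\mathcal{O}_{\p_i}/\p_i^{e_i}$ as $F_\p$-vector spaces. Each summand carries the filtration $\mathcal{O}_{\p_i}\supset\p_i\supset\dots\supset\p_i^{e_i}$ whose $e_i$ successive quotients $\p_i^{\nu}/\p_i^{\nu+1}$ are each isomorphic as $F_\p$-vector spaces to $F'_{\p_i}$ (multiplication by a $\p_i$-uniformizer), so $\dim_{F_\p}\mathcal{O}_{\p_i}/\p_i^{e_i}=e_if_i$. Comparing the two computations yields $n=\sum_{i=1}^m e_if_i$.

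\medskip
\noindent The main obstacle is the one nontrivial input above: finiteness of the integral closure $\mathcal{O}'$ over $\mathcal{O}_\p$. When $F'/F$ is separable this is standard, since the trace form $F'\times F'\to F$ is nondegenerate, placing $\mathcal{O}'$ between a free $\mathcal{O}_\p$-lattice and its dual lattice, with $\mathcal{O}_\p$ Noetherian. In the inseparable case (only possible in positive characteristic) one relies on function fields being Japanese, and I would quote this from \cite{stichtenoth} rather than reprove it. An alternative route that avoids integral closures entirely is to prove the two inequalities separately: $\sum e_if_i\le n$ by exhibiting the $F$-linearly independent family $\{z_{ij}\,t_i^{\nu}\}$, $1\le i\le m$, $0\le\nu<e_i$, $1\le j\le f_i$, where the $z_{ij}$ lift an $F_\p$-basis of $F'_{\p_i}$ and have been spread apart using weak approximation, and $\sum e_if_i\ge n$ by a $t$-adic successive-approximation argument showing the same family generates $F'$ over $F$; the latter half is the delicate one, and is really where the finiteness is hiding.
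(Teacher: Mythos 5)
The paper does not actually prove this statement: it is quoted as background in Section 2, with the reader referred to \cite{stichtenoth} for details, so there is no in-paper argument to compare against. Your proof is correct and is essentially the standard textbook argument (the one given in Stichtenoth's Chapter III): compute $\dim_{F_\p}\mathcal{O}'/\p\mathcal{O}'$ for $\mathcal{O}'=\bigcap_i\mathcal{O}_{\p_i}$ once via freeness of rank $n$ over the discrete valuation ring $\mathcal{O}_\p$, and once via weak approximation and the filtration of each $\mathcal{O}_{\p_i}/\p_i^{e_i}$ with graded pieces $\p_i^{\nu}/\p_i^{\nu+1}\cong F'_{\p_i}$. Both computations are carried out correctly, the identification $\p\mathcal{O}'=t\mathcal{O}'=\{x\in\mathcal{O}':v_{\p_i}(x)\ge e_i\ \forall i\}$ is justified, and you correctly isolate the one genuinely nontrivial input, namely finiteness of $\mathcal{O}'$ as an $\mathcal{O}_\p$-module (automatic in the separable case via the trace form, and a quotable fact for function fields in the inseparable case); your alternative two-inequality route is also the standard fallback. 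One cosmetic remark: no ``localization'' is needed in describing $\mathcal{O}'$ --- the intersection $\bigcap_{\p_i|\p}\mathcal{O}_{\p_i}$ is already the full integral closure of $\mathcal{O}_\p$ in $F'$ (a semi-local principal ideal domain whose maximal ideals correspond exactly to the $\p_i$); this does not affect any step of your argument.
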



For a place $\p \in \PP_F$ let $\O_\p^\prime$ be the integral closure of $\O_\p$ in $F^\prime$.  The complementary module over $\O_\p$ is given by $t \cdot \O_p^\prime$. Then for $\p^\prime | \p$ we define the \textbf{different exponent} of $\p^\prime$ over $\p$ as
\[ d (\p^\prime | \p) := - v_{\p^\prime} (t). \]
%
The different exponent $d (\p^\prime | \p) $ is well-defined and $d (\p^\prime | \p) \geq 0$.  Moreover, we have $d (\p^\prime | \p) =0$ for almost all $\p \in \PP_F$.  The \textbf{different divisor} is defined as
\[ 
\diff (F^\prime / F) := \sum_{\p \in \PP_F} \sum_{\p^\prime | \p} d(\p^\prime | \p) \cdot \p^\prime.   
\]
The following well-known formula for ramified coverings between Riemann surfaces of genus $g'$ and $g$, respectively, can now be generalized to function fields as follows.

\begin{thm}
\label{th-HGF}
Let $F/k$ be an algebraic function field of genus $g$ and $F^\prime/F$ a finite separable extension. Let $k^\prime$ denote the constant field of $F^\prime$ and $g^\prime$ the genus of $F^\prime/k^\prime$. Then,
\begin{equation}\label{e1}
2 (g^\prime-1)   =   \frac {[F^\prime : F]}    {[k^\prime: k]}   (2g-2)   +  \degrm     \diff (F^\prime /F)
\end{equation}
\end{thm}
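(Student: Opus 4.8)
The plan is to reduce the genus formula to a statement about degrees of divisors via the Riemann–Roch theorem, using the behavior of a separating element and its differential under the extension $F'/F$. First I would pick a separating transcendental element $x \in F$ (which exists because the extension $F'/F$ is separable and $k$ is perfect, so $F'/k'$ is itself separably generated), and consider the differential $dx \in \Omega_{F/k}$. On the $F$ side, the degree of the canonical divisor $(dx)_F$ equals $2g-2$; on the $F'$ side, the degree of $(dx)_{F'}$ equals $2g'-2$ — provided one is careful about constant-field extensions, which is why the factor $[k':k]$ appears, since the degree of a place can change by exactly that factor when passing to $F'$. So the heart of the matter is to compare the divisor of $dx$ computed in $F$ with the divisor of $dx$ computed in $F'$, place by place.

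The key local computation is the following: for a place $\p' \in \PP_{F'}$ lying over $\p \in \PP_F$, one has
\[
v_{\p'}\bigl((dx)_{F'}\bigr) = e(\p'|\p)\, v_{\p}\bigl((dx)_{F}\bigr) + d(\p'|\p).
\]
This is exactly the statement that the different exponent $d(\p'|\p)$ measures the discrepancy between the ramification of the differential upstairs and $e$ times the ramification downstairs; it follows from writing $x$ in terms of a local uniformizer $t'$ at $\p'$ and a uniformizer $t$ at $\p$, using $v_{\p'}(t) = e$, and relating $d(dx/dt')$ to the different via the characterization of $d(\p'|\p)$ as $-v_{\p'}(t)$ for the generator $t$ of the complementary module (this is where one invokes the definition of the different exponent given just above the theorem, together with the description of the complementary module in terms of $dx$). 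I would prove this identity at every place, including the unramified ones where it reduces to $d(\p'|\p)=0$ and $e=1$.

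Summing this local identity over all $\p' \in \PP_{F'}$, grouping the $\p'$ lying over a fixed $\p$, and using the fundamental identity $\sum_{\p'|\p} e(\p'|\p) f(\p'|\p) = [F':F]$ from Theorem \ref{th-fundeq}, the first term on the right collects into $\frac{[F':F]}{[k':k]} \deg (dx)_F$ (the $[k':k]$ coming from the change in the notion of place-degree under the constant-field extension), the second term collects into $\deg \diff(F'/F)$, and the left side is $\deg (dx)_{F'} = 2g'-2$. Rearranging gives Equation \eqref{e1}. The main obstacle I anticipate is the careful bookkeeping of the constant field: one must track precisely how $\deg \p$ compares with $\deg \p'$ relative to the base field $k$ versus $k'$, and confirm that the degree of a canonical divisor is indeed $2g-2$ over the correct constant field in both function fields — this is the step where a naive argument would drop the $[k':k]$ factor. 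Everything else is a localization of the Riemann–Roch machinery, which I would quote from \cite{stichtenoth} rather than reprove.
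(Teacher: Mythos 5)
The paper gives no proof of this theorem, only the citation to Thm.~3.4.13 of \cite{stichtenoth}, and your argument is precisely the proof given there: the canonical divisor of $F'$ is the conorm of a canonical divisor of $F$ plus the different divisor (your place-by-place identity $v_{\p'}(dx)=e(\p'|\p)\,v_{\p}(dx)+d(\p'|\p)$), and taking degrees with the conorm degree formula $\deg \mathrm{Con}_{F'/F}(\p)=\frac{[F':F]}{[k':k]}\deg \p$ yields \eqref{e1}. So the proposal is correct and follows essentially the same route as the cited proof; the only substantive ingredient you quote rather than prove is that local identity (Stichtenoth's Thm.~3.4.6), whose verification rests on describing the complementary module via the derivative of the minimal polynomial of a local uniformizer, exactly as you indicate.
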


For a proof see \cite{stichtenoth}*{Thm. 3.4.13}.  A special case of the above is the following:

\begin{cor}
Let $F/k$ be a function field of genus $g$ and $x\in F\setminus k$ such that $F/k(x)$ is separable. Then,
\[ 2g-2 = -2 [F:k(x)] + \degrm \diff (F/k(x)) \]
\end{cor}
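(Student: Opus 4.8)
The plan is to derive this corollary as the special case of the Hurwitz genus formula (\cref{th-HGF}) in which the bigger field $F$ plays the role of $F^\prime$ and the rational function field $k(x)$ plays the role of the base field. First I would observe that since $x \in F \setminus k$ is transcendental over $k$, the extension $F/k(x)$ is finite; together with the separability hypothesis this puts us exactly in the setting of \cref{th-HGF} with the roles assigned as "$F^\prime \leftarrow F$" and "$F \leftarrow k(x)$". The genus of the rational function field $k(x)$ is $0$, so the term playing the role of $2g-2$ on the right-hand side of \eqref{e1} becomes $2\cdot 0 - 2 = -2$.

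Next I would dispose of the constant-field factor. The key point is that $k$ is algebraically closed in $k(x)$ — that is, the constant field of $k(x)$ is $k$ itself — and, since $k$ is perfect and $F/k(x)$ is separable of finite degree, the constant field $k^\prime$ of $F$ is a finite separable (hence, because we may also assume $k$ algebraically closed in the running conventions of \cref{sect-2}, trivial) extension of $k$; in any case the relevant ratio $[F^\prime : F]/[k^\prime : k]$ in \eqref{e1} specializes to $[F : k(x)]/[k^\prime : k]$, and under the standing hypothesis that the constant field does not grow this is simply $[F : k(x)]$. Substituting $g^\prime = g$ (the genus of $F$), $g = 0$ (the genus of $k(x)$), and this ratio into \eqref{e1} yields
\[
2(g-1) = [F:k(x)]\cdot(-2) + \degrm \diff(F/k(x)),
\]
which is exactly the claimed identity $2g - 2 = -2[F:k(x)] + \degrm \diff(F/k(x))$.

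The only genuine subtlety — the step I expect to be the main obstacle — is justifying that the constant field of $F$ over $k(x)$ contributes trivially, i.e. that $[k^\prime : k] = 1$, or more precisely that one may cancel it cleanly. This is automatic under the blanket assumption in \cref{sect-2} that curves are geometrically irreducible over a perfect (indeed algebraically closed) $k$, so that $k$ is already algebraically closed in $F$; if one wants the corollary in the stated generality one invokes that a finite separable extension of $k(x)$ cannot enlarge the field of constants beyond a separable algebraic extension of $k$, and in the intended application $k^\prime = k$. Apart from this bookkeeping, the proof is a direct substitution into \cref{th-HGF} and requires no new computation.
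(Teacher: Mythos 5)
Your proposal is correct and follows exactly the route the paper intends: the paper presents this corollary as a direct special case of \cref{th-HGF}, obtained by taking the base field to be $k(x)$ (genus $0$) and the extension to be $F$, with the constant field not growing so that $[k^\prime:k]=1$. Your extra care about the constant-field factor is fine but is already covered by the paper's standing conventions on the ground field.
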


The ramification index and the different exponent are closely related, as made precise by the Dedekind theorem.

\begin{thm}[Dedekind Different Theorem] 
For all $\p^\prime|\p$ we have:

i) $d ( \p^\prime | \, \p) \geq e ( \p^\prime | \p) -1$.

ii) $d ( \p^\prime | \, \p) = e ( \p^\prime | \p) -1$ if and only if $e ( \p^\prime | \p) $ is not divisible by the $\ch k$.
\end{thm}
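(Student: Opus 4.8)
The plan is to work locally at a place $\p \in \PP_F$ and a place $\p' \mid \p$ above it, and to analyze the different exponent $d(\p' \mid \p)$ through the structure of the local ring extension $\O_\p \subset \O_{\p'}$ and the complementary module. First I would reduce to the local situation: by definition $d(\p' \mid \p) = -v_{\p'}(t)$ where $t\O_\p'$ is the complementary module of $\O_\p'$ over $\O_\p$ inside $F'$; since only the behavior at $\p'$ matters, I may complete (or localize) and assume we have a local extension of discrete valuation rings with ramification index $e = e(\p' \mid \p)$. Choosing a uniformizer $\pi$ at $\p$ and a uniformizer $\pi'$ at $\p'$, we have $v_{\p'}(\pi) = e$, and the residue field extension is $F'_{\p'}/F_\p$ of degree $f = f(\p' \mid \p)$.

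The key computational input is the description of the complementary (inverse different) module via the trace form: $\mathfrak{C}_{\p'/\p} = \{\, z \in F' : \operatorname{Tr}_{F'/F}(z\,\O_{\p'}) \subseteq \O_\p \,\}$, and $d(\p'\mid\p) = v_{\p'}(\mathfrak{d})$ where $\mathfrak{d} = \mathfrak{C}^{-1}$. For part (i), I would use the standard fact that the different divides the relative different of any generating element: if $\O_{\p'} = \O_\p[\alpha]$ (which holds in the tamely ramified case, and in general after completion one can still extract the needed estimate), then $d(\p'\mid\p) = v_{\p'}(g'(\alpha))$ for the minimal polynomial $g$ of $\alpha$. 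Taking $\alpha = \pi'$ a uniformizer, the minimal polynomial is Eisenstein-like of degree $e$ over the completion, so $g(X) = X^e + (\text{lower terms with positive valuation}) $, giving $g'(\pi') = e\,\pi'^{\,e-1} + \cdots$; hence $v_{\p'}(g'(\pi')) \geq e - 1$, with the possibly-larger contribution coming precisely from the term $e\,\pi'^{\,e-1}$ when $v_{\p'}(e) > 0$, i.e. when $\Char k \mid e$. This simultaneously proves (i) and shows that equality $d = e-1$ forces $v_{\p'}(e) = 0$.

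For the converse direction of (ii) — that $\Char k \nmid e$ implies $d(\p'\mid\p) = e - 1$ — I would argue that in the tamely ramified case the local extension of completions is, after an unramified base change, of the form $\widehat{\O_\p}[\pi^{1/e}]$ (Kummer-type), whose different is generated by $e\,\pi^{(e-1)/e} \cdot (\text{unit})$, of valuation exactly $e-1$ at $\p'$ since $e$ is a unit. One must handle the residue extension carefully: if $F'_{\p'}/F_\p$ is separable (automatic here when it is separable, which in the function-field setting it is since $k$ is perfect) it contributes nothing to the different, so the whole different exponent comes from the ramification and equals $e-1$. The main obstacle I expect is the case of wild ramification mixed with inseparable or large residue extensions — making the reduction "the different exponent equals $v_{\p'}(g'(\alpha))$ for a single generator $\alpha$" rigorous requires either passing to completions (where monogenicity of the DVR extension holds) or invoking the transitivity and tower formulas for the different; I would cite \cite{stichtenoth} for the local structure theory of the different rather than redo it, and focus the argument on extracting the valuation estimate $v_{\p'}(g'(\pi')) = e-1 + v_{\p'}(e/\text{unit})$ cleanly from an Eisenstein polynomial.
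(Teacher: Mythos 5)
The paper does not prove this theorem at all: it is quoted as a classical result, with the surrounding local theory referred to \cite{stichtenoth}, so there is no in-paper argument to compare against. Your sketch is the standard proof and is essentially sound: define $d(\p^\prime|\p)$ via the trace-dual (complementary) module, pass to completions, and compute the different of a generator. The only point to make airtight is that the Eisenstein-uniformizer computation $v_{\p^\prime}\bigl(g^\prime(\pi^\prime)\bigr)=e-1+v_{\p^\prime}(e)$ applies verbatim only to the totally ramified step, since $\pi^\prime$ generates the completed extension only when $f(\p^\prime|\p)=1$; but you already note the fix, namely that $k$ perfect makes the residue extension separable, so the unramified part contributes $0$ to the different and the transitivity (tower) formula reduces both (i) and (ii) to that totally ramified computation, giving $d\geq e-1$ always and $d=e-1$ exactly when $\ch k \nmid e$.
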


An extension $\p^\prime | \p$ is said to be \textbf{tamely} ramified if $e (\p^\prime | \p) > 1$ and $\ch k$ does not divide $e (\p^\prime | \p)$.  If $e (\p^\prime | \p) > 1$ and $\ch k$ does  divide $e (\p^\prime | \p)$ we say that $\p^\prime | \p$ is \textbf{wildly} ramified.

The extension $F^\prime / F$ is called \textbf{ramified} if there is at least one place $\p \in \PP_F$ which is ramified in $F^\prime / F$. The extension  $F^\prime / F$ is called \textbf{tame} if there is no place $\p \in \PP_F$ which is wildly ramified in $F^\prime/F$.

\begin{lem}
Let $F^\prime/F$ be a finite separable extension of algebraic function fields. Then

\begin{itemize}
\item[a)] $\p^\prime | \p$ is ramified if and only if $\p^\prime \leq \diff (F^\prime/F)$.  Moreover, if $\p^\prime/\p$ is ramified then:

\subitem i) $d ( \p^\prime | \p) = e ( \p^\prime | \p) -1$ if and only if $\p^\prime |\p$ is tamely ramified

\subitem ii) $d ( \p^\prime | \p) >  e ( \p^\prime | \p) -1$ if and only if $\p^\prime |\p$ is wildly ramified

\item[b)] Almost all places $\p \in \PP_F$ are unramified in $F^\prime /F$.
\end{itemize}
\end{lem}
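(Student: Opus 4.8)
The plan is to deduce both parts of the lemma from the machinery already assembled in the excerpt, namely the definition of the different divisor $\diff(F'/F)$, the basic positivity properties of the different exponent $d(\p'|\p)$, and the Dedekind Different Theorem. First I would prove part (a): the claim $\p' \le \diff(F'/F)$ (i.e.\ $\p'$ appears with positive coefficient in the different divisor) is equivalent to $d(\p'|\p) > 0$ by the very definition of $\diff(F'/F)$ as $\sum_{\p}\sum_{\p'|\p} d(\p'|\p)\cdot\p'$. So it suffices to show $d(\p'|\p) > 0$ if and only if $e(\p'|\p) > 1$. The ``if'' direction is immediate from the Dedekind Different Theorem part (i): $d(\p'|\p) \ge e(\p'|\p) - 1 \ge 1$ when $e(\p'|\p) > 1$. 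For the ``only if'' direction, suppose $e(\p'|\p) = 1$; then $e(\p'|\p)$ is not divisible by $\ch k$ (as $1$ is divisible by no prime), so part (ii) of the Dedekind Different Theorem gives $d(\p'|\p) = e(\p'|\p) - 1 = 0$, hence $\p' \not\le \diff(F'/F)$. This establishes the main equivalence in (a).

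Next I would handle the two sub-items. Assuming $\p'|\p$ is ramified, i.e.\ $e(\p'|\p) > 1$, the equivalences in (i) and (ii) are essentially restatements of the Dedekind Different Theorem combined with the definitions of tame and wild ramification. For (i): if $\p'|\p$ is tamely ramified, then by definition $\ch k \nmid e(\p'|\p)$, so Dedekind (ii) gives $d(\p'|\p) = e(\p'|\p) - 1$; conversely, if $d(\p'|\p) = e(\p'|\p) - 1$, then Dedekind (ii) forces $\ch k \nmid e(\p'|\p)$, and since $e(\p'|\p) > 1$ this is exactly tame ramification. Sub-item (ii) then follows by complementation: given $\p'|\p$ is ramified, by Dedekind (i) we always have $d(\p'|\p) \ge e(\p'|\p) - 1$, so $d(\p'|\p) > e(\p'|\p) - 1$ is the negation of $d(\p'|\p) = e(\p'|\p)-1$, which by (i) is the negation of ``tamely ramified,'' i.e.\ ``wildly ramified.'' (One should note the dichotomy is genuine: a ramified place is either tamely or wildly ramified, never both and never neither, directly from the definitions.)

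Finally, part (b) follows from the fact, already recorded in the excerpt, that $d(\p'|\p) = 0$ for almost all $\p \in \PP_F$ (this is what makes $\diff(F'/F)$ a well-defined divisor, i.e.\ a finite sum). By the equivalence proved in part (a), $d(\p'|\p) = 0$ for every $\p'|\p$ precisely when no $\p'|\p$ is ramified, i.e.\ when $\p$ is unramified in $F'/F$; hence all but finitely many $\p$ are unramified. I expect no serious obstacle here: the entire lemma is a formal consequence of the Dedekind Different Theorem and the definition of the different divisor, and the only thing requiring a little care is bookkeeping the logical equivalences (tame $\Leftrightarrow$ $d = e-1$, wild $\Leftrightarrow$ $d > e-1$) so as not to conflate the ramified and unramified cases — in particular remembering to invoke $e(\p'|\p) > 1$ when converting ``$\ch k \nmid e$'' into ``tamely ramified.''
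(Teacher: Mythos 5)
Your proposal is correct. The paper itself gives no proof of this lemma (it is quoted as standard material, with the surrounding results referred to \cite{stichtenoth}), and your derivation — reading $\p' \leq \diff(F'/F)$ as $d(\p'|\p)>0$, settling the equivalence with $e(\p'|\p)>1$ via both parts of the Dedekind Different Theorem, obtaining the tame/wild dichotomy from the same theorem under the standing hypothesis $e(\p'|\p)>1$, and deducing (b) from the finiteness of the support of the different divisor already recorded in the text — is exactly the standard argument, carried out with the right care at the only delicate points (the case $e(\p'|\p)=1$ in the converse of (a), and invoking $e(\p'|\p)>1$ when passing from $\ch k \nmid e(\p'|\p)$ to tameness).
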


From now on we will use the term  "curve" and its function field interchangeably, depending on the context.  It is more convenient to talk about function fields than curves in most cases.

\subsection{Divisors and the Riemann-Roch theorem}

For a given curve $\X$ defined over $k$, we call a divisor $D$ the formal finite sum 
\[  D=\sum_{\p\in \Sigma_\X(k)}   z_\p \, P.     \]
The set of all divisors of $\X$ is denoted by $\Div_\X (k)$. Moreover, the divisor $(f)$ of a function $f\in k(\X)$, defined as the finite linear combination of the set of all zeroes and poles of $f$, is called a \textbf{principal divisor}. Since $(f g) = (f) + (g)$, the set of principal divisors is a subgroup of the group of divisors. Two divisors that differ by a principal divisor are called \textbf{linearly equivalent}. The symbol $\degrm (D)$ denotes the \textbf{degree} of the divisor $D$, i.e., the sum of the coefficients occurring in $D$. It can be shown that the divisor of a global meromorphic function always has degree $0$, so the degree of the divisor depends only on the linear equivalence class. The \textbf{Picard group} $\Pic_\X(k)$ is the group of divisors modulo linear equivalence. 

\subsubsection{Riemann-Roch Spaces}
Define a partial ordering of elements in  $\Div_\X (k)$ as follows; $D$   is \textbf{effective} ($D \geq 0$)  if $ z_\p \geq 0$ for every  $\p$,   and $D_1\geq D_2$ if $D_1-D_2\geq 0$.
The \textbf{Riemann-Roch space} associated to $D$ is
\[\L(D)=\{f\in k(\X) \mbox{ with } (f)\geq -D\}\cup\{0\}.\]
Thus, the elements $x\in \L(D)$ are defined by the property that $w_\p(x)\geq -z_\p$ for all $\p\in \Sigma_\X(k)$. 
$\L(D)$ is a vector space over $k$ and can be interpreted as the space of functions $f\in k(\X)$ whose poles are bounded by $D$, and is often denoted by $\mathcal{O}_\X[D]$. It has positive dimension  if and only if there is a function $f\in k(\X)$ with $D+(f)\geq 0$, or equivalently,  $D\sim D_1$ with $D_1\geq 0$. 

Here are some facts:
$\L(0)=k$, and if $\degrm(D)<0$ then $\L(D)=\{0\}$.  If $\degrm(D)=0$ then either $D$ is a principal divisor or $\L(D)=\{0\}$.
%
%
\begin{prop}
Let $D=D_1-D_2$ with $D_i\geq 0$ for $i=1,2$. Then
\[ \dim(\L(D))\leq \degrm (D_1)+1. \]
\end{prop}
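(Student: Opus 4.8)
The plan is to bound the dimension of $\L(D)$ by comparing it with a simpler space attached to the effective divisor $D_1$. First I would dispose of the case $D_1 = 0$ separately: then $D = -D_2 \le 0$, so $\L(D) \subseteq \L(0) = k$, and in fact $\dim \L(D) \le 1 = \degrm(D_1) + 1$, with the inequality being an equality precisely when $D_2 = 0$. So from now on assume $D_1 \ne 0$, hence $\degrm(D_1) \ge 1$.

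The main idea is to produce an injective $k$-linear map from $\L(D)$ into a space of dimension at most $\degrm(D_1)+1$. The natural candidate is to evaluate functions (and, at multiple points, their leading Laurent coefficients) along the support of $D_1$. Concretely, write $D_1 = \sum_{i} n_i P_i$ with $n_i \ge 1$. For a nonzero $f \in \L(D) = \L(D_1 - D_2)$ we have $(f) \ge -D_1 + D_2 \ge -D_1$, so $w_{P_i}(f) \ge -n_i$ for each $i$, and $f$ is regular away from the $P_i$. Choosing a uniformizer $t_i$ at each $P_i$, the function $t_i^{n_i} f$ is regular at $P_i$, so we may form the truncated Laurent expansion and record the $n_i$ coefficients of $t_i^{-n_i}, \dots, t_i^{-1}$ in the expansion of $f$ at $P_i$; assembling these over all $i$ gives a $k$-linear map $\varphi \colon \L(D) \to k^{\degrm(D_1)}$. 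Its kernel consists of those $f \in \L(D)$ that are in fact regular everywhere, i.e. $f \in \L(0) = k$ (using that $\X$ is a complete curve, so global regular functions are constants). Hence $\dim \L(D) \le \dim \ker \varphi + \dim \im \varphi \le 1 + \degrm(D_1)$, which is the claim.

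An alternative, perhaps cleaner, route is an induction on $\degrm(D_1)$ using the standard fact that for any divisor $D'$ and any point $P$, $\dim \L(D') \le \dim \L(D' - P) + 1$. Starting from $\dim \L(-D_2) \le \dim \L(0) = 1$ (indeed $\L(-D_2) \subseteq \L(0)$ since $D_2 \ge 0$), one adds back the points of $D_1$ one at a time: writing $D_1 = P_1 + \dots + P_d$ with $d = \degrm(D_1)$ and $P_j$ allowed to repeat, we get $\dim \L(D_1 - D_2) \le \dim \L(-D_2) + d \le 1 + \degrm(D_1)$. This reduces everything to the one-point inequality, which itself follows from the evaluation-of-leading-coefficient argument applied at a single point.

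The only real subtlety — and the step I would be most careful about — is justifying that $\ker \varphi \subseteq \L(0)$, i.e. that a rational function on $\X$ with no poles at all must be constant. This is exactly where completeness (properness) of the curve is used, together with the hypothesis that $k$ is perfect so that the constant field is $k$ itself; over a general curve it is the statement that the only global sections of $\mathcal{O}_\X$ are scalars, equivalently that $\L(0) = k$, which is already recorded among the stated facts above. Everything else — linearity of $\varphi$, the target having dimension $\degrm(D_1)$, and the dimension count — is routine. One should also note that the inequality can be strict: e.g. if $\degrm(D) < 0$ then $\L(D) = 0$ while $\degrm(D_1)+1$ may be large, so no sharpening along these lines is expected without further hypotheses.
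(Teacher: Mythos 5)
Your proof is correct. The paper itself states this proposition without proof (and for the companion lemma $\ell(D)\leq \degrm(D)+1$ it only hints at reducing, via linear equivalence, to the case $D\sim D'\geq 0$), so there is no in-paper argument to match against; what you give is the standard direct argument. Your principal-parts map $\varphi\colon \L(D)\to k^{\degrm(D_1)}$ is well defined once uniformizers $t_i$ are fixed, its kernel consists of functions with no poles at all (poles can only sit in $\supp D_1$ since $(f)\geq -D_1+D_2\geq -D_1$, and vanishing of the coefficients of $t_i^{-n_i},\dots,t_i^{-1}$ forces regularity at $P_i$), hence lies in $\L(0)=k$, which the paper records as a basic fact; the rank--nullity count then gives $\ell(D)\leq 1+\degrm(D_1)$. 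Your alternative route, peeling off the points of $D_1$ one at a time via $\dim\L(D')\leq\dim\L(D'-P)+1$ and starting from $\L(-D_2)\subseteq\L(0)$, is the same inequality organized as an induction and is equally valid; it is essentially the argument the paper later uses in \cref{prop:dimension-increase-by-1}. One small caveat worth flagging: over a base field that is merely perfect, the leading Laurent coefficients at a place $\p_i$ live in the residue field $F_{\p_i}$, so the target of $\varphi$ should be $\bigoplus_i F_{\p_i}^{\,n_i}$, whose $k$-dimension is $\sum_i n_i\deg\p_i=\degrm(D_1)$ when degree is weighted by place degrees (and correspondingly the one-point step gives $+\deg P$ rather than $+1$); under the paper's standing assumption that $k$ is algebraically closed all places have degree one and your version is exactly right.
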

We also remark that for $D\sim D'$ we have $\L(D)\sim \L(D')$.   In particular $\L(D)$ is a finite-dimensional $k$-vector space. We follow traditional conventions and denote the dimension of $\L (D)$ by 
\begin{equation}
\ell(D):=\dim_k(\L(D)).
\end{equation}
Computing  $\ell(D)$ is a fundamental problem which is solved by the   Riemann-Roch Theorem. A first estimate is a generalization of the proposition above.

\begin{lem}
For all divisors $D$ we have the inequality
\[\ell(D)\leq \degrm(D)+1.\]
\end{lem}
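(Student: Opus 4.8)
The plan is to deduce the bound $\ell(D)\le\degrm(D)+1$ for an \emph{arbitrary} divisor $D$ from the already-established case of divisors of the form $D=D_1-D_2$ with $D_i\ge 0$ (the Proposition immediately above), together with the elementary facts that $\L(0)=k$ and that $\L(D)=\{0\}$ whenever $\degrm(D)<0$. The key observation is that every divisor can be written as a difference of two effective divisors by separating the positive and negative coefficients.

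First I would split $D=\sum_{\p}z_\p\,P$ as $D=D_1-D_2$, where $D_1=\sum_{z_\p>0}z_\p\,P$ collects the places with positive coefficient and $D_2=\sum_{z_\p<0}(-z_\p)\,P$ collects those with negative coefficient; then $D_1,D_2\ge 0$, they have disjoint support, and $D=D_1-D_2$. Now I would invoke the Proposition: $\dim(\L(D))=\ell(D)\le\degrm(D_1)+1$. This is \emph{almost} the claim, but not quite, since $\degrm(D_1)$ may exceed $\degrm(D)=\degrm(D_1)-\degrm(D_2)$ when $D_2\ne 0$. So the remaining point is to handle the case $D_2\ne 0$, i.e.\ to improve the bound by $\degrm(D_2)$.

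To close this gap I would argue by induction on $\degrm(D_2)$, peeling off one point of $D_2$ at a time. If $D_2\ne 0$, pick a place $P$ in its support and set $D'=D+P$; then $D'$ has strictly smaller ``negative degree'' and $\degrm(D')=\degrm(D)+1$. There is an obvious inclusion $\L(D)\subseteq\L(D')$, coming from the fact that the condition $(f)\ge -D$ is more restrictive than $(f)\ge -D'$, and moreover $\L(D')/\L(D)$ embeds into the one-dimensional residue space at $P$ (a function in $\L(D')$ is sent to the coefficient of its pole of top order allowed at $P$; its vanishing means $f\in\L(D)$). Hence $\ell(D')\le\ell(D)+1$, equivalently $\ell(D)\ge\ell(D')-1$; but we want the other direction, so instead I run the induction downward: $\ell(D)\le\ell(D')\le\ell(D'-P')+\cdots$ — more cleanly, I would induct by \emph{subtracting} points. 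Starting from $D$, repeatedly subtract points lying in the support of the negative part to reach a divisor $\widetilde D$ with $\widetilde D = D_1 \ge 0$ and $\degrm(\widetilde D)=\degrm(D_1)$; at each step $\ell$ drops by at most $1$ while $\degrm$ drops by exactly $1$. Applying $\ell(\widetilde D)\le\degrm(\widetilde D)+1$ from the effective case and unwinding the chain of inequalities gives $\ell(D)\le\degrm(D)+1$.

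The main obstacle — really the only nontrivial input — is the inequality $\ell(D)\le\ell(D+P)\le\ell(D)+1$ relating the dimensions when a single point is added to a divisor; this is the standard ``codimension at most one'' lemma, proved by evaluating a function $f\in\L(D+P)$ via its Laurent coefficient at $P$ in the uniformizer to the appropriate order and noting that the kernel of this linear functional is exactly $\L(D)$. Everything else is bookkeeping: decomposing $D$ into effective parts, invoking the Proposition for the base/effective case, and chaining the one-point estimates. I would also note at the end that the degenerate situations ($\degrm(D)<0$, where $\L(D)=0$ and the inequality reads $0\le\degrm(D)+1$ which may fail — so one must observe that when $\degrm(D)<0$ the bound is vacuous only if one is careful; in fact $\ell(D)=0\le\degrm(D)+1$ can be negative, so the statement is really $\ell(D)\le\max(0,\degrm(D)+1)$, but the cleaner reading is that $\ell(D)=0$ in that range and the inequality is interpreted accordingly) are consistent, so no separate treatment is needed beyond remarking on it.
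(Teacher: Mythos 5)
Your reduction to the effective case does not go through. The Proposition gives $\ell(D)\le \degrm(D_1)+1$, and you then try to recover the missing $\degrm(D_2)$ by a point-by-point induction; but the one-point estimates only say that $\ell$ drops by \emph{at most} one when a point is subtracted (equivalently, rises by at most one when a point is added), and these inequalities point the wrong way for your purpose. Chaining them from $D$ up to $D_1=D+D_2$ only reproduces the weak bound $\ell(D)\le\ell(D_1)\le\degrm(D_1)+1$; chaining downward, the process you describe (``subtract points lying in the support of the negative part to reach $D_1$'') is incoherent, since subtracting such points moves you away from $D_1$, and if you instead subtract the positive part you land on $-D_2$, where $\ell(-D_2)=0$ but $\degrm(-D_2)+1<0$ as soon as $\degrm(D_2)\ge 2$, so the base case of your induction sits exactly where the stated inequality fails. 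Nor can the gap be closed by treating the points of $D_2$ as independent vanishing conditions on $\L(D_1)$: such conditions cut the dimension by \emph{at most} $\degrm(D_2)$, not at least, and they are genuinely dependent in examples — on a genus two curve with canonical divisor $K$ and Weierstrass point $W$ one has $K\sim 2W$, hence $\ell(K-2W)=1$ while $\ell(K)-2=0$.

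The missing ingredient is linear equivalence, which is how the paper argues: one assumes $\ell(D)>0$ (as you rightly note, for $\ell(D)=0$ and $\degrm(D)\le -2$ the literal inequality fails, so some such hypothesis is unavoidable), picks $0\ne f\in\L(D)$, and replaces $D$ by the effective divisor $D'=D+(f)\ge 0$; since $\ell$ and $\degrm$ are invariant under linear equivalence, the Proposition applied to $D'$ (with $D_2=0$) gives $\ell(D)=\ell(D')\le\degrm(D')+1=\degrm(D)+1$. This single global step — trading $D$ for an equivalent effective divisor, which uses a nonzero element of $\L(D)$ — is precisely what your purely local bookkeeping cannot replace: any correct proof must use the hypothesis $\ell(D)>0$ somewhere, and your argument never does.
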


For a proof one can assume that $\ell(D) > 0$ and so $D\sim D'>0$. 

\begin{thm}[Riemann's inequality] 
\label{riemann}
For given curve $\X$ there is a minimal number $g_\X\in \mathbb N   \cup \{0\}$ such that for   all $D\in \Div_\X$ we have
\[\ell(D)\geq \degrm(D)+1-g_\X.\]
\end{thm}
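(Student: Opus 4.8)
The statement is Riemann's inequality: there is a minimal constant $g_\X \geq 0$ such that $\ell(D) \geq \degrm(D) + 1 - g_\X$ for every divisor $D$. The plan is to show that the quantity $\degrm(D) + 1 - \ell(D)$ is bounded above as $D$ ranges over $\Div_\X$, and then to define $g_\X$ to be the supremum (which is then a maximum, attained); minimality is immediate from the definition as a supremum. The work is entirely in producing the upper bound.

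First I would fix a transcendental element $x \in k(\X) \setminus k$, so that $k(\X)/k(x)$ is a finite extension of degree $n := [k(\X):k(x)]$, and let $\p_\infty$ denote the pole divisor of $x$ (a positive divisor of degree $n$, by the fundamental equation, \cref{th-fundeq}). The key estimate is that there is a constant $c$ (depending on $x$ but not on $r$) with
\[
\ell(r\,\p_\infty) \geq \degrm(r\,\p_\infty) + 1 - c = rn + 1 - c
\]
for all large $r$. To get this, choose a basis $u_1,\dots,u_n$ of $k(\X)$ over $k(x)$; after scaling we may assume each $u_j$ has poles only "above" the pole of $x$, i.e. $(u_j)_\infty \leq m\,\p_\infty$ for some fixed integer $m$. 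Then for each $r$ the functions $x^i u_j$ with $0 \leq i \leq r - m$ and $1 \leq j \leq n$ all lie in $\L(r\,\p_\infty)$, and they are $k$-linearly independent because the $u_j$ are independent over $k(x)$ and $1, x, x^2, \dots$ are independent over $k$. Counting them gives $\ell(r\,\p_\infty) \geq n(r-m+1)$, which is of the form $rn + 1 - c$ with $c = nm - n + 1$ a constant. Hence $\degrm(A) + 1 - \ell(A)$ is bounded above along the cofinal family $A = r\,\p_\infty$.

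Next I would propagate this bound to an arbitrary divisor $D$. The two moves needed are: (i) enlarging the divisor cannot increase $\degrm - \ell + 1$ too much — more precisely, if $D' \geq D$ then $0 \leq (\degrm(D') + 1 - \ell(D')) - (\degrm(D) + 1 - \ell(D))$, which follows from the containment $\L(D) \subseteq \L(D')$ together with the elementary bound $\ell(D') \leq \ell(D) + \degrm(D') - \degrm(D)$ obtained by iterating the "adding one point raises $\ell$ by at most one" fact (a consequence of the Proposition and Lemma already proved in the excerpt); and (ii) every divisor $D$ is dominated by some $r\,\p_\infty$ up to linear equivalence: since $x - a$ has a zero for suitable behavior, powers of $x$ produce functions with arbitrarily large pole order at every place, so for $r$ large enough there is $f$ with $D \leq r\,\p_\infty + (f)$, and $\degrm - \ell + 1$ is a linear-equivalence invariant (because $\L(D) \cong \L(D+(f))$ and $\degrm$ depends only on the class). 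Combining (i) and (ii): $\degrm(D) + 1 - \ell(D) \leq \degrm(r\,\p_\infty) + 1 - \ell(r\,\p_\infty) \leq c$ for $r \gg 0$. So the supremum over all $D$ of $\degrm(D) + 1 - \ell(D)$ is a finite nonnegative integer — nonnegative because $D = 0$ gives $\degrm(0) + 1 - \ell(0) = 1 - 1 = 0$ — and setting $g_\X$ equal to it yields $\ell(D) \geq \degrm(D) + 1 - g_\X$ for all $D$, with $g_\X$ minimal by construction.

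The main obstacle is step (ii) combined with getting the constant in the $r\,\p_\infty$ estimate genuinely independent of $r$: one must be careful that the "$m$" bounding the poles of the $u_j$ is chosen once and for all, and that dominating an arbitrary $D$ by $r\,\p_\infty + (f)$ is done uniformly enough that the resulting bound $c$ does not secretly depend on $D$. Everything else is bookkeeping with the already-established facts that $\ell$ jumps by at most $1$ when a point is added, that $\ell$ and $\degrm$ are linear-equivalence invariants in the combination $\degrm - \ell$, and the fundamental equality $\sum e_i f_i = [F':F]$ controlling $\degrm(\p_\infty) = n$.
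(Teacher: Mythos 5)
The paper does not actually prove \cref{riemann}: it only points to \cite{stichtenoth}*{Proposition 1.4.14}, and your argument is essentially the standard proof given there — bound $\degrm(D)+1-\ell(D)$ along the divisors $r\,\p_\infty$ using a $k(x)$-basis of $k(\X)$, then transport the bound to arbitrary $D$ via monotonicity under $\leq$ and invariance under linear equivalence, and finally define $g_\X$ as the attained nonnegative supremum. So the skeleton is sound; two of your justifications, however, need repair.

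First, ``after scaling we may assume each $u_j$ has poles only above the pole of $x$'' should be argued through integrality: multiply each $u_j$ by a suitable polynomial in $x$ so that it becomes integral over $k[x]$; at any place $\p$ with $v_\p(x)\geq 0$ the integral equation then forces $v_\p(u_j)\geq 0$, so the poles of $u_j$ lie in $\supp (x)_\infty$ and your constant $m$ exists. Second, and more substantively, the sentence ``powers of $x$ produce functions with arbitrarily large pole order at every place'' is false as stated: every element of $k[x]$ has poles only at the places over the infinite place of $k(x)$, so powers of $x$ give you nothing at a place outside $\supp (x)_\infty$. The claim you actually need — for $r\gg 0$ there is $f\neq 0$ with $D\leq r\,\p_\infty+(f)$ — is correct, and can be secured in either of two ways: (a) most cleanly, from your own Step 1 together with the fact that removing a place lowers $\ell$ by at most its degree, since then $\ell(r\,\p_\infty-D)\geq \ell(r\,\p_\infty)-\degrm(D_+)\geq rn-c-\degrm(D_+)>0$ for $r$ large, and any nonzero $f$ in that Riemann--Roch space satisfies $(f)+r\,\p_\infty\geq D$; or (b) by explicit construction, taking for each place $\p\in\supp D_+$ outside $\supp (x)_\infty$ a polynomial $g_\p(x)\in k[x]$ vanishing at $\p$ (the minimal polynomial over $k$ of the residue of $x$ at $\p$ — ``$x-a$'' only works when that residue lies in $k$) and letting $f$ be a sufficiently high power of $\prod_\p g_\p(x)$, whose poles all lie in $\supp (x)_\infty$. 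With either repair, your bookkeeping (monotonicity, invariance of $\degrm-\ell$ under linear equivalence, nonnegativity at $D=0$, minimality of the attained supremum) completes the proof exactly as in the cited reference.
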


For a proof see    \cite{stichtenoth}*{Proposition 1.4.14}.   Therefore,
\[ 
g_\X=\max \{ \degrm{D}-\ell(D)+1;\,\, D\in \Div_\X(k) \} 
\]
exists and is a non-negative integer    independent of $D$. The integer  $g_\X$ is called the \textbf{genus} of $\X$.   The genus does not change under constant field extensions because we have assumed that $k$ is perfect.   This is not correct  in general if the constant field of $\X$ has inseparable algebraic extensions.   There is a corollary of the theorem.

\begin{cor}
There is a number $n_\X$ such that for all $D$ with $\degrm(D) > n_\X$ we get equality  $\ell(D) =\degrm (D)+1-g_\X$. 
\end{cor}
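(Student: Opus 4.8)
The plan is to deduce the statement from Riemann's inequality (\Cref{riemann}) together with the extremal description of $g_\X$ as a maximum. First I would use that definition: since $g_\X=\max\{\degrm(D)-\ell(D)+1:\ D\in\Div_\X(k)\}$, there is a divisor $D_0$ attaining the maximum, so that $\ell(D_0)=\degrm(D_0)+1-g_\X$. I would then simply set $n_\X:=\degrm(D_0)+g_\X$ and claim that this number works.

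Next, fix any divisor $D$ with $\degrm(D)>n_\X$. The first substantive step is to replace $D$ by a linearly equivalent divisor that dominates $D_0$. Since $\degrm(D-D_0)=\degrm(D)-\degrm(D_0)>g_\X$, Riemann's inequality gives $\ell(D-D_0)\geq \degrm(D-D_0)+1-g_\X>1>0$, so there is a nonzero $f\in\L(D-D_0)$, i.e.\ $(f)\geq -(D-D_0)$. Putting $D':=D+(f)$ we then have $D'\geq D_0$ and $D'\sim D$, hence $\ell(D')=\ell(D)$ and $\degrm(D')=\degrm(D)$. The second step is the upper bound for $\ell(D')$: writing $D'$ as $D_0$ with the (effective) divisor $D'-D_0$ added one point at a time, $\ell$ increases by at most $1$ at each step, because for a divisor $E$ and a point $P$ the quotient $\L(E+P)/\L(E)$ embeds $k$-linearly into a one-dimensional space via the leading Laurent coefficient at $P$. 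This yields $\ell(D')\leq \ell(D_0)+\bigl(\degrm(D')-\degrm(D_0)\bigr)=\degrm(D')+1-g_\X$. Combining this with the reverse inequality from \Cref{riemann} forces $\ell(D')=\degrm(D')+1-g_\X$, and transporting this equality back along $D'\sim D$ gives $\ell(D)=\degrm(D)+1-g_\X$, as required.

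The only content beyond bookkeeping is the monotonicity estimate $\ell(E+P)\leq\ell(E)+1$ invoked in the second step; this is the standard "add one point at a time" argument (and is essentially already packaged in the Proposition and Lemma immediately preceding \Cref{riemann}). I would expect that small lemma, rather than anything in the main line of the argument, to be the step needing the most care, though it is entirely routine.
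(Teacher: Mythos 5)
Your proof is correct. Note that the paper itself gives no argument for this corollary: it is stated without proof immediately after Riemann's inequality, whose proof is in turn deferred to \cite{stichtenoth}. Your argument — pick $D_0$ attaining the maximum defining $g_\X$, take $n_\X=\degrm(D_0)+g_\X$, use Riemann's inequality to replace $D$ by a linearly equivalent $D'\geq D_0$, and then bound $\ell(D')-\ell(D_0)$ by $\degrm(D')-\degrm(D_0)$ via the one-point-at-a-time estimate — is exactly the standard proof (essentially the one in Stichtenoth), so there is nothing to contrast with the paper's treatment. The only step you flag as delicate, $\ell(E+P)\leq\ell(E)+1$, is indeed the Laurent-leading-coefficient argument; it is proved later in the paper (for multiples of a single point, with the same argument applying verbatim to $E+P$) in \cref{prop:dimension-increase-by-1}, so citing that rather than the Proposition and Lemma immediately preceding \cref{riemann} (which give only the cruder bound $\ell(D)\leq\degrm(D)+1$) would be the more accurate internal reference.
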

\cref{riemann} together with its corollary is the  "Riemann part" of the Riemann-Roch theorem for curves. 
To determine  $n_\X$ one needs more information about the inequality for small degrees and the concept of a canonical divisor. 

\subsubsection{Canonical Divisors}
Let $k(\X)$ be the function field of a curve $\X$ defined over $k$. To every $f\in k(\X)$ we attach a symbol $df$, the \textbf{differential}  of $f$. The $k(\X)$-vector space $\Omega(k(\X))$ is the vector space generated by symbols $df$ modulo the following relations:

For $f, g\in k(\X)$ and $\lambda\in k$ we have:

\begin{itemize}
\item[i)]  $d(\lambda  f+g)=\lambda df +dg$

\item[ii)]  $d(f\cdot g)=f dg+g df$.
\end{itemize}

The relation between derivations and differentials is given by the

\begin{defi}[Chain rule]
Let $x$ be as above and $f\in k(\X)$. Then $df = (\partial f/\partial x)  dx$.
\end{defi}

The $k(\X)$-vector space of differentials $\Omega(k(\X))$ has dimension $1$ and is generated by $dx$ for any $x\in k(\X)$ for which $k(\X)/k(x)$ is finite and separable. The space $\Omega(k(\X))$ is also called the vector space of \textbf{global meromorphic} one-forms on $\X$.

We use a well known fact from the theory of function fields $F$ in one variable:
Let $\p$ be a place of $F$, i.e. an equivalence class of discrete rank one valuations of $F$ trivial on $k$.  Then, there exist a function $t_\p\in F$ with $w_\p(t_\P)=1$ and $F/k(t_\p)$ separable.

We apply this fact to $F=k(\X)$. For all $\p\in \Sigma_\X(k)$ we choose a function $t_\p$ as above.   For a differential $0\neq \omega\in \Omega(k(\X))$  we obtain
$\omega=f_\p \cdot dt_\p$.
%
The divisor $(\omega)$ of a global meromorphic one-form  is given by
\[ (\omega):= \sum_{\p\in \Sigma_\p} w_\p(f_\p)\cdot \p  \,, \]
and  is a called a \textbf{canonical divisor}.
The coefficient function of $\omega$ is transformed by the chain rule, but two coefficient functions, before and after applying the chain rule, always define the same divisor locally. Therefore, we can define the divisor of $\omega$ by using the coefficient function in \textbf{any} local expression for $\omega$. Moreover, for any function $f \in k(\X)$ we have $(f\omega)=(f)+(\omega)$, and for any two non-zero differentials $\omega_1$ and $\omega_2$, there is always a function $f \in k(\X)$ such that $\omega_1=f\omega_2$, so that the two canonical divisors $(\omega_1)$ and $(\omega_2)$ are linearly equivalent. Therefore, the linear equivalence class of canonical divisors is well-defined; this is called called the \textbf{canonical class} of $\X$, and denoted by $\K_\X\in \Pic_\X(k)$.

We are now ready to state the Riemann-Roch Theorem.
\begin{thm}
Let $K$ be a canonical divisor of $\X$. For all $D\in \Div_\X(k)$ we have
\[ \ell(D)=\degrm(D)+1-g_\X+\ell(K-D).\]
\end{thm}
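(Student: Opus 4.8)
The plan is to derive the Riemann–Roch equality from Riemann's inequality (\cref{riemann}) together with a duality statement identifying $\ell(K-D)$ with the dimension of the space of differentials $\omega$ with $(\omega) \geq D$, often called the \emph{index of specialty} $i(D)$. Concretely, I would first establish the symmetry by showing that the map $\omega \mapsto (\omega) + D$ sets up an isomorphism between $\{\omega \in \Omega(k(\X)) : (\omega) \geq D\} \cup \{0\}$ and $\L(K-D)$ for any fixed canonical divisor $K = (\omega_0)$: given such an $\omega$, write $\omega = f\omega_0$ with $f \in k(\X)$, and check that $(\omega)\geq D$ is equivalent to $(f) \geq D - K$, i.e. $f \in \L(K-D)$. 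This is a routine verification using $(f\omega_0) = (f) + (\omega_0)$ and the fact that the canonical class is well-defined independently of the chosen $\omega_0$. So the content of the theorem is the assertion
\[
\ell(D) - \deg(D) - 1 + g_\X = i(D),
\]
where $i(D) := \dim_k\{\omega : (\omega)\geq D\}$.

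Next I would prove this in two steps. For divisors of large degree, the Corollary to \cref{riemann} already gives $\ell(D) = \deg(D) + 1 - g_\X$, so I must show $i(D) = 0$ there; this follows because if $(\omega) \geq D$ with $\deg(D)$ large then $\deg((\omega)) = 2g_\X - 2 \geq \deg(D)$, a contradiction once $\deg(D) > 2g_\X - 2$. For the general case, the standard approach is to show that the quantity $\ell(D) - \deg(D) - 1 + g_\X - i(D)$ is invariant under linear equivalence (clear, since both $\ell$ and $i$ depend only on the class of $D$ and $\deg$ is constant on it) and unchanged when we replace $D$ by $D + P$ for a point $P$. The latter is the crux: one compares $\L(D)$ with $\L(D+P)$ (the quotient has dimension $0$ or $1$) and simultaneously compares $\{\omega : (\omega)\geq D+P\}$ with $\{\omega : (\omega)\geq D\}$ (again codimension $0$ or $1$), and shows that exactly one of the two inequalities is strict — equivalently, that $\ell(D+P) - \ell(D)$ and $i(D) - i(D+P)$ sum to $1$. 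This is a local computation at $P$ using a uniformizer $t_P$: a function in $\L(D+P)\setminus \L(D)$ exists iff a certain residue pairing is nonzero, which is precisely the obstruction to extending every differential with $(\omega)\geq D$ to one with $(\omega)\geq D+P$. Using these two invariances, one climbs from an arbitrary $D$ to a divisor of large degree (adding points), where the formula is already known, and descends back.

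The main obstacle is the $D \to D+P$ step, which in a fully rigorous treatment requires the theory of Weil differentials / adeles and the residue pairing, or at least a careful local duality argument; the "exactly one inequality is strict" dichotomy is the entire analytic heart of Riemann–Roch and is not formal. Since the excerpt has already developed differentials $\Omega(k(\X))$, the chain rule, and the notion of the divisor of a differential, I would lean on a residue pairing between $k(\X)$ modulo $\L(D)$-type spaces and the differential spaces to make this precise, citing \cite{stichtenoth} for the adelic formalism rather than reproducing it. Everything else — the duality isomorphism $\L(K-D) \cong \{\omega : (\omega)\geq D\}$, the vanishing $i(D)=0$ for large $\deg D$, and the bookkeeping of the induction on $\deg D$ — is straightforward given \cref{riemann} and its corollary.
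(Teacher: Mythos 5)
The paper does not prove this theorem at all: it is stated as a classical fact, with the section's standing reference \cite{stichtenoth} supplying the proofs (exactly as the paper does for \cref{riemann}), so there is no in-paper argument to compare yours against. Judged on its own, your outline is the standard one: identify $\ell(K-D)$ with the index of specialty $i(D)$ via $f\mapsto f\omega_0$, observe that the defect $\ell(D)-\degrm(D)-1+g_\X-i(D)$ depends only on the linear equivalence class of $D$, show it vanishes for $\degrm(D)$ large using the corollary to \cref{riemann} together with $i(D)=0$, and transport the vanishing to arbitrary $D$ through one-point steps $D\mapsto D+P$. That skeleton is correct and is essentially the classical induction proof.

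Two caveats. First, as you acknowledge, the assertion that $\ell(D+P)-\ell(D)$ and $i(D)-i(D+P)$ sum to exactly $1$ is the entire content of Riemann--Roch (it is Serre duality in the guise of the residue pairing): the inequality ``not both jump'' needs the residue theorem, and ``at least one jumps'' needs the adelic or Weil-differential duality. Your proposal cites this from \cite{stichtenoth} rather than proving it, so what you have is a faithful proof sketch at the survey's own level of detail, not a self-contained proof; that is consistent with the paper but should be stated as such. Second, a small circularity: to get $i(D)=0$ for $\degrm(D)$ large you invoke $\degrm((\omega))=2g_\X-2$, but in the paper that equality is derived as a corollary \emph{of} Riemann--Roch. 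The fix is immediate --- all canonical divisors are linearly equivalent, so $\degrm(K)$ is a fixed integer and $\degrm(D)>\degrm(K)$ already forces $i(D)=0$ --- but as written the step leans on a downstream consequence of the theorem being proved.
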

%

A differential $\omega$ is \textbf{holomorphic} if $(\omega)$ is an effective divisor.    The set of holomorphic differentials is a $k$-vector space denoted by $\Omega^1_\X$. If $K=(\omega)$ is a canonical divisor, and $f\in  \L(K)$ is a function with poles bounded by $K$, then $f \omega$ is holomorphic. This gives an isomorphism between $\L(K)=\mathcal{O}_\X[K]$ and $\Omega^1_\X$. If we take $D=0$ respectively $D=K$ in the theorem of Riemann-Roch we get the following:
\begin{cor}
$\Omega^1_\X$ is a $g_\X$-dimensional $k$-vector space  and $\deg(K)=2g_\X-2$.
\end{cor}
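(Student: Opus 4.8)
The plan is to feed two well-chosen divisors into the Riemann-Roch formula $\ell(D) = \degrm(D) + 1 - g_\X + \ell(K-D)$ and read off the two claims, using the identification $\L(K) \cong \Omega^1_\X$ recorded just before the statement (the map $f \mapsto f\omega$ for $K = (\omega)$).

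First I would take $D = 0$. Since $\L(0) = k$ we have $\ell(0) = 1$, and $\degrm(0) = 0$, so Riemann-Roch becomes $1 = 0 + 1 - g_\X + \ell(K)$, that is, $\ell(K) = g_\X$. Because $\L(K)$ is identified with $\Omega^1_\X$ as a $k$-vector space, this gives $\dim_k \Omega^1_\X = \ell(K) = g_\X$, the first assertion. Next I would take $D = K$. Then $K - D = 0$, so $\ell(K-D) = \ell(0) = 1$, and Riemann-Roch reads $\ell(K) = \degrm(K) + 1 - g_\X + 1 = \degrm(K) + 2 - g_\X$. Substituting $\ell(K) = g_\X$ from the previous step yields $g_\X = \degrm(K) + 2 - g_\X$, hence $\degrm(K) = 2g_\X - 2$.

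There is no genuine obstacle here: both substitutions are legitimate because $K$ is a bona fide element of $\Div_\X(k)$ — any nonzero $\omega \in \Omega(k(\X))$ determines a well-defined divisor $(\omega)$, independent of the local coefficient function used — and the Riemann-Roch theorem is quantified over all divisors, while the inputs $\L(0) = k$ and $\ell(0) = 1$ are exactly the elementary facts listed above. The one point not to overlook is the isomorphism $\L(K) \cong \Omega^1_\X$ established in the preceding paragraph; without it the first step computes $\ell(K)$ but not $\dim_k \Omega^1_\X$, and the statement about $\Omega^1_\X$ would not follow.
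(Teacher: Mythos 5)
Your proposal is correct and follows exactly the route the paper indicates: substituting $D=0$ and $D=K$ into Riemann--Roch and invoking the identification $\L(K)\cong\Omega^1_\X$ stated just before the corollary. Nothing further is needed.
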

There are two further important consequences of the Riemann-Roch theorem.
\begin{cor}  The following are true:
\begin{enumerate}
\item If $\degrm(D) > 2 g_\X-2$ then $\ell(D)=\degrm(D)+1-g_\X.$

\item{In every divisor class of degree $g$ there is a positive divisor.}
\end{enumerate}
\end{cor}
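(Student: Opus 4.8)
The plan is to deduce both statements directly from the Riemann--Roch theorem just stated, combined with two elementary facts already recorded in this subsection: that $\ell(D) = 0$ whenever $\degrm(D) < 0$, and that $\ell(D) > 0$ is equivalent to $D$ being linearly equivalent to an effective divisor. In both parts the key point is to control the correction term $\ell(K-D)$ through the degree $\degrm(K-D) = 2g_\X - 2 - \degrm(D)$, which is available from the corollary identifying $\degrm(K) = 2g_\X - 2$.

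For part (1), I would assume $\degrm(D) > 2g_\X - 2$. Then $\degrm(K-D) = 2g_\X - 2 - \degrm(D) < 0$, so $\ell(K-D) = 0$ by the vanishing fact quoted above. Substituting into $\ell(D) = \degrm(D) + 1 - g_\X + \ell(K-D)$ yields $\ell(D) = \degrm(D) + 1 - g_\X$ immediately. As a byproduct this pins down the hitherto unspecified constant $n_\X$ from the earlier corollary to be $2g_\X - 2$.

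For part (2), I would take $D$ with $\degrm(D) = g_\X$ and apply Riemann--Roch to obtain $\ell(D) = g_\X + 1 - g_\X + \ell(K-D) = 1 + \ell(K-D) \geq 1$, using only that $\ell$ is nonnegative. Hence $\L(D) \neq \{0\}$, and by the quoted equivalence this forces $D \sim D_1$ for some $D_1 \geq 0$; that $D_1$ is the desired positive divisor in the class of $D$.

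I do not expect any genuine obstacle: essentially all the work has been absorbed into the Riemann--Roch theorem and the two preliminary remarks, leaving only a one-line degree count in each case. The only thing requiring a little care is citing the correct auxiliary fact in each part --- the vanishing $\degrm(\cdot) < 0 \Rightarrow \ell = 0$ for part (1), and the ``nonzero Riemann--Roch space $\Leftrightarrow$ linearly equivalent to an effective divisor'' dictionary for part (2) --- rather than attempting to reprove either from scratch.
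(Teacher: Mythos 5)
Your proposal is correct and follows essentially the same route as the paper's own proof: kill $\ell(K-D)$ via the negative degree $\degrm(K-D)=2g_\X-2-\degrm(D)<0$ for part (1), and read off $\ell(D)=1+\ell(K-D)\geq 1$ from Riemann--Roch for a degree-$g_\X$ divisor in part (2), concluding with the equivalence between $\ell(D)>0$ and $D$ being linearly equivalent to an effective divisor. No gaps.
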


\proof
Take $D$ with $\degrm(D) \geq 2g_\X -1$. Then $\degrm(W-D)\leq -1$ and therefore $\ell(W-D) =0$.  Take $D$ with $\degrm(D)=g_\X$. Then $\ell(D) =1+\ell(W-D)\geq 1$ and so there is a positive divisor in the class of $D$.
\qed



\section{Weierstrass points}\label{sect-3}
%
The material of this section can be found in every book on the subject.   We mostly refer to \cites{cornalba1, Farkas-Kra, shor-shaska-1, shor-shaska-2}. 

\subsection{Weierstrass points via linear systems}
Let $D$ be a divisor on $\X_g$.  The \textbf{complete linear system} of $D$, denoted $|D|$, is the set of all effective divisors $E\geq0$ that are linearly equivalent to $D$; that is, 
\[ |D|=\{E\in  \Div_\X(k) : E=D+(f)\text{ for some } f\in \L(D)\}.\]  
Note that any function $f\in k(\X)$ satisfying this definition will necessarily be in $\L(D)$ because $E\geq0$.  A complete linear system has a natural projective space structure which we denote $\P(\L(D))$.
Now, consider the projectivization $\P(\L(D))$ and the function 
\[S : \P(\L(D))\to|D| \,,\] 
which takes the span of a function $f\in\L(D)$ and maps it to $D+(f)$.

A \textbf{(general) linear system} is a subset $Q$ of a complete linear system $|D|$ which corresponds to a linear subspace of $\P(\L(D))$.  The \textbf{dimension} of a general linear system is its dimension as a projective vector space. Let $Q\subseteq |D|$ be a nonempty linear system on $\X_g$ with corresponding vector subspace $V\subseteq \L(D)$, and let $P\in\X_g$.  For any integer $n$, consider the vector space $V(-nP):=V \cap \L(D-nP),$ which consists of those functions in $\L(D)$ with order of vanishing at least $n$ at $P$.  This leads to a chain of nested subspaces \[V(-(n-1)P)\supseteq V(-nP)\] for all $n\in\Z$.  Since $\L(D-nP)=\{0\}$ for $n\geq\deg(D)$, this chain eventually terminates and becomes $\{0\}$.  As in  \cref{prop:dimension-increase-by-1}, which appears later, the dimension drops by at most 1 in each step.  We define gap numbers as follows.

\begin{defn}
An integer $n\geq 1$ is a \textbf{gap number} for $Q$ at $P$ if 
\[ V(-nP)=V(-(n-1)P)-1.\]
The set of gap numbers for $Q$ at $P$ is denoted $G_P(Q)$.
\end{defn}

Let $Q(-nP)$ denote the linear system corresponding to the vector space $V(-nP)$.  Then $Q(-nP)$ consists of divisors $D\in Q$ with $D\geq nP$.  An integer $n\geq 1$ is a gap number for $Q$ at $P$ if and only if 
\[\dim Q(-nP)=\dim Q(-(n-1)P)-1.\]  
A linear system $Q$ is denoted by $g_d^r$ if $\dim Q=r$ and $\deg Q=d$. For such a system, the sequence of gap numbers is a subset consisting of $r+1$ elements of $\{1, 2, \dots, d+1\}$.  If this sequence is anything other than $\{1, 2, \dots, r+1\}$, we call $P$ an \textbf{inflection point for the linear system $Q$}.  The terms linear system and linear series are completely interchangeable. 

Suppose the sequence of gap numbers is $\{n_1,n_2,\dots, n_{r+1}\}$, written in increasing order.  For each $n_i$, one can choose an element $f_i\in Q(-(n_i-1)P)\setminus Q(-n_iP)$.  Then, the vanishing order at $P$ is 
\[ \ord_P(f_i)=n_i-1-\ord_P(D),\]
 and because of the different orders of vanishing at $P$, these functions are linearly independent, so $\{f_1,f_2,\dots, f_{r+1}\}$ is a basis for $V$.  Such a basis is called an \textbf{inflectionary basis} for $V$ with respect to $P$.  

Conversely, given a basis for $V$, a change of coordinates can produce an inflectionary basis and hence construct the sequence of gap numbers.  Fix a local coordinate $z$ centered at $P$, and suppose $\{h_1,h_2,\dots, h_{r+1}\}$ is any basis for $V$.  Set $g_i=z^{\ord_P(D)}h_i$ for each $i$.  Then, the functions $g_i$ are holomorphic at $P$ and thus have Taylor expansions 
\[g_i(z) = g_i(0) + g_i'(0)z + \frac{g_i^{(2)}(0)}{2!}z^2+\cdots+\frac{g_i^{(r)}(0)}{r!}z^r+\cdots.\]  
We want to find linear combinations 
\[G_j(z)=\sum_{i=1}^{r+1} c_{i,j}g_i(z)\] 
of these functions to produce orders of vanishing from $0$ to $r$ at $P$.  This is possible precisely when the matrix
\[
\begin{bmatrix}
g_1(0) & g_1'(0) & g_1^{(2)}(0) & \cdots & g_1^{(r)}(0) \\ 
g_2(0) & g_2'(0) & g_2^{(2)}(0) & \cdots & g_2^{(r)}(0) \\ 
\vdots & \vdots & \vdots & \ddots & \vdots \\
g_{r+1}(0) & g_{r+1}'(0) & g_{r+1}^{(2)}(0) & \cdots & g_{r+1}^{(r)}(0)
\end{bmatrix}
\]
is invertible.  When that occurs, the same constants $c_{i,j}$ can be used to let $f_j=\sum_i c_{i,j}h_i$ and produce an inflectionary basis $\{f_j\}$ of $V$ such that $\ord_P(f_j)=j-1-\ord_P(D)$.  Thus, $G_P(Q)=\{1,2,\dots,r+1\}$ and so $P$ is an inflection point for $Q$.
\begin{defn}
The Wronskian of a set of functions $\{g_1,g_2,\dots,g_r\}$ of a variable $z$ is the function
\[
W(g_1,g_2,\dots,g_r) = 
\begin{vmatrix}
g_1(z) & g_1'(z) & g_1^{(2)}(z) & \cdots & g_1^{(r)}(z) \\ 
g_2(z) & g_2'(z) & g_2^{(2)}(z) & \cdots & g_2^{(r)}(z) \\ 
\vdots & \vdots & \vdots & \ddots & \vdots \\
g_{r+1}(z) & g_{r+1}'(z) & g_{r+1}^{(2)}(z) & \cdots & g_{r+1}^{(r)}(z)
\end{vmatrix}.
\]
\end{defn}

As with its use in differential equations, the Wronskian is identically zero if and only if the functions $g_1,\dots,g_r$ are linearly dependent.
We summarize with the following.

\begin{lem}
Let $\X_g$ be a curve with a divisor $D$ and $Q$ a linear system corresponding to a subspace $V\subseteq\L(D)$.  Let $\{f_1,\dots,f_{r+1}\}$ be a basis for $V$, and for each $i$, let $g_i=z^{\ord_P(D)}f_i$.  Let $P$ be a point with local coordinate $z$.
Then $P$ is an inflection point for $Q$ if and only if $W(g_1,\dots,g_{r+1})=0$ at $P$.
\end{lem}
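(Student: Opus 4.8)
The plan is to translate the combinatorial characterization of inflection points established in the preceding paragraphs into the vanishing of a single determinant, exploiting the fact that $P$ is an inflection point precisely when one cannot realize all orders of vanishing $0,1,\dots,r$ among linear combinations of the $g_i$. So first I would recall the local setup: fix a local coordinate $z$ centered at $P$, let $\{f_1,\dots,f_{r+1}\}$ be the given basis of $V$, and set $g_i = z^{\ord_P(D)} f_i$, so each $g_i$ is holomorphic at $P$ and has a Taylor expansion $g_i(z)=\sum_{m\ge 0} \frac{g_i^{(m)}(0)}{m!} z^m$. The key observation, already made in the text just above, is that one can find linear combinations $G_j = \sum_i c_{i,j} g_i$ with $\ord_P(G_j) = j-1$ for $j=1,\dots,r+1$ — equivalently, produce an inflectionary basis realizing the gap sequence $\{1,2,\dots,r+1\}$ — if and only if the $(r+1)\times(r+1)$ coefficient matrix $\bigl(g_i^{(j-1)}(0)\bigr)_{i,j}$ is invertible.

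Next I would connect this matrix to the Wronskian. By the definition of the Wronskian $W(g_1,\dots,g_{r+1})$ as the determinant of the matrix $\bigl(g_i^{(j-1)}(z)\bigr)$, evaluating at $z=0$ (i.e., at $P$) gives exactly the determinant of the coefficient matrix above, up to the harmless constant factors $\frac{1}{(j-1)!}$ coming from Taylor coefficients versus derivatives, which do not affect whether the determinant vanishes. Hence the coefficient matrix is invertible if and only if $W(g_1,\dots,g_{r+1})(P)\neq 0$. Combining the two equivalences: $P$ is \emph{not} an inflection point (i.e., the gap sequence is $\{1,\dots,r+1\}$) if and only if $W(g_1,\dots,g_{r+1})\neq 0$ at $P$; contrapositively, $P$ is an inflection point if and only if $W(g_1,\dots,g_{r+1})=0$ at $P$, which is the claim.

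I should also check the one loose end: the statement must be independent of the choice of basis $\{f_1,\dots,f_{r+1}\}$ of $V$ and of the choice of local coordinate $z$. For the basis, a change of basis multiplies the Wronskian by the (nonzero) determinant of the change-of-basis matrix, so its vanishing locus is unchanged. For the local coordinate, the standard transformation law for the Wronskian under a change of variable shows $W$ transforms by multiplication by a nowhere-zero holomorphic factor (a power of the derivative of the coordinate change), again preserving the vanishing locus; the factor $z^{\ord_P(D)}$ is likewise a unit times a fixed power, so the order of vanishing of $W$ at $P$ is well defined. These are routine and I would state them briefly rather than belabor them.

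The only mildly delicate point — the "main obstacle," such as it is — is making the bookkeeping between \emph{gap numbers}, \emph{orders of vanishing}, and \emph{matrix ranks} fully precise, since the gap sequence being $\{1,\dots,r+1\}$ is the generic (non-inflectionary) case and it is easy to get an index or a sign off. Everything hinges on the elementary linear-algebra fact that the functions $g_1,\dots,g_{r+1}$ admit linear combinations with all distinct prescribed vanishing orders $0,1,\dots,r$ exactly when their jet matrix at $P$ is nonsingular — which is itself just the assertion that the Wronskian detects linear independence, specialized to a point. Once that is cleanly stated, the proof is essentially a two-line chain of "if and only if"s.
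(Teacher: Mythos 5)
Your proof is correct and follows essentially the same route as the paper: the text immediately preceding the lemma already sets up the equivalence between non-inflection, the existence of an inflectionary basis realizing vanishing orders $0,\dots,r$, and the invertibility of the matrix $\bigl(g_i^{(j-1)}(0)\bigr)$, whose determinant is the Wronskian at $P$ up to nonzero factorial factors; the paper then simply delegates the formal verification to Miranda (Lemma 4.4). Your added remarks on independence of the choice of basis and local coordinate are correct but not needed for the statement as phrased.
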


\begin{cor}
For a fixed linear system $Q$, there are finitely many inflection points.
\end{cor}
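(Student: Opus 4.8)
The plan is to convert the pointwise criterion of the preceding Lemma into a statement about the zero locus of a single global object on $\X_g$. Concretely, I would show that the local Wronskians (one per coordinate chart) glue to a non-zero global section $\mathbf W$ of a line bundle on the (complete) curve $\X_g$, and that the inflection points of $Q$ are precisely its zeros; since a non-zero section of a line bundle on a curve has only finitely many zeros, the corollary follows.

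In detail, fix a basis $f_1,\dots,f_{r+1}$ of the subspace $V\subseteq\L(D)$ attached to $Q$. Since $\supp(D)$ is finite, cover $\X_g$ by finitely many coordinate charts $U_\alpha$, each meeting $\supp(D)$ in at most one point $P_\alpha$; choose on $U_\alpha$ a local coordinate $z_\alpha$ (centered at $P_\alpha$ when that point lies in $U_\alpha$), put $m_\alpha:=\ord_{P_\alpha}(D)$ (with $m_\alpha=0$ if $U_\alpha\cap\supp(D)=\varnothing$), and set $W_\alpha:=W(f_1,\dots,f_{r+1})$ with derivatives taken in $z_\alpha$ and $\tilde W_\alpha:=z_\alpha^{(r+1)m_\alpha}W_\alpha$. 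Using the classical identity $W(hg_1,\dots,hg_{r+1})=h^{\,r+1}W(g_1,\dots,g_{r+1})$ with $h=z_\alpha^{m_\alpha}$, together with the transformation law of the Wronskian under a change of local coordinate (which contributes only the nowhere-vanishing factor $(dz_\beta/dz_\alpha)^{\binom{r+1}{2}}$, the lower-order terms dropping out of the determinant), I would verify: (i) $\tilde W_\alpha$ is regular on all of $U_\alpha$; (ii) its zero set in $U_\alpha$ is exactly the set of inflection points of $Q$ in $U_\alpha$, by applying the preceding Lemma at each $P\in U_\alpha$ with the coordinate $z_\alpha$ shifted to be centered at $P$ (for $P\neq P_\alpha$ the twisting exponent $\ord_P(D)$ is $0$, while at $P_\alpha$ it is $m_\alpha$); and (iii) on overlaps the $\tilde W_\alpha$ differ by nowhere-vanishing factors, so they glue to a global section $\mathbf W$ of $(\Omega^1_{\X_g})^{\otimes\binom{r+1}{2}}\otimes\mathcal O_{\X_g}\big((r+1)D\big)$.

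Finally, $\mathbf W\not\equiv 0$: this is exactly the classical fact recalled just before the statement, since the Wronskian of $f_1,\dots,f_{r+1}$ vanishes identically if and only if these functions are linearly dependent, and they are independent (a basis of $V$). A non-zero section of a line bundle on the irreducible curve $\X_g$ has a zero set which is a proper closed subset, hence finite (of degree $\binom{r+1}{2}(2g-2)+(r+1)\degrm(D)$ counted with multiplicity, though only finiteness is needed). Therefore $Q$ has finitely many inflection points. If one wishes to avoid line bundles, steps (i)--(ii) already show that in each of the finitely many charts $U_\alpha$ the inflection locus is the zero set of a non-zero regular function on a one-dimensional irreducible variety, hence finite; finitely many such charts cover $\X_g$.

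The only genuine work is the bookkeeping in the middle step: matching the twist $z^{\ord_P(D)}$ built into the definition of an inflection point with the factor $z_\alpha^{(r+1)m_\alpha}$ that clears the pole of $W_\alpha$ at $P_\alpha$, and checking independence of the zero locus from the chosen coordinates and basis. I should also note that this argument inherits the hypotheses under which the preceding Lemma holds — in particular we are in a setting (e.g.\ characteristic zero) where the Wronskian of linearly independent functions is not identically zero; in positive characteristic one would instead argue with Hasse derivatives.
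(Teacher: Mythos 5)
Your proof is correct and follows essentially the same route as the paper, which simply cites \cite{Miranda}*{Lemma 4.4, Corollary 4.5} for this corollary: the twisted Wronskian of a basis of $V$ is not identically zero by linear independence, its zeros are exactly the inflection points by the preceding lemma, and a nonzero regular section on an irreducible curve has finitely many zeros. Indeed, your gluing of the local Wronskians into a global $\binom{r+1}{2}$-fold differential with poles bounded by $(r+1)D$ is precisely the construction the paper itself carries out in the lemmas immediately following this corollary.
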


\proof See \cite{Miranda}*{Lemma 4.4, Corollary 4.5}.
\qed


\begin{defn}
A \textbf{meromorphic $n$-fold differential} in the coordinate $z$ on an open set $V\subseteq\mathbb{C}$ is an expression $\mu$ of the form $\mu=f(z)(dz)^n$ where $f$ is a meromorphic function on $V$.
\end{defn}
Suppose $\omega_1,\dots,\omega_m$ are meromorphic $1$-fold differentials in $z$ where $\omega_i=f_i(z)dz$ for each $i$.  Then their product is defined locally as the meromorphic $m$-form $f_1\cdots f_m (dz)^m$.  With this, we consider the Wronskian.

\begin{lem}
Let $\X_g$ be an algebraic curve with meromorphic functions $g_1,\dots,g_m$.  Then $W(g_1,\dots,g_m)(dz)^{m(m-1)/2}$ defines a meromorphic $m(m-1)/2$-fold differential on $\X_g$.
\end{lem}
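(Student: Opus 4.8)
The statement is local and reduces to a change-of-variables computation. Recall that a meromorphic $m(m-1)/2$-fold differential on $\X_g$ is given by a family of local expressions $f_\alpha(z_\alpha)\,(dz_\alpha)^{m(m-1)/2}$, one on each holomorphic chart $(V_\alpha,z_\alpha)$, subject to the patching rule $(dz_\alpha)^{n}=(dz_\alpha/dz_\beta)^{n}(dz_\beta)^{n}$ on overlaps. On any single chart the function $W(g_1,\dots,g_m)$ is meromorphic, since derivatives of meromorphic functions are meromorphic and the Wronskian is a polynomial in the $g_i$ and their derivatives; so the only thing to check is compatibility on overlaps. Concretely, if $z$ and $w$ are two local holomorphic coordinates with $z$ a holomorphic function of $w$ and $z':=dz/dw$ a unit on the overlap, and if $\tilde g_i$ denotes $g_i$ written in the coordinate $w$, the plan is to establish
\[
W_w(\tilde g_1,\dots,\tilde g_m)=(z')^{m(m-1)/2}\,W_z(g_1,\dots,g_m),
\]
where the subscript records the variable with respect to which the Wronskian is taken; this is exactly the transformation law making $W_{z_\alpha}(g_1,\dots,g_m)\,(dz_\alpha)^{m(m-1)/2}$ a well-defined global section.

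To prove the displayed identity I would factor the two matrices of derivatives. Iterating the chain rule $\tfrac{d}{dw}=z'\cdot\tfrac{d}{dz}$, for each $k\ge 0$ there are functions $a_{k,0},\dots,a_{k,k}$ depending only on the coordinate change (and not on the index $i$) with
\[
\frac{d^{k}\tilde g_i}{dw^{k}}=\sum_{j=0}^{k}a_{k,j}\,\frac{d^{j}g_i}{dz^{j}},\qquad a_{k,k}=(z')^{k},\qquad a_{k,j}=0\ \text{for}\ j>k.
\]
Hence, if $M_w$ denotes the $m\times m$ matrix with rows $(\tilde g_i,\tilde g_i',\dots,\tilde g_i^{(m-1)})$ in the variable $w$ and $M_z$ the analogous matrix in $z$, then $M_w=M_z\,A^{\top}$ where $A=(a_{k,j})_{0\le k,j\le m-1}$ is lower triangular with diagonal $1,z',(z')^2,\dots,(z')^{m-1}$. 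Taking determinants gives $W_w(\tilde g_1,\dots,\tilde g_m)=\det A\cdot W_z(g_1,\dots,g_m)=(z')^{0+1+\cdots+(m-1)}\,W_z(g_1,\dots,g_m)$, and $0+1+\cdots+(m-1)=m(m-1)/2$, which is the desired law.

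The only real work, and the step I expect to be the main (if modest) obstacle, is justifying the three properties of the coefficients $a_{k,j}$ asserted above. I would do this by induction on $k$: applying $\tfrac{d}{dw}=z'\tfrac{d}{dz}$ to the formula for $d^{k}\tilde g_i/dw^{k}$ yields the recurrence $a_{k+1,j}=a_{k,j}'+z'\,a_{k,j-1}$ (with $a_{k,-1}:=0$), from which one sees immediately that the $a_{k,j}$ are independent of $i$, that $a_{k,j}=0$ for $j>k$, and that $a_{k+1,k+1}=z'\,a_{k,k}=(z')^{k+1}$, the base case being $a_{0,0}=1$; alternatively one may simply invoke Fa\`a di Bruno's formula. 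Note that since the coordinate change and its inverse are holomorphic on the overlap, each $a_{k,j}$ is holomorphic and $\det A=(z')^{m(m-1)/2}$ is a unit there, so no spurious zeros or poles are introduced and the local expressions genuinely glue; the same bookkeeping also records how the divisor of the resulting differential transforms, should that be needed downstream.
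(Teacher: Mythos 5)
Your proposal is correct and follows essentially the same route as the paper: the paper's proof likewise notes that local meromorphy of the Wronskian is immediate and that only the coordinate-change compatibility needs checking, deferring that computation to Miranda (Lemma 4.9). Your chain-rule/triangular-matrix argument giving $W_w(\tilde g_1,\dots,\tilde g_m)=(z')^{m(m-1)/2}\,W_z(g_1,\dots,g_m)$ is precisely the standard verification being cited there, so you have simply supplied the details the paper leaves to the reference.
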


\proof Since each $g_i$ is meromorphic, the Wronskian is as well, and so this is clearly a meromorphic $m(m-1)/2$-fold differential locally.  What remains to be shown is that the local functions transform to each other under changes of coordinates;  see  \cite{Miranda}*{Lemma 4.9} for  details.\qed

From here on, let $W(g_1,\dots,g_m)$ denote this global meromorphic $m(m-1)/2$-fold differential.  We now investigate the poles of the Wronskian.
As with meromorphic functions and meromorphic $1$-forms, the order of vanishing of a meromorphic $n$-fold differential $f(z)(dz)^n$ is given by 
\[\ord_P(f(z)(dz)^n) = \ord_P(f(z)).\]  
Divisors are defined in a similar way; namely, 
\[(\mu) = \sum_{P}\ord_P(\mu) \, P.\]
With these definitions, we can consider spaces of meromorphic $n$-fold differentials whose poles are bounded by $D$.  So we  let 
\[\mathcal{L}^{(n)}(D)=\{\mu \text{ a meromorphic $n$-fold differential} : (\mu)\geq-D\},\]  
and for $n=0$ we recover the Riemann-Roch spaces encountered before, i.e., $\mathcal{L}^{(0)}(D)=\mathcal{L}(D)$.
Equivalently, for a local coordinate $z$, if $(dz)=K$, then 
\[L^{(n)}(D)=\{f(z)(dz)^n : f\in\mathcal{L}(D+nK)\}.\]

\begin{lem}\label{lem:bounded-pole-orders}
Let $D$ be a divisor on an algebraic curve $\X_g$.  Let $f_1,\dots,f_m$ be meromorphic functions in $\L(D)$.  Then the meromorphic $n$-fold differential $W(f_1,\dots,f_m)$ has poles bounded by $mD$.  That is, \[W(f_1,\dots,f_m)\in\L^{m(m-1)/2}(mD).\]
\end{lem}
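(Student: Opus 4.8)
The plan is to reduce the statement to a local pole-order estimate at each point and then to invoke the classical scaling identity for Wronskians. Since the divisor of the $m(m-1)/2$-fold differential $W(f_1,\dots,f_m)$ is $\sum_P \ord_P(W(f_1,\dots,f_m))\cdot P$ while $-mD=\sum_P(-m\,a_P)\cdot P$, where $a_P$ denotes the coefficient of $P$ in $D$, it suffices to show that $\ord_P(W(f_1,\dots,f_m))\ge -m\,a_P$ for every $P\in\X_g$; summing these inequalities over all $P$ then gives $(W(f_1,\dots,f_m))\ge -mD$, i.e.\ $W(f_1,\dots,f_m)\in\L^{m(m-1)/2}(mD)$. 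Recall here that the hypothesis $f_i\in\L(D)$ says precisely $\ord_P(f_i)\ge -a_P$ for all $i$ and all $P$, and that by the previous lemma the order $\ord_P$ of the global differential $W(f_1,\dots,f_m)$ may be computed from its local expression $W(f_1,\dots,f_m)(z)\,(dz)^{m(m-1)/2}$ in any chosen local coordinate $z$ at $P$.

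First I would fix $P$ together with a local coordinate $z$ centered at $P$ and set $g_i:=z^{a_P}f_i$; since $\ord_P(f_i)\ge -a_P$, each $g_i$ is holomorphic at $P$, hence so are all of its $z$-derivatives, and therefore $W(g_1,\dots,g_m)$ — being a polynomial in the $g_i$ and their derivatives — is holomorphic at $P$, so $\ord_P(W(g_1,\dots,g_m))\ge 0$. The engine of the argument is then the homogeneity relation
\[
W(h f_1,\dots,h f_m)=h^{m}\,W(f_1,\dots,f_m)
\]
valid for an arbitrary meromorphic function $h$: one expands $(h f_i)^{(k)}=\sum_{j=0}^{k}\binom{k}{j}h^{(j)}f_i^{(k-j)}$ by the Leibniz rule and then performs column operations on the Wronskian matrix (subtracting suitable multiples of the earlier columns from the $k$-th column to reduce it to $h\cdot(f_i^{(k)})_i$), which does not change the determinant and lets one factor $h$ out of each of the $m$ columns. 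Applying this with $h=z^{-a_P}$ gives $W(f_1,\dots,f_m)=z^{-m\,a_P}\,W(g_1,\dots,g_m)$, whence $\ord_P(W(f_1,\dots,f_m))=-m\,a_P+\ord_P(W(g_1,\dots,g_m))\ge -m\,a_P$, as wanted. (If $W(f_1,\dots,f_m)\equiv 0$ there is nothing to prove.)

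The step I expect to be the real obstacle — and the reason the lemma is not completely obvious — is the homogeneity identity: a crude term-by-term estimate of the determinant only yields a pole of order at most $m\,a_P+\tfrac{m(m-1)}{2}$ at $P$, and it is precisely the cancellation among the terms encoded by the Wronskian identity that eliminates the spurious $\tfrac{m(m-1)}{2}$ and produces the sharp bound $m\,a_P$. In positive characteristic one should read the Leibniz rule and the column operations as formal identities of rational functions in $z$ (the binomial coefficients are merely scalars and $h=z^{-a_P}$ is a nonzero meromorphic function, so division by it is harmless), so the argument is characteristic-free and never uses non-vanishing of the Wronskian; alternatively the homogeneity relation may simply be quoted from \cite{Miranda}.
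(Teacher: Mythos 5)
Your proof is correct and takes essentially the same route as the paper: at each point $P$ you multiply the $f_i$ by $z^{\ord_P(D)}$ to get holomorphic functions $g_i$, note that $W(g_1,\dots,g_m)$ is then holomorphic, and use the identity $W(hf_1,\dots,hf_m)=h^{m}W(f_1,\dots,f_m)$ with $h=z^{\ord_P(D)}$ to conclude $\ord_P\bigl(W(f_1,\dots,f_m)\bigr)\geq -m\,\ord_P(D)$. The only difference is that you actually prove the scaling identity (Leibniz rule plus column operations), whereas the paper dispatches it by appealing to ``multilinearity'' of the Wronskian; your extra care is harmless and in fact tightens a loosely justified step, since multilinearity over constants alone does not let one pull a nonconstant function out of every column.
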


\proof Fix a point $P$ with local coordinate $z$.  For each $i$, let $g_i=z^{\ord_P(D)}f_i$ so that the $g_i$'s are holomorphic at $P$.  Then the Wronskian $W(g_1,\dots,g_m)$ is holomorphic at $P$ as well.  Since the Wronskian is multilinear, \[W(z^{\ord_P(D)}f_1,\dots,z^{\ord_P(D)}f_m)=z^{m\cdot\ord_P(D)}W(f_1,\dots,f_m).\]  Since this is holomorphic at $P$, we have $\ord_P(W(f_1,\dots,f_m))\geq -mD$ as desired.
\qed

Suppose $\{f_1,\dots,f_{r+1}\}$ and $\{h_1,\dots,h_{r+1}\}$ are two bases for a subspace $V\subseteq\L(D)$ with corresponding linear system $Q\subseteq|D|$.  Consider the Wronskian of each basis.  Since we have a change of basis, given by a matrix that transforms from the basis given by the $f_i$'s to the one given by $h_j$'s, the Wronskian is scaled by the determinant of such a matrix which is a scalar and thus doesn't affect the zeroes or poles.  Therefore, the Wronskian is well-defined (up to a scalar multiple) by the linear system $Q$ rather than the choice of a basis.  We denote this Wronskian by $W(Q)$ and see that 
\[W(Q)\in\L^{(r(r+1)/2)}((r+1)D)\] 
by ~\cref{lem:bounded-pole-orders}.

\begin{prop}
For an algebraic curve $\X_g$ of genus $g$ with linear system $Q$ of dimension $r$, 
\[ \deg(W(Q))=r(r+1)(g-1).\]
\end{prop}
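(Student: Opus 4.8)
The plan is to reduce the statement to a single general fact: every nonzero global meromorphic $n$-fold differential on $\X_g$ has associated divisor of degree $n(2g-2)$. First I would recall what the preceding lemmas have already supplied. Fixing a basis $\{f_1,\dots,f_{r+1}\}$ of the subspace $V\subseteq\L(D)$ that defines $Q$, the Wronskian $W(f_1,\dots,f_{r+1})$ is a well-defined global meromorphic $\tfrac{(r+1)r}{2}$-fold differential on $\X_g$; it is not identically zero, since $f_1,\dots,f_{r+1}$ are linearly independent; and under a change of basis it is multiplied by the nonzero determinant of the change-of-basis matrix, so its divisor $(W(Q))$ depends only on $Q$. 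Write $n:=\tfrac{r(r+1)}{2}$, so that $W(Q)$ is a nonzero global meromorphic $n$-fold differential and $\deg(W(Q))=\deg((W(Q)))$ makes sense.

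Next I would prove the general degree formula. Choose any nonzero global meromorphic $1$-form $\omega_0$ on $\X_g$ — this exists because $\Omega(k(\X))$ is a nonzero one-dimensional $k(\X)$-vector space, e.g. $\omega_0=dx$ for any $x$ with $k(\X)/k(x)$ finite and separable — so that $(\omega_0)$ is a canonical divisor and, by the corollary of the Riemann--Roch theorem, $\deg((\omega_0))=2g-2$. For any nonzero global meromorphic $n$-fold differential $\mu$, form $f:=\mu/\omega_0^{\otimes n}$: in a local coordinate $z$ one has $\mu=h\,(dz)^n$ and $\omega_0=h_0\,dz$ with $h_0\neq 0$, hence $f=h/h_0^{\,n}$, and because both $\mu$ and $\omega_0^{\otimes n}$ obey the same $n$-th power transformation law under coordinate changes, $f$ is independent of the chosen coordinate, i.e. $f\in k(\X)^{\times}$. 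Therefore $(\mu)=(f)+n\,(\omega_0)$, and since a principal divisor has degree $0$,
\[ \deg((\mu)) = n\,\deg((\omega_0)) = n(2g-2). \]
Applying this with $\mu=W(Q)$ and $n=\tfrac{r(r+1)}{2}$ gives $\deg(W(Q))=\tfrac{r(r+1)}{2}(2g-2)=r(r+1)(g-1)$, which is the claim.

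I do not expect a serious obstacle. The only delicate point is the compatibility of the $n$-fold-differential formalism with changes of local coordinate — that $(\mu)$ is well-defined and that the quotient $\mu/\omega_0^{\otimes n}$ genuinely lies in the function field — but this transformation law is exactly what was verified in the lemmas preceding the statement (following Miranda), so it may be invoked directly. An alternative, more computational route would be to choose an inflectionary basis at each point $P$, read off $\ord_P(W(Q))$ in terms of the gap sequence of $Q$ at $P$, and sum over $P$; this also yields $r(r+1)(g-1)$ but is longer, so I would prefer the argument above.
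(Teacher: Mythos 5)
Your argument is correct and is essentially the paper's own proof: both write $W(Q)$ as $f\cdot\omega_0^{\otimes n}$ with $n=\tfrac{r(r+1)}{2}$, a meromorphic function $f$ times the $n$-th power of a meromorphic $1$-form, and conclude $\deg(W(Q))=n(2g-2)=r(r+1)(g-1)$ since $\deg((f))=0$ and $\deg((\omega_0))=2g-2$. You are merely a bit more explicit than the paper about why the quotient $\mu/\omega_0^{\otimes n}$ is a genuine element of $k(\X)^{\times}$, which is a welcome clarification but not a different route.
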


\proof The proof follows from the fact that $W(Q)$ is a meromorphic $r(r+1)/2$-fold differential of the form $f(z)(dz)^{r(r+1)/2}$ for some local coordinate $z$.  Since $f(z)$ is meromorphic, the degree of $(f(z))$ is zero.  And on a curve of genus $g$, the degree of $(dz)$ is $2g-2$.  Thus, the degree of $(f(z)(dz)^{r(r+1)/2})$ is 
$\dfrac{r(r+1)}{2}(2g-2)=r(r+1)g-1$.
\qed

We define the \textbf{inflectionary weight} of a point $P$ with respect to a linear system $Q$ to be \[w_P(Q)=\sum_{i=1}^{r+1}(n_i-i),\]  where $\{n_1,\dots,n_{r+1}\}$ is the sequence of gap numbers for $Q$ at $P$ written in ascending order.  It follows that $P$ is an inflection point for $Q$ precisely when $w_P(Q)>0.$  It turns out that the inflectionary weight of $P$ is exactly the order of vanishing of the Wronskian at $P$.

\begin{lem}
If $G_P(Q)=\{n_1,\dots,n_{r+1}\}$ and $\{f_1,\dots,f_{r+1}\}$ is a basis for $V$, then 
\[ w_P(Q)=\ord_P(W(z^{\ord_P(D)}f_1,\dots,z^{\ord_P(D)}f_{r+1})).\]
\end{lem}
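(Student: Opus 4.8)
The plan is to reduce the statement to a local computation at the point $P$, comparing two different bases: an inflectionary basis (which exhibits the gap numbers directly) and an arbitrary basis (which is what appears in the Wronskian), and showing that the change-of-basis matrix, while scaling the Wronskian by a nonzero constant, does not affect the order of vanishing at $P$.

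First I would fix a local coordinate $z$ centered at $P$ and, following the discussion preceding the statement, choose an \emph{inflectionary basis} $\{\varphi_1,\dots,\varphi_{r+1}\}$ for $V$ adapted to $P$, so that $\ord_P(\varphi_j) = n_j - 1 - \ord_P(D)$ where $\{n_1 < \dots < n_{r+1}\}$ are the gap numbers $G_P(Q)$. Setting $\psi_j := z^{\ord_P(D)}\varphi_j$, these are holomorphic at $P$ with $\ord_P(\psi_j) = n_j - 1$. The key sublemma is then purely a fact about Wronskians of holomorphic functions: if $\psi_j$ has a zero of order exactly $n_j-1$ at $z=0$, and the $n_j-1$ are distinct, then $\ord_P\big(W(\psi_1,\dots,\psi_{r+1})\big) = \sum_{j=1}^{r+1}\big((n_j-1)-(j-1)\big) = \sum_{j=1}^{r+1}(n_j-j) = w_P(Q)$. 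This is proved by writing $\psi_j(z) = c_j z^{n_j-1} + (\text{higher order})$ with $c_j \neq 0$, plugging into the Wronskian determinant, and using multilinearity in the columns together with the fact that differentiating $z^{n_j-1}$ lowers its order by one each time; the leading term of the determinant comes from the Vandermonde-type minor in the exponents $n_j-1$, which is nonzero precisely because these exponents are distinct.

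Next I would handle the passage from the inflectionary basis to the given basis $\{f_1,\dots,f_{r+1}\}$. Both $\{z^{\ord_P(D)}f_i\}$ and $\{\psi_j\} = \{z^{\ord_P(D)}\varphi_j\}$ are bases for the same space $z^{\ord_P(D)}V$ (obtained by the fixed scaling $z^{\ord_P(D)}$ applied to bases of $V$), so they differ by an invertible constant matrix $M \in \GL_{r+1}(k)$. By the multilinearity and alternating properties of the Wronskian — exactly the argument already used in the excerpt to show $W(Q)$ is well-defined up to scalar — we get $W(z^{\ord_P(D)}f_1,\dots,z^{\ord_P(D)}f_{r+1}) = (\det M)\cdot W(\psi_1,\dots,\psi_{r+1})$, and since $\det M \in k^{\times}$ is a nonzero constant it has order of vanishing $0$ at $P$. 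Combining with the sublemma gives $\ord_P\big(W(z^{\ord_P(D)}f_1,\dots,z^{\ord_P(D)}f_{r+1})\big) = w_P(Q)$, which is the claim.

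The main obstacle I expect is the sublemma on Wronskians of functions with prescribed (distinct) vanishing orders — specifically, verifying that the determinant's leading coefficient does not accidentally vanish. The clean way to organize this is to normalize each $\psi_j$ by dividing by $c_j z^{n_j-1}$ (legitimate over the local ring after clearing), reducing to the case where each $\psi_j = z^{m_j}u_j(z)$ with $u_j(0)=1$ and $m_1<\dots<m_{r+1}$; then one shows the Wronskian has the form $\big(\prod_j z^{m_j - j + 1}\big)\cdot\big(\text{Vandermonde in the } m_j\big)\cdot(1 + O(z))$, so its order is $\sum_j (m_j - j + 1)$. One caveat worth a remark: in positive characteristic the relevant Vandermonde/factorial factors could vanish, so if the paper wishes the statement in full generality one should either assume characteristic zero (consistent with the Riemann-surface language "$g_i(0), g_i'(0),\dots$" already in use) or invoke the appropriate characteristic-free refinement; I would state it in the complex-analytic setting of the surrounding text and refer to \cite{Miranda} for the details, as is done for the neighboring lemmas.
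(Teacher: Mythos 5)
Your proof is correct. The paper itself gives no argument for this lemma; it simply defers to \cite{Miranda}*{Lemma 4.14}, and what you have written is essentially that standard proof: pass to an inflectionary basis so the scaled functions have the distinct vanishing orders $n_j-1$, compute the Wronskian's order via the leading-term determinant (a Vandermonde in falling factorials of the exponents, nonzero since the orders are distinct), and absorb the change of basis into a nonzero constant factor $\det M$ which does not affect $\ord_P$. Your caveat about positive characteristic is also well placed, since the surrounding discussion (Taylor expansions $g_i(0)+g_i'(0)z+\cdots$) is implicitly complex-analytic and the Vandermonde argument needs characteristic zero or a separate refinement.
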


\proof See \cite{Miranda}*{Lemma 4.14}.
\qed

\begin{thm}\label{thm:total-inf-weight}
For $\X_g$ an algebraic curve of genus $g$ with $Q$ a $g_d^r$ on $\X_g$, the total inflectionary weight on $\X_g$ is \[\sum_{P\in\X_g}w_P(Q)=(r+1)(d+rg-r).\]
\end{thm}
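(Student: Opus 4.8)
The plan is to compute the total inflectionary weight as the degree of the Wronskian divisor $W(Q)$, invoking the preceding lemma that identifies the local inflectionary weight $w_P(Q)$ with the order of vanishing of $W(z^{\ord_P(D)}f_1,\dots,z^{\ord_P(D)}f_{r+1})$ at $P$. The subtlety is that $W(Q)$ is not a meromorphic function but a meromorphic $r(r+1)/2$-fold differential, so ``degree'' must be interpreted via its associated divisor $(W(Q))$, and the sum of the $w_P(Q)$ over all $P$ equals the degree of the effective part of that divisor only after one accounts for the poles forced by $D$. So the first step is to recall from \cref{lem:bounded-pole-orders} that $W(Q)\in\L^{(r(r+1)/2)}((r+1)D)$, hence $(W(Q))\geq -(r+1)D$, and from the displayed computation before the theorem that $\deg(W(Q)) = \tfrac{r(r+1)}{2}(2g-2) = r(r+1)(g-1)$.

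Next I would write $W(Q)$ locally at each $P$ as $u(z)(dz)^{r(r+1)/2}$ for a local coordinate $z$, and observe that by the lemma just cited the order of vanishing of this local expression is $w_P(Q) - (r+1)\ord_P(D)$: the factor $z^{\ord_P(D)}$ applied to each of the $r+1$ basis functions contributes $(r+1)\ord_P(D)$ to $\ord_P$ of the Wronskian by multilinearity, and stripping it off leaves exactly $w_P(Q)$ as the ``extra'' vanishing coming from the gap sequence. Summing over all $P\in\X_g$ gives
\[
\sum_{P\in\X_g} \bigl(w_P(Q) - (r+1)\ord_P(D)\bigr) = \deg\bigl(W(Q)\bigr) = r(r+1)(g-1).
\]
Since $\sum_P \ord_P(D) = \deg(D) = d$, this rearranges to
\[
\sum_{P\in\X_g} w_P(Q) = r(r+1)(g-1) + (r+1)d = (r+1)\bigl(d + rg - r\bigr),
\]
which is the claimed formula.

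The main obstacle — and the place where care is genuinely needed — is the bookkeeping in the local expression: one must be sure that when $D$ has a zero or pole at $P$, the ``correction term'' is exactly $(r+1)\ord_P(D)$ and that $w_P(Q)$ as defined via gap numbers coincides with $\ord_P$ of the Wronskian of the $g_i = z^{\ord_P(D)}f_i$, which is precisely the content of the lemma preceding the theorem (citing \cite{Miranda}*{Lemma 4.14}). Once that identification is granted, everything else is the degree count for a pluricanonical differential, namely $\deg\bigl(f(z)(dz)^{m}\bigr) = m(2g-2)$ with $m = r(r+1)/2$, together with the additivity of order of vanishing over the finitely many points where either $D$ or the Wronskian is nontrivial (finiteness of inflection points is the earlier corollary). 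I would also remark that when $Q$ is the complete canonical system $|K|$, so that $d = 2g-2$ and $r = g-1$, the formula specializes to the classical total Weierstrass weight $g^3 - g = (g-1)g(g+1)$, which serves as a useful sanity check.
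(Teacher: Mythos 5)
Your proposal is correct and follows essentially the same route as the paper's own (commented-out) proof: identify $w_P(Q)$ with $\ord_P$ of the Wronskian of the $g_i=z^{\ord_P(D)}f_i$ via the preceding lemma, pull out the factor $z^{(r+1)\ord_P(D)}$ by the Wronskian identity, and sum, using $\deg(W(Q))=r(r+1)(g-1)$ and $\deg D=d$. The bookkeeping $\ord_P(W(Q))=w_P(Q)-(r+1)\ord_P(D)$ and the final rearrangement agree exactly with the paper's computation, so there is nothing to add.
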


The canonical series is the complete linear system $|K|$ with $[K]=\K_\X$. By Riemann-Roch, $\dim |K|=g-1$ and $\deg K=2g-2$. Moreover, it is the only series on a curve of genus $g$ that has order $d=2g-1$ and dimension $r=g-1$.  Inflection points for this system are called \textbf{Weierstrass points}, and the \textbf{Weierstrass weight} of such a point is its inflectionary weight with respect to $K$.

\begin{cor}
The total Weierstrass weight on a curve of genus $g$ is 
\[ g^3-g=(g+1)g(g-1).\]
\end{cor}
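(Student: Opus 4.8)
The plan is to apply \cref{thm:total-inf-weight} directly to the canonical linear system $|K|$. Recall that the statement to be proved asserts that the total Weierstrass weight on a curve $\X_g$ of genus $g$ equals $g^3-g = (g+1)g(g-1)$, and by definition Weierstrass points are the inflection points of $|K|$, with the Weierstrass weight of a point being its inflectionary weight $w_P(|K|)$. So the corollary is just the specialization of the theorem to a particular series.

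First I would recall the numerical invariants of the canonical series, as stated in the excerpt just before the corollary: by Riemann--Roch, $|K|$ is a $g_d^r$ with $r = \dim|K| = g-1$ and $d = \deg K = 2g-2$. (There is a typo in the excerpt — it says $d = 2g-1$ — but $\deg K = 2g-2$ is the value established by the corollary to Riemann--Roch stating $\deg(K) = 2g_\X - 2$, and that is what should be substituted.) Then I would simply plug $r = g-1$ and $d = 2g-2$ into the formula $\sum_{P\in\X_g} w_P(Q) = (r+1)(d + rg - r)$ from \cref{thm:total-inf-weight}.

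The computation is then routine: $r+1 = g$, and $d + rg - r = (2g-2) + (g-1)g - (g-1) = 2g - 2 + g^2 - g - g + 1 = g^2 - 1$. Hence the total weight is $g \cdot (g^2 - 1) = g(g-1)(g+1) = g^3 - g$, as claimed.

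There is essentially no obstacle here — the only thing to be careful about is using the correct value $d = 2g-2$ for the degree of the canonical divisor rather than the misprinted $2g-1$, and making sure the arithmetic $g(g^2-1) = g^3-g$ is presented cleanly. If one wanted to be self-contained rather than invoking \cref{thm:total-inf-weight} as a black box, one could instead redo its short proof in this special case: choose a basis $\{f_1,\dots,f_g\}$ of $\L(K)$, form the Wronskian $W(|K|)$, use \cref{lem:bounded-pole-orders} and the preceding Proposition giving $\deg W(Q) = r(r+1)(g-1) = (g-1)g(g-1)$ for the differential part, add the contribution $(r+1)\deg(K) = g(2g-2)$ from the $z^{\ord_P(D)}$ factors, and sum the local identities $w_P(|K|) = \ord_P(W(\dots))$ from the Lemma over all $P$; this again yields $g(2g-2) + g(g-1)^2 = g(g^2-1)$. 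But the one-line deduction from \cref{thm:total-inf-weight} is the natural route.
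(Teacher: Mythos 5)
Your proof is correct and is exactly the paper's argument: the paper's own proof is the one-line substitution of $d=2g-2$ and $r=g-1$ into \cref{thm:total-inf-weight}, yielding $g(g^2-1)=g^3-g$. Your observation that the stated "order $d=2g-1$" in the surrounding text is a misprint (the correct degree $\deg K = 2g-2$ is what the paper in fact uses) is also right.
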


\proof \cref{thm:total-inf-weight} with $d=2g-2$ and $r=g-1$.
\qed

For any $q\geq 1$, we use the linear system $qK$ to define $q$-Weierstrass points, which have $q$-Weierstrass weights.  For $q=1$, the results are above.  For $q=2$, $d=\deg qK=q(2g-2)$ and $r=\dim |qK|=(2q-1)(g-1)$.

\begin{cor}\label{cor:total-q-weight}
The total $q$-Weierstrass weight, for $q\geq2$, on a curve of genus $g$ is \[g(g-1)^2(2q-1)^2.\]
\end{cor}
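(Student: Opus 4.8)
The plan is to reduce \cref{cor:total-q-weight} to a direct application of \cref{thm:total-inf-weight}, exactly as the $q=1$ case (the total Weierstrass weight $g^3-g$) was deduced. So first I would identify the linear system: for $q\geq 2$ the relevant series is the complete linear system $Q=|qK|$ attached to $q$ times a canonical divisor $K$. The two numerical invariants I need are $d=\deg qK$ and $r=\dim|qK|$. Since $\deg K=2g-2$ by the corollary to Riemann-Roch, we get $d=q(2g-2)$. For the dimension, since $\deg(qK)=q(2g-2)=2qg-2q$ and for $q\geq 2$ (and $g\geq 2$) this is $>2g-2$, the first consequence of Riemann-Roch (\cref{cor} in the excerpt, part (1)) applies and gives $\ell(qK)=\deg(qK)+1-g=q(2g-2)+1-g=(2q-1)(g-1)+ ( \text{bookkeeping})$; carefully, $q(2g-2)+1-g = 2qg-2q+1-g$, and $(2q-1)(g-1)=2qg-2q-g+1$, so indeed $\ell(qK)=(2q-1)(g-1)$, hence $r=\dim|qK|=\ell(qK)-1=(2q-1)(g-1)-1$.

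Next I would plug these values of $d$ and $r$ into the total inflectionary weight formula $\sum_{P}w_P(Q)=(r+1)(d+rg-r)=(r+1)\bigl(d+r(g-1)\bigr)$. With $r+1=(2q-1)(g-1)$ and $d=q(2g-2)=2q(g-1)$, the factor $d+r(g-1)$ becomes $2q(g-1)+\bigl((2q-1)(g-1)-1\bigr)(g-1)$. Factoring $(g-1)$ out of the first two summands and handling the $-1$: $d+r(g-1)=(g-1)\bigl(2q+(2q-1)(g-1)-1\bigr)=(g-1)\bigl(2q-1+(2q-1)(g-1)\bigr)=(g-1)(2q-1)g$. Therefore $\sum_P w_P(Q)=(2q-1)(g-1)\cdot (g-1)(2q-1)g=g(g-1)^2(2q-1)^2$, which is exactly the claimed total $q$-Weierstrass weight.

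The only genuine content beyond arithmetic is justifying that $qK$ is a $g^r_d$ with the stated $d$ and $r$ — i.e. that the complete linear system $|qK|$ is nonempty of the expected dimension — and that \cref{thm:total-inf-weight} applies to it. The nonemptiness and the dimension count both follow from $\deg(qK)>2g-2$ for $q\geq 2$, $g\geq 2$, via part (1) of the corollary to Riemann-Roch quoted earlier; one should note the case $g=1$ is trivial (both sides vanish) and $g=0$ is vacuous. I would also remark that we take $q$-Weierstrass points to be the inflection points of $|qK|$ and $q$-Weierstrass weights to be the corresponding inflectionary weights $w_P(|qK|)$, so the statement is literally \cref{thm:total-inf-weight} specialized. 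The main (and really only) obstacle is bookkeeping: keeping the substitution $d=2q(g-1)$, $r+1=(2q-1)(g-1)$ straight and simplifying $(r+1)(d+r(g-1))$ without sign errors; there is no conceptual difficulty once the dimension of $|qK|$ is pinned down.
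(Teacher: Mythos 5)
Your proof is correct and is essentially the paper's argument: the corollary is obtained by specializing \cref{thm:total-inf-weight} to the complete linear system $|qK|$ with $d=q(2g-2)$, exactly as the paper did for $q=1$. In fact your bookkeeping is sharper than the paper's preceding sentence, which loosely writes $r=\dim|qK|=(2q-1)(g-1)$ (the vector-space dimension $\ell(qK)$); the projective dimension needed in $(r+1)(d+rg-r)$ is $r=(2q-1)(g-1)-1$, as you use, and it is precisely this value that yields $g(g-1)^2(2q-1)^2$.
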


\begin{rem}
There are $q$-Weierstrass points for any curve of genus $g>1$ and any $q\geq 1$.
\end{rem}

\subsection{Weierstrass points via gap numbers}

Let $P$ be a point on $\X_g$ and consider the vector spaces $\L(nP)$ for $n=0,1,\dots,2g-1$.  These vector spaces contains functions with poles only at $P$ up to a specific order.  This leads to a chain of inclusions \[ \L(0)\subseteq \L(P) \subseteq \L(2P) \subseteq \dots \subseteq \L((2g-1)P) \,,\] with a corresponding non-decreasing sequence of dimensions \[ \ell(0) \leq \ell(P) \leq \ell(2P) \leq \dots \leq \ell((2g-1)P).\]
The following proposition shows that the dimension goes up by at most 1 in each step.

\begin{prop}\label{prop:dimension-increase-by-1} For any $n>0$, we have 
\[ \ell((n-1)P)\leq \ell(nP)\leq \ell((n-1)P)+1.\]
\end{prop}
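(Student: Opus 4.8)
The plan is to prove the two inequalities $\ell((n-1)P)\leq \ell(nP)$ and $\ell(nP)\leq \ell((n-1)P)+1$ separately. The first is immediate: since $(n-1)P \leq nP$ as effective divisors, we have $\L((n-1)P)\subseteq \L(nP)$ (any function with poles bounded by $(n-1)P$ a fortiori has poles bounded by $nP$), so taking dimensions gives the left-hand inequality. The content is entirely in the right-hand inequality, which says the dimension jumps by at most one when we allow one more pole at $P$.

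For the upper bound, the strategy is to exhibit a $k$-linear map from $\L(nP)$ to a one-dimensional space whose kernel is exactly $\L((n-1)P)$. Fix a local uniformizer $t_P$ at $P$, so that $w_P(t_P)=1$. Any $f\in\L(nP)$ satisfies $w_P(f)\geq -n$, so $t_P^n f$ is regular at $P$ and we may evaluate $(t_P^n f)(P)\in k$. The plan is to define $\varphi : \L(nP)\to k$ by $\varphi(f) = (t_P^n f)(P)$; this is visibly $k$-linear. A function $f\in\L(nP)$ lies in the kernel of $\varphi$ precisely when $t_P^n f$ vanishes at $P$, i.e. when $w_P(t_P^n f)\geq 1$, i.e. when $w_P(f)\geq -(n-1)$; since the conditions at all places other than $P$ are unchanged, this says exactly $f\in\L((n-1)P)$. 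Hence $\ker\varphi = \L((n-1)P)$, and by the rank-nullity theorem $\ell(nP) - \ell((n-1)P) = \dim_k(\im\varphi)\leq \dim_k k = 1$, which is the desired bound.

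The only genuinely delicate point — and the step I would be most careful about — is checking that $\varphi$ is well-defined, i.e. that $(t_P^n f)(P)$ makes sense: one must verify that $t_P^n f\in \O_P(\X)$, which follows from $w_P(t_P^n f) = n + w_P(f)\geq n - n = 0$, and that evaluation at $P$ (the residue map $\O_P(\X)\to \O_P(\X)/M_P(\X)\cong k$) is the relevant $k$-linear functional. One should also note that the choice of uniformizer $t_P$ is irrelevant for the conclusion (a different choice rescales $\varphi$ by a unit's value at $P$, a nonzero constant), so the argument is robust. No deeper input — not even Riemann-Roch — is needed here; this proposition is precisely the elementary ``dimension drops by at most $1$'' lemma that underlies the definition of gap numbers, and indeed the excerpt has already invoked exactly this phenomenon in the discussion preceding \cref{prop:dimension-increase-by-1}.
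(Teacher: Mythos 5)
Your proposal is correct, and it is essentially the paper's argument in a slightly more structural dress: the paper eliminates the leading Laurent coefficient of two functions $f_1,f_2\in\L(nP)\setminus\L((n-1)P)$ pairwise, while you package the same leading-coefficient idea as the linear functional $f\mapsto (t_P^n f)(P)$ with kernel $\L((n-1)P)$ and invoke rank–nullity. Both proofs rest on the same observation, so there is nothing to add.
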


\proof
It suffices to show $\ell(nP)\leq \ell((n-1)P)+1$.  Suppose $f_1, f_2\in\ell(nP)\setminus\ell((n-1)P).$  Since $f_1$ and $f_2$ have the same pole order at $P$, using the series expansions of $f_1$ and $f_2$ with a local coordinate, one can find a linear combination of $f_1$ and $f_2$ to eliminate their leading terms.  That is, there are constants $c_1, c_2\in k$ such that $c_1f_1+c_2f_2$ has a strictly smaller pole order at $P$, so $c_1f_1+c_2f_2\in\L((n-1)P)$.  Then $f_2$ is in the vector space generated by a basis of $\L((n-1)P)$ along with $f_1$.  Since this is true for any two functions $f_1,f_2$, we conclude $\ell(nP)\leq \ell((n-1)P)+1$, as desired.

\qed

For any integer $n>0$, we call $n$ a \textbf{Weierstrass gap number of $P$} if 
$ \ell(nP)=\ell((n-1)P)$, 
 that is, if there is no function $f\in k(\X_g)^\times$ such that $(f)_\infty=nP$.  
%
\begin{thm}
For any point $P$, there are exactly $g$ gap numbers $\alpha_i(P)$ with \[1=\alpha_1(P)<\alpha_2(P)<\cdots<\alpha_g(P)\leq2g-1.\]
\end{thm}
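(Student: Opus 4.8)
The plan is to exploit the chain of Riemann--Roch spaces
\[ \L(0)\subseteq\L(P)\subseteq\L(2P)\subseteq\cdots\subseteq\L((2g-1)P) \]
together with \cref{prop:dimension-increase-by-1}, which tells us that each inclusion raises the dimension by at most $1$. By definition $n$ is a gap number of $P$ exactly when $\ell(nP)=\ell((n-1)P)$, i.e.\ when the $n$-th step fails to raise the dimension. So I would first make the bookkeeping observation that, over the $2g-1$ steps $n=1,\dots,2g-1$, the number of steps that \emph{do} raise the dimension equals the telescoping sum $\ell((2g-1)P)-\ell(0)$, and hence the number of gap numbers lying in $\{1,\dots,2g-1\}$ is $(2g-1)-\bigl(\ell((2g-1)P)-\ell(0)\bigr)$.

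Next I would evaluate the two ends of the telescope. On the left, $\ell(0)=1$ since $\L(0)=k$. On the right, $\deg((2g-1)P)=2g-1>2g-2$, so the corollary to the Riemann--Roch theorem gives $\ell((2g-1)P)=(2g-1)+1-g=g$. Substituting, the total dimension increase is $g-1$, so there are exactly $(2g-1)-(g-1)=g$ gap numbers among $\{1,\dots,2g-1\}$.

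I would then rule out gap numbers outside that range: for $n\geq 2g$ both $(n-1)P$ and $nP$ have degree $>2g-2$, so Riemann--Roch forces $\ell(nP)=\ell((n-1)P)+1$, and $n$ is never a gap. Since gap numbers are positive integers and are automatically distinct, listing the $g$ of them in increasing order yields $\alpha_1(P)<\alpha_2(P)<\cdots<\alpha_g(P)\leq 2g-1$.

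The only step that is not pure formal bookkeeping --- and the one I expect to require the most care --- is showing $\alpha_1(P)=1$, i.e.\ that $1$ is always a gap. Here I would argue by contradiction using \cref{prop:dimension-increase-by-1}: $\ell(P)\leq\ell(0)+1=2$, and if $\ell(P)=2$ there is a nonconstant $f\in\L(P)$ whose only pole is a simple pole at $P$; such an $f$ defines a degree-one morphism $\X_g\to\P^1$, forcing $g=0$. Under the standing assumption $g\geq 1$ this is impossible, so $\ell(P)=\ell(0)$ and $1$ is a gap number; the degenerate case $g=0$, which has no gap numbers, is excluded.
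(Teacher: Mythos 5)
Your proof is correct and follows essentially the same route as the paper, which proves the more general Noether gap theorem by the identical telescoping count: the dimension rises by at most one per step, $\ell(0)=1$ and $\ell((2g-1)P)=g$ by Riemann--Roch, so exactly $g-1$ increases occur over the $2g-1$ steps, and Riemann--Roch equality in degree $>2g-2$ rules out gaps beyond $2g-1$. Your closing argument that $1$ is always a gap (a nonconstant function with a single simple pole would force $g=0$) is a point the paper's proof leaves implicit, so spelling it out is a welcome addition rather than a divergence.
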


This theorem is a special case of the Noether ``gap'' theorem, which we state and prove below. The set of gap numbers, denoted by $G_P$,  forms the \textbf{Weierstrass gap sequence} for $P$.  

\begin{defn} 
If the gap sequence at $P$ is anything other than $\{1,2,\dots,g\}$, then $P$ is called a \textbf{Weierstrass point}.
\end{defn}

Equivalently, $P$ is a Weierstrass point if $\ell(gP)>1$; that is, if there is a function $f$ with $(f)_\infty=mP$ for some $m$ with $1<m\leq g$.
The notion of gaps can be generalized, which we briefly describe.  Let $P_1,P_2,\dots,$ be a sequence of (not necessarily distinct) points on $\X_g$.  Let $D_0=0$ and, for $n\geq 1$, let $D_n=D_{n-1}+P_n$.  One constructs a similar sequence of vector spaces 
\[
\L(D_0)\subseteq\L(D_1)\subseteq\L(D_2)\subseteq\dots\subseteq\L(D_n)\subseteq\cdots \,,\] 
with a corresponding non-decreasing sequence of dimensions 
\[
\ell(D_0)<\ell(D_1)<\ell(D_2)<\dots<\ell(D_n)<\cdots.
\]  
If $\ell(D_n)=\ell(D_{n-1})$, then $n$ is a \textbf{Noether gap number} of the sequence $P_1, P_2, \dots .$

\begin{thm}
For any sequence $P_1,P_2, \dots$, there are exactly $g$ Noether gap numbers $n_i$ with \[1=n_1<n_2<\dots<n_g\leq 2g-1.\]
\end{thm}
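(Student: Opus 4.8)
The plan is to mimic the counting argument behind the Weierstrass gap theorem, replacing the chain $\L(nP)$ by the telescoping chain $\L(D_0)\subseteq\L(D_1)\subseteq\cdots$ attached to the sequence $P_1,P_2,\dots$. The two endpoints I would pin down first: $\ell(D_0)=\ell(0)=1$, and, since $\degrm(D_n)=n$, the Riemann--Roch corollary on divisors of large degree gives $\ell(D_n)=n+1-g$ for every $n>2g-2$, i.e.\ for all $n\ge 2g-1$; in particular $\ell(D_{2g-1})=g$.

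Next I would establish the analogue of \cref{prop:dimension-increase-by-1} for this chain: for every $n\ge1$,
\[ \ell(D_{n-1})\le \ell(D_n)\le \ell(D_{n-1})+1. \]
The argument is verbatim the one used there for a single point --- any two functions in $\L(D_n)\setminus\L(D_{n-1})$ attain the same (maximal allowed) pole order at $P_n$, so a suitable $k$-linear combination cancels their common leading Laurent coefficient at $P_n$ and hence lands in $\L(D_{n-1})$, forcing $\dim\bigl(\L(D_n)/\L(D_{n-1})\bigr)\le1$. Thus each index $n\ge1$ is either a \emph{gap} ($\ell(D_n)=\ell(D_{n-1})$) or a \emph{non-gap} ($\ell(D_n)=\ell(D_{n-1})+1$), with no other possibility.

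The count then falls out. For $n\ge 2g$ both $\ell(D_{n-1})$ and $\ell(D_n)$ lie in the stable range, so the dimension jumps by exactly $1$ and $n$ is never a gap; hence every gap number lies in $\{1,\dots,2g-1\}$. Among these $2g-1$ indices the number of non-gaps equals the total dimension increase $\ell(D_{2g-1})-\ell(D_0)=g-1$, so the number of gaps is exactly $(2g-1)-(g-1)=g$. Finally, to see $n_1=1$: $\L(D_1)=\L(P_1)$, and if $\ell(P_1)=2$ there would be a non-constant $f\in\L(P_1)$ with a single simple pole, i.e.\ a degree-one morphism $\X_g\to\P^1$, forcing $g=0$; since $g\ge1$ this is impossible, so $1$ is a gap and is the least one. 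Listing the $g$ gap numbers in increasing order gives $1=n_1<n_2<\cdots<n_g\le 2g-1$.

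I do not expect a genuine obstacle here: everything reduces to Riemann--Roch together with the elementary pole-order cancellation already invoked for Weierstrass points. The only points that need a little care are the bookkeeping in the final count and the verification that $n=1$ is always a gap (which is exactly where the hypothesis $g\ge1$ enters); if one wants a fully uniform formulation, one can phrase the proof as: the non-gap set $\{n\ge1:\ell(D_n)>\ell(D_{n-1})\}$ has finite complement in $\Z_{\ge1}$, this complement is contained in $[1,2g-1]$, and its size is forced to be $g$ by the jump from $\ell(D_0)=1$ to the eventual value $n+1-g$.
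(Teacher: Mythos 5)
Your proof is correct and follows essentially the same route as the paper's: the step-by-step bound $\ell(D_{n-1})\le\ell(D_n)\le\ell(D_{n-1})+1$, Riemann--Roch to pin down $\ell(D_0)=1$ and $\ell(D_{2g-1})=g$ and to rule out gaps beyond $2g-1$, and then the counting of jumps versus non-jumps. Your extra verification that $n=1$ is always a gap (via the impossibility of a degree-one map to $\P^1$ when $g\ge1$) is a small addition the paper leaves implicit, but it does not change the argument.
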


\proof In analog with  \cref{prop:dimension-increase-by-1}, one can show the dimension goes up by at most 1 in each step; that is, 
\[ 
\ell(D_{n-1})\leq \ell(D_n)\leq \ell(D_{n-1})+1,
\] 
for all $n>0$.  First, note that the Riemann-Roch theorem is an equality for $n>2g-1$, so the dimension goes up by 1 in each step, so there are no gap numbers greater than $2g-1$.

Now, consider the chain $\L(D_0)\subseteq\dots\subseteq\L(D_{2g-1})$.  By Riemann-Roch, $\ell(D_0)=1$ and $\ell(D_{2g-1})=g$, so in this chain of vector spaces, the dimension must increase by 1 exactly $g-1$ times in $2g-1$ steps.  Thus, for $n\in\{1, 2, \dots, 2g-1\}$,  there are $g$ values of $n$ such that $\ell(D_n)=\ell(D_{n-1})$.  These $g$ values are the Noether gap numbers.
\qed

\begin{rem}
The Weierstrass ``gap'' theorem is a special case of the Noether ``gap'' theorem, taking $P_i=P$ for all $i$.
\end{rem}
This result is a direct application of the Riemann-Roch theorem, and the proof can be found in \cite{Farkas-Kra}*{III.5.4}.

Since a Weierstrass gap sequence contains $g$ natural numbers between $1$ and $2g-1$, and since its complement in $\N$ is a semi-group, we can begin to list the possible gap sequences for points on curves of small genus.
%
\begin{itemize}
\item For $g=1$, the only possible gap sequence is $\{1\}$.  Note that this means a curve of genus $g=1$ has no Weierstrass points.
\item For $g=2$, the possible sequences are $\{1,2\}$ and $\{1,3\}$.
\item For $g=3$, the possible sequences are $\{1,2,3\}, \{1,2,4\}, \{1,2,5\}, \{1,3,5\}$.
\end{itemize}

\subsection{Weierstrass points via holomorphic differentials}
Continuing with a point $P$ on a curve $\X_g$, recall that $n$ is a gap number precisely when $\ell(nP)=\ell((n-1)P)$.  By Riemann-Roch, this occurs exactly when 
\[\ell(K-(n-1)P)-\ell(K-nP)=1\] 
for a canonical divisor $K$, which is the divisor associated to some differential $dx$.  Thus there is $f\in k(\X_g)^\times$ such that 
\[ (f)+K-(n-1)P\geq0\]
 and $(f)+K-nP\not\geq0$, which implies that $\ord_P(f\cdot dx)=n-1$.  Since 
 \[ (f)+K\geq(n-1)P\geq0, \quad  \text{ for } n\geq 1,\]
$n$ is a gap number of $P$ exactly when there is a holomorphic differential $f\cdot dx$ such that $\ord_P(f\cdot dx)=n-1$.

For $H^0(\X_g,\Omega^1)$ the space of holomorphic differentials on $\X_g$, by Riemann-Roch, the dimension of $H^0(\X_g,\Omega^1)$ is $g$.  Let $\{\psi_i\}$, for $i=1,\dots, g$, be a basis, chosen in such a way that \[\ord_P(\psi_1)<\ord_P(\psi_2)<\cdots<\ord_P(\psi_g).\]  Let $n_i=\ord_P(\psi_i)+1$.
%
The \textbf{1-gap sequence at $P$} is $\{n_1,n_2,\dots,n_g\}$.

We then have the following equivalent definition of a Weierstrass point.
%
If the 1-gap sequence at $P$ is anything other than $\{1,2,\dots,g\}$, then $P$ is a Weierstrass point.

It follows that $P$ is a Weierstrass point exactly when there is a holomorphic differential $f\cdot dx$ with $\ord_P(f\cdot dx)\geq g$.

\begin{defn}
The \textbf{Weierstrass weight} of a point $P$ is 
\[w(P)=\sum_{i=1}^g (n_i-i).\]
\end{defn}
In particular, $P$ is a Weierstrass point if and only if $w(P)>0$.

\subsection{Bounds for weights of Weierstrass points}
Suppose $\X_g$ is a curve of genus $g\geq 1$, $P\in\X_g$, and consider the 1-gap sequence of $P$ $\{n_1,n_2,\dots,n_g\}$.  We will refer to the non-gap sequence of $P$ as the complement of this set within the set $\{1,2,\dots,2g\}$.  That is, the non-gap sequence is the sequence $\{\alpha_1,\dots,\alpha_g\}$ where %
\[
1<\alpha_1<\dots<\alpha_g=2g.
\]

\begin{prop}\label{prop:sums-of-non-gaps}
For each integer $j$ with $0<j<g$, $\alpha_j+\alpha_{g-j}\geq 2g$.
\end{prop}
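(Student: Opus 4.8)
The plan is to exploit the semigroup property of the non-gaps: if $\alpha$ and $\beta$ are non-gaps at $P$ (orders of poles at $P$ of functions in $k(\X_g)$), then $\alpha+\beta$ is again a non-gap, since the product of the corresponding functions has a pole of exactly that order at $P$. Here the non-gap sequence $\{\alpha_1<\dots<\alpha_g\}$ is the complement in $\{1,2,\dots,2g\}$ of the $1$-gap sequence $\{n_1<\dots<n_g\}$; note $\alpha_g=2g$ and $n_1=1$. So the non-gaps among $\{1,\dots,2g\}$ form (the initial segment of) a numerical semigroup, and I want to show $\alpha_j+\alpha_{g-j}\ge 2g$ for $0<j<g$.

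First I would argue by contradiction: suppose $\alpha_j+\alpha_{g-j}\le 2g-1$ for some $j$ with $0<j<g$. Then for each $i$ with $1\le i\le j$ we have $\alpha_i+\alpha_{g-j}\le \alpha_j+\alpha_{g-j}\le 2g-1 < 2g$, and by the semigroup property each $\alpha_i+\alpha_{g-j}$ is a non-gap lying in $\{1,\dots,2g-1\}\subset\{1,\dots,2g\}$. Moreover these $j$ numbers $\alpha_1+\alpha_{g-j}<\alpha_2+\alpha_{g-j}<\dots<\alpha_j+\alpha_{g-j}$ are strictly increasing and all strictly greater than $\alpha_{g-j}$ (since $\alpha_i\ge\alpha_1\ge 1$; in fact $\alpha_1>1$ as $1$ is always a gap, but $\alpha_i\ge 1$ suffices). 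So they are non-gaps strictly between $\alpha_{g-j}$ and $2g$.

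Now I count: the non-gaps in $\{1,\dots,2g\}$ that are $>\alpha_{g-j}$ are exactly $\alpha_{g-j+1},\alpha_{g-j+2},\dots,\alpha_g$, which is $j$ of them, the largest being $\alpha_g=2g$. But I have just exhibited $j$ distinct non-gaps strictly between $\alpha_{g-j}$ and $2g$ — hence all of them are $\le 2g-1<2g=\alpha_g$, so they must be among $\alpha_{g-j+1},\dots,\alpha_{g-1}$, which is only $j-1$ values. That is $j$ distinct elements inside a set of size $j-1$, a contradiction. Therefore $\alpha_j+\alpha_{g-j}\ge 2g$.

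The only genuinely delicate point is making sure the semigroup/counting bookkeeping is airtight — in particular that the $j$ sums $\alpha_i+\alpha_{g-j}$ really do land in the range $(\alpha_{g-j},\,2g)$ and are pairwise distinct, and that one correctly accounts for $\alpha_g=2g$ being excluded from that open range; everything else is immediate from the fact that non-gaps are closed under addition and from the definition of the non-gap sequence as the complement of the $g$-element gap set inside $\{1,\dots,2g\}$. I expect no computational obstacle beyond this indexing care.
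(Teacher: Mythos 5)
Your proof is correct and follows essentially the same argument as the paper: assume $\alpha_j+\alpha_{g-j}<2g$, use the semigroup property of non-gaps to produce the $j$ distinct non-gaps $\alpha_i+\alpha_{g-j}$ ($1\le i\le j$) lying strictly between $\alpha_{g-j}$ and $\alpha_g=2g$, and derive a contradiction since only $j-1$ non-gaps can lie in that range. Your write-up just spells out the indexing and distinctness bookkeeping that the paper leaves implicit.
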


\proof
Suppose there is some $j$ with $\alpha_j+\alpha_{g-j}<2g.$  The non-gaps are contained in a semigroup under addition, so for every $k\leq j$, since $\alpha_k+\alpha_{g-j}<2g$ as well, $\alpha_k+\alpha_{g-j}$ is also a non-gap which lies between $\alpha_{g-j}$ and $\alpha_g=2g$.  There are $j$ such non-gaps, though there can only be $j-1$ non-gaps between $\alpha_{g-j}$ and $\alpha_g$.  Thus, we have a contradiction.
\qed

\begin{prop}\label{prop:max-w-weight}
For $P\in\X_g$, 
\[ w(P)\leq \frac {g(g-1)} 2,\] 
with equality if and only if $P$ is a branch point on a hyperelliptic curve $\X_g$.
\end{prop}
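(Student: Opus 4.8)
The plan is to bound $w(P)=\sum_{i=1}^{g}(n_i-i)$, where $\{n_1<\cdots<n_g\}$ is the $1$-gap sequence at $P$, by controlling how large the gap numbers can be, and then to read off exactly when the bound is attained. The one structural input is that the set of \emph{non-gaps} at $P$ (the pole orders actually realized by functions in $k(\X_g)$) is closed under addition: if $f$ has pole divisor $aP$ and $h$ has pole divisor $bP$, then $fh$ has pole divisor $(a+b)P$. I will use this in the following form: if $\gamma$ is a non-gap, $n$ is a gap, and $\gamma<n$, then $n-\gamma$ is a gap — otherwise $n-\gamma$ and $\gamma$ would both be non-gaps and their sum $n$ would be a non-gap, a contradiction.

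First I would prove the key estimate $n_i\le 2i-1$ for $i=1,\dots,g$. Fix $i$ and consider $\{1,\dots,n_i-1\}$: exactly $i-1$ of these integers are gaps (namely $n_1,\dots,n_{i-1}$), so $n_i-1-(i-1)=n_i-i$ of them are non-gaps. By the remark above, $\gamma\mapsto n_i-\gamma$ sends each such non-gap to a gap lying strictly between $0$ and $n_i$, i.e. into $\{n_1,\dots,n_{i-1}\}$; since this map is injective, $n_i-i\le i-1$, that is $n_i\le 2i-1$. Summing,
\[
w(P)=\sum_{i=1}^{g}(n_i-i)\le\sum_{i=1}^{g}\bigl((2i-1)-i\bigr)=\sum_{i=1}^{g}(i-1)=\frac{g(g-1)}{2}.
\]
(Alternatively, the same bound follows by summing the inequalities $\alpha_j+\alpha_{g-j}\ge 2g$ of \cref{prop:sums-of-non-gaps} over $j=1,\dots,g-1$, since the gap and non-gap sequences partition $\{1,\dots,2g\}$; but the estimate above also controls the equality case.)

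Equality in the displayed bound forces $n_i=2i-1$ for every $i$, i.e. the gap sequence at $P$ is $\{1,3,5,\dots,2g-1\}$; in particular $2$ is a non-gap. Then there is $x\in k(\X_g)$ with pole divisor exactly $2P$, so $x$ defines a degree-$2$ morphism $\X_g\to\P^1$ whose fibre over $\infty$ is $2P$; hence (for $g\ge 2$) $\X_g$ is hyperelliptic and $P$ is a branch point of the hyperelliptic projection. Conversely, if $\X_g$ is hyperelliptic with projection $\pi$ and $P$ is a branch point, then $\pi$ (suitably normalized) has pole divisor $2P$, so $1,x,\dots,x^{g-1}$ realize the pole orders $0,2,\dots,2g-2$ at $P$, while $\ell(2gP)=g+1$ by Riemann--Roch shows $2g$ is also a non-gap; counting, the non-gaps in $\{1,\dots,2g\}$ are exactly $\{2,4,\dots,2g\}$, the gap sequence is $\{1,3,\dots,2g-1\}$, and $w(P)=\sum_{i=1}^{g}\bigl((2i-1)-i\bigr)=\frac{g(g-1)}{2}$. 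When $g=1$ both sides vanish for every $P$.

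The routine part is the inequality; the crux is the lemma $n_i\le 2i-1$, where closure of the non-gap set under addition does the real work, together with the standard translation "$2$ is a non-gap at $P$ $\Longleftrightarrow$ $\X_g$ is hyperelliptic with $P$ a branch point." The one point needing care in a full writeup is the converse direction of the equality case, where one must verify via Riemann--Roch that $2g$ (and no odd integer $\le 2g-1$) occurs as a pole order at $P$.
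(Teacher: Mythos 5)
Your proof is correct, but it runs along a slightly different track than the paper's. The paper rewrites $w(P)=\sum_{i=g+1}^{2g-1} i-\sum_{i=1}^{g-1}\alpha_i$ in terms of the non-gaps and then invokes the pairing inequality $\alpha_j+\alpha_{g-j}\ge 2g$ of \cref{prop:sums-of-non-gaps} to bound the aggregate $\sum_{i=1}^{g-1}\alpha_i$ from below, whereas you prove the pointwise estimate $n_i\le 2i-1$ for each gap directly from the semigroup property of the non-gaps and then sum; you even note the paper's route as an alternative. Both arguments ultimately rest on the same fact (closure of pole orders under addition, which is also what proves \cref{prop:sums-of-non-gaps}), so the inequality parts are essentially equivalent in content. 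Where your write-up genuinely adds something is the equality case: the paper only observes that the extremal gap sequence $\{1,3,\dots,2g-1\}$ "is" that of a hyperelliptic branch point, leaving both implications implicit, while you show that equality forces $n_i=2i-1$, deduce that $2$ is a non-gap and hence that the curve is hyperelliptic with $P$ a branch point, and verify the converse by exhibiting the non-gaps $2,4,\dots,2g$ via powers of the hyperelliptic function and a Riemann--Roch count. That extra care (plus the $g=1$ remark) makes your equality characterization complete in both directions, at the cost of a slightly longer argument than the paper's.
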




\proof
The Weierstrass weight of $P$ is
\begin{align*}
w(P) & = \sum_{i=1}^g n_i- \sum_{i=1}^g i = \sum_{i=1}^{2g} i - \sum_{i=1}^g \alpha_i - \sum_{i=1}^g i = \sum_{i=g+1}^{2g-1} i - \sum_{i=1}^{g-1} \alpha_i.
\end{align*}
The first sum is $3g(g-1)/2$ and the second sum, via  \cref{prop:sums-of-non-gaps} is at least $(g-1)g$.  Hence, $w(P)\leq g(g-1)/2$.
To prove the second part, we note that the weight is maximized when the sum of the non-gaps is minimized.  That occurs when $\alpha_1=2$, which implies the non-gap sequence is $\{2,4,\dots,2g\}$, and so the 1-gap sequence is $\{1,3,5,\dots,2g-1\}$, which is the 1-gap sequence of a branch point on a hyperelliptic curve.
\qed


\begin{cor}\label{cor:number-w-pts}
For a curve of genus $g\geq2$, there are between $2g+2$ and $g^3-g$ Weierstrass points.  The lower bound of $2g+2$ occurs only in the hyperelliptic case.
\end{cor}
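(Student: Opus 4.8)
The plan is to derive both bounds from two facts already in hand: the total Weierstrass weight on a genus-$g$ curve equals $g^3-g$ (the corollary to \cref{thm:total-inf-weight}), and every Weierstrass point $P$ satisfies $1 \le w(P) \le \tfrac{g(g-1)}{2}$. The lower inequality here is immediate, since $w(P)$ is a non-negative integer which is positive exactly when $P$ is a Weierstrass point; the upper inequality is \cref{prop:max-w-weight}. Write $N$ for the number of Weierstrass points on $\X_g$, which is finite because each contributes positively to the finite total weight.

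For the upper bound, since each of the $N$ Weierstrass points contributes at least $1$ to $\sum_P w(P) = g^3-g$, we get $N \le g^3-g$. For the lower bound, each Weierstrass point contributes at most $\tfrac{g(g-1)}{2}$, so $N\cdot\tfrac{g(g-1)}{2} \ge \sum_P w(P) = g^3-g = (g+1)g(g-1)$, and dividing by $\tfrac{g(g-1)}{2}$, which is positive for $g\ge 2$, yields $N \ge 2g+2$. In particular this also re-proves that Weierstrass points exist when $g\ge 2$.

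Next I would treat the equality case. If $N = 2g+2$, then $N\cdot\tfrac{g(g-1)}{2} = g^3-g = \sum_P w(P)$ with each summand at most $\tfrac{g(g-1)}{2}$, which forces $w(P) = \tfrac{g(g-1)}{2}$ for every Weierstrass point $P$. By the equality clause of \cref{prop:max-w-weight}, each such $P$ is then a branch point of a hyperelliptic map on $\X_g$, so $\X_g$ is hyperelliptic. Conversely, if $\X_g$ is hyperelliptic, the degree-two map to $\P^1$ has exactly $2g+2$ branch points (by the Hurwitz genus formula, \cref{th-HGF}), and each is a Weierstrass point of weight $\tfrac{g(g-1)}{2}$ by \cref{prop:max-w-weight}; since $(2g+2)\cdot\tfrac{g(g-1)}{2} = g^3-g$ already exhausts the total Weierstrass weight, no further Weierstrass points exist, so $N = 2g+2$.

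I expect the only mildly delicate point to be the converse half of the equality statement — confirming that on a hyperelliptic curve the $2g+2$ branch points are the only Weierstrass points — but this is forced by the weight accounting rather than by a separate argument, so the proof is in essence elementary bookkeeping with the two bounds together with \cref{prop:max-w-weight}.
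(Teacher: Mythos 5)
Your proof is correct and follows essentially the same route as the paper: both arguments are pure weight bookkeeping, combining the total Weierstrass weight $g^3-g$ with the bounds $1\le w(P)\le \tfrac{g(g-1)}{2}$ from \cref{prop:max-w-weight} to get $2g+2\le N\le g^3-g$. In fact you are slightly more careful than the paper, which only asserts the hyperelliptic equality case implicitly via the equality clause of \cref{prop:max-w-weight}, whereas you spell out both directions of that case explicitly.
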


\proof The total weight of the Weierstrass points is $g^3-g$.  In  \cref{prop:max-w-weight}, we see that the maximum weight of a point is $g(g-1)/2$, which occurs in the hyperelliptic case.  Thus, there must be at least $\dfrac{g^3-g}{g(g-1)/2} = 2g+2$ Weierstrass points.  On the other hand, the minimum weight of a point is 1, so there are at most $g^3-g$ Weierstrass points.
\qed

\subsection{Higher-order Weierstrass points via holomorphic $q$-differentials}
In the above, we described Weierstrass points by considering the vector spaces $\mathcal{L}(K-nP)$ for $n\geq0$.  Now, we let $q\in\N$ and proceed analogously with the vector spaces $\mathcal{L}(qK-nP)$ to describe $q$-Weierstrass points.
If 
\[\ell(qK-(n-1)P)-\ell(qK-nP)=1,\] 
then there is some $q$-fold differential $dx^q$ and some $f\in k(\X_g)^\times$ such that $f\cdot dx^q$ is a holomorphic $q$-fold differential with $\ord_P(f\cdot dx^q)=n-1$.
Let $H^0(\X_g,(\Omega^1)^q)$ denote the space of holomorphic $q$-fold differentials on $\X_g$, and let $d_q$ denote the dimension of this space.  By the Riemann-Roch, it follows that
\[
d_q=\begin{cases}
g & \text{if $q=1$,} \\ 
(g-1)(2q-1) & \text{if $q>1$.}
\end{cases}
\]  
Let $\{\psi_i\}$, for $i=1,\dots,d_q$, be a basis of $H^0(\X_g,(\Omega^1)^q)$, chosen in such a way that 
\[\ord_P(\psi_1)<\ord_P(\psi_2)<\cdots<\ord_P(\psi_{d_q}).\]  
Let $n_i=\ord_P(\psi_i)+1$.
The \textbf{$q$-gap sequence at $P$} is $\{n_1,n_2,\dots,n_{d_q}\}$.
If the $q$-gap sequence is anything other than $\{1,2,\dots,d_q\}$, then $P$ is a $q$-Weierstrass point.

Thus, $P$ is a $q$-Weierstrass point exactly when there is a holomorphic $q$-fold differential $f\cdot dx^q$ such that $\ord_P(f\cdot dx^q)\geq d_q$.
When $q=1$, we have a Weierstrass point.  For $q>1$, a $q$-Weierstrass point is also called a \textbf{higher-order Weierstrass point}.  
The \textbf{$q$-Weierstrass weight} of a point $P$ is \[w^{(q)}(P)=\sum_{i=1}^{d_q}(n_i-i).\]
In particular, $P$ is a $q$-Weierstrass point if and only if $w^{(q)}(P)>0$.
For each $q\geq 1$, there are a finite number of $q$-Weierstrass points, which follows from  ~\cref{cor:total-q-weight}.



\section{Automorphisms}\label{sect-4}

Let $\X$ be an irreducible and non-singular algebraic curve defined over a field $k$. We denote its function field by $F:=k(\X)$.  The automorphism group of $\X$ is the group $G:=\Aut(F/k)$ (i.e., all field automorphisms of $F$ fixing $k$).  It has been the focus of research activity for over two hundred years and focused on one the following problems.

\begin{prob}
For a given $g\geq 2$ and  an algebraically closed field $k$,    
determine:
\begin{enumerate}
\item   a bound for $\Aut (\X_g)$ 

\item the list all groups which occur as full automorphism groups of curves $\X_g$ of genus $g$ defined over $k$. 

\item for every group $G$ from the list above, write down an equation for $\X_g$ such that $G \iso \Aut (\X_g)$. 
\end{enumerate}
\end{prob} 

For further details on automorphisms we will refer to \cite{b-sh-w}.  Throughout this section $C_n$ denotes the cyclic group of order $n$ and $D_n$ the dihedral group of order $2n$.

\subsection{The action of $k$-automorphisms on places}
$G$ acts on the places of $F/k$.  Since there is a one-to-one correspondence between places of $F/k$ and points of $\X$, this action naturally extends to the points of $\X$. For $\alpha \in G$ and $P \in \X$, we denote its image under $\alpha$ by $P^\alpha$.
In a natural way we extend this $G$-action to $\Div_k (\X)$. Let $D \in \Div_k (\X)$, say $D= \sum n_P \cdot P$. Then, the image of the divisor $D$ under the action of $\alpha$ is given by
\[ D^\alpha = \sum n_p \cdot P^\alpha.\]
\begin{lem}\label{lem-1} 
$G$ acts on the set $\W$ of Weierstrass points.
\end{lem}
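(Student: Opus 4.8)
The plan is to show that the set $\W$ of Weierstrass points is stable under the $G$-action by using the fact, established in the preceding subsections, that the Weierstrass weight $w(P)$ is an intrinsic invariant of the point $P$ --- it is defined purely in terms of the dimensions $\ell(nP)$, which depend only on the linear-equivalence geometry of the curve. Since every $\alpha \in G = \Aut(F/k)$ is a field automorphism of $k(\X_g)$ fixing $k$, it induces an isomorphism of $k$-vector spaces $\L(D) \to \L(D^\alpha)$ for every divisor $D$ (sending $f \mapsto f \circ \alpha^{-1}$, or equivalently $f \mapsto f^\alpha$), and in particular $\ell(D) = \ell(D^\alpha)$.

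First I would record the key compatibility: for $P \in \X_g$ and $n \geq 0$ one has $(nP)^\alpha = n P^\alpha$, so the chain of inclusions $\L(0) \subseteq \L(P) \subseteq \dots \subseteq \L((2g-1)P)$ is carried by $\alpha$ to the corresponding chain for $P^\alpha$, isomorphically at each stage. Hence $n$ is a Weierstrass gap number of $P$ (i.e. $\ell(nP) = \ell((n-1)P)$) if and only if $n$ is a Weierstrass gap number of $P^\alpha$. Therefore $G_P = G_{P^\alpha}$ as sets, and consequently the Weierstrass weight satisfies $w(P) = w(P^\alpha)$. Second, recalling the definition, $P$ is a Weierstrass point precisely when $G_P \neq \{1, 2, \dots, g\}$, equivalently when $w(P) > 0$. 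Combining these two observations: if $P \in \W$ then $w(P) > 0$, so $w(P^\alpha) > 0$, so $P^\alpha \in \W$. Thus $\W^\alpha \subseteq \W$ for every $\alpha \in G$; applying the same to $\alpha^{-1}$ gives $\W^\alpha = \W$, so $G$ permutes $\W$, which is the assertion (and $\W$ is finite by the corollary on total Weierstrass weight, so this is a genuine action on a finite set).

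I do not anticipate a serious obstacle here; the only point requiring a small amount of care is the verification that $f \mapsto f^\alpha$ genuinely sends $\L(nP)$ to $\L(nP^\alpha)$, which amounts to checking that $\alpha$ preserves valuations up to relabeling places --- i.e. $v_{P^\alpha}(f^\alpha) = v_P(f)$ --- and this is immediate from the one-to-one correspondence between places of $F/k$ and points of $\X$ together with the fact that $\alpha$ is a $k$-algebra automorphism carrying the valuation ring $\O_P$ onto $\O_{P^\alpha}$. One could equally phrase the whole argument via holomorphic differentials, since $\alpha$ acts on $H^0(\X_g, \Omega^1)$ and preserves orders of vanishing after relabeling, giving $w(P) = w(P^\alpha)$ directly; either route is short. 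The substantive content is simply that Weierstrass weight is an isomorphism invariant, and an automorphism is an isomorphism of $\X_g$ with itself.
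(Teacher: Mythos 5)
Your proof is correct. The paper disposes of this lemma in one sentence, appealing to the Wronskian formulation of Section 3: the Weierstrass points are the zeroes of the Wronskian of the canonical series, the Wronskian does not depend on the choice of local coordinate, and hence the set $\W$ is invariant under any $\sigma \in \Aut(\X_g)$. You instead work through the gap-number characterization: an automorphism $\alpha$ carries $\L(nP)$ isomorphically onto $\L(nP^\alpha)$ because $\alpha$ maps the valuation ring $\O_P$ onto $\O_{P^\alpha}$, so the gap sequence and hence the Weierstrass weight are preserved, giving $\W^\alpha \subseteq \W$ and, by applying $\alpha^{-1}$, equality. The two arguments rest on the same underlying fact --- that being a Weierstrass point is an intrinsic, isomorphism-invariant property --- but your route is the more self-contained one: it needs only the functoriality of Riemann--Roch spaces and the correspondence between points and places, rather than the Wronskian machinery (meromorphic $n$-fold differentials, coordinate changes) built up earlier in the section, and it makes explicit the preservation of weights, which the paper only uses later (in the finiteness-of-$\Aut$ proposition) with the same one-line justification. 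The price is length; the paper's version is shorter precisely because Section 3 has already done the work of showing the Wronskian is well defined. Either way the lemma stands, and your closing remark that one could argue via the action on $H^0(\X_g,\Omega^1)$ is the differential-theoretic version of the same invariance.
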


\proof
The set $\W$ of  Weierstrass points do not depend on the choice of the local coordinate and so it is invariant under any $\sigma \in \Aut(\X_g)$.
\qed

Hence, in order to determine the automorphism group we can just study the action of the group on the set of Weierstrass point of the curve.
Then we have the following.

\begin{prop} 
Let $\a \in \Aut(\X)$ be a non-identity element. Then $\a$ has at most $2g+2$ fixed places.
\end{prop}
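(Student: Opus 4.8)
\emph{Proof strategy.}
The plan is to manufacture, from one fixed place of $\alpha$, a single nonzero function $h\in F=k(\X)$ whose pole divisor is supported on that place with bounded order and which vanishes at every \emph{other} fixed place of $\alpha$; comparing $\degrm(h)_0=\degrm(h)_\infty$ (a principal divisor has degree $0$) then bounds the number of fixed places. First I would recall that $\alpha$ acts on $F$, writing $f^\alpha$ for the image of $f$, so that $\ord_P(f^\alpha)=\ord_{P^{\alpha^{-1}}}(f)$ and $f^\alpha(P)=f(P^{\alpha^{-1}})$ for every place $P$. If $\alpha$ fixes no place there is nothing to prove; otherwise fix a fixed place $P_1$. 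Then $(2g+1)P_1$ is $\alpha$-invariant, so $\L\big((2g+1)P_1\big)$ is an $\alpha$-stable $k$-subspace of $F$, and since $2g+1>2g-2$, the corollary to Riemann--Roch gives $\ell\big((2g+1)P_1\big)=g+2$.

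The crucial step will be the estimate $\dim_k\L\big((2g+1)P_1\big)^{\langle\alpha\rangle}\le g+1$. Put $F_0:=F^{\langle\alpha\rangle}$, a proper subfield of $F$ since $\alpha\ne\id$. If $F_0=k$ the estimate is trivial. Otherwise $F_0$ is an algebraic function field of one variable over $k$ with $n:=[F:F_0]<\infty$ (so $\alpha$ has finite order $n\ge 2$); let $\mathfrak{q}$ be the place of $F_0$ below $P_1$. Since $k$ is algebraically closed, $f(P_1\mid\mathfrak{q})=1$, and since $\alpha$ fixes $P_1$ the inertia (= decomposition, the residue extension being trivial) group at $P_1$ is all of $\langle\alpha\rangle$, so $P_1$ is the unique place of $F$ above $\mathfrak{q}$ and, by \cref{th-fundeq}, $e(P_1\mid\mathfrak{q})=n$. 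It follows that a function $\phi\in F_0$ lies in $\L\big((2g+1)P_1\big)$ exactly when its only pole among the places of $F_0$ is at $\mathfrak{q}$, of order at most $\lfloor(2g+1)/n\rfloor$; hence $\dim_k\L\big((2g+1)P_1\big)^{\langle\alpha\rangle}\le\lfloor(2g+1)/n\rfloor+1\le\lfloor(2g+1)/2\rfloor+1=g+1$, using the inequality $\ell(D)\le\degrm(D)+1$ in $F_0$.

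With this in hand the argument closes quickly. Since $g+1<g+2$, some $f\in\L\big((2g+1)P_1\big)$ satisfies $f^\alpha\ne f$, and I would set $h:=f-f^\alpha$. By $\alpha$-stability $h\in\L\big((2g+1)P_1\big)$ and $h\ne0$, so $(h)_\infty\le(2g+1)P_1$ and $\degrm(h)_0=\degrm(h)_\infty\le2g+1$. For any fixed place $P\ne P_1$ of $\alpha$, $f$ is regular at $P$ and $h(P)=f(P)-f(P^{\alpha^{-1}})=f(P)-f(P)=0$, so $P$ occurs in the zero divisor $(h)_0$. Therefore $\#\fix(\alpha)-1\le\degrm(h)_0\le2g+1$, i.e.\ $\alpha$ has at most $2g+2$ fixed places.

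The main obstacle is the uniform dimension bound $\dim_k\L\big((2g+1)P_1\big)^{\langle\alpha\rangle}\le g+1$: this is exactly what pins down the sharp constant $2g+2$ (for the hyperelliptic involution every inequality above becomes an equality). An alternative to that computation is to invoke the Hurwitz genus formula \cref{th-HGF}: reducing first to the case where $\alpha$ has prime order $p$ by means of $\fix(\alpha)\subseteq\fix(\alpha^{n/p})$, the fixed places of $\alpha$ are exactly the places of $F$ ramified over $F^{\langle\alpha\rangle}$, each contributing at least $p-1$ to $\degrm\diff$ by the Dedekind Different Theorem; then $2g-2\ge -2p+r(p-1)$ forces $r\le 2+\frac{2g}{p-1}\le 2g+2$.
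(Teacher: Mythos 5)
Your proof is correct, but it goes by a genuinely different route than the paper's. The paper argues away from the fixed locus: it picks $g+1$ distinct places forming a divisor $D$ with $D$ and $D^{\alpha}$ disjoint, invokes the existence (citing \cite{hkt}*{Thm.~6.82}) of a function $z$ whose pole divisor is exactly $D$, and then observes that $w=z-\alpha(z)$ is nonzero with exactly $2g+2$ poles, hence $2g+2$ zeroes, every fixed place being among them. You instead anchor the construction at a fixed place $P_1$: you compare $\ell\bigl((2g+1)P_1\bigr)=g+2$ with the dimension of the $\alpha$-invariant subspace, which you identify with a Riemann--Roch space $\L\bigl(\lfloor(2g+1)/n\rfloor\,\mathfrak q\bigr)$ of the fixed field and bound by $g+1$ using $e(P_1\mid\mathfrak q)=[F:F^{\langle\alpha\rangle}]\ge 2$; the resulting $h=f-f^{\alpha}$ then has pole degree at most $2g+1$, concentrated at $P_1$, and vanishes at every other fixed place. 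The trade-off: the paper's argument is shorter but leans on the nontrivial existence statement for a function with prescribed pole divisor of degree $g+1$ in general position, whereas yours needs only Riemann--Roch (the exact value in large degree plus the crude bound $\ell(D)\le\degrm(D)+1$), Artin's theorem for the cyclic extension $F/F^{\langle\alpha\rangle}$, transitivity of the Galois action on the fiber, and the fundamental equality of \cref{th-fundeq}; it also cleanly covers automorphisms not yet known to be of finite order through the degenerate case $F^{\langle\alpha\rangle}=k$, which matters since the paper uses this proposition on the way to proving finiteness. Your concluding Riemann--Hurwitz alternative via \cref{th-HGF} is also sound, but note it presupposes finite order for the reduction to prime order, so it is the Riemann--Roch argument, not that remark, that carries the general statement.
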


\proof
Let $\a$ be a non-trivial element of  $\Aut(F/k)$.  Since $\a$ is not the identity, there is some place $\p \in \PP_F$ not fixed by $\a$. Here, $\PP_F$ is the set of all places of $F/k$. Take $g+1$ distinct places $\p_1, \dots , \p_{g+1} $ in $\PP_F$ such that $D = \p_1 + \cdots +\p_{g+1}$ and $D^\a$ share no place. By \cite{hkt}*{Thm. 6.82} there is $z\in F\setminus k$ such that $\dv (z)_\infty = D$.  Then consider $w= z - \a (z)$.  Since $z$ and $\alpha (z)$ have different poles then $w\neq 0$.  Hence, $w$ has exactly $2g+2$ poles. Then $w$ has exactly $2g+2$ zeroes. But every fixed place of $\a$ is a zero of $w$.  Hence $\a$ has at most $2g+2$ fixed places.
\qed

Let $\W$ be the set of Weierstrass points.  From \cref{cor:number-w-pts} we know that $\W$ is finite. Since for every $\a  \in \Aut(\X)$, from \cref{lem-1} we have $\a(\W)=\W$.  Then we have the following.

\begin{thm}  
Let $\X$ be a genus $g\geq 2$ irreducible, non-hyperelliptic curve defined over $k$ such that $\ch k = p$ and $\a \in \Aut(\X)$. If $p=0$ or $p > 2g-2$ then $\a$ has finite order.
\end{thm}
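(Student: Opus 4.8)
The plan is to realise $G := \Aut(\X)$ as a subgroup of a finite symmetric group; once that is done, the finiteness of the order of $\alpha$ (indeed of all of $\Aut(\X)$) is immediate. First I would reduce to the case $k$ algebraically closed: passing from $k$ to $\bar k$ changes neither the genus of $\X$, nor the order of $\alpha$, nor the (geometric) property of being non-hyperelliptic, and it can only enlarge the automorphism group, so it suffices to treat that case. Then I would bring in the Weierstrass points: let $\W$ be the set of Weierstrass points of $\X$. By \cref{lem-1} the group $G$ acts on $\W$, so there is a homomorphism $\phi\colon G \to \Sym(\W)$, and the whole argument comes down to showing $\phi$ is injective with finite target.

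For the target: under the hypothesis $\ch k = 0$ or $\ch k = p > 2g-2$ the canonical system of $\X$ is classical, so the Weierstrass-point theory of \cref{sect-3} — in particular the total-weight formula and hence \cref{cor:number-w-pts} — applies verbatim. This gives that $\W$ is finite with $|\W| \le g^3-g$, and, crucially, since $\X$ is non-hyperelliptic, that $|\W| \ge 2g+3$ (the value $2g+2$ being attained only by hyperelliptic curves, and $|\W|$ being an integer). For injectivity: if $\alpha \in \ker\phi$ were a non-identity automorphism, then $\alpha$ would fix each of the at least $2g+3$ places lying in $\W$; this contradicts the Proposition above, which bounds the number of fixed places of a non-identity element of $\Aut(\X)$ by $2g+2$. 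Hence $\ker\phi = \{\id\}$, so $\phi$ embeds $G$ into the finite group $\Sym(\W)$, and therefore $G$ is finite; in particular $\alpha$ has finite order (or directly: $\phi(\alpha)$ has some finite order $m$ in $\Sym(\W)$, whence $\alpha^m \in \ker\phi = \{\id\}$).

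The step I expect to carry the real weight is the appeal to \cref{sect-3}, specifically the finiteness of $\W$ together with the strict inequality $|\W| > 2g+2$ in the non-hyperelliptic case. Both rest on classicality of the canonical linear system, and this is exactly the role played by the hypothesis $p = 0$ or $p > 2g-2$: in small positive characteristic a non-hyperelliptic curve can be non-classical, in which case the Weierstrass-weight identity $g^3-g$ — and even the finiteness of $\W$ — may fail, so there is no longer any control on $\ker\phi$ and the embedding argument collapses. Everything else in the proof — the $G$-action on $\W$ supplied by \cref{lem-1}, and the fixed-place bound from the preceding Proposition — is already available and is insensitive to the characteristic.
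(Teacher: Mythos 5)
Your proposal is correct and follows essentially the same route as the paper: the theorem is stated there precisely as the combination of \cref{lem-1} (the action on $\W$), \cref{cor:number-w-pts} (finiteness of $\W$ and $|\W|>2g+2$ in the non-hyperelliptic case), and the preceding proposition bounding the fixed places of a non-identity automorphism by $2g+2$, which together force a power of $\a$ to be the identity. Indeed, your embedding of $\Aut(\X)$ into $\Sym(\W)$ with trivial kernel is exactly the argument the paper writes out later for the characteristic-zero finiteness proposition, and your remark that the hypothesis $p=0$ or $p>2g-2$ is what keeps the canonical system classical is the right justification for using the Weierstrass-point counts in positive characteristic.
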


Hence,  we have:

\begin{lem}   
If $p=0$ and $g\geq 2$ then every automorphism is finite.
\end{lem}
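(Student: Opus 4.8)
The plan is to obtain this as an immediate consequence of the theorem just proved, applied to the non-hyperelliptic case, and then to handle the hyperelliptic case separately so that the statement covers all curves of genus $g \geq 2$. First I would note that if $\X$ is non-hyperelliptic, then setting $p = 0$ we have $p > 2g-2$ vacuously (or rather, the hypothesis ``$p = 0$ or $p > 2g-2$'' is satisfied), so the preceding theorem applies directly and every $\a \in \Aut(\X)$ has finite order. So the only real content is the hyperelliptic case, where the argument via non-hyperelliptic Weierstrass points does not literally apply.

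For the hyperelliptic case I would argue as follows. Let $\X$ be hyperelliptic of genus $g \geq 2$ with hyperelliptic involution $\iota$; recall $\iota$ is central in $\Aut(\X)$ and $\X/\<\iota\> \iso \P^1$. Given $\a \in \Aut(\X)$, it descends to an automorphism $\bar\a$ of $\P^1 = \X/\<\iota\>$ which permutes the $2g+2$ branch points of the hyperelliptic projection. Since an automorphism of $\P^1$ fixing three points is the identity and $2g+2 \geq 6 > 3$, the map $\Aut(\X) \to \Aut(\P^1)$ has image a finite subgroup of $\mathrm{PGL}_2(k)$ (it embeds into the symmetric group on the branch points), and its kernel is contained in $\<\iota\>$, hence has order at most $2$. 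Therefore $\Aut(\X)$ is finite, and in particular every element has finite order. Alternatively, and perhaps more in the spirit of the section, one can invoke the fact — recorded earlier — that the Weierstrass points of a hyperelliptic curve are exactly the $2g+2$ branch points (the points fixed by $\iota$), that $G = \Aut(\X)$ acts on this finite set $\W$ by Lemma~\ref{lem-1}, and that the kernel of $G \to \Sym(\W)$ consists of automorphisms fixing every Weierstrass point, which by the Proposition on fixed places (at most $2g+2$ of them, with equality characterized) forces such an automorphism to be $\id$ or $\iota$; either way $G$ is finite.

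Concretely I would write: in characteristic $0$, every curve $\X$ of genus $g \geq 2$ is either non-hyperelliptic — in which case the previous theorem (with $p = 0$) gives that every automorphism has finite order — or hyperelliptic, in which case $\Aut(\X)$ acts faithfully-up-to-$\<\iota\>$ on the finite set of $2g+2$ Weierstrass points, so again $\Aut(\X)$, and hence each of its elements, is finite. Since these two cases exhaust all curves of genus $g \geq 2$, the claim follows.

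The main obstacle is purely expository rather than mathematical: the theorem as stated applies only to the non-hyperelliptic case, so one must not simply ``cite the theorem'' but also supply the short hyperelliptic argument. The mathematical input needed there — centrality of the hyperelliptic involution, the identification of Weierstrass points with branch points, and rigidity of $\mathrm{PGL}_2$ acting on $\geq 3$ points — is all either standard or already available in the excerpt, so no genuinely hard step is involved; care is only needed to ensure the two cases together are genuinely exhaustive and that the kernel-of-order-$\leq 2$ bound is justified cleanly.
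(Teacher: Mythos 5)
Your proof is correct, and it is actually more complete than what the paper itself offers: the paper states this lemma with only a ``Hence, we have'' following the preceding theorem, even though that theorem is formulated for \emph{non-hyperelliptic} curves, so strictly speaking the paper leaves exactly the gap you identified. Your treatment of the hyperelliptic case --- descending $\a$ through the central hyperelliptic involution to $\P^1$, observing that the induced map permutes the $2g+2\ge 6$ branch points and that an element of $\mathrm{PGL}_2$ fixing three points is the identity, and bounding the kernel by $\<\iota\>$ --- is precisely the argument the paper deploys a few lines later in the proof that $\Aut(\X_g)$ is finite (the proposition whose proof splits into the hyperelliptic and non-hyperelliptic cases), so you are supplying the missing case with tools already present in the text rather than importing anything new. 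Note that your argument in fact proves the stronger statement that $\Aut(\X)$ is a finite group in the hyperelliptic case, whereas the cited theorem only gives finiteness of the order of each individual automorphism in the non-hyperelliptic case; both suffice for the lemma as stated. One small point of care: the descent of $\a$ to $\P^1$ uses normality (here centrality, or uniqueness) of $\<\iota\>$ in $\Aut(\X)$, which you correctly flag; with that in place the two cases are exhaustive and the proof is sound.
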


In the case of $p=0$,  Hurwitz \cite{Hu} showed $|\a| \leq 10 (g-1)$. In 1895, Wiman improved this bound to be $ |\a| \le 2(2 g+1)$ and showed this is best possible. If $|\a|$ is a prime then $|\a| \leq 2g +1$. Homma \cite{Ho}   shows that this bound is achieved for a prime $q \neq p$  if and only if the curve is birationally
equivalent to
\[ y^{m-s} (y-1)^s = x^q, \quad for \quad 1 \leq s < m \leq g +1. \]
If $p > 0$, we have the following; see \cite{hkt}*{Thm.~11.34}.

\begin{thm}  
Let $\X$ be a genus $g\geq 2$, irreducible curve defined over $k$, with $\ch k =p >0$ and $\alpha \in \Aut(\X)$ which fixes a place $\p \in \PP_F$.  Then the order of $\alpha $ is bounded by
\[ | \alpha | \leq 2 p (g+1) (2g+1)^2. \]
\end{thm}

\subsection{Finiteness of $\Aut(\X)$}

The main difference for $g=0, 1$ and $g\geq 2$ is that for $g\geq 2$ the automorphism group is a finite group.  This result was proved first by Schmid (1938).

\begin{thm}[\cite{Schmid}]
Let $\X$ be an irreducible curve of genus $g\geq 2$, defined over a field $k$, $\ch k = p \geq 0$. Then $\Aut(\X)$ is finite.
\end{thm}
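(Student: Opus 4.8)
The plan is to show that $\Aut(\X)$ embeds, up to a subgroup of order at most $2$, into the finite symmetric group on the set of Weierstrass points. Concretely, I would first invoke \cref{lem-1}, which says that $\Aut(\X)$ permutes the set $\W$ of Weierstrass points, together with \cref{cor:number-w-pts}, which says that $\W$ is finite, non-empty, and of cardinality at least $2g+2$, with equality exactly in the hyperelliptic case. This yields a permutation representation
\[
\rho\colon \Aut(\X)\longrightarrow \Sym(\W),\qquad \alpha\longmapsto \alpha|_{\W},
\]
and since $\Sym(\W)$ is finite the whole problem is reduced to bounding $N:=\ker\rho$, the subgroup of automorphisms fixing every Weierstrass point. (An alternative, more scheme-theoretic route would embed $\X$ by its tricanonical system $|3K|$, which is intrinsic and hence realises $\Aut(\X)$ inside $\pgl_N(k)$ preserving a fixed projective curve, and then show that stabiliser is finite; but the Weierstrass-point argument stays within the tools already developed above.)

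If $\X$ is not hyperelliptic then $g\geq 3$ and \cref{cor:number-w-pts} gives $\#\W\geq 2g+3>2g+2$. Any $\alpha\in N$ fixes all of $\W$, hence fixes more than $2g+2$ places, so by the proposition that a non-identity automorphism of $\X$ has at most $2g+2$ fixed places we get $\alpha=\id$. Thus $N$ is trivial, $\rho$ is injective, and $\Aut(\X)$ is finite.

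If $\X$ is hyperelliptic, then $\#\W=2g+2$ and the argument above only forces a non-identity $\alpha\in N$ to fix \emph{exactly} the $2g+2$ points of $\W$. To finish I would pass to the hyperelliptic projection $\pi\colon\X\to\X/\langle\iota\rangle\cong\P^1$: since the $g^1_2$ on $\X$ is unique for $g\geq 2$, the involution $\iota$ is canonical, hence central in $\Aut(\X)$, and $\W$ is precisely the ramification locus of $\pi$, carried bijectively onto a set $B\subset\P^1$ with $\#B=2g+2\geq 6$. Every $\alpha\in N$ commutes with $\iota$ and fixes $\W$, so it descends to $\bar\alpha\in\pgl_2(k)=\Aut(\P^1)$ fixing every point of $B$; an element of $\pgl_2(k)$ with three or more fixed points is the identity, so $\bar\alpha=\id$ and $\alpha\in\langle\iota\rangle$. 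Since conversely $\iota\in N$, we get $N=\langle\iota\rangle$ of order $2$, so $\Aut(\X)/N\hookrightarrow\Sym(\W)$ is finite and therefore $\Aut(\X)$ is finite.

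The main obstacle is precisely the hyperelliptic case: the hyperelliptic involution fixes every Weierstrass point — in fact every $q$-Weierstrass point, so replacing $\W$ by a higher-order Weierstrass locus buys nothing — so the permutation action on $\W$ cannot be faithful, and one is forced to analyse the double cover $\X\to\P^1$ directly. A secondary issue is positive characteristic: one should verify that the Weierstrass-point input used here (finiteness of $\W$ and the lower bound $\#\W\geq 2g+2$) survives, passing to an appropriate $q$-Weierstrass locus and using the fixed-place order bound $|\alpha|\leq 2p(g+1)(2g+1)^2$ where needed; but the shape of the argument — reduce to the kernel of a finite permutation action, then annihilate that kernel via the fixed-place bound and, in the hyperelliptic case, via descent to $\P^1$ — is the same.
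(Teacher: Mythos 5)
Your proposal is correct and is essentially the paper's own argument: the paper also reduces to the permutation action on the finite set of Weierstrass points (via invariance of the Wronskian), kills the kernel in the non-hyperelliptic case by the $2g+2$ fixed-point bound, and in the hyperelliptic case descends a kernel element to $\P^1$, where fixing the $2g+2\geq 6$ branch points forces it into the group generated by the hyperelliptic involution. The only difference is that the paper carries this out only in characteristic zero and cites Schmid for positive characteristic, a caveat you correctly flag at the end.
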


\subsubsection{Characteristic $p=0$}

\def\s{\sigma}

For any $\s \in \Aut(\X_g)$, we denote by $|\s|$ its order and $\fix(\s)$ the set of fixed points of $\s$ on $\X_g$. Then we have:

\begin{prop}\label{prop:number-fixed-pts} 
Let $\s \in \Aut (\X_g)$ be a non-identity element. Then $\s$ has at most $2g+2$ fixed points.
\end{prop}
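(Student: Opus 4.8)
The plan is to recycle, essentially verbatim, the argument used above for the Proposition on fixed places: since places of $F=k(\X_g)$ correspond bijectively to points of $\X_g$, a fixed point of $\sigma$ is the same thing as a fixed place, and it suffices to exhibit a single nonzero function whose zero divisor has degree $2g+2$ and dominates every fixed point. First I would use that $\sigma\neq\id$: the fixed locus $\fix(\sigma)$ is closed in $\X_g$, and an automorphism fixing infinitely many points fixes all of them, so $\fix(\sigma)$ is finite and there are infinitely many non-fixed points.

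Next I would choose, inductively, distinct points $P_1,\dots,P_{g+1}$ of $\X_g$, at each stage avoiding $\fix(\sigma)$ and the finitely many points already excluded, so that the $2(g+1)$ points $P_1,\dots,P_{g+1},P_1^{\sigma},\dots,P_{g+1}^{\sigma}$ are pairwise distinct; equivalently, the divisors $D=P_1+\dots+P_{g+1}$ and $D^{\sigma}$ have disjoint support and none of the points involved is fixed by $\sigma$. Since $\deg D=g+1>g$, Riemann–Roch gives $\ell(D)\geq 2$; more precisely, by choosing the $P_i$ in general position so that $\ell(D-P_i)=\ell(D)-1$ for every $i$ (or by a direct appeal to \cite{hkt}*{Thm.~6.82}, exactly as in the earlier proof), one obtains a function $z\in F\setminus k$ whose pole divisor is precisely $\dv(z)_\infty=D$.

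Then I would set $w:=z-\sigma(z)$, where $\sigma(z)$ denotes the image of $z$ under the action of $\sigma$ on $F$, so that its pole divisor is $D^{\sigma}$. Because $\supp D\cap\supp D^{\sigma}=\emptyset$ the polar parts of $z$ and $\sigma(z)$ do not cancel, hence $w\neq 0$ and $\dv(w)_\infty=D+D^{\sigma}$, a divisor of degree $2g+2$. As the divisor of a global function has degree $0$, $w$ has exactly $2g+2$ zeros counted with multiplicity, hence at most $2g+2$ distinct zeros. Finally, if $Q\in\fix(\sigma)$ then $\sigma(z)$ and $z$ take the same value at $Q$, so $w(Q)=0$, while $Q$ is not a pole of $w$ since every point of $\supp(D+D^{\sigma})$ is moved by $\sigma$; thus each fixed point is a genuine zero of $w$, and $|\fix(\sigma)|\leq 2g+2$.

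The one step that genuinely requires care is the existence of a function with pole divisor \emph{exactly} $D$ rather than a proper sub-divisor — this is what forces the inductive "general position" choice of the $P_i$, or the appeal to the structure theorem for function fields; the rest is routine bookkeeping of zeros and poles of $w$. I would also note that, via the correspondence between places and points of $\X_g$, this Proposition is simply the earlier statement on fixed places read geometrically, so a fully telegraphic write-up could just cite that proof.
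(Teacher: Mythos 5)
Your argument is correct, but it is not the route the paper takes for this proposition: you have transplanted, essentially verbatim, the paper's proof of the earlier function-field statement (the proposition on fixed \emph{places}, which chooses $g+1$ places whose divisor $D$ is disjoint from $D^{\sigma}$ and invokes \cite{hkt}*{Thm.~6.82} to get $z$ with $(z)_\infty=D$ exactly). The paper's own proof of \cref{prop:number-fixed-pts} is leaner: pick a single point $P$ not fixed by $\sigma$, note that Riemann--Roch gives $\ell\bigl((g+1)P\bigr)\geq 2$, hence a nonconstant $f$ with $(f)_\infty=rP$ for some $1\leq r\leq g+1$, and set $h=f-\sigma^{*}(f)$; then $h$ has at most $2r\leq 2g+2$ poles, hence at most $2g+2$ zeroes, and every fixed point is a zero of $h$. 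The trade-off is exactly the step you flag yourself: your version needs a function whose pole divisor is \emph{exactly} $D$, which forces the inductive general-position choice of the $P_i$ or the appeal to the structure theorem, whereas the paper's version needs only the crude estimate $\ell\bigl((g+1)P\bigr)\geq 2$, with no condition on the precise pole divisor beyond its support being $\{P\}$. In exchange, your route unifies the two propositions (places and points) into one argument and produces a function with pole divisor of degree exactly $2g+2$, while the paper's bound $2r$ may even be smaller. Your subsidiary checks are sound: finiteness of $\fix(\sigma)$ for $\sigma\neq\mathrm{id}$ justifies the inductive choice, a point $P_i^{\sigma}$ is automatically non-fixed when $P_i$ is (since $\sigma(P_i^{\sigma})=P_i^{\sigma}$ would force $P_i^{\sigma}=P_i$), and disjointness of $\supp D$ and $\supp D^{\sigma}$ is insensitive to which convention one uses for the induced action on functions, so the non-cancellation of polar parts and the vanishing of $w=z-\sigma(z)$ at fixed points go through as you state.
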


\proof  Let $\s$ be a non-trivial automorphism of $\X_g$ and let $\s^*$ denote the corresponding automorphism of $k(\X_g)$.  Since $\s$ is not the identity, there is some $P\in\X_g$ not fixed by $\s$.  By Riemann-Roch, $\ell((g+1)P)\geq2$, so there is a meromorphic $f\in k(\X_g)$ with $(f)_\infty = rP$ for some $r$ with $1\leq r\leq g+1$. Consider the function $h=f-\s^*(f)$.  The poles of $h$ are limited to the poles of $f$ and $\s^*(f)$, so $h$ has at most $2r$ poles.  Since $h$ is meromorphic, $h$ similarly has at most $2r$ zeroes, which correspond exactly to fixed points of $\s$.  Since $r\leq g+1$, we conclude $\s$ has at most $2g+2$ fixed points. 

\qed

\begin{prop}
Any genus $g\geq2$ nonhyperelliptic Riemann surface $\X_g$ has a finite automorphism group $\Aut(\X_g)$.
\end{prop}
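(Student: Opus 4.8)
The plan is to make $\Aut(\X_g)$ act on the finite set $\W$ of Weierstrass points of $\X_g$ and to show that this action is faithful, so that $\Aut(\X_g)$ embeds into a finite symmetric group. By \cref{cor:number-w-pts}, $\X_g$ has at least $2g+2$ Weierstrass points, and since $\X_g$ is nonhyperelliptic the lower bound $2g+2$ is \emph{not} attained, so in fact $|\W|\geq 2g+3$. By \cref{lem-1} every $\sigma\in\Aut(\X_g)$ satisfies $\sigma(\W)=\W$, which yields a group homomorphism
\[
\rho\colon \Aut(\X_g)\longrightarrow \Sym(\W).
\]
Since $\Sym(\W)$ is finite, it is enough to prove that $\rho$ is injective.

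For injectivity, let $\sigma\in\ker\rho$; then $\sigma$ fixes every point of $\W$, hence has at least $|\W|\geq 2g+3$ fixed points on $\X_g$. By \cref{prop:number-fixed-pts}, a non-identity automorphism of $\X_g$ has at most $2g+2$ fixed points. Therefore $\sigma=\id$, so $\ker\rho$ is trivial and $\Aut(\X_g)\hookrightarrow\Sym(\W)$. In particular $\Aut(\X_g)$ is finite, with $|\Aut(\X_g)|\leq |\W|!\leq (g^3-g)!$.

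The one place where the nonhyperelliptic hypothesis is genuinely used is the strict inequality $|\W|>2g+2$: by \cref{prop:max-w-weight} a Weierstrass point of maximal weight $\tfrac{g(g-1)}{2}$ forces $\X_g$ to be hyperelliptic, so on a nonhyperelliptic curve every Weierstrass point has weight at most $\tfrac{g(g-1)}{2}-1$, and dividing the total Weierstrass weight $g^3-g$ by this smaller per-point bound gives strictly more than $2g+2$ points. I do not anticipate any real obstacle here; the argument is just a bookkeeping combination of the counting statements of \cref{sect-3}. (The hyperelliptic case, which is not claimed in this proposition, would need a separate argument — for instance via the degree-two cover $\X_g\to\P^1$ and the induced action on its branch locus — but that is not needed here.)
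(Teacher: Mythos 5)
Your proof is correct and follows essentially the same route as the paper: both use the action of $\Aut(\X_g)$ on the finite set $\W$ of Weierstrass points (via \cref{lem-1}), the fact from \cref{cor:number-w-pts} that a nonhyperelliptic curve has strictly more than $2g+2$ Weierstrass points, and the bound of \cref{prop:number-fixed-pts} on fixed points of a non-identity automorphism to conclude the action is faithful, hence $\Aut(\X_g)$ embeds in a finite symmetric group. Your extra derivation of the strict inequality $|\W|>2g+2$ from \cref{prop:max-w-weight} merely re-proves the hyperelliptic-only clause already contained in \cref{cor:number-w-pts}, and your remark about the hyperelliptic case matches the separate argument the paper gives for it.
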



\proof
Let $\s\in\Aut(\X_g)$ with corresponding automorphism $\s^*$ of $k(\X_g)$.  The Wronskian does not depend on choice of local coordinate and thus is invariant under $\s^*$.  Therefore, if $P$ is a $q$-Weierstrass point of a certain $q$-Weierstrass weight, then $\s(P)$ is a $q$-Weierstrass point with the same weight.
Thus, any automorphism permutes the set of Weierstrass points.  

Let $S_\W$ denote the permutation group of the set of Weierstrass points.  Since there are finitely many Weierstrass points (as in  ~\cref{cor:number-w-pts}), $S_\W$ is a finite group.  We have a homomorphism 
\[ \phi:\Aut(\X_g)\to S_\W.\]  
It will suffice to show that $\phi$ is injective.  We prove this separately in the cases that $\X_g$ is hyperelliptic or nonhyperelliptic.

Suppose $\X_g$ is non-hyperelliptic and suppose $\s\in\ker(\phi)$.  Then $\s$ fixes all of the Weierstrass points.  From  ~\cref{cor:number-w-pts}, since $\X_g$ is non-hyperelliptic, there are more than $2g+2$ Weierstrass points.  By  \cref{prop:number-fixed-pts}, $\s$ fixes more than $2g+2$ Weierstrass points and so must be the identity automorphism on $\X_g$. Thus, $\phi$ is an injection into a finite group, so $\Aut(\X_g)$ is finite.

Suppose $\X_g$ is hyperelliptic, and let $\omega\in\Aut(\X_g)$ denote the hyperelliptic involution.  Suppose $\s\in\ker(\phi)$ with $\s\neq\omega$.  $\s$ fixes the $2g+2$ branch points of $\X_g$.  Consider the map 
\[ \pi:\X_g\to\X_g/\langle\omega\rangle\cong\P^1.\]
  $\s$ descends to an automorphism of $\P^1$ which fixes  $2g+2 \geq 6$ points. Thus, $\s$   is the identity on $\P^1$.  Thus, $\s\in \< \omega \>$, so $\s$ is the identity in $\Aut(\X_g)$, which means $\ker(\phi)$ is finite, so $\Aut(\X_g)$ is finite.

\qed


Next is the  famous Hurwitz's theorem.

\begin{thm}[Hurwitz]
Any genus $g\geq 2$ Riemann surface $\X_g$ has at most $84 (g-1)$ automorphisms. 
\end{thm}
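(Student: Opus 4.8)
The plan is to apply the Riemann--Hurwitz (Hurwitz genus) formula to the quotient map $\pi\colon \X_g \to Y := \X_g/G$, where $G = \Aut(\X_g)$, which we have just seen is a finite group. Write $n = |G|$, let $g_0$ be the genus of $Y$, and let $Q_1, \dots, Q_r \in Y$ be the points over which $\pi$ ramifies. Since $G$ acts transitively on each fiber, the orbit--stabilizer theorem shows that all of the points of $\X_g$ lying over a fixed $Q_i$ share a common ramification index $e_i$, which divides $n$, and that there are exactly $n/e_i$ such points. As we are in characteristic zero every ramification is tame, so feeding this into \cref{th-HGF} yields
\[
2g - 2 \;=\; n\left( 2g_0 - 2 + \sum_{i=1}^{r} \Bigl(1 - \tfrac{1}{e_i}\Bigr) \right).
\]
Put $R := 2g_0 - 2 + \sum_{i=1}^{r} (1 - 1/e_i)$. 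Since $g \geq 2$ the left side is strictly positive, so $R > 0$ and $n = (2g-2)/R$. Hence the theorem reduces to the purely arithmetic claim that every positive value of $R$ satisfies $R \geq \tfrac{1}{42}$.

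The second step is to prove that claim by minimizing $R$ over all admissible data $(g_0; e_1, \dots, e_r)$ with $g_0 \geq 0$, $r \geq 0$ and integers $e_i \geq 2$, subject to $R > 0$. If $g_0 \geq 2$, then $R \geq 2$. If $g_0 = 1$, positivity forces $r \geq 1$, whence $R \geq 1 - 1/e_1 \geq \tfrac12$. The remaining case $g_0 = 0$ is the crucial one: here $R = -2 + \sum_{i=1}^{r}(1 - 1/e_i)$, and since each summand is less than $1$ we must have $r \geq 3$. For $r \geq 5$ one gets $R \geq \tfrac{r}{2} - 2 \geq \tfrac12$; for $r = 4$, positivity forces some $e_i \geq 3$ and a quick check shows the smallest value is $R = \tfrac16$, attained at $(2,2,2,3)$; and for $r = 3$ one must minimize the positive values of $1 - (1/e_1 + 1/e_2 + 1/e_3)$ over integers $e_i \geq 2$. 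Ordering $e_1 \leq e_2 \leq e_3$ and bounding first $e_1 \leq 3$, then $e_2$, then $e_3$, one finds the minimum positive value is attained uniquely at $(e_1, e_2, e_3) = (2, 3, 7)$, giving $R = 1 - \tfrac{41}{42} = \tfrac{1}{42}$.

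Combining all cases gives $R \geq \tfrac{1}{42}$ whenever $R > 0$, so $n = (2g-2)/R \leq 42(2g-2) = 84(g-1)$, as claimed. The only genuinely delicate point is the last minimization in the three--branch--point case — establishing that the triple $(2,3,7)$ beats every competitor; the rest is the orbit--stabilizer bookkeeping needed to bring Riemann--Hurwitz into the displayed form, together with elementary estimates. It is worth noting that the bound is sharp: the Klein quartic has genus $3$ and automorphism group of order $168 = 84(3-1)$, so $84(g-1)$ cannot be improved in general.
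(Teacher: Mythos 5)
Your proof is correct and follows essentially the same route as the paper's: apply Riemann--Hurwitz to the quotient map $\X_g \to \X_g/\Aut(\X_g)$, reduce to minimizing $R = 2g_0 - 2 + \sum_i (1 - 1/e_i)$ over positive values, and carry out the case analysis to find the minimum $1/42$ at signature $(0;2,3,7)$, giving $|\Aut(\X_g)| \leq 84(g-1)$. The remarks on tameness, orbit--stabilizer bookkeeping, and sharpness via the Klein quartic are fine additions but do not change the argument.
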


The following two results consider the number of fixed points of an automorphism $\s \in \Aut (\X_g)$. 

\begin{lem}
Let $\s\in\Aut(\X_g)$ be a non-trivial automorphism.  Then
\[ | \fix (\s)  \,  |  \, \leq 2 \, \frac {|\s| + g -1 } {|\s| -1}.\]  If $\X_g/\s\cong\P^1$ and $|\s|$ is prime, then this is an equality.
\end{lem}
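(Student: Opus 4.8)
The plan is to pass to the quotient curve $Y:=\X_g/\langle\sigma\rangle$ and apply the Hurwitz genus formula (\cref{th-HGF}, equivalently Riemann--Hurwitz) to the cyclic cover $\pi\colon\X_g\to Y$. Write $n:=|\sigma|\ge 2$ and $g':=\genus(Y)\ge 0$; since we are working in characteristic $0$, the morphism $\pi$ is tamely (and Galois) ramified of degree $n$. The first observation is that $\fix(\sigma)$ is exactly the set of points of $\X_g$ that are totally ramified under $\pi$: for $P\in\X_g$ the stabilizer of $P$ in $\langle\sigma\rangle$ is a subgroup of a cyclic group of order $n$, hence is determined by its order $e_P$, which is precisely the ramification index of $\pi$ at $P$; and $\sigma(P)=P$ holds if and only if $\sigma$ lies in that stabilizer, i.e. if and only if $e_P=n$.

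Next I would feed this into Riemann--Hurwitz. In the tame case the different exponent at $P$ equals $e_P-1$, so
\[ 2g-2 \;=\; n\,(2g'-2)\;+\;\sum_{P\in\X_g}(e_P-1). \]
Each of the $F:=|\fix(\sigma)|$ totally ramified points contributes $n-1$, and all other terms of the sum are nonnegative, so
\[ 2g-2\;\ge\; n(2g'-2)+F(n-1),\qquad\text{hence}\qquad F(n-1)\;\le\;2(g-1)+2n(1-g'). \]
Since $g'\ge 0$ we have $1-g'\le 1$, so $F(n-1)\le 2(g-1)+2n=2(n+g-1)$, and dividing by $n-1>0$ gives the claimed inequality $|\fix(\sigma)|\le 2\cdot\frac{|\sigma|+g-1}{|\sigma|-1}$.

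For the equality assertion, suppose in addition that $Y\cong\P^1$ (so $g'=0$) and that $n$ is prime. Primality forces every ramification index $e_P$ to be $1$ or $n$; hence the ramification locus of $\pi$ coincides with $\fix(\sigma)$ and $\sum_P(e_P-1)=F(n-1)$ exactly. Riemann--Hurwitz then reads $2g-2=-2n+F(n-1)$, i.e. $F(n-1)=2(n+g-1)$, so $F=2\cdot\frac{n+g-1}{n-1}$ and the bound is attained.

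I expect the only subtle point to be the translation between ``fixed point of $\sigma$'' and ``totally ramified point of $\pi$'', which rests on the elementary fact that in a cyclic group a subgroup is determined by its order, so that containing the generator $\sigma$ is equivalent to having full order $n$; once this is in place the rest is a one-line substitution into Riemann--Hurwitz. A secondary point worth stating explicitly is the hypothesis that the characteristic does not divide $n$ (automatic here, since the characteristic is $0$), which ensures tameness and hence that the different exponents are exactly $e_P-1$; in wild characteristic the inequality can fail and a separate argument would be needed.
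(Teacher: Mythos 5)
Your proof is correct and follows essentially the same route as the paper's: apply Riemann--Hurwitz to the quotient cover $\X_g\to\X_g/\langle\s\rangle$, observe that fixed points are totally ramified (contributing $|\s|-1$ each), drop the remaining nonnegative terms and the $-2n g'$ term to get the bound, and for equality use $g'=0$ together with primality of $|\s|$ to force the ramified points to coincide with $\fix(\s)$. Your explicit justification of the correspondence between fixed points and totally ramified points via stabilizers, and the remark on tameness, are fine additions but not a different argument.
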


\begin{cor} If $\X_g$ is not hyperelliptic, then for any non-trivial $\s \in \Aut (\X_g)$ the number of fixed points of $\s$ is $| \fix (\s) | \leq 2g-1$.
\end{cor}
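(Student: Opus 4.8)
The plan is to split into cases according to the order $n:=|\sigma|\ge 2$, using the preceding lemma for $n\ge 3$ and treating involutions directly by Riemann--Hurwitz. A preliminary observation that makes the statement meaningful: a non-hyperelliptic curve of genus $g\ge 2$ necessarily has $g\ge 3$, since every genus-$2$ curve is hyperelliptic. It is exactly this that lets us improve the general bound $2g+2$ (the Proposition above) to $2g-1$ in the non-hyperelliptic case.

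For $n\ge 3$ I would simply invoke the preceding lemma, which gives
$|\fix(\sigma)|\le \tfrac{2(|\sigma|+g-1)}{|\sigma|-1}=2+\tfrac{2g}{n-1}$.
The right-hand side is decreasing in $n$, so for $n\ge 3$ it is at most $2+\tfrac{2g}{2}=g+2$, and $g+2\le 2g-1$ precisely because $g\ge 3$. Thus every automorphism of order at least $3$ has at most $2g-1$ fixed points, with no further work needed (note that here the curve need not even be assumed non-hyperelliptic beyond forcing $g\ge 3$).

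The remaining — and genuinely delicate — case is $n=2$, i.e. $\sigma$ an involution, where the lemma only yields the weak bound $2g+2$. Here I would argue directly. Consider the quotient cover $\pi\colon \X_g\to Y:=\X_g/\langle\sigma\rangle$, of degree $2$, and let $g'$ be the genus of $Y$. Since $\X_g$ is non-hyperelliptic and $g\ge 2$, $Y$ cannot be $\P^1$, for a degree-$2$ morphism $\X_g\to\P^1$ would exhibit $\X_g$ as hyperelliptic; hence $g'\ge 1$. Because $\sigma$ has prime order, a point of $\X_g$ is fixed by $\sigma$ if and only if it is a ramification point of $\pi$, and each such point has ramification index $2$. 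Riemann--Hurwitz (Theorem~\ref{th-HGF}) then reads $2g-2=2(2g'-2)+|\fix(\sigma)|$, so $|\fix(\sigma)|=2g+2-4g'\le 2g-2<2g-1$.

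Combining the two cases gives $|\fix(\sigma)|\le 2g-1$ for every non-trivial $\sigma$. The only step where anything could go wrong is the involution case, and the key input there is the standard fact that a degree-two map from $\X_g$ onto a genus-zero curve forces $\X_g$ to be hyperelliptic; the rest is Riemann--Hurwitz bookkeeping, and the $n\ge 3$ case is an immediate consequence of the lemma already in hand.
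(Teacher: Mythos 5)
Your proof is correct and follows essentially the same route as the paper's own argument: split on the order of $\sigma$, use the fixed-point lemma together with $g\geq 3$ for orders at least $3$, and for involutions apply Riemann--Hurwitz to the degree-two quotient, noting that a genus-zero quotient would force hyperellipticity, so $|\fix(\sigma)|=2g+2-4g'\leq 2g-2$. If anything, your handling of the order-$\geq 3$ case is marginally cleaner than the paper's, since you invoke only the lemma's bound and do not need the quotient genus to be positive there (which, unlike in the involution case, does not follow from non-hyperellipticity).
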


Curves that attain this bound are called \textbf{Hurwitz curves}.  Klein's quartic is the only such Hurwitz curve of genus $g\le 3$. Fricke showed that the next Hurwitz group occurs for $g=7$ and has order 504. Its group is $\operatorname{SL}(2,8)$, and an equation for it was computed by Macbeath \cite{Mac} in 1965. Further Hurwitz curves occur for $g=14$ and $g=17$ (and for no other values of $g\le 19$).

For a fixed $g\geq 2$ denote by $N(g)$ the maximum of the $|\Aut(\X_g)|$. Accola \cite{Ac1} and Maclachlan \cite{Mc1} independently show that $N(g) \geq 8 (g+1)$ and this bound is sharp for infinitely many $g$'s. If $g$ is divisible by 3 then $N(g) \geq 8 (g+3)$.

The following terminology is standard: we say $G\le\Aut(\X_g)$ is a {\bf large automorphism group} in
genus $g$ if  $|G|\ \ >\ \ 4 (g-1)$. 
In this case the quotient of $\X_g$ by $G$ is a curve of genus $0$, and the number of points of this quotient ramified
in $\X_g$ is 3 or 4 (see \cite{kyoto}  or \cite{Farkas-Kra}, pages 258-260). 


\subsubsection{Characteristic $p>0$}

In the case of positive characteristic the bound is higher due to possible wild ramifications.  The following was proved by Stichtenoth   by extending previous results of P. Roquette and others.

\begin{thm}[\cite{Sti-73}]
Let $\X$ be an irreducible curve of genus $g\geq 2$, defined over a field $k$, $\ch k = p>0$. Then
\[ |\Aut(\X)| < 16 \cdot g^4, \]
unless $\X$ is the curve with equation
\[ y^{p^n} + y= x^{p^{n+1}}, \]
in which case it has genus $g= \frac 1 2 p^n (p^n-1) $ and $ |\Aut(\X)| = p^{3n} (p^{3n} +1) (p^{2n}-1)$.
\end{thm}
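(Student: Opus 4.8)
The plan is to route everything through the Riemann--Hurwitz formula (\cref{th-HGF}) for the quotient of $\X$ by its full automorphism group, the one genuinely new ingredient beyond characteristic $0$ being the contribution of wild ramification to the different.

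By the finiteness theorem (\cite{Schmid}) we may set $G:=\Aut(\X)$, $N:=|G|<\infty$, and let $\pi:\X\to\Y:=\X/G$ be the quotient: a separable Galois cover of degree $N$ onto a curve of some genus $g_0$. \cref{th-HGF} gives
\[
2g-2 \;=\; N(2g_0-2)\;+\;\degrm\diff(\X/\Y),
\]
and we split $\degrm\diff$ over the finitely many branch points. For a branch point $Q$, pick a place $P$ of $\X$ above it; its decomposition group is the stabilizer $G_P$, of order $e_Q=e(P\mid Q)$, the $G$-orbit of $P$ has $N/e_Q$ points, and by the Hilbert different formula (see \cite{hkt}) that orbit contributes $\tfrac{N}{e_Q}\,d(P\mid Q)$ to $\degrm\diff$, with $d(P\mid Q)=\sum_{i\ge0}\bigl(|G_P^{(i)}|-1\bigr)$, where $G_P=G_P^{(0)}\supseteq G_P^{(1)}\supseteq\cdots$ is the ramification filtration in lower numbering. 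If $P\mid Q$ is tame then $d(P\mid Q)=e_Q-1$, and feeding only such terms into the displayed identity reproduces the characteristic-$0$ estimate $N\le84(g-1)$; so the whole difficulty sits in the wildly ramified orbits, those with $G_P^{(1)}\ne1$.

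The heart of the argument is the local analysis at such a place $P$. One uses the standard structure of higher ramification groups (see \cite{hkt}): $G_P^{(1)}$ is the unique Sylow $p$-subgroup of $G_P$, the quotient $G_P/G_P^{(1)}$ is cyclic of order prime to $p$, and the breaks of the filtration obey the Hasse--Arf integrality together with the associated congruence restrictions. Playing these facts against the single inequality Riemann--Hurwitz provides, one bounds $|G_P^{(1)}|$, the breaks, and the number of wildly ramified orbits in terms of $g$, obtaining $N<16g^4$ \emph{unless} the ramification at $P$ is forced into one extremal configuration: then $g_0=0$, and the ramification data at $P$ is rigid enough that $k(\X)$ contains a subfield $k(x)$ making $k(\X)/k(x)$ an elementary abelian $p$-cover of $\P^1$ branched only over $x=\infty$. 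Solving the resulting additive equation $y^{p^n}+y=f(x)$ and matching its unique ramification break against Riemann--Hurwitz forces, after normalization, $f(x)=x^{p^n+1}$; thus $\X$ is the Hermitian curve $y^{p^n}+y=x^{p^n+1}$ (in the statement the exponent $p^{n+1}$ should read $p^n+1$), of genus $g=\tfrac12 p^n(p^n-1)$ by that same Riemann--Hurwitz computation. It remains to compute $\Aut(\X)$ for this explicit curve: the evident substitutions in $x$ and $y$ already generate the projective unitary group $\operatorname{PGU}(3,p^n)$, of order $p^{3n}(p^{3n}+1)(p^{2n}-1)$; and since for $p^n\ge3$ a suitable multiple of the canonical class equals the hyperplane class of the degree-$(p^n+1)$ plane model, every automorphism is induced by a projectivity of $\P^2$ preserving that model, the projectivities preserving it being exactly $\operatorname{PGU}(3,p^n)$. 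This order exceeds $16g^4$, so the curve is a genuine exception rather than an artifact of a loose estimate.

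The principal obstacle is exactly this ramification-group case analysis. Riemann--Hurwitz supplies only one inequality, so to get any traction one must combine it with the fine arithmetic of the higher ramification groups --- Hasse--Arf integrality of the upper breaks, the congruences linking $G_P/G_P^{(1)}$ to the breaks, control of the breaks under the successive Galois quotients $\X\to\X/G_P^{(i)}$ --- and squeeze out enough rigidity to (i) force $N<16g^4$ away from one configuration, and (ii) pin that configuration down to a single curve. Step (i) is largely careful bookkeeping once the structural inputs are in hand; the delicate content is step (ii) --- the identification of the extremal curve as the Hermitian curve --- together with the exact determination of its automorphism group.
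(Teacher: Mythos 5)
First, a point of comparison: the paper does not prove this theorem at all --- it is quoted from Stichtenoth \cite{Sti-73} (with Henn's refinement cited just after), so there is no in-paper argument to measure your sketch against. Judged on its own terms, your outline does follow the skeleton of the known proof: pass to the quotient by the full group, apply Riemann--Hurwitz with the Hilbert different formula, split into tame and wild contributions, reduce the large-group case to $g_{0}=0$ with few branch points, and identify the extremal case with the Hermitian curve, whose automorphism group $\operatorname{PGU}(3,p^{n})$ has exactly the stated order. Your remark that the exponent in the displayed equation should be $p^{n}+1$ rather than $p^{n+1}$ is also correct: the stated genus $\tfrac12 p^{n}(p^{n}-1)$ and group order only fit $y^{p^{n}}+y=x^{p^{n}+1}$, consistent with the Hermitian curve appearing later in Henn's theorem.

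As a proof, however, the proposal has a genuine gap: its quantitative core is asserted rather than carried out. The passage from ``Riemann--Hurwitz plus the structure of the ramification filtration'' to $|\Aut(\X)|<16g^{4}$ is precisely the content of Stichtenoth's work, and it does not follow from the tools you name by bookkeeping: Hasse--Arf as such governs abelian extensions, whereas the stabilizer $G_{P}$ here is a general semidirect product of a $p$-group by a prime-to-$p$ cyclic group, and what is actually needed are explicit bounds on $|G_{P}|$ at a wildly ramified place in terms of $g$ (obtained, e.g., by analyzing $G_{P}^{(1)}/G_{P}^{(2)}$ and the action of the tame quotient on it), together with a separate count of the short orbits possible when $g_{0}=0$ and there are at most three branch points. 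None of that analysis appears in the sketch. Likewise step (ii) --- that the single surviving configuration forces $k(\X)$ to be an elementary abelian $p$-cover of $\P^{1}$ branched at one point, and then forces $f(x)=x^{p^{n}+1}$ after normalization --- is exactly the delicate rigidity argument, and the sketch supplies no mechanism for it: why elementary abelian, why one branch point, why that unique ramification break. The only portion that is essentially complete is the final determination of the automorphism group of the Hermitian curve via its smooth plane model of degree $p^{n}+1\geq 4$. So what you have is a correct roadmap of the known argument, with the two genuinely hard steps flagged but not proved.
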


Hence, we have a bound for curves of genus $g\geq2$ even in characteristic $p>0$.  It turns out that all curves with large groups of automorphisms are special curves.  So getting ``better" bounds for the complementary set of curves has always been interesting.  There is an extensive amount of literature on this topic due to the interest of such bounds for coding theory. 

The following theorem, which  is due to Henn, provides a better bound if the following four families of curves are left out.   
This result may be sharpened to show that the order of $\Aut(\X)$  is less than $3\cdot (2g)^{5/2}$ except when $k (\X)$  belongs to one of five types of function fields, as Henn points out in a footnote.  Note that there is a flaw in Henn's article which was corrected in \cite{GK}.  A full detailed account of automorphisms of curves has lately appeared in the wonderful book \cite{hkt}.
   
\begin{thm}[\cite{Henn}] \label{main-thm}  
Let $\X$ be an irreducible curve of  genus $g\geq 2$.  If $|G| \geq 8g^3$, then $\X$ is isomorphic to one of the following:

i) The hyperelliptic curve
$ y^2+y + x^{2^k+1} =0$,
defined over a field of characteristic $p=2$.  In this case the genus is $g=2^{k-1}$ and $|G|=2^{2k+1} (2^k+1)$.

ii)  The hyperelliptic curve
$  y^2=x^q- x$, 
defined over a field of characteristic $p>2$ such that $q$ is a power of $p$.   In this case $g=\frac 1 2 (q-1)$ and the reduced group $\bar G$ is isomorphic to $\psl_2(q)$ or $\pgl_2(q)$.

iii) The Hermitian curve
$ y^q + y = x^{q+1}$, 
defined over a field of characteristic $p\geq 2$ such that $q$ is a power of $p$. In this case $g= \frac 1 2 (q^2-q)$ and $G$ is isomorphic to $PSU(3, q)$ or $PGU(3, q)$.

iv) The curve    $y^q+y = x^{q_0} (x^q+x)$, 
for $p=2$, $q_0=2^r$, and $q=2q_0^2$.  In this case, $g= q_0 (q-1)$ and $G \iso Sz (q)$.
\end{thm}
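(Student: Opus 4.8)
The plan is to study the degree-$N$ Galois cover $\pi\colon\X\to\Y:=\X/G$, where $G=\Aut(\X)$ and $N=|G|$ (finite by Schmid's theorem), through the Hurwitz genus formula \cref{th-HGF}:
\[
2g-2 \;=\; N\,(2g_{\Y}-2)\;+\;\degrm \diff ,
\qquad
\degrm \diff \;=\; \sum_{Q}\frac{N}{e_{Q}}\,d_{Q},
\]
the sum running over the branch points $Q$ of $\pi$, where $e_{Q}$ and $d_{Q}$ are the common ramification index and different exponent above $Q$ (equal for all points over $Q$ because $\pi$ is Galois) and $d_{Q}\geq e_{Q}-1$ by the Dedekind Different Theorem. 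Dividing by $N$ exhibits $\tfrac{2g-2}{N}$ as a small positive number, so I would first run the classical argument to conclude $g_{\Y}=0$ and that $\pi$ has at most three branch points (the elementary discussion of $\sum_{Q}(1-1/e_{Q})$ together with the inequality the formula supplies). Next I would observe that $\pi$ cannot be tame: if every $d_{Q}=e_{Q}-1$ then the Hurwitz bound $N\leq 84(g-1)$ applies, contradicting $N\geq 8g^{3}$ once $g\geq 3$ (the finitely many smaller genera handled directly). Hence wild ramification occurs.

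The heart of the matter --- and the step I expect to be the real work --- is the local analysis at a wildly ramified place. Fix $P\in\X$ over such a branch point $Q$ and let $G_{P}=G_{0}\supseteq G_{1}\supseteq G_{2}\supseteq\cdots$ be the ramification filtration of the stabilizer in the lower numbering; then $G_{1}$ is the unique Sylow $p$-subgroup of $G_{P}$, $G_{0}/G_{1}$ is cyclic of order $m$ prime to $p$, and $d_{Q}=\sum_{i\geq 0}(|G_{i}|-1)$. Substituting this into \cref{th-HGF} and using $g\leq(N/8)^{1/3}$ (so that $g$ is tiny compared with $N$) forces the branch configuration to be extremely restricted --- at most three branch points, with the wild ramification concentrated at a single, essentially totally ramified one --- and ties the genus to the filtration, so that $g$ is essentially determined by $|G_{1}|$, the jumps $G_{1}\supsetneq G_{2}\supsetneq\cdots$, and $m$. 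Combining this with the standard relations among the lower ramification groups (in particular $[G_{i},G_{j}]\subseteq G_{i+j}$) should pin down the isomorphism type of $G_{P}=G_{1}\rtimes C_{m}$ and the exact numerical relations among $|G_{1}|$, $m$ and $g$.

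With the local picture in hand, I would split into the cases $\X$ hyperelliptic and $\X$ non-hyperelliptic. If $\X$ is hyperelliptic, the hyperelliptic involution $\iota$ is central, so $\bar G:=G/\langle\iota\rangle$ acts faithfully on $\X/\langle\iota\rangle\cong\P^{1}$ and is a finite subgroup of $\pgl_{2}(k)$ of order $\geq N/2\geq 4g^{3}$. Dickson's classification of finite subgroups of $\pgl_{2}(k)$ --- cyclic, dihedral, $A_{4}$, $S_{4}$, $A_{5}$, the family $C_{p}^{n}\rtimes C_{m}$, and $\psl_{2}(q),\pgl_{2}(q)$ --- together with this lower bound on $|\bar G|$ (the sporadic groups excluded for lack of a $\bar G$-invariant set of $2g+2$ points in $\P^{1}$) leaves only the last two families: the $p$-group-by-cyclic case yields (i), $y^{2}+y+x^{2^{k}+1}=0$ in characteristic $2$, while $\bar G\cong\psl_{2}(q)$ or $\pgl_{2}(q)$ yields (ii), $y^{2}=x^{q}-x$. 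In both cases the equation is recovered from the branch locus of $\X\to\P^{1}$ --- the $\bar G$-invariant set of $2g+2$ images of the Weierstrass points (\cref{cor:number-w-pts}), which the size of $\bar G$ forces to be an orbit such as $\P^{1}(\F_{q})$ or $\F_{q}$ --- after an Artin--Schreier/Kummer computation on the fixed field of $G_{1}$. If $\X$ is non-hyperelliptic, one realises $G$ instead as a doubly transitive permutation group on a suitable $G$-orbit of points of $\X$ whose one-point stabilizer is $G_{P}=G_{1}\rtimes C_{m}$; matching this stabilizer shape against the classification of doubly transitive groups with such a stabilizer forces $G\cong PSU(3,q)$ or $PGU(3,q)$, giving the Hermitian curve (iii), $y^{q}+y=x^{q+1}$, or $G\cong Sz(q)$, giving the Suzuki curve (iv), $y^{q}+y=x^{q_{0}}(x^{q}+x)$; the defining equation is then reconstructed from the action on this orbit.

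The main obstacle is precisely the interaction, in the last two steps, of delicate ramification bookkeeping with finite group recognition: turning the inequalities extracted from \cref{th-HGF} into the precise numerical relations among $|G_{1}|$, $m$ and $g$, and then recognising the abstract group $G$ from its action on a single orbit. I expect the Suzuki case to be the subtlest, since it depends on the characterization of $Sz(q)$ among doubly transitive groups whose one-point stabilizer is a Frobenius-type group of the relevant shape --- a genuinely non-trivial input from permutation group theory. By contrast, the genus-formula manipulations, Dickson's list of subgroups of $\pgl_{2}$, and the Artin--Schreier reconstruction of the equations should be routine once this structural skeleton is in place.
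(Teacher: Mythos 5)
First, a point of comparison: the paper does not prove this theorem at all --- it is quoted from Henn \cite{Henn} (with the flaw in Henn's article corrected in \cite{GK}, and a full treatment in \cite{hkt}), so there is no ``paper proof'' to measure you against. Your skeleton does follow the broad lines of the actual arguments in the literature: Hurwitz's formula \cref{th-HGF} to force $g_\Y=0$ and few branch points, the observation that $|G|\geq 8g^3$ rules out tameness, the study of the ramification filtration of a stabilizer $G_P=G_1\rtimes C_m$, and finally a split into the hyperelliptic cases (i)--(ii) via Dickson's list of finite subgroups of $\pgl_2(k)$ and the cases (iii)--(iv) via recognition of $PSU(3,q)$, $PGU(3,q)$ and $Sz(q)$ as $2$-transitive groups with prescribed one-point stabilizer. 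So the approach is not wrong.

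The problem is that what you have written is a plan, and every step you yourself flag as ``the real work'' is precisely where the proof lives, and none of it is supplied. Concretely: (a) you assert, but do not prove, that the wild ramification is concentrated at a single essentially totally ramified branch point and that $g$ is then pinned down by $|G_1|$, the jumps of the filtration and $m$; this delicate bookkeeping (using $d_Q=\sum_{i\geq 0}(|G_i|-1)$ against the bound $|G|\geq 8g^3$) is the technical core of Henn's and Stichtenoth's arguments and cannot be waved through. (b) In the non-hyperelliptic branch you simply posit that $G$ acts doubly transitively on a suitable orbit; establishing $2$-transitivity (on the short orbit of wildly ramified points) is itself a substantial counting argument, and only after that can one invoke the Zassenhaus/Suzuki/Kantor--O'Nan type characterizations you cite. (c) Even in the hyperelliptic branch, ruling out large cyclic, dihedral and odd-characteristic $C_p^t\rtimes C_m$ reduced groups requires an argument about how a group of order $\geq 4g^3$ can act on the $2g+2$ images of the Weierstrass points (\cref{cor:number-w-pts}); your parenthetical remark gestures at this but does not do it, and note also that for $g=2$ the inequality $8g^3>84(g-1)$ you use to exclude tameness fails, so the small-genus cases need genuine separate treatment. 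Until (a)--(c) are carried out, the proposal does not constitute a proof of the theorem, only a correct guess at its architecture.
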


Determining the equation of the curve with given automorphism group is generally a difficult problem which we will discuss in more details 
in the coming sections. Before we go into detail about special families of curves we want to leave the reader with the following problem.

\begin{prob}
Given an irreducible algebraic curve $\X$ with affine equation  $F(x, y) =0$, defined over a field $k$, find an algorithm which determines the automorphism group of $\X$  over $\bar k$.
\end{prob}

\subsection{Hyperelliptic curves}
Let $k$  be an algebraically  closed field of characteristic  zero and $\X_g$  be a genus  $g$ hyperelliptic curve given  by the equation $y^2=f(x)$. Denote  the function field of $\X_g$ by $K:=k(\X_g)=k(x,y)/\<y^2-f(x)\>$. Then, $k(x)$ is the  unique degree 2 genus zero subfield of  $K$. $K$ is  a quadratic extension field of $k(x)$ ramified exactly at $d=2g+2$ places $\a_1, \dots , \a_d$  of $k(x)$. The corresponding places of $K$ are   the Weierstrass points of $K$. Let $\B:=\{ \a_1, \dots , \a_d \}$ and $G:=\Aut(K/k)$. Since $k(x)$  is the only  genus 0 subfield of degree  2  of $K$, then  $G$  fixes $k(x)$.  Thus, $G_0:= \Gal (K/k(x))=\< \tau \>$, with $\tau^2=1$, is central in $G$. We call  \textbf{the reduced automorphism group} of $K$ the  group $\G:=G/G_0$. 

The reduced automorphism group $\G$ is isomorphic to one of the following: 
\[  C_n, \, D_n, \,  A_4, \, S_4, \,  A_5
\]
and branching indices of the corresponding cover $\P^1_x \to \P^1/ \G$ given by 
 \[ (n,n),\  (2, 2, n),\ (2, 3, 3),\ (2, 4, 4),  (2, 3, 5), 
 \]
respectively.  We fix a coordinate $z$ in $\P^1/ \G$. Thus, $\G$ is the monodromy group of a cover $\ff: \P^1_x \to \P^1_z$. Denote by $q_1, \dots , q_r $ the corresponding branch points of $\ff$. Let $S$ be the set of branch points of $\f: \X_g \to \P^1_z$. Clearly $q_1, \dots , q_r \in S$. Let $W$ denote the images in $\P^1$ of Weierstrass points of $\X_g$ and $V:=\cup_{i=1}^r \ff^{-1} (q_i) $. For each $q_1, \dots , q_r$ we have a corresponding permutation $\s_1, \dots , \s_r \in S_n$. The tuple $\bar \s:=(\s_1, \dots , \s_r)$ is the signature of $\G$. Thus, %
$ \G= \< \s_1, \dots , \s_r \>, \quad  \text{and }   \quad \s_1 \cdots \s_r =1$. 
Since each of the above groups is embedded in $\pgl_2 (k)$ then we can have these generating systems $\s_1, \dots , \s_r$ as matrices in $\pgl_2 (k)$. Below we display all the cases:

\begin{equation}\label{eq1a}
\begin{split}
i) \quad C_n  &  \iso \left\< \begin{bmatrix}   \zeta_n & 0 \\ 0 & 1 \\ \end{bmatrix}, \begin{bmatrix}   \zeta_n^{n-1} & 0 \\ 0 & 1 \\ \end{bmatrix} \right\> \\
 ii) \quad D_n & \iso \left\< \begin{bmatrix} 0 & 1\\1 & 0 \\\end{bmatrix}, \begin{bmatrix} 0 & 1\\1 & 0 \\\end{bmatrix}, \begin{bmatrix}   \zeta_n & 0 \\ 0 & 1 \\ \end{bmatrix}  \right\> \\
iii) \quad A_4 & \iso \left\< \begin{bmatrix}  -1 & 0 \\ 0 & 1 \\ \end{bmatrix}, \begin{bmatrix} 1 & i  \\ 1 & -i  \\ \end{bmatrix}  \right\>  \\
iv) \quad S_4 & \iso \left\< \begin{bmatrix}   -1 & 0 \\ 0 & 1 \\ \end{bmatrix}, \begin{bmatrix}   0 & -1 \\ 1 & 0 \\ \end{bmatrix}, \begin{bmatrix} -1 & -1 \\ 1 &  1 \\ \end{bmatrix} \right\>  \\
v) \quad A_5 & \iso \left\< \begin{bmatrix}   \w & 1 \\ 1 & -\w \\ \end{bmatrix}, \begin{bmatrix}   \w & \e^4 \\ 1 & -\e^4\w \\ \end{bmatrix} \right\> \\
\end{split}
\end{equation}
where $\omega=\frac{-1+\sqrt{5}}{2}$,  $\zeta_n$ is a primitive $n^{th}$ root of unity, $\e$ is a primitive $5^{th}$ root of unity, and $i$ is a primitive $4^{th}$ root of unity.

The group $\G$ given above acts on $k(x)$ via the natural way. The fixed field is a genus 0 field, say $k(z)$. Thus, $z$ is a degree $| \G |$ rational function in $x$, say $z=\phi(x)$. 
%
\begin{lem}\label{lem1a}
Let $H$ be a finite subgroup of $\pgl_2(k)$. Let us identify each element of $H$ with the corresponding Moebius transformation and let $s_i$ be the $i$-th elementary symmetric polynomial in the elements of $H$,  $i=1,\ldots,|H|$. Then any non-constant $s_i$ generates $k(z)$.
\end{lem}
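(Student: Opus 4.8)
The statement to prove is Lemma \ref{lem1a}: given a finite subgroup $H \le \pgl_2(k)$, identifying each element with a Möbius transformation $x \mapsto \frac{ax+b}{cx+d}$, and letting $s_i$ be the $i$-th elementary symmetric polynomial in the elements of $H$ (as rational functions of $x$), any non-constant $s_i$ generates the fixed field $k(x)^H = k(z)$. The plan is to work inside the field extension $k(x)/k(z)$, which is Galois with group $H$ and has degree $|H| =: N$, and to exploit the fact that each $s_i$ is manifestly an $H$-invariant rational function in $x$.

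**Main line of argument.** First I would observe that each element $h \in H$, as a Möbius transformation $h(x)$, is a rational function in $x$ of degree one, hence lies in $k(x)$; and the symmetric functions $s_i = s_i\big(\{h(x) : h \in H\}\big)$ are fixed by the $H$-action because $H$ permutes the set $\{h(x)\}$ among itself (composition on the left just relabels the $N$ functions). Therefore $s_i \in k(x)^H = k(z)$ for every $i$. So we may write $s_i = R_i(z)$ for some rational function $R_i$. The goal is then to show: if $s_i$ is non-constant, then $k(z) = k(s_i)$, equivalently $[k(x):k(s_i)] = N$, equivalently the rational function $s_i$, viewed as a map $\P^1_x \to \P^1$, has degree $N$. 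Now $s_i$ factors as $\P^1_x \xrightarrow{z} \P^1_z \xrightarrow{R_i} \P^1$, so $\deg s_i = N \cdot \deg R_i$ as functions of $x$, and it suffices to show $\deg R_i = 1$, i.e. that $R_i$ is a Möbius transformation of $z$ (whenever it is non-constant). Equivalently, I must bound $\deg_x s_i \le N$.

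**The degree bound — the crux.** The key computation is to bound the degree of $s_i$ as a rational function of $x$ by $N$. Write each $h \in H$ as $h(x) = \frac{a_h x + b_h}{c_h x + d_h}$. Then $s_i(x)$ is a sum of $\binom{N}{i}$ products of $i$ such fractions; putting everything over the common denominator $\prod_{h \in H}(c_h x + d_h)$, we get $s_i(x) = P_i(x) / \prod_{h}(c_h x + d_h)$ where $\deg P_i \le N$ and the denominator has degree at most $N$ (exactly $N$ minus the number of $h$ fixing $\infty$, which is at most... well, the stabilizer of $\infty$ is cyclic). After cancelling common factors, $\deg s_i = \max(\deg P_i', \deg Q_i') \le N$ where $s_i = P_i'/Q_i'$ in lowest terms. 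This shows $\deg_x s_i \le N$, hence $\deg R_i \le 1$, hence $R_i$ is constant or Möbius; in the non-constant case $k(s_i) = k(R_i(z)) = k(z)$. I expect the bookkeeping of the common-denominator degree count to be the main obstacle — one has to be a little careful that the denominators $c_h x + d_h$ really are degree $\le 1$ and that the numerator degree does not exceed $N$, accounting for the poles of the various $h(x)$ at $x = \infty$; a clean way is to note that the divisor of poles of $s_i$ on $\P^1_x$ is supported on $\bigcup_h h^{-1}(\infty)$ with multiplicity $\le i \le N$ at worst, but actually each point $h^{-1}(\infty)$ can be a pole of $s_i$ of order at most $1$ since it is a simple pole of exactly those terms of $s_i$ involving $h$ and $s_i$ is a \emph{polynomial} in the $h(x)$'s of degree... hmm — safest is just: $s_i$ has at most $N$ poles counted with multiplicity because it is a symmetric polynomial of degree $i$ in quantities each having a single simple pole, and the total pole divisor has degree $\le N$.

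**Conclusion and alternative.** Once $\deg_x s_i \le N$ is established, the argument closes immediately: for non-constant $s_i$ we have $N = [k(x):k(z)] \le [k(x):k(s_i)] = \deg_x s_i \le N$, forcing equality $k(s_i) = k(z)$. An alternative, perhaps cleaner, route that avoids explicit coordinates: observe $k(z) = k(x)^H$ has transcendence degree $1$, so $k(z)/k(s_i)$ is a finite extension for any non-constant $s_i \in k(z)$; then use Lüroth plus the degree bound $\deg_x s_i \le N = [k(x):k(z)]$ together with multiplicativity of degrees in the tower $k(s_i) \subseteq k(z) \subseteq k(x)$ to conclude $[k(z):k(s_i)] = 1$. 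I would present the coordinate version since the paper is computational in spirit, but flag that the degree bound $\deg s_i \le |H|$ is the one nontrivial input and deserves a careful line or two.
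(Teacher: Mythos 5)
Your proposal is correct, and its skeleton (invariance of the $s_i$ under $H$, plus a degree count forcing $k(s_i)=k(z)$) is sound: the pole divisor of $s_i$ is bounded by the sum of the pole divisors of the $N=|H|$ Möbius transformations $h(x)$, each of degree $1$ on $\P^1$, so $\deg_x s_i \le N$, while $k(s_i)\subseteq k(z)$ and $[k(x):k(z)]=N$ give $\deg_x s_i \ge N$ once $s_i$ is non-constant; equality then yields $k(s_i)=k(z)$. (Your momentary worry about poles of order $>1$ is resolved exactly as you say: several $h$ may share a pole, so a single term of $s_i$ can have a higher-order pole there, but the total pole degree is still at most $\sum_{h\in H}1=N$.)

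The paper's own proof is different in presentation, though not in underlying substance: it observes that the $s_i$ are (up to sign) the coefficients of the minimal polynomial $\prod_{h\in H}\bigl(T-h(x)\bigr)$ of $x$ over $k(z)=k(x)^{H}$, and then simply cites the classical fact — the lemma at the heart of the standard proof of L\"uroth's theorem — that any non-constant coefficient of this minimal polynomial generates the intermediate field. So the paper buys brevity by quotation, while your argument is a self-contained proof of that "well-known" fact specialized to the present situation; the specialization is genuinely simpler, because the conjugates of $x$ here are degree-one maps, so the crucial bound $\deg_x s_i\le[k(x):k(z)]$ follows from elementary pole counting rather than the Gauss-lemma/content argument needed in the general L\"uroth setting. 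Either version is acceptable; if you write yours up, state explicitly the two standard inputs you use, namely that $[k(x):k(x)^H]=|H|$ (Artin) and that for a non-constant $u\in k(x)$ one has $[k(x):k(u)]=\deg_x u$.
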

\begin{proof}
It is easy to check that the $s_i$ are the coefficients of the minimum polynomial of $x$ over $k(z)$. It is well-known that any non-constant coefficient of this polynomial generates the field.
\end{proof}

The fixed field for each of the groups $\G$ in cases i) - v) is generated  by the function
\begin{equation}
\begin{split}
i)   \quad  &  z= x^n  \\
ii) \quad  &  z= x^n + \frac 1 {x^n}\\
iii) \quad  &  z=\frac {x^{12} -33x^8 -33 x^4+1} {x^2 (x^4-1)^2} \\
iv) \quad  &  z= \frac {(x^8 + 14 x^4 +1 )^3} {108 \left( x (x^4-1) \right)^4} \\
v) \quad &  z= \frac {\left(-x^{20}+228 x^{15}-494x^{10}-228x^5-1 \right)^3} {1728 \left(x(x^{10} + 11x^5 -1)\right)^5} \\
\end{split}
\end{equation}
 
Notice that the branch points of a rational function $\phi (x)= \frac {f(x)} {g(x)}$ are exactly the zeroes of the discriminant of the polynomial $r(x):=f(x)-t\cdot g(x)$ with respect to $x$. Then the branch points of each of the above functions are 
\begin{itemize}

\item[i)] $\{0, \infty\}$,

\item[ii)] $\{ -2, 2, \infty\}$,

\item[iii)] $\{\infty, -6i\sqrt{3}, 6i\sqrt{3}\}$,

\item[iv)] $\{0, 1, \infty\}$,

\item[v)]  $\{ 0, 1728, \infty\}$.
\end{itemize}

The  group $G$  is a degree 2 central extension of $\G$.  The following  is proved in \cite{g_sh}.
\begin{lem}
Let $p\geq 2$, $\a \in G$ and $\bar \a$ its image in $\G$ with order $| \, {\bar \a} \, |= p $. Then,

i) $| \, \a \, | = p$ if and only if it fixes no Weierstrass points.

ii) $| \, \a \, | = 2p$ if and only if it fixes some Weierstrass point.
\end{lem}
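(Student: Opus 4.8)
Since $\bar\alpha$ has order $p$ in $\G=G/G_0$ with $G_0=\langle\tau\rangle$, we have $\alpha^p\in G_0=\{1,\tau\}$. If $\alpha^p=1$ then $|\alpha|\mid p$, and as $p=|\bar\alpha|$ divides $|\alpha|$ this forces $|\alpha|=p$; if $\alpha^p=\tau$ then $\alpha^{2p}=1$ while $\alpha^p\neq1$, and again $p\mid|\alpha|$, so $|\alpha|=2p$. Thus $|\alpha|\in\{p,2p\}$, the two cases are mutually exclusive, and the alternatives ``$\alpha$ fixes no Weierstrass point'' and ``$\alpha$ fixes some Weierstrass point'' are complementary; hence (i) and (ii) are contrapositives of one another and it suffices to prove the single equivalence
\[
\alpha^p=\tau \iff \alpha \text{ fixes at least one Weierstrass point.}
\]
I will use freely that the Weierstrass points of $\X_g$ are exactly the $2g+2$ ramification points of the hyperelliptic projection $\phi\colon\X_g\to\P^1_x=\X_g/\langle\tau\rangle$, i.e.\ the fixed points of $\tau$, and that at each of them $\tau$ acts on the one-dimensional tangent space by $-1$. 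Normalising the coordinate on $\P^1_x$ so that $\bar\alpha$ is $x\mapsto\zeta_p x$ (possible, since $\bar\alpha\in\pgl_2(k)$ has order $p$, hence exactly two fixed points, which we move to $0$ and $\infty$), we may write $\alpha\colon(x,y)\mapsto(\zeta_p x,\,\delta(x)y)$ with $\delta(x)^2=f(\zeta_p x)/f(x)$, and $\X_g$ acquires one of the normal forms $y^2=x^{\epsilon}\prod_i(x^p-\lambda_i)$, $\epsilon\in\{0,1\}$, where $\epsilon$ records whether $x=0$ is a branch point of $\phi$ (and similarly for $x=\infty$).

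\textbf{Fixing a Weierstrass point forces $|\alpha|=2p$.} Suppose $\alpha(Q)=Q$ for a Weierstrass point $Q$ and put $P=\phi(Q)\in\{0,\infty\}$. Since $\langle\alpha\rangle$ is a finite cyclic group fixing $Q$ and the characteristic is zero, it acts faithfully on the tangent line $T_Q$, say through a root of unity $\mu$, so $|\alpha|=\ord(\mu)$. As $\phi$ is ramified of index $2$ at $Q$, a local parameter at $P$ pulls back to the square of a local parameter at $Q$, so $\bar\alpha$ acts on $T_P$ by $\mu^2$; and since $\bar\alpha$ is a non-trivial Moebius transformation of order $p$, it acts on $T_P$ by a primitive $p$-th root of unity, whence $\ord(\mu^2)=p$. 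Therefore $\alpha^p$ acts on $T_Q$ by $\mu^p\neq1$, and because $\alpha^p\in\{1,\tau\}$ while $\tau$ acts on $T_Q$ by $-1$, we conclude $\alpha^p=\tau$, i.e.\ $|\alpha|=2p$.

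\textbf{The converse.} Assume $\alpha^p=\tau$. Iterating $\alpha$ gives $\alpha^p\colon(x,y)\mapsto\bigl(x,\ \prod_{j=0}^{p-1}\delta(\zeta_p^{j}x)\cdot y\bigr)$, so $\alpha^p=\tau$ is equivalent to $\prod_{j=0}^{p-1}\delta(\zeta_p^{j}x)=-1$. For $p=2$ this reads $\delta(x)^2=-1$, forcing $f(-x)=-f(x)$ and hence $f(0)=0$: the fixed point $x=0$ of $\bar\alpha$ is a branch point of $\phi$, so $\epsilon=1$. For general $p$ one argues the analogous way inside the normal forms $y^2=x^{\epsilon}\prod_i(x^p-\lambda_i)$ to locate a branch point of $\phi$ at a fixed point of $\bar\alpha$. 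In either case, the fibre $\phi^{-1}(0)$ over that branch point is a single Weierstrass point $Q_0$, and $\alpha$, which fixes the branch point $0$, must fix $Q_0$. This yields the desired Weierstrass fixed point and completes the displayed equivalence, and with it (i) and (ii).

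The conceptual heart of the argument is the tangent-space computation of the first implication: the ramification index $2$ of $\phi$ over a Weierstrass point promotes the order-$p$ local action of $\bar\alpha$ to an order-$2p$ local action of $\alpha$, which together with $\alpha^p\in\langle\tau\rangle$ pins down $|\alpha|$. The step I expect to demand the most care is closing the converse for general $p$: the bare identity $\prod_{j=0}^{p-1}\delta(\zeta_p^{j}x)=-1$ does not by itself place a branch point of $\phi$ at a fixed point of $\bar\alpha$ — for instance a constant $\delta\equiv-1$ satisfies it when $p$ is odd — so one has to bring in the finer information on $f$ available in the classification setting of \cite{g_sh} and check the normal forms case by case. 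For $p=2$ this subtlety disappears, which is the case one most often needs in practice.
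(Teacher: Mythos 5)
The paper itself does not prove this lemma --- it is quoted from \cite{g_sh} --- so there is no internal argument to compare with; judged on its own terms, your proposal has a genuine gap, and in fact the equivalence you reduce everything to fails in the generality claimed. The flawed step is ``$\ord(\mu^2)=p$, therefore $\mu^p\neq 1$'': this inference is valid only when $p$ is even (then $\ord(\mu)$ is forced to be $2p$ and $\mu^p=-1$), whereas for odd $p$ the character $\mu$ may itself be a primitive $p$-th root of unity, in which case $\mu^2$ still has order $p$ but $\mu^p=1$. This is not a repairable slip, because for odd $p$ the statement ``$\alpha$ fixes a Weierstrass point $\iff \alpha^p=\tau$'' is false in both directions. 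For instance, on the genus $3$ curve $y^2=x(x^6+ax^3+1)$ the map $\alpha(x,y)=(\zeta_3 x,\zeta_3^2 y)$ is an automorphism of order $3=p$ which fixes the Weierstrass point $(0,0)$; and on the genus $2$ curve $y^2=(x^3-1)(x^3-\lambda)$ the map $\alpha(x,y)=(\zeta_3 x,-y)$ has order $6=2p$ and fixes no Weierstrass point. So the lemma as printed is only correct for even $p$ (the case $p=2$, the one relevant to \cite{g_sh} and to the classification that follows, is the essential one), and any correct proof must invoke the parity of $p$; your tangent-space argument does establish the forward direction once ``$p$ even'' is said explicitly.

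For the converse you verify only $p=2$ and defer the general case to an unspecified check of normal forms; the trouble you yourself flag (a constant $\delta\equiv-1$ satisfies your product identity when $p$ is odd) is exactly the real obstruction, not a technicality. The clean global argument for every even $p$ is: since the branch locus is $\bar\alpha$-stable and $0,\infty$ are the fixed points of $\bar\alpha$, the root multiset of $f$ is invariant under $x\mapsto\zeta_p x$, so $f(\zeta_p x)=\zeta_p^{\,d} f(x)$ with $d=\deg f$, and the lift is $\alpha(x,y)=(\zeta_p x,\delta y)$ with a \emph{constant} $\delta$ satisfying $\delta^2=\zeta_p^{\,d}$. Then $\alpha^p=\tau$ iff $\delta^p=-1$. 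If $p$ is even this says $(-1)^d=-1$, i.e.\ $d$ is odd, so $\infty$ is a branch point whose unique preimage is a Weierstrass point fixed by $\alpha$ (and since $d\equiv\epsilon_0 \pmod p$ with $p$ even, where $\epsilon_0$ records whether $0$ is a branch point, $x=0$ is then a branch point as well); if $p$ is odd the two sign choices for $\delta$ give $\delta^p=\pm 1$, so one lift of $\bar\alpha$ has order $p$ and the other order $2p$ regardless of the branch behaviour at $0$ and $\infty$ --- which is precisely where the counterexamples above come from.
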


Let $W$ denote the images in $\P^1_x$ of Weierstrass places of $\X_g$ and $V:=\cup_{i=1}^3 \phi^{-1} (q_i)$.
Let $z= \frac {\nf (x) } { \df (x) }$, where $\nf, \df \in k[x]$. For each branch point $q_i$, $i=1,2,3$ we
have the degree $|\G|$ equation $z\cdot \df(x) - q_i \cdot \df (x) =\nf (x),$ where the multiplicity of the
roots correspond to the ramification index for each $q_i$ (i.e., the index of the normalizer in $\G$ of
$\s_i$). We denote the ramification of $\phi: \P^1_x \to \P^1_z$, by $\varphi_m^r, \chi_n^s,  \psi_p^t$,
where the subscript denotes the degree of the polynomial.

Let $\l \in S \setminus \{ q_1, q_2, q_3\}$. The points in the fiber of a non-branch point $\l$ are the roots
of the equation: $ \nf (x) -\l \cdot \df (x) =0.$ To determine  the equation of the curve we simply need to
determine the Weierstrass points of the curve. For each fixed $\phi$ there are the following eight cases:
\begin{equation}\label{cases}
\begin{split}
1) & \quad  \, V \cap W = \emptyset,\\
2) & \quad  \, V \cap W =\phi^{-1} (q_1),\\
3) & \quad  \, V \cap W =\phi^{-1} (q_2),\\
4) & \quad  \, V \cap W =\phi^{-1} (q_3),\\
5) & \quad  \, V \cap W =\phi^{-1} (q_1) \cup \phi^{-1} (q_2),\\
6) & \quad  \,V \cap W =\phi^{-1} (q_2) \cup \phi^{-1} (q_3),\\
7) & \quad  \, V \cap W =\phi^{-1} (q_1) \cup \phi^{-1} (q_3),\\
8) & \quad  \, V \cap W =\phi^{-1} (q_1) \cup \phi^{-1} (q_2) \cup \phi^{-1} (q_3).
\end{split}
\end{equation}
\noindent It turns out that the above cases also determine the   full automorphism groups. We define the
following groups  as follows:
\begin{equation}\label{groups1}
\begin{split}
V_n := & \<  \, \, x, y \, | \, x^4, y^n,  (xy)^2, (x^{-1}y)^2\, \>, \quad
H_n :=  \<\,  x, y \, \, | \, \, x^4, y^2x^2, (xy)^n \, \>,   \\
G_n := & \< \, x, y \, \, | \, \, x^2 y^n, y^{2n},  x^{-1} y x y \, \>,\quad
U_n :=  \< \, x, y \, | \, x^2, y^n,  xyxy^{n+1} \>,  \\
\end{split}
\end{equation}
These groups are also called \textbf{twisted dihedral, double dihedral, generalized quaternion}, and
\textbf{semidihedral}. We warn the reader that these terms are not standard in the literature. They are all
four degree 2 central extensions of the dihedral group $D_n$ and therefore have order $4n$. Notice that $V_2$
is isomorphic with the dihedral group of order 8 and $H_2 \iso U_2 \iso C_2 \o C_4$. Furthermore, we have
the following result, the proof is elementary, and we skip the details.

\begin{rem}  i) If $n\equiv 1 \mod 2$ then $H_{4n} \iso G_{4n}$

 ii) If $n=2^{s+1}$ then $G_n = Q_{2^{s+1}}$ for any $s\in \Z$.
\end{rem}

\noindent The following groups
\begin{equation*}
\begin{split}
W_2:=  & \< \, x, y \, | \, x^4, y^3, y x^2  y^{-1} x^2, (x y)^4 \>, \quad
W_3:=   \< \, x, y \, | \, x^2, y^3, x^2 (x y)^4, (x y)^8 \> \\
\end{split}
\end{equation*}
are degree 2 central extensions of $S_4$. We have the following result:
\begin{thm}
The full automorphism group of a hyperelliptic curve is isomorphic to one of the following
$C_2 \times C_n$,  $C_n$,
$C_2 \times D_n$, $V_n$, $D_n$, $H_n$, $G_n$, $U_n$,
$C_2 \times A_4$, $SL_2(3)$, $C_2 \o S_4$,  $GL_2 (3)$, $W_2$, $W_3$
$C_2 \times A_5$, $SL_2 (5)$.
%
\end{thm}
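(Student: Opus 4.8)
The plan is to reduce the determination of $G=\Aut(K/k)$ to that of the reduced group $\bar G$ together with the combinatorial data carried by the eight cases of \eqref{cases}. Since $G_0=\langle\tau\rangle$ is central of order $2$ and $\bar G=G/G_0$, the group $G$ sits in a central extension
\[ 1 \longrightarrow \langle\tau\rangle \longrightarrow G \longrightarrow \bar G \longrightarrow 1, \]
so $|G|=2\,|\bar G|$ and $\bar G$ is one of $C_n,D_n,A_4,S_4,A_5$ with the explicit faithful action on $\P^1_x$ of \eqref{eq1a} and fixed field $k(z)=k(\phi(x))$, where $\phi$ is the degree $|\bar G|$ rational function listed above (its shape being controlled by \cref{lem1a}). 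The first step is to identify the set $W\subset\P^1_x$ of images of the Weierstrass points: these are exactly the $2g+2$ branch points of the double cover $K/k(x)$, i.e. the roots of $f$ together with $\infty$ when $\deg f$ is odd. By \cref{lem-1} the group $G$ permutes the Weierstrass places, and since $G_0$ acts trivially on $\P^1_x$ this action descends to $\bar G$; hence $W$ is a $\bar G$-invariant subset of $\P^1_x$ with $|W|=2g+2$.

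Second, I would analyze the $\bar G$-orbits inside $W$. Every $\bar G$-orbit on $\P^1_x$ has size $|\bar G|$, with the only exceptions being the orbits $\phi^{-1}(q_1),\dots,\phi^{-1}(q_r)$ over the branch points of $\phi$ ($r=2$ for $C_n$, $r=3$ otherwise), whose sizes are smaller and are encoded in the ramification data $\varphi,\chi,\psi$ of $\phi$. Thus $W$ is a disjoint union of finitely many generic orbits together with some subset of $\{\phi^{-1}(q_1),\dots,\phi^{-1}(q_r)\}$, and the choice of that subset is exactly the choice among the eight cases of \eqref{cases}. In each case the polynomial $f(x)$ equals, up to a nonzero scalar, the product of the orbit polynomials of the orbits contained in $W$: the exceptional factors are the ramification polynomials $\varphi,\chi,\psi$, and the generic factors are the products $\prod_\lambda\bigl(\nf(x)-\lambda\,\df(x)\bigr)$ over the free parameters $\lambda\in S\setminus\{q_1,\dots,q_r\}$. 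This simultaneously recovers the equation of $\X_g$ and records, for each generator $\sigma_i$ of $\bar G$ in \eqref{eq1a}, whether its exceptional fiber $\phi^{-1}(q_i)$ lies in $W$.

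Third comes the lifting step. For each generator $\bar\sigma$ of $\bar G$, viewed as a Möbius transformation of $x$, one checks that $f(\bar\sigma x)/f(x)$ is a square in $k(x)^\times$ — this is equivalent to $\operatorname{div}(f)$ being $\bar\sigma$-invariant modulo $2$, which is precisely the $\bar G$-invariance of $W$ just established — and chooses $c_{\bar\sigma}\in k(x)^\times$ with $c_{\bar\sigma}^{\,2}=f(\bar\sigma x)/f(x)$. Then $\sigma\colon(x,y)\mapsto(\bar\sigma x,\,c_{\bar\sigma}(x)\,y)$ is a lift of $\bar\sigma$ to $G$, unique up to multiplication by $\tau$. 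The relations of $\bar G$ hold among the $\sigma_i$ up to central factors, and the defects $\sigma_i^{m_i}\in\langle\tau\rangle$ and $[\sigma_i,\sigma_j]\in\langle\tau\rangle$ are computed from the explicit $c_{\bar\sigma}$, or read off from the fixed-point criterion of the lemma of \cite{g_sh} (a lift of an element of order $m$ has order $m$ or $2m$ according to whether it fixes no, respectively some, Weierstrass point). Running through the five possibilities for $\bar G$ and, for each, the admissible cases of \eqref{cases}, one obtains: from $C_n$ the groups $C_2\times C_n$ and the cyclic group of order $2n$; from $D_n$ the groups $C_2\times D_n$, the dihedral group of order $4n$, and the four twisted families $V_n,H_n,G_n,U_n$ of \eqref{groups1}; from $A_4$ the groups $C_2\times A_4$ and $SL_2(3)$; from $S_4$ the groups $C_2\times S_4$, $GL_2(3)$, $W_2$, $W_3$; and from $A_5$ the groups $C_2\times A_5$ and $SL_2(5)$. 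Collecting these gives exactly the asserted list.

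The hard part is this last step: proving that precisely these central extensions occur, and identifying each with its named group. One must verify that in the split cases a globally coherent choice of the signs $c_{\bar\sigma}$ exists; that in the non-split cases the signs — hence the extension class — are forced by which exceptional fibers lie in $W$; and that the resulting presentation is isomorphic to the claimed group. The dihedral case is the most delicate, since $H^2(D_n,C_2)$ leaves room for $V_n,H_n,G_n,U_n$ of order $4n$ in addition to $C_2\times D_n$ and $D_{2n}$, and these must be separated by their $2$-group structure while accounting for the low-index coincidences $V_2\cong D_4$, $H_2\cong U_2\cong C_2\times C_4$, $H_{4n}\cong G_{4n}$ for odd $n$, and $G_{2^{s+1}}\cong Q_{2^{s+1}}$, so that each group is counted once. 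For $A_4,S_4,A_5$ the bookkeeping is lighter, the extensions that arise being the direct products together with the Schur-type covers $SL_2(3)$, $GL_2(3)$, $W_2$, $W_3$, $SL_2(5)$.
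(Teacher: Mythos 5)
Your proposal is correct and follows essentially the same route as the paper: reduce to the degree-2 central extension $1\to\langle\tau\rangle\to G\to \bar G\to 1$ with $\bar G\in\{C_n,D_n,A_4,S_4,A_5\}$ acting on $\P^1_x$, use the $\bar G$-invariance of the images of the Weierstrass points and the eight cases of \eqref{cases} to fix the data, and identify the resulting extension via the order-$p$ versus order-$2p$ lifting lemma of \cite{g_sh}, yielding exactly the listed groups. The case-by-case identification you defer as the ``hard part'' is likewise not carried out in the survey, which cites the same framework, so your sketch matches the paper's argument in both structure and level of detail.
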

In \cref{sect-7} we will show how to determine a parametric equation of the  curve for each case. 
Can this be done for non-hyperelliptic curves? A natural generalization of hyperelliptic curves are the superelliptic curves which we will discuss next.

\part{Superelliptic curves}

\section{Superelliptic curves}\label{sect-5}
To generalize the theory of hyperelliptic case, we consider curves which have an automorphism similar to the  \emph{hyperelliptic involution}. 

\subsection{Superelliptic Riemann surfaces}
A curve $\X$ is called \textbf{cyclic $n$-gonal}, where $n \geq 2$ is an integer, if there exists $\tau \in \Aut(\X)$ of order $n$ so that the quotient   ${\mathcal O}=\X/\langle \tau\rangle$ has genus zero;  $\tau$ is called a \emph{$n$-gonal automorphism} and $H=\langle \tau \rangle \cong C_{n}$ a \emph{$n$-gonal group} of $\X$.  
Let us consider, in this case, a regular branched covering $\pi: \X \to \P^1 (k)$ whose deck covering group is $H$. If $H$ is a normal subgroup of $\Aut(\X)$, then the computation of the group $G=\Aut(\X)$ can be done by studying the short sequence
\[ 1 \to H \to G \to \overline{G},\]
where $\overline{G}:= G/H$ is called the \textbf{reduced automorphism group} of $\X$. 

The case $n=p$ a prime integer has been the most studied one. For instance, in \cite{Gabino} it was observed that any two $p$-gonal groups of $\X$ are conjugated in $\Aut(\X)$ and, by 
Castelnuovo-Severi's inequality \cites{Accola1, CS}, for $g>(p-1)^{2}$ the $p$-gonal group is unique.
This uniqueness property also holds for any integer $n$ if $\X/H$ is fully ramified, see \cite{K}. The uniqueness also holds if $2 \leq g < (p-1)(p-5)/10$ (for instance, for $p \geq 11$ and $g=(p-1)/2$) \cite{Hid:pgrupo}.

Let us assume that $\pi:\X \to \P^1$ is tame and  the finite branch values of $\pi:\X \to \P^1$ are given by the collection of pairwise different points $a_{1}, \ldots, a_{r} \in \P^1$. Then the cyclic $n$-gonal curve $\X$ can be represented by an affine irreducible  algebraic curve, which might have singularities, of the following form (called a \emph{cyclic $n$-gonal curve})
\begin{equation}\label{ngonal}
\quad y^{n}=\prod_{j=1}^{r}(x-a_{j})^{l_{j}},
\end{equation}
where (i) $l_{1},\ldots,l_{r} \in \{1,\ldots,n-1\}$, (ii) $\gcd(n,l_{1},\ldots,l_{r})=1$; in this model, $\tau$ and $\pi$ are given by $\tau(x,y)=(x,\omega_{n}y)$, where $\omega_{n}=e^{2 \pi i/n}$, and $\pi(x,y)=x$. The point $\infty$ is a branch value of $\pi$ if and only if $l_{1}+\cdots+l_{r}$ is not congruent to zero module $n$. Let us denote by $N$ the normalizer of $H$ in $\Aut(\X)$.

A particular class of cyclic $n$-gonal curves, called \emph{superelliptic curves of level $n$}, has been introduced in \cites{super1}. These correspond, in the above algebraic description \cref{ngonal}, to the case when all the exponents $l_{j}$ are equal to $1$. In this case, $\tau$ happens to be central in $N$. 
In the generic situation, it happens that $N=\Aut(\X)$, that is, $\tau$ is central in $\Aut(\X)$;
$\tau$ is called a \emph{superelliptic automorphism of level $n$} and $H=\langle \tau\rangle$ a \emph{superelliptic group of level $n$}. 
In this case, all cone points of $\X/H$ have order $n$ and a classification of those was provided in \cite{Sa}. 
%

For the general cyclic $n$-gonal curve \cref{ngonal} 
it happens that, for the generic case, $\tau$ is central in $N$. In this situation we call $\tau$ a \textbf{generalized superelliptic automorphism of level $n$}, $H$ a \textbf{generalized superelliptic group of level $n$}, $\X$ a \textbf{generalized superelliptic surface of level $n$} and the corresponding cyclic $n$-gonal curve \cref{ngonal} a \textbf{generalized superelliptic curve of level $n$}; see \cite{HQS} for details.

Motivated by the above discussion we have the following definition.

\begin{defi} A  a genus $g \geq 2$ smooth, irreducible,  algebraic curve    $\X$ defined over an algebraically closed field $k$ is called  a \textbf{superelliptic curve of level $n$}  if there exist an element $\tau \in \Aut (\X)$ of order $n$ such that $\tau$ is central and  the quotient $\X / \< \tau \> $ has genus zero.
\end{defi}

Next we will see that with the above definition, superelliptic curves mimic exactly the theory of hyperelliptic curves.

Let $\X$ be a genus $g\geq 2$  defined over $k$ such that there exists an order $n>1$ automorphism $\sigma \in \Aut (\X)$ with the following properties:   i) $H:=\<\s\>$ is normal in $\Aut (\X)$, and ii) $\X/\<\s\>$ has genus zero.  Such curves are called superelliptic curves and their Jacobians, superelliptic Jacobians.  They have affine equation 
\begin{equation}\label{super}
\X  : \; y^n = f(x) = \prod_{i=1}^d (x-\a_i)
\end{equation}
We denote by $\sigma$ the superelliptic automorphism of $\X$.  So $\sigma : \X \to \X$ such that 
\[ \sigma (x, y) \to (x, \xi_n y),
\]
where $\xi_n$ is a primitive $n$-th root of unity.  Notice that $\sigma$ fixes 0 and  the point at infinity in $\P_y^1$.   

The natural projection 
\[ \pi : \X \to \P^1_x=\X/\<\sigma\>\]
is called the \textbf{superelliptic projection}.  It  has  $\deg \pi =n$ and 
$ \pi (x, y) =  x$. 
This cover is branched at exactly at the roots $\a_1, \dots , \a_d$ of $f(x)$.

If the discriminant $\Delta (f, x) \neq 0$ and $d>n$   then from the Riemann-Hurwitz formula we have
\[ g = \frac 1 2 \left(  n(d-1) - d - \gcd (n, d)    \frac {} {}  \right)     + 1 \]  
There is a lot of confusion in the literature over the term \textit{superelliptic} or \textit{cyclic} curves.  To us a \textit{superelliptic curve} it is a curve which satisfies \cref{super} with discriminant $\Delta (f, x) \neq 0$.  

If $\gcd (n, d)=1$ then $\deg f$ is either  $\frac {2g} {n-1}+ 2$ or $ \frac {2g} {n-1}+ 1$, depending on whether or not the place at infinity is a branch point of the superelliptic projection map.

\subsection{Automorphism groups}
Let $k$ be an algebraically closed field of characteristic $p\geq 0$ and $\X_g$ be a genus $g$ cyclic curve given by the equation $y^n=f(x)$ for some $f \in k[x]$. Let $K:=k(x,y)$ be the function field of $\X_g$. Then $k(x)$ is degree n genus zero subfield of $K$. Let $G=\Aut(K/k)$. Since 
\[ C_n: = \Gal (K/k(x))=\langle \tau \rangle,\]
 with $\tau^n=1$ such that $\langle \tau \rangle \lhd G$, then group $\G:=G/C_n$ and $\G \leq \pgl_2(k)$. Hence $\G$ is isomorphic to one of the following:   
\[ C_m, D_m, A_4, S_4, A_5,\]
 \emph{semidirect product  of elementary  Abelian  group  with  cyclic  group}, $ \psl_2(q)$ and $\pgl_2(q)$,   see \cite{VM}.

\[
\xymatrix{
K =k(x, y)\ar@{-}[d]^{\, \, \, C_n} \ar@/_1.8pc/[dd]_{\, \, G} \\
k(x, y^n) \ar@{-}[d]^{\, \, \, \G}  \\ k (z)  \\ }
\qquad \qquad \qquad \xymatrix{ \X_g \ar[d]_{\, \, \phi_0 }^{\, \, \,
C_n} \ar@/_1.8pc/[dd]_{\, \, \Phi} \\
\P^1 \ar[d]_{\, \, \, \phi}^{\, \, \, \G} \\ \P^1  \\ }
\]

The group $\G$ acts on $k(x)$ via the natural way. The fixed field is a genus 0 field, say $k(z)$. Thus $z$ is a degree $|\G|$ rational function in $x$, say $z=\phi (x)$. We illustrate with the above diagram.

Let $\phi_0 : \X_g \to \P^1$ be the cover which corresponds to the degree n extension $K/k(x)$. Then $\Phi :=\phi \circ \phi_0$ has monodromy group $G:=\Aut(\X_g)$. From the basic covering theory, the group $G$ is embedded in the group $S_l$ where $l=\deg$ $\P$. There is an $r$-tuple $\overline{\sigma}:=(\sigma_1, \dots , \sigma_r)$, where $\sigma_i \in S_l$ such that $\sigma_1, \dots ,\sigma_r$ generate $G$ and $\sigma_1\dots \sigma_r=1$. The signature of $\P$ is an $r$-tuple of conjugacy classes $\bC:=(C_1, \ldots ,C_r)$ in $S_l$ such that $C_i$ is the conjugacy class of $\sigma_i$. We use the notation $n$ to denote the conjugacy class of permutations which is cycle of length $n$. Using the signature of $\phi : \P^1 \to \P^1$ one finds out the signature of $\Phi : \X_g \to \P^1$ for any given $g$ and $G$.
Let $E$ be the fixed field of G, the Hurwitz genus formula states that
\begin{equation}\label{hurwitz-1}
2(g_K-1)=2(g_E-1)|G|+\deg(\mathfrak{D}_{K/E})
\end{equation}
with $g_K$ and $g_E$ the genera of $K$ and $E$ respectively and $\mathfrak{D}_{K/E}$ the different of $K/E$. Let $\overline{P}_1, \overline{P}_2, \dots , \overline{P}_r$ be ramified primes of $E$. If we set $d_i=\deg(\overline{P}_i)$ and let $e_i$ be the ramification index of the $\overline{P}_i$ and let $\beta_i$ be the exponent of $\overline{P}_i$ in $\mathfrak{D}_{K/E}$. Hence, \cref{hurwitz-1} may be written as
\begin{equation}\label{e2}
2(g_K-1)=2(g_E-1)|G|+|G|\sum_{i=1}^{r}\frac{\beta_i}{e_i}d_i
\end{equation}
If $\overline{P}_i$ is tamely ramified then $\beta_i=e_i-1$ or if $\overline{P}_i$ is wildly ramified then $\beta_i=e_i^*q_i+q_i-2$ with $e_i=e_i^*q_i$, $e_i^*$ relatively prime to $p$, $q_i$ a power of $p$ and $e_i^*|q_i-1$. For fixed $G$, $\bC$ the family of covers $\P:\X_g\to \P^1$ is a Hurwitz space $\cH(G,\bC)$. $\cH(G,\bC)$ is an irreducible algebraic variety of dimension $\d(G,\bC)$. Using equation \cref{e2} and signature $\bC$ one can find out the dimension for each $G$.

We denote by $K_m$ the  following semidirect product of elementary Abelian group  with  cyclic group  
$K_m:=\left\langle\left\{ \sigma_a, \t |a \in \U_m\right\}\right\rangle$, 
where $\t(x)=\xi^2x, \quad \sigma_a(x)=x+a,$ for each $a \in \U_m$,
\[\U_m :=\{a \in k| (a\prod_{j=0}^{\frac{p^t-1}{m}-1}(a^m-b_j))=0\}\]
$b_j \in \F^*_q$, $m|p^t-1$ and $\xi$ is a primitive $2m$-th root of unity.  $\U_m$ is a subgroup of the additive group of $k$.

\begin{small}
\begin{table}[hbt]
\begin{center}
\begin{tabular}{cccc}
$Case$ & $\G$ & $z$ & $Ramification$  \\
\hline \\
1 & $C_m$, $(m,p)=1$& $x^m$ & $(m,m)$\\ \\
2 & $D_{2m}$, $(m,p)=1$& $x^m+\frac{1}{x^m}$ & $(2,2,m)$\\  \\
  3 & $A_4, \, p\neq 2, 3$ & $\frac{x^{12}-33x^8-33x^4+1}{x^2(x^4-1)^2}$ & $(2,3,3)$      \\   \\
4 & $S_4, \, p\neq 2, 3$ & $\frac{(x^8+14x^4+1)^3}{108(x(x^4-1))^4}$ & $(2,3,4)$\\  \\
5 & $A_5, \,p\neq 2, 3, 5$ & $\frac{(-x^{20}+228x^{15}-494x^{10}-228x^5-1)^3}{(x(x^{10}+11x^5-1))^5}$ & $(2,3,5)$\\  \\
  & $A_5, \,p=3$ & $\frac{(x^{10}-1)^6}{(x(x^{10}+2ix^5+1))^5}$ & $(6,5)$\\  \\
6 & $U$ & $  \displaystyle{\prod_{a \in H_t}} (x+a)$ & $(p^t)$\\  \\
7 & $K_m$ & $(x   \displaystyle{\prod_{j=0}^{\frac{p^t-1}{m}-1} } (x^m-b_j))^m$ & $(mp^t,m)$ \\  \\
8 & $\psl_2(q), \,p\neq 2$ & $\frac{((x^q-x)^{q-1}+1)^{\frac{q+1}{2}}}{(x^q-x)^{\frac{q(q-1)}{2}}}$ & $(\alpha,\beta)$ \\  \\
9 & $\pgl_2(q)$ & $\frac{((x^q-x)^{q-1}+1)^{q+1}}{(x^q-x)^{q(q-1)}}$ & $(2\alpha,2\beta)$ \\
\end{tabular}
\caption{Rational functions correspond to each reduced automorphism group} 
\label{t1}
\end{center}
\end{table}

\end{small}


\begin{lem}\label{l1}
Let $k$  be an algebraically closed field of characteristic $p$, 
$\G$ be a finite subgroup of $\pgl_2(k)$ acting on the field $k(x)$. Then, $\G$ is isomorphic to one of the following groups 
\[  
C_m, D_m, A_4, S_4, A_5, U= C_p^t, K_m, \psl_2(q), \pgl_2(q),
\]
where $q=p^f$ and $(m,p)=1$. Moreover, the fixed subfield $k(x)^{\G}=k(z)$ is given by  ~\cref{t1}, where $\alpha =\frac{q(q-1)}{2}$, $\beta= \frac{q+1}{2}$, and $H_t$ is a subgroup of the additive group of $k$ with $| H_t | = p^t$ and $b_j \in k^*$.
\end{lem}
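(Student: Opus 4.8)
The statement splits into a purely group-theoretic part --- the classification of the finite subgroups $\G\le\pgl_2(k)$ --- and an invariant-theoretic part --- exhibiting the generator $z$ of $k(x)^{\G}$ in each case. For the first part the plan is to invoke Dickson's classification of the finite subgroups of $\pgl_2(k)$ in arbitrary characteristic, as presented in \cite{hkt} and used in \cite{VM}. Its logic: let $P$ be a Sylow $p$-subgroup of $\G$ (trivial when $p=0$ or $p\nmid|\G|$). Every element of finite order in $\pgl_2(k)$ is either semisimple, hence of order prime to $p$, or unipotent of order exactly $p$; a $p$-subgroup consists of unipotents sharing a unique fixed point of $\P^1$, so after conjugation $P$ lies in the translation group $\{x\mapsto x+a\}$ and is elementary abelian, giving type $U=C_p^t$. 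Its normalizer is the affine stabilizer of that point, whose finite subgroups are $C_p^t\rtimes C_m$ with $(m,p)=1$, giving $K_m$. If $1<P$ is not normal one shows $\psl_2(p^f)\le\G\le\pgl_2(p^f)$. If $P=1$ the classical orbit-counting argument on $\P^1$ (the finite ``rotation group'' classification, valid over any field once $p\nmid|\G|$) returns $C_m$, $D_m$, $A_4$, $S_4$, $A_5$; the constraint $p\nmid|\G|$ is exactly what forces the small-characteristic exclusions $p\ne2,3$ for $A_4,S_4$ and $p\ne2,3,5$ for $A_5$, the borderline coincidences $A_4\cong\psl_2(3)$, $S_4\cong\pgl_2(3)$, $A_5\cong\psl_2(5)$ and the anomalous $A_5$ in characteristic $3$ being recorded on their own lines of \cref{t1}.

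For the second part, by L\"uroth's theorem $k(x)^{\G}=k(z)$ is rational, and since $[k(x):k(x)^{\G}]=|\G|$ by Artin's theorem it suffices, for each candidate $z$ listed in \cref{t1}, to check that $z\in k(x)^{\G}$ and that $z$ has degree $\le|\G|$ as a rational function of $x$; equality of degrees is then automatic. I would obtain these $z$ from the invariant theory of the induced linear action on $k[X,Y]$ with $x=X/Y$, using \cref{lem1a}: a nonconstant elementary symmetric function of the M\"obius transformations of $\G$ --- equivalently a ratio of two relative invariant forms --- generates $k(z)$. This gives $x^m$ for $C_m$, $x^m+x^{-m}$ for $D_m$, and for $A_4,S_4,A_5$ the classical degree $12,24,60$ ratios built from the tetrahedral, octahedral and icosahedral ground forms. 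For $U=C_p^t$ the additive polynomial $\prod_{a\in H_t}(x+a)$ is manifestly $U$-invariant of degree $|U|$. For $K_m=C_p^t\rtimes C_m$ one first takes the $U$-invariant $g(x)=x\prod_{j}(x^m-b_j)$ --- degree $p^t$, with the $b_j$ indexing the $C_m$-orbits among the nonzero $U$-fixed points --- and then $z=g(x)^m$ is $K_m$-invariant of degree $mp^t$. For $\psl_2(q)$ and $\pgl_2(q)$ one assembles $z$ from the Dickson invariants of $\GL_2(\F_q)$: the factor $x^q-x$ (invariant under translations) and the form $(x^q-x)^{q-1}+1$ of degree $q^2-q$, combined with the exponents $(q+1)/2$ and $q(q-1)/2$ --- resp. their doubles --- forced by $|\psl_2(q)|=\tfrac12 q(q^2-1)$, resp. $|\pgl_2(q)|=q(q^2-1)$. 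The ramification column of \cref{t1} is then obtained by factoring $\Psi(x)-z\,\Upsilon(x)$, where $z=\Psi/\Upsilon$, and reading off root multiplicities at the branch points, or equivalently applying Riemann--Hurwitz \eqref{e2} to $\phi\colon\P^1\to\P^1$.

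The hard part is the characteristic-$p$ bookkeeping in the last three families: one must pin down the fixed-point and orbit structure of $\G$ on $\P^1$ in the presence of wild ramification --- e.g. that $\prod_j(x^m-b_j)$ is exactly the product over the non-origin $U$-fixed points, that $\infty$ is totally (wildly) ramified of index $|U|m$ for $K_m$, and that $x^q-x$ realizes the translation orbit for $\psl_2(q)$ --- and then match these orbit sizes against $\deg z=|\G|$. A careless count here is where the argument would break, so I would treat these cases by explicit computation with the generators spelled out in the paragraph preceding the lemma, using the dimension/ramification identity \eqref{e2} as a consistency check.
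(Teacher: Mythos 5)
The paper itself offers no proof of this lemma: the group list is quoted from Dickson's classification via \cite{VM} (see also \cite{hkt}), the generators of the fixed fields in \cref{t1} are taken from the same source, and the only ingredient the paper actually proves is \cref{lem1a}. So your plan is best read as a reconstruction of the cited argument, and its two halves are the right ones. For the classification you invoke Dickson's theorem, which is exactly what the paper does implicitly; for the fixed fields your degree-count is clean and correct: by L\"uroth $k(x)^{\G}=k(z)$, by Artin $[k(x):k(x)^{\G}]=|\G|$, so any nonconstant $\G$-invariant rational function of degree at most $|\G|$ must generate the fixed field. Exhibiting the candidates via \cref{lem1a} or the classical ground forms ($C_m$, $D_m$, $A_4$, $S_4$, $A_5$), the additive polynomial for $U$, the $m$-th power construction for $K_m$, and the Dickson invariants $x^q-x$ and $(x^q-x)^{q-1}+1$ for $\psl_2(q)$, $\pgl_2(q)$ matches \cref{t1}, and using \eqref{e2} as a consistency check on the ramification column is sensible.

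One caveat on the classification half: your Sylow trichotomy is stated too crudely. It is not true that a nontrivial non-normal $p$-Sylow forces $\psl_2(q)\le\G\le\pgl_2(q)$: in characteristic $3$ the group $A_5$ has a non-normal Sylow $3$-subgroup and is not of that form --- this is precisely the extra $A_5$, $p=3$ row of \cref{t1} with signature $(6,5)$ --- and in characteristic $2$ the dihedral groups $D_m$ with $m$ odd behave similarly; correspondingly, the tame polyhedral cases are not characterized by $p\nmid|\G|$ alone. Since you treat Dickson's theorem as a black box this costs you nothing, but if you intend to rederive the classification along the lines you sketch, those exceptional branches must be carried explicitly, and the wild $A_5$ case needs the invariant and ramification listed in its own row of \cref{t1} rather than the icosahedral $(2,3,5)$ data.
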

  

By considering the lifting of ramified points in each $\G$, we divide each $\G$ into sub cases and determine the signature of each sub case by looking the behavior of lifting and ramification of $\G$. Using that signature and \cref{e2} we calculate the moduli dimension $\delta$ (cf. \cref{sect-5})  for each case. 

\begin{thm}[\cite{Sa-sh}] \label{th1}
Let  $\X$ be a genus $g\geq 2$ superelliptic curve.  The signature of  $\Phi :\X \to \X^{\Aut(\X)}$ and  the moduli dimension $\delta$ are given in  \cref{long-table}, 
where $m=|\psl_2(q)|$ for cases 38-41 and $m=|\pgl_2(q)|$ for cases 42-45.
\end{thm}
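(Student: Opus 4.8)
The plan is a case analysis driven by the reduced automorphism group $\G = G/C_n$, where $G = \Aut(\X)$ and $C_n = \langle\tau\rangle$ is the central superelliptic group. By \cref{l1}, $\G$ is one of $C_m, D_m, A_4, S_4, A_5, U = C_p^t, K_m, \psl_2(q), \pgl_2(q)$, and in each case the fixed field $k(x)^{\G} = k(z)$ is generated by the explicit rational map $z = \phi(x)$ of \cref{t1}, whose branch locus $\{q_1,\dots,q_r\} \subset \P^1_z$ and ramification data are listed there. So the first step is: for each admissible $\G$, fix $\phi$ and record, for every $q_i$, its ramification index in $\phi$ and the size of the fiber $\phi^{-1}(q_i)$.

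The structural fact that makes the computation go is that $\Phi = \phi\circ\phi_0 \colon \X \to \X/G \cong \P^1_z$ factors through the superelliptic projection $\phi_0 \colon \X \to \P^1_x$, which is the cyclic degree $n$ cover totally ramified exactly over the roots $\alpha_1,\dots,\alpha_d$ of $f$. Centrality of $\tau$ enters precisely here: it forces all exponents $l_j$ in the cyclic $n$-gonal model \cref{ngonal} to equal $1$, so that every branch point of $\phi_0$ is totally ramified with index $n$ and all cone points of $\X/\langle\tau\rangle$ have order $n$. Since $\G \leq \pgl_2(k)$ must permute $\{\alpha_1,\dots,\alpha_d\}$, this set is a union of $\G$-orbits on $\P^1_x$; generically these are free orbits, but some $\alpha_j$ may fall into the distinguished fibers $\phi^{-1}(q_i)$. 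For each $\G$ one therefore splits into sub-cases according to which of $\phi^{-1}(q_1),\dots,\phi^{-1}(q_r)$ are contained in the branch set of $\phi_0$ --- the same kind of eight-fold split that appears in the hyperelliptic discussion around \cref{cases}, now carried out for every $\G$ from \cref{t1}. In each sub-case the monodromy of $\Phi$ over $q_i$ is read off by superposing the $\G$-action on the fiber $\phi^{-1}(q_i)$ with the order-$n$ cyclic action of $\tau$; this pins down the conjugacy class $C_i \subset S_\ell$, $\ell = |G| = n\,|\G|$, hence the signature $\bC = (C_1,\dots,C_r)$, which is what \cref{long-table} tabulates. The completeness of the list of possible $G$ themselves may be taken from the classification recalled in \cref{sect-4}; what is new here is attaching a signature and a dimension to each entry.

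Once the signature is in hand, the moduli dimension $\delta$ follows from the Hurwitz genus formula in the form \cref{e2}: with $E = k(z) = k(\X)^G$, each ramified prime $\overline P_i$ of $E$ contributes $|G|\,\beta_i/e_i$, where $\beta_i = e_i - 1$ when $\overline P_i$ is tame and $\beta_i = e_i^* q_i + q_i - 2$ (in the notation of \cref{e2}) when it is wild; matching this against the known genus $g$ determines which configurations of free branch points on $\P^1_x$ are allowed, and $\delta$ is the number of parameters of that configuration space, i.e.\ the dimension $\delta(G,\bC)$ of the Hurwitz space $\cH(G,\bC)$. Doing this for each $\G$ and each sub-case, with $m = |\psl_2(q)|$ in cases 38--41 and $m = |\pgl_2(q)|$ in cases 42--45, produces \cref{long-table}.

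The hard part will be the wildly ramified cases in characteristic $p > 0$, i.e.\ $\G \in \{U = C_p^t, K_m, \psl_2(q), \pgl_2(q)\}$. There one must compute the different exponents of $\Phi$ itself (not merely of $\phi$), correctly account for the interaction of the wild part of $\phi$ with the tame cyclic cover $\phi_0$ when $p \nmid n$, and --- the most delicate point --- make sure every sub-case that actually occurs has been listed and that each claimed signature is genuinely realized by some curve $y^n = f(x)$. A related check, needed throughout, is that $\tau$ really is normal in the candidate group $G$ (central on the generic locus), so that the extension $1 \to C_n \to G \to \G \to 1$ is the correct object to analyze; this rests on the uniqueness statements for $n$-gonal groups cited earlier together with a direct look at the explicit equations.
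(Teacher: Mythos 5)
Your proposal follows essentially the same route the paper (and the cited source \cite{Sa-sh}) takes: fix $\G$ from \cref{l1} and the rational map of \cref{t1}, split into sub-cases according to how the branch points of the superelliptic projection sit over the ramified fibers of $\phi$, read off the signature of $\Phi=\phi\circ\phi_0$, and then extract $\delta$ from the Hurwitz formula \cref{e2} as the dimension of the corresponding Hurwitz space. Your added cautions about wild ramification and the normality/centrality of $\tau$ are exactly the delicate points the original computation has to handle, so there is nothing methodologically different to flag.
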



\begin{footnotesize}

\renewcommand{\arraystretch}{2}

\begin{longtable}{|c|c|c|c|}
\hline \hline
\multicolumn{1}{|c|}{$\#$} & \multicolumn{1}{|c|}{$\G$} & \multicolumn{1}{|c|}{$\delta(G,\bC)$} & \multicolumn{1}{|c|}{$\bC=(C_1, \dots ,C_r)$}\\
\hline \hline
\endfirsthead
\hline \hline
\multicolumn{1}{|c|}{$\#$} & \multicolumn{1}{|c|}{$\G$} & \multicolumn{1}{|c|}{$\delta(G,\bC)$} &  \multicolumn{1}{|c|}{$\bC=(C_1, \dots ,C_r)$}\\
\hline \hline
\endhead
\hline
\multicolumn{4}{r}{\itshape continued on the next page}\\
\endfoot
\multicolumn{2}{r}{ }
\endlastfoot
$1$ &$(p,m)=1$ & $\frac{2(g+n-1)}{m(n-1)}-1$ &  $(m,m,n,\dots,n)$ \\
$2$ & $C_m$ & $\frac{2g+n-1}{m(n-1)}-1$ &  $(m,mn,n,\dots,n)$\\
$3$ &  & $\frac{2g}{m(n-1)}-1$ &   $(mn,mn,n,\dots,n)$\\
\hline
$4$ & $(p,m)=1$ & $\frac{g+n-1}{m(n-1)}$ &  $(2,2,m,n,\dots,n)$  \\
$5$ &  & $\frac{2g+m+2n-nm-2}{2m(n-1)}$ &  $(2n,2,m,n,\dots,n)$  \\
$6$ & $D_{2m}$ & $\frac{g}{m(n-1)}$ &  $(2,2,mn,n,\dots,n)$ \\
$7$ & &$\frac{g+m+n-mn-1}{m(n-1)}$ &  $(2n,2n,m,n,\dots,n)$  \\
$8$ &  & $\frac{2g+m-mn}{2m(n-1)}$ &  $(2n,2,mn,n,\dots,n)$  \\
$9$ &  & $\frac{g+m-mn}{m(n-1)}$ &  $(2n,2n,mn,n,\dots,n)$  \\
\hline \hline
$10$ &  & $\frac{n+g-1}{6(n-1)}$ &  $(2,3,3,n,\dots,n)$  \\
$11$ &$ A_4$ & $\frac{g-n+1}{6(n-1)}$ &  $(2,3n,3,n,\dots,n)$ \\
$12$ &  & $\frac{g-3n+3}{6(n-1)}$ &  $(2,3n,3n,n,\dots,n)$  \\
$13$ &  & $\frac{g-2n+2}{6(n-1)}$ &  $(2n,3,3,n,\dots,n)$  \\
$14$ &  & $\frac{g-4n+4}{6(n-1)}$ &  $(2n,3n,3,n,\dots,n)$  \\
$15$ &  & $\frac{g-6n+6}{6(n-1)}$ &  $(2n,3n,3n,n,\dots,n)$  \\
\hline \hline
$16$ &  & $\frac{g+n-1}{12(n-1)}$ &  $(2,3,4,n,\dots,n)$  \\
$17$ &  & $\frac{g-3n+3}{12(n-1)}$ &  $(2,3n,4,n,\dots,n)$  \\
$18$ &  &  $\frac{g-2n+2}{12(n-1)}$ &  $(2,3,4n,n,\dots,n)$ \\
$19$ &  & $\frac{g-6n+6}{12(n-1)}$ &  $(2,3n,4n,n,\dots,n)$  \\
$20$ &  $S_4$ & $\frac{g-5n+5}{12(n-1)}$ &  $(2n,3,4,n,\dots,n)$  \\
$21$ &  & $\frac{g-9n+9}{12(n-1)}$ &  $(2n,3n,4,n,\dots,n)$  \\
$22$ &  & $\frac{g-8n+8}{12(n-1)}$ &  $(2n,3,4n,n,\dots,n)$  \\
$23$ &  & $\frac{g-12n+12}{12(n-1)}$  & $(2n,3n,4n,n,\dots,n)$  \\
\hline \hline
$24$ &  & $\frac{g+n-1}{30(n-1)}$ &  $(2,3,5,n,\dots,n)$  \\
$25$ &  & $\frac{g-5n+5}{30(n-1)}$ &  $(2,3,5n,n,\dots,n)$ \\
$26$ &  & $\frac{g-15n+15}{30(n-1)}$  & $(2,3n,5n,n,\dots,n)$  \\
$27$ &  & $\frac{g-9n+9}{30(n-1)}$ &  $(2,3n,5,n,\dots,n)$  \\
$28$ &  $A_5$ & $\frac{g-14n+14}{30(n-1)}$ &  $(2n,3,5,n,\dots,n)$  \\
$29$ &  & $\frac{g-20n+20}{30(n-1)}$ &  $(2n,3,5n,n,\dots,n)$  \\
$30$ &  & $\frac{g-24n+24}{30(n-1)}$ &  $(2n,3n,5,n,\dots,n)$  \\
$31$ &  & $\frac{g-30n+30}{30(n-1)}$ &  $(2n,3n,5n,n,\dots,n)$  \\
\hline \hline
$32$ &  & $\frac{2g+2n-2}{p^t(n-1)}-2$ &  $(p^t,n,\dots,n)$  \\
$33$ & $U$ & $\frac{2g+np^{t}-p^t}{p^t(n-1)}-2$ &   $(np^t,n,\dots,n)$ \\
\hline \hline
$34$ &  & $\frac{2(g+n-1)}{mp^t(n-1)}-1$ &   $(mp^t,m,n,\dots,n)$  \\
$35$ &  & $\frac{2g+2n+p^t-np^t-2}{mp^t(n-1)}-1$ &  $(mp^t,nm,n,\dots,n)$  \\
$36$ & $K_m$ & $\frac{2g+np^t-p^{t}}{mp^t(n-1)}-1$ &   $(nmp^t,m,n,\dots,n)$\\
$37$ &  & $\frac{2g}{mp^t(n-1)}-1$ &   $(nmp^t,nm,n,\dots,n)$ \\
\hline \hline
$38$& & $\frac{2(g+n-1)}{m(n-1)}-1$ &   $(\alpha,\beta,n,\dots,n)$  \\
$39$& $\psl_2(q)$  & $\frac{2g+q(q-1)-n(q+1)(q-2)-2}{m(n-1)}-1$ &   $(\alpha,n\beta,n,\dots,n)$  \\
$40$& & $\frac{2g+nq(q-1)+q-q^2}{m(n-1)}-1$ &   $(n\alpha,\beta,n,\dots,n)$  \\
$41$& & $\frac{2g}{m(n-1)}-1$ &   $(n\alpha,n\beta,n,\dots,n)$  \\
\hline \hline
$42$& & $\frac{2(g+n-1)}{m(n-1)}-1$ &   $(2\alpha,2\beta,n,\dots,n)$  \\
$43$& $\pgl_2(q)$  & $\frac{2g+q(q-1)-n(q+1)(q-2)-2}{m(n-1)}-1$ &   $(2\alpha,2n\beta,n,\dots,n)$  \\
$44$& & $\frac{2g+nq(q-1)+q-q^2}{m(n-1)}-1$ &   $(2n\alpha,2\beta,n,\dots,n)$ \\
$45$& & $\frac{2g}{m(n-1)}-1$ &   $(2n\alpha,2n\beta,n,\dots,n)$ \\
\hline \hline
\caption{The signature $\bC$ and dimension $\d$ for $\chara >5 $} 
\label{t2}
\label{long-table}
\end{longtable}

\end{footnotesize}

\vspace{-1cm}

Next  we can   complete the classification of automorphism groups of superelliptic curves defined over any algebraically closed  field of characteristic $\chara k > 2$.

\begin{thm}[\cite{Sa}]  \label{th14}
Let $\X_g$ be an irreducible cyclic curve of genus $g\geq2$, defined over an algebraically closed field $k$, $\ch (k)=p\neq2$, $G=Aut(\X_g)$, $\G$ its reduced automorphism group.
\begin{enumerate}
\item If $\G \cong C_m$ then $G \cong C_{mn}$ or
\begin{center}
$\left\langle \r, \s \right|\r^n=1,\s^m=1,\s\r\s^{-1}=\r^l \rangle$
\end{center}
where (l,n)=1 and $l^m\equiv 1$ (mod n).

\item If $\G \cong D_{2m}$ then $G \cong D_{2m} \times C_n$ or
\begin{align*}
\begin{split}
G_{5}=& \left\langle \r, \s, \t \right|\r^n=1,\s^2=\r,\t^2=1,(\s\t)^m=1,\s\r\s^{-1}=\r,\t\r\t^{-1}=\r^{n-1} \rangle\\
G_{6}=& D_{2mn} \\
G_{7}=& \left\langle \r, \s, \t \right|\r^n=1,\s^2=\r,\t^2=\r^{n-1},(\s\t)^m=1,\s\r\s^{-1}=\r,\t\r\t^{-1}=\r \rangle\\
G_{8}=& \left\langle \r, \s, \t \right|\r^n=1,\s^2=\r,\t^2=1,(\s\t)^m=\r^{\frac{n}{2}},\s\r\s^{-1}=\r,\t\r\t^{-1}=\r^{n-1} \rangle \\
G_{9}=& \left\langle \r, \s, \t \right|\r^n=1,\s^2=\r,\t^2=\r^{n-1},(\s\t)^m=\r^{\frac{n}{2}},\s\r\s^{-1}=\r,\t\r\t^{-1}=\r \rangle\end{split}
\end{align*}

\item If $\G \cong A_4$ and $p\neq3$ then $G \cong A_4 \times C_n$ or
\begin{align*}
\begin{split}
G'_{10}=& \left\langle \r, \s, \t \right|\r^n=1,\s^2=1,\t^3=1,(\s\t)^3=1,\s\r\s^{-1}=\r,\t\r\t^{-1}=\r^l \rangle\\
G'_{12}=& \left\langle \r, \s, \t \right|\r^n=1,\s^2=1,\t^3=\r^{\frac{n}{3}},(\s\t)^3=\r^{\frac{n}{3}},\s\r\s^{-1}=\r,\t\r\t^{-1}=\r^l \rangle\\
\end{split}
\end{align*}
where $(l,n)=1$ and $l^3\equiv 1$ (mod n) or
\begin{center}
$\left\langle \r, \s, \t \right|\r^n=1,\s^2=\r^{\frac{n}{2}},\t^3=\r^{\frac{n}{2}},(\s\t)^5=\r^{\frac{n}{2}},\s\r\s^{-1}=\r,\t\r\t^{-1}=\r \rangle $
\end{center}
or
\begin{align*}
\begin{split}
G_{10}=& \left\langle \r, \s, \t \right|\r^n=1,\s^2=1,\t^3=1,(\s\t)^3=1,\s\r\s^{-1}=\r,\t\r\t^{-1}=\r^k \rangle\\
G_{13}=& \left\langle \r, \s, \t \right|\r^n=1,\s^2=\r^{\frac{n}{2}},\t^3=1,(\s\t)^3=1,\s\r\s^{-1}=\r,\t\r\t^{-1}=\r^k \rangle\\
\end{split}
\end{align*}
where $(k,n)=1$ and $k^3\equiv 1$ (mod n).

\item If $\G \cong S_4$ and $p\neq3$ then $G \cong S_4 \times C_n$ or
\begin{align*}
\begin{split}
G_{16}=& \left\langle \r, \s, \t \right|\r^n=1,\s^2=1,\t^3=1,(\s\t)^4=1,\s\r\s^{-1}=\r^l,\t\r\t^{-1}=\r \rangle\\
G_{18}=& \left\langle \r, \s, \t \right|\r^n=1,\s^2=1,\t^3=1,(\s\t)^4=\r^{\frac{n}{2}},\s\r\s^{-1}=\r^l,\t\r\t^{-1}=\r \rangle\\
G_{20}=& \left\langle \r, \s, \t \right|\r^n=1,\s^2=\r^{\frac{n}{2}},\t^3=1,(\s\t)^4=1,\s\r\s^{-1}=\r^l,\t\r\t^{-1}=\r \rangle \\
G_{22}=& \left\langle \r, \s, \t \right|\r^n=1,\s^2=\r^{\frac{n}{2}},\t^3=1,(\s\t)^4=\r^{\frac{n}{2}},\s\r\s^{-1}=\r^l,\t\r\t^{-1}=\r \rangle\\
\end{split}
\end{align*}
where $(l,n)=1$ and $l^2\equiv 1$ (mod n).

\item If $\G \cong A_5$ and $p\neq5$ then $G \cong A_{5}\times C_{n}$ or
\begin{center}
$\left\langle \r, \s, \t \right|\r^n=1,\s^2=\r^{\frac{n}{2}},\t^3=\r^{\frac{n}{2}},(\s\t)^5=\r^{\frac{n}{2}},\s\r\s^{-1}=\r,\t\r\t^{-1}=\r \rangle $
\end{center}

\item If $\G \cong U$ then $G \cong U \times C_n$ or
\begin{multline*}
<\r,\s_1,\s_2,\dots,\s_t|\r^n=\s_1^p=\s_2^p=\dots=\s_t^p=1,\\ \s_i\s_j=\s_j\s_i, \s_i\r\s_i^{-1}=\r^{l}, 1\leq i,j\leq t>
\end{multline*}
where $(l,n)=1$ and $l^p \equiv 1$ (mod n).

\item If $\G \cong K_m$ then $G \cong$
\begin{multline*}
<\r,\s_1,\dots,\s_t,v|\r^n=\s_1^p=\dots=\s_t^p=v^m=1, \s_i\s_j=\s_j\s_i,\\
v\r v^{-1}=\r, \s_i\r\s_i^{-1}=\r^{l}, \s_iv\s_i^{-1}=v^{k}, 1\leq i,j\leq t >
\end{multline*}
where $(l,n)=1$ and $l^p \equiv 1$ (mod n), $(k,m)=1$ and $k^p \equiv 1$ (mod m) or
\begin{align*}
\begin{split}
\left\langle \r,\s_1,\dots,\s_t|\r^{nm}=\s_1^p=\dots=\s_t^p=1, \s_i\s_j=\s_j\s_i, \s_i\r\s_i^{-1}=\r^{l}, 1\leq i,j\leq t\right\rangle
\end{split}
\end{align*}
where $(l,nm)=1$ and $l^p \equiv 1$ (mod nm).

\item If $\G \cong \psl_{2}(q)$ then $G\cong \psl_2(q)\times C_n$ or $SL_2(3)$.

\item If $\G \cong \pgl_2(q)$ then $G \cong \pgl_2(q) \times C_n$.
\end{enumerate}
\end{thm}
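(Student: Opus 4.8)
The backbone of the argument is the exact sequence
\[ 1 \longrightarrow C_n \longrightarrow G \longrightarrow \bar G \longrightarrow 1, \]
in which, by \cref{l1}, $\bar G$ is one of $C_m$, $D_m$, $A_4$, $S_4$, $A_5$, $U=C_p^t$, $K_m$, $\psl_2(q)$, $\pgl_2(q)$. For each of these one must list all extensions $G$ of $\bar G$ by $C_n=\langle\tau\rangle$ which can occur as $\Aut(\X_g)$ of a cyclic curve. The first reduction is the conjugation action: since $C_n$ is normal, $G$ acts on it, giving $\chi\colon\bar G\to\Aut(C_n)\cong(\Z/n\Z)^{\times}$, and since the target is abelian, $\chi$ factors through the abelianization $\bar G^{\mathrm{ab}}$. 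Hence $\chi$ is trivial whenever $\bar G$ is perfect ($A_5$, and $\psl_2(q)$ for $q>3$), has image of exponent dividing $2$ when $\bar G^{\mathrm{ab}}$ does ($D_{2m}$, $S_4$, $\pgl_2(q)$), image of exponent dividing $3$ for $A_4$, and is governed by the evident cyclic or elementary-abelian quotient when $\bar G\in\{C_m,U,K_m\}$. Writing the conjugation of $\tau$ by a lift of a generator of $\bar G$ as $\tau\mapsto\tau^{\,l}$ with $(l,n)=1$, the exponents $l$ are thereby forced to satisfy the congruences $l^m\equiv1$, $l^2\equiv1$, $l^3\equiv1$, $l^p\equiv1\pmod n$ that figure in items (1)--(9).

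With $\chi$ pinned down, the only remaining freedom is the class in $H^2(\bar G,C_n)$; concretely it records, for suitable lifts $\widetilde g_1,\widetilde g_2,\dots$ of the generators of $\bar G$, the elements $\widetilde g_1^{\,k_1},\widetilde g_2^{\,k_2},(\widetilde g_1\widetilde g_2)^{k}\in C_n$ appearing in the presentations of $G_5,\dots,G_9$, $G'_{10},G'_{12},\dots$ Here the Schur multipliers carry the load: since $M(A_4)=M(S_4)=M(A_5)=C_2$, and (for $p\neq2$) $M(D_{2m})$ and $M(\psl_2(q))$ are likewise $2$-groups, the only nontrivial defects that can occur are $\tau^{n/2}$ and $\tau^{n/3}$, i.e. a generator of $\bar G$ can at worst lift to an element of twice or three times its order. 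This yields the finite lists above, and through the exceptional isomorphisms $\psl_2(3)\cong A_4$, $\pgl_2(3)\cong S_4$, $\psl_2(5)\cong A_5$ it explains the appearance of the binary groups $SL_2(3)$, $SL_2(5)$. One then enumerates the cocycles compatible with $\chi$ and with the divisibility constraints — $2\mid n$ for a $\tau^{n/2}$ defect, $3\mid n$ for $\tau^{n/3}$, and so on — obtaining precisely the groups of (1)--(9) together with the split product $\bar G\times C_n$.

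What remains is to decide which of these abstractly admissible extensions is actually attained, and that nothing else occurs. For this I would pass to the explicit model $\X_g\colon y^n=f(x)$: given $\bar G\le\pgl_2(k)$, \cref{l1} supplies the rational function $z=\phi(x)$ of Table~\ref{t1}, which fixes the orbit structure of $\bar G$ on $\P^1_x$ and hence the admissible branch divisors of the degree-$n$ projection $\pi$. Lifting the Moebius generators of $\bar G$ through $\pi$ and computing their orders from the local behavior at the branch points of $\phi$ — using the Riemann--Hurwitz relation \cref{e2} to constrain ramification and the hypothesis $p\neq2$ to keep the $C_n$-part tame — reproduces exactly the defect exponents $n/2$, $n/3$ found above; any candidate lift whose existence would contradict \cref{e2} or force $g\le1$ is discarded. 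The signature of $\Phi\colon\X_g\to\X_g^{G}$ used in these computations is the one tabulated in \cref{th1} and Table~\ref{long-table}, so the verification is a finite one.

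The principal obstacle is exactly this last, lengthy bookkeeping: for every $\bar G$ and every sub-case of its signature in Table~\ref{long-table} one must correctly compute the orders of the lifted generators together with the induced action on $C_n$, cleanly separate the split extension $\bar G\times C_n$ from the various non-split central extensions, and absorb the coincidences that merge some families. No single step is conceptually deep once \cref{l1} is available, but the cases are numerous and must be handled uniformly, and keeping them straight is where the real effort lies.
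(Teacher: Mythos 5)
Your overall architecture---the extension $1\to C_n\to G\to \bar G\to 1$, the action $\chi:\bar G\to \Aut(C_n)$ factoring through $\bar G^{\mathrm{ab}}$, a case-by-case analysis of lifts, and a final realization check against signatures---is indeed the route taken in \cite{Sa}, which the paper cites rather than reproves. But the quantitative claim carrying your second paragraph is wrong: the possible ``defects'' are not bounded by Schur multipliers. Even for central extensions one has $H^2(\bar G,C_n)\cong \mathrm{Ext}(\bar G^{\mathrm{ab}},C_n)\oplus \Hom (H_2(\bar G,\Z),C_n)$ (and for nontrivial $\chi$ one needs the twisted $H^2$), and it is precisely the $\mathrm{Ext}$ term you discard that produces most of the defects in the statement. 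The theorem itself refutes your bound: in $G_5,\dots,G_9$ the lift $\s$ satisfies $\s^2=\r$, so it has order $2n$ --- the defect is a generator of $C_n$, not $\r^{n/2}$; the relation $\t^3=\r^{n/3}$ in $G'_{12}$ comes from $A_4^{\mathrm{ab}}\cong C_3$, not from the multiplier $C_2$ of $A_4$; for $\bar G\cong C_m$ a generator lifts to an element of order $mn$ in the case $G\cong C_{mn}$ (and $H_2(\psl_2(9),\Z)\cong C_6$, so even the multiplier statement has exceptions). As written, your middle step would exclude $C_{mn}$, $G_5$, $G'_{12}$, and others that occur in the theorem, so the case analysis cannot close.

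The constraint that actually yields the finite list is curve-theoretic and must be the derivation, not a verification: point stabilizers on $\X_g$ are cyclic, so the inertia group over each branch point of $\Phi:\X_g\to\X_g/G$ is cyclic of order equal to the corresponding entry of the signature $\bC$ in \cref{long-table}; whether that entry is $m$ or $mn$, $2$ or $2n$, $3$ or $3n$, $p^t$ or $np^t$ decides whether the corresponding branch generator of $\bar G$ lifts to an element of the same order or acquires a power of $\r$. That is how \cite{Sa} proceeds: first the sub-cases and signatures (\cref{th1}, obtained from \cref{e2} applied to $\phi$ and $\Phi$), then the presentations in (1)--(9) read off from the lifted generators after normalizing lifts $\s\mapsto \s\r^{a}$, with existence certified by the equations of \cref{equations}. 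If you reorganize along these lines---derive the orders of lifts from the signatures, use cohomology only to normalize and to separate the split product $\bar G\times C_n$ from the non-split groups, and treat the cases $\bar G\cong U, K_m, \psl_2(q), \pgl_2(q)$ with the wild form of \cref{e2} (the hypothesis $p\neq 2$ alone does not make $\Phi$ tame)---the proof can be completed; as it stands, the Schur-multiplier step is a genuine gap.
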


Applying the above theorem  we can obtain the automorphism groups of a genus 3 superelliptic curves  defined over algebraically closed field of characteristic $p \neq 2$. Below we list the GAP group ID's of those groups.

\begin{lem}\label{thm_g_3}
Let $\X_g$ be a genus 3 superelliptic curve  defined over a field of characteristic $p \neq 2$. Then the automorphism groups of $\X_g$ are as follows.
\begin{description}
\item[i)] $p=3$: $(2,1)$, $(4,2)$, $(3,1)$, $(4,1)$, $(8,2)$, $(8,3)$, $(7,1)$, $(14,2)$, $(6,2)$, $(8,1)$, $(8,5)$, $(16,11)$, $(16,10)$, $(32,9)$, $(30,2)$, $(16,7)$, $(16,8)$, $(6,2)$.
\item[ii)] $p=5$: $(2,1)$, $(4,2)$, $(3,1)$, $(4,1)$, $(8,2)$, $(8,3)$, $(7,1)$, $(21,1)$, $(14,2)$, $(6,2)$, $(12,2)$,
    $(9,1)$, $(8,1)$, $(8,5)$, $(16,11)$, $(16,10)$, $(32,9)$, $(42,3)$, $(12,4)$, $(16,7)$, $(24,5)$, $(18,3)$,
    $(16,8)$, $(48,33)$, $(48,48)$.
    \item[iii)] $p=7$: $(2,1)$, $(4,2)$, $(3,1)$, $(4,1)$, $(8,2)$, $(8,3)$, $(7,1)$, $(21,1)$, $(6,2)$, $(12,2)$,
    $(9,1)$, $(8,1)$, $(8,5)$, $(16,11)$, $(16,10)$, $(32,9)$, $(30,2)$, $(42,3)$, $(12,4)$, $(16,7)$, $(24,5)$, $(18,3)$,
    $(16,8)$, $(48,33)$, $(48,48)$.
\item[iv)] $p=0$ or $p > 7$: $(2,1)$, $(4,2)$, $(3,1)$, $(4,1)$, $(8,2)$, $(14,2)$, $(6,2)$,  
    $(9,1)$, $(8,5)$, $(16,11)$,  $(32,9)$,   $(12,4)$, $(16,13)$, $(24,5)$, 
      $(48,33)$, $(48,48)$, (96, 64).
\end{description}
\end{lem}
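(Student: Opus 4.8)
The plan is to read the list off the general classification of automorphism groups of cyclic (in particular superelliptic) curves in \cref{th14}, specialised to genus $3$, and then to translate each abstract group that occurs into its small-group identifier.

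The first step is to bound the level. If $\X_g$ carries a degree-$n$ cyclic projection $\pi:\X_g\to\P^1$, the Riemann--Hurwitz genus formula (the identity displayed just after \cref{super}, together with its variant for non-trivial exponents $l_j$) writes $g$ in terms of $n$, the number of branch points, and the exponents; setting $g=3$ leaves only finitely many admissible data. In the superelliptic case $y^n=f(x)$ with $f$ separable of degree $d$ this reads $(n-1)(d-1)=5+\gcd(n,d)$, whose solutions are $n=2$ (with $d=7,8$), $n=3$ (with $d=4$), $n=4$ (with $d=3,4$), $n=7$ (with $d=2$) and $n=8$ (with $d=2$); the more general cyclic models contribute only a few further finite cases. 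For each admissible level I would then consult \cref{t1} and \cref{long-table}: imposing $g=3$ and the nonemptiness condition $\delta(G,\bC)\ge 0$ on the modular dimension singles out, for each $n$, a short list of admissible reduced groups $\G$ with their signatures $\bC$. The characteristic enters here: $A_4$ and $S_4$ are excluded when $p=3$, $A_5$ when $p=5$, the modular families $U=C_p^t$, $K_m$, $\psl_2(q)$, $\pgl_2(q)$ occur only for $p\in\{3,5,7\}$ and only for the few $q=p^f$ compatible with the genus, and in characteristic $3$ one must also include the exceptional $A_5$-row of \cref{t1} with branching $(6,5)$.

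For every surviving pair $(\G,\bC)$ the full group $G=\Aut(\X_g)$ is one of the explicit extensions listed in \cref{th14}, each presented by generators and relations that exhibit $G$ as an extension of $\G$ by the cyclic group $C_n$. Because $g=3$ these groups have small order, so each can be matched against the GAP small-groups library; performing that identification and sorting the outcomes according to the value of $p$ yields the four lists in the statement, the list for $p=0$ or $p>7$ being the stable one that remains once the modular families are discarded. One should also invoke the known inclusion relations among the loci of genus-$3$ curves with automorphisms to confirm that each group recorded is the \emph{full} automorphism group on some stratum, and not merely a subgroup of a group living on a more special stratum.

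The difficulty is organisational rather than conceptual. The delicate points are: running correctly through all the rows of \cref{long-table} (and through their analogues in characteristics $3$ and $5$, where wild ramification changes the signatures) and discarding the vacuous ones at $g=3$; identifying the several pairwise non-isomorphic extensions of a fixed $\G$ — in particular the distinct groups of orders $8$, $16$ and $48$ that appear — with their exact small-group numbers; and checking that the parameter families in \cref{th14} (subject to side conditions such as $(l,n)=1$ and $l^{m}\equiv 1\pmod n$) are enumerated once up to isomorphism rather than once per parameter. The modular cases $p=3,5,7$ require the extra care of tracking the reduced groups and the non-split extensions that exist only in those characteristics.
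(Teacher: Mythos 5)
Your proposal follows essentially the same route as the paper, which offers no detailed argument beyond ``applying the above theorem'': one specialises the signature/dimension data of \cref{th1} (\cref{t1}, \cref{long-table}) to $g=3$, feeds the surviving reduced groups into the extensions listed in \cref{th14}, and records the resulting groups by their GAP identifiers, with the characteristic-dependent families handled exactly as you describe. Your additional bookkeeping (bounding the level via Riemann--Hurwitz, checking fullness of the automorphism group via the inclusions of loci) is consistent with, and fills in, what the paper leaves implicit.
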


Recall that the list for  $p=0$ is the same as for $p > 7$.   While the above result seems rather technical it can be used very effectively to write down the complete list of automorphism  groups for all superelliptic curves for any given $g\geq 2$.  Such lists were compiled for all $2 \leq g \leq 10$ in \cite{MPRZ}.

\subsection{Weierstrass points of  superelliptic curves} 

\def\br{\mathfrak b}

Most of this section is summarizing the results in \cite{shor-shaska-1} and \cite{shor-shaska-2}.
Let  $\X_g$ be a smooth superelliptic curve given by an affine equation  $y^n=f(x)$ with $n\geq2$ and $f(x)\in k[x]$.  Since  we are assuming that $\X_g$ is smooth, then $f(x)$ is a separable polynomial of degree $\deg f = d >n$.  Hence, $\D_f \neq 0$. Consider the following.

\begin{prob}
Determine all the $q$-Weierstrass points superelliptic curves  $y^n= f(x)$. 
\end{prob}

Let $\{\alpha_1, \alpha_2, \dots, \alpha_d\}$ denote the $d$ distinct roots of $f(x)$, and for each $i$ let $\br_i=(\alpha_i,0)$ be an affine branch point of the cover $\phi: \X_g\to\P^1 (k)$.  %
For any   $c \in \P^1 (k)$ let   $P^c_1, \dots, P^c_n$ denote the $n$ points in the fiber  $\phi(c)$.  
Let $r=\gcd(n,d)$.  All points on this model of the curve are smooth except possibly the point at infinity, which is singular when $d>n+1$.  In a smooth model for the curve, the point at infinity splits into $r$ points which we denote $P^\infty_1, \dots, P^\infty_r$.
One then has the following divisors:
\begin{itemize}
\item $(x-c) = \displaystyle\sum_{j=1}^n  P^c_j - \dfrac{n}{r}\sum_{m=1}^r  P^\infty_m,$
\item $(x-\alpha_i) = \displaystyle n \br_i - \dfrac{n}{r}\sum_{m=1}^r  P^\infty_m,$
\item $(y) = \displaystyle\sum_{j=1}^d \br_j - \dfrac{d}{r}\sum_{m=1}^r  P^\infty_m,$
\item $(dx) = (n-1)\displaystyle\sum_{j=1}^d \br_j - \left(\dfrac{n}{r}+1\right)\sum_{m=1}^r  P^\infty_m.$
\end{itemize}
Since $(dx)$ is a canonical divisor and hence has degree $2g-2$, we find the genus $g$ of $\X_g$ is given by 
\[
2g-2=nd-n-d-\gcd(n,d).
\]  
In particular, if $n$ and $d$ are relatively prime, then we obtain $g=\dfrac{(n-1)(d-1)}{2}$. 
\begin{lem}
For a curve $\X_g$  given by an affine equation $y^n=f(x)$, with $f(x)$ separable of degree $d$ and $g>1$, we have $g\geq n$ with equality only when $(n,d)=(2,5), (2,6),$ or $(3,4)$.
\end{lem}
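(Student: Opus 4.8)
The plan is to start from the genus formula established just above, namely $2g-2 = nd-n-d-\gcd(n,d)$ (which came from computing that $(dx)$ is a canonical divisor of degree $2g-2$), together with the standing hypothesis of this subsection that $f$ is separable of degree $d>n$. Writing $nd-n-d = (n-1)(d-1)-1$, this rearranges to
\[
g = \frac{(n-1)(d-1) + 1 - \gcd(n,d)}{2},
\]
so that $g \geq n$ is equivalent to $(n-1)(d-1) \geq 2n-1+\gcd(n,d)$, and $g=n$ to equality in that inequality. First I would dispose of the case $n \geq 4$ by crude bounds, and then handle $n=2$ and $n=3$ by hand, since for small $n$ the crude bounds are too weak to be conclusive.

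For $n \geq 4$: using $d \geq n+1$ and $\gcd(n,d) \leq n$, one gets
\[
(n-1)(d-1) - \bigl(2n-1+\gcd(n,d)\bigr) \geq (n-1)n - (3n-1) = n^2 - 4n + 1 \geq 1 > 0,
\]
the last step because $n^2-4n+1$ is increasing for $n \geq 2$ and equals $1$ at $n=4$. Hence $g > n$ strictly whenever $n \geq 4$, so no equality case occurs there (and $g>1$ is automatic).

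For $n=2$ the formula reduces to $g = \tfrac{1}{2}\bigl(d-\gcd(2,d)\bigr)$, i.e. $g=(d-1)/2$ for $d$ odd and $g=(d-2)/2$ for $d$ even; since $d>2$ and $g>1$ force $d \geq 5$, we read off $g \geq 2 = n$, with equality exactly when $d=5$ or $d=6$. For $n=3$, $g = \tfrac{1}{2}\bigl(2(d-1)+1-\gcd(3,d)\bigr)$, which is $d-1$ when $3 \nmid d$ and $d-2$ when $3 \mid d$ (in which case $d \geq 6$); since $d>3$ gives $d \geq 4$, we get $g \geq 3 = n$, with equality exactly when $d=4$. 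Combining the three cases gives $g \geq n$ in all cases, with equality precisely for $(n,d) \in \{(2,5),(2,6),(3,4)\}$.

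I do not anticipate a real obstacle: the argument is elementary bookkeeping with the genus formula. The two points that need care are (i) actually invoking the hypotheses $d>n$ and $g>1$, without which the statement is false — e.g. $(n,d)=(4,4)$ would give $g=3<4$ — and (ii) remembering to split according to $\gcd(n,d)$ in the cases $n=2,3$, where the uniform estimate used for $n\geq 4$ does not suffice.
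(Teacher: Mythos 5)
Your proof is correct and follows essentially the same route as the paper: it rests on the genus formula $2g-2=nd-n-d-\gcd(n,d)$ and a case split into $n=2$, $n=3$, and $n\geq 4$, with a crude bound disposing of the last case and direct inspection of small $d$ (using $d>n$ and $g>1$) pinning down the equality cases $(2,5),(2,6),(3,4)$. The only difference is cosmetic: you bound $n^2-4n+1>0$ from $d\geq n+1$, while the paper estimates $2g\geq (n-1)(d-2)\geq 3(n-1)$ from $d\geq 5$.
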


\proof
One can check that if $(n,d)=(2,5), (2,6),$ or $(3,4)$, then $g=n$.
If $n=2$ and $d\geq7$, then 
\[ 
g=  \frac {d-\gcd(d,2)}  2     \geq 3 >n.
\]
If $n=3$ and $d\geq 5$, then 
\[ 
g= \frac {2d-1-\gcd(d,3)}  2 \geq 4 > n.
\]
If $n\geq4$, then $d\geq5$, and so 
\[ 2g=(n-1)(d-1)-\gcd(n,d)+1\geq (n-1)(d-2) \geq 3(n-1).\]
Thus, $g\geq \frac{3}{2}(n-1)$, which is larger than $n$ for $n>3$.  
\qed

To construct a basis of $H^0(\X_g,(\Omega^1)^q)$, we first note that 
\[ \left(\displaystyle\frac{dx}{y^{n-1}}\right)=  \dfrac{2g-2}{r}    \displaystyle\sum_{m=1}^r P^\infty_m.\]
Fix $\alpha_i$ and $q\geq 1$; for any $a, b\in \Z$ and we also let 
\[  
h_{a, b, q}(x,y)= (x-\alpha_i)^a  \,  y^b   \left(\frac{dx} {y^{n-1}}\right)^q.
\]
  Then, the divisor of $h_{a,b,q}$ is given by
\[\left( h_{a,b,q}(x,y)\right) = an  \br_i + b\sum_{j=1}^d  \br_j+\frac{(2g-2)q - a n - b d}{r} \sum_{m=1}^r P_m^\infty.\]  
In particular, this divisor is effective precisely when $a\geq 0$, $b\geq 0$, and $an+bd\leq (2g-2) q.$  Since $y^n=f(x)$, the functions $h_{a,b,q}(x,y)$ are linearly independent if we assume $a\geq0$ and $0\leq b<n$.

Define the set 
\[S_{n,d,q} : = \{(a,b)\in\Z^2 : a\geq0,\, 0\leq b<n,\, 0 \leq an+bd \leq (2g-2)q\}.\]  
A simple counting argument gives the following:
\begin{lem}
The set $S_{n,d,q}$ contains exactly $d_q$ distinct elements.
\end{lem}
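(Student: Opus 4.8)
The plan is to turn the statement into an elementary lattice-point count. Write $m := 2g-2 = nd-n-d-\gcd(n,d)$ and $r := \gcd(n,d)$, and recall from the Riemann--Roch computation above that $d_q = g$ when $q=1$ and $d_q = (g-1)(2q-1) = mq-(g-1)$ when $q\ge 2$. Since $a\ge 0$ and $b\ge 0$ force $an+bd\ge 0$ automatically, the lower inequality in the definition of $S_{n,d,q}$ is vacuous, and for each $b$ the admissible $a$ form $\{0,1,\dots,\lfloor(mq-bd)/n\rfloor\}$ when $mq-bd\ge 0$ and the empty set otherwise; in all cases this count equals $\max\!\bigl(0,\,1+\lfloor(mq-bd)/n\rfloor\bigr)$, so
\[
|S_{n,d,q}| \;=\; \sum_{b=0}^{n-1}\#\{a\in\Z_{\geq 0}\,:\,an\le mq-bd\}
\;=\;\sum_{b=0}^{n-1}\max\!\Bigl(0,\;1+\Bigl\lfloor\tfrac{mq-bd}{n}\Bigr\rfloor\Bigr).
\]

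First I would evaluate the \emph{unclamped} sum $\Sigma:=\sum_{b=0}^{n-1}\bigl(1+\lfloor(mq-bd)/n\rfloor\bigr)$. Using $\lfloor t\rfloor = t-\{t\}$ gives $\Sigma = n + mq - \tfrac{d(n-1)}{2} - \sum_{b=0}^{n-1}\{(mq-bd)/n\}$. Since $r$ divides every term of $m=nd-n-d-r$, it divides $mq$; hence as $b$ runs through $\{0,\dots,n-1\}$ the residues $mq-bd \bmod n$ sweep the subgroup $\{0,r,2r,\dots,n-r\}$ of $\Z/n\Z$, each value attained exactly $r$ times, so $\sum_b\{(mq-bd)/n\} = \tfrac{n-r}{2}$. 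Combining this with the identity $d(n-1)+n-r = m+2n$ collapses the expression to $\Sigma = mq - \tfrac{m}{2} = mq-(g-1)$.

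Next I would pin down which terms of the clamped sum actually differ from $\Sigma$, i.e.\ for which $b\in\{0,\dots,n-1\}$ one has $1+\lfloor(mq-bd)/n\rfloor<0$, equivalently $mq-bd < -n$. For $q\ge 2$ I claim there are none: the extreme case is $b=n-1$, where $mq-(n-1)d\ge -n$ amounts to $mq\ge (n-1)d-n = m+r$, i.e.\ $(q-1)m\ge r$, and this holds because $m = 2g-2 \ge 2n-2 \ge n \ge r$ by the lemma $g\ge n$ just established; so $|S_{n,d,q}| = \Sigma = mq-(g-1) = (g-1)(2q-1) = d_q$. For $q=1$ the only exceptional index is $b=n-1$: there $m-(n-1)d = -(n+r) < -n$, so the unclamped value $1+\lfloor-(n+r)/n\rfloor = -1$ must be replaced by $0$, a net correction of $+1$, while for every $b\le n-2$ one has $m-bd \ge d-n-r > -n$ (because $r<d$), so no other term is affected; hence $|S_{n,d,1}| = \Sigma + 1 = (m-g+1)+1 = g = d_1$.

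The floor-sum bookkeeping and the coset equidistribution giving $\tfrac{n-r}{2}$ are mechanical; the one step needing genuine care is the clamping analysis — showing that a spurious negative contribution occurs only at $b=n-1$, and only when $q=1$. This is exactly where the standing hypothesis $g\ge 2$ enters (through $g\ge n$, and through the exclusion of degenerate pairs such as $(n,d)=(2,4)$ that would have genus $\le 1$), so I would sanity-check this step against the small cases $(n,d,q)=(2,6,1)$, $(2,6,2)$ and $(3,4,2)$ before writing out the general argument.
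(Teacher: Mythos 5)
Your count is correct, and every step checks out. The paper itself offers no argument here -- it simply says ``a simple counting argument gives the following'' and defers to the cited reference -- so your write-up supplies precisely the missing computation, presumably along the intended lines: a direct lattice-point count $\sum_{b=0}^{n-1}\max\bigl(0,\,1+\lfloor(mq-bd)/n\rfloor\bigr)$. The two places where such a count could go wrong are exactly the ones you flagged, and both are handled correctly: the evaluation $\sum_{b}\{(mq-bd)/n\}=\tfrac{n-r}{2}$ is legitimate because $r=\gcd(n,d)$ divides $m=nd-n-d-r$ and hence $mq$, so the residues really do sweep the subgroup $r\Z/n\Z$ with multiplicity $r$ (and your identity $d(n-1)+n-r=m+2n$ is a one-line check); and the clamping analysis is right, since $m-(n-1)d=-(n+r)<-n$ forces the single $+1$ correction when $q=1$, while for $q\ge2$ the inequality $(q-1)m\ge m\ge 2n-2\ge n\ge r$ (using the preceding lemma $g\ge n$) shows $mq-(n-1)d\ge -n$, so no term is clamped -- note that equality can occur, e.g.\ $(n,d,q)=(2,6,2)$ gives $mq-(n-1)d=-n$ exactly, and your criterion correctly treats that term as an honest zero rather than a clamped one. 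This matches $d_1=g$ and $d_q=(g-1)(2q-1)=mq-(g-1)$ for $q\ge2$, so the lemma is proved.
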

From this set $S_{n,d,q}$, we obtain a basis 
\[ 
\mathfrak{B}_{q}=\{h_{a,b,q}(x,y) : (a,b)\in S_{n,d,q}\}.
\]
  Since we already have $\dim(H^0(\X_g,(\Omega^1)^q))=d_q$, we obtain the following:

\begin{thm}\label{super-basis}
For any root $\alpha_i$ and any $q\geq 1$, the set $\mathfrak{B}_{q}$ forms a basis of $H^0(\X_g,(\Omega^1)^q)$.
\end{thm}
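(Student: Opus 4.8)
The plan is to assemble the proof from the two preceding lemmas together with the divisor computations that were set up immediately before the statement. First I would recall that by Riemann--Roch (more precisely by the formula for $d_q=\dim H^0(\X_g,(\Omega^1)^q)$ recorded just above, namely $d_q=g$ for $q=1$ and $d_q=(g-1)(2q-1)$ for $q>1$), the target space $H^0(\X_g,(\Omega^1)^q)$ has dimension exactly $d_q$. So it suffices to show that $\mathfrak{B}_q$ is a set of $d_q$ linearly independent holomorphic $q$-differentials; equality of dimension then forces it to be a basis.

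Next I would verify the three defining properties of the elements of $\mathfrak{B}_q$. Holomorphy: for $(a,b)\in S_{n,d,q}$ the divisor computation
\[
\left(h_{a,b,q}(x,y)\right) = a n\,\br_i + b\sum_{j=1}^d \br_j + \frac{(2g-2)q-an-bd}{r}\sum_{m=1}^r P_m^\infty
\]
is effective precisely because the constraints $a\geq 0$, $b\geq 0$, $an+bd\leq (2g-2)q$ defining $S_{n,d,q}$ make every coefficient nonnegative (and $r\mid (2g-2)q-an-bd$ because $r=\gcd(n,d)$ divides $an$, $bd$, and $2g-2=nd-n-d-r$). Hence each $h_{a,b,q}$ is a holomorphic $q$-fold differential. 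Cardinality: this is exactly the counting lemma stating $|S_{n,d,q}|=d_q$. Linear independence: the functions $h_{a,b,q}$ differ from one another by factors $(x-\alpha_i)^a y^b$, and since $y^n=f(x)$ with $f$ separable, the monomials $(x-\alpha_i)^a y^b$ with $a\geq 0$ and $0\leq b<n$ are linearly independent over $k(x)$ — this is the standard fact that $1,y,\dots,y^{n-1}$ form a $k(x)$-basis of the function field $k(\X_g)$, combined with the polynomial independence of the powers of $(x-\alpha_i)$. Dividing out the common factor $(dx/y^{n-1})^q$ reduces the independence of the $h_{a,b,q}$ to this statement.

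Putting these together: $\mathfrak{B}_q\subseteq H^0(\X_g,(\Omega^1)^q)$ by holomorphy, $\mathfrak{B}_q$ is linearly independent, and $|\mathfrak{B}_q|=d_q=\dim H^0(\X_g,(\Omega^1)^q)$, so $\mathfrak{B}_q$ spans and is therefore a basis. I would also remark that the argument is uniform in the chosen root $\alpha_i$, since the divisor of $(x-\alpha_i)$ is $n\br_i - \tfrac{n}{r}\sum_m P_m^\infty$ for every $i$, so the set $S_{n,d,q}$ and hence $d_q$ do not depend on $i$.

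The main obstacle I anticipate is the linear independence step: one must be careful that the claimed independence is genuinely over $k$ (not just that the $h_{a,b,q}$ are distinct) and that reducing modulo the relation $y^n=f(x)$ does not collapse two of the monomials. The clean way to handle this is to fix the normal form $0\le b<n$ (already built into $S_{n,d,q}$) and use that $k(\X_g)=\bigoplus_{b=0}^{n-1} k(x)\,y^b$; within each graded piece $y^b$, the functions $(x-\alpha_i)^a$ for distinct $a\ge 0$ are obviously $k$-independent, and distinct values of $b$ live in distinct summands, so no cancellation can occur. Everything else is bookkeeping with the explicitly computed divisors.
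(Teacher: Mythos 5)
Your proposal is correct and follows essentially the same route as the paper (which only sketches the argument and cites Prop.~13 of the reference for details): holomorphy of each $h_{a,b,q}$ from the effectivity of its divisor under the constraints defining $S_{n,d,q}$, linear independence from the decomposition $k(\X_g)=\bigoplus_{b=0}^{n-1}k(x)\,y^b$, the counting lemma giving $|S_{n,d,q}|=d_q$, and the Riemann--Roch computation of $\dim H^0(\X_g,(\Omega^1)^q)=d_q$ to conclude. Your added checks (integrality of the coefficient at the points at infinity via $r=\gcd(n,d)$, and independence of the construction from the chosen root $\alpha_i$) are consistent with, and slightly more explicit than, the paper's presentation.
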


The above result was proved in \cite{nato-3}*{Prop.~13}.     Next we have the following result:
\begin{prop}
Any affine branch point $\br_i$ is a $q$-Weierstrass point for all $q\geq 1$.
\end{prop}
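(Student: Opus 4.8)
The plan is to read off the $q$-gap sequence at $\mathfrak{b}_i$ directly from the explicit basis $\mathfrak{B}_q=\{h_{a,b,q}(x,y):(a,b)\in S_{n,d,q}\}$ of $H^0(\X_g,(\Omega^1)^q)$ provided by \cref{super-basis}, and then to exhibit one forced gap. From the divisor computation recorded above,
\[
\big(h_{a,b,q}\big)=an\,\mathfrak{b}_i+b\sum_{j=1}^{d}\mathfrak{b}_j+\frac{(2g-2)q-an-bd}{r}\sum_{m=1}^{r}P_m^\infty,
\]
and since the points $P_m^\infty$ are distinct from $\mathfrak{b}_i$, one gets $\ord_{\mathfrak{b}_i}\!\big(h_{a,b,q}\big)=an+b$. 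Because $0\le b<n$ throughout $S_{n,d,q}$, the assignment $(a,b)\mapsto an+b$ is injective there, so $\mathfrak{B}_q$ consists of $d_q$ holomorphic $q$-differentials with \emph{pairwise distinct} vanishing orders at $\mathfrak{b}_i$. Consequently the $q$-gap sequence at $\mathfrak{b}_i$ is exactly the set $\{an+b+1:(a,b)\in S_{n,d,q}\}$ arranged in increasing order, and it suffices to check that $T:=\{an+b:(a,b)\in S_{n,d,q}\}$ is not $\{0,1,\dots,d_q-1\}$: since $|T|=d_q$, if some $v^\ast\in\{0,\dots,d_q-1\}$ lies outside $T$ then $T$ must contain a value $\ge d_q$, i.e.\ there is a holomorphic $q$-differential vanishing to order $\ge d_q$ at $\mathfrak{b}_i$, which by definition makes $\mathfrak{b}_i$ a $q$-Weierstrass point.

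To locate such a missing value, observe that for $v\equiv n-1\pmod n$ with $v\ge n-1$ the only candidate pair is $(a,b)=\big(\tfrac{v-(n-1)}{n},\,n-1\big)$, and this pair lies in $S_{n,d,q}$ precisely when $an+bd=v+(n-1)(d-1)\le(2g-2)q$; hence any such $v$ with $v+(n-1)(d-1)>(2g-2)q$ is absent from $T$. I would take $v^\ast$ to be the largest integer $\le d_q-1$ that is $\equiv n-1\pmod n$, so that $d_q-n\le v^\ast\le d_q-1$ (this is a legitimate index because $d_q\ge g\ge n$, by the lemma bounding the genus). Using the genus identity $2g-2=nd-n-d-\gcd(n,d)$, which rewrites as $(n-1)(d-1)=2g-1+\gcd(n,d)$, the inequality $v^\ast+(n-1)(d-1)>(2g-2)q$ follows from $d_q-n+2g-1+\gcd(n,d)>(2g-2)q$, i.e.\ from $d_q>(2g-2)(q-1)+n-1-\gcd(n,d)$. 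For $q=1$ this reads $g>n-1-\gcd(n,d)$, and for $q\ge2$, using $d_q=(g-1)(2q-1)$, it reduces to $g-1>n-1-\gcd(n,d)$; both hold at once since $g\ge n$ and $\gcd(n,d)\ge1$. That finishes the proof.

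The entire argument is bookkeeping around $S_{n,d,q}$ together with the shape of the divisor of $h_{a,b,q}$; the one genuinely external input is the bound $g\ge n$, already established. The only point requiring care is making sure the exhibited missing value $v^\ast$ actually sits \emph{inside} the range $\{0,\dots,d_q-1\}$ — otherwise a ``missing'' value would be vacuous — and this is exactly where $g\ge n$ (hence $d_q\ge n$) is used. If desired, the same computation yields a positive lower bound for the $q$-Weierstrass weight $w^{(q)}(\mathfrak{b}_i)$ rather than just its nonvanishing, but that is not needed for the statement at hand.
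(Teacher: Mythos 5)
Your proof is correct and follows the same route as the paper: both compute $\ord_{\mathfrak{b}_i}\!\left(h_{a,b,q}\right)=an+b$ on the explicit basis $\mathfrak{B}_q$ and use that these $d_q$ vanishing orders are pairwise distinct (since $0\le b<n$), so the $q$-Weierstrass weight is $\sum_{(a,b)\in S_{n,d,q}}(an+b+1)-\sum_{m=1}^{d_q}m$. Where you go beyond the paper is welcome: the paper merely asserts this quantity is positive, while you actually verify it by exhibiting a residue $v^\ast\equiv n-1\pmod{n}$ inside $\{0,\dots,d_q-1\}$ that the order set misses, via the identity $(n-1)(d-1)=2g-1+\gcd(n,d)$ and the bound $g\ge n$ — a step the paper leaves implicit.
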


\proof
One can calculate the $q$-Weierstrass weight of any branch point $\br_i=(\alpha_i,0)$ by calculating the order of vanishing of the basis elements at $\br_i$.  In particular, one checks that 
\[\ord_{\br_i}\left(h_{a,b,q}(x,y)\right) = an+b.\]
Since $0\leq b<n$, these valuations are all distinct non-negative numbers. Thus, we obtain for the $q$-Weierstrass weight of the point $\br_i=(\alpha_i,0)$ the following
\[w^{(q)}(\br_i) = \sum_{(a,b)\in S_{n,d,q}} (an+b+1) - \sum_{m=1}^{d_q}m.\]
Thus, this formula shows that $w^{(q)}(\br_i)>0$ for any $q$.
\qed


Determining Weierstrass points gives a Weierstrass equation for hyperelliptic curves. The above results seem to suggest that the same can be done for superelliptic curves.

Next, we leave the reader with a problem of using the information on Weierstrass points to determine if the curve is superelliptic.  As far as we are aware, this is still an open problem.

\begin{prob}
Given an irreducible algebraic curve $\X$ with affine equation $F(x, y)=0$, find an algorithm which determines whether $\X$ is superelliptic. 
\end{prob}

A further discussion of this problem is intended in \cite{sevilla-shaska}.  Moreover, using the approach in \cite{issac} and \cite{Sa-thesis} this would determine the full automorphism group of superelliptic curves.

\section{Moduli space   of curves and superelliptic loci}\label{sect-6}
\subsection{Moduli space of curves}
Let $\M_{g}$ be the moduli space of smooth, projective curves of genus $g$, and $\M_{g_0,r}$ the moduli space of genus-$g_0$ curves
with $r$ distinct marked points, where we view the marked points as unordered. The term \textbf{space} here refers to a Deligne-Mumford stack (in algebraic geometry) or orbifold (in an analytic setting). We will focus on the latter notion to describe the moduli space.

To explain this in more detail, we will first define $\M_{g_0,r}$ as a set and then endow this set with the structure of a smooth, complex, $n=3g_0-3+r$-dimensional orbifold that is locally an open ball in $\mathbb{C}^n$ divided by a finite group action. \emph{As a set}, we define $\M_{g_0,r}$ to be the set of isomorphism classes of smooth, projective curves of genus $g_0$ with $r$ marked points.

Here, we must insist that $2-2g_0-r<0$, since only the group of marked-points-preserving automorphisms for a smooth algebraic curve satisfying $2-2g_0-r<0$ is finite. On the other hand, every algebraic curve with $2-2g_0-r\ge 0$ has an infinite group of marked-points-preserving automorphisms, which makes it impossible to define the moduli spaces $\M_{0,0}$, $\M_{0,1}$, $\M_{0,2}$, and $\M_{1,0}$ as orbifolds. The difficulty with viewing moduli spaces $\M_{g_0,r}$ only as sets is easily observed in the following example: as we have seen, a genus-two curve is uniquely defining by six distinct unordered points on a rational curve, i.e., its Weierstrass points. Thus, we have -- on the level of sets -- $\M_{2,0}=\M_{0,6}/S_6$ where $S_6$ is the symmetric group in six elements. However, any meaningful notion of moduli space should distinguish $\M_{2,0}$ and $\M_{0,6}/S_6$ since every genus-two curve carries an additional automorphism, i.e., the hyperelliptic involution, that the genus-zero curve with six marked points does not have.

The set $\M_{g_0,r}$ with $2-2g_0-r<0$ can be endowed with the structure of a smooth complex $3g_0-3+r$-dimensional \textbf{orbifold}, that is, $\M_{g_0,r}$ can be covered by a family of compatible charts such that the stabilizer of any point in $\M_{g_0,r}$ is the automorphism group of the corresponding algebraic curves of genus $g_0$ with $r$ marked points. In the aforementioned example, the moduli spaces $\M_{2,0}$ and $\M_{0,6}/S_6$, though equal as sets, then have different orbifold structures, and as orbifolds are isomorphic only up to a $\mathbb{Z}/2\mathbb{Z}$ action. This is based on the following theorem [citation needed]:

\begin{thm}
Given any smooth projective genus-$g_0$ curve $\X$ with $r$ marked points, and finite automorphism group $G$, there exists an open, bounded, simply connected domain $U \subset \mathbb{C}^{3g_0-3+r}$, a family $p : \X' \to U$ of smooth projective genus-$g_0$ curves with $r$ marked points, and an action of the group $G$ on $\X'$ commuting with $p$, satisfying the following conditions: (1) the central fiber $\X'_0$ over $0 \in U$ is isomorphic to $\X$ , i.e., $\X'_0 \cong \X$, (2) the action of $G$ preserves $\X'_0$ and coincides with the natural action of $G$ on $\X$, and (3) any other family of smooth projective genus-$g_0$ curves with $r$ marked points and central fiber $\X$ is the pull-back of the family $p : \X'_t \to U$ (after suitable restriction).
\end{thm}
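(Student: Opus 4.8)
\medskip
\noindent\textbf{Proof idea.}
The plan is to construct $U$ and $p\colon\X'\to U$ as a \emph{$G$-equivariant Kuranishi (universal deformation) family} of the pointed curve $(\X;p_1,\dots,p_r)$, and then to read the three asserted properties off versality. Write $D=p_1+\cdots+p_r$ for the divisor of marked points and let $T_\X$ denote the tangent sheaf of $\X$; infinitesimal deformations of $(\X;p_1,\dots,p_r)$ are classified by $H^1(\X,T_\X(-D))$ and infinitesimal automorphisms fixing the $p_i$ by $H^0(\X,T_\X(-D))$. The line bundle $T_\X(-D)$ has degree $-(2g_0-2)-r=2-2g_0-r$, which is negative by the hypothesis $2-2g_0-r<0$; hence $H^0(\X,T_\X(-D))=0$, and Riemann--Roch gives $\dim H^1(\X,T_\X(-D))=g_0-1-(2-2g_0-r)=3g_0-3+r$, which will be the dimension of $U$.

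First I would observe that, $\X$ being a curve, $H^2$ of every coherent sheaf vanishes, so the deformation functor of $(\X;p_1,\dots,p_r)$ is \emph{unobstructed}; together with $H^0(\X,T_\X(-D))=0$ this makes it \emph{universal}. Invoking the analytic Kuranishi construction (or Grauert's existence theorem to pass from the formal to a convergent family) produces an open, bounded, simply connected polydisc $U\subset\C^{3g_0-3+r}$ with base point $0$, and a family $p\colon\X'\to U$ of smooth projective genus-$g_0$ curves with $r$ marked points, with $\X'_0\cong\X$, having the property that for every family $q\colon\Y\to S$ of such pointed curves with $\Y_{s_0}\cong\X$ there is, after shrinking $S$ around $s_0$, a \emph{unique} holomorphic map $(S,s_0)\to(U,0)$ along which $\Y\cong q^*\X'$ respecting the marked points. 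This is exactly conditions (1) and (3).

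For condition (2) I would exploit that $G=\Aut(\X;p_1,\dots,p_r)$ is finite and acts functorially. Each $\gamma\in G$ carries $p\colon\X'\to U$ to another family with the same central fibre $\X$ (via the automorphism $\gamma$ of $\X$), so by the universal property of (3) it induces a germ of biholomorphism $\gamma_*\colon(U,0)\to(U,0)$ together with an isomorphism of $\X'$ over $\gamma_*$ restricting to $\gamma$ on $\X'_0=\X$; the derivative of $\gamma_*$ at $0$ is the linear action of $\gamma$ on $H^1(\X,T_\X(-D))$. Uniqueness of classifying maps forces $(\gamma\delta)_*=\gamma_*\delta_*$, so these data assemble to a $G$-action on (the germ of) $\X'\to U$; by the Cartan--Bochner linearization of a holomorphic finite-group action near a fixed point, I may then shrink $U$ to a $G$-invariant ball on which $G$ acts linearly and restrict $p$ and the $G$-action to it. The induced action on $\X'_0=\X$ is, by construction, the given one, which is (2).

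The main obstacle is the equivariant bookkeeping in the last step: one must verify that the fibre isomorphisms can be chosen so that $G$ acts on the \emph{total space} $\X'$ and not merely on $U$, i.e.\ that the cocycle identities hold after one compatible shrinking and that all the shrinkings are $G$-invariant; uniqueness in the universal property is the lever for this, but it has to be pushed through with care. As an alternative to the deformation-theoretic route, the whole statement can be imported from Teichmüller theory: take $U$ to be a Bers coordinate ball around the point of Teichmüller space representing $(\X;p_1,\dots,p_r)$, on which the finite stabilizer $G$ inside the mapping class group acts, with $\X'$ the restricted Teichmüller curve; conditions (1)--(3) are then the standard universal property of the Teichmüller family together with the definition of the $G$-action.
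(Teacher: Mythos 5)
The paper does not actually prove this statement: it is quoted as a background fact from the theory of moduli of pointed curves (the text even carries a ``[citation needed]'' placeholder), so there is no internal argument to measure you against. Your outline is essentially the standard proof and is correct in substance. The cohomological bookkeeping is right: $\deg T_{\X}(-D)=2-2g_0-r<0$ (which is equivalent to finiteness of the marked-points-preserving automorphism group) gives $H^0(\X,T_{\X}(-D))=0$, Riemann--Roch gives $h^1=3g_0-3+r$, and unobstructedness is automatic in dimension one; Kuranishi/Grauert then yields the family over a ball, and vanishing of $H^0$ upgrades the semiuniversal family to a universal one, which is exactly what conditions (1) and (3) assert. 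For the step you flag as delicate, note that the same vanishing does the work twice: it guarantees not only uniqueness of the classifying germ $\gamma_*\colon (U,0)\to (U,0)$ but also uniqueness of the lifted isomorphism of families over it (an automorphism of the family restricting to the identity on the central fiber is the identity near that fiber, since the fibers have no infinitesimal automorphisms), and this uniqueness forces the cocycle identities, so the $G$-action on the germ of $\X'\to U$ is well defined; Cartan's linearization then supplies the $G$-invariant shrinking. Your closing remark that the whole statement can instead be imported from Teichm\"uller theory (a Bers ball around the marked point, acted on by the finite stabilizer in the mapping class group, with the restricted Teichm\"uller curve over it) is in fact closer in spirit to the orbifold-chart language the paper uses immediately after the theorem, and either route is a legitimate substitute for the missing citation.
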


In other words, $\M_{g_0,r}$ is a smooth, complex $3g_0-3+r$-dimensional orbifold and is covered by charts of the form $U/G$ such that the stabilizer of $[\X] \in \M_{g_0,r}$ is isomorphic to the symmetry group of the surface $\X'$. Moreover, the theorem also yields the construction a second smooth orbifold $\mathcal{N}_{g_0,r}$ that is covered by (suitable subdivisions of) the open sets $\{ \X'\}$, and an induced orbifold morphism $p:\mathcal{N}_{g_0,r} \to \M_{g_0,r}$ between them, called the \textbf{universal curve} over $\M_{g_0,r}$. The fibers of the universal curve are smooth, projective genus-$g_0$ curves with $r$ marked points, such that each curve appears exactly once among the fibers.

The moduli space $\M_{g_0,r}$ is, in general, not compact. We now compactify it by adding new points that correspond to so-called \textbf{stable curves}. A curve singularity $(\X,p)$ is called a node if locally the singularity $P \in \X$ is isomorphic to the plane curve singularity $xy = 0$.  Thus, we think of the neighborhood of a node as isomorphic to two discs with identified centers. A curve $\X$ is called nodal if the only singularities of $\X$ are nodes. There are two different ways of desingularizing curves. In our situation, a node can be \textbf{resolved} by replacing the two discs with identified centers that form its neighborhood by a cylinder.  On the other hand, we say that a node is \textbf{normalized} if the two discs with identified centers are unglued, i.e., replaced by disjoint discs. The concept of normalization is based on the algebraic construction of the normalization of the coordinate ring of $\X$. However, given any affine variety $X$, one can always construct the normalization $X^\nu$ along with a normalization morphism $\nu: X^\nu \to X$ explicitly. To do so, one constructs the normalization for each affine open chart of $X$, and shows that they glue together. In fact, in the case of a curve, the integral closure of the coordinate ring inside the function field can entirely be studied locally, since the integral closure of the coordinate ring of an algebraic curve is broken only at singular points, i.e., in our situation the nodes. In this way, the normalization of a nodal curve is the curve obtained by normalizing all its nodes $P_i \in \X$. It is smooth, but not necessarily connected. The arithmetic genus of a nodal curve is the genus of the curve obtained by resolving all its nodes. We make the following:

\begin{defi}
A \textbf{stable curve} $\X$ with $r$ marked points is a connected, complete, projective curve of arithmetic genus $g_0$ satisfying the following conditions: (1) the only singularities of $\X$ are nodes, i.e., the curve is nodal, (2) the marked points are distinct and do not coincide with any nodes, (3) the curve $\X$ has a finite number of marked-points-preserving automorphisms.
\end{defi}

To be able to check the conditions of this definition, in particular reformulate condition (3) in a way that is checked easily, one uses the dualizing sheaf of $\X$. If $\X$ is a nodal connected curve of arithmetic genus $g_0$, the dualizing sheaf $\omega_{\X}$\footnote{For a normal projective variety $\X$, the dualizing sheaf exists and it is in fact the canonical sheaf, i.e., $\omega_{\X} = \mathcal{O}_{\X}(K_{\X})$ where $K_{\X}$ is a canonical divisor.} is an invertible sheaf of degree $2g_0-2$ and $h^0(\X,\omega_{\X})=g_0$. It can be described explicitly: let $\X$ be a connected curve of arithmetic genus $g_0$ with just one node at $P \in \X$ and $\nu: \X^\nu \to \X$ the normalization with $\{r,s\} = \nu^{-1}(P)$. Then, $\omega_{\X}$ is the sheaf that associates to any open subset $V \subset \X$ the rational differentials $\eta$ on $\nu^{-1}(V)$ having at worst simple poles at $r,s$ such that $\operatorname{Res}_r(\eta) + \operatorname{Res}_s(\eta)=0$. For a connected, complete, nodal curve (with nodes $\{P_i\}$) of arithmetic genus $g_0\ge 2$  the following three conditions are equivalent:
\begin{enumerate}
 \item $\omega_{\X}( \sum P_i)$ is ample,
 \item If $\X_i^{\nu}$ is a genus-zero component of the normalization of $\X$, then $\X_i^{\nu}$ has at least three points mapped  by $\nu$ to nodes or marked points of $\X$.
 \item The group of marked points preserving automorphisms of $\X$ is finite.
\end{enumerate}
An immediate consequence is the following: if $\X_i^{\nu}$ are the connected, genus-$g_i$ components of the normalization of $\X$, and $n_i$ the number of marked points plus the number of preimages of nodes on the component $\X_i^{\nu}$, then Condition (3) in the above definition is satisfied if and only if $2-2g_i-n_i<0$ for all $i$.

The following theorem is essential  [citation needed]:
\begin{thm}
There exist \emph{compact}, smooth, complex orbifolds $\overline{\M}_{g_0,r}$ of dimension $3g_0-3+r$ and $\overline{\mathcal{N}}_{g_0,r}$ of dimension $3g_0-2+r$, and an orbifold morphism $\bar{p}:\overline{\mathcal{N}}_{g_0,r} \to \overline{\M}_{g_0,r}$ such that (1) $\M_{g_0,r} \subset \overline{\M}_{g_0,r}$ and $\mathcal{N}_{g_0,r} \subset \overline{\mathcal{N}}_{g_0,r}$ are open dense sub-orbifolds, (2) $\bar{p}$ restricts to $p$ on $\M_{g_0,r}$, $\bar{p}^{-1}(\overline{\M}_{g_0,r})=\overline{\mathcal{N}}_{g_0,r}$, and the fibers of $\bar{p}$ are stable curves of arithmetic genus $g_0$ with $r$ marked points, (3) each stable curve is isomorphic to exactly one fiber of $\bar{p}$, and (4) the stabilizer of a point $[\X] \in \overline{\M}_{g_0,r}$ is the automorphism group of the corresponding stable curve $\X$.
\end{thm}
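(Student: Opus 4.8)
The plan is to follow the strategy of Deligne and Mumford, recast in the analytic/orbifold language used above. First I would \emph{define $\overline{\M}_{g_0,r}$ as a set} to be the set of isomorphism classes of stable $r$-pointed curves of arithmetic genus $g_0$. The preceding discussion — the equivalence of ampleness of $\omega_{\X}(\sum P_i)$ with finiteness of the automorphism group — guarantees that every point of this set has a finite automorphism group, which is exactly what will make the orbifold charts meaningful. The smooth locus $\M_{g_0,r}$ is the subset of curves with no nodes; its density will follow from the local analysis below, since each node carries a one-parameter smoothing.

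Next I would build the \textbf{local charts} using miniversal (Kuranishi) deformations. Given a stable curve $\X$ with its marked points and $G=\Aut(\X)$, deformations rel.\ marked points are controlled by $T^1=\mathrm{Ext}^1_{\mathcal{O}_\X}(\Omega^1_\X,\mathcal{O}_\X(-\sum P_i))$ and obstructions by $T^2=\mathrm{Ext}^2_{\mathcal{O}_\X}(\Omega^1_\X,\mathcal{O}_\X(-\sum P_i))$. Feeding this into the local-to-global spectral sequence and using that $\X$ is a curve (so $H^{\geq 2}$ vanishes) together with the fact that $\mathcal{E}xt^1$ is a length-one skyscraper supported at the nodes (so $H^1$ of it vanishes), one obtains $T^2=0$; hence the Kuranishi space $U$ is \emph{smooth}, and Serre duality identifies $\dim U = \dim H^0(\X,(\Omega^1_\X)^{\otimes 2}(\sum P_i)) = 3g_0-3+r$. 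The node contributes the smoothing parameter $xy=t$, which also shows the nodal locus is nowhere dense. The finite group $G$ acts on $U$ and on the total space $\X'\to U$ of the versal family; the chart is $U/G$, the stabilizer of $[\X]$ is $G$, and openness of versality shows these charts are mutually compatible. Gluing them produces the complex orbifold $\overline{\M}_{g_0,r}$ of the stated dimension; gluing the total spaces produces $\overline{\mathcal{N}}_{g_0,r}$ together with the morphism $\bar p$, and conditions (1)--(4) hold by construction (fibers are the stable curves parametrized, each appearing once, with the asserted stabilizers).

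The remaining and genuinely hard point is \textbf{compactness} (equivalently, properness of the moduli stack). Since $\overline{\M}_{g_0,r}$ is locally of finite type, this reduces to sequential compactness, cleanly packaged as the \emph{stable reduction theorem}: any family of smooth $r$-pointed genus-$g_0$ curves over a punctured disc $\Delta^\ast$ extends, after a finite base change $t\mapsto t^n$, to a family of stable curves over $\Delta$. I would prove this by spreading the family out to \emph{some} flat proper family over $\Delta$, then running semistable reduction — resolve the singularities of the total surface, base change to make the central fibre reduced with normal crossings, blow down all $(-1)$-curves — to reach a family with nodal central fibre, and finally contract the remaining unstable rational components to make the central fibre stable; this is precisely Mumford's appendix to the Deligne--Mumford paper (alternatively, the Kempf--Knudsen--Mumford--Saint-Donat toroidal argument, or Artin--Winters). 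Separatedness of the orbifold, i.e.\ uniqueness of the stable limit, follows from minimality of the stable model of a family with smooth generic fibre: two stable extensions over $\Delta$ that agree over $\Delta^\ast$ are related by a chain of blow-ups and blow-downs which must be trivial once every component is stable. I expect this stable-reduction step to be the main obstacle; the rest is deformation theory and bookkeeping.
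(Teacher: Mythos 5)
The paper itself gives no proof of this statement: it is quoted as the foundational Deligne--Mumford--Knudsen compactification theorem (the text even flags ``citation needed''), so there is no argument of the authors to compare yours against. Your sketch follows the standard construction in its analytic form -- Kuranishi charts $U/G$ from the deformation theory of stable pointed curves, with $T^2=0$ via the local-to-global spectral sequence and the dimension count $3g_0-3+r$, then properness via stable reduction -- and that is indeed the right skeleton. One small imprecision: for a \emph{nodal} curve the Serre dual of $\mathrm{Ext}^1_{\mathcal{O}_\X}\bigl(\Omega^1_\X,\mathcal{O}_\X(-\textstyle\sum P_i)\bigr)$ is $H^0\bigl(\X,\Omega^1_\X\otimes\omega_\X(\sum P_i)\bigr)$ with the dualizing sheaf, not $(\Omega^1_\X)^{\otimes 2}(\sum P_i)$; the dimension count is unaffected, but the distinction matters precisely on the boundary you are constructing.

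The genuine gap is in the compactness step. Stable reduction is the valuative criterion: it extends a family over a punctured disc after finite base change, and together with uniqueness of the stable limit it gives properness \emph{once you already know the space is of finite type}. Your reduction ``locally of finite type, hence compactness reduces to sequential compactness, hence to stable reduction'' does not work as stated: to apply stable reduction to an arbitrary sequence of stable curves you must first fit a subsequence into a one-parameter family, and that is exactly where boundedness is needed. The standard fix is to observe that $\omega_\X(\sum P_i)^{\otimes 3}$ is very ample on every stable curve with uniformly bounded Hilbert polynomial, so all stable $r$-pointed curves of genus $g_0$ embed in a fixed projective space and are parametrized by a locally closed subscheme of a single (projective) Hilbert scheme; finitely many Kuranishi charts then cover $\overline{\M}_{g_0,r}$, and a limit point for your sequence is produced inside the Hilbert scheme before semistable reduction is invoked to make it stable. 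Without this boundedness input (or some equivalent, e.g.\ Mumford's GIT construction or Bers/Deligne--Mumford boundedness arguments), the compactness claim is not established by what you wrote.
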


We make the following:
\begin{defi}
The space $\overline{\M}_{g_0,r}$ is called the Deligne-Mumford compactification of the moduli space $\M_{g_0,r}$. The family $\bar{p}:\overline{\mathcal{N}}_{g_0,r} \to \overline{\M}_{g_0,r}$ is called the universal curve over $\overline{\M}_{g_0,r}$.
\end{defi}

Notice that $\overline{\M}_{g_0,r}$ is a \emph{smooth and compact} orbifold. The set $\overline{\M}_{g_0,r} \backslash \M_{g_0,r}$ is called the \textbf{boundary} of $\overline{\M}_{g_0,r}$ and parametrizes singular stable curves. The boundary is a sub-orbifold of codimension 1, whence given by a divisor.  A generic point of the boundary is a stable curve with one node. If a point of the boundary corresponds to a stable curve $\X$ with $k$ nodes, that means that there are $k$ local components of the boundary that intersect transversally, and this is the only way local components can intersect. Therefore, the boundary is a so-called \textbf{normal crossing divisor}.

\subsection{Curves with automorphisms in the moduli space}

\def\c{{\bf c}}
\def\dd{{\bf d}}
\def\C{{\bf C}}
\def\D{\Delta}

Fix the genus $g \geq 2$.  Consider the following problem.

\begin{prob}
Could one list all groups which occur as a full automorphism group of a genus $g$ smooth, irreducible algebraic  curve $\X$ defined over a field $k$ of characteristic $\ch (k) = p \geq 0$? 
\end{prob}

 In the previous section we were able to do this for all superelliptic curves for all genera and $\ch k  \neq 2$.  The case of $\ch k =2$ is more technical and we avoid it here.  However, there are plenty of curves which are not superelliptic.  The generic curve of genus three, for example, has equation isomorphic to a ternary quartic and is not a superelliptic curve.  The classification of automorphism groups is still an open problem for $\ch k = p>0$, but it can be done in $\ch k = 0$ due to results of the last two decades by Breuer, Magaard, Shaska, Shpectorov, Volklein.  We summarize these results briefly below. 

Recall that a group $G$ acts faithfully on a genus $g$ curve if and only if it has a genus $g$ generating system; see \cite{kyoto}. For $g$ up to 48, all such groups and the signatures of all their genus-$g$ generating systems have been listed by Breuer \cite{Breuer}. More precisely, for each genus $g\le48$, he produced a list containing all {\bf signature-group pairs} in genus $g$, i.e., pairs consisting of a group $G$ together with the signature of a genus $g$ generating system of $G$.

If $G$ acts on $X_g$ then so does each subgroup of $G$. This shows that Breuer's lists have to be long, and contain some redundancies. Th work in \cite{kyoto} eliminates those signature-group pairs that do not yield the full automorphism group of a curve.  It turns out that the larger $g$ is, the larger the ratio is of entries in Breuer's lists that do occur as full automorphism group in genus $g$. This can already be seen from the fact  that if a signature-group pair does not yield the full automorphism group of a curve, then its $\delta$-invariant (dimension of corresponding locus in $\M_g$) is at most 3.

For small genus $g$, a relatively large portion of those groups do not occur as full automorphism group in genus $g$. Among those that do occur, we distinguish those that occur for a particularly simple class of curves: we call a group homocyclic if it is a direct product of isomorphic cyclic groups. 
%

\subsection{Ramification type and signature of a $G$-curve}
Fix an integer $g\ge2$ and a finite group $G$. Let $C_1,\dots , C_r$ be conjugacy classes $\ne\{1\}$ of $G$. Let $\C=(C_1,\dots ,C_r)$ be an unordered tuple, where repetitions are allowed. We also allow $r$ to be zero, in which case $\C$ is empty.
Consider pairs $(X,\mu)$, where $X$ is a curve and $\mu: G\to\Aut(X)$ is an injective homomorphism. We will often suppress $\mu$ and just say $X$ is a curve with $G$-action, or a $G$-curve, for short. Two $G$-curves $X$ and $X'$ are called equivalent if there is a $G$-equivariant isomorphism $X\to X'$.

We say a $G$-curve $X$ is {\bf of ramification type} $(g,G,\C)$ if the following holds: the curve $X$ has genus $g$, the points of the quotient $X/G$ that are ramified in the cover $X\to X/G$ can be labelled as $p_1,\dots ,p_r$ such that $C_i$ is the conjugacy class in $G$ of distinguished inertia group generators over $p_i$ (for $i=1,\dots ,r$). (Distinguished inertia group generator means the generator acts in the tangent space as multiplication by $\exp(2\pi\sqrt{-1}/e)$, where $e$ is the ramification index). For short, we will just say $X$ is of type $(g,G,\C)$.

If $X$ is a $G$-curve of type $(g,G,\C)$ then the genus $g_0$ of $X/G$ is given by the Riemann-Hurwitz formula
\[
\frac{2\ (g-1)}{|G|}\ \ \ = \ \ \ 2\ (g_0-1)\ +\ \sum_{i=1}^r\ \left(1-\frac{1}{c_i}\right),
\]
where $c_i$ is the order of the elements in $C_i$. Note that $g_0$ (the {\bf orbit genus}) depends only on $g$, $|G|$ and the {\bf signature} $\c=(c_1,\ldots,c_r)$ of the $G$-curve $X$.

\subsection{Hurwitz spaces and moduli of curves}\label{section2}

Define $\H=\H(g,G,\C)$ to be the set of equivalence classes of $G$-curves of type $(g,G,\C)$. By covering
space theory (or the theory of Fuchsian groups), $\H$ is non-empty if and only if $G$ can be generated by
elements $\a_1,\b_1,\dots ,\a_{g_0},\b_{g_0},\g_1,\dots ,\g_r$ with $\g_i\in C_i$ and
$$\prod_{j}\ [\a_j,\b_j]\ \ \prod_{i}\ \g_i\ \ \ \
= \ \ \ 1 \leqno{(2)} \,.$$ Here $[\a,\b]=\ \a^{-1}\b^{-1}\a\b$.
Consider the map
\[\Phi:\ \H\ \to \ \M_{g} \,,\]
obtained by forgetting the $G$-action, and the map   $\Psi:\ \H\ \to \ \M_{g_0,r} $
mapping (the class of) a $G$-curve $X$ to the class of the quotient curve $X/G$ together with the (unordered)
set of branch points $p_1,\dots ,p_r$. If $\H\ne\emptyset$ then $\Psi$ is surjective and has finite fibers, by
covering space theory. Also $\Phi$ has finite fibers, since the automorphism group of a curve of genus $\ge2$
is finite.

The set $\H$ carries the structure of a quasi-projective variety (over $\bC$) such that the maps
$\Phi$ and $\Psi$ are finite morphisms. If $\H\ne\emptyset$ then all components of $\H$ map surjectively to
$\M_{g_0,r}$ (through a finite map), hence they all have the same dimension
\[ \d(g,G,\C):= \ \ \dim\ \M_{g_0,r} \ \ = \ \ 3g_0-3+ r.\]

\begin{lem} \label{Lemma1} Let $\M(g,G,\C)$ denote the image of $\Phi$,
i.e., the locus of genus $g$ curves admitting a $G$-action of type $(g,G,\C)$. If this locus is non-empty
then each of its components has dimension $\d(g,G,\C)$.
\end{lem}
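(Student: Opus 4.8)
The plan is to reduce the statement entirely to the structural facts about the Hurwitz space $\H=\H(g,G,\C)$ recorded just above: it carries the structure of a quasi-projective variety, $\Phi\colon\H\to\M_g$ is a finite morphism whose image is $\M(g,G,\C)$, and \emph{every} irreducible component of $\H$ has dimension $\d(g,G,\C)=3g_0-3+r$ (this last point because $\Psi\colon\H\to\M_{g_0,r}$ is a finite surjective morphism onto the irreducible variety $\M_{g_0,r}$, so each component of $\H$ dominates $\M_{g_0,r}$ and therefore has dimension exactly $\dim\M_{g_0,r}$). Granting these inputs, the lemma is a short dimension count.

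First I would record that $\Phi$ has \emph{finite fibers}: a point of $\Phi^{-1}([\X])$ is the equivalence class of an injection $G\hookrightarrow\Aut(\X)$ producing a $G$-curve of type $(g,G,\C)$, and since $\Aut(\X)$ is finite for $g\ge 2$ by Schmid's theorem \cite{Schmid}, there are only finitely many such injections up to equivalence. Next, because $\Phi$ is finite it is a closed map, and the image of an irreducible set is irreducible; so, writing $\H=\H_1\cup\cdots\cup\H_m$ for the decomposition into irreducible components, $\M(g,G,\C)=\Phi(\H)=\bigcup_{i=1}^{m}\Phi(\H_i)$ is a finite union of irreducible closed subvarieties of $\M_g$. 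Since a morphism with finite fibers preserves the dimension of an irreducible source, $\dim\Phi(\H_i)=\dim\H_i=\d(g,G,\C)$ for every $i$. Finally, among irreducible closed sets all of the same dimension, an inclusion $\Phi(\H_i)\subseteq\Phi(\H_j)$ forces equality; hence, after discarding repetitions, the sets $\Phi(\H_i)$ are precisely the irreducible components of $\M(g,G,\C)$, and each of them has dimension $\d(g,G,\C)$, as claimed.

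I do not expect a genuinely hard step once the variety structure on $\H$ and the finiteness of $\Phi$ and $\Psi$ are in hand; the only point that deserves real care is the \emph{equidimensionality} of $\H$ — that each component of $\H$, not merely $\H$ as a whole, surjects onto $\M_{g_0,r}$ — which is exactly where irreducibility of $\M_{g_0,r}$ together with surjectivity of the finite morphism $\Psi$ is used, and which is already asserted in the discussion above. If one wished to avoid invoking the quasi-projective structure on $\H$, the same conclusion can be reached analytically: over a chart of $\M_{g_0,r}$ one builds the family of $G$-covers from generating vectors $(\a_1,\b_1,\ldots,\g_r)$ satisfying the relation displayed above, the resulting family over that $(3g_0-3+r)$-dimensional chart maps to $\M_g$ with finite fibers (again by finiteness of $\Aut(\X)$), and so locally $\M(g,G,\C)$ is the image of a $(3g_0-3+r)$-dimensional orbifold under a quasi-finite map, which pins down the dimension of each component.
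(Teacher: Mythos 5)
Your argument is correct and is essentially the paper's own: the lemma is stated there as an immediate consequence of the preceding facts that $\H$ is quasi-projective, every component of $\H$ has dimension $\d(g,G,\C)$ (via the finite map $\Psi$ to $\M_{g_0,r}$), and $\Phi$ is finite, hence dimension-preserving on each component. The only nuance is that per-component surjectivity of $\Psi$ is justified in the paper by covering space theory (each component is itself a finite cover of the connected base $\M_{g_0,r}$), not merely by ``finite surjective onto an irreducible target,'' but since you take that assertion as the stated input, your reduction is the same as the paper's.
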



\subsection{Restriction to a subgroup} \label{restr}

Let $H$ be a subgroup of $G$. Then each $G$-curve can be viewed as an $H$-curve by restriction of action. Let
$X$ be a $G$-curve of type $(g,G,\C)$. Then the resulting $H$-curve is of type $(g,H,\D)$, where $\D$ is
obtained as follows: Choose $\g_i\in C_i$ and let $\s_{i,1}, \s_{i,2},\dots $ be a set of representatives for
the double cosets $<\g_i>\s H$ in $G$. Let $m_{ij}$ be the smallest integer $\ge 1$ such that the element
$\s_{ij}^{-1}\g_{i}^{m_{ij}}\s_{ij}$ lies in $H$, and let $D_{ij}$ be the conjugacy class of this element in
$H$. Then $\D$ is the tuple consisting of all $D_{ij}$ with $D_{ij}\ne\{1\}$. (More precisely, the tuple $\D$
is indexed by the set of possible pairs $(i,j)$, and its $(i,j)$-entry is $D_{ij}$.) The definition of $\D$
does not depend on the choice of the $\g_i$ and $\s_{ij}$. Note that the signature of the $H$-curve depends
on the type of the $G$-curve, not only on its signature.   We have 
\[  \M(g,G,\C) \ \ \subset \ \ \M(g,H,\D) \,.\]
Hence, their dimensions satisfy
$ \d(g,G,\C)\ \ \ \le\ \ \ \d(g,H,\D)$. 
If this is a strict inequality then the complement of the closure of $ \M(g,G,\C)$ in $\M(g,H,\D)$ is open
and dense. In particular, it is not true that every $H$-curve of type $(g,H,\D)$ is the restriction of a
$G$-curve of type $(g,G,\C)$. 

\xymatrixrowsep{4ex}
\xymatrixcolsep{2.7ex}
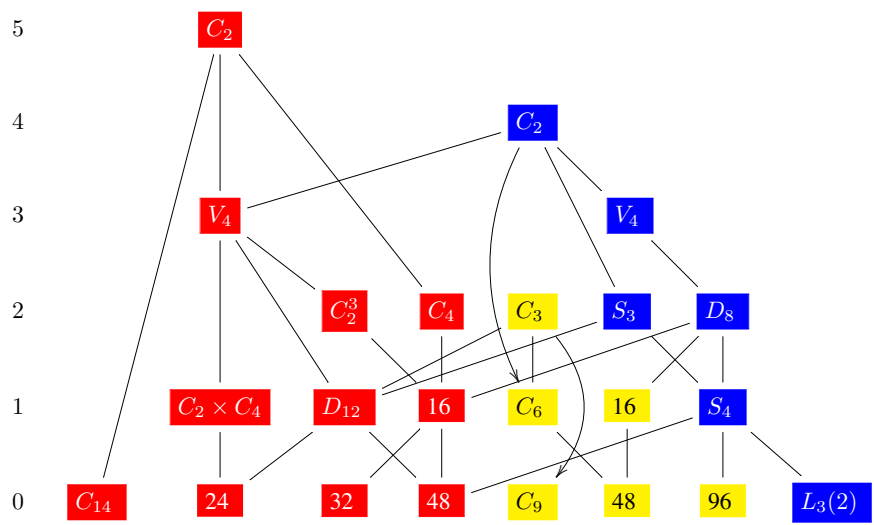
\begin{figure}[h] 
\[
\scalebox{.9}{
 \xymatrix{ 5 & &{\fcolorbox{red}{red}{\color{white}$C_2$}} \ar@{-}[dd]
\ar@{-}[dddddl] \ar@{-}[dddrr] & & & & & & \\
4 & & & & & {\fcolorbox{blue}{blue}{\color{white}$C_2$ }}
\ar@{-}[ddr] \ar@{-}[dlll] \ar@{-}[dr] \ar@/_1.5pc/[ddd] & & & \\
3 & & {\fcolorbox{red}{red}{\color{white}$V_4$}} \ar@{-}[dr] \ar@{-}[ddr] \ar@{-}[dd] & & & & \ {\fcolorbox{blue}{blue}{\color{white}$V_4$ }} \ar@{-}[dr] & & \\
2 & & &{\fcolorbox{red}{red}{\color{white}$C_2^3$}} \ar@{-}[dr] 
& {\fcolorbox{red}{red}{\color{white}$C_4$}} \ar@{-}[d] &
{\fcolorbox{yellow}{yellow}{\color{black}$C_3$ }}  \ar@{-}[d] \ar@{-}[dll] \ar@/^1.8pc/[dd] 
&  {\fcolorbox{blue}{blue}{\color{white}$S_3$ }}  \ar@{-}[dr] \ar@{-}[dlll] 
&  {\fcolorbox{blue}{blue}{\color{white}$D_8$ }}    \ar@{-}[d] \ar@{-}[dl] \ar@{-}[dlll]
& \\
1 & &{\fcolorbox{red}{red}{\color{white} $ C_2 \times C_4$}} \ar@{-}[d] &{\fcolorbox{red}{red}{\color{white}
$D_{12}$ }} \ar@{-}[dl]\ar@{-}[dr] & {\fcolorbox{red}{red}{\color{white} 16 }} \ar@{-}[dl] \ar@{-}[d] &
 {\fcolorbox{yellow}{yellow}{\color{black}$C_6$ }} \ar@{-}[dr] &
  {\fcolorbox{yellow}{yellow}{\color{black}16 }} \ar@{-}[d] &
   {\fcolorbox{blue}{blue}{\color{white}$S_4$ }} \ar@{-}[d] \ar@{-}[dr] \ar@{-}[dlll] \\
0 & {\fcolorbox{red}{red}{\color{white} $C_{14}$ }} & {\fcolorbox{red}{red}{\color{white} 24 }} &
{\fcolorbox{red}{red}{\color{white} 32 }} & {\fcolorbox{red}{red}{\color{white} 48 }} &
 {\fcolorbox{yellow}{yellow}{\color{black}$C_9$ }} &
  {\fcolorbox{yellow}{yellow}{\color{black}48 }} &
  {\fcolorbox{yellow}{yellow}{\color{black}96 }} &
    {\fcolorbox{blue}{blue}{\color{white}$L_3 (2)$ }} \\
} }
\]
\caption{Poset of Hurwitz loci for $\M_3$.}
   \label{loci-3}
\end{figure}

\subsubsection{The moduli space $\M_3$}
In \cite{kyoto} the inclusions among the loci in $\M_g$ with different automorphism groups and their dimension were determined.  We illustrate the inclusion and dimensions of the different loci in \cref{loci-3} for $g=3$.  The red cases represent hyperelliptic loci, and the yellow ones are superelliptic (non-hyperelliptic). Notice that from 23 cases only 6 are non-hyperelliptic. 


\subsubsection{The moduli space $\M_4$}
%
In \cref{tab_1}  we present all automorphism groups and their signatures $g=4$. 
Each one of the families above is an irreducible algebraic locus in $\M_4$.   Notice that there are 41 cases from which only 13 are non-superelliptic (colored in blue). 

\newpage



\begin{center}
\begin{longtable}{||l|l|l|l|l|l|l||}
\hline
\hline
$\#$ & dim &G& ID & sig & type & subcases \\
\hline \hline
1 & 0 & $S_5$ & (120,34) & 0-$(2, 4, 5)$ & 1 &  \\
2 & 0 &$C_3\times S_4$ &(72,42) & 0-$(2, 3, 12)$ & 3 &  \\
3 & 0 &         &(72,40) & 0-$(2, 4, 6)$ & 4 &  \\
4 & 0 & $V_{10}$ &(40,8) & 0-$(2, 4, 10)$ & 7 &  \\
5 & 0 & $C_6 \times S_3$ &(36,12) & 0-$(2, 6, 6)$ & 10 &  \\
6 & 0 & $U_8$    &(32,19) & 0-$(2, 4, 16)$ & 16 &  \\
7 & 0 & $SL_2(3)$ &(24,3) & 0-$(3, 4, 6)$ & 20 &  \\
8 & 0 & $C_{18}$  &(18,2) & 0-$(2, 9, 18)$ & 27 &  \\
9 & 0 & $C_{15}$ &(15,1) & 0-$(3, 5, 15)$ & 38 &  \\
10 & 0 & $C_{12}$  &(12,2) & 0-$(4, 6, 12)$ & 45 &  \\
11 & 0 & $C_{10}$  &(10,2) & 0-$(5, 10, 10)$ & 51 &  \\
12 & 1 & $S_3^2$&(36,10) & 0-$(2, 2, 2, 3)$ & 12 & 3 \\
13 & 1 & $S_4$&(24,12) & 0-$(2, 2, 2, 4)$ & 18 & 1, 2 \\
14 & 1 & $C_2\times D_5$ &(20,4) & 0-$(2, 2, 2, 5)$ & 21 & 4 \\
15 & 1 & $C_3\times S_3$ &(18,3) & 0-$(2, 2, 3, 3)$ & 30 & 2, 5 \\
16 & 1 & $D_8$           &(16,7) & 0-$(2, 2, 2, 8)$ & 35 & 6 \\
17 & 1 & $C_2\times C_6$ &(12,5) & 0-$(2, 2, 3, 6)$ & 46 & 2, 5 \\
18 & 1 & $C_2\times S_3$ &(12,4) & 0-$(2, 2, 3, 6)$ & 41 & 3 \\
19 & 1 & $A_4$       &(12,3) & 0-$(2, 3, 3, 3)$ & 43 & 2 \\
20 & 1 & $D_{10}$ &(10,1) & 0-$(2, 2, 5, 5)$ & 49 & 1 \\
21 & 1 & $Q_8$ &(8,4) & 0-$(2, 4, 4, 4)$ & 59 & 6, 7 \\
22 & 1 & $C_6$ &(6,2) & 0-$(2, 6, 6, 6)$ & 66 & 5, 10 \\
23 & 1 & $C_5$ &(5,1) & 0-$(5, 5, 5, 5)$ & 69 & 9, 11 \\
24 & 2 & $D_6$ &(12,4) & 0-$(2^{5})$ & 40 & 1, 5, 12 \\
25 & 2 & $D_4$ &(8,3) & 0-$(2^{4}, 4)$ & 57 & 3, 13 \\
26 & 2 & $D_4$ &(8,3) & 0-$(2^{4}, 4)$ & 56 & 4, 16 \\
27 & 2 & $C_6$ &(6,2) & 0-$(2^{3}, 3, 6)$ & 64 & 7, 8 \\
28 & 2 & $C_6$ &(6,2) & 0-$(2^{2}, 3^{3})$ & 65 & 15, 17 \\
29 & 2 & $S_3$ &(6,1) & 0-$(2^{2}, 3^{3})$ & 62 & 12, 18 \\
30 & 2 & $C_4$ &(4,1) & 0-$(2, 4^{4})$ & 77 & 10 \\
31 & 3 & $S_3$ &(6,1) & 0-$(2^{6})$ & 61 & 13, 15, 24 \\
32 & 3 & $V_4$ &(4,2) & 1-$(2, 2, 2)$ & 72 & 18, 19, 25 \\
33 & 3 & $C_4$ &(4,1) & 0-$(2^{4}, 4^{2})$ & 76 & 21, 26 \\
34 & 3 & $C_3$ &(3,1) & 0-$(3^{6})$ & 80 & 9, 28 \\
35 & 3 & $C_3$ &(3,1) & 0-$(3^{6})$ & 81 & 29 \\
36 & 3 & $C_3$  &(3,1) & 1-$(3, 3, 3)$ & 79 & 15, 19, 22, 27 \\
37 & 4 & $V_4$ &(4,2) & 0-$(2^{7})$ & 73 & 14, 26 \\
38 & 4 & $V_4$ &(4,2) & 0-$(2^{7})$ & 74 & 17, 24, 25 \\
39 & 5 & $C_2$ &(2,1) & 2-$(2, 2)$ & 82 & 11, 20, 29, 32, 37, 38 \\
40 & 6 & $C_2$ &(2,1) & 1-$(2^{6})$ & 83 & 22, 28, 30, 31, 38 \\
41 & 7 & $C_2$ &(2,1) & 0-$(2^{10})$ & 84 & 27, 33, 37 \\
\hline \hline 
\caption{Hurwitz loci of genus 4 curves}
\label{tab_1}
\end{longtable}

\end{center}





\begin{sideways}
\begin{minipage}{19.5cm}
\[
\scalebox{.95}{
\xymatrixrowsep{.7cm}
\xymatrixcolsep{1cm}
\xymatrix@C-8pt{
 7        &    & & {\fcolorbox{red}{red}{\color{white}41}} \ar@{-}[ddddl] \ar@{-}[ddd] \ar@{-}[dddddr]&    &    &    &    &    &    &    &              \\
 6        &    &    &    &    &    &    &    &      {\fcolorbox{blue}{blue}{\color{white}40}} \ar@{-}[dddddl] \ar@/^1.2pc/[dddd] \ar@{-}[dddrr] \ar@{-}[ddddrr]&    &    &      \\
 5        &    &    &    &    &    &
 {\fcolorbox{blue}{blue}{\color{white}39}} \ar@{-}[dddddl]   \ar@{-}[dlll] \ar@{-}[ddd] \ar@/_1.3pc/[ddddll] \ar@{-}[ddrrr] \ar@{-}[drrrrr]&    &    &    &    &     \\
 4        &    &    &
 {\fcolorbox{red}{red}{\color{white}37}} \ar@{-}[ddl]\ar@{-}[ddd] &     &    &    &    &    &    &    & {\fcolorbox{blue}{blue}{\color{white}38}}  \ar@{-}[dd]\ar@{-}[ddr] \ar@/^2.0pc/[dddll]  \\
 3        &    &
 {\fcolorbox{red}{red}{\color{white}33}} \ar@{-}[d] \ar@{-}[ddl] &    &    &    {\fcolorbox{blue}{blue}{\color{white}36}} \ar@{-}[dl] \ar@{-}[dd]\ar@/_1.5pc/[ddrr] \ar@/^1.6pc/[ddrrr]&    & {\fcolorbox{yellow}{yellow}{\color{black}35}}  \ar@{-}[dl] & {\fcolorbox{yellow}{yellow}{\color{black}34}}   \ar@{-}[drr] \ar@{-}[dddr]& {\fcolorbox{blue}{blue}{\color{white}32}}  \ar@{-}[drrr] \ar@{-}[ddllll]  \ar@{-}[ddlll] & {\fcolorbox{blue}{blue}{\color{white}31}}  \ar@{-}[dddr] \ar@{-}[dr] \ar@{-}[ddll]&      \\
2   &    &
  {\fcolorbox{red}{red}{\color{white}26}} \ar@{-}[d] &    &
     {\fcolorbox{red}{red}{\color{white}27}} \ar@/_1.5pc/[dd] \ar@{-}[ddlll]    &    &
     {\fcolorbox{yellow}{yellow}{\color{black}29}}  \ar@{-}[d]\ar@{-}[drrrrrr] &    & {\fcolorbox{yellow}{yellow}{\color{black}30}}  \ar@{-}[ddl] &    &
 {\fcolorbox{yellow}{yellow}{\color{black}28}}  \ar@{-}[dl] \ar@{-}[dll]& {\fcolorbox{blue}{blue}{\color{white}24}}  \ar@/^1.2pc/[dd] \ar@{-}[ddl] \ar@{-}[dr]& {\fcolorbox{blue}{blue}{\color{white}25}}  \ar@/_1.1pc/[dd]       \\
1        &
 {\fcolorbox{red}{red}{\color{white}21}} \ar@{-}[d] \ar@{-}[dr]&
 {\fcolorbox{red}{red}{\color{white}16}} \ar@{-}[d] &
 {\fcolorbox{red}{red}{\color{white}14}} \ar@{-}[d] &
 {\fcolorbox{blue}{blue}{\color{white}20}} \ar@{-}[drrrr] &
 {\fcolorbox{blue}{blue}{\color{white}19}} \ar@{-}[drrr] &
 {\fcolorbox{yellow}{yellow}{\color{black}18}} \ar@/_0.3pc/[drrrrrr] &
 {\fcolorbox{blue}{blue}{\color{white}22}} \ar@{-}[d]\ar@{-}[drrr] &
 {\fcolorbox{yellow}{yellow}{\color{black}15}} \ar@{-}[d]\ar@{-}[drr] &
 {\fcolorbox{yellow}{yellow}{\color{black}17}} \ar@{-}[dr]\ar@{-}[dl] &
 {\fcolorbox{blue}{blue}{\color{white}13}} \ar@{-}[d]\ar@{-}[dr] &
 {\fcolorbox{yellow}{yellow}{\color{black}23}} \ar@{-}[dllllll] \ar@{-}[dll]  &
 {\fcolorbox{yellow}{yellow}{\color{black}12}} \ar@{-}[d]&     \\
0    &
{\fcolorbox{red}{red}{\color{white}7}}  &
{\fcolorbox{red}{red}{\color{white}6}}  &
{\fcolorbox{red}{red}{\color{white}4}}  &
{\fcolorbox{red}{red}{\color{white}8}}    &
{\fcolorbox{yellow}{yellow}{\color{black}11}}  &    &
{\fcolorbox{yellow}{yellow}{\color{black}10}}      &
{\fcolorbox{yellow}{yellow}{\color{black}2}}   &
{\fcolorbox{yellow}{yellow}{\color{black}9}}    &
{\fcolorbox{yellow}{yellow}{\color{black}5}}   &
{\fcolorbox{blue}{blue}{\color{white}1}}   &
{\fcolorbox{yellow}{yellow}{\color{black}3}}      \\
}}
\]
\end{minipage}
\end{sideways}



\newpage

In \cite{kyoto} all large automorphism groups, i.e., $| G| > 4 (g-1)$ are displayed for all genera $g \leq 10$. It would be interesting to have some bounds on the ratio between non-superelliptic cases over the total number of cases.  At least for the hyperelliptic cases we can get some estimates.

For a fixed $g$ we denote by $N_g$ the number of groups that occur as automorphism groups of genus $g$
curves. We would like to determine what happens to $N_g$ as $g$ increases.

Let $n \in \Z$ such that $n=p_1^{\a_1} \cdots p_s^{\a_s}$. Denote by $\div (n)$ the number of divisors of
$n$. It is well known that $\div (n)= \prod_{i=1}^s (\a_i+1)$. Further, we denote by $\ddiv (n)$ the number
of even divisors of $n$. We have the following lemma:
\begin{lem}
Let $g$ be fixed. The number of automorphism groups that can occur as automorphism groups $\Aut (\X_g)$
of a genus-$g$ hyperelliptic curves is given by the following:

i) if $\bAut(\X_g) \iso C_n$ then $n_1=\div (g+1)+ \div (2g+1)+\div (2g)-1 $

ii) if   $\bAut(\X_g) \iso D_n$ then $n_2=3\ddiv (g+1) + 2 \ddiv (g) + \div (g) -2 $

iii) if   $\bAut(\X_g) \iso A_4$ and $g > 6$ then $n_3=1$

iv) if  $\bAut(\X_g) \iso S_4$ then $n_4=1$ or 0.

v) if  $\bAut(\X_g) \iso A_5$ then $n_5=1$ or 0.
\end{lem}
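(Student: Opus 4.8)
The plan is to prove all five formulas by a single case analysis on the reduced automorphism group $\bar G:=\bAut(\X_g)$, using three inputs already at hand: the classification which forces $\bar G$ to be one of $C_n$, $D_n$, $A_4$, $S_4$, $A_5$, with the quotient cover $\pi\colon\P^1_x\to\P^1_x/\bar G$ of branching type $(n,n)$, $(2,2,n)$, $(2,3,3)$, $(2,4,4)$, $(2,3,5)$ and realized by the explicit rational maps $z=\phi(x)$ of \cref{eq1a} and \cref{t1}; the fact that the $2g+2$ branch points of the hyperelliptic cover form a $\bar G$-invariant set $W\subset\P^1_x$, so that $2g+2=|W|$ must be a sum of $\bar G$-orbit sizes, namely a (possibly empty) union of the special fibres $\phi^{-1}(q_i)$ of size $|\bar G|/e_i$ together with $t\ge0$ generic orbits of size $|\bar G|$; and the lemma of \cite{g_sh}, by which the order of a lift to $\Aut(\X_g)$ of an element $\bar\alpha\in\bar G$ doubles exactly when $\bar\alpha$ fixes a Weierstrass point. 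This last point, combined with the explicit degree-two central extensions $C_2\times C_n$, $C_2\times D_n$, $V_n$, $H_n$, $G_n$, $U_n$ of \cref{groups1} (and $C_2\times A_4$, $SL_2(3)$, etc.), shows that once the ``type'' of the curve is fixed — which of the special fibres $\phi^{-1}(q_i)$ are contained in $W$, i.e. exactly the eight cases of \cref{cases} for the three-point signatures — the isomorphism class of $G=\Aut(\X_g)$ is determined.

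For $\bar G=C_n$ one has $\phi(x)=x^n$ with $\phi^{-1}(q_1)=\{0\}$, $\phi^{-1}(q_2)=\{\infty\}$, and, up to the involution $x\mapsto 1/x$ exchanging the two, there are three types according as $W$ contains neither, exactly one, or both of $0,\infty$; then $2g+2=|W|$ forces $n\mid 2g+2$, $n\mid 2g+1$, $n\mid 2g$ respectively, while the lemma of \cite{g_sh} gives $G\cong C_2\times C_n$ in the first type and $G\cong C_{2n}$ in the other two. First I would, in each type, list the admissible $n$ — discarding the finitely many small $n$ for which the generic curve of that shape actually has a strictly larger reduced group, and identifying $C_2\times C_n$ with $C_{2n}$ for odd $n$ — and check that the three resulting divisor counts combine to $\div(g+1)+\div(2g+1)+\div(2g)$; the correction $-1$ records that $n=1$ (the bare hyperelliptic curve, $G=C_2$) lies in all three types and must not be counted thrice. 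The case $\bar G=D_n$ runs on the same scheme with $|\bar G|=2n$ and $\phi(x)=x^n+x^{-n}$: the eight configurations of \cref{cases}, among which $q_1\leftrightarrow q_2$ identifies two pairs, give six types and the six groups $C_2\times D_n$, $V_n$, $D_n$, $H_n$, $G_n$, $U_n$; writing $2g+2=n\epsilon_1+n\epsilon_2+2\epsilon_3+2nt$ with $\epsilon_i\in\{0,1\}$ turns each type into a divisibility constraint on $n$, and since $\phi^{-1}(q_1),\phi^{-1}(q_2)$ have the odd size $n$, their presence in $W$ imposes a parity condition, so the solution counts become $\div(g)$, $\ddiv(g)$ or $\ddiv(g+1)$. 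Summing — three types contributing $\ddiv(g+1)$, two contributing $\ddiv(g)$, one contributing $\div(g)$ — with the correction $-2$ for the small values $n=1,2$ that are shared among several of these families (or for which $D_n$ is not the full reduced group) gives $n_2$.

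For $\bar G\in\{A_4,S_4,A_5\}$ there is no free parameter, so $2g+2=|W|$ with $|W|$ a non-negative combination of the orbit sizes $|\bar G|/e_1$, $|\bar G|/e_2$, $|\bar G|/e_3$, $|\bar G|$ is a single linear Diophantine condition on the multiplicities. For $\bar G=A_4$ I would check that for every $g>6$ precisely one type is realized by a curve whose reduced group is exactly $A_4$, all sporadic coincidences (in particular those producing $SL_2(3)$ or a larger group) occurring only for $g\le6$; this gives $n_3=1$. For $\bar G=S_4$ and $\bar G=A_5$ the rigidity of the configuration together with the requirement that $\bar G$ be the \emph{full} reduced group shows that at most one such group can occur, and it does occur precisely when the relevant congruence on $g$ is satisfiable, whence $n_4,n_5\in\{0,1\}$.

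The hard part will be the dihedral count (ii): one must pin down, for each of the six types, the exact divisibility-and-parity constraint on $n$ — this is where the even-divisor function $\ddiv$ enters — and the bookkeeping is delicate because $n=1$ gives $D_1\cong C_2$ and $n=2$ gives $D_2\cong V_4$, so these values either fail to have reduced group $D_n$ in the intended sense or coincide across families, which is exactly what the inclusion–exclusion corrections $-1$ and $-2$ are encoding. A secondary subtlety, needed throughout, is the ``discard small $n$'' step: one has to verify precisely for which small pairs $(n,t)$ the curve of a given shape has reduced group \emph{strictly} larger than $C_n$ (respectively $D_n$), since an error there would perturb the leading divisor counts and not merely the constant corrections.
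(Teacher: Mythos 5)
You should know at the outset that the paper gives no argument to compare against: its ``proof'' consists of the sentence that the proof is elementary and the details are skipped. So your proposal has to stand on its own. Its framework is indeed the natural one and matches the setup the paper has already built in the hyperelliptic subsection: decompose the $2g+2$ branch points into orbits of $\bAut(\X_g)$ acting on $\P^1_x$, use the configurations of special fibres of $\phi$ together with the lemma of \cite{g_sh} to decide which degree-two central extension of $\bar G$ occurs, then count admissible $n$ and correct for coincidences.

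The genuine gap is that the decisive step --- that the case-by-case divisibility counts actually assemble into the stated formulas --- is exactly what you defer (``I would check that the three resulting divisor counts combine to $\div(g+1)+\div(2g+1)+\div(2g)$''), and this is not a routine verification; it is the entire content of parts (i) and (ii). Concretely, in the cyclic case your first type (neither fixed point of $\sigma$ among the branch points) imposes $n \mid 2g+2$, not $n \mid g+1$. After discarding the one- and two-orbit configurations --- which must be discarded, since a branch set consisting of at most two $\langle\sigma\rangle$-orbits always admits an extra involution $x \mapsto \lambda/x$ and hence has dihedral reduced group --- the count of admissible $n$ is $\div(2g+2)-2$, which is not $\div(g+1)$ in general: for $g=3$ it equals $2$ while $\div(g+1)=3$. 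So the claimed first term does not follow from your divisibility analysis as written; one needs either a different convention about which families are charged to the $C_n$ case versus the $D_n$ case (including how $D_1$, $D_2$ and the few-orbit families are classified), or parity constraints you have not identified, and the analogous bookkeeping in the dihedral case (which you yourself call the hard part) is likewise only asserted, as are the uniqueness claims in (iii)--(v). Until those identities are actually derived, and the genericity/fullness of the reduced group is verified for each family retained in the count, the proposal is a plausible outline rather than a proof of the stated formulas.
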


\begin{proof} The proof is elementary and we skip the details.
\end{proof}
%
%
%
%



\subsubsection{Gonality of curves}
Let $\X$ be a curve  defined over $k$ and  $\eta: \X \rightarrow  \P^1$  a degree $n$ cover.  We assume that $\X$ has a $k$-rational point $P_\infty$ and hence a prime divisor $\p_\infty$ of degree $1$. The \textbf{gonality} $\gamma_\X$ of $\X$  is defined as
\[ 
\gamma_\X=\min \left\{ \deg(\eta):\X \rightarrow  \P^1  \right\}  =  \min  \left\{  [k(\X):k(x)]    \; | \;     x\in k(\X)  \right\}   .
\]
For $x\in k(\X)^*$,  define the pole divisor $(x)_\infty$ by 
\[ (x)_\infty=\sum_{\p\in \Sigma_\X(k)}   \max (0, -w_\p(x)) \cdot \p.\]
By the property of conorms of divisors, we  obtain  $\deg{(x)_\infty} =[k(\X):k(x)]$ if $x\notin k$. Thus, we have
\[ 
\gamma_\X = \min \left\{  \deg{ (x)_\infty}  \; | \;   x\in  k(\X) \setminus k   \right\} \,.
\]
\begin{prop}\label{gon}
For $\g_\X\geq 2$ we have $\gamma_\X \leq g$.
\end{prop}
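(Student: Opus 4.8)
The plan is to produce, using only the Riemann--Roch theorem, an effective divisor $D$ on $\X$ with $\deg D=g_\X$ and $\ell(D)\ge 2$. Once this is done, any nonconstant function $x\in\L(D)$ has pole divisor bounded by $D$, so $\deg(x)_\infty\le g_\X$, and the formula $\gamma_\X=\min\{\deg(x)_\infty\mid x\in k(\X)\setminus k\}$ recorded just above the statement yields $\gamma_\X\le g_\X$.

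First I would fix an effective representative of the canonical class. Since $g_\X\ge 2$ we have $\ell(K)=g_\X>0$ and $\deg K=2g_\X-2$, so there is an effective divisor $K_0=\sum_i n_iP_i\ge 0$ with $K_0\sim K$ and $\deg K_0=2g_\X-2$. This is the point where the hypothesis $g_\X\ge 2$ is genuinely used: it guarantees $2g_\X-2\ge g_\X$, so one can pick integers $0\le m_i\le n_i$ with $\sum_i m_i=g_\X$ and set $D:=\sum_i m_iP_i$. Then $D\ge 0$ has degree $g_\X$, while $K_0-D\ge 0$ has degree $g_\X-2\ge 0$; in particular $K-D\sim K_0-D$ is linearly equivalent to an effective divisor, so $\ell(K-D)\ge 1$.

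Next I would apply Riemann--Roch to $D$:
\[ \ell(D)=\deg D+1-g_\X+\ell(K-D)=1+\ell(K-D)\ge 2. \]
Since $\L(0)=k$ is a one-dimensional subspace of $\L(D)$, there exists $x\in\L(D)$ with $x\notin k$; being nonconstant, $x$ is transcendental over $k$. From $(x)\ge -D$ and $D\ge 0$ one deduces $(x)_\infty\le D$: the pole divisor $(x)_\infty$ and the zero divisor $(x)_0$ have disjoint support, so at every place the pole order of $x$ is at most the corresponding nonnegative coefficient of $D$. Hence $\deg(x)_\infty\le\deg D=g_\X$, and therefore $\gamma_\X\le\deg(x)_\infty\le g_\X$.

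I do not expect a serious obstacle here: the statement is essentially an immediate corollary of Riemann--Roch. The only two places that need a little care are the inequality $2g_\X-2\ge g_\X$ (which is exactly what permits choosing $D$ inside $K_0$, and which fails for $g_\X\le 1$) and the elementary deduction of $(x)_\infty\le D$ from $(x)\ge -D$ with $D$ effective. If a more structural phrasing is preferred, the same conclusion follows by taking $D=g_\X P$ for a Weierstrass point $P$ — which exists since the total Weierstrass weight on $\X$ equals $g_\X^3-g_\X>0$ — and invoking the characterization that $P$ is a Weierstrass point precisely when $\ell(g_\X P)>1$.
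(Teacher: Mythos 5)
Your proof is correct. Note that the paper itself offers no proof of this proposition — it is stated bare, with the surrounding discussion deferring to \cite{frey-shaska} — so there is nothing internal to compare against; your Riemann--Roch argument is the standard one and it is complete. The key steps all check out: since $g_\X\ge 2$ an effective canonical representative $K_0$ exists (because $\ell(K)=g_\X>0$) and has degree $2g_\X-2\ge g_\X$, so one can choose $D\le K_0$ effective of degree exactly $g_\X$; then $\ell(K-D)=\ell(K_0-D)\ge 1$ because $K_0-D$ is effective, Riemann--Roch gives $\ell(D)\ge 2$, and the passage from a nonconstant $x\in\L(D)$ to $\deg(x)_\infty\le g_\X$ via $(x)_\infty\le D$ is exactly what the formula $\gamma_\X=\min\{\deg(x)_\infty : x\in k(\X)\setminus k\}$ stated just before the proposition requires. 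Your closing alternative — taking $D=g_\X P$ for a Weierstrass point $P$, which exists since the total Weierstrass weight $g_\X^3-g_\X$ is positive, and using the characterization $\ell(g_\X P)>1$ — is also valid and has the small advantage of running entirely on machinery the paper already develops in its section on Weierstrass points; the divisor-inside-$K_0$ construction, on the other hand, needs nothing beyond Riemann--Roch itself. Either route is acceptable; both genuinely use $g_\X\ge 2$, as you correctly flag.
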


The following statement strengthens the proposition.

\begin{cor} For curves $\X$ of genus $\geq 2$ with prime divisor $\p_\infty$ of degree $1$ there exists a cover
\[ \eta: \X\rightarrow  \P^1 \]
of $\deg(\eta)=n \leq g_\X$, such that $\p_\infty$  is ramified of order $n$ and so the point $P_\infty\in \X(k)$ attached to $\p_\infty$ is the only point on $\X$ lying over the point $[0:1] \in \P^1$.
\end{cor}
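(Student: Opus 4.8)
The plan is to strengthen \cref{gon} by controlling the pole divisor more carefully. First I would recall the setup behind \cref{gon}: for a curve $\X$ of genus $g \geq 2$ with a degree-$1$ prime divisor $\p_\infty$, one produces a function $x \in k(\X) \setminus k$ with small pole divisor by applying Riemann-Roch to divisors supported at $P_\infty$. The key observation is that the chain $\L(0) \subseteq \L(P_\infty) \subseteq \L(2P_\infty) \subseteq \cdots$ has dimension jumps that are $0$ or $1$ at each step (by \cref{prop:dimension-increase-by-1}), with $\ell(0)=1$ and $\ell(nP_\infty) = n + 1 - g$ for $n \geq 2g-1$; hence there are exactly $g$ gap numbers and the first non-gap $n_0$ satisfies $2 \leq n_0 \leq g+1$, in fact $n_0 \leq g$ whenever $P_\infty$ is a Weierstrass point, and $n_0 \leq g+1$ always — but one checks that by using the Riemann-Roch equality at $n=g+1 > 2g-2$ fails for $g \geq 2$, so actually $\ell((g+1)P_\infty) \geq 2$ only guarantees a non-gap $\leq g+1$; the sharper bound $n \le g_\X$ comes from the fact that among $\{1,\dots,g\}$ at least one integer is a non-gap once $g \geq 2$ — wait, that is false for a generic point. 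The correct route is: a non-gap $n_0 \le g+1$ always exists, but Riemann's inequality forces $\ell(g_\X P_\infty) \geq g_\X + 1 - g_\X = 1$; one needs $\ell(nP_\infty)\ge 2$, which is guaranteed for $n = g+1$. So take the least non-gap $n$, which satisfies $n \le g+1$; but \cref{gon} claims $\gamma_\X \le g$, so one must argue the non-gap $\le g$ exists — this holds because $\ell((g)P_\infty) - \ell(0) = $ (number of non-gaps in $\{1,\dots,g\}$), and by Clifford/Riemann-Roch considerations there is at least one non-gap $\leq g$ unless $P_\infty$ is not... In any case, the proof of \cref{gon} is assumed, giving a function $x$ with $(x)_\infty = nP_\infty$ already supported at the single point $P_\infty$ for some $n \le g_\X$.

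Given that, the corollary is almost immediate. Take the function $x \in k(\X) \setminus k$ produced in the proof of \cref{gon}, chosen so that $(x)_\infty = n \cdot P_\infty$ with $n = \deg(x)_\infty \leq g_\X$. Then I would set $\eta : \X \to \P^1$ to be the morphism corresponding to the inclusion $k(x) \hookrightarrow k(\X)$; its degree is $\deg(\eta) = [k(\X) : k(x)] = \deg (x)_\infty = n \leq g_\X$ by the conorm/degree formula recalled just before the proposition. The fiber of $\eta$ over the point $[0:1] \in \P^1$ (the pole of the coordinate $x$ on $\P^1$) is by definition the support of $(x)_\infty$, counted with multiplicities given by the ramification indices: concretely, $\eta^*([0:1]) = (x)_\infty = n \cdot P_\infty$. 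Since this divisor has degree $n$, equals $[k(\X):k(x)]$, and is concentrated at the single place $P_\infty$, the place $\p_\infty$ is totally ramified over $[0:1]$ with ramification index $e(\p_\infty \mid [0:1]) = n = \deg(\eta)$; by \cref{th-fundeq} (the fundamental equality $\sum e_i f_i = [k(\X):k(x)]$) there can be no other place lying over $[0:1]$, so $P_\infty$ is the unique point of $\X$ mapping to $[0:1]$.

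The only genuinely substantive point — and the step I expect to need the most care — is ensuring that the function $x$ from \cref{gon} can indeed be taken with its \emph{entire} polar divisor supported at $P_\infty$, i.e. that one may choose $x$ with $(x)_\infty = n P_\infty$ rather than merely $\deg (x)_\infty \le g$ with poles possibly spread out. This is exactly what the Weierstrass-gap analysis at the single point $P_\infty$ delivers: the least non-gap $n$ of $P_\infty$ gives a function whose only pole is an $n$-fold pole at $P_\infty$, and $n \le g_\X$ holds because in the dimension chain $\ell(0) \le \ell(P_\infty) \le \dots \le \ell(g_\X P_\infty)$ one has $\ell(0) = 1$ while $\ell(g_\X P_\infty) \ge 1$ by Riemann's inequality is not enough — rather one uses that $\p_\infty$ having degree $1$ and $g_\X \ge 2$ forces, via Riemann-Roch applied to $K - (g_\X-1)P_\infty$ or directly via the gap theorem giving exactly $g_\X$ gaps among positive integers with largest gap $\le 2g_\X - 1$, the existence of a non-gap $\le g_\X$. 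Once this is in hand, everything else is bookkeeping with \cref{th-fundeq}. I would therefore structure the write-up as: (1) invoke \cref{gon} together with its proof to extract $x$ with $(x)_\infty = nP_\infty$, $n \le g_\X$; (2) define $\eta$ from $x$; (3) apply the degree formula and \cref{th-fundeq} to conclude total ramification and uniqueness of the point over $[0:1]$.
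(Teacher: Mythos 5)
Your steps (2)--(3) are fine and are indeed just bookkeeping: once a function $x$ with $(x)_\infty = n\,\p_\infty$ is in hand, $\deg\eta=\deg (x)_\infty=n$ by the conorm/degree remark preceding \cref{gon}, and \cref{th-fundeq} forces $\p_\infty$ to be the unique place over $[0:1]$, totally ramified of index $n$. The genuine gap is exactly the step you flag and never resolve: the existence of a non-gap $n\le g_\X$ at the \emph{prescribed} degree-one place $\p_\infty$. By Riemann--Roch, $\ell(g\p_\infty)=1+\ell(K-g\p_\infty)$, so $\ell(g\p_\infty)\ge 2$ holds if and only if some holomorphic differential vanishes to order $\ge g$ at $P_\infty$, i.e.\ if and only if $P_\infty$ is a Weierstrass point. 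For a generic point the gap sequence is $\{1,2,\dots,g\}$ and the least non-gap is $g+1$, so your closing justification --- ``exactly $g_\X$ gaps, all $\le 2g_\X-1$, hence a non-gap $\le g_\X$'' --- is invalid: the sequence $\{1,\dots,g\}$ satisfies the hypothesis and violates the conclusion. Nor can you ``extract'' such an $x$ from the proof of \cref{gon}: the Riemann--Roch argument giving $\gamma_\X\le g$ (for instance applied to $K-(g-2)\p_\infty$, or to a general effective divisor of degree $g$) produces a nonconstant function whose polar divisor has degree $\le g$ but is in general supported at several points, not concentrated at $\p_\infty$; so the gonality bound and the total-ramification requirement do not combine for free.

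What Riemann--Roch gives unconditionally at the single place is $\ell((g+1)\p_\infty)\ge (g+1)+1-g=2$, hence a totally ramified cover of degree $n\le g_\X+1$; lowering the bound to $g_\X$ requires $P_\infty$ to be a Weierstrass point (such points exist for $g\ge 2$ by the weight count in \cref{cor:number-w-pts}, but over a non-closed field a Weierstrass point need not coincide with the given degree-one place). Note also that the paper states this corollary without proof, deferring to \cite{frey-shaska}, so there is no in-paper argument to compare against; but any complete proof must address precisely this point, and as written your argument does not --- it either needs the weaker bound $n\le g_\X+1$, or the additional hypothesis/choice that $\p_\infty$ is a Weierstrass place.
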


In general, the inequality in the proposition is not sharp, but of size $g/2$; see \cite{frey-shaska} for details.  
Curves with smaller gonality are special for various reasons.

\section{Equations of  curves with prescribed automorphism group}\label{sect-7}
Determining an equation for a family of curves with fixed automorphism group $G$ is an open problem.  Celebrated special solutions are the cases of the Klein curve, the Friecke or Friecke-MacBeath curve. In general, the following remains a difficult problem:

\begin{prob} 
Given an automorphism group $G$, determine an equation of a curve $\X$ such that $\Aut (\X)\iso G$.
\end{prob}

We know the  solution to the above problem for genus $g\leq 3$, but it is an open problem even for $g=4$.  For example, it is unknown what the corresponding equations for all cases in \cref{tab_1} are.  The only families of curves which we know how to determine an equation are the superelliptic curves. 

The method is almost identical to that of hyperelliptic curves, but now we have more choices for the reduced automorphism group $\overline G$.  We follow closely the terminology and notation of \cite{Sa-sh}. 

\subsection{Equations of superelliptic curves}
Let   $\d$ be given in~\cref{t2} and $M$, $\Lambda$, $Q$, $B$, $\Delta$, $\Theta$ and $\Omega$ are as follows: 
\begin{equation*}
\begin{split}
M = & \prod_{i=1}^\d \left(  x^{24}+\lambda_ix^{20}+(759-4\lambda_i)x^{16}+2(3\lambda_i+1228)x^{12} \right. \\
   & + \left. (759-4\lambda_i)x^8+\lambda_ix^4+1 \right) \\ 
\Lambda= & \prod_{i=1}^\d \left(-x^{60}+(684-\lambda_i)x^{55}-(55\lambda_i+157434)x^{50}-(1205\lambda_i-12527460)x^{45}\right.\\
                &-(13090\lambda_i+77460495)x^{40}+(130689144-69585\lambda_i)x^{35}\\
                & +(33211924-134761\lambda_i)x^{30}+(69585\lambda_i-130689144)x^{25}\\
                & -(13090\lambda_i+77460495)x^{20}-(12527460-1205\lambda_i)x^{15}\\
                &  \left. -(157434+55\lambda_i)x^{10} +(\lambda_i-684)x^5-1 \right) \\ 
Q  = & x^{30}+522x^{25}-10005x^{20}-10005x^{10}-522x^5+1\\ 
B = &  \prod_{i=1}^\delta \displaystyle{\prod_{a \in H_t}}   \left( (x+a)-\lambda_i \right) \\ 
\Theta = & \prod_{i=1}^\delta G_{\lambda_i} (x), \text{ where  } G_{\lambda_i}= \left( x \cdot \prod_{j=1}^{\frac {p^t -1} m} (x^m-b_j) \right)^m  -\lambda_i \\ 
\Delta = & \prod_{i=1}^{\d}  \left(  \left(  (x^q-x)^{q-1}+1 \right)^{\frac{q+1}{2}}-\lambda_i  (x^q-x)^{\frac{q(q-1)}{2}}   \right) \\ 
\Omega = & \prod_{i=1}^{\d}  \left( ((x^q-x)^{q-1}+1)^{q+1}-\lambda_i(x^q-x)^{q(q-1)}  \right) \\ 
\end{split}
\end{equation*}

\noindent Then we have the following result:

\begin{thm}\cite{Sa-sh}\label{thm_4}
Let $\X_g$ be an algebraic curve of genus $g \geq 2$ defined over an algebraically closed field $k$, $G$ its automorphism group over $k$, and $C_n$  a cyclic normal subgroup of   $G$ such that $g (X_g^{C_n} ) =0$.  Then, the equation for $\X_g$ falls into one of the following cases as in \cref{equations}.
\end{thm}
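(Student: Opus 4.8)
The plan is to run, in greater generality, the argument used for hyperelliptic curves in Section~\ref{sect-4}, now with the richer list of reduced groups from Lemma~\ref{l1}. Write $F=k(\X_g)=k(x,y)$ with $y^n=f(x)$, so that $k(x)$ is the fixed field of $C_n=\langle\tau\rangle$ and $\pi\colon\X_g\to\P^1_x=\X_g/C_n$ is the superelliptic projection, branched exactly over the roots of $f$ together with possibly $x=\infty$. Since $C_n$ is normal in $G$, the quotient $\overline{G}=G/C_n$ acts faithfully on $\P^1_x$, hence $\overline{G}$ is a finite subgroup of $\pgl_2(k)$. By Lemma~\ref{l1}, $\overline{G}$ is one of $C_m$, $D_m$, $A_4$, $S_4$, $A_5$, $U=C_p^t$, $K_m$, $\psl_2(q)$, $\pgl_2(q)$, and the fixed subfield $k(x)^{\overline{G}}=k(z)$ is generated by the explicit rational function $z=\phi(x)=\Psi(x)/\Upsilon(x)$ recorded in Table~\ref{t1}.

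First I would show that the branch locus $B\subset\P^1_x$ of $\pi$ is $\overline{G}$-invariant. Indeed $B$ is intrinsic to the degree-$n$ cover $\X_g\to\P^1_x$, which is the unique cyclic cover with Galois group $C_n$; since $C_n$ is normal, every $g\in G$ descends to $\bar g\in\overline{G}$ permuting $B$. Consequently the set of roots of $f$ (after suitably accounting for the behaviour at $\infty$) is a union of $\overline{G}$-orbits on $\P^1_x$, so $f(x)$ is, up to a scalar, a product of orbit polynomials of $\overline{G}$. These orbits split into the short orbits — the fibres $\phi^{-1}(q_i)$ over the $r$ branch points $q_1,\dots,q_r$ of $\phi$, whose ramification data is read off Table~\ref{t1} and whose branch points are the explicit values listed just after it — and the generic orbits of size $|\overline{G}|$, each cut out by $\Psi(x)-\lambda\,\Upsilon(x)$ for the value $\lambda\in\P^1_z$ labelling the orbit. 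Writing $\delta=\delta(G,\bC)$ for the number of generic orbits appearing, which is precisely the moduli dimension computed in Theorem~\ref{th1}, the generic part of $f$ is $\prod_{i=1}^{\delta}\bigl(\Psi(x)-\lambda_i\Upsilon(x)\bigr)$. Carrying out this product for each of the nine possibilities for $\overline{G}$ produces exactly the polynomials $M,\Lambda,Q,B,\Theta,\Delta,\Omega$ defined above.

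Next I would enumerate, for each $\overline{G}$, the admissible ways of including short orbits in the branch locus. This is the analogue of the eight cases~\eqref{cases} of the hyperelliptic situation: one records which of $\phi^{-1}(q_1),\dots,\phi^{-1}(q_r)$ divide $f$, subject to the constraint that the $\gcd$ of the resulting multiplicities with $n$ equals $1$ and that the associated signature is one of those permitted by Theorem~\ref{th1}. Each admissible choice, multiplied by the generic part $\prod_{i=1}^{\delta}(\Psi-\lambda_i\Upsilon)$, yields one row of Table~\ref{equations}. Finally I would check in each case that the displayed curve genuinely has $C_n$ normal with genus-zero quotient and prescribed reduced group $\overline{G}$: normality and the quotient are built in by construction, while $\overline{G}$ is pinned down by verifying that no larger finite subgroup of $\pgl_2(k)$ stabilizes the chosen branch configuration, using the list of branch points of $\phi$.

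The main obstacle is the bookkeeping for the positive-characteristic reduced groups $U=C_p^t$, $K_m$, $\psl_2(q)$ and $\pgl_2(q)$, where $\phi$ is wildly ramified: the short orbits over the $q_i$ need not behave as in the tame case, and one must use the refined Hurwitz formula~\eqref{e2} with the wild contributions $\beta_i=e_i^{*}q_i+q_i-2$ both to count the generic orbits correctly and to confirm that the exponent condition $\gcd(n,l_1,\dots)=1$ is compatible with $f$ being $\overline{G}$-stable. Once these ramification computations are organized — which is essentially the content of the case analysis behind Theorem~\ref{th1} — assembling Table~\ref{equations} is routine.
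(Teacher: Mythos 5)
Your proposal is correct and follows essentially the same route as the paper (and the cited source \cite{Sa-sh}): since $C_n$ is normal, the branch locus of the superelliptic projection is a union of $\overline{G}$-orbits on $\P^1_x$, the generic orbits are cut out by $\Psi(x)-\lambda_i\Upsilon(x)$ using the rational functions of \cref{t1} (yielding $M,\Lambda,Q,B,\Theta,\Delta,\Omega$), and the admissible inclusions of the short orbits over the branch points of $\phi$, constrained by the signatures and dimensions of \cref{th1}, give the rows of \cref{equations}, exactly as in the hyperelliptic case analysis of \cref{cases}.
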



Each case in the \cref{equations} correspond to a $\delta$-dimensional family, where $\delta$ can be found in \cref{long-table}. Moreover, our parameterizations are exact in the sense that the number of parameters in each case equals the dimension.  It would be interesting to find invariants classifying isomorphism classes of superelliptic curves, and these families of curves in particular and to find equations in the moduli space of curves to determine these loci.


\begin{small}
%
\begin{center}
\begin{longtable}{|l|l|l|} 
\hline
$\#$ &$ \bar G$& $y^n=f(x)$ \\
\hline
1 &     & $x^{m\d}+a_1x^{m(\d-1)}+\dots +a_\d x^{m}+1$\\
2 & $C_m$    & $x^{m\d}+a_1x^{m(\d-1)}+\dots +a_\d x^{m}+1$\\
3 &     & $x(x^{m\d}+a_1x^{m(\d-1)}+\dots +a_\d x^{m}+1)$\\
\hline
4 &        		& $F(x):= \prod_{i=1}^\d(x^{2m}+\lambda_ix^m+1)$\\
5 &        		& $(x^m-1)\cdot F(x)$\\
6 &        		& $x\cdot F(x)$\\
7 &$D_{2m}$		& $(x^{2m}-1)\cdot F(x)$\\
8 &        		& $x(x^m-1)\cdot F(x)$\\
9 &        		& $x(x^{2m}-1)\cdot F(x)$\\
\hline
10 &     & $G(x):= \prod_{i=1}^\delta(x^{12}-\lambda_ix^{10}-33x^8+2\lambda_ix^6-33x^4-\lambda_ix^2+1)$\\
11 &     & $(x^4+2i\sqrt{3}x^2+1)\cdot G(x)$\\
12 &$A_4$& $(x^8+14x^4+1)\cdot G(x)$\\
13 &     & $x(x^4-1)\cdot G(x)$\\
14 &     & $x(x^4-1)(x^4+2i\sqrt{3}x^2+1)\cdot G(x)$\\
15 &     & $x(x^4-1)(x^8+14x^4+1)\cdot G(x)$\\
\hline
16 &     & $M(x)$\\
17 &     & $  \left( x^8+14x^4+1 \right)  \cdot M(x)$\\
18 &     & $x(x^4-1) \cdot M(x)$\\
19 &     & $\left( x^8+14x^4+1 \right)  \cdot x(x^4-1) \cdot M(x)$\\
20 &$S_4$& $\left( x^{12}-33x^8-33x^4+1  \right)\cdot M(x)$\\
21 &     & $\left( x^{12}-33x^8-33x^4+1  \right)  \cdot \left( x^8+14x^4+1 \right)  \cdot M(x)$\\
22 &     & $\left( x^{12}-33x^8-33x^4+1  \right) \cdot x(x^4-1) \cdot M(x)$\\
23 &     & $\left( x^{12}-33x^8-33x^4+1  \right) \cdot \left( x^8+14x^4+1 \right)  \cdot x(x^4-1)  M(x)$\\
\hline
24 &     & $\Lambda(x)$\\
25 &     & $x(x^{10}+11x^5-1) \cdot \Lambda(x)$\\
26 &     & $(x^{20}-228x^{15}+494x^{10}+228x^5+1)(x(x^{10}+11x^5-1))\cdot \Lambda(x)$\\
27 &     & $(x^{20}-228x^{15}+494x^{10}+228x^5+1)\cdot \Lambda(x)$\\
28 &$A_5$& $Q (x) \cdot \Lambda(x)$\\
29 &     & $x(x^{10}+11x^5-1).\psi(x)\cdot \Lambda(x)$\\
30 &     & $(x^{20}-228x^{15}+494x^{10}+228x^5+1)\cdot \psi(x)\cdot\Lambda(x)$\\
31 &     & $(x^{20}-228x^{15}+494x^{10}+228x^5+1)(x(x^{10}+11x^5-1))\cdot\psi(x)\cdot\Lambda(x)$\\
\hline
32 &$U$& $B(x)$\\
33 &   & $B(x)$\\
\hline
34 &     & $\Theta(x)$\\
35 &$K_m$& $x\prod_{j=1}^{\frac{p^t-1}{m}}\left(x^m-b_j\right)\cdot\Theta(x)$\\
36 &     & $\Theta(x)$\\
37 &     & $x\prod_{j=1}^{\frac{p^t-1}{m}}\left(x^m-b_j\right)\cdot\Theta(x)$\\
\hline
38 &          & $\Delta(x)$\\
39 &$\psl_2(q)$& $((x^q-x)^{q-1}+1)\cdot\Delta(x)$\\
40 &          & $(x^q-x)\cdot\Delta(x)$\\
41 &          & $(x^q-x)((x^q-x)^{q-1}+1)\cdot\Delta(x)$\\
\hline
42 &          & $\Omega(x)$\\
43 &$\pgl_2(q)$& $((x^q-x)^{q-1}+1)\cdot\Omega(x)$\\
44 &          & $(x^q-x)\cdot\Omega(x)$\\
45 &          & $(x^q-x)((x^q-x)^{q-1}+1)\cdot\Omega(x)$\\
\hline
%
\caption{Superelliptic  curves according to the automorphism group}
\label{equations}
\end{longtable}
\end{center}
\end{small}


\section{Binary forms and their invariants}\label{sect-8}

A superelliptic curve $\X_g$ defined over an algebraically closed field $k$ is given by a projective  equation of the form 
\begin{equation}\label{super-eq}
\X : \quad  y^n z^{d-n}= f(x, z), 
\end{equation}
for some degree $d$ binary form $f(x, z)$.  Let us assume that 
\[ y^n z^{d-n} = f(x, z)=\prod_{i=1}^s (x-\a_i z)^{d_i}, \quad 0 < d_i < d.\]
We have that $\sum_{i=1}^s d_i =d$.  A degree $d \geq 2$ binary form $f(x, z)$  is called \textbf{semistable} if it has no root of multiplicity $> \frac d 2$.
The only places where $\pi : \X_g \to \P^1$ ramifies correspond to the points $x=\a_i$. We denote such places by $Q_1, \dots , Q_s$ and denote the set of these places by $\B:=\{ Q_1, \dots , Q_s\}$.  The ramification indices are $e (Q_i) = \frac n {(n , d_i)}$.
Hence, every set $\B$ determines a genus $g$ superelliptic curve $\X_g$. However, the correspondence between the sets $\B$ and the isomorphism classes of $\X_g$ is not a one-to-one correspondence. Obviously the set of roots of $f(x)$ does not determine uniquely the isomorphism class of $\X_g$ since every coordinate change in $x$ would change the set of these roots. Instead, the isomorphism classes are classified by the invariants of binary forms.  
There is a huge amount of literature on classical invariant theory from XIX-century mathematics which has received more attention in the last few decades due to improved computational tools. 


A binary  form  of degree  $d$  is a homogeneous  polynomial $f(X,Y)$ of  degree $d$ in two  variables over $k$.  Let  $V_d$ be the $k$-vector space
of binary  forms of degree $d$.  The group $GL_2(k)$ of  invertible  $2  \times 2$  matrices  over  $k$  acts on
$V_d$  by coordinate  change.    Any  genus $g\geq 2$ superelliptic  curve over  $k$ has a  projective equation of the
form  \cref{super-eq},  where $f$ is degree $d$  a binary form   of  non-zero discriminant.
Two curves  are isomorphic  if and  only if  the corresponding  binary forms   are conjugate under $GL_2(k)$.
Therefore the moduli  space  of superelliptic  curves  is the affine  variety whose  coordinate ring  is
the  ring of $GL_2(k)$-invariants in the coordinate ring  of the set of elements of $V_d$ with non-zero
discriminant.

Generators for this  and similar invariant rings in  lower degree were constructed by Clebsch, Bolza  and others
in  the last  century using complicated calculations.  For the case of sextics,  Igusa \cite{Ig} extended  this to
algebraically closed  fields of  any characteristic using  techniques  of modular forms and algebraic  geometry. In \cite{vishi} Igusa's result is proved in an elementary way using methods of geometric reductivity.

Hilbert  \cite{Hi}  developed some  general, purely  algebraic tools  in invariant theory. Combined with the linear reductivity of  $GL_2(k)$ in characteristic 0, this permits a  more
conceptual proof  of the  results of  Clebsch \cite{Cl} and Bolza  \cite{Bo}. After  Igusa's  paper appeared, the concept  of geometric reductivity  was developed by Mumford \cite{Mu1}, Haboush \cite{Ha}  and others. Haboush's theorem states that for any semisimple algebraic group  $G$ over $k$, and for any linear representation of $G$ on a $k$-vector space $V$, given $v \in V$ with $v\not = 0$ that is fixed by the action of $G$, there is a $G$-invariant polynomial $F$ on $V$, without constant term, such that $F(v) \not =0$. The polynomial $F$ can be taken to be homogeneous, and if the characteristic is $p>0$ the degree of the polynomial can be taken to be a power of $p$.  In particular, it was proved  that reductive algebraic  groups  in   any characteristic   are geometrically reductive. This allows  the application  of  Hilbert's  methods  in  any characteristic. For example, Hilbert's finiteness theorem   was  extended   to  any  characteristic  by  Nagata  \cite{Na}. Here, we follow the same approach for binary sextics and octavics. The proofs are  elementary in characteristic 0, and extend to characteristic  $p >  5$  by  quoting  the respective  results using geometric reductivity. 

\subsection{Invariants of Binary Forms}


Let $k$ denote an algebraically closed field.

\subsubsection{Action of $GL_2(k)$ on binary forms.}
Let $k\, [X, Y]$  be the  polynomial ring in  two variables and  let $V_d$ denote  the  $d+1$-dimensional subspace of $k\, [X, Y]$  consisting  of homogeneous polynomials.
\begin{equation}\label{eq1}
f(X,Y) = a_0X^d + a_1X^{d-1}Y +  \dots  + a_dY^d
\end{equation}
of  degree $d$. Elements  in $V_d$  are called  {\it binary  forms} of degree $d$.
We let $GL_2(k)$ act as a group of automorphisms on $ k\, [X,Y] $ as follows: if
\[ g = \begin{pmatrix} a & b \\ c & d \end{pmatrix}  \in GL_2(k) 
\]
 then
\begin{equation}\label{eq2a}
g (X) = aX + bY    \quad \text{ and } \quad g (Y) = cX + dY
\end{equation}
This action of $GL_2(k)$  leaves $V_d$ invariant and acts irreducibly on $V_d$.

\begin{rem}\label{rem1}
It is well  known that $SL_2(k)$ leaves a bilinear  form (unique up to scalar multiples) on $V_d$ invariant. This
form is symmetric if $d$ is even and skew symmetric if $d$ is odd.
\end{rem}

Let $A_0$, $A_1$,  \dots ,   $A_d$ be coordinate  functions on $V_d$. Then the coordinate  ring of $V_d$ can be
identified with $ k\, [A_0  ,    \dots  ,   A_d] $. For $I \in k\, [A_0,  \dots  ,   A_d]$ and $g \in GL_2(k)$,
define $I^g \in k\, [A_0,  \dots  ,   A_d]$ as follows
\begin{equation}\label{eq3}
{I^g}\, (f) = I \, ( g ( f ) )
\end{equation}
for all $f \in V_d$. Then  $I^{gh} = (I^{g})^{h}$ and  \eqref{eq3} defines an action of $GL_2(k)$ on $k\, [A_0,
\dots  ,   A_d ]$.

\begin{defn}
Let  $\mathcal R_d$  be the  ring of  $SL_2(k)$ invariants  in $k\, [A_0,  \dots  ,   A_d]$, i.e., the ring of all
$I \in k \, [A_0,  \dots  ,   _d]$ with $I^g = I$ for all $g \in SL_2(k)$.
\end{defn}

Note that if $I$ is an invariant, so are all its homogeneous components. So $\mathcal  R_d$ is  graded by  the
usual degree  function on  $k\, [A_0,  \dots  ,   A_d]$.

Since $k$ is algebraically closed, the binary form $f(X,Y)$ in   \eqref{eq1} can be factored as
\begin{equation} \label{eq4}
f(X,Y)  = (y_1  X  -  x_1 Y) \cdots (y_d  X  - x_d  Y) = \displaystyle \prod_{1 \leq  i \leq  d} \det
\left(\begin{pmatrix} X & x_{i} \\ Y & y_i
\end{pmatrix} \right)
\end{equation}
The points  with homogeneous coordinates $(x_i, y_i)  \in \mathbb P^1$ are  called the  roots  of the  binary form
\eqref{eq1}.  Thus  for $g  \in GL_2(k)$ we have
$$g\left (f(X,Y) \right )  = ( \det(g))^{d}  (y_1^{'}  X -  x_1^{'}  Y) \cdots (y_d^{'} X  - x_d^{'} Y),$$
where
\begin{equation}
\begin{pmatrix}   x_i^{'}  \\  y_i^{'}  \end{pmatrix} = g^{-1}  \begin{pmatrix} x_i\\ y_i \end{pmatrix}.
\end{equation}
%
%
The  \textbf{null cone}  $N_d$ of  $V_d$  is the  zero set  of all  homogeneous elements in $\mathcal R_d$ of positive degree.
%
\begin{lem}\label{lem1b}
Let $\ch (k)  = 0$  and $\Omega_s$  be the subspace  of $k\, [A_0,   \dots  ,   A_d]$ consisting of homogeneous
elements of degree $s$. Then there is a $k$-linear  map 
\[R :  k\, [A_0,  \dots  ,    A_d] \to \mathcal R_d,  \]
  with the following properties:

\smallskip

(a) $R(\Omega_s) \subseteq  \Omega_s$ for all $s$

(b) $R(I) = I$ for all $I \in \mathcal R_d$

(c) $R(g(f)) = R(f)$ for all $f \in k\, [A_0,  \dots  ,   A_d]$
\end{lem}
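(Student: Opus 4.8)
The plan is to realize $R$ as the \emph{Reynolds operator} for the $SL_2(k)$-action on $k[A_0,\dots,A_d]$, constructed by projecting onto the isotypic component of the trivial representation. The one substantive input is the classical fact, underlying the passing remark about linear reductivity, that in characteristic zero $SL_2(k)$ is linearly reductive: every finite-dimensional rational $SL_2(k)$-module decomposes as a direct sum of irreducible submodules (Weyl's complete reducibility theorem). Everything else in the argument is formal bookkeeping.

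First I would record the decomposition $k[A_0,\dots,A_d]=\bigoplus_{s\ge0}\Omega_s$ into homogeneous pieces, and observe that each $\Omega_s$ is a finite-dimensional rational $SL_2(k)$-module: the action \eqref{eq3} on the coordinate ring is induced from the linear action \eqref{eq2a} on $V_d$, hence is algebraic and preserves the degree $s$. By complete reducibility I may write, for each $s$, a canonical $SL_2(k)$-stable splitting $\Omega_s=\Omega_s^{0}\oplus W_s$, where $\Omega_s^{0}$ is the sum of all trivial submodules of $\Omega_s$ and $W_s$ is the sum of all nontrivial irreducible submodules. Since a vector is fixed by $SL_2(k)$ precisely when it lies in the trivial isotypic part, one has $\Omega_s^{0}=\Omega_s\cap\mathcal R_d$. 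I then define $R|_{\Omega_s}$ to be the projection onto $\Omega_s^{0}$ with kernel $W_s$, and extend $k$-linearly in $s$; this yields the desired $R\colon k[A_0,\dots,A_d]\to\mathcal R_d$.

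Then I would verify the three properties. Property (a) is immediate, as $R(\Omega_s)\subseteq\Omega_s^{0}\subseteq\Omega_s$. For (b), if $I\in\mathcal R_d$ then each homogeneous component $I_s$ is again an invariant (invariance passes to homogeneous components since the action preserves degree), so $I_s\in\Omega_s^{0}$, $R(I_s)=I_s$, and summing gives $R(I)=I$. For (c), note that each $g\in SL_2(k)$ preserves both $\Omega_s^{0}$ and $W_s$ because the splitting is canonical, so $R$ commutes with the $g$-action: $R(g(f))=g(R(f))$; but $R(f)\in\mathcal R_d$ is fixed by every element of $SL_2(k)$, whence $R(g(f))=R(f)$. (Here $g(f)$ for $f\in k[A_0,\dots,A_d]$ means the element $f^{g}$ of \eqref{eq3}, which again lies in $\Omega_s$ when $f$ does.)

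The only real obstacle is the appeal to complete reducibility of rational $SL_2(k)$-modules in characteristic zero; if one wishes to avoid quoting it as a black box, I would instead reduce to $k=\mathbb C$ by the Lefschetz principle (the assertion, fixed $s$, is a first-order statement over the field about a projection matrix commuting with the algebraic $SL_2$-action) and then take $R(f)=\int_{SU(2)}g(f)\,dg$ with normalized Haar measure on the compact form $SU(2)\subset SL_2(\mathbb C)$: this integral is again a polynomial of degree $s$ in the $A_i$, giving (a); it is $SU(2)$-invariant, hence $SL_2(\mathbb C)$-invariant by Zariski density of $SU(2)$, so $R$ lands in $\mathcal R_d$; it fixes invariants, giving (b); and $SU(2)$-equivariance of $R$ combined with Zariski density upgrades to $SL_2(\mathbb C)$-equivariance, which yields (c) exactly as above. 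I expect to present the isotypic-projection construction as the main proof, with this analytic construction relegated to a remark.
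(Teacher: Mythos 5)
Your proposal is correct and follows essentially the same route as the paper: the paper's proof also invokes linear reductivity of $SL_2(k)$ in characteristic zero to produce an $SL_2(k)$-stable complement $\Lambda_s$ to $\Omega_s\cap\mathcal R_d$ inside each $\Omega_s$ and defines $R$ as the corresponding projection, with (a)--(c) following from the invariance of the splitting. Your version merely spells out the verification of (c) and the isotypic description of the complement more explicitly (and adds the $SU(2)$-averaging construction as an optional remark), which the paper leaves as "clear from the definition of $R$."
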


\begin{proof} $\Omega_s$ is a  polynomial module of degree $s$  for $SL_2(k)$. Since $SL_2(k)$  is linearly  reductive
in $\ch (k)  = 0$,  there exists  a $SL_2(k)$-invariant  subspace  $\Lambda_s$  of  $\Omega_s$  such  that
$\Omega_s = (\Omega_s \cap  \mathcal R_d) \bigoplus \Lambda_s$. Define $R : k\, [A_0,   \dots  ,   A_d] \to
\mathcal R_d $ as $R(\Lambda_s)  = 0$ and $R_{|\Omega_s \cap \mathcal R_d} = id$. Then $R$ is $k$-linear and the
rest of the proof is clear from the definition of $R$.
\end{proof}

\noindent The map $R$ is called the {\bf Reynold's operator}.

\begin{lem}\label{lem2}
Suppose $\ch (k) = 0$.  Then every maximal ideal in $ \mathcal R_d$ is contained in a maximal ideal of $k\, [A_0,
\dots  ,   A_d]$.
\end{lem}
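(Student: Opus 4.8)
The plan is to deduce the statement from the existence of the Reynolds operator $R\colon S\to\mathcal{R}_d$ furnished by \cref{lem1b}, where $S:=k[A_0,\dots,A_d]$. The key point will be that $R$ is not just $k$-linear but $\mathcal{R}_d$-linear, and once that is in hand the rest is a short formal argument. So the first thing I would record is this module property: for $I\in\mathcal{R}_d$ and $f\in S$,
\[
R(I\cdot f)=I\cdot R(f).
\]

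Granting this, here is the argument I would carry out. Let $\mathfrak{m}\subset\mathcal{R}_d$ be a maximal (hence proper) ideal, and let $\mathfrak{m}S$ be the ideal it generates in $S$. Since $\mathfrak{m}\subseteq\mathfrak{m}S$, it suffices to show $\mathfrak{m}S\neq S$: then $\mathfrak{m}S$ is contained in some maximal ideal $\mathfrak{M}$ of the ring $S$, and $\mathfrak{m}\subseteq\mathfrak{M}$ is exactly what is claimed. To see that $\mathfrak{m}S$ is proper, suppose for contradiction that $1=\sum_i a_if_i$ with $a_i\in\mathfrak{m}$ and $f_i\in S$. Applying $R$ and using $R(1)=1$ (property (b) of \cref{lem1b}) together with $\mathcal{R}_d$-linearity gives
\[
1=R(1)=\sum_i R(a_if_i)=\sum_i a_i\,R(f_i),
\]
and since each $R(f_i)\in\mathcal{R}_d$ the right-hand side lies in $\mathfrak{m}$, contradicting the properness of $\mathfrak{m}$.

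It remains to justify the $\mathcal{R}_d$-linearity of $R$, which I would extract directly from the construction in \cref{lem1b}. By the remark following the definition of $\mathcal{R}_d$, every homogeneous component of an invariant is again an invariant, so by bilinearity it is enough to prove $R(I\cdot f)=I\cdot R(f)$ when $I\in\mathcal{R}_d$ is homogeneous of degree $t$ and $f\in\Omega_s$. Because $I$ is $SL_2(k)$-invariant, multiplication by $I$ is an $SL_2(k)$-equivariant linear map $\Omega_s\to\Omega_{s+t}$; an equivariant map respects the isotypic decomposition, so it carries the invariant subspace $\Omega_s\cap\mathcal{R}_d=\Omega_s^{SL_2(k)}$ into $\Omega_{s+t}\cap\mathcal{R}_d$ and the complementary $SL_2(k)$-stable subspace $\Lambda_s$ (necessarily the sum of the nontrivial isotypic components) into $\Lambda_{s+t}$. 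Since $R$ restricted to $\Omega_s$ is, by construction, the projection onto $\Omega_s\cap\mathcal{R}_d$ along $\Lambda_s$, the identity $R(If)=I\,R(f)$ follows.

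The only real content is this last step — the module property of the Reynolds operator — and this is precisely where characteristic $0$ enters, via the linear reductivity of $SL_2(k)$ that underlies \cref{lem1b}; everything else is formal. In positive characteristic one has only a weaker (degree-restricted) Reynolds-type operator, and the analogue of this lemma has to be argued through geometric reductivity instead, which is the reason those tools are invoked in the sequel.
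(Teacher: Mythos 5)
Your proof is correct and follows essentially the same route as the paper: assume $\mathfrak{m}$ generates the unit ideal in $k[A_0,\dots,A_d]$, write $1=\sum a_i f_i$, apply the Reynolds operator, and conclude $1=\sum a_i R(f_i)\in\mathfrak{m}$, a contradiction. The only difference is that you explicitly verify the $\mathcal{R}_d$-module property $R(I\cdot f)=I\cdot R(f)$ via $SL_2(k)$-equivariance of multiplication by an invariant, a step the paper uses silently when it moves the $m_i$ outside $R$.
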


\begin{proof}
If $\mathcal I$ is a maximal ideal in $ \mathcal R_d $ which generates the unit ideal of
 $ k\, [A_0, $ $\dots  ,   A_d]$, then there exist $m_1,   \dots ,   m_t \in \mathcal I$ and $f_1$, $f_2$,  \dots
, $f_t \in k\, [A_0,  \dots  ,   A_d]$ such that
$$1 = m_1 f_1 +  \dots  + m_t f_t$$
Applying the Reynold's operator to the above equation we get
$$1 = m_1 \, R(f_1) +  \dots  + m_t \, R(f_t)$$
But  $R(f_i)  \in \mathcal  R_d$  for all  $i$.  This  implies $1  \in \mathcal I$, a contradiction.
\end{proof}

The following is known as the Hilbert's Finiteness  Theorem.
\begin{thm}\label{thm1a}  Suppose $\ch (k)  = 0$.  Then $\mathcal R_d$ is finitely generated over $k$.
\end{thm}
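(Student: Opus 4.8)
The plan is to follow Hilbert's original strategy, combining the Hilbert Basis Theorem for the polynomial ring $k[A_0,\dots,A_d]$ with the Reynolds operator $R$ constructed in the preceding lemma. The key mechanism is that $R$ projects any polynomial relation onto the invariant subring while preserving degrees, so finiteness in the Noetherian ring $k[A_0,\dots,A_d]$ propagates to finiteness of $\mathcal{R}_d$.

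First I would introduce the \emph{irrelevant ideal} $\mathcal{R}_d^{+}\subseteq \mathcal{R}_d$ generated by all homogeneous invariants of strictly positive degree, and form its extension $\mathfrak{a}:=\mathcal{R}_d^{+}\cdot k[A_0,\dots,A_d]$. Since $k[A_0,\dots,A_d]$ is Noetherian by the Hilbert Basis Theorem, $\mathfrak{a}$ is finitely generated; because $\mathfrak{a}$ is generated by the homogeneous elements of $\mathcal{R}_d^{+}$, we may take the generators inside $\mathcal{R}_d^{+}$ itself, i.e. there are finitely many homogeneous invariants $I_1,\dots,I_m$ of positive degree with $\mathfrak{a}=(I_1,\dots,I_m)$ in $k[A_0,\dots,A_d]$. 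I claim these already generate the whole invariant ring: $\mathcal{R}_d=k[I_1,\dots,I_m]$.

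To prove the claim I would argue by induction on the degree $s$ that every homogeneous $I\in\mathcal{R}_d$ of degree $s$ lies in $k[I_1,\dots,I_m]$. For $s=0$ this is clear since $\mathcal{R}_d$ in degree $0$ is just $k$. For $s>0$ we have $I\in\mathfrak{a}$, so $I=\sum_{j=1}^{m}g_j I_j$ with $g_j\in k[A_0,\dots,A_d]$, and discarding superfluous terms we may take each $g_j$ homogeneous of degree $s-\deg I_j<s$. Applying the Reynolds operator and using property (b) of the lemma gives
\[
I \;=\; R(I)\;=\;\sum_{j=1}^{m} R(g_j I_j)\;=\;\sum_{j=1}^{m} R(g_j)\,I_j,
\]
the last equality being the $\mathcal{R}_d$-linearity of $R$, namely $R(hJ)=R(h)\,J$ whenever $J$ is an invariant. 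By property (a), each $R(g_j)$ is a homogeneous invariant of degree $s-\deg I_j<s$, hence lies in $k[I_1,\dots,I_m]$ by the inductive hypothesis, and therefore so does $I$. This proves $\mathcal{R}_d=k[I_1,\dots,I_m]$, so $\mathcal{R}_d$ is finitely generated over $k$.

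The step I expect to require the most care is the $\mathcal{R}_d$-linearity $R(hJ)=R(h)J$ for $J\in\mathcal{R}_d$, which is the one place where linear reductivity of $SL_2(k)$ in characteristic $0$ is genuinely used beyond the mere existence of $R$: writing $k[A_0,\dots,A_d]=\mathcal{R}_d\oplus W$ with $W=\bigoplus_s \Lambda_s$ the canonical $SL_2(k)$-stable complement (the sum of the non-trivial isotypic components), one checks that multiplication by an invariant $J$ maps $W$ into $W$, so $R(hJ)=R(h)J$ follows by decomposing $h$ along $\mathcal{R}_d\oplus W$. Everything else — the Hilbert Basis Theorem, the reduction to homogeneous invariant generators, and the degree induction — is routine.
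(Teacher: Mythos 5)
Your proof is correct and follows essentially the same route as the paper's: form the ideal generated by all positive-degree homogeneous invariants, extract finitely many homogeneous invariant generators via the Hilbert Basis Theorem, and descend to $\mathcal R_d$ by induction on degree using the Reynolds operator. If anything you are more careful than the paper, which applies the identity $R(f_j J_j)=R(f_j)\,J_j$ without comment, whereas you isolate it as the crucial point and justify it via the isotypic decomposition $k[A_0,\dots,A_d]=\mathcal R_d\oplus W$.
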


\begin{proof}
Let $\mathcal I_0$ be the ideal in $k\, [A_0,  \dots  ,   A_d]$ generated by all homogeneous invariants of
positive degree. Because $k\, [A_0,  \dots  ,  A_d]$ is Noetherian,  there exist finitely many  homogeneous
elements $J_1,
 \dots  ,   J_r$  in $\mathcal  R_d$   such  that  $\mathcal  I_0  =  (J_1,  \dots  ,  J_r)$.
We prove $\mathcal R_d =  k\, [J_1,  \dots  ,   J_r]$. Let $J \in \mathcal R_d$  be homogeneous  of degree $d$. We
prove $J  \in k\, [J_1, \dots ,   J_r]$ using induction on $d$.  If $d = 0$, then $J \in k \subset k\, [J_1, \dots
,   J_r]$. If $d > 0$, then
\begin{equation}\label{eq_6}
J = f_1 \, J_1 +  \dots  + f_r \, J_r
\end{equation}
with $f_i  \in k\, [A_0,  \dots  ,    A_d]$ homogeneous and $deg(f_i)  < d$ for all $i$. Applying the Reynold's
operator to  \eqref{eq_6} we have
$$J = R(f_1) J_1 +  \dots  + R(f_r) J_r$$
then by  Lemma 1 $R(f_i)$ is  a homogeneous element  in $\mathcal R_d$ with $deg(R(f_i))  < d $  for all $i$  and
hence by induction  we have $R(f_i) \in k\, [J_1,  \dots   ,   J_r]$ for all $i$. Thus $J  \in k\, [J_1,  \dots  ,
J_r]$.

\end{proof}

If $k$ is of arbitrary characteristic, then $SL_2(k)$ is geometrically reductive,  which is a  weakening of linear
reductivity; see Haboush \cite{Ha}. It suffices to prove Hilbert's finiteness theorem in any characteristic; see
Nagata \cite{Na}. The following theorem is also due to Hilbert.

\begin{thm}\label{thm2a}
Let $I_1$,  $I_2$,  \dots ,    $I_s$ be homogeneous  elements in  $ \mathcal R_d$ whose common zero set equals the
null cone $\mathcal N_d$. Then $ \mathcal R_d$  is finitely  generated as a  module over $k\, [I_1,   \dots  ,
I_s]$.
\end{thm}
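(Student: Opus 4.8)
The plan is to deduce the statement from the elementary graded version of Nakayama's lemma, once we establish that the quotient ring $\mathcal{R}_d/\mathfrak{a}$ is finite dimensional over $k$, where $\mathfrak{a}:=(I_1,\dots,I_s)\mathcal{R}_d$. First I would collect the structural facts. By \cref{thm1a} the ring $\mathcal{R}_d$ is a finitely generated graded $k$-algebra with $(\mathcal{R}_d)_0=k$, hence Noetherian; let $\mathfrak{m}:=\bigoplus_{n\geq1}(\mathcal{R}_d)_n$ be its irrelevant ideal, generated by finitely many homogeneous elements $J_1,\dots,J_r$ of positive degree. Each graded piece $(\mathcal{R}_d)_n$ sits inside the finite dimensional space of degree-$n$ polynomials in $k[A_0,\dots,A_d]$, so it is finite dimensional over $k$. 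Now take any homogeneous $J\in\mathcal{R}_d$ of positive degree. By definition of the null cone $J$ vanishes on $\mathcal{N}_d$, and by hypothesis $\mathcal{N}_d=V(I_1,\dots,I_s)$ as a subset of $V_d$; hence Hilbert's Nullstellensatz in $k[A_0,\dots,A_d]$ gives a relation $J^{N}=\sum_i g_i I_i$ with $g_i\in k[A_0,\dots,A_d]$ and $N\geq1$.

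The key step is to push this relation down into $\mathcal{R}_d$. Applying the Reynolds operator $R$ of \cref{lem1b} — which is $k$-linear, fixes invariants, and, being the canonical $SL_2(k)$-equivariant projection onto the trivial isotypic component, is $\mathcal{R}_d$-linear — I obtain $J^{N}=R(J^{N})=\sum_i R(g_i I_i)=\sum_i R(g_i)\,I_i$ with every $R(g_i)\in\mathcal{R}_d$, so $J^{N}\in\mathfrak{a}$. Applying this to each generator $J_1,\dots,J_r$ and using that $\mathcal{R}_d$ is Noetherian yields $\mathfrak{m}^{M}\subseteq\mathfrak{a}$ for a suitable $M$ (equivalently $\sqrt{\mathfrak{a}}=\mathfrak{m}$). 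Since $(\mathcal{R}_d)_n\subseteq\mathfrak{m}^{n}\subseteq\mathfrak{a}$ for all $n\geq M$, and each $(\mathcal{R}_d)_n$ with $n<M$ is finite dimensional, the quotient $\mathcal{R}_d/\mathfrak{a}$ is a finite dimensional $k$-vector space.

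To finish, choose homogeneous elements $h_1,\dots,h_t\in\mathcal{R}_d$, with $h_1=1$, whose residues form a $k$-basis of $\mathcal{R}_d/\mathfrak{a}$, and set $A:=k[I_1,\dots,I_s]$. Then an induction on degree shows $\mathcal{R}_d=\sum_{i=1}^{t}A h_i$: for $J\in\mathcal{R}_d$ homogeneous of degree $n>0$, write $J\equiv\sum_i c_i h_i\pmod{\mathfrak{a}}$ with $c_i\in k$, so $J-\sum_i c_i h_i=\sum_j I_j b_j$ with each $b_j\in\mathcal{R}_d$ homogeneous of degree $n-\deg I_j<n$ (because $\deg I_j\geq1$); by the inductive hypothesis $b_j\in\sum_i A h_i$, and since $I_j\in A$ we conclude $J\in\sum_i A h_i$. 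The base case $n=0$ is $(\mathcal{R}_d)_0=k\subseteq A$. This proves $\mathcal{R}_d$ is a finitely generated $A$-module.

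The main obstacle — and essentially the only delicate point — is the descent step: "$J^{N}\in(I_1,\dots,I_s)\,k[A_0,\dots,A_d]$ implies some power of $J$ lies in $(I_1,\dots,I_s)\mathcal{R}_d$." In characteristic $0$ this is immediate from the Reynolds operator as above. In characteristic $p>0$ the Reynolds operator need not exist, and one instead invokes geometric reductivity of $SL_2(k)$ (Haboush, with finiteness supplied by Nagata), exactly as in the proof of \cref{thm1a}: geometric reductivity furnishes the transfer, up to radicals, from $SL_2(k)$-stable ideals of $k[A_0,\dots,A_d]$ to ideals of $\mathcal{R}_d=k[A_0,\dots,A_d]^{SL_2(k)}$, which is all the argument needs. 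One should also verify explicitly the $\mathcal{R}_d$-linearity of $R$ used in the second paragraph — equivalently, that multiplication by an invariant preserves the isotypic decomposition — since \cref{lem1b} only records the $k$-linearity, the fixing of invariants, and invariance under the group action.
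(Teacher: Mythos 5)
Your argument is correct in substance and follows essentially the same strategy as the paper's proof: both reduce the theorem to showing that the ideal generated by $I_1,\dots,I_s$ in $\mathcal{R}_d$ has radical equal to the irrelevant maximal ideal, and both then conclude module-finiteness over $k[I_1,\dots,I_s]$ by induction on degree. The differences are in the bookkeeping. Where you apply the Nullstellensatz in $k[A_0,\dots,A_d]$ and descend the relation $J^{N}=\sum_i g_i I_i$ into $\mathcal{R}_d$ via the Reynolds operator, the paper applies the Nullstellensatz to the affine algebra $\mathcal{R}_d$ itself together with \cref{lem2} (every maximal ideal of $\mathcal{R}_d$ is contained in a maximal ideal of $k[A_0,\dots,A_d]$) to show that the irrelevant ideal is the only maximal ideal containing $I_1,\dots,I_s$; and where you take module generators to be lifts of a $k$-basis of $\mathcal{R}_d/\mathfrak{a}$ (graded Nakayama), the paper takes the explicit monomials $J_1^{i_1}\cdots J_r^{i_r}$ with $0\le i_j<q$. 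Your caveat about the Reynolds identity $R(hf)=hR(f)$ for invariant $h$ is well taken, but the paper relies on it just as implicitly (in the proof of \cref{lem2}); it does hold once $\Lambda_s$ is chosen as the sum of the nontrivial isotypic components, since multiplication by an invariant is $SL_2(k)$-equivariant. Your characteristic-$p$ remark likewise matches the paper's, which delegates that case to geometric reductivity.

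One step is misstated, though the conclusion it serves is fine: the containment $(\mathcal{R}_d)_n\subseteq\mathfrak{m}^{n}$ is false in general, because the algebra generators $J_1,\dots,J_r$ have degrees larger than one (a generator of degree $n$ lies in $\mathfrak{m}$ but typically not in $\mathfrak{m}^{n}$). All you need is that $\mathcal{R}_d/\mathfrak{a}$ is finite dimensional, and that does hold: with $e=\max_j \deg J_j$, any homogeneous element of degree $n\ge Me$ is a sum of monomials in the $J_j$ having at least $M$ factors of positive degree, hence lies in $\mathfrak{m}^{M}\subseteq\mathfrak{a}$; alternatively, since each $J_j$ has a power in $\mathfrak{a}$, the quotient $\mathcal{R}_d/\mathfrak{a}$ is generated over $k$ by finitely many nilpotent elements and is therefore spanned by finitely many monomials. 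With that repair your degree induction goes through exactly as written.
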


\begin{proof}
Consider first the case  $\ch (k)  = 0$. By   \cref{thm1a}  we have $\mathcal R_d  = k\, [J_1, J_2,  \dots  ,    J_r]$ for  some
homogeneous invariants  $J_1$,  \dots   ,  $J_r$. Let $\mathcal I_0$  be the maximal ideal  in $ \mathcal  R_d$
generated by all homogeneous elements  in $ \mathcal R_d$ of  positive degree. Then the theorem follows if $I_1$,
\dots , $I_s$ generate an ideal $\mathcal I$ in $ \mathcal R_d$ with $rad(\mathcal I)=\mathcal I_0$. For if this
is the case, we have an integer $q$ such that
\begin{equation}\label{eq5}
 J_i ^{q} \in \mathcal I, \quad  \textit{    for all   } i
\end{equation}
Set \[ S:= \{ J_1 ^{i_1} J_2^{i_2} \dots J_r^{i_r} \,  | \, 0 \leq i_1,  \dots  ,   i_r <  q \}.\]  Let $\mathcal  M$
be the $k\, [I_1,  \dots   I_s]$-submodule in $\mathcal R_d$  generated by $S$.  We prove $\mathcal R_d  =
\mathcal M$. Let $J \in \mathcal  R_d$  be homogeneous.  Then $J  = J^{'}  + J^{''}$  where  $J^{'} \in  \mathcal
M$, $\, \, J^{''}$ is a $k$-linear combination of  $J_1^{i_1} J_2 ^{i_2}   \dots  J_r ^{i_r}$ with  at least one
$i_{\nu} \geq q$ and $deg(J) = deg(J^{'})  = deg(J^{''})$. Hence  \eqref{eq5} implies $J^{''} \in \mathcal I$
and so we have
\[J^{''} = f_1 \, I_1 + \cdots  + f_s \, I_s \]
where $f_i \in \mathcal R_d$ for all $i$. Then 
\[ \deg (f_i) < \deg (J^{''}) = \deg(J),
\]
 for all $i$. Now by induction on degree of $J$ we may assume $f_i \in \mathcal M$ for all $i$. This implies $J^{''} \in \mathcal M$ and  hence  $J
\in  \mathcal  M$.  Therefore  $\mathcal M  =  \mathcal R_d$.  So  it  only  remains  to prove  $rad(\mathcal  I) =  \mathcal I_0$. This  follows from  Hilbert's Nullstellensatz and  the following claim.

\medskip

\noindent  \textbf{Claim:}  $\mathcal I_0$ is the only maximal ideal containing $I_1,  \dots  ,   I_s$.

\medskip

Suppose $\mathcal I_1 $ is  a maximal ideal  in $ \mathcal  R_d$ with $I_1,  \dots   ,   I_s \in  \mathcal I_1$.
Then from Lemma  2 we know there exists a  maximal ideal  $\mathcal J$  of $k\, [A_0,  \dots   ,   A_d]$  with $
\mathcal I_1 \subset \mathcal J$.  The point in $V_d$ corresponding to $\mathcal  J$ lies  on the  null  cone
$\mathcal N_d$ because  $I_1,  \dots   ,   I_s$ vanish  on  this point.  Therefore  $\mathcal I_0  \subset
\mathcal J$,  by definition of  $\mathcal N_d$. Therefore  $\mathcal J \cap \mathcal R_d$ contains both the
maximal ideals $\mathcal I_1$ and $\mathcal I_0$. Hence,   $\mathcal I_1 = \mathcal J  \cap \mathcal R_d =
\mathcal I_0$.

Next we consider the case   $\ch (k) = p>0$. The same proof works if \cref{lem2} above holds. Geometrically  this means the morphism  $\pi : V_d \to
V_d$ // $SL_2(k)$ corresponding to the  inclusion $\mathcal R_d \subset k\, [A_0,  \dots  ,   A_d]$ is surjective.
Here $ V_d$ // $SL_2(k)$ denotes the affine variety corresponding  to the  ring $\mathcal R_d$  and is  called the
\textbf{categorical quotient}. $\pi$ is surjective  because $SL_2(k)$ is geometrically  reductive. The proof  is by
reduction modulo $p$, see Geyer \cite{Ge}.
\end{proof}


\subsubsection{Symbolic method}

We will use the symbolic method of classical theory to construct covariants of binary forms. First we  recall some facts about the symbolic notation. Let
\[f(X,Y):=\sum_{i=0}^n
\begin{pmatrix} n \\ i
\end{pmatrix}
a_i X^{n-i} \, Y^i, \quad  and \quad g(X,Y) :=\sum_{i=0}^m
  \begin{pmatrix} m \\ i
\end{pmatrix}
b_i X^{n-i} \, Y^i
\]
be binary forms of  degree $n$ and $m$ respectively. We define the $r$-{\it transvection}
\[(f,g)^r:= \frac {(m-r)! \, (n-r)!} {n! \, m!} \, \,
\sum_{k=0}^r (-1)^k
\begin{pmatrix} r \\ k
\end{pmatrix} \cdot
\frac {\partial^r f} {\partial X^{r-k} \, \,  \partial Y^k} \cdot \frac {\partial^r g} {\partial X^k  \, \,
\partial Y^{r-k} },
\]
see Grace and Young \cite{GY} for  details.

The following result gives relations among the invariants of binary forms and it is known as the \textbf{Gordon's formula}.  It is the basis for most of the classical results on invariant theory.  

\begin{thm}
Let $\phi_i$, $i=0, 1, 2$  be covariants of order $m_i$ and $e_i, e_j, m_k$ be three non-negative integers such that $ e_i+e_j \leq m_k$,   for distinct $i, j, k$.  The following holds
\begin{small}
\begin{equation}
\begin{split}
& \sum_i  \frac  {C_i^{e_1} \cdot  C_i^{m_1-e_0-e_2 }}  {C_i^{m_0+m_1+1-2e_2-i}   } \, \left(  \left(\phi_0 \, \phi_1 \right)^{e_2+1}, \phi_2  \right)^{e_0+e_1-i}  \\
 =& \sum_i  \frac  {C_i^{e_2} \cdot  C_i^{m_2-e_0-e_1 }}  {C_i^{m_0+m_2+1-2e_1-i}   } \, \left( \left(\phi_0 \, \phi_2 \right)^{e_1+1}, \phi_1      \right)^{e_0+e_2-i}, 
\end{split} 
\end{equation}
\end{small}
where $e_0=0$ or $e_1+e_2=m_0$.  
\end{thm}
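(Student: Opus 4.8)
The plan is to prove this classical identity — Gordan's formula — by the symbolic (umbral) calculus of binary forms set up in \cref{sect-8}, reducing it to a purely combinatorial identity among binomial coefficients. Since both sides are trilinear in the covariants $\phi_0,\phi_1,\phi_2$, it is enough to verify the identity on decomposable symbolic tensors; so I would write $\phi_0 = a_x^{m_0}$, $\phi_1 = b_x^{m_1}$, $\phi_2 = c_x^{m_2}$ in umbral notation, with $a_x = a_1X + a_2 Y$ and bracket factors $(ab) = a_1b_2 - a_2 b_1$, and recall that the $r$-th transvectant of $a_x^m$ and $b_x^n$ equals, with the normalization built into the definition of $(\cdot,\cdot)^r$, the symbolic form $(ab)^r\,a_x^{m-r}\,b_x^{n-r}$. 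The inequalities $e_i + e_j \le m_k$ together with the boundary alternative ($e_0 = 0$ or $e_1 + e_2 = m_0$) are precisely what guarantees that every exponent of $a_x, b_x, c_x$ produced below is nonnegative, so that all the symbolic expressions occurring are genuine covariants rather than formal nonsense.

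First I would expand the left-hand side. Symbolically $(\phi_0\phi_1)^{e_2+1} = (ab)^{e_2+1}\,a_x^{p}\,b_x^{q}$ with $p = m_0 - e_2 - 1$, $q = m_1 - e_2 - 1$. Transvecting this with $\phi_2 = c_x^{m_2}$ to order $e_0+e_1-i$ and applying the Leibniz rule for transvectants — that is, distributing the $e_0+e_1-i$ contractions between the $a_x^{p}$ and $b_x^{q}$ factors — gives
\[
\bigl((\phi_0\phi_1)^{e_2+1}, \phi_2\bigr)^{e_0+e_1-i} = \sum_{s+t = e_0+e_1-i} \kappa_{s,t}\,(ab)^{e_2+1}(ac)^{s}(bc)^{t}\,a_x^{p-s}\,b_x^{q-t}\,c_x^{m_2-s-t},
\]
where $\kappa_{s,t}$ is an explicit product of binomial coefficients coming from the derivatives. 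Substituting into the left-hand sum over $i$ and reorganizing expresses the left-hand side as a linear combination of symbolic monomials $(ab)^{\alpha}(ac)^{\beta}(bc)^{\gamma}a_x^{\bullet}b_x^{\bullet}c_x^{\bullet}$. Performing the mirror computation for the right-hand side — interchanging the roles of $\phi_1,\phi_2$ and of the subscripts $1,2$ throughout — writes the right-hand side as a linear combination of symbolic monomials of the same shape.

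Next I would invoke the fundamental syzygy among the three umbral letters, the three-term Plücker relation $(ab)\,c_x + (bc)\,a_x + (ca)\,b_x = 0$, equivalently $(bc)\,a_x = (ac)\,b_x - (ab)\,c_x$, to reduce both linear combinations to a common normal form (removing, say, every occurrence of the product $(ab)\,c_x$). Equating the coefficient of each surviving normal monomial then converts the statement into a family of numerical identities relating the $\kappa_{s,t}$ to the weights $C_i^{j}$; these are terminating hypergeometric sums of Chu--Vandermonde / Pfaff--Saalsch\"utz type, which I would dispatch by a finite induction on $e_0+e_1+e_2$ (or by directly quoting Saalsch\"utz's theorem). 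The somewhat opaque ratio $C_i^{e_1}C_i^{m_1-e_0-e_2}/C_i^{m_0+m_1+1-2e_2-i}$ appearing in the statement is exactly the normalization that this bookkeeping forces.

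The main obstacle is this last matching step: after the syzygy reduction one must carefully track which symbolic monomials are already in normal form and which still carry removable bracket relations, and the binomial identity that emerges is not transparent — and it is here that the boundary hypothesis ($e_0 = 0$ or $e_1 + e_2 = m_0$) is genuinely used, since it is what prevents the negative exponents that would otherwise block bringing the two sides to a common normal form. A conceptually cleaner, though computationally equivalent, alternative is representation-theoretic: every iterated transvectant here is an $\mathrm{SL}_2(k)$-equivariant composition of Clebsch--Gordan projections $V_{m_i}\otimes V_{m_j}\to V_{m_i+m_j-2r}$, so each side defines an element of $\mathrm{Hom}_{\mathrm{SL}_2(k)}(V_{m_0}\otimes V_{m_1}\otimes V_{m_2}, V_{N})$; evaluating both elements on a highest-weight vector in $V_{m_0}\otimes V_{m_1}\otimes V_{m_2}$ again reduces the claim to the same Saalsch\"utz-type summation. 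In either approach, once that numerical identity is established the theorem follows.
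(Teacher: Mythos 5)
You should first be aware that the paper contains no proof of this theorem: it is recorded as the classical Gordan series, with the reader sent to Grace--Young for the symbolic calculus, so there is no internal argument to measure your plan against. In outline, your plan is the classical one: reduce by multilinearity to symbolic powers of linear forms, expand the iterated transvectant of a product by distributing the contractions, normalize with the syzygy $(ab)\,c_x=(ac)\,b_x-(bc)\,a_x$, and match coefficients through a terminating binomial identity (equivalently, a Clebsch--Gordan recoupling computation). That is indeed how the result is classically established.

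The genuine problem is what you have set out to prove. You take the printed formula literally, with the inner transvectant index frozen at $e_2+1$ (resp. $e_1+1$), independent of the summation index $i$; in Gordan's series the inner index is $e_2+i$ (resp. $e_1+i$), and $C_i^n$ stands for $\binom{n}{i}$. The shift by $i$ is not cosmetic: it is exactly what makes every term of the series a covariant of one and the same order $m_0+m_1+m_2-2(e_0+e_1+e_2)$, so that a genuine linear relation can hold. With the index frozen, the $i$-th terms on the two sides have order $m_0+m_1+m_2-2(e_0+e_1+e_2)-2+2i$, the identity would have to hold separately in each order, and it is simply false: take $\phi_0=x^2$, $\phi_1=y^2$, $\phi_2=xy$, $e_0=0$, $e_1=e_2=1$; then the left side reduces to $\tfrac12(\phi_0,\phi_1)^2\,\phi_2=\tfrac12\,xy$, while the right side reduces to $\tfrac12(\phi_0,\phi_2)^2\,\phi_1=0$. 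In your symbolic expansion this shows up as every left-hand monomial carrying the factor $(ab)^{e_2+1}$ and every right-hand monomial carrying $(ac)^{e_1+1}$, a discrepancy no normal-form reduction can reconcile, so the ``Saalsch\"utz-type summation'' you defer to at the end cannot come out. Beyond this, even for the corrected statement your proposal leaves the decisive content --- the explicit product-transvectant coefficients $\kappa_{s,t}$, the bookkeeping after the syzygy reduction, and the verification that the resulting terminating hypergeometric identity produces precisely the weights $\binom{e_1}{i}\binom{m_1-e_0-e_2}{i}/\binom{m_0+m_1+1-2e_2-i}{i}$ --- asserted rather than carried out, and that computation is where essentially all of the work of Gordan's proof lies.
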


This result has been used by many XIX century mathematicians to compute algebraic relations among invariants, most notably by Bolza for binary sextics and by Alagna for binary octavics.  It provides algebraic relations among the invariants in a very similar manner that the Frobenious identities do for theta functions of hyperelliptic curves. Whether there exists some explicit relation among both formulas is unknown.  

\subsubsection{Binary sextics}

Let $f(x, z)$ be a binary sextic defined over a field $k$,  $\ch k =0$,  given by
\begin{equation}\label{eq_1}
f(x,z)       =      \sum_{i=0}^6 a_i x^{6-i} z^i             =  (z_1x-x_1z)(z_2x-x_2z) \dots (z_6x-x_6z)
\end{equation}
Consider the  following covariants
\begin{equation}\label{Y-transv}
\begin{aligned}
& \Delta  = \left( (f, f)_4, (f, f)_4  \right)_2,     &  Y_1  = \left(f,  (f, f)_4  \right)_4 \\
& Y_2  = \left( (f, f)_4, Y_1  \right)_2,    &  Y_3  = \left( (f, f)_4, Y_2  \right)_2  \\
\end{aligned}
\end{equation}
The \textbf{Clebsch invariants} $A, B, C, D$  are defined as follows
\begin{equation}
 A  = (f, f)_6, \; \; B  = \left( (f, f)_4,  (f, f)_4  \right)_4, \; \; C  = \left( (f, f)_4, \Delta \right)_4, \; \;   D  = \left(  Y_3, Y_1 \right)_2,   
\end{equation}
see Clebsch \cite{Cl} or Bolza \cite{Bo}*{Eq.~(7), (8), pg. 51}  for details. 
%
%
%


\noindent \textbf{Root differences:}    Let $f(x, z)$ be a binary sextic as above and set
$D_{ij}:=   \begin{pmatrix} x_i & x_j \\  z_i & z_j \end{pmatrix}$.
For $\tau  \in SL_2(k)$,  we have
\[ \tau (f) =  (z_1^{'} x  - x_1^{'} z)   \dots  (z_6^{'}  x - x_6^{'}  z), \quad  \textit{ with } \quad
 \begin{pmatrix}   x_i^{'}  \\ z_i^{'}  \end{pmatrix}  = \tau^{-1} \, \begin{pmatrix} x_i\\ z_i \end{pmatrix}.
\]
Clearly $D_{ij}$ is  invariant under this action of $SL_2(k)$ on $\mathbb P^1$. Let $\{i, j,  k, l, m, n  \}$ = $\{ 1, 2, 3$,  $4, 5, 6 \}$. Treating $a_i$ as  variables, we construct the following elements in the ring of invariants $\mathcal R_6$
%
\begin{small}
\begin{equation}\label{j-invariants}
\begin{split}
\IA  & =  a_0^2 \,  \prod_{fifteen}   (12)^2 (34)^2 (56)^2  = \displaystyle \sum_{\substack {i<j,k<l,m<n}}  D_{ij}^2D_{kl}^2D_{mn}^2 \\
& \\
\IB  & =   a_0^4 \, \prod_{ten}  (12)^2 (23)^2 (31)^2 (45)^2 (56)^2 (64)^2 = \sum_{\substack {i<j,j<k, \\ l<m,m<n}}    D_{ij}^2D_{jk}^2D_{ki}^2D_{lm}^2D_{mn}^2D_{nl}^2 \\
 & \\
\IC  & =  a_0^6 \,  \prod_{sixty}   (12)^2 (23)^2 (31)^2 (45)^2 (56)^2 (64)^2  (14)^2 (25)^2 (36)^2  \\
 & = \sum_{ \substack {i<j,j<k, l<m, m<n \\  i<l' , j < m', k<n' \\  l',m',n' \in \{ l, m, n\} } }
 D_{ij}^2 D_{jk}^2 D_{ki}^2 D_{lm}^2 D_{mn}^2 D_{nl}^2  D_{{il}^{'}}^2 D_{{jm}^{'}}^2 D_{{kn}^{'}}^2 \\
 & \\
\ID  & =  a_0^{10} \prod_{i<j} (i j)^2   \\
\end{split}
\end{equation}
\end{small}
These    invariants, sometimes called \textbf{integral invariants},   are defined in \cite{Ig}*{pg. 620} where they are denoted by $A, B, C, D$.  Incidentally even Clebsch invariants which are defined next are also denoted by $A, B, C, D$ by many authors.  



%
%

To quote Igusa \textit{"if we restrict to integral invariants, the discussion will break down in characteristic 2 simply because Weierstrass points behave badly under reduction modulo 2"}; see \cite{Ig}*{pg. 621}.  Next we define invariants which will work in every characteristic. 

In \cite{Ig}*{pg. 622} Igusa defined what he called \textbf{basic arithmetic invariants}, which are now commonly known as  \textbf{Igusa invariants}
\[ 
J_2 = \frac 1 {2^3} \IA, \;   J_4=  \frac 1 {2^5 \cdot 3} (4J_2^2-\IB),
J_6 = \frac 1 {2^6 \cdot 3^2} (8J_2^3-160J_2 J_4 -\IC), \;          J_{10}= \frac 1 {2^{12}} \ID  
\]
While most of the current literature on genus 2 curves uses invariants $\IA, \IB, \IC, \ID$, which are now most commonly labeled as $I_2, I_4, I_6, I_{10}$, Igusa went to great lengths in \cite{Ig} to define $J_2, J_4, J_6, J_{10}$ and to show that they also work in characteristic 2. 
\begin{lem}\label{lem3}
$J_{2i}$ are homogeneous elements in $\mathcal R_6$ of degree $2i$,  for $i$ = 1,2,3,5.
\end{lem}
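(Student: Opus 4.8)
The plan is to reduce the statement to the corresponding facts about the four integral invariants $\IA,\IB,\IC,\ID$ of \eqref{j-invariants}, namely that each of them lies in $\mathcal R_6$ and is homogeneous of degree $2,4,6,10$ respectively, and then to read off the assertion for $J_2,J_4,J_6,J_{10}$ directly from their defining formulas by a short computation inside the graded ring $\mathcal R_6$.

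First I would check that $\IA,\IB,\IC,\ID\in\mathcal R_6$. By the remark preceding \eqref{j-invariants}, each factor $D_{ij}=x_iz_j-x_jz_i$ is invariant under the action of $SL_2(k)$ on the roots; hence any expression built out of the $D_{ij}$ is $SL_2(k)$-invariant, and in particular so are the expanded expressions for $\IA,\IB,\IC,\ID$ in \eqref{j-invariants}. Each of these is moreover symmetric under permutation of the six root pairs $(x_i,z_i)$ — the sums run over all partitions, resp.\ all configurations, of $\{1,\dots,6\}$ of the prescribed combinatorial type — and is homogeneous of the same degree in each root pair; such a balanced symmetric expression is necessarily a polynomial in the coefficients $a_0,\dots,a_6$ of $f$. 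Therefore $\IA,\IB,\IC,\ID$ are homogeneous elements of $\mathcal R_6=k[A_0,\dots,A_6]^{SL_2(k)}$.

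Next I would pin down the degrees by a multidegree count. The form $D_{ij}$ is bihomogeneous of degree $1$ in $(x_i,z_i)$ and in $(x_j,z_j)$, whereas each coefficient $a_t$ is homogeneous of degree $1$ in every root pair. In a single summand of $\IA$ each index lies in exactly one of the three squared factors $D_{ij}^2$, so the summand is homogeneous of degree $2$ in each root pair; in $\IB$ each index lies in two squared factors, giving degree $4$; in $\IC$ each index lies in three, giving degree $6$; and in $\ID=a_0^{10}\prod_{i<j}D_{ij}^2$ each index lies in all five pairs containing it, giving degree $10$. Matching this against the multidegree of a monomial $\prod a_t^{e_t}$, which has degree $\sum e_t$ in each root pair, forces $\IA,\IB,\IC,\ID$ to be homogeneous in $a_0,\dots,a_6$ of degrees $2,4,6,10$ — this is, of course, precisely Igusa's normalization \cite{Ig}*{pg.~620}, which one may alternatively just quote.

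Finally, since $\mathcal R_6$ is a graded ring and $\ch k=0$, the defining formulas give the conclusion at once: $J_2=\tfrac18\IA$ is homogeneous of degree $2$; $J_2^2$ and $\IB$ are both homogeneous of degree $4$, so $J_4=\tfrac1{2^5\cdot 3}(4J_2^2-\IB)$ is homogeneous of degree $4$; $J_2^3$, $J_2J_4$ and $\IC$ are all homogeneous of degree $6$, so $J_6=\tfrac1{2^6\cdot 3^2}(8J_2^3-160J_2J_4-\IC)$ is homogeneous of degree $6$; and $J_{10}=\tfrac1{2^{12}}\ID$ is homogeneous of degree $10$. All of these lie in $\mathcal R_6$ because that ring is closed under sums, products and scalar multiples. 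The only point that demands genuine care is the degree bookkeeping in the third paragraph: one must keep the degree in the $a_t$ separate from the degree in the root coordinates, and check that the powers of $a_0$ displayed in \eqref{j-invariants} are exactly those needed for the two expressions given there for each invariant to coincide; everything else is formal.
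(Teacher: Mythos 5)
Your proof is correct. The three steps — $SL_2(k)$-invariance of the $D_{ij}$ (hence of the displayed expressions), the passage from symmetric expressions of equal multidegree in the root pairs to polynomials in $a_0,\dots,a_6$, and the multidegree count giving degrees $2,4,6,10$ for $\IA,\IB,\IC,\ID$ — are all sound, and the conclusion for $J_2, J_4, J_6, J_{10}$ then follows from the grading of $\mathcal R_6$ exactly as you say. For comparison: the paper supplies no proof of this lemma at all, treating it as immediate from Igusa's definition of the integral invariants in \cite{Ig}, so your argument simply writes out the standard bookkeeping that the paper leaves implicit; there is no methodological divergence to report.
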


\begin{lem}\label{lem4}
A sextic has a root of multiplicity exactly three if and only if the basic invariants take the form
\begin{equation}\label{eq6}
J_2 = 3r^{2}, \quad J_4 =  81r^{4}, \quad J_6 = r^{6},  \quad J_{10} = 0.
\end{equation}
for some $ r \neq 0 $.
\end{lem}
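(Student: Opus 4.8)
The plan is to prove the two implications separately, reducing each to a computation with the root-difference invariants $\IA,\IB,\IC,\ID$ of~\eqref{j-invariants} and the formulas defining $J_2,J_4,J_6,J_{10}$ in terms of them.

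For the implication ``$f$ has a root of multiplicity exactly three $\Rightarrow$ the stated form'', I would first move that root to $[0:1]$ by a transformation in $GL_2(k)$; since each $J_{2i}$ is only multiplied by a power of the determinant under $GL_2(k)$, and by a power of the scalar under $f\mapsto cf$, and since each such rescaling sends a tuple of shape $(3r^2,81r^4,r^6,0)$ to one of the same shape, it suffices to treat $f=x^3g(x,z)$ with $g$ a binary cubic whose $z^3$-coefficient $c=g(0,1)$ is nonzero (nonzero precisely because the multiplicity is three and no larger). Label the six roots of $f$ as $0,0,0,\beta_1,\beta_2,\beta_3$. The key observation is that in the summation formulas for $\IB,\IC,\ID$ every single term carries a factor $D_{ij}^2$ with $r_i=r_j=0$: for $\ID$ this is immediate, and for $\IB,\IC$ it holds because in any splitting of $\{1,\dots,6\}$ into two triples at least one triple contains two of the three indices $\{1,2,3\}$ of the triple root. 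Hence $\IB=\IC=\ID=0$, whereas in $\IA$ only the $3!$ terms coming from a perfect matching of $\{1,2,3\}$ with $\{4,5,6\}$ survive, each equal to $(\beta_1\beta_2\beta_3)^2$, so $\IA$ is a fixed nonzero constant times $c^2$. Substituting these values into the definitions of $J_2,J_4,J_6,J_{10}$ and simplifying should yield $J_2=3r^2$, $J_4=81r^4$, $J_6=r^6$, $J_{10}=0$ with $r$ a fixed nonzero scalar multiple of $c$.

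For the converse, suppose $J_2=3r^2$, $J_4=81r^4$, $J_6=r^6$, $J_{10}=0$ with $r\neq0$; equivalently $J_{10}=0$, $J_2\neq0$, $J_4=9J_2^2$ and $J_6=\tfrac1{27}J_2^3$. Since $J_{10}$ is, up to a power of the leading coefficient, the discriminant of $f$, the hypothesis $J_{10}=0$ forces a root of multiplicity $m\ge2$. If $m\ge4$ then $f$ lies in the null cone $\mathcal N_6$, so every positive-degree invariant vanishes on it and $J_2=0$, a contradiction; hence $m\in\{2,3\}$ and it remains to exclude $m=2$. Assume $m=2$. A torus degeneration analysis shows that if $f$ has all roots of multiplicity $\le2$ then every one-parameter $SL_2(k)$-degeneration of $f$ leaves the semistable locus, so the $SL_2(k)$-orbit of $f$ is already closed in the semistable locus. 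If $f$ had the triple-root invariant tuple, then $f$ and $x^3z^3$ would have the same image in the GIT quotient: indeed the fiber of the quotient over this tuple lies in $\{J_{15}=0\}$ because $J_{15}^2$ is a polynomial in $J_2,J_4,J_6,J_{10}$ (Igusa's relation) that vanishes there, and $J_{15}(x^3z^3)=0$ for weight-parity reasons. Both orbits being closed, $f$ would be $GL_2(k)$-equivalent (up to scalar) to $x^3z^3$, which has a root of multiplicity $3$ — contradicting $m=2$. Therefore $m=3$, i.e.\ $f$ has a root of multiplicity exactly three.

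The routine half is the first implication: once one notices that $\IB,\IC,\ID$ vanish identically on triple-root sextics, the rest is bookkeeping with the explicit formulas. The genuine difficulty will be the exclusion of $m=2$ in the converse. I would carry this out either via the orbit-closure/GIT argument above — which needs care about precisely which polystable sextics carry a double root and about the role of $J_{15}$ — or, more concretely, by a Gröbner-basis computation showing that the polynomial in $a_0,\dots,a_6$ cutting out the locus of sextics with a root of multiplicity $\ge3$ lies in the radical of the ideal $\bigl(J_4-9J_2^2,\ J_6-\tfrac1{27}J_2^3,\ J_{10}\bigr)\subset k[a_0,\dots,a_6]$, after which $J_2\neq0$ discards the multiplicity-$\ge4$ component.
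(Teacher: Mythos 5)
Your forward direction is essentially the paper's: the paper also normalizes the triple root (to $(1,0)$, forcing $a_0=a_1=a_2=0$ and setting $a_3=r$) and observes that the basic invariants then depend only on $r$; your version via the vanishing of $\IB,\IC,\ID$ on triple-root sextics is the same computation in different clothing. The converse is where you genuinely diverge. The paper stays elementary: it places the multiple root at $(1,0)$ and a second root at $(0,1)$, so the sextic becomes $a_2X^4Y^2+a_3X^3Y^3+a_4X^2Y^4+a_5XY^5$, turns \eqref{eq6} into an explicit polynomial system, eliminates $a_4$ and then $a_2,a_5$ to reach $(a_3^2-r^2)^3\bigl(a_3^2-(3r)^2\bigr)=0$, and exhibits a triple root in each surviving case. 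You instead argue by GIT: multiplicity $\ge 4$ is killed by the null cone, and multiplicity exactly $2$ is killed because such a sextic is stable, hence has closed $SL_2(k)$-orbit, and two closed orbits in the same fiber of $V_6\to \mathcal R_6$ must coincide, which would force $f$ to be equivalent to $x^3z^3$. Your route is more conceptual and explains why the invariants detect the stratification; the paper's route is self-contained elimination, and your suggested Gr\"obner-basis fallback is essentially a mechanized version of it.

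Two caveats. First, your exclusion of $m=2$ rests on two facts you assert rather than prove and which the paper does not supply: that a sextic all of whose roots have multiplicity $\le 2$ has \emph{closed} orbit in $V_6$ (this is the Hilbert--Mumford criterion; your ``torus degeneration analysis'' is doing real work there), and that $\mathcal R_6$ is generated by $I_2,I_4,I_6,I_{10},I_{15}$ with $I_{15}^2$ polynomial in the even ones --- \cref{lem6} only gives finite module generation over $k[I_2,I_4,I_6,I_{10}]$, which is not enough to conclude that matching the four even invariants together with $J_{15}=0$ pins down a single fiber point. Both facts are true and citable (Clebsch, Igusa), but as written they are gaps. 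Second, watch the constants: with the paper's displayed formulas $J_4=\tfrac1{2^5\cdot3}(4J_2^2-\IB)$ and $J_6=\tfrac1{2^6\cdot3^2}(8J_2^3-160J_2J_4-\IC)$, your (correct) vanishing $\IB=\IC=0$ yields $J_4=J_2^2/24$ and $J_6=J_2^3/432$, not the relations $J_4=9J_2^2$ and $J_6=J_2^3/27$ encoded by the tuple $(3r^2,81r^4,r^6)$; so either the lemma's constants or the paper's normalization of the $J_{2i}$ is inconsistent, and your ``bookkeeping'' step will not literally land on \eqref{eq6} as stated. Your argument remains internally consistent provided you carry whatever constants the forward computation actually produces into the converse, but you should compute them rather than assume they match.
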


\begin{lem}\label{lem5}
A sextic has a root of multiplicity at least four if and only if the basic invariants vanish simultaneously.
\end{lem}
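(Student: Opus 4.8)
The plan is to prove the two implications separately, working throughout with the integral invariants $\IA,\IB,\IC,\ID$ of \cref{j-invariants} rather than with $J_2,J_4,J_6,J_{10}$ directly: since $J_2=\IA/2^3$, $J_{10}=\ID/2^{12}$, and $J_4,J_6$ are $\IB,\IC$ corrected by polynomials in the lower-weight $J_{2i}$, the passage between the two sets of invariants is an invertible polynomial change of variables (triangular in the grading), so $J_2=J_4=J_6=J_{10}=0$ at a binary sextic $f$ if and only if $\IA=\IB=\IC=\ID=0$ there.

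For the implication ``root of multiplicity $\ge 4$ $\Rightarrow$ all invariants vanish'', suppose $f$ has a root of multiplicity at least $4$; after relabelling, the roots $1,2,3,4$ coincide, so $D_{ij}=0$ for all $i,j\in\{1,2,3,4\}$. Inspecting \cref{j-invariants}, every monomial of each invariant then contains such a factor: $\ID=\prod_{i<j}D_{ij}^2$ contains $D_{12}^2$; in $\IA=\sum D_{ij}^2D_{kl}^2D_{mn}^2$ the three index pairs partition $\{1,\dots,6\}$, so by the pigeonhole principle one of them lies inside $\{1,2,3,4\}$ and that summand is $0$; and in $\IB$ and $\IC$ the six indices are split into two index triples, one of which meets $\{1,2,3,4\}$ in at least two points, killing its ``triangle'' product $D^2D^2D^2$. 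Hence $\IA=\IB=\IC=\ID=0$, and therefore $J_2=J_4=J_6=J_{10}=0$.

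For the converse, assume $J_2=J_4=J_6=J_{10}=0$. Since $J_{10}$ is a nonzero scalar multiple of the discriminant $\disc(f)=\prod_{i<j}D_{ij}^2$, the sextic $f$ has a repeated root; let $\mu\ge 2$ be its largest root multiplicity. If $\mu\ge 4$ we are done. If $f$ had a root of multiplicity exactly $3$, then \cref{lem4} would force $J_6=r^6\ne 0$ (and $J_2=3r^2\ne 0$) for some $r\ne 0$, contrary to hypothesis; so no root of $f$ has multiplicity $3$. The only case still to be excluded is that every root of $f$ has multiplicity $\le 2$, i.e.\ the root-multiplicity partition of $f$ is $[2,1^4]$, $[2^2,1^2]$ or $[2^3]$. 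I would dispose of this as follows: such an $f$ has no root of multiplicity $\ge 3$, hence is stable in the sense of geometric invariant theory, so its $SL_2(k)$-orbit is closed and $\ne\{0\}$; since $SL_2(k)$ is geometrically reductive its invariants separate closed orbits, so some homogeneous invariant of positive degree is nonzero at $f$, i.e.\ $f\notin\mathcal N_6$. But the vanishing of $J_2,J_4,J_6,J_{10}$ at $f$ does force $f\in\mathcal N_6$: by the classical description of $\mathcal R_6$ (Clebsch, Bolza, Igusa \cite{Ig}), this ring is generated over $k$ by $J_2,J_4,J_6,J_{10}$ together with one further invariant $J_{15}$ whose square is a polynomial in $J_2,J_4,J_6,J_{10}$ with zero constant term; hence $J_{15}(f)^2=0$, every generator of $\mathcal R_6$ vanishes at $f$, and so does every invariant of positive degree. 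This contradiction rules out $\mu\le 2$, leaving $\mu\ge 4$.

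The crux is precisely this last step -- excluding the partitions $[2,1^4]$, $[2^2,1^2]$, $[2^3]$ -- for it is the only point where one genuinely needs that $J_2,J_4,J_6,J_{10}$ already cut out enough of $\mathcal R_6$, and this is what forces the appeal to geometric reductivity (equivalently, to the known structure of $\mathcal R_6$); the rest is pigeonhole plus \cref{lem4}. A fully elementary alternative exists but is tedious: using that the $J_{2i}$ are $GL_2(k)$-semiinvariants one normalizes three roots to $0,1,\infty$, reducing $[2^3]$ to a single orbit -- on which an explicit substitution such as $f=\bigl((x-z)(x-2z)(x-3z)\bigr)^2$ gives $\IA=\sum D_{ij}^2D_{kl}^2D_{mn}^2=32\ne 0$, hence $J_2\ne 0$ -- and reducing $[2^2,1^2]$ and $[2,1^4]$ to one- and two-parameter families on which one checks by hand that some $J_{2i}$ has no zero on the stratum; or else, in the style of the proof of \cref{lem4}, one normalizes the repeated root to $(1:0)$ (so $a_0=a_1=0$) and eliminates $a_2,\dots,a_6$ from $J_2=J_4=J_6=0$ to force a quadruple root.
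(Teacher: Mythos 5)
Your proof is correct, but the converse is argued by a genuinely different route than the paper's. For the forward direction the paper simply normalizes the quadruple root to $(1,0)$ (so the top coefficients vanish and the invariants are seen to vanish), while you use the root-difference expressions of \cref{j-invariants} and a pigeonhole count; both work, and your version is arguably cleaner and coordinate-free. For the converse the paper's argument is a short, self-contained elimination: using $J_{10}=0$ it places the multiple root at $(1,0)$ and a second root (if any) at $(0,1)$, reduces to $f=a_2X^4Y^2+a_3X^3Y^3+a_4X^2Y^4+a_5XY^5$, and from $J_2=J_4=J_6=0$ deduces $a_2a_3=0$, which in either case forces a quadruple root --- exactly the ``tedious but elementary'' alternative you sketch at the end but do not carry out. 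Your actual argument instead invokes the Hilbert--Mumford stability theory for binary sextics (multiplicities $\le 2$ imply a closed nonzero orbit, and invariants separate closed orbits) together with the Clebsch--Igusa structure theorem that $\mathcal{R}_6$ is generated by $J_2,J_4,J_6,J_{10},J_{15}$ with $J_{15}^2\in k[J_2,J_4,J_6,J_{10}]$. This is valid mathematics (those classical facts have independent proofs), and it buys conceptual clarity: it exhibits the lemma as the statement that the four basic invariants cut out the null cone. But be aware that it is much heavier machinery than the lemma itself and is circular in spirit relative to this paper's development: \cref{lem4} and \cref{lem5} are precisely what one feeds into Hilbert's \cref{thm2a} to obtain \cref{lem6} and, ultimately, the structure of $\mathcal{R}_6$ that you assume; neither the numerical criterion for stability nor the five-generator theorem is established in the paper. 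Also note your use of \cref{lem4} to exclude a triple root is redundant in your setup, since semistability (multiplicity $\le 3$) already keeps $f$ off the null cone and the same generation argument applies.
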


Both of the above lemmas are useful when we study semistable and stable genus 2 curves. 
\begin{lem}\label{lem6}
$\mathcal R_6$ is finitely generated as a module over $k\, [I_2, I_4, I_6, I_{10}]$.
\end{lem}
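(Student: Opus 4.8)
The plan is to read this off from Hilbert's module-finiteness theorem, \cref{thm2a}, whose hypothesis is that one has finitely many homogeneous elements of $\mathcal{R}_6$ whose common zero set is the null cone $\mathcal N_6$. By \cref{lem3} the four basic invariants $I_2,I_4,I_6,I_{10}$ are homogeneous of positive degree, so the whole argument comes down to the single identity $V(I_2,I_4,I_6,I_{10})=\mathcal N_6$ of subsets of $V_6$; granting it, \cref{thm2a} applied to $\{I_1,\dots,I_s\}=\{I_2,I_4,I_6,I_{10}\}$ gives exactly the assertion of the lemma.

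To establish that identity I would argue both inclusions. The inclusion $\mathcal N_6\subseteq V(I_2,I_4,I_6,I_{10})$ is formal: the null cone is by definition the common zero locus of all positive-degree homogeneous invariants, and $I_2,I_4,I_6,I_{10}$ are among these. For the reverse inclusion I would invoke \cref{lem5}: if $f$ is a binary sextic with $I_2(f)=I_4(f)=I_6(f)=I_{10}(f)=0$, then $f$ has a root of multiplicity at least $4$. Since both membership in $\mathcal N_6$ and the vanishing of invariants are preserved under the $GL_2(k)$-action, we may move that root to $(1:0)$ and write $f=\sum_{i=4}^{6}a_i\,x^{6-i}z^i$. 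Now let $g_t=\mathrm{diag}(t^{-1},t)\in SL_2(k)$; then $g_t(f)=\sum_{i=4}^{6}a_i\,t^{2i-6}x^{6-i}z^i$, and $2i-6>0$ for every $i\geq 4$. Hence, for any homogeneous $I\in\mathcal R_6$ of positive degree, $I(g_t(f))$ is a polynomial in $t$ with vanishing constant term (each nonzero monomial of $I$ contributes a strictly positive power of $t$), while at the same time $I(g_t(f))=I(f)$ is independent of $t$; a constant polynomial lying in $t\,k[t]$ is zero, so $I(f)=0$. This shows $f\in\mathcal N_6$, completing the proof of $V(I_2,I_4,I_6,I_{10})=\mathcal N_6$ and hence of the lemma.

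The same reasoning applies in characteristic $p>5$ without change, because \cref{lem5} and \cref{thm2a} are stated there (the latter via geometric reductivity, as in the discussion following \cref{thm2a}), and the one-parameter subgroup computation is characteristic-free. I do not expect a serious obstacle: the only step carrying real content is \cref{lem5}, the explicit description of sextics on which all basic invariants vanish, and that has already been dealt with. The one point that deserves care is simply making sure the conclusion ``$f$ has a root of multiplicity $\geq 4$'' of \cref{lem5} is correctly matched with the Hilbert--Mumford description of $\mathcal N_6$, and that matching is precisely what the $g_t$ computation supplies.
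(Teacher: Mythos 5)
Your proof is correct and follows the route the paper intends: the paper states this lemma without printing an argument, but in context it is exactly the application of \cref{thm2a} to $I_2,I_4,I_6,I_{10}$ once their common zero locus is identified with the null cone $\mathcal N_6$ via \cref{lem5}. Your one-parameter subgroup computation with $g_t=\mathrm{diag}(t^{-1},t)$ supplies precisely the Hilbert--Mumford step (multiplicity $\geq 4 > d/2$ forces membership in $\mathcal N_6$) that the paper leaves implicit, so there is no gap.
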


\begin{cor}\label{cor1a} (\textbf{Clebsch-Bolza-Igusa})
Two binary sextics $f$ and $g$ with $I_{10} \neq 0$ are $GL_2(k)$ conjugate if and only if there exists an $r \neq
0$ in $k$ such that for every $i$ = 1, 2, 3, 5 we have
\begin{equation}\label{eq9}
I_{2i}(f) = r^{2i} \, I_{2i}(g)
\end{equation}
\end{cor}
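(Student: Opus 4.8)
The plan is to prove the two implications separately. The ``only if'' direction is formal: it merely records how the invariants transform under the scalar part of $GL_2(k)$. Suppose $g=h\cdot f$ for some $h\in GL_2(k)$. As $k$ is algebraically closed, write $h=\lambda\,\sigma$ with $\sigma\in SL_2(k)$ and $\lambda\in k^{\ast}$, so $\det h=\lambda^{2}$; a scalar matrix $\lambda\cdot\mathrm{Id}$ acts on $V_6$ by $p\mapsto \lambda^{6}p$ (by \eqref{eq2a}), hence $h\cdot f=\lambda^{6}\,(\sigma\cdot f)$. Since each $I_{2i}$ is $SL_2(k)$-invariant and homogeneous of degree $2i$ in $a_0,\dots,a_6$ (\cref{lem3}),
\[
I_{2i}(g)=I_{2i}\bigl(\lambda^{6}(\sigma\cdot f)\bigr)=(\lambda^{6})^{2i}\,I_{2i}(\sigma\cdot f)=\lambda^{12i}\,I_{2i}(f).
\]
Setting $r:=(\det h)^{-3}=\lambda^{-6}$ gives $I_{2i}(f)=r^{2i}I_{2i}(g)$ for $i=1,2,3,5$, with $r\neq 0$.

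For the converse I would first normalise. Given $r\neq 0$ with $I_{2i}(f)=r^{2i}I_{2i}(g)$, choose $\lambda\in k^{\ast}$ with $\lambda^{6}=r$ and replace $g$ by $g':=\lambda^{6}g=(\lambda\cdot\mathrm{Id})\cdot g$, which lies in the $GL_2(k)$-orbit of $g$; then $I_{2i}(g')=(\lambda^{6})^{2i}I_{2i}(g)=r^{2i}I_{2i}(g)=I_{2i}(f)$ for $i=1,2,3,5$. So it suffices to show: two binary sextics $f,g$ with $I_{10}(f)\neq 0$ and $I_2(f)=I_2(g)$, $I_4(f)=I_4(g)$, $I_6(f)=I_6(g)$, $I_{10}(f)=I_{10}(g)$ are $GL_2(k)$-conjugate. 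Since $I_{10}$ is, up to a nonzero scalar, the discriminant of the sextic (see \eqref{j-invariants}), $I_{10}(f)\neq 0$ forces $f$ and $g$ to have six distinct roots, hence to be GIT-stable points of the $SL_2(k)$-action on $V_6$ (a binary sextic is stable precisely when every root has multiplicity $<3$). Stable points have closed $SL_2(k)$-orbits, and the morphism to the categorical quotient $V_6\to\Spec\mathcal{R}_6$ separates closed orbits (a standard consequence of geometric reductivity, as invoked in the proof of \cref{thm2a}); thus $f$ and $g$ are $SL_2(k)$-equivalent as soon as $J(f)=J(g)$ for every $J\in\mathcal{R}_6$. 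Now by \cref{lem5} the common zero locus of $I_2,I_4,I_6,I_{10}$ is exactly the set of sextics with a root of multiplicity $\geq 4$, which is the null cone $\mathcal{N}_6$; hence by \cref{thm2a} (equivalently \cref{lem6}) $\mathcal{R}_6$ is a finite module over $A:=k[I_2,I_4,I_6,I_{10}]$, so each $J\in\mathcal{R}_6$ is integral over $A$, and $J(f),J(g)$ are roots of one and the same monic polynomial over $k$ (its coefficients being elements of $A$, which take equal values on $f$ and $g$).

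It then remains to upgrade ``$J(f)$ and $J(g)$ lie in a common finite set'' to equality after a $GL_2(k)$-transformation, and here the classical presentation of $\mathcal{R}_6$ enters: by Clebsch~\cite{Cl} and Bolza~\cite{Bo} (and Igusa~\cite{Ig} in characteristic $\neq 2$), $\mathcal{R}_6$ is generated over $A$ by a single further invariant $I_{15}$ of degree $15$ subject to one relation $I_{15}^{\,2}=P(I_2,I_4,I_6,I_{10})$ with $P\in A$. Hence the fibre of $\Spec\mathcal{R}_6\to\Spec A$ over $(I_2(f),I_4(f),I_6(f),I_{10}(f))$ consists of at most the two points $I_{15}=\pm\sqrt{P}$, and these two $SL_2(k)$-orbits are interchanged by any $h\in GL_2(k)$ with $\det h=-1$: such an $h$ fixes $I_2,I_4,I_6,I_{10}$ (all of even weight) but negates $I_{15}$ (weight $45$). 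Therefore $f$ and $g$, having equal $I_2,I_4,I_6,I_{10}$, lie in a single $GL_2(k)$-orbit, which completes the proof.

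The main obstacle is exactly this last input: that $I_2,I_4,I_6,I_{10}$ — equivalently the Igusa invariants $J_{2i}$, or the absolute invariants $I_2^{5}/I_{10}$, $I_2^{3}I_4/I_{10}$, $I_2^{2}I_6/I_{10}$ — form a \emph{complete} system of invariants on the stable locus modulo $GL_2(k)$. The finiteness statement \cref{lem6} only bounds the fibre, and establishing that over $\{I_{10}\neq 0\}$ the residual ambiguity is resolved by the $\det=-1$ action is precisely the substance of the Clebsch--Bolza--Igusa theorem; it is not formal and rests either on the explicit presentation of $\mathcal{R}_6$ used above or on a direct normal-form computation with the six roots of $f$.
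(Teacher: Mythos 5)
Your argument is correct, and since the paper itself offers no proof of this corollary (it defers entirely to \cite{vishi}, whose advertised method is exactly the geometric-reductivity machinery of this section), the comparison has to be with the toolkit the paper actually develops. The forward direction and the normalization step are fine: they only use that a degree-$m$ $SL_2(k)$-invariant of a sextic is a relative $GL_2(k)$-invariant of weight $3m$ (the sextic analogue of \cref{lem_3}, which the paper states only for octavics, together with \cref{lem3}), plus algebraic closedness to extract the roots $\lambda^2=\det h$ and $\lambda^6=r$. The GIT half of the converse is also sound and is in the spirit of the paper: $I_{10}\neq 0$ forces distinct roots, hence stability and closed $SL_2(k)$-orbits, and separation of distinct closed orbits by $\mathcal R_6$ follows from linear (resp.\ geometric) reductivity as invoked in the proof of \cref{thm2a}; the identification of the null cone via \cref{lem5} and the module-finiteness of \cref{lem6} are used correctly.

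The one point to flag is the step you yourself single out: passing from equality of $I_2,I_4,I_6,I_{10}$ to equality of \emph{all} invariants after a $\det=-1$ twist rests on the presentation $\mathcal R_6=A\oplus A\,I_{15}$ with $I_{15}^2\in A=k[I_2,I_4,I_6,I_{10}]$. This is strictly stronger than anything proved in the paper — \cref{lem6} gives only module-finiteness, not ring generation by the five classical invariants nor the quadratic relation for the skew invariant — and it is essentially the Clebsch--Bolza--Igusa structure theorem itself, so your proof is correct only conditionally on that external classical input (\cite{Cl}, \cite{Bo}, \cite{Ig}), which is also where \cite{vishi} does its real work. Granting that citation, the derivation is complete and the $\det=-1$ bookkeeping (weights $6,12,18,30$ even, weight $45$ odd, the fibre of $\Spec \mathcal R_6\to\Spec A$ having at most two points, including the degenerate case $I_{15}=0$) is handled correctly; just be aware that you have not produced an argument that is more self-contained than the paper's citation, only an explicit reduction of the corollary to the classical presentation of $\mathcal R_6$ plus standard GIT.
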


See \cite{vishi} for a proof. We will use \cref{eq9} when we consider the moduli space of binary sextics as a weighted moduli space.

\subsubsection{Binary octavics}  Next we will construct covariants and invariants of binary octavics.  They were first constructed by van Gall who showed that there are 70 such covariants; see von Gall \cite{vG}.  Let $f(X,Y)$ denotes a binary octavic as below:
\begin{equation}
f(X,Y) =   \sum_{i=0}^8 a_i X^i Y^{8-i} = \sum_{i=0}^8
\begin{pmatrix} n \\ i
\end{pmatrix}    b_i X^i Y^{n-i}
\end{equation}
where $b_i=\frac {(n-i)! \, \, i!} {n!} \cdot a_i$,  for $i=0, \dots , 8$. We define the following
covariants:
\begin{equation}
\begin{split}
&g=(f,f)^4, \quad k=(f, f )^6, \quad h=(k,k)^2, \quad m=(f,k)^4, \\
&   n=(f,h)^4, \quad p=(g,k)^4, \quad q=(g, h)^4.\\
\end{split}
\end{equation}

\noindent Then, the following 
\begin{small}
\begin{equation}\label{def_J}
\begin{aligned}
&  J_2= 2^2 \cdot 5 \cdot 7 \cdot (f,f)^8,     &  J_3 = \frac 1 3 \cdot  2^4 \cdot 5^2 \cdot 7^3 \cdot (f,g )^8,   &  \; J_4= 2^9 \cdot 3 \cdot 7^4 \cdot (k,k)^4,  \\
&  J_5= 2^9 \cdot 5 \cdot 7^5 \cdot (m,k)^4,  &   J_6 = 2^{14} \cdot 3^2 \cdot 7^6 \cdot (k,h )^4,                 &  \;  J_7= 2^{14} \cdot 3 \cdot 5 \cdot 7^7 \cdot (m,h )^4,  \\
& J_8= 2^{17} \cdot 3 \cdot 5^2 \cdot 7^9 \cdot  (p,h)^4,    &  J_9= 2^{19} \cdot 3^2 \cdot 5 \cdot 7^9 \cdot  (n,h)^4, &   \; J_{10}=   2^{22} \cdot 3^2 \cdot 5^2 \cdot 7^{11} (q,h)^4         
\end{aligned}
\end{equation}
\end{small}
are $SL_2(k)$- invariants.     Notice that these invariants are scaled up to multiplication by a constant for computational purposes only; see    \cite{shi1} and \cite{hyp-3} for further details. 

\begin{lem}\label{lem_3} 
For each binary octavic $f(X, Y)$, its invariants defined in \cref{def_J}  are primitive homogeneous polynomials $J_i\in \Z [a_0, \dots , a_8]$  of degree $i$, for $i=2, \dots , 10$.
Let $f^\prime=g (f)$, where
\[ g =
\begin{pmatrix} a &b \\ c & d
\end{pmatrix}
\in GL_2(k),  
\]
and denote the corresponding $J_2, \dots , J_{10}$ of $f^\prime$ by $J_2^\prime, \dots , J_{10}^\prime$. Then,
$$J_i^\prime= ( \Delta^4)^i \, J_i$$ where $\Delta=ad-bc$ and $i=2, \dots , 10$.
\end{lem}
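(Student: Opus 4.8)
The plan is to treat the three assertions about $J_i$ — homogeneity of degree $i$, integrality of the coefficients, and primitivity — separately, and then to deduce the transformation law from the fact that the $J_i$ are built from $f$ by iterated transvection.

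For the degree, I would run a bookkeeping through the definitions. If $F$ is a covariant of order $p$ and degree $d_1$ in the coefficients $a_0,\dots,a_8$ and $G$ one of order $q$ and degree $d_2$, then the $r$-transvection $(F,G)^r$ is a covariant of order $p+q-2r$ and degree $d_1+d_2$: this is immediate from the transvection formula, which is bilinear in $(F,G)$ and drops the total degree in $X,Y$ by $2r$. Starting from $f$ (order $8$, degree $1$) one computes that $g=(f,f)^4$, $k=(f,f)^6$, $h=(k,k)^2$, $m=(f,k)^4$, $n=(f,h)^4$, $p=(g,k)^4$, $q=(g,h)^4$ have orders $8,4,4,4,4,4,4$ and degrees $2,2,4,3,5,4,6$, and then that $(f,f)^8,\ (f,g)^8,\ (k,k)^4,\ (m,k)^4,\ (k,h)^4,\ (m,h)^4,\ (p,h)^4,\ (n,h)^4,\ (q,h)^4$ all have order $0$ — so they are genuine invariants — and degrees $2,3,4,5,6,7,8,9,10$ respectively. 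Multiplying by the nonzero scalar constants in \cref{def_J} changes neither the order nor the degree, so $J_i$ is homogeneous of degree $i$ for $i=2,\dots,10$.

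For the transformation law, I would use that transvectants of $SL_2(k)$-covariants are $SL_2(k)$-covariants; hence each $J_i$, being obtained from $f$ by iterated transvection and having order $0$, is an $SL_2(k)$-invariant, that is, $J_i(\sigma(f))=J_i(f)$ for $\sigma\in SL_2(k)$. Since $k$ is algebraically closed, an arbitrary $g\in GL_2(k)$ factors as $g=\lambda\,\sigma$ with $\lambda^{2}=\Delta$ and $\sigma\in SL_2(k)$; the scalar matrix $\lambda I$ acts on a binary octavic by $f\mapsto\lambda^{8}f$, so it multiplies each $a_j$ by $\lambda^{8}$ and therefore multiplies any homogeneous degree-$i$ polynomial in the $a_j$ by $\lambda^{8i}$. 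Combining the two steps gives $J_i(g(f))=\lambda^{8i}J_i(f)=(\lambda^{2})^{4i}J_i(f)=(\Delta^{4})^{i}J_i(f)$, which is exactly the claim. (Equivalently, one may quote the classical weight identity $w=\frac{nd-\omega}{2}$ for a covariant of degree $d$ and order $\omega$ of a binary $n$-ic: here $n=8$, $\omega=0$, $d=i$, so the weight is $4i$.)

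The main obstacle is the remaining claim that $J_i\in\Z[a_0,\dots,a_8]$ and is primitive. A priori the iterated transvectants only give elements of $\Q[a_0,\dots,a_8]$, the denominators coming from the factors $\frac{(p-r)!\,(q-r)!}{p!\,q!}$ and the binomials $\binom{r}{k}$ in the transvection formula. The strategy is to determine, prime by prime, the exact power of each prime occurring in the denominator of the unscaled $J_i$ and to check that the constant prescribed in \cref{def_J} is precisely the least common denominator; integrality is then automatic, and primitivity follows from minimality, since clearing the least common denominator leaves content $1$. In practice this is a finite but lengthy verification, most cleanly carried out either by expanding the transvectants explicitly in the $a_j$ with a computer algebra system or by working in the symbolic (umbral) calculus, where the denominators of transvectants of bracket monomials are controlled combinatorially; it is also worth confirming directly that no prime outside $\{2,3,5,7\}$ ever divides the relevant factorials, which is what makes the constants of \cref{def_J} suffice. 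The details are those of \cite{shi1} and \cite{hyp-3}.
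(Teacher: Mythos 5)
Your proposal is correct, and it actually does more work than the paper, whose entire proof reads: the first claim is ``immediate from the definition of the covariants and invariants,'' and the transformation law ``one can check computationally.'' Your degree bookkeeping (order $p+q-2r$, degree $d_1+d_2$ under $r$-transvection, yielding orders $0$ and degrees $2,\dots,10$ for the nine invariants) is the precise content behind the paper's ``immediate from the definition.'' Where you genuinely diverge is the transformation law: instead of a direct computational verification, you factor $g=\lambda\sigma$ with $\lambda^{2}=\Delta$, $\sigma\in SL_2(k)$, use that iterated transvectants of order $0$ are $SL_2$-invariants, and let the scalar act by $\lambda^{8i}$ on a degree-$i$ polynomial in the coefficients, recovering $J_i^\prime=(\Delta^4)^i J_i$; equivalently the classical weight formula $w=(8i-0)/2=4i$. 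This buys a structural explanation of the exponent $4i$ that is independent of any machine computation, whereas the paper's computational check has the (minor) advantage of simultaneously confirming the explicit normalizing constants in \cref{def_J}. On integrality and primitivity you are right that the transvection formula a priori lands only in $\Q[a_0,\dots,a_8]$, with denominators supported on the primes $2,3,5,7$ coming from the factorials, and that one must verify the scalars in \cref{def_J} clear exactly the least common denominator; both you and the paper ultimately defer this finite verification to computation (the paper via \cite{shi1} and \cite{hyp-3}), so no gap there relative to the paper's own standard of proof.
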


\proof The first claim is immediate from the definition of the covariants and invariants.  Let $f$ and
$f^\prime$ be two binary octavics as in the hypothesis. One can check the result computationally. \qed

There are 68 invariants defined this way as discovered by van Gall \cites{vG, vG2} in 1880.  Indeed, van Gall claimed 70 such invariants, but as discovered in XX-century there are only 68 of them.  In particular, $J_{14}$ is the discriminant of the binary octavic.  In articles in 1892 and 1896 R. Alagna determined the algebraic relations among such invariants; see \cites{Al, Al1} for details. 

Next we want to show that the ring of invariants $\cR_8$ is finitely generated as a module over $k[J_2,
\dots,J_7]$. First we need some auxiliary lemmas.

\begin{lem}\label{lem_4} 
If $J_i=0$, for $i=2, \dots 7$, then the $f(X,Y)$ has a  multiple root.
\end{lem}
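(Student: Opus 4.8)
The plan is to prove the contrapositive: if $f(X,Y)$ is squarefree --- equivalently, its discriminant $J_{14}$ is nonzero --- then at least one of $J_2,\dots,J_7$ must be nonzero. By \cref{lem_3} we have $J_i(g\cdot f)=(\det g)^{4i}J_i(f)$ for $g\in GL_2(k)$, so both the hypothesis $J_2=\dots=J_7=0$ and the conclusion ``$f$ has a multiple root'' are $GL_2(k)$-invariant, and we are free to replace $f$ by any representative of its orbit. The strategy is then to show that the common zero locus $V(J_2,\dots,J_7)\subseteq V_8$ is exactly the null cone $N_8$, and to conclude via the Hilbert--Mumford description of $N_8$.

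First I would recall that $N_8$ --- the zero set of all positive-degree homogeneous invariants --- is the locus of octavics with a root of multiplicity $>8/2=4$, i.e.\ of multiplicity at least $5$; in particular every element of $N_8$ has a multiple root. Since $J_2,\dots,J_7$ are positive-degree invariants, the inclusion $N_8\subseteq V(J_2,\dots,J_7)$ is immediate, so the real point is the reverse inclusion $V(J_2,\dots,J_7)\subseteq N_8$, which is exactly the statement that $J_2,\dots,J_7$ is a homogeneous system of parameters for $\mathcal R_8$ (the same fact that is needed to bring $\mathcal R_8$ under the hypothesis of \cref{thm2a} with the distinguished invariants $J_2,\dots,J_7$). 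Granting that inclusion, $J_2(f)=\dots=J_7(f)=0$ forces $f\in N_8$, hence $f$ has a root of multiplicity at least $5$ and therefore a multiple root --- which is the assertion.

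To establish $V(J_2,\dots,J_7)\subseteq N_8$ I would use the $GL_2$-reduction to put $f$ in normal form: since $GL_2(k)$ is $3$-transitive on points of $\mathbb P^1$, move two of the distinct roots to $[1:0]$ and $[0:1]$, so $a_0=a_8=0$ and $a_1,a_7\neq0$, and normalize a third root to $[1:1]$, leaving a five-parameter family. Substituting this family into the transvection formulas \cref{def_J} for $J_2,\dots,J_7$ (evaluated with the symbolic method and the Gordan-type identities of \cref{sect-8}) reduces the claim to the purely algebraic assertion that the system $J_2=\dots=J_7=0$ has no solution with the remaining roots pairwise distinct; equivalently, that $J_{14}$ lies in the radical of the ideal $(J_2,\dots,J_7)$ on this chart. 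I would verify this by elimination --- iterated resultants or a Gr\"obner basis --- in the same spirit as the computation behind \cref{lem5} for sextics; as an alternative, one may simply invoke Shioda's description of the invariant ring of binary octavics \cite{shi1} (see also \cite{hyp-3}), which already exhibits $J_2,\dots,J_7$ as primary invariants of degrees $2,3,4,5,6,7$ whose common vanishing locus is the null cone.

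The hard part will be precisely this last inclusion. The octavic invariants $J_2,\dots,J_7$ are large (degrees up to $7$ in nine coefficients), so the elimination is heavy and, realistically, certified by machine; moreover one must check that the resulting Nullstellensatz certificate --- which lives a priori over $\mathbb Q$ --- remains valid in every characteristic $p>5$. This last point follows from geometric reductivity together with the fact, recorded in \cref{lem_3}, that the $J_i$ are \emph{primitive} polynomials with integer coefficients, so their reduction modulo $p$ behaves well. By contrast, the $GL_2$-equivariant normalization and the Hilbert--Mumford identification of $N_8$ with the ``multiplicity $\geq 5$'' locus are formal and present no difficulty.
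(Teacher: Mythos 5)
Your argument is correct in substance, but it takes a heavier, GIT-flavored route than the paper, and its superstructure does less work than you suggest. The paper's own proof is the bare computation: from $J_2=\dots=J_7=0$ one deduces directly that $\mathrm{Res}\big(f(X,1),f'(X,1),X\big)=0$, i.e.\ the discriminant vanishes, so $f$ has a multiple root --- exactly the elimination you describe in your third paragraph, with no mention of the null cone. Your wrapper (contrapositive, $V(J_2,\dots,J_7)=N_8$, Hilbert--Mumford identification of $N_8$ with the multiplicity-$\geq 5$ locus) proves a strictly stronger statement, but the key inclusion $V(J_2,\dots,J_7)\subseteq N_8$ is precisely the hypothesis of \cref{thm2a} needed for \cref{thm_6}, i.e.\ the assertion that $J_2,\dots,J_7$ is a homogeneous system of parameters for $\mathcal{R}_8$; within this paper that is what the auxiliary lemmas (this one and \cref{thm_5}) are building toward, so invoking it here risks circularity unless you lean on Shioda externally, in which case the lemma follows but by a much bigger hammer. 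Note also a small internal mismatch in your plan: the elimination you actually propose (normalize three distinct roots, show the system $J_2=\dots=J_7=0$ admits no solution with all roots pairwise distinct) certifies only that no squarefree octavic lies in the common zero locus --- which is the lemma itself, and the paper's computation --- not the stronger inclusion into $N_8$; so the null-cone scaffolding can be dropped without loss. Your remarks on characteristic $p>5$ via geometric reductivity and the primitivity of the $J_i$ from \cref{lem_3} are in the spirit of the paper and fine, though stated loosely. In short: the computational core coincides with the paper's proof; what your version buys is the cleaner conceptual statement (common zero locus $=$ null cone) at the price of either machine-heavy Nullstellensatz certificates or an appeal to Shioda's description of $\mathcal{R}_8$.
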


\proof Compute $J_i=0$, for $i=2, \dots 7$. These equations imply that
\[Res( f(X,1), f^\prime (X, 1), X)=0,\]
where $f^\prime $ is the derivative of $f$. This proves the lemma. \qed

\begin{thm} \label{thm_5}   The following hold true for any octavic.

i)   An octavic has a root of multiplicity exactly four if and only if the basic invariants take the form
\begin{equation}\label{J_i}
\begin{split}
J_2 &= 2  \cdot r^2, 
\quad       J_3=  2^2 \cdot 3 \cdot  r^3,  
\quad J_4 = 2^6 \cdot   r^4, 
\quad J_5 = 2^6   \cdot r^5,\\
 &J_6=  2^9 \cdot    r^6, 
 \quad J_7= 2^9 \cdot   r^7, 
 \quad J_8=  2^{11}\cdot 3^2 \cdot        r^8,
\end{split}
\end{equation}
for some  $ r \neq 0 $.  Moreover, if the octavic has equation 
\[ f(x, y)=x^4 (a x^4+b x^3 y+c x^2 y^2+d x y^3+e y^4), \]
then  $r=e$.

ii) \label{lem_root} An octavic has a root of multiplicity 5 if and only if
\[ J_i=0, \ \  for \ \ i=2, \dots , 8.\]

\end{thm}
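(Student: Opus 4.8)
The plan is to imitate, for binary octavics, the two-step argument that establishes the sextic analogues \cref{lem4} and \cref{lem5}: first normalize by a $GL_2(k)$-substitution so that a root of the relevant multiplicity sits at $(0:1)$, then compute. Throughout I will use the transformation law of \cref{lem_3}, namely $J_i(T(f)) = (\det T)^{4i}\,J_i(f)$.

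For the ``only if'' directions, suppose $f$ has a root of multiplicity exactly $4$ and pick $T\in GL_2(k)$ carrying that root to $(0:1)$; then $T(f)=\sum_{i=4}^{8}a_i x^i y^{8-i}$ with $a_4\neq 0$ (it is precisely the hypothesis ``exactly $4$, not $\geq 5$'' that forces $a_4\neq 0$). One now substitutes this into the covariants $g,k,h,m,n,p,q$ and into $J_2,\dots,J_8$ as defined in \cref{def_J}. The observation that makes this tractable is a degree count: once $a_0=a_1=a_2=a_3=0$, a transvection $(\cdot,\cdot)^r$ can only yield a nonzero scalar by contracting the $y$-heavy partial derivatives against the $x$-heavy ones, and this forces every surviving contribution to involve only the middle coefficient $a_4$. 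Running this through the tower of covariants shows that $g$ is divisible by $x^4$, that $k$ and $h$ are divisible by $x^2$, each with ``lowest'' coefficient a monomial in $a_4$, and hence each $J_i$ evaluates to $c_i a_4^{\,i}$ for an explicit constant $c_i$ independent of $a_5,a_6,a_7,a_8$; a direct computation gives $c_2=2$, $c_3=2^2\cdot 3$, $c_4=2^6$, $c_5=2^6$, $c_6=2^9$, $c_7=2^9$, $c_8=2^{11}\cdot 3^2$. Applied to $f=x^4(ax^4+bx^3y+cx^2y^2+dxy^3+ey^4)$ directly (no transformation is needed, the root already sitting at $(0:1)$) this yields exactly \cref{J_i} with $r=a_4=e$; for a general $f$, the transformation law gives $J_i(f)=(\det T)^{-4i}J_i(T(f))=c_i\,(a_4(\det T)^{-4})^i$, so \cref{J_i} holds with $r=a_4(\det T)^{-4}\neq 0$. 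Running the same substitution with $a_4=0$ as well (a root of multiplicity $\geq 5$) makes every surviving term vanish for the same degree reason, which is the ``only if'' half of (ii).

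For the ``if'' directions one argues in reverse. In case (ii), the hypothesis contains $J_2=\dots=J_7=0$, so \cref{lem_4} already gives a multiple root; move it to $(0:1)$ and, unless $f$ is a single power $cx^my^{8-m}$ (a case handled at once: $m\geq 5$ or $8-m\geq 5$ is what we want, and $m=4$ gives $f=cx^4y^4$ with $J_2=2c^2\neq 0$, excluded), move a second root to $(1:0)$, so $T(f)=\sum_{i=2}^{7}a_i x^i y^{8-i}$. Imposing $J_2=\dots=J_8=0$ now gives a polynomial system in $a_2,\dots,a_7$ whose elimination (resultants, exactly as in the commented sextic computation) forces $a_2=a_3=a_4=0$, i.e.\ $x^5\mid T(f)$. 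In case (i), \cref{J_i} with $r\neq 0$ gives $J_2=2r^2\neq 0$, so by (ii) there is no root of multiplicity $\geq 5$ and it suffices to produce one of multiplicity $\geq 4$; eliminating $r$ from \cref{J_i} yields relations such as $J_4=16J_2^2$, $J_3^2=18J_2^3$, $J_6=64J_2^3$, which one checks imply the vanishing of a discriminant-type quantity (so $f$ has a multiple root), after which the same normalize-and-eliminate step — now feeding in the conditions of \cref{J_i} rather than all zeros — pins the multiplicity at exactly $4$, with the middle coefficient of the normalized form coming out equal to $r(\det T)^4\neq 0$.

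The conceptual scaffolding above is routine; the genuine labor, as in the sextic case, is the resultant/elimination computation in the last paragraph together with the verification of the explicit constants $c_i$ in the first — finite but sizeable symbolic computations, best carried out with a computer algebra system. The point to flag as the main obstacle is the converse of (i): here ``multiplicity $\geq 4$'' rather than merely ``$\geq 2$'' has to be extracted, so one cannot simply quote \cref{lem_4} and must instead push the full set of relations coming from \cref{J_i} through the elimination. Finally one should record the characteristic hypotheses: the constants $c_i$, the denominators in the covariant normalizations of \cref{def_J}, and the intermediate resultants involve only the primes $2,3,5,7$, so the argument is valid in characteristic $0$ and in characteristic $p>7$, consistently with the standing assumptions of this section.
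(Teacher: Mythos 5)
Your proposal follows essentially the same route as the paper's own (computational) proof: normalize the relevant roots to coordinate points via the transformation law of \cref{lem_3}, verify the forward directions by direct computation of the $J_i$ on the specialized form (where they depend only on the middle coefficient), and prove the converses by normalizing a multiple root and a second root and running resultant/elimination on the resulting six-coefficient system, with the sizeable symbolic work deferred exactly as the paper defers it. Your added care in the converse of (i) — invoking part (ii) to exclude multiplicity $\geq 5$ and eliminating $r$ to justify the existence of a multiple root — slightly refines the paper's terser assertion, but the method is the same.
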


\begin{rem}
An alternative proof of the above can provided using the $k$-th subresultants of $f$ and its derivatives. Two
forms have $k$ roots in common if and only if the first  $k$ subresultants vanish.
This is equivalent to $J_2= \dots = J_7=0$.
\end{rem}

\begin{thm}\label{thm_6}
$\cR_8$ is finitely generated as a module over $k[J_2, \dots,J_7]$.
\end{thm}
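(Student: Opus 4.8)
The plan is to deduce the statement from Hilbert's module-finiteness theorem, \cref{thm2a}, in exactly the way \cref{lem6} is obtained for binary sextics. One applies \cref{thm2a} with $d=8$ and the six invariants $J_2,\dots,J_7\in\cR_8$, so that it suffices to prove that their common zero locus $Z(J_2,\dots,J_7)\subseteq V_8$ coincides with the null cone $\mathcal N_8$. This is consistent with the dimension count: $\cR_8$ has Krull dimension $\dim V_8-\dim SL_2(k)=9-3=6$, so the six invariants $J_2,\dots,J_7$ ought to form a homogeneous system of parameters, i.e.\ a graded Noether normalization of $\cR_8$. Thus the whole proof reduces to one geometric assertion about octavics: $f\in Z(J_2,\dots,J_7)$ if and only if $f$ has a root of multiplicity $>\tfrac12\cdot 8=4$.

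Put $\mathcal M=\{\,f\in V_8:\ f\ \text{has a root of multiplicity}\ \ge 5\,\}$. The plan is to prove the chain of inclusions
\[
\mathcal M\ \subseteq\ \mathcal N_8\ \subseteq\ Z(J_2,\dots,J_7)\ \subseteq\ \mathcal M,
\]
whence all three sets agree and \cref{thm2a} applies. The first inclusion is the numerical criterion for the null cone of a binary form: if $f$ has a root of multiplicity $m>d/2$, one may choose coordinates so that $\operatorname{diag}(t,t^{-1})\cdot f\to 0$ as $t\to 0$, and then by continuity every positive-degree invariant vanishes on $f$, so $f\in\mathcal N_8$. The second inclusion is immediate, since each $J_i$ (for $i=2,\dots,7$) is a homogeneous element of $\cR_8$ of positive degree and hence vanishes on $\mathcal N_8$. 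The third inclusion is the substantive one; by \cref{thm_5}(ii) and the remark following it, $f$ satisfies $J_2(f)=\dots=J_7(f)=0$ if and only if $f$ has a root of multiplicity at least $5$, which is precisely $Z(J_2,\dots,J_7)\subseteq\mathcal M$ (the reverse already being contained in the first two inclusions).

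For the third inclusion I would reproduce the normal-form argument in the proof of \cref{thm_5}. If $J_2(f)=\dots=J_7(f)=0$, then $f$ has a multiple root by \cref{lem_4}. Suppose, for contradiction, that no root of $f$ has multiplicity $\ge 5$. Since $SL_2(k)$ acts on $\cR_8$ and acts $3$-transitively on $\P^1$, move the multiple root to $(1:0)$ and a second (necessarily existing) root to $(0:1)$, so that
\[
f(X,Y)=a_2X^6Y^2+a_3X^5Y^3+a_4X^4Y^4+a_5X^3Y^5+a_6X^2Y^6+a_7XY^7.
\]
Substituting into $J_2=\dots=J_7=0$ gives an explicit polynomial system in $a_2,\dots,a_7$; eliminating variables — equivalently, using that $J_2=\dots=J_7=0$ is equivalent to $f$ and $f'$ sharing a common factor of degree $\ge 4$ — one finds that every solution has $a_2=a_3=0$ or $a_6=a_7=0$, so $f$ does have a root of multiplicity $\ge 5$, a contradiction. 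Regarding the characteristic hypothesis: in characteristic $0$ this mirrors the linear-reductivity argument used to prove \cref{thm2a}, and in characteristic $p>5$ one replaces it by geometric reductivity of $SL_2(k)$ (surjectivity of the categorical-quotient morphism $V_8\to V_8/\!\!/SL_2(k)$) and reduces modulo $p$; this is legitimate because the $J_i$ have integral coefficients by \cref{lem_3}.

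The main obstacle is the elimination step in the third inclusion: certifying that the system $J_2=\dots=J_7=0$ on the six-parameter family above has only the two degenerate families of solutions. This is a Gröbner-basis/resultant computation, bulkier than its genus-two analogue in \cref{lem4,lem5}; the cleanest route is the subresultant reformulation noted after \cref{thm_5}, which replaces the symbolic transvectant identities by the vanishing of the first few subresultants of $f$ and its derivative and verifies that these cut out, on $V_8$, the same set as $J_2=\dots=J_7=0$. Once this equality of zero loci is established, the conclusion is formal.
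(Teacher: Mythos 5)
Your overall strategy is the same as the paper's: invoke Hilbert's criterion \cref{thm2a}, so that everything reduces to showing that the common zero locus $Z(J_2,\dots,J_7)\subset V_8$ is the null cone $\mathcal{N}_8$, i.e.\ the set of octavics with a root of multiplicity at least $5$; your first two inclusions are fine and this is exactly how \cref{lem6} is to be read in the sextic case. The genuine gap is in the third inclusion, which is the entire content of the theorem. You cannot quote \cref{thm_5}(ii) for it, since its hypothesis includes $J_8=0$; what is needed is that $J_2=\cdots=J_7=0$ \emph{alone} forces a quintuple root, and that is precisely the computation your proposal only sketches. Moreover, the outcome you assert for that computation is false: inside your normal-form family $f=a_2X^6Y^2+\cdots+a_7XY^7$ the form $f=XY^2(X-\alpha Y)^5$, $\alpha\neq 0$, has a quintuple root at $(\alpha:1)$, hence lies in $\mathcal{N}_8$ and satisfies $J_2=\cdots=J_7=0$, yet has $a_2\neq 0$ and $a_7\neq 0$; so the solution set is not contained in $\{a_2=a_3=0\}\cup\{a_6=a_7=0\}$. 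Even granting the dichotomy, the conclusion would not follow: $a_2=a_3=0$ gives $f=XY^4(a_4X^3+\cdots)$, only a root of multiplicity $\geq 4$, and one must still use $J_2=0$ together with \cref{thm_5}(i) (which gives $J_2=2a_4^2$ here) to kill $a_4$; while $a_6=a_7=0$ gives $f=X^3Y^2(a_2X^3+\cdots)$, generically of maximal multiplicity $3$, which is not in the null cone at all.

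The proposed shortcut ``$J_2=\cdots=J_7=0$ is equivalent to $f$ and $f'$ sharing a common factor of degree $\geq 4$'' is also not an equivalence: an octavic with two distinct triple roots has $\deg\gcd(f,f')=4$ but no quintuple root, so it is not in $\mathcal{N}_8$ and hence cannot lie in $Z(J_2,\dots,J_7)$ if the theorem is true; the correct reformulation involves $f$ together with its higher derivatives (the subresultants alluded to after \cref{thm_5}). So what is missing is the actual case analysis behind \cref{thm_5}(ii): substituting the normal form into $J_2=\cdots=J_7=0$ and eliminating does not produce your clean dichotomy but a product of several factors in $a_4,a_5,a_6$ whose vanishing must be analyzed case by case (this is where the fifth-power family above appears), as is done in \cite{hyp-3}. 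Until that step is carried out, the identity $Z(J_2,\dots,J_7)=\mathcal{N}_8$, and with it the application of \cref{thm2a}, is not established.
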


\begin{cor}
$J_2, \dots , J_7$ are algebraically independent over $k$ because $\cR_8$ is the coordinate ring of the     5-dimensional variety $V_8$//$SL_2(k)$.
\end{cor}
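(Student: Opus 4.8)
The plan is to deduce the algebraic independence of $J_2,\dots,J_7$ from \cref{thm_6} by a Krull-dimension count. Put $A:=k[J_2,\dots,J_7]\subseteq\cR_8$ and let $\phi\colon k[t_2,\dots,t_7]\twoheadrightarrow A$ be the surjection of $k$-algebras sending $t_i$ to $J_i$. Since $A$ is a subring of the polynomial ring $k[a_0,\dots,a_8]$, it is a domain, so $\ker\phi$ is a prime ideal; hence $J_2,\dots,J_7$ are algebraically independent over $k$ precisely when $\ker\phi=0$. As $k[t_2,\dots,t_7]$ is a finitely generated domain over $k$ (so catenary and equidimensional) and every nonzero prime in it has height at least $1$, the condition $\ker\phi=0$ is equivalent to $\dim A=6$. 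Thus it suffices to prove $\dim A=6$.

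First I would apply \cref{thm_6}: $\cR_8$ is a finitely generated $A$-module, so the extension $A\hookrightarrow\cR_8$ is module-finite, hence integral. Integral ring extensions preserve Krull dimension (by Lying-Over and Going-Up; cf.\ Atiyah--Macdonald, Cor.~5.9), whence $\dim A=\dim\cR_8$, and it remains to check $\dim\cR_8=6$.

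Next I would compute this dimension via the geometry of the quotient. By hypothesis $\cR_8=k[a_0,\dots,a_8]^{SL_2(k)}$ is the coordinate ring of the categorical quotient $V_8/\!/SL_2(k)$, and since $SL_2(k)$ is reductive this quotient has dimension $\dim V_8-3$, where $3=\dim SL_2(k)$ is the dimension of a generic $SL_2(k)$-orbit in $V_8$. The orbit count is the crux: $V_8\cong\mathbb A^9$, and a generic binary octavic has finite stabilizer in $SL_2(k)$, because its image in $\pgl_2(k)$ is trivial --- a generic configuration of $8$ points of $\mathbb P^1$ admits no nontrivial M\"obius symmetry --- while the kernel $\{\pm I\}$ of $SL_2(k)\to\pgl_2(k)$ acts trivially on forms of even degree. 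Hence $\dim\cR_8=9-3=6$, which completes the argument. (Equivalently one may invoke the classical fact that the ring of $SL_2$-invariants of binary $d$-ics has Krull dimension $d-2$ for $d\ge3$; for $d=6$ this, combined with \cref{lem6}, likewise yields the algebraic independence of $I_2,I_4,I_6,I_{10}$.)

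The ring-theoretic passages above --- domain implies prime kernel, module-finite implies integral implies equal Krull dimension, and the height estimate for primes in a polynomial ring --- are routine. The one step requiring genuine care is the dimension of $V_8/\!/SL_2(k)$, i.e.\ the assertion that the generic $SL_2(k)$-orbit on $V_8$ is $3$-dimensional, equivalently that a generic binary octavic has only finitely many automorphisms; this classical fact is the sole essentially geometric input, and I would state it explicitly rather than absorb it tacitly into the dimension bookkeeping.
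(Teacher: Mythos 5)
Your argument is correct and is essentially the route the paper intends: module-finiteness of $\cR_8$ over $k[J_2,\dots,J_7]$ (\cref{thm_6}) makes the extension integral, so algebraic independence of the six invariants reduces to the Krull dimension of $\cR_8$, which you then compute from the categorical quotient via the generic (finite-stabilizer) orbit count. One point worth flagging: your value $\dim V_8/\!/SL_2(k)=9-3=6$ is the one the corollary actually needs, since six algebraically independent elements force $\dim \Spec \cR_8 \geq 6$; the ``$5$'' in the statement should be read as the dimension of the projective quotient, $\dim \Proj \cR_8 = 5$ (matching the $5$-dimensional hyperelliptic locus in $\M_3$), so that $\dim \Spec \cR_8 = 6$ exactly as in your orbit computation. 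With that reading, your fully spelled-out proof and the paper's one-line justification coincide.
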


In \cite{hyp-3} the following theorem was proved that determines explicitly the relation among the invariants. 

\begin{thm} \label{eq-main-g-3}  
The  invariants  $J_2, \dots , J_8$ satisfy  the following equation
\begin{equation}\label{shaska}
 J_8^5 + \frac { I_8} {3^4 \cdot 5^3 } J_8^4 + 2 \cdot \frac { I_{16} } {3^8\cdot 5^6}   J_8^3 + \frac {I_{24}} {2 \cdot 3^{12} \cdot 5^6}  J_8^2  + \frac { I_{32}} {3^{16} \cdot 5^{10} }  J_8 + \frac { I_{40}} {2^2 \cdot 3^{20} \cdot 5^{12}}  =0, 
 \end{equation}
where  $I_8, I_{16}, I_{24}, I_{32}, I_{40}$ are expressed in terms of the coefficients in the Appendix in \cite{hyp-3}
\end{thm}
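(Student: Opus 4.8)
The plan is to obtain the relation structurally from the finiteness theorem and then to make it explicit by an elimination computation. First I would invoke \cref{thm_6}: since $\cR_8$ is a finitely generated module over the polynomial ring $A:=k[J_2,\dots,J_7]$, every element of $\cR_8$ is integral over $A$, so in particular $J_8$ satisfies a monic polynomial over $A$. Because $J_2,\dots,J_7$ are algebraically independent over $k$ (the corollary to \cref{thm_6}), $A$ is a normal domain, hence the minimal polynomial $P(T)$ of $J_8$ over $\operatorname{Frac}(A)$ in fact has all its coefficients in $A$. This already produces a well-defined relation $P(J_8)=0$ with $P$ monic of some degree $N$ and coefficients polynomial in $J_2,\dots,J_7$, where $N=[\operatorname{Frac}(A)(J_8):\operatorname{Frac}(A)]$.

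Next I would exploit the grading to pin down the shape of $P$. By \cref{lem_3} each $J_i$ is homogeneous of degree $i$ in $a_0,\dots,a_8$, and the scaling action $a_j\mapsto\lambda a_j$ permutes the roots of $P$; uniqueness of the minimal polynomial then forces the coefficient of $T^{N-k}$ in $P$ to be weighted-homogeneous of weighted degree $8k$ in $J_2,\dots,J_7$ (weights $2,3,4,5,6,7$), equivalently an $SL_2(k)$-invariant of degree $8k$ in the $a_j$ — which I would call $I_{8k}$ after fixing a primitive integral representative in $\Z[a_0,\dots,a_8]$ (the one recorded in the Appendix of \cite{hyp-3}). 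Thus the relation is automatically weighted-homogeneous of total degree $8N$, and it matches the displayed form of \cref{shaska} once one shows $N=5$ and computes the scalar normalizations.

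The remaining, computational step is the explicit elimination. I would write $J_2,\dots,J_8$ as concrete polynomials in $a_0,\dots,a_8$ using \cref{def_J}, form the ideal $(J_2-t_2,\dots,J_8-t_8)$ in $k[a_0,\dots,a_8,t_2,\dots,t_8]$, eliminate $a_0,\dots,a_8$ (a Gröbner basis computation with a block term order, or a sequence of resultants), obtain the single generator of the elimination ideal, check that it is irreducible over $k(t_2,\dots,t_7)$ and of degree $5$ in $t_8$, and read off its coefficients. Substituting back the chosen integral representatives $I_{8k}$ and clearing denominators then yields exactly \cref{shaska}, including the precise constants $3^4\cdot5^3,\ 3^8\cdot5^6,\dots$ .

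The hard part will be this last step. The elimination is heavy (the invariants $I_{8k}$ reach degree $40$ in nine variables), and one must certify that the computed relation really is the minimal polynomial of $J_8$ — irreducible over $k(J_2,\dots,J_7)$, not a proper power or a multiple of some lower-degree relation — so that the degree is genuinely $5$ rather than a divisor of it. I would corroborate $N=5$ independently by computing the generic rank of $\cR_8$ as an $A$-module (via its Hilbert series over $A$, or by specializing $J_2,\dots,J_7$ to generic values and counting $SL_2(k)$-orbits in the fibre) and verifying that $J_8$ separates the points of a generic fibre; together with the integrality from Step~1 this forces $[\operatorname{Frac}(A)(J_8):\operatorname{Frac}(A)]=5$ and completes the argument.
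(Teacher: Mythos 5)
Your proposal is essentially the route the paper takes: the survey itself offers no in-text argument but defers to \cite{hyp-3}, where the relation \cref{shaska} is obtained exactly in this spirit — the structural input is the finiteness statement of \cref{thm_6} together with the algebraic independence of $J_2,\dots,J_7$, and the relation itself is produced and certified by explicit computer elimination with the coefficients $I_{8k}$ recorded in the Appendix. Your added framing (integrality of $J_8$ over $A=k[J_2,\dots,J_7]$, normality of $A$ forcing the minimal polynomial to have coefficients in $A$, and the scaling argument via \cref{lem_3} showing the coefficient of $T^{N-k}$ is weighted-homogeneous of degree $8k$) is correct and is a clean way to predict the shape of \cref{shaska} before computing. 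One remark on economy: the theorem as stated only asserts that $J_2,\dots,J_8$ \emph{satisfy} the displayed equation, so once the candidate coefficients $I_8,\dots,I_{40}$ are written down, the proof reduces to verifying a single weighted-homogeneous polynomial identity in $\Z[a_0,\dots,a_8]$ using \cref{def_J}; the parts of your plan devoted to certifying irreducibility and that the degree is exactly $5$ (generic rank of $\cR_8$ over $A$, fibre counting) are only needed for the stronger claim that \cref{shaska} is the \emph{minimal} polynomial of $J_8$, which is not what \cref{eq-main-g-3} asserts. So there is no gap, only some work beyond what the statement requires.
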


We suggest the following problem.

\begin{prob}
Express all invariants $I_8, I_{16}, I_{24}, I_{32}, I_{40}$ in terms of the transvectants of the binary octavics. 
\end{prob}

We also have a similar result for superelliptic curves. 

\begin{thm} Two superelliptic curves $C$ and $C^\prime$ in Weierstrass form, given by affine equations  
\[  C:   Z^n=f(X, 1)   \textit{    and   } C^\prime:  z^n=g(X, 1) \]
with $\deg f = \deg g =8$ are isomorphic over $k$ if and only if there exists some $\l \in k\setminus \{ 0\}$ such that 
\[ J_i (f) = \l^i \cdot J_i(g), \textit{   for   }  \,\,  i=2, \dots , 7,   \]  
and $J_2, \dots J_8$ satisfy the  \eqref{shaska}.
Moreover, the isomorphism $C  \to C^\prime$  is given by
\[ 
\begin{bmatrix} X \\ Y \end{bmatrix}  \to M \cdot \begin{bmatrix} X \\ Y \end{bmatrix}
\]
where $M  \in GL_2(k) $ and $\l = \left( \det M \right)^4$.
\end{thm}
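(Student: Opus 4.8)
The plan is to reduce the statement about superelliptic curves to the Clebsch–Bolza–Igusa-type criterion for binary octavics recorded earlier in the paper, adapting the argument for sextics (\cref{cor1a}) to degree $8$. First I would recall the geometric setup: a superelliptic curve $C: z^n = f(X,1)$ in Weierstrass form is determined, up to isomorphism over $k$, by the $GL_2(k)$-orbit of the binary octavic $f(X,Y)$ of non-zero discriminant. Indeed, as explained in \cref{sect-8}, an isomorphism $C \to C'$ of superelliptic curves in Weierstrass form must carry the superelliptic projection to the superelliptic projection (since $C_n$ is normal, hence the degree-$n$ map to $\P^1$ is intrinsic), and therefore is induced by a change of coordinates $M \in GL_2(k)$ on the $X$-line, sending $f$ to a scalar multiple of $g$. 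Conversely any such $M$ (together with a suitable rescaling of $z$) produces an isomorphism. This reduces the theorem to: $f$ and $g$ of degree $8$ with non-zero discriminant are $GL_2(k)$-equivalent $\iff$ there is $\lambda \in k^*$ with $J_i(f) = \lambda^i J_i(g)$ for $i = 2,\dots,7$ and the octavics satisfy \eqref{shaska}.

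Next I would prove the forward direction. If $f' = g(f)$ for $g = \begin{pmatrix} a & b \\ c & d\end{pmatrix} \in GL_2(k)$, then by \cref{lem_3} (the transformation law $J_i' = (\Delta^4)^i J_i$ with $\Delta = ad-bc$), setting $\lambda = \Delta^4$ gives $J_i(f) = \lambda^i J_i(g)$ for all $i = 2,\dots,10$; in particular for $i=2,\dots,7$, and the homogeneous relation \eqref{shaska} is preserved under this common rescaling because every $I_{8k}$ scales with the appropriate power of $\lambda$ (being a polynomial in the coefficients of weight $8k$). For the converse, suppose $J_i(f) = \lambda^i J_i(g)$ for $i=2,\dots,7$ and \eqref{shaska} holds for both. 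By \cref{thm_6}, $\cR_8$ is a finitely generated module over $k[J_2,\dots,J_7]$; by the corollary after it, $J_2,\dots,J_7$ are algebraically independent and $V_8/\!/SL_2(k)$ is the $5$-dimensional affine variety with coordinate ring $\cR_8$. The key point is that an octavic with non-zero discriminant has trivial stabilizer issues so that the $GL_2(k)$-orbit is closed and is cut out by the values of the generators of $\cR_8$; hence two such octavics are $GL_2(k)$-equivalent precisely when all generators agree after a common rescaling by $\lambda^{\deg}$. Since $\cR_8$ is generated as a module over $k[J_2,\dots,J_7]$ by finitely many further invariants — and these extra invariants, in particular $J_8$ (and the ones above it), are constrained by algebraic relations such as \eqref{shaska} over $k[J_2,\dots,J_7]$ — matching $J_2,\dots,J_7$ up to the scaling $\lambda$ together with the relation \eqref{shaska} forces all remaining module generators to match up to the same scaling, and therefore $f$ and $g$ lie in the same $GL_2(k)$-orbit. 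Finally, the explicit form of the isomorphism $\begin{bmatrix} X \\ Y\end{bmatrix} \mapsto M\begin{bmatrix} X \\ Y\end{bmatrix}$ with $\lambda = (\det M)^4$ follows by unwinding \cref{lem_3}.

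The main obstacle — and the step I would treat most carefully — is the converse direction: showing that agreement of $J_2,\dots,J_7$ up to the weighted scaling, together with the single relation \eqref{shaska}, genuinely pins down the full $GL_2(k)$-orbit. This requires knowing that $\cR_8$ is in fact generated over $k[J_2,\dots,J_7]$ by $J_8$ alone modulo \eqref{shaska} (so that no further independent data is needed beyond $J_2,\dots,J_8$ and that one relation), or at least that all further module generators are determined — as functions on the locus of octavics with non-vanishing discriminant — by $J_2,\dots,J_8$. One has to be slightly delicate here: \eqref{shaska} is a degree-$5$ polynomial in $J_8$ with coefficients in $k[J_2,\dots,J_7]$, so $J_8$ is not a polynomial in $J_2,\dots,J_7$, but it is algebraic of degree $5$; the relation therefore determines $J_8$ only up to finitely many choices, and one must check that the remaining covariant data removes this ambiguity on the discriminant-nonzero locus. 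I expect this can be handled by the same strategy Igusa used in the sextic case (\cite{vishi}, quoted for \cref{cor1a}): the separating invariants on the stable locus are exactly $J_2,\dots,J_7$ together with $J_8$, and \eqref{shaska} is precisely the relation that must be imposed to lift an equality of the first seven invariants (plus $J_8$) to an honest $GL_2(k)$-equivalence. I would cite \cite{hyp-3} for the structural facts about $\cR_8$ and keep the orbit-closedness argument for non-vanishing discriminant at the level of a reference to geometric invariant theory (Mumford), since the octavic with distinct roots is a stable point.
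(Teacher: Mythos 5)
Your reduction of the curve isomorphism to $GL_2(k)$-equivalence of the binary octavics, and the forward implication via \cref{lem_3} (giving $\lambda=(\det M)^{4}$ since $d=8$), are correct and are the routine part; the paper itself gives no proof of this theorem and defers to \cite{hyp-3}, so the whole weight of the statement lies in the converse, and that is where your argument has a genuine gap.

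The gap is this: by \cref{eq-main-g-3}, the relation \eqref{shaska} is satisfied by the invariants of \emph{every} binary octavic, so the hypothesis ``both $f$ and $g$ satisfy \eqref{shaska}'', as you actually use it, adds nothing to the proportionality $J_i(f)=\lambda^i J_i(g)$, $i=2,\dots,7$. Your converse therefore amounts to the claim that $J_2,\dots,J_7$ alone separate the $GL_2(k)$-orbits of octavics with nonzero discriminant up to the weighted scaling — but the very existence of the degree-$5$ relation \eqref{shaska} signals that $J_8$ is only algebraic, not rational, over $k[J_2,\dots,J_7]$, so a given value of $(J_2,\dots,J_7)$ generically corresponds to more than one orbit, distinguished by which root of the quintic its $J_8$ is. The intended content of the theorem is that one must also match $J_8$, i.e. $J_8(f)=\lambda^{8}J_8(g)$ (this is what the clause about $J_2,\dots,J_8$ satisfying \eqref{shaska} is meant to encode), and the nontrivial step — carried out in \cite{hyp-3} using the module structure of $\cR_8$ over $k[J_2,\dots,J_7]$ and the analysis on the locus $J_{14}\neq 0$ — is that $J_2,\dots,J_8$ then determine the orbit, even though $\cR_8$ needs further generators (in degrees $9$ and $10$) as a ring. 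Your sentence ``matching $J_2,\dots,J_7$ \dots together with the relation \eqref{shaska} forces all remaining module generators to match up to the same scaling'' is precisely this missing step, asserted rather than proved; general GIT (stability of forms with distinct roots and separation of closed orbits by invariants) only gives separation by the \emph{full} invariant ring, not by the proper subset $J_2,\dots,J_8$, and you acknowledge the residual five-fold ambiguity in $J_8$ only to defer it (``I expect this can be handled as in the sextic case''). As written, the converse would prove too much and is therefore not a proof.
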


Using \cref{eq-main-g-3} one can build a database of superelliptic curves $y^n=f(x)$, for $\deg f = 8$. This was done in \cite{beshaj-polak} for genus 3 hyperelliptic curves.


\subsection{Discriminant of a superelliptic curve} 
An important  invariant is the discriminant of the binary form.  In the classical way, the discriminant is defined as 
$ \D =  \prod_{i \neq j } (\a_i - \a_j)^2$, 
where $\a_1, \dots \a_d$ are the roots of $f(x, 1)$.   It is a well-known result that it can be expressed in terms of the transvections. For example, for binary sextics we have $\D = J_{10}$  and for binary octavics  $\D (f) = J_{14}$.

The discriminant of a degree $d$ binary form $f(X, Z)\in k[X, Z]$ is and $SL_2 (k)$-invariant of degree $2d-2 $.   For any $M \in GL_2 (k)$ and any degree $d$ binary form $f$ we have
\[ \D (f^M) = \left(  \det M \right)^{d (d-1) } \, \D (f) \,.\]
The concept of a minimal discriminant is classical concept in number theory, starting with the binary quadratics. The minimal discriminant 
for elliptic curves was studied by Tate and others in the 1970-s; see \cite{ta-75} and generalized by Lockhart in \cite{lockhart} for hyperelliptic curves.  We will consider superelliptic curves with minimal discriminant or with minimal set of invariants in \cref{sect-10}. 

  
\subsection{Dihedral invariants of superelliptic curves with extra automorphisms}  
%
For curves with extra automorphisms we have additional invariants which are simpler in form and easier to compute.  These invariants were introduced in \cite{g_sh} for hyperelliptic curves and generalized in  \cite{AK} for superelliptic curves.

We will say that the superelliptic curve is  in \textbf{normal form} if and only if it is given by an equation of the form
\[y^n = x^s + \sum_{i=1}^{d/\delta}a_ix^{\delta \cdot i} + 1.    \]
To parametrize families of the superelliptic curves  that admit an extra automorphism of order $\d$, we determine the set of possible coefficients $\{ a_{s / \d-1}, \cdots, a_1\}$ of this normal form up to a change of coordinate in $x$.   The condition $\tau (x)= \zeta x$, implies that $\bar{\tau}$ fixes the places $0, \infty$. Moreover we can change the defining equation by a morphism $\g \in \pgl_2(k)$ of the form $ \g: x \rightarrow mx$ or $ \g: x \rightarrow \frac{m}{x}$  so that the new equation is again in normal form.  Substituting  
$a_0= (-1)^{d/s} \prod_{i=1}^{d/s} \b_i^s \,,$
we obtain
\[(-1)^{s/ \d} \prod_{i=1}^{s/ \d} \g(\b_i)^\d =1 \,,\]
whence $m^s= (-1)^{s/\d}$. Then,  $x$ is determined up to a coordinate change by the subgroup $D_{s/ \d}$ generated by
\[ \t_1: x \rightarrow \epsilon  x, \thinspace \t_2 : x \rightarrow \frac{1}{x} \,, \]
where $\epsilon$ is a primitive $s/\d$-root of unity; see \cite{g_sh} for details. 
The action of $D_{s/\d}$ on the parameter space $k(a_1,\dots, a_{s/\d} )$ is given by 
\[
\begin{split}
 \t_1 : &  \, a_i  \rightarrow \epsilon^{\d i}  a_i,   \text{for} i= 1, \dots s/\d \,,\\
 \t_2 : & \, a_i  \rightarrow   a_{d/ \d-i},       \text{for} i= 1,  \dots [s/\d] \,. \\
 \end{split}
 \]
Notice that if $s/\d=1$ then the above actions are trivial, therefore the normal form determines the equivalence class.  If $s/\d=2$ then 
$$ \t_1(a_1) = -a_1,   \tau_1 (a_2)= a_2, \tau_2= 1 $$
and the action is not dihedral but cyclic on the first vector.

\begin{lem}  
Assume that $s/\d> 2$. The fixed field $k(a_1,a_2,\cdots a_{s/\d})^{D_{s/\d}}$ is the same as the function field of the variety $\L_{n,s,\d}$.
\end{lem}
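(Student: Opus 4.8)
The plan is to identify the function field $k(a_1,\dots,a_{s/\delta})^{D_{s/\delta}}$ with the coordinate ring (hence function field) of the variety $\mathcal L_{n,s,\delta}$ by producing an explicit generating set of invariants and checking it cuts out exactly the claimed quotient. First I would recall that, by the discussion preceding the lemma, a superelliptic curve in normal form $y^n = x^s + \sum_{i=1}^{s/\delta} a_i x^{\delta i} + 1$ with an extra automorphism of order $\delta$ has its remaining coordinate freedom exactly the group $D_{s/\delta}$ acting on the parameters $(a_1,\dots,a_{s/\delta})$ by $\tau_1 : a_i \mapsto \epsilon^{\delta i} a_i$ and $\tau_2 : a_i \mapsto a_{s/\delta - i}$, where $\epsilon$ is a primitive $(s/\delta)$-th root of unity. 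So the content of the statement is: the dihedral invariants used to define $\mathcal L_{n,s,\delta}$ generate precisely $k(a_1,\dots,a_{s/\delta})^{D_{s/\delta}}$.

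The key steps, in order. (1) Write down the standard dihedral invariants: set $u_i := a_i a_{s/\delta - i}$ and $v := a_1^{s/\delta}$ (and the analogous monomial invariants, e.g. $a_1^{j} a_{j}$-type products) exactly as in the hyperelliptic construction of \cite{g_sh} and its superelliptic generalization in \cite{AK}; these are manifestly fixed by both $\tau_1$ and $\tau_2$ once one checks the exponent bookkeeping $\epsilon^{\delta(i + (s/\delta - i))} = \epsilon^{s} = 1$. (2) Show these invariants generate the full invariant field: this is the classical two-step argument for dihedral (equivalently, $\mathbb Z/(s/\delta) \rtimes \mathbb Z/2$) actions — first pass to the cyclic subgroup $\langle \tau_1\rangle$, whose invariant field is generated by the monomials in the $a_i$ whose weighted degree is $\equiv 0 \bmod s/\delta$, then impose the involution $\tau_2$ and use a Galois-descent / symmetrization argument ($f \mapsto f + \tau_2 f$, $f \mapsto f\cdot \tau_2 f$) to see every $\langle\tau_1\rangle$-invariant is a rational function of the symmetric dihedral invariants. (3) Conclude that $k(a_1,\dots,a_{s/\delta})^{D_{s/\delta}}$ is exactly the field generated by the dihedral invariants, which by definition is the function field of $\mathcal L_{n,s,\delta}$; checking that the defining relations among the $u_i, v$ coincide with the defining equations of $\mathcal L_{n,s,\delta}$ closes the loop. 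Since $D_{s/\delta}$ is finite, the quotient variety exists and its function field is the invariant field, so no separability or geometric-reductivity subtlety arises here.

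The main obstacle will be step (2), specifically the bookkeeping that shows the chosen finite list of dihedral invariants actually generates (not merely is contained in) the invariant field, including the degenerate low cases. One has to be careful that the hypothesis $s/\delta > 2$ is genuinely used: for $s/\delta = 1$ the action is trivial and for $s/\delta = 2$ the $\tau_2$-action degenerates ($\tau_2$ acts as the identity on $(a_1,a_2)$ while $\tau_1$ acts by $a_1 \mapsto -a_1$), so the invariant field is only cyclic, not dihedral — exactly the caveat flagged just before the lemma. Under $s/\delta > 2$ the orbit of a generic parameter point has full size $2\cdot(s/\delta)$ and a transcendence-degree count ($s/\delta$ algebraically independent invariants, matching $\dim \mathcal L_{n,s,\delta}$) together with the symmetrization argument forces the generation. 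I would present (1) and the degenerate-case remark carefully and treat the descent in (2) as the technical heart, citing \cite{g_sh} and \cite{AK} for the explicit invariant list and the verification of the relations defining $\mathcal L_{n,s,\delta}$.
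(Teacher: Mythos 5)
There is a genuine gap, and it comes from misreading what the lemma asserts. In the paper, $\mathcal{L}_{n,s,\delta}$ is a locus inside the moduli space: the (closure of the) set of isomorphism classes of superelliptic curves of level $n$ with an extra automorphism of order $\delta$. The lemma is precisely the statement that the function field of that moduli locus coincides with the invariant field $k(a_1,\dots,a_{s/\delta})^{D_{s/\delta}}$; it is \emph{not} a definition. Your step (3) says the field generated by the dihedral invariants ``by definition is the function field of $\mathcal{L}_{n,s,\delta}$,'' which makes the argument circular. What actually has to be proved is that the dominant map from the parameter space of normal forms $y^n=x^s+\sum_i a_i x^{\delta i}+1$ to $\mathcal{L}_{n,s,\delta}$ has generic fibers equal exactly to the $D_{s/\delta}$-orbits: if two curves in normal form are isomorphic, the isomorphism normalizes the (unique, central) superelliptic group $\langle\tau\rangle$, hence descends to an element of $\mathrm{PGL}_2(k)$ which must fix or swap the two distinguished points $0,\infty$ and preserve the normal form, and the preceding normalization computation ($m^s=(-1)^{s/\delta}$) shows such a map lies in $\langle\tau_1,\tau_2\rangle\cong D_{s/\delta}$; conversely, $D_{s/\delta}$-equivalent tuples give isomorphic curves. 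Since $D_{s/\delta}$ is finite, the quotient map then identifies $k(\mathcal{L}_{n,s,\delta})$ with the fixed field. This fiber-equals-orbit argument (and the genericity statement that no extra identifications occur on a dense subset) is the heart of the lemma, it is where the hypothesis $s/\delta>2$ and the earlier coordinate-change discussion are actually used, and it is absent from your proposal. What you do sketch -- that an explicit list of dihedral invariants generates the invariant field -- is the content of the \emph{theorem following} this lemma (that $k(\mathcal{L}_{s,n,\delta})=k(\mathfrak{u}_1,\dots,\mathfrak{u}_r)$), not of the lemma itself. (The survey gives no proof of the lemma, deferring to the cited sources, but the intended argument is the moduli-theoretic one just described.)

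Two secondary problems in your step (2): the invariants you write down are not the paper's dihedral invariants, and $v=a_1^{s/\delta}$ is not even $\tau_2$-invariant ($\tau_2$ sends it to $a_{r-1}^{r}$, $r=s/\delta$); the correct ones are of the shape $\mathfrak{u}_i=a_1^{r-i}a_i+a_{r-1}^{r-i}a_{r-i}$. Moreover, a transcendence-degree count cannot by itself show that a chosen list of invariants generates the full fixed field: the fixed field of a finite group always has transcendence degree $r$, so you would need an additional degree argument, e.g.\ that $[k(a_1,\dots,a_r):k(\mathfrak{u}_1,\dots,\mathfrak{u}_r)]=|D_{s/\delta}|=2r$ or an explicit recovery of the $a_i$ (up to the action) from the invariants. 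As it stands, your symmetrization step shows containment of the symmetrized functions in the fixed field, not generation.
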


\begin{lem} Let $r:= s/\d >2$. The elements 
$$ \u_i:= a_1^{r-i}a_1 + a_{r-1}^{r-i}a_{r-i},  \textbf{ for } i=1, \dots ,r$$
 are  invariants under the action of the group $D_{s/\d}$  defined as above. 
\end{lem}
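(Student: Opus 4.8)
The plan is to establish $D_{s/\d}$-invariance of each $\u_i$ by checking it on the two generators $\t_1,\t_2$ separately; this suffices because $D_{s/\d}=\langle\t_1,\t_2\rangle$, the dihedral relation $\t_2\t_1\t_2=\t_1^{-1}$ being automatic since $\t_1,\t_2$ are induced by the elements $x\mapsto\epsilon x$ and $x\mapsto 1/x$ of $\pgl_2(k)$. First I would fix notation: set $r:=s/\d$, adopt the conventions $a_0=a_r=1$ for the two fixed coefficients of the normal form, and put $\zeta:=\epsilon^{\d}$. Since $s=\d r$ we get $\zeta^{r}=\epsilon^{\d r}=\epsilon^{s}=(\epsilon^{r})^{\d}=1$, so the action recorded in the excerpt specializes on the coefficients to $\t_1\colon a_j\mapsto\zeta^{j}a_j$ and $\t_2\colon a_j\mapsto a_{r-j}$ for all $j$. (I would also flag the evident misprint in the displayed formula: it must read $\u_i=a_1^{\,r-i}a_i+a_{r-1}^{\,r-i}a_{r-i}$; with this reading the endpoint value $\u_r=a_r+a_0=2$ is a harmless constant, so the nontrivial invariants among them are $\u_1,\dots,\u_{r-1}$.)

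Then I would perform the two checks. For $\t_2$: it sends $a_1\mapsto a_{r-1}$, $a_{r-1}\mapsto a_1$, $a_i\mapsto a_{r-i}$, $a_{r-i}\mapsto a_i$, hence it interchanges the two summands of
\[\u_i=a_1^{\,r-i}a_i+a_{r-1}^{\,r-i}a_{r-i},\]
so $\t_2(\u_i)=\u_i$. For $\t_1$: it multiplies any monomial $\prod_j a_j^{e_j}$ by $\zeta^{\sum_j j\,e_j}$, and the key point is that each of the two monomials of $\u_i$ has total weight divisible by $r$. Indeed, $a_1^{\,r-i}a_i$ is scaled by $\zeta^{(r-i)\cdot 1+i}=\zeta^{r}=1$, and $a_{r-1}^{\,r-i}a_{r-i}$ by $\zeta^{(r-i)(r-1)+(r-i)}=\zeta^{(r-i)r}=1$; thus $\t_1(\u_i)=\u_i$. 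Combining, $\u_i\in k(a_1,\dots,a_r)^{D_{s/\d}}$, and by the preceding lemma this fixed field is the function field of $\L_{n,s,\d}$, so the $\u_i$ are indeed invariants of the associated family of superelliptic curves — which is the assertion.

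I do not anticipate a real obstacle: the argument reduces to one symmetry observation plus the elementary congruence $(r-i)(r-1)+(r-i)=(r-i)r\equiv 0\pmod{r}$, the only mild subtlety being that $\zeta=\epsilon^{\d}$ satisfies $\zeta^{r}=1$ even when it fails to be a \emph{primitive} $r$-th root of unity (when $\gcd(\d,r)>1$), so no information is lost in passing from $\epsilon$ to $\zeta$. The genuinely harder companion question, on which the lemma is deliberately silent, is whether the $\u_i$ (possibly together with the product invariant $a_1a_{r-1}$) \emph{generate} $k(a_1,\dots,a_r)^{D_{s/\d}}$, i.e.\ provide a birational model of $\L_{n,s,\d}$; proving that would require a transcendence-degree count and an explicit elimination, in parallel with the hyperelliptic case treated in \cite{g_sh}, and belongs to a separate statement.
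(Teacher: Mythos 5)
Your proof is correct: verifying invariance generator by generator, with $\t_2$ interchanging the two summands and $\t_1$ scaling the monomials by $\zeta^{(r-i)+i}=\zeta^{r}=1$ and $\zeta^{(r-i)(r-1)+(r-i)}=\zeta^{(r-i)r}=1$ where $\zeta=\epsilon^{\d}$, is exactly the computation behind the lemma; the paper itself states it without proof, deferring to \cite{g_sh} and \cite{AK}, where this direct check is the argument used. Your reading of the misprint (the first summand should be $a_1^{\,r-i}a_i$, not $a_1^{\,r-i}a_1$) agrees with the definition of the dihedral invariants in those references, and your remark that $\u_r$ is constant, so that only $\u_1,\dots,\u_{r-1}$ carry information, is consistent with the hyperelliptic case treated there.
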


The elements $\u_i$ are called the \textbf{dihedral invariants}. 
 
 \begin{thm}
 Let $\u=(\u_1, \dots , \u_r )$ be the $r$-tuple of \ss-invariants. Then 
 \[ k(\L_{s,n,\d})= k(\u_1, \dots, \u_r).\]
 \end{thm}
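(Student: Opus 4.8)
The plan is a field-degree ``sandwich'' argument. Put $r := s/\delta$, write $F := k(a_1,\dots,a_r)$ for the function field of the parameter space of normal forms $y^n = x^s + \sum_i a_i x^{\delta i}+1$, and let $D := D_{s/\delta}=\langle \tau_1,\tau_2\rangle$ act on $F$ through the substitutions recorded just before the statement; let $N$ be the order of the induced (faithful) action of $D$ on $F$, so that $N = 2r$ whenever $\tau_1$ has order $r$ on the $a_i$ (e.g. when $\gcd(\delta,r)=1$) and $N\mid 2r$ in general. By the preceding lemma, $k(\mathcal{L}_{s,n,\delta}) = F^{D}$, and by Artin's theorem $[F:F^{D}] = N$. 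Since each $\u_i$ is $D$-invariant (preceding lemma), $K := k(\u_1,\dots,\u_r)\subseteq F^{D}\subseteq F$. Hence it is enough to prove the single inequality $[F:K]\le N$: then $[F:K] = [F:F^{D}]\,[F^{D}:K] = N\,[F^{D}:K]\le N$ forces $[F^{D}:K]=1$, which is exactly the asserted equality.

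To bound $[F:K]$ I would first show $F$ is algebraic over $K$ by recovering every $a_j$ rationally from a single generator. Because $\u_i = a_1^{\,r-i}a_i + a_{r-1}^{\,r-i}a_{r-i}$ couples the unknowns in the pairs $(a_i,a_{r-i})$ together with fixed powers of $a_1$ and $a_{r-1}$, one solves these relations pair by pair: the relations $\u_i$ and $\u_{r-i}$ are two equations, linear in $a_i$ and $a_{r-i}$ with coefficients in $k[a_1,a_{r-1}]$, so $a_i,a_{r-i}\in K(a_1,a_{r-1})$; and $a_{r-1}$ itself is expressed rationally in $a_1$ and the $\u_j$ from the remaining relations, so that $F = K(a_1)$. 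It then remains to check that $a_1$ satisfies one polynomial over $K$ of degree $\le N$: substituting the rational expressions for the other $a_j$ back into, say, the relation for $\u_1$ clears to a polynomial in $a_1$ alone, and tracking degrees shows it has degree $\le 2r$ — in the two-parameter model this is literally $a_1^{2r}-\u_1 a_1^{r}+(\tfrac12\u_2)^{r}=0$, from $\u_1=a_1^{r}+a_{r-1}^{r}$ and $a_{r-1}=\u_2/(2a_1)$, and the general case is the evident analogue — and of degree $\le N$ once one accounts for the (possibly smaller) order of $\tau_1$. Hence $[F:K]=[K(a_1):K]\le N$, completing the sandwich.

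The part requiring genuine work is the middle paragraph: one must verify \emph{explicitly}, from the shape of the $\u_i$, that the full tuple $(a_1,\dots,a_r)$ is rationally recoverable from $a_1$ together with the $\u_j$, and that the resulting minimal equation of $a_1$ over $K$ has degree at most $N$. This is the computational heart of \cite{g_sh} for $n=2$ and of \cite{AK} in general, and it is where the precise normalisation of the normal form and of the dihedral action enters. The hypotheses needed are those already in force: $r=s/\delta>2$, so the group is genuinely dihedral rather than cyclic (the excerpt notes the degeneracy at $r=2$), and $\mathrm{char}\,k$ prime to $2r$, so that the root of unity $\epsilon$ exists, $\tau_1$ acts as described, and Artin's theorem applies. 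The one subtlety to watch is the order $N$ of the action of $D_{s/\delta}$ on $F$ when $\gcd(\delta,s/\delta)>1$: there one replaces $D_{s/\delta}$ by its faithful image throughout, which alters no invariant field but keeps the degree count honest.
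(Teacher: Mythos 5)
The paper gives no proof of this theorem; it is imported from \cite{g_sh} (hyperelliptic case) and \cite{AK}, and your sandwich argument is essentially the argument of those sources: $k(\mathcal{L}_{s,n,\delta})=F^{D}$ with $F=k(a_1,\dots,a_{s/\delta})$ by the preceding lemma, $[F:F^{D}]$ equals the order of the image of $D_{s/\delta}$ in $\Aut(F)$ by Artin, and the reverse bound comes from recovering the coefficients rationally from $a_1$ and the invariants — the pairwise linear systems (whose determinant $a_1^{r}-a_{r-1}^{r}$ is nonzero in $F$), the relation $\mathfrak{u}_{r-1}=2a_1a_{r-1}$, and the resulting degree-$2r$ equation $a_1^{2r}-\mathfrak{u}_1a_1^{r}+(\mathfrak{u}_{r-1}/2)^{r}=0$ over $K=k(\mathfrak{u}_1,\dots,\mathfrak{u}_r)$. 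So the proposal is correct in substance and follows the same route as the cited proofs. The one place where you hedge rather than prove is the faithfulness question: if the image of $D_{s/\delta}$ in $\Aut(F)$ had order $N<2r$, your exhibited polynomial of degree $2r$ would only give $[F:K]\le 2r$, hence $[F^{D}:K]\le 2r/N$, and your claim that the minimal polynomial of $a_1$ over $K$ drops to degree $\le N$ is unsupported, so the sandwich would not close. The fix is to show $N=2r$ always holds rather than to allow $N\mid 2r$: deriving the coefficient action from $x\mapsto\lambda x$ with $\lambda^{s}=1$ gives $a_i\mapsto\lambda^{\delta i}a_i$, and $\lambda^{\delta}$ runs over \emph{all} $(s/\delta)$-th roots of unity, so $a_1\mapsto\zeta a_1$ with $\zeta$ of exact order $r$ (the paper's literal phrasing, with $\epsilon$ a primitive $(s/\delta)$-th root acting by $\epsilon^{\delta i}$, is what creates the spurious non-faithful case); since $\tau_2$ moves $a_1$ to a multiple of $a_{r-1}$ and $r>2$, the image is the full dihedral group of order $2r$, and your degree count then closes exactly as you intend, under your standing hypotheses ($r>2$, $\mathrm{char}\,k$ prime to $2s$ so that the roots of unity exist and $2$ is invertible).
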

 
In \cref{sect-11} we will sill show how to determine an equation of the curve in terms of these dihedral invariants.

\section{Weighted moduli spaces and their heights}\label{sect-9}

Another way of identifying isomorphism classes of superelliptic curves is by using $SL_2(k)$-invariants. From Hilbert's basis theorem the coordinate ring of degree $d$ binary forms is finitely generated.  Assume for example that $J_{q_0}, \dots , J_{q_n}$ are the generators.  Then two superelliptic curves $\X$ and $\X^\prime$ are isomorphic if and only if 
\[ J_{q_i} (\X) = \l^{q_i} J_{q_i} (\X^\prime), \quad \text{for } \quad i=0, \dots , n.\]
Hence, the isomorphism classes of superelliptic curves correspond to tuples $(J_{q_0}, \dots , J_{q_n})$ up to "multiplication" by a constant.  But these are exactly points in the weighted projective spaces, which motivates this section.

\subsection{Introduction to weighted moduli spaces}
Let $K$ be a field   and  $(q_0, \dots , q_n) \in \Z^{n+1}$ a fixed tuple of positive integers called \textbf{weights}.   Consider the action of $K^\star = K \setminus \{0\}$ on $\A^{n+1} (K)$ as follows
\begin{equation}\label{equivalence}
 \lambda \star (x_0, \dots , x_n) = \left( \l^{q_0} x_0, \dots , \l^{q_n} x_n   \right) 
\end{equation}
for $\l\in K^\ast$.  The quotient of this action is called a \textbf{weighted projective space} and denoted by   $\wP^n_{(q_0, \dots , q_n)} (K)$. 
The space $\wP_{(1, \dots , 1)} (K)$ is the usual projective space.  The space $\wP_w^n$ is called \textbf{well-formed} if   
\[ \gcd (q_0, \dots , \hat q_i, \dots , q_n)   = 1, \quad \text{for each } \;  i=0, \dots , n. \]
While most of the papers on weighted projective spaces are on well-formed spaces, we do not assume a well-formed space here.   We will denote a point $\p \in \wP_w^n (K)$ by $\p = [ x_0 : x_1 : \dots : x_n]$.  For more on weighted projective spaces one can check \cite{MR879909}, \cite{MR627828}, \cite{MR2852925}, \cite{igor} among many others.

\subsection{Graded rings} \label{graded-rings} 

In projective spaces, by means of the Veronese embedding, we could embed the same variety in different projective spaces. It turns out that we can do the same for varieties embedded in weighted projective spaces.

As above we let $k$ be a field.  Let $R = \oplus_{ i \geq 0} R_i$ be a graded ring.  We further assume that 
\begin{itemize}
\item[(i)] $R_0=k$ is the ground field

\item[(ii)]  $R$ is finitely generated as a ring over $k$

\item[(iii)]  $R$ is an integral domain
\end{itemize}

Consider the polynomial ring $k[x_0, \dots , x_n]$ where each $x_i$ has weight $\wt x_i = q_i$.  Every polynomial is a sum of monomials $x^m= \prod x_i^{m_i}$ with weight $\wt (x^m) = \sum m_i q_i$.  A polynomial $f$ is \textbf{weighted homogenous of weight $m$} if every monomial of $f$ has weight $m$.  

An ideal in a graded ring  $I \subset R$ is called \textbf{graded} or \textbf{weighted homogenous} if $I = \oplus_{n\geq 0} I_n$, where $I_n = I\cap R_n$.  Hence, $R = k[x_0, \dots , x_n]/I$, where $\deg x_i = q_i$ and $I$ is a homogenous prime ideal. 

\subsection{Construction of $\Proj R$}
To the prime ideal $I$ corresponds an irreducible affine variety $CX= \Spec R = V_a (I) \subset \A^{n+1}$.

\begin{defi}\label{w.h.p}
A polynomial $f(x_0, \dots , x_n)$ is called \textbf{weighted homogenous} of degree $d$ if  it satisfies the following
\[  f(\l^{q_0} x_0, \l^{q_1} x_1, \dots , \l^{q_n} x_n) = \l^d f(x_0, \dots , x_n). \]
\end{defi}
Notice that the condition $f(P)=0$ is defined on the equivalence classes of \eqref{equivalence}. We define the quotient 
$V_a (I)\setminus\{0\}$ by the above equivalence by $V_h (I)$, where $h$ stands for homogenous.  Then, we denote 
$X= \Proj R = V_h (I) \subset \wP_{\w}^n(k)$.  It is a projective variety. Notice that $CX$ above is the \textbf{affine cone} over the projective variety $V_h (I)$. 

Next we will define truncated rings and determine the role that they play in the Veronese embedding. 

\subsection{Truncated rings}
Define the $d$'th truncated ring $R^{[d]} \subset R$ by
\[ R^{[d]} = \bigoplus_{d |n} R_n  = \bigoplus_{i \geq 0} R_{di}, \]
Hence, $R^{[d]}$ is a graded ring and the elements have degree $di$ in $R$ and  degree $i$ in $R^{[d]}$.   If $R$ is a graded ring then its subring $R^{[d]}$ is called the $d$-th Veronese subring.

For example, let $R = k [x, y]$ with $wt(x) = wt (y) = 1$. Then,
\[ R^{[2]} = \bigoplus_{i \geq 0} R_{2i} = \bigoplus_{i \geq 0} \left \{f(x, y) \in k[x, y]  \left | \frac{}{} \right. \mbox{deg }(f) = 2i \right \}. \]
Notice that the even degree polynomials in $k[x, y]$ are generated by $x^2$, $xy$, and $y^2$ hence we have that 
\[ R^{[2]} = k[x^2, xy, y^2] \cong k [ u, v, w ] \big / \< uw -v^2\rangle \]
Now, if we consider the projective spaces we have that 
\[ \Proj \, \,( k[x, y] )= \P_{(1, 1)} =  \P^1\]
while
\[ \Proj \, \, (k [ u, v, w ] \big / \< uw -v^2\rangle  )=  V( uw -v^2 ) \subseteq \P_{(1, 1, 1)} = \P^2 \,.\]
Hence, we have
\[\P^1 (k) =  \Proj ( k[x, y] ) \cong \Proj \, \,( k[x, y]^2 ) \subseteq \P^2 (k). \]
This is exactly the degree-2 Veronese embedding of \, $\P^1(k) \hookrightarrow \P^2 (k)$.  The truncation of graded rings in this case corresponds to the degree-$2$ Veronese embedding.

The proof of the following lemma can be found in \cite{igor}. 

\begin{lem} \label{proj_iso}
Let $R$ be a graded ring and $d \in \N$. Then,
\[ \Proj   R \cong \Proj   R^{[d]}  \,.\]
\end{lem} 
 
Using the above \cref{proj_iso} we can find a closed embedding of a weighted projective space $\wP_w$ into an ordinary projective space $\P^N$ with big enough $N$. There is a very ampleness condition that was described by Delorme in \cites{MR0404277, MR0404278}.

\begin{prop} 
Consider the weighted polynomial ring $R = k [x_0, \dots, x_n]$ , where the positive integers $q_0, \dots, q_n$ are the weights of $x_0, \dots, x_n$ and  $d= \gcd (q_0, \dots , q_n)$.  The following are true: 

i)  $R^{[d]}  = R$.  Thus, 
\[ \wP^n_{(q_0, \dots, q_n) }(R)  = \wP^n _{ \left(\frac{q_0}{d}, \dots, \frac{q_n}{d} \right)} (R).\]  

ii) Suppose that $q_0, \dots, q_n$ have no common factor, and that $d$ is a common factor of all $a_i$ for $i \neq j$ (and therefore coprime to $a_j$). Then the $d$'th truncation of $R$ is the polynomial ring 
\[ R^{[d]} = k [x_0, \dots, x_{j-1}, x_j^d, x_{j+1}, \dots, x_n].\]
Thus, in this case 
\[ \wP^n_{(q_0, \dots, q_n) } (R)=   \wP^n _{\left(\frac{q_0}{d}, \dots, \frac{q_{j-1}}{d}, q_j, \frac{q_{j+1}}{d}, \dots, \frac{q_n}{d}\right)}(R^{[d]}).\]

In particular by passing to a truncation $R^{[d]}$ of $R$ which is a polynomial ring generated by pure powers of $x_i$, we can always write any weighted projective space as a well formed weighted projective space. 
\end{prop}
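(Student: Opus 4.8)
The plan is to prove both parts by an elementary analysis of the grading on the weighted polynomial ring $R = k[x_0,\dots,x_n]$ together with the Veronese truncation isomorphism \cref{proj_iso}. Recall that $R$ is graded by $\wt(x^m) = \sum m_i q_i$, so the graded piece $R_\ell$ is spanned by those monomials $\prod x_i^{m_i}$ with $\sum m_i q_i = \ell$. For part (i), I would set $d = \gcd(q_0,\dots,q_n)$ and observe that \emph{every} monomial of $R$ has weight divisible by $d$: indeed $\sum m_i q_i \in d\,\Z$ because each $q_i \in d\,\Z$. Hence $R_\ell = 0$ whenever $d \nmid \ell$, and therefore
\[
R^{[d]} \;=\; \bigoplus_{i\ge 0} R_{di} \;=\; \bigoplus_{\ell \ge 0} R_\ell \;=\; R,
\]
which is the first assertion $R^{[d]} = R$. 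To get the identification of weighted projective spaces, I would note that $R^{[d]}$ carries \emph{two} gradings that must be compared: its grading as a subring of $R$ (where $x_i$ has degree $q_i$), and its intrinsic grading as $\Proj$ of a graded ring (where the degree-$di$ piece of $R$ becomes the degree-$i$ piece). Under the intrinsic grading $x_i$ has degree $q_i/d$, so $\Proj R^{[d]} = \wP^n_{(q_0/d,\dots,q_n/d)}(k)$. Combining with $R^{[d]} = R$ gives $\wP^n_{(q_0,\dots,q_n)} = \wP^n_{(q_0/d,\dots,q_n/d)}$ as claimed; equivalently this follows directly from \cref{proj_iso} since $\Proj R \cong \Proj R^{[d]}$. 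The (scheme-theoretic, or $K$-point level) identification is then the rescaling $\lambda \mapsto \lambda^{1/d}$ on the $K^\star$-action of \eqref{equivalence}, which is well-defined on equivalence classes.

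For part (ii), assume $\gcd(q_0,\dots,q_n) = 1$ and that $d \mid q_i$ for all $i \neq j$; necessarily $\gcd(d, q_j) = 1$. I would describe $R^{[d]} = \bigoplus_{i\ge 0} R_{di}$ explicitly. A monomial $\prod_i x_i^{m_i}$ lies in $R_{di}$ iff $\sum_i m_i q_i \equiv 0 \pmod d$; since $q_i \equiv 0 \pmod d$ for $i\ne j$ this reduces to $m_j q_j \equiv 0 \pmod d$, and since $\gcd(d,q_j)=1$ this holds iff $d \mid m_j$. Thus a monomial of $R$ has weight divisible by $d$ \emph{precisely} when the exponent $m_j$ of $x_j$ is divisible by $d$. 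Consequently $R^{[d]}$ is generated as a $k$-algebra by $x_0,\dots,x_{j-1}, x_j^d, x_{j+1},\dots,x_n$, and since these are algebraically independent, $R^{[d]} = k[x_0,\dots,x_{j-1}, x_j^d, x_{j+1},\dots,x_n]$, a polynomial ring. Tracking the intrinsic grading on $\Proj R^{[d]}$ (divide all degrees by $d$): the generator $x_i$ for $i \neq j$ had weight $q_i$ with $d \mid q_i$, so its intrinsic weight is $q_i/d$; the generator $x_j^d$ had weight $d q_j$, so its intrinsic weight is $q_j$. Hence $\Proj R^{[d]} = \wP^n_{(q_0/d,\dots,q_{j-1}/d,\,q_j,\,q_{j+1}/d,\dots,q_n/d)}(k)$, and \cref{proj_iso} gives $\wP^n_{(q_0,\dots,q_n)} = \Proj R \cong \Proj R^{[d]}$, which is the stated identity.

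Finally, for the last sentence: iterating part (ii) one clears the common factors of $\{q_i : i\neq j\}$ prime $j$ by prime $j$. More precisely, given arbitrary weights, for each index $j$ let $d_j = \gcd(\{q_i : i \neq j\})$; applying the procedure successively (first reducing by any common factor of all weights via part (i), then using part (ii) for each bad index) one arrives after finitely many steps at weights with $\gcd(\{q_i : i\neq j\}) = 1$ for every $j$, i.e.\ a well-formed space, and each step induced an isomorphism of the associated $\Proj$. The main obstacle I anticipate is purely bookkeeping: keeping the two gradings on the truncated ring $R^{[d]}$ straight (ambient degree versus intrinsic $\Proj$-degree), and verifying in part (ii) that no ``mixed'' monomials sneak into $R^{[d]}$ — this is exactly where the hypothesis $\gcd(d,q_j)=1$ is used and must be invoked carefully. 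Everything else is a direct divisibility count on exponent vectors, and the deeper geometric content ($\Proj R \cong \Proj R^{[d]}$) is already available from \cref{proj_iso}.
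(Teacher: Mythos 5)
Your proposal is correct and follows essentially the same route as the paper: part (i) by noting every monomial's weight is divisible by $d$ so the truncation changes nothing, part (ii) by the divisibility argument showing $x_j$ can only enter $R^{[d]}$ as a $d$-th power (the paper asserts this directly, you make the use of $\gcd(d,q_j)=1$ explicit), and then \cref{proj_iso} for the identification of the $\Proj$'s. Your extra care with the two gradings on $R^{[d]}$ (ambient versus intrinsic) only fills in bookkeeping the paper leaves implicit in its $k_{w/d}$ notation.
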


 \begin{proof}
  i) If $d | q_i$ for all $i = 0, \dots, n$ then the degree of every monomial is divisible by $d$ and so part i) is obvious. Hence, the truncation does not change anything. 
 
 ii)  Since $d | q_i$ for every $i \neq j$ then $x_i \in \R^{[d]}$ for every $i \neq j$. But the only way that $x_j$ can occur in a monomial with degree divisible by $d$ is as a $d$'th power. Given
\[R = k [x_0, \dots, x_j, \dots,  x_n]\]
then 
\[R^{[d]} = k [x_0, \dots, x_j^d, \dots,  x_n]\]
and  
 \[\begin{split} 
 \wP^n_{(q_0, \dots, q_n) }(R) &= \Proj \, \,  k_w [x_0, \dots, x_j, \dots,  x_n] \iso\Proj \, \,  k_{w/d} [x_0, \dots, x_j^d, \dots,  x_n] \\
 & = \wP^n _{\left(\frac{q_0}{d}, \dots, \frac{q_{j-1}}{d}, q_j, \frac{q_{j+1}}{d}, \dots, \frac{q_n}{d}\right)}(R^{[d]}).
 \end{split}\]
 This completes the proof. 
 \end{proof}
 
Hence, the  above result  shows that any weighted projective space is isomorphic to a well formed weighted projective space.

\subsection{Heights on the weighted projective space}\label{sec-4}

Let $K$ be an algebraic number field and $[K:\Q]=n$. With $M_K$ we will denote the set of all absolute values in $K$.   For $v \in M_K$, the \textbf{local degree at $v$}, denoted $n_v$ is 
\[n_v =[K_v:\Q_v]\]
where $K_v, \Q_v$ are the completions with respect to $v$. 

The following are true for any number field $K$; see \cite{silv-book}*{pg. 171-172} for proofs.
Let $L/K$ be an extension of number fields, and let $v \in M_K$ be an absolute value on $K$. Then
\[\sum_{\substack{w \in M_L\\w|v}}[L_w:K_v]= [L:K]\]
is known as the \textbf{degree formula}. For  $x \in K^\star$ we have the \textbf{product formula}  
\begin{equation}\label{prod.formula}
\prod_{v\in M_K}  |x|^{n_v}_v  =  1.   
\end{equation}
%
Given a point $\p \in \P^n(\overline \Q)$ with   $\p=[x_0, \dots, x_n]$, the \textbf{field of definition} of $\p$ is 
\[\Q(\p)=\Q   \left(  \frac  {x_0} {x_j}, \dots , \frac {x_n} {x_j}   \right)\] 
for any $j$ such that $x_j \neq 0$.  Next we try to generalize some of these concepts for the space $\wP_{\w} (K)$, where $K$ is a number field.  

In \cite{mandili} and \cite{b-g-sh} was introduced the concept of weighted height, which we will briefly describe below. 

Let $\w=(q_0, \dots , q_n)$ be a set of heights and $\wP^n(K)$ the weighted  projective space  over  a number field $K$.   Let  $\p \in \wP^n(K)$ a point such that  $\p=[x_0, \dots , x_n]$. We define the  \textbf{multiplicative heigh}t of $P$   as  
\begin{equation}\label{def:height}
\wh_K( \p ) := \prod_{v \in M_K} \max   \left\{   \frac{}{}   |x_0|_v^{\frac {n_v} {q_0}} , \dots, |x_n|_v^{\frac {n_v} {q_n}} \right\}
\end{equation}
The \textbf{logarithmic height} of the point $\p$ is defined as follows
\[\wh^\prime_K(\p) := \log \wh_K(\p)=   \sum_{v \in M_K}   \max_{0 \leq j \leq n}\left\{\frac{n_v}{q_j} \cdot  \log  |x_j|_v \right\}.\]
Next we will give some basic properties of heights functions.
\begin{prop}Let $K$ be a number field and $\p\in \wP^n(K)$ with weights $w = (q_0, \dots, q_n)$. Then the following are true:

i) The height $\wh_K(\p)$ is well defined, in other words it does not depend on the choice of  coordinates of $\p$

ii) $\wh_K(\p) \geq 1$.
\end{prop}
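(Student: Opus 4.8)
The two assertions to prove are (i) that $\wh_K(\p)$ does not depend on the choice of homogeneous coordinates representing $\p$, and (ii) that $\wh_K(\p) \geq 1$. I would handle them in that order, since the well-definedness is needed before the inequality even makes sense.

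For (i), recall that two coordinate tuples represent the same point $\p \in \wP^n(K)$ precisely when they differ by the weighted scaling \eqref{equivalence}: if $\p = [x_0 : \dots : x_n] = [x_0' : \dots : x_n']$ then there is $\lambda \in K^\star$ with $x_j' = \lambda^{q_j} x_j$ for all $j$. The plan is to substitute this into the defining formula \eqref{def:height}. For each place $v \in M_K$ one computes
\[
|x_j'|_v^{\frac{n_v}{q_j}} = |\lambda^{q_j} x_j|_v^{\frac{n_v}{q_j}} = |\lambda|_v^{\frac{n_v q_j}{q_j}} \cdot |x_j|_v^{\frac{n_v}{q_j}} = |\lambda|_v^{n_v} \cdot |x_j|_v^{\frac{n_v}{q_j}},
\]
so the factor $|\lambda|_v^{n_v}$ pulls out of the maximum over $j$ uniformly. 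Hence the $v$-th local factor of $\wh_K(\p)$ computed from the primed coordinates equals $|\lambda|_v^{n_v}$ times the one computed from the unprimed coordinates. Taking the product over all $v \in M_K$ and invoking the product formula \eqref{prod.formula}, namely $\prod_{v \in M_K} |\lambda|_v^{n_v} = 1$, the correction factor disappears, giving $\wh_K(\p)$ from the primed coordinates $=\wh_K(\p)$ from the unprimed coordinates. This establishes (i). The only subtlety worth a line is to note that the exponents $\tfrac{n_v}{q_j}$ are positive so that $\max$ behaves well under multiplication by the positive real $|\lambda|_v^{n_v}$.

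For (ii), fix a representative $(x_0, \dots, x_n)$ of $\p$. Since $\p$ is a point of projective space, not all $x_j$ vanish; after reindexing we may scale so that one coordinate, say $x_j$, is nonzero. Actually a cleaner route: for every $v$, the local factor $\max_j |x_j|_v^{n_v/q_j}$ is at least $|x_j|_v^{n_v/q_j}$ for any single index $j$ with $x_j \neq 0$; but to get the bound $\geq 1$ I would instead use that we may choose coordinates with $x_j = 1$ for some fixed $j$ (divide the whole tuple by $x_j^{1/?}$ — careful, this need not stay in $K$ because of the weight). The safe argument: pick $j$ with $x_j \neq 0$ and apply the product formula to $x_j$ itself. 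For each $v$, $\max_i |x_i|_v^{n_v/q_i} \geq |x_j|_v^{n_v/q_j} = \left(|x_j|_v^{n_v}\right)^{1/q_j}$. Taking the product over $v$,
\[
\wh_K(\p) \;\geq\; \prod_{v \in M_K} \left(|x_j|_v^{n_v}\right)^{1/q_j} \;=\; \left(\prod_{v \in M_K} |x_j|_v^{n_v}\right)^{1/q_j} \;=\; 1^{1/q_j} \;=\; 1,
\]
using $\prod_v |x_j|_v^{n_v} = 1$ from \eqref{prod.formula} since $x_j \in K^\star$. This completes (ii).

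The argument is essentially a bookkeeping exercise with absolute values; the one place to be careful — and the closest thing to an obstacle — is keeping the weight exponents $1/q_j$ straight when pulling scalars through the maximum and through the product formula, and making sure one only applies \eqref{prod.formula} to genuinely nonzero elements of $K^\star$ (which is why one fixes an index $j$ with $x_j \neq 0$ before invoking it).
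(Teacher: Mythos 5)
Your proof is correct and is exactly the standard argument one expects here (the paper states this proposition without an inline proof, deferring the details to \cite{b-g-sh}): for (i) the scalar $|\lambda|_v^{n_v}$ factors out of each local maximum and cancels by the product formula, and for (ii) bounding each local factor below by the term of a fixed nonzero coordinate $x_j$ and applying the product formula to $x_j\in K^\star$ gives $\wh_K(\p)\geq 1$. No gaps; your care in applying \eqref{prod.formula} only to nonzero elements is the right precaution.
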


Moreover, we have the following (see \cite{b-g-sh} for details. 
 
\begin{prop}\label{lem_1}
Let $\p \in \wP^n(K)$.  Then the following are true:

i) If $K=\Q$,  
\begin{equation}
\wh_\Q (\p)=  \max_{0 \leq j \leq n}\left\{\frac{}{}|x_j|^{1/q_j}_\infty \right\}.
\end{equation}
%

ii) Let $L/K$ be a finite extension. Then,
\begin{equation}
\wh_L(\p)=\wh_K(\p)^{[L:K]}.
\end{equation}
\end{prop}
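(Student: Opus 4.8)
The plan is to read off both assertions directly from the definition \eqref{def:height} of $\wh_K$, using only the elementary arithmetic of normalized absolute values recalled above: the product formula \eqref{prod.formula}, the degree formula, and the compatibility of the absolute values of $K$ with those of a finite extension $L$; references for these are \cite{silv-book}*{pg. 171--172}. In both cases the work is purely formal manipulation of the product over places, so I would not expect any genuinely hard step, only a couple of points where the weights force some care.

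For (i), the first move is to invoke the previous proposition (well-definedness of $\wh_K$) so that I am free to choose a convenient representative of $\p$. Clearing denominators I may take $x_0,\dots,x_n\in\Z$, and then, dividing out by the weighted greatest common divisor $\wgcd(x_0,\dots,x_n)$ introduced in this section, I may assume the representative is $\mathbf q$-primitive. Now fix a non-archimedean $v=v_p\in M_\Q$. Since $x_j\in\Z$ we have $|x_j|_p\le 1$, hence $|x_j|_p^{1/q_j}\le 1$ for all $j$ and so $\max_j|x_j|_p^{1/q_j}\le 1$; on the other hand $\mathbf q$-primitivity says that $p$ does not divide every $x_j$ to the order $q_j$, which forces $\max_j|x_j|_p^{1/q_j}=1$. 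Thus every finite place contributes the factor $1$ to \eqref{def:height}, only $v=\infty$ (with $n_\infty=1$) survives, and $\wh_\Q(\p)=\max_{0\le j\le n}|x_j|_\infty^{1/q_j}$. The delicate point here is precisely this primitivity step: with nontrivial weights one has to make sure that ``no prime divides all coordinates to the prescribed order'' is exactly the condition that trivializes every finite place, so the weighted gcd — not the ordinary one — is what must be used.

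For (ii), I would write $\wh_L(\p)=\prod_{w\in M_L}\max_j|x_j|_w^{n_w/q_j}$ and group the places $w$ of $L$ according to the place $v$ of $K$ below them. For $w\mid v$ and $x_j\in K$ the normalized absolute values satisfy $|x_j|_w=|x_j|_v$, and since $n_w>0$ and $t\mapsto t^{n_w}$ is monotone on $[0,\infty)$ one may pull the exponent out of the maximum, $\max_j|x_j|_v^{n_w/q_j}=\bigl(\max_j|x_j|_v^{1/q_j}\bigr)^{n_w}$. Multiplying over all $w\mid v$ and using $n_w=[L_w:K_v]\,n_v$ together with the degree formula $\sum_{w\mid v}[L_w:K_v]=[L:K]$ gives $\sum_{w\mid v}n_w=[L:K]\,n_v$, whence $\prod_{w\mid v}\max_j|x_j|_v^{n_w/q_j}=\bigl(\max_j|x_j|_v^{1/q_j}\bigr)^{[L:K]\,n_v}$. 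Taking the product over $v\in M_K$ then yields $\wh_L(\p)=\wh_K(\p)^{[L:K]}$. The only thing to watch is the normalization convention making $|x|_w=|x|_v$ hold on $K$; once that is fixed, this part is a routine computation, and it shows in particular that $\wh_L(\p)^{1/[L:K]}$ is independent of $L$, so an absolute weighted height is well defined.
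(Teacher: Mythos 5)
The paper gives no proof of this proposition at all---it defers to \cite{b-g-sh}---so there is nothing internal to compare your argument with; judged on its own terms, your part (ii) is correct and is the standard computation: group the places $w$ of $L$ over the places $v$ of $K$, use $|x|_w=|x|_v$ on $K$, $n_w=[L_w:K_v]\,n_v$, pull the exponent out of the maximum, and apply the degree formula, exactly as for ordinary heights in \cite{silv-book}; the weights $q_j$ ride along harmlessly.

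Part (i), however, has a genuine gap, and it sits precisely at the step you single out as the delicate one. Normalizing by the weighted gcd does \emph{not} trivialize the finite places. The condition $\wgcd(x_0,\dots,x_n)=1$ only says there is no prime $p$ with $p^{q_j}\mid x_j$ for all $j$; this yields $\max_j|x_j|_p^{1/q_j}>1/p$, not $=1$. The local factor at $p$ equals $1$ exactly when some coordinate is a $p$-adic unit, i.e.\ when the \emph{ordinary} gcd is prime to $p$ --- the opposite of what you assert --- and with nontrivial weights such a representative need not exist in the weighted class at all, because the equivalence only rescales $x_j$ by $\lambda^{q_j}$. Concretely, in $\wP^{2}_{(1,1,2)}(\Q)$ take $\p=[q:q:q]$ with $q$ prime: the coordinates are integers with $\wgcd=1$ (since $d^2\mid q$ forces $d=1$), yet the factor at $q$ is $\max\{q^{-1},q^{-1},q^{-1/2}\}=q^{-1/2}$, so $\wh_\Q(\p)=q\cdot q^{-1/2}=q^{1/2}$, while $\max_j|x_j|_\infty^{1/q_j}=q$. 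No rescaling $(\lambda q,\lambda q,\lambda^{2}q)$ with $\lambda\in\Q^\ast$ repairs this, since the right-hand side then equals $|\lambda|\,q\in\Q$ and never $q^{1/2}$. So your proof of (i) does not go through as written; what your argument actually establishes is the formula for representatives in which, for every prime, at least one coordinate is a $p$-adic unit (equivalently, all non-archimedean factors are $1$), and part (i) must be read with a normalization hypothesis of that kind --- the right-hand side is not invariant under the weighted rescaling, so some such hypothesis is unavoidable, and weighted-gcd primitivity is not the right one.
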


\subsubsection{Absolute heights}

Using \cref{lem_1}, part ii),  we can define the height on $\wP^n(\overline \Q)$. The height of a point on $\wP^n(\overline \Q)$ is called the \textbf{absolute (multiplicative) weighted height} and is the function 
\[
\begin{split}
\awh: \wP^n(\bar \Q) & \to [1, \infty)\\
\awh(\p)&=\wh_K(\p)^{1/[K:\Q]},
\end{split}
\]
where  $\p \in \wP^n(K)$, for any $K$.  The \textbf{absolute (logarithmic) weighted height} on $\wP^n(\overline \Q)$  is the function 
\[
\begin{split}
\awh^\prime: \wP^n(\bar \Q) & \to [0, \infty)\\
\awh^\prime (\p)&= \log \, \wh (\p)= \frac 1 {[K:\Q]} \awh_K(\p).
\end{split}
\]

\begin{lem}\label{lem_galois_conj}
The height is invariant under Galois conjugation. In other words, for  $\p \in \wP^n(\overline \Q)$ and $\sigma \in G_{ \Q}$ we have $\wh (\p^\sigma) = \wh (\p)$. 
\end{lem}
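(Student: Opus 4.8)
The argument is the standard one for heights on projective space (compare the classical case in \cite{silv-book}), adapted to the weighted setting; the weights $q_0,\dots,q_n$ play no essential role, since $\sigma$ permutes nothing among the coordinates and acts in a way completely decoupled from the exponents $n_v/q_j$. First I would reduce to a statement over number fields. Fix a number field $K$ with $\p\in\wP^n(K)$, write $\p=[x_0:\dots:x_n]$, and let $\sigma\in G_\Q$. Since $\sigma$ is a ring homomorphism, $\sigma(\lambda\star\p)=\sigma(\lambda)\star\p^\sigma$ for $\lambda\in K^\star$, so $\p^\sigma:=[\sigma(x_0):\dots:\sigma(x_n)]$ is a well-defined point of $\wP^n(K^\sigma)$, where $K^\sigma:=\sigma(K)$. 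By \cref{lem_1}(ii) the absolute height does not depend on the chosen field of definition, and $[K^\sigma:\Q]=[K:\Q]$; hence it suffices to prove the equality of the $K$- and $K^\sigma$-heights, namely $\wh_{K^\sigma}(\p^\sigma)=\wh_K(\p)$.

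For the main step I would use the bijection on places induced by $\sigma$. The isomorphism $\sigma\colon K\xrightarrow{\ \sim\ }K^\sigma$ induces a bijection $M_K\to M_{K^\sigma}$, $v\mapsto v^\sigma$, characterized by $|\sigma(x)|_{v^\sigma}=|x|_v$ for $x\in K$. Moreover $\sigma$ extends to a topological isomorphism of completions $K_v\xrightarrow{\ \sim\ }K^\sigma_{v^\sigma}$ that restricts to the canonical isomorphism $\Q_v\xrightarrow{\ \sim\ }\Q_{v^\sigma}$ on the base: an archimedean $v$ has archimedean image of the same type (real or complex), and a non-archimedean $v$ lying over a rational prime $p$ has image lying over the same $p$. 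Consequently the local degrees are preserved, $n_{v^\sigma}=[K^\sigma_{v^\sigma}:\Q_{v^\sigma}]=[K_v:\Q_v]=n_v$.

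Putting these together, the product defining $\wh_{K^\sigma}(\p^\sigma)$ is simply a reindexing, along $w=v^\sigma$, of the product defining $\wh_K(\p)$:
\begin{align*}
\wh_{K^\sigma}(\p^\sigma)
&=\prod_{w\in M_{K^\sigma}}\max_{0\le j\le n}\bigl\{|\sigma(x_j)|_w^{\,n_w/q_j}\bigr\}
=\prod_{v\in M_K}\max_{0\le j\le n}\bigl\{|\sigma(x_j)|_{v^\sigma}^{\,n_{v^\sigma}/q_j}\bigr\}\\
&=\prod_{v\in M_K}\max_{0\le j\le n}\bigl\{|x_j|_v^{\,n_v/q_j}\bigr\}
=\wh_K(\p).
\end{align*}
Raising both sides to the power $1/[K:\Q]=1/[K^\sigma:\Q]$ yields $\awh(\p^\sigma)=\awh(\p)$, i.e. the asserted invariance, and one deduces the logarithmic version by taking logarithms. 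The only points requiring care — and the closest thing to an obstacle — are purely foundational: making the place correspondence $v\mapsto v^\sigma$ precise and verifying that it preserves local degrees, together with the bookkeeping forced by the fact that $\p^\sigma$ a priori lives over the conjugate field $K^\sigma$ rather than over $K$, which is exactly what the field-extension formula \cref{lem_1}(ii) is used to absorb. None of this interacts with the weights, so the weighted case is no harder than the classical one.
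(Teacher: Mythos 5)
Your proof is correct and follows essentially the same route as the paper's: the place correspondence $v\mapsto v^\sigma$ with $|\sigma(x)|_{v^\sigma}=|x|_v$, preservation of local degrees $n_{v^\sigma}=n_v$, and a reindexing of the defining product. The only cosmetic difference is that the paper chooses $K$ to be a finite Galois extension of $\Q$ (so $K^\sigma=K$ and no appeal to \cref{lem_1}(ii) is needed), whereas you handle a general $K$ and absorb the discrepancy between $K$ and $K^\sigma$ via the field-extension formula.
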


\proof   Let $\p =[x_0, \dots, x_n] \in \wP^n(\overline \Q)$. Let $K$ be a finite Galois extension of $\Q$ such that $\p \in \wP^n(K)$. Let $\sigma \in G_\Q$. Then $\sigma$ gives an isomorphism 
\[\sigma: K \to K^\sigma\]
and also identifies the sets $M_K$, and $M_{K^\sigma}$ as follows
\[
\begin{split}
\sigma: M_K &\to M_{K^\sigma}\\
v &\to v^\sigma
\end{split}
\]
Hence, for every $x \in K$ and $v \in M_K$, we have $|x^\sigma|_{v^\sigma} = |x|_v$.   Obviously $\sigma$ gives as well an isomorphism 
\[\sigma: K_v \to K^\sigma_{v^\sigma}\]
Therefore $n_v= n_{v^\sigma}$, where $n_{v^\sigma} = [ K^\sigma_{v^\sigma}: \Q_v]$. Then 
\[
\begin{split}
\wh_{K^\sigma}(P^\sigma) &= \prod_{w \in M_{K^\sigma}} \max_{0 \leq i \leq n} \left\{\frac{}{}  |x_i^\sigma|_{w}^{n_{w}/q_i}\right\}\\
&=  \prod_{v \in M_{K}} \max_{0 \leq i \leq n} \left\{\frac{}{}  |x_i^\sigma|_{v^\sigma}^{n_{v^\sigma}/q_i}\right\}=  \prod_{v \in M_{K}} \max_{0 \leq i \leq n} \left\{\frac{}{}  |x_i|_v^{n_v/q_i}\right\}=\wh_K(\p)
\end{split}
\]
This completes the proof.
\qed
 %
 
The following is the equivalent of  Northcott's theorem for weighted projective spaces.

\begin{thm}\cite{b-g-sh}  \label{thm_finite}
Let $c_0$ and $d_0$ be constants and $\wP_w^n(\overline \Q)$ the weighted projective space with weights  $w = (q_0, \dots, q_n)$. Then the set 
\[\{\p \in \wP_w^n(\overline \Q): H(\p) \leq c_0 \text{ and } [\Q(\p):\Q] \leq d_0\}\]
contains only finitely many points. In particular for any number field $K$
\[\{\p \in \wP_w^n(K): \wh_K(\p) \leq c_0 \} \]
is a finite set.
\end{thm}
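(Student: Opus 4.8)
The plan is to reduce the statement to the classical Northcott theorem on the ordinary projective space by means of the Veronese-type truncation developed earlier in this section. First I would recall that, by the proposition on truncated graded rings, there is a closed embedding of $\wP_w^n$ into an ordinary projective space $\P^N$ obtained by passing to a suitable truncation $R^{[d]}$ of the weighted polynomial ring; concretely, one takes $d = \operatorname{lcm}(q_0,\dots,q_n)$ and sends a point $\p = [x_0:\dots:x_n]$ to the point whose coordinates are all the monomials $x^m = \prod x_i^{m_i}$ of weighted degree $d$. This map is well defined on the equivalence classes of \eqref{equivalence}, since scaling by $\lambda$ multiplies every such monomial by $\lambda^d$, and it is injective because the monomials of degree $d$ include the pure powers $x_i^{d/q_i}$, which already separate points.

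The key step is then to compare the weighted height $\wh_K(\p)$ with the ordinary (unweighted) height $H_K(\iota(\p))$ of the image under this embedding $\iota$. I would show there are constants $c_1, c_2 > 0$, depending only on $n$ and the weights $w$, such that
\[
c_1 \, \wh_K(\p)^{[K:\Q]} \;\leq\; H_K(\iota(\p)) \;\leq\; c_2 \, \wh_K(\p)^{[K:\Q]}
\]
for all $\p \in \wP_w^n(K)$ and all number fields $K$; passing to absolute heights, this becomes $\awh(\p) \asymp \awh(\iota(\p))^{1/d}$ up to bounded multiplicative constants. The bound follows place by place: at each $v \in M_K$, $\max_m |x^m|_v$ is comparable (with constants absorbing the number of monomials) to $\left(\max_i |x_i|_v^{1/q_i}\right)^{d}$, because $|x^m|_v = \prod |x_i|_v^{m_i}$ and $\sum m_i q_i = d$ forces each monomial to be bounded above by $\max_i |x_i|_v^{d/q_i}$, while the pure power monomials realize this maximum up to the constant. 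Taking the product over all $v$ and using the definitions \eqref{def:height} and the standard formula for $H_K$ on $\P^N$ gives the displayed comparison.

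Granting this comparison, the theorem is immediate: the field of definition $\Q(\p)$ coincides with $\Q(\iota(\p))$ (the ratios of the monomial coordinates are polynomial expressions in the ratios $x_i/x_j$ and conversely), so the conditions $\awh(\p) \leq c_0$ and $[\Q(\p):\Q] \leq d_0$ translate into $H(\iota(\p)) \leq c_0^{d} \cdot \text{const}$ and $[\Q(\iota(\p)):\Q] \leq d_0$, and the classical Northcott theorem on $\P^N(\bar\Q)$ finishes the argument; since $\iota$ is injective, only finitely many $\p$ can occur. The second assertion, finiteness of $\{\p \in \wP_w^n(K) : \wh_K(\p) \leq c_0\}$, is the special case $d_0 = [K:\Q]$ together with \cref{lem_1}(ii). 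The main obstacle I anticipate is making the two-sided height comparison genuinely uniform — keeping careful track of the combinatorial constants (the number of weighted-degree-$d$ monomials, and the role of the non-well-formed case where $\gcd$ of the $q_i$ need not be $1$) so that the constants $c_1, c_2$ truly depend only on $w$ and $n$ and not on $K$ or $\p$; once that bookkeeping is done, the reduction to Northcott is routine.
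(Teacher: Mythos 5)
The survey itself contains no proof of \cref{thm_finite} (it is quoted from \cite{b-g-sh}), so your argument has to stand on its own. Your route --- map $\wP^n_w$ into an ordinary projective space by the weighted Veronese map $\iota$ given by all monomials of weighted degree $d=\operatorname{lcm}(q_0,\dots,q_n)$, compare heights place by place, and invoke classical Northcott --- is viable, and your local estimate is in fact sharper than you state: every degree-$d$ monomial satisfies $|x^m|_v\le \max_i|x_i|_v^{d/q_i}$, while the pure power $x_{i_0}^{d/q_{i_0}}$ attains this bound, so $\max_m|x^m|_v=\bigl(\max_i|x_i|_v^{1/q_i}\bigr)^{d}$ exactly and hence $H_K(\iota(\p))=\wh_K(\p)^{d}$ with no constants at all and uniformly in $K$ and $\p$. (The exponent $[K:\Q]$ in your displayed comparison is a slip; the correct exponent is $d$, as your absolute-height reformulation indicates, and the bookkeeping you worry about at the end disappears.)

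Two steps do need repair. First, injectivity of $\iota$ is not a consequence of the pure powers alone: from $(x_i')^{d/q_i}=\mu\, x_i^{d/q_i}$ you only get $x_i'=\zeta_i\lambda_0^{q_i}x_i$ for some fixed $d$-th root $\lambda_0$ of $\mu$ and roots of unity $\zeta_i$ with $\zeta_i^{d/q_i}=1$, and one must still absorb all the $\zeta_i$ into a single $\lambda$, which requires either the mixed monomials or a separate arithmetic argument; with the quotient definition \eqref{equivalence} taken literally over a number field it can actually fail (e.g.\ $w=(1,2)$: the classes $[0:b]$ and $[0:1]$ over $\Q$ have the same image). What your proof needs, and what the computation above does give on $\wP^n_w(\overline\Q)$, is that every fiber of $\iota$ has at most $\prod_i(d/q_i)$ points; finite fibers suffice to deduce finiteness from Northcott, so state and use that instead of injectivity. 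Second, the identification $\Q(\p)=\Q(\iota(\p))$ ``by polynomial expressions in the ratios'' does not work as written, since the ratios of monomial coordinates involve fractional powers of the $x_i/x_j$. The inclusion you actually need is $\Q(\iota(\p))\subseteq\Q(\p)$, and it follows from the Galois description of the field of definition: if $\sigma$ fixes $\p$ as a weighted class, then $(x_i^\sigma)=\lambda\star(x_i)$ for some $\lambda$, so every degree-$d$ monomial is multiplied by $\lambda^{d}$ and $\iota(\p)^\sigma=\iota(\p)$; hence $[\Q(\iota(\p)):\Q]\le d_0$, which is all that Northcott requires. With these two repairs, and with $\wP^n_w(K)$ read as the classes in $\wP^n_w(\overline\Q)$ admitting a representative in $K$ so that the second assertion follows from the first together with \cref{lem_1}(ii), the argument goes through.
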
 

The next result is the analogue of Kronecker's theorem for  heights on projective spaces.

\begin{lem}
Let $K$ be a number field,  and let $\p=[x_0: \dots: x_n] \in \wP^n_w(K)$, where $\w = (q_0, \dots, q_n)$. Fix any $i$ with $x_{i} \neq 0$. Then $\wh (\p)= 1$ if  the ratio $x_j/\xi_{i}^{q_j}$, where $\xi_i$ is the $q_i$-th root of unity of $x_i$, is a root of unity or zero for every $0 \leq j \leq n$ and $j \neq i$.
\end{lem}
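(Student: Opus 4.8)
The plan is to rescale the homogeneous coordinates so that every entry becomes a root of unity or $0$, and then read off $\wh(\p)=1$ directly from the product formula \eqref{def:height}.

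First I would take care of the field of definition. Neither the chosen $q_i$-th root $\xi_i$ of $x_i$ nor the ratios $\zeta_j:=x_j/\xi_i^{q_j}$ (for $j\neq i$) need lie in $K$; however $\xi_i$ is a root of $T^{q_i}-x_i$ and each $\zeta_j$ is a root of unity or $0$, so all of them are algebraic over $K$ and $L:=K(\xi_i,\{\zeta_j\}_{j\neq i})$ is again a number field. By \cref{lem_1}, part ii), $\wh_L(\p)=\wh_K(\p)^{[L:K]}$, so it suffices to prove $\wh_L(\p)=1$.

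Next I would perform the rescaling. Since $x_i\neq 0$ we have $\xi_i\in L^\star$, and replacing the representative $(x_0,\dots,x_n)$ by $\lambda\star(x_0,\dots,x_n)$ with $\lambda=\xi_i^{-1}$ — which represents the same point of $\wP^n$ by \eqref{equivalence} — produces the tuple $(y_0,\dots,y_n)$ with $y_j=\xi_i^{-q_j}x_j$. By construction $y_i=\xi_i^{-q_i}x_i=1$, and for $j\neq i$ the hypothesis gives $y_j=\zeta_j$, a root of unity or $0$. Because $\wh_L$ depends only on the point and not on the chosen representative, $\wh_L(\p)$ may be computed from $(y_0,\dots,y_n)$. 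Evaluating \eqref{def:height}: fix $v\in M_L$; any root of unity $\zeta$ of order $m$ satisfies $|\zeta|_v^{m}=1$, hence $|\zeta|_v=1$, whether $v$ is archimedean or non-archimedean, while $|0|_v=0$ and $n_v/q_j>0$. Thus each $|y_j|_v^{n_v/q_j}$ lies in $\{0,1\}$, and the term with $j=i$ equals $1$ since $y_i=1$; therefore $\max_{0\le j\le n}\{|y_j|_v^{n_v/q_j}\}=1$. Taking the product over all $v\in M_L$ gives $\wh_L(\p)=1$, and hence $\wh_K(\p)=1$.

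The argument is essentially bookkeeping with places and local degrees; the one point that deserves care is the reduction to the number field $L$ containing $\xi_i$ and the relevant roots of unity together with the invocation of multiplicativity of the height under field extension, since it is exactly this that legitimizes the rescaling which trivializes the coordinates. I do not expect any deeper obstacle.
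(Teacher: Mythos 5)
Your proposal is correct and follows essentially the same route as the paper's own proof: rescale the representative by $\lambda=\xi_i^{-1}$ so that the $i$-th coordinate becomes $1$ and the remaining ones are roots of unity or zero, then observe that $|\zeta|_v=1$ at every place forces the local maxima, and hence the product, to equal $1$. The only difference is that you explicitly pass to the number field $L=K(\xi_i,\{\zeta_j\})$ and invoke $\wh_L(\p)=\wh_K(\p)^{[L:K]}$ before rescaling, a point the paper's proof glosses over when it simply adjoins $\xi_i$ and computes over $M_K$.
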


\proof   Let  $\p=[x_0: \dots: x_i: \dots: x_n] \in \wP^n(K)$. Assume $x_i \neq 0$. Adjoin the $q_i$-th root of unity to $x_i$. Hence, let $x_i = \xi_i^{q_i}$  so that $wt(\xi_i) = 1$. Without loss of generality we can divide the coordinates of $\p$ by $\xi_i^{q_j}$, for $j \neq i$,  and then  we have 
$$\p=\left [\frac{x_0}{\xi_i^{q_0}}, \dots , 1, \dots, \frac{ x_n}{\xi_i^{q_n}}\right].$$ 
For simplicity let $\p=[y_0: \dots: 1: \dots: y_n]$.  If $y_l$   is a root of unity for every $0 \leq l \leq n$ and $l \neq i$ then $|y_l|_v=1$ for every $v \in M_K$. Hence, $\wh (\p)=1$.
\qed



\subsection{Polynomials in weighted projective spaces}

Next we give a brief description  of a weighted variety and then define the height on a weighted variety.  For more details on weighted projective varieties see \cites{igor, MR879909} among others.

As it turns out, in the same way  as in ordinary projective spaces, evaluating a polynomial at a point it's not well defined, but checking if a point is a zero of a polynomial is.  We will make this precise below. As above we let $k$ be a field. We define the polynomial ring in $n+1$ variables with weights $w =(q_0, \dots, q_n)$ as  $k_w [x_0, \dots, x_n] $ such that  $wt (x_i) =q_i$. 

This changes the grading of the ring but does not change the underlying $k$-algebra structure. So, $k_w[x_0, \dots, x_n]$ is a Noetherian ring.  We will write $k_w[x_0, \dots, x_n]_d \subset k_w[x_0, \dots, x_n]$, where  $w = (q_0, \dots, q_n)$,  to  mean the additive group of all weighted homogenous polynomials of degree $d$.

\begin{defi} Let $f(x_0, \dots, x_n) \in k[x_0, \dots, x_n]$ where $wt (x_i) = q_i$, for $i=0, \dots, n$.   A polynomial $f(x_0, \dots , x_n)$   is called a \textbf{weighted homogenous polynomial of degree $d$} if each monomial in $f$ is weighted of degree $d$, i.e. 
\[ f (x_0, \dots , x_n) = \sum_{i=1}^m a_i \prod_{j=0}^n x_j^{d_j}, \, \,  \, a_i \in k  \, \text{ and } \,  m \in \N \]
and for all $ 0 \leq i \leq n$, we have that 
\[\sum_{i=1}^n q_id_j = d \,.\]
\end{defi}

%
%
%

Consider the point $P=(a_0, \dots, a_n) \in \wP^n_w$ and $f(x_0, \dots, x_n) \in k_w[x_0, \dots, x_n]_d$. By definition $P =   (\lambda^{q_0} a_0, \dots, \lambda^{q_n}a_n)$ for any $\lambda \in \G_m$, and particularly we can assume that $\lambda  \neq 1$, then we have that
\[f(\lambda^{q_0} a_0, \dots, \lambda^{q_n}a_n) = f(a_0, \dots, a_n) \, \, \text{ if and only if } f(a_0, \dots, a_n) = 0 \,.\]
Thus, it is well defined to write $f(P) = 0$ for some $f(x_0, \dots, x_n) \in k_w[x_0, \dots, x_n]_d$ and $P \in \wP_w^n$. We say that an ideal is a \textbf{weighted homogenous ideal} if and only if every element of $f\in I$ can be written as 
\[ f = \sum_{i=0}^{\deg f} f_i\]
with $f_i \in k_w[x_0, \dots, x_n]_i\cap I$.  Given $I \triangleleft k_w[x_o,  \dots, x_n]$,  a weighted homogenous ideal, define the \textbf{weighted projective variety} by
\[V (I)  = \left  \{ P \in \wP_w^n \, \left |\, \frac{}{} \right. f(P) = 0 \, \, \text{ for all } \, \, f \in I \right \} \,.\]
Conversely, given $V \subset \wP_w^n$ define the \textbf{ideal associated to $V$} by
\[ I(V) = \left\{ f \in k_w[x_0, \dots, x_n] \, \left | \frac{}{} \right. \, f(P) = 0 \, \, \text{ for all } \, \,  p \in V   \right \} \,. \]
 In the next lemma we prove that $I(V)$ it is actually an ideal.

\begin{lem}  Let $V \subset \wP_w^n$ and define $I(V)$ as above  
\[ I(V) = \left\{ f \in k_w[x_0, \dots, x_n] \, \left | \frac{}{} \right. \, f(P) = 0 \, \, \text{ for all } \, \,  p \in V   \right \} \]
then $I(V)$ is a radical weighted homogenous ideal. 
\end{lem}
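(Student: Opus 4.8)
The claim has two parts: that $I(V)$ is an ideal, and that it is radical and weighted homogeneous. The plan is to dispose of the ideal axioms first, since they are immediate, then spend the real effort on homogeneity, and finally observe that radicality is free.

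First I would check that $I(V)$ is an ideal of $k_w[x_0,\dots,x_n]$. For $f,g\in I(V)$ and any $P\in V$ we have $(f+g)(P)=f(P)+g(P)=0$, and for $h\in k_w[x_0,\dots,x_n]$ arbitrary we have $(hf)(P)=h(P)f(P)$; here one must be a little careful, because as noted in the text $h(P)$ is not well defined on the equivalence class defining $\wP_w^n$. The clean way around this is to fix, once and for all, an affine representative $(a_0,\dots,a_n)\in\A^{n+1}$ for each $P\in V$ and evaluate all polynomials at that representative; then $(hf)(a_0,\dots,a_n)=h(a_0,\dots,a_n)\cdot f(a_0,\dots,a_n)=0$ because $f(a_0,\dots,a_n)=0$, and vanishing of $f$ at the representative is exactly the well-defined condition $f(P)=0$. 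This shows $hf\in I(V)$, so $I(V)$ is an ideal.

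Next, the main step: $I(V)$ is weighted homogeneous, i.e. if $f=\sum_{i=0}^{\deg f} f_i$ with $f_i$ the weighted-degree-$i$ component, then each $f_i\in I(V)$. Fix $P\in V$ with representative $(a_0,\dots,a_n)$. For any $\lambda\in k^\star$ the point $\lambda\star P=(\lambda^{q_0}a_0,\dots,\lambda^{q_n}a_n)$ is also a representative of $P$ (since $P\in V$ and $V$ is $k^\star$-stable by definition of a subset of $\wP_w^n$), hence $f$ vanishes there:
\[
0=f(\lambda^{q_0}a_0,\dots,\lambda^{q_n}a_n)=\sum_{i=0}^{\deg f}\lambda^{i}f_i(a_0,\dots,a_n).
\]
This is a polynomial identity in $\lambda$ that holds for all $\lambda\in k^\star$; since $k$ is infinite (it is algebraically closed), a nonzero polynomial in $\lambda$ cannot vanish at all $\lambda\in k^\star$, so every coefficient $f_i(a_0,\dots,a_n)$ is zero. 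As $P$ was arbitrary, $f_i\in I(V)$ for all $i$, which is the definition of a weighted homogeneous ideal. (If one wanted to avoid infiniteness of $k$ entirely, one could instead pick $\deg f+1$ distinct values of $\lambda$ and invert the resulting Vandermonde matrix; I expect the reader will accept the infinite-field argument here since the standing hypothesis is that $k$ is algebraically closed.)

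Finally, radicality: suppose $f^m\in I(V)$ for some $m\geq 1$. Then for every $P\in V$ with representative $(a_0,\dots,a_n)$ we have $f(a_0,\dots,a_n)^m=0$ in the field $k$, hence $f(a_0,\dots,a_n)=0$, so $f\in I(V)$. Thus $\sqrt{I(V)}=I(V)$. Combining the three parts gives that $I(V)$ is a radical weighted homogeneous ideal, as claimed. The only genuinely delicate point in the whole argument is making sure evaluation is pinned down to a fixed affine representative so that all the manipulations of "$f(P)$" are legitimate; everything else is routine.
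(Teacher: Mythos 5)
Your proof is correct, and for the substantive part --- weighted homogeneity --- it takes a genuinely different route from the paper's. The paper handles the ideal axioms and radicality exactly as you do (closure under sums and products, and $f(P)^r=0 \Rightarrow f(P)=0$ since $k$ is a field, hence has no nilpotents), but for homogeneity it argues: $k_w[x_0,\dots,x_n]$ is Noetherian, so $I(V)=\langle f_1,\dots,f_n\rangle$ is finitely generated, and ``every $f_i$ is weighted homogeneous'' because it lies in $k_w[x_0,\dots,x_n]$, hence the ideal is weighted homogeneous. That middle step is unjustified: membership in the weighted polynomial ring does not make a generator homogeneous, and Noetherianity by itself produces no homogeneous generating set, so the paper's homogeneity argument has a real gap that your argument fills. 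Your scaling argument --- expand $f(\lambda\star a)=\sum_i \lambda^i f_i(a)$ over the $k^\star$-orbit of a representative and use that $k$ is infinite (or a Vandermonde matrix) to force each $f_i(a)=0$ --- is the standard correct proof that every weighted-homogeneous component of an element of $I(V)$ again lies in $I(V)$, and it costs essentially nothing extra in length.

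One small wrinkle in your write-up: you first declare that $f(P)=0$ means vanishing at a single fixed affine representative, but two sentences later you use that $f$ vanishes at every scaling $\lambda\star(a_0,\dots,a_n)$ of that representative. For inhomogeneous $f$ these are different conditions (e.g. $x_0-x_0^2$ vanishes at $(1,1)$ but not at $(2,2)$, both representing the same point of $\P^1$), and the lemma is only true under the orbit-wise reading, which is also what the paper intends: its earlier discussion establishes well-definedness of $f(P)=0$ only for weighted homogeneous $f$, so for general $f$ one must read $f(P)=0$ as vanishing on the entire equivalence class. Since your homogeneity step in fact uses orbit-wise vanishing, the mathematics goes through; just state the definition that way from the start instead of via a fixed representative.
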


\begin{proof} Let $f$ and $g$ be two polynomials in $I(V)$. Then, $f(P) = g(P) = 0$ for all points $P \in V$, i.e. they both vanish at all points $P$ in the variety $V$ then, so does $f+g$ and $fh$ where $h$ is any polynomial in $I(V)$. Therefore, $I(V)$ is an ideal. 

Since, $k_w[x_0, \dots, x_n]$ is Noetherian then $I(V)$ is finitely generated, say  
\[ 
I(V)= \< f_1, \dots, f_n\rangle .
\]
But, $f_i \in k_w[x_0, \dots, x_n]$ for all $i$ and therefore every $f_i$ is weighted homogenous. Hence $I(V)$ is weighted homogenous since it is generated by finitely many weighted homogenous polynomials. 

Lastly let us prove that $I(V)$ is radical. Let $f^r \in I(V)$. Then, for all points $P \in V$ we have that $f^r (P) = 0$. But since $f \in k_w[x_0, \dots, x_n]$, which is an integral domain, then $f^r (P) = (f(P)^r = 0$ implies that $f(P) = 0$ for all $P\in V$. Therefore, $I(V)$ is radical. 

\end{proof} 

A weighted projective variety  is said to be irreducible if it has no non-trivial decomposition into subvarieties.  Weighted projective varieties are projective varieties.  Hence, we can define a Zariski topology for weighted projective varieties  $\wP_w^n$ which is given by defining the closed sets of $\wP_w^n$ to be those of the form $V(I)$ for   weighted homogenous ideal $I \subset k_w[x_0, \dots, x_n]$.

%
Let   $f(x_0, \dots , x_n)$   be a weighted homogenous polynomial of degree $d$, then each monomial in $f$ is weighted of degree $d$, i.e. 
\[ f (x_0, \dots , x_n) = \sum_{i=1}^m a_i \prod_{j=0}^n x_j^{d_j}, \, \,  \, a_i \in k  \, \text{ and } \,  m \in \N \]
and for all $ 0 \leq i \leq n$, we have that 
\[\sum_{i=1}^n q_id_j = d.\]
%

We  use lexicographic ordering to order the terms in a given polynomial, and 
\[ x_1 > x_2 > \dots >x_n. \]
  The \textbf{multiplicative height of $f$} is defined as follows 
\[\wh_K(f)= \prod_{v \in M_K}  |f|_v^{n_v}\]
where 
\[|f|_v := \max_j\left\{\frac{}{} |a_j|_v^{1/q_j} \right\}\]
for any absolute value $v$.  Hence, the \textbf{multiplicative height of a polynomial} is the height of its coefficients taken as coordinates in the weighted projective space.  The   \textbf{absolute multiplicative  height} is defined as follows
\[
\begin{split}
 H: \P^n(\Q) & \to [1, \infty)\\
 H(f)&=\wh_K(f)^{1/[K:\Q]}. 
 \end{split}
\]
 

\begin{thm}\label{pol_finite} 
Let  $F(x, y ) \in K_w [x, y]$, where $w = (q_0, q_1)$, be a given weighted homogenous polynomial. Then,  there are only finitely many polynomials $G(x,y)\in K_w [x, y]$ such that $\wh_K(G) \leq \wh_K(F)$.  
\end{thm}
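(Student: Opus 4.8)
The plan is to obtain the statement as a direct corollary of the weighted Northcott theorem, \cref{thm_finite}, once the bookkeeping is set up correctly. Fix the weights $w=(q_0,q_1)$ and put $d=\deg F$. A weighted homogeneous $G\in K_w[x,y]$ of degree $d$ is supported on the finite set of monomials $x^{a}y^{b}$ with $aq_0+bq_1=d$; listing them as $x^{a_0}y^{b_0},\dots,x^{a_N}y^{b_N}$, the polynomial $G=\sum_{j=0}^N c_j\,x^{a_j}y^{b_j}$ is determined by its coefficient tuple $\mathbf c_G=(c_0,\dots,c_N)\in K^{N+1}$. By construction the multiplicative height $\wh_K(G)$ is exactly the weighted height $\wh_K(\mathbf p_G)$ of the associated point $\mathbf p_G=[\,c_0:\dots:c_N\,]$ of the weighted projective space $\wP^N_{w'}(K)$, where the coordinate weights $w'$ are those carried by the coefficients. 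Hence the inequality $\wh_K(G)\le\wh_K(F)$ translates verbatim into $\wh_K(\mathbf p_G)\le c_0$ with $c_0:=\wh_K(F)$.

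With this translation the argument is short. By the second assertion of \cref{thm_finite}, the set $\{\,\mathbf p\in\wP^N_{w'}(K):\wh_K(\mathbf p)\le c_0\,\}$ is finite, so the coefficient tuples of all admissible $G$ of degree $d$ fall into finitely many weighted-scaling classes; selecting in each class the representative in the normal form used in \cref{sect-8} — equivalently, counting polynomials only up to the equivalence $\lambda\star$ of \cref{equivalence} that defines the points of $\wP^N_{w'}$ — yields finitely many polynomials. To go from a fixed degree to arbitrary degree one uses that $q_0,q_1>0$, so for each value of $\deg G$ the monomial support is finite and the preceding applies verbatim degree by degree; what remains is to see that $\wh_K(G)\le c_0$ confines $\deg G$ to a finite set, which one reads off from the fact that the coefficient weights $w'$ grow with the degree, so that a form of very large degree with bounded height is forced into a degenerate shape (a root of high multiplicity, i.e.\ an essentially lower-degree form) in the spirit of \cref{lem4}, \cref{lem5} and \cref{thm_5}.

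The delicate point, and the one I would devote most of the write-up to, is precisely the passage from points of $\wP^N_{w'}$ back to \emph{polynomials}: read literally and over all degrees, ``finitely many polynomials'' is too strong, since every pure power $x^{m}$ has height $1\le\wh_K(F)$. The substantive content is therefore the finiteness up to weighted equivalence within each fixed-degree stratum, and the natural reading of the theorem — $G$ of the same degree as $F$, counted modulo $\lambda\star$, or taken in normal form — makes it a one-line consequence of \cref{thm_finite}. Accordingly the real work is the verification that the polynomial height defined above agrees coordinate for coordinate with the weighted projective height to which \cref{thm_finite} applies (which is essentially its definition) together with pinning down the normalization; no estimate beyond \cref{thm_finite} is needed.
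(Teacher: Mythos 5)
Your proposal follows essentially the same route as the paper's own proof: identify a weighted homogeneous $G$ with the point of its coefficients in a weighted projective space, observe that $\wh_K(G)$ is by definition the weighted height of that point, and invoke the Northcott-type finiteness of \cref{thm_finite}. The caveat you raise about scaling and degree (e.g.\ pure powers all having height $1$) is genuine, and the paper resolves it just as you suggest, by fixing the monomial support of degree $d$ and counting only polynomials ``with content 1'', i.e.\ normalized representatives of the weighted-scaling classes.
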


\proof
Let 
\[F(x, y)=\sum_{\substack{i=(i_0, i_1)\in I \\ d= i_0\cdot q_0+i_1 \cdot q_1} } a_ix^{i_0}y^{i_1}\]
be a  polynomial with coefficients in $K$ and  fix an ordering $x > y$.  Let  $\wh_K(F)=c$. By  definition  
\[\wh_K(F) =  \prod_{v \in M_K}  |f|_v^{n_v}= \prod_{v \in M_K} \max_i\left\{\frac{}{} |a_i|_v^{n_v}\right\} = \wh_K[a_0   \dots, a_i, \dots ]_{i\in I}.\]
But, $P=[a_0   \dots, a_i, \dots ]_{i\in I}$ is a point in $\P^{s}$ where $s$ is the number of monomials of degree $d$ in 2 variables. Hence,  $s=\left( 
\begin{array}{c}
d+1 \\ 
d
\end{array}
\right). $
From \cref{thm_finite} we have  that  for any constant  $c$ the set 
\[\{P \in \P^s(K): \wh_K(\p) \leq c \}\]
is finite. Hence there are finitely many polynomials $G(x, y)$ with content 1 corresponding to points $P$ with height   
$\wh_K(G) \leq c=\wh_K(F)$.

\qed


Next we see an application of the weighted projective spaces which was the main motivation for the definition of the weighted gcd's and the height in such spaces.

\subsection{Space of binary forms as weighted projective spaces}

It turns out that the space of degree-$d$ binary forms is a weighted projective space.

\def\RR{\mathcal R}

To start, let us determine what happens to the invariants when we change the coordinates, in other words when we act on the binary form $g(x, y)$ via $M \in GL_2( k)$. 
Let $I_0, \dots  , I_n$ be the generators of $\RR_d$ with degrees $q_0, \dots , q_n$ respectively.  
We denote the tuple of invariants by $\I := (I_0, \dots , I_n)$.  The following result is fundamental to our approach. 

\begin{prop}\label{prop-2}
For any two binary formal $f $ and $g$,   and    $M\in GL_2 (k)$,   
$g=f^M$  if and only if 
\[ 
\left( I_0 (g), \dots   I_i (f), \dots , I_n (g) \right) = \left( \l^{q_0} \, I_0 (f), \dots ,    \l^{q_i}\,  I_i (f), \dots , \l^{q_n} \, I_n (f),    \right), 
\] 
where   $\l = \left( \det M \right)^{\frac d 2}$.
\end{prop}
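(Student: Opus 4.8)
The plan is to deduce \cref{prop-2} from two ingredients: the transformation law of a homogeneous $SL_2(k)$-invariant under the full group $GL_2(k)$ (which is elementary), and the classical fact that, on the locus of binary forms of non-zero discriminant, the values of the generators $I_0,\dots,I_n$ of $\mathcal R_d$ determine the $GL_2(k)$-orbit --- this is \cref{cor1a} in the case $d=6$, its analogue for octavics (whose transformation law is \cref{lem_3}), and in general it amounts to the description, recalled earlier in this section, of the moduli variety of superelliptic curves as the GIT quotient of the discriminant-nonzero locus.

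First I would record the weight formula: if $I\in\mathcal R_d$ is homogeneous of degree $q$ as a polynomial in $A_0,\dots,A_d$, then $I(f^M)=(\det M)^{dq/2}\,I(f)$ for every $M\in GL_2(k)$. The proof is the scalar-matrix trick. Since $k$ is algebraically closed we may write $M=(sI_2)\,M'$ with $M'\in SL_2(k)$ and $s^2=\det M$; the central factor $sI_2$ sends a degree-$d$ form $f$ to $f(sX,sY)=s^d f$, so $f^M=s^d\,f^{M'}$. Hence $I(f^M)=I(s^d f^{M'})=s^{dq}\,I(f^{M'})=s^{dq}\,I(f)$, using homogeneity of $I$ in the coefficients and $SL_2(k)$-invariance, and $s^{dq}=(s^2)^{dq/2}=(\det M)^{dq/2}$. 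In particular $dq$ is always even, so when $d$ is odd every generator degree $q_j$ is even and $\lambda^{q_j}=(\det M)^{dq_j/2}$ is well defined regardless of the choice of square root of $\det M$. This identity is already visible in \cref{lem_3}, where $d=8$ and $J_i'=(\det M)^{4i}J_i$ (and in the formula $\D(f^M)=(\det M)^{d(d-1)}\D(f)$ for the discriminant, whose weight is $d(d-1)$). The forward implication of \cref{prop-2} is then immediate: apply the weight formula to $I=I_j$ for each $j$, with $\lambda:=(\det M)^{d/2}$.

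For the converse, assume $I_j(g)=\lambda^{q_j}\,I_j(f)$ for all $j$ and some $\lambda\in k^\ast$. I would first absorb $\lambda$ by a diagonal substitution: choose $\mu\in k^\ast$ with $\mu^{d/2}=\lambda$ and set $M_1=\operatorname{diag}(\mu,1)$, so that by the weight formula $I_j(f^{M_1})=\mu^{dq_j/2}\,I_j(f)=\lambda^{q_j}\,I_j(f)=I_j(g)$ for every $j$. Thus $f^{M_1}$ and $g$ have identical values of every generator of $\mathcal R_d$, and both have non-zero discriminant; the orbit-separation theorem (\cref{cor1a} and its higher-degree analogues) then puts $g$ in the $GL_2(k)$-orbit of $f$, i.e. $g=f^M$ for some $M\in GL_2(k)$. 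Re-applying the forward direction to this $M$ shows that $(\det M)^{d/2}$ is again an admissible value of the scalar relating the invariants, which matches the ``only if'' statement.

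The step I expect to be the main obstacle is the converse, and specifically the orbit-separation input: the equivalence in \cref{prop-2} is correct only on the discriminant-nonzero (stable) locus, so one must be careful to state it there, and invoking the Clebsch--Bolza--Igusa-type theorem --- or, for general $d$, the first fundamental theorem of invariant theory for $SL_2$ together with the GIT description of the categorical quotient --- is the one genuinely non-formal ingredient. Everything else, namely the weight identity $w=dq/2$ and the bookkeeping with $\lambda$ and $\det M$, is routine once the distinction between the $GL_2(k)$- and $SL_2(k)$-actions is kept straight.
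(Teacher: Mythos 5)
Your argument is correct, and it is in fact more complete than what the paper records: the proof of \cref{prop-2} given in the text is a direct coefficient computation (left as a sketch) of $I(f^M)=I\bigl(f(aX+bY,cX+dY)\bigr)$, which even when carried out only yields the transformation law, i.e.\ the forward implication, with the converse resting implicitly on the surrounding discussion (\cref{cor1a} for sextics and the degree-$8$ analogue around \cref{lem_3}). You obtain the forward half by a different and cleaner route: factoring $M=(sI_2)M'$ with $s^2=\det M$ and $M'\in SL_2(k)$, then combining homogeneity in the coefficients with $SL_2$-invariance; your observation that $dq$ is always even (isobarity of invariants, visible in \cref{lem_3} and in the weight $d(d-1)$ of the discriminant) is exactly what makes $(\det M)^{dq/2}$ unambiguous, including when $d$ is odd. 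For the converse you make explicit the ingredient the paper never states for general $d$: after absorbing $\lambda$ by $\operatorname{diag}(\mu,1)$ (for odd $d$ read $\mu^{d}=\lambda^{2}$; evenness of the $q_j$ then gives $\mu^{dq_j/2}=\lambda^{q_j}$ with no sign ambiguity), one needs that the generators of $\mathcal R_d$ separate $SL_2(k)$-orbits of forms with $\Delta\neq 0$ --- Clebsch--Bolza--Igusa for $d=6$, and in general the GIT fact that stable orbits are closed and separated by invariants; naming that as the one non-formal input is accurate. Keep both of your caveats: the ``if'' direction genuinely fails off the locus $\Delta\neq 0$ (for instance $X^{d}$ and $X^{d-1}Y$ have all positive-degree invariants equal to zero yet are not conjugate), so the statement is correct only under the nondegeneracy hypothesis that the superelliptic context supplies implicitly; and the conclusion of the converse is $g=f^{M}$ for \emph{some} $M$ with $(\det M)^{d/2}=\lambda$, not for an arbitrarily prescribed $M$.
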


\proof  Let  $f(x, y) = \sum_{i=0}^d a_i x^i y^{d-i}$ be a degree $d\geq 2$ binary form and $I_s$ be an invariant of degree $s$ in $\RR_d$, say
\[ 
I_s = \sum  a_0^{\a_0} \ldots a_d^{\a_d}
\]
where $\a_i=0, \ldots , s$. When we evaluate $I (f^M) = I (f(ax+by, cx+dy)$ we have

\qed

\begin{cor} Let $I_0,  I_1,   \dots,  I_n$ be the generators of the ring of invariants $\RR_d$ of degree $d$ binary forms. 
A $k$-isomorphism class of a binary form $f$ is determined by  the point 
\[ \I (f) := \left[ I_0 (f), I_1 (f), \dots , I_n(f) \right]  \in \wP_\w^n  (k). \]
Moreover $f=g^M$ for some $M\in GL_2 (K)$ if and only if  $ \I(f)  = \lambda\star \I(g)$, 
for $\lambda = \left( \det A \right)^{\frac d 2}$.
\end{cor}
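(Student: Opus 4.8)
The plan is to read both assertions off from \cref{prop-2}, together with one genuinely substantive ingredient: that on the locus of binary forms of nonzero discriminant the ring $\mathcal{R}_d$ of invariants separates $GL_2(k)$-orbits. This last fact is the general-$d$ analogue of the Clebsch--Bolza--Igusa criterion \cref{cor1a} and rests on geometric reductivity exactly as in \cref{thm2a}.

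First I would dispose of the easy implication. If $f = g^M$ with $M \in GL_2(k)$, then \cref{prop-2}, applied to the pair $(g,f)$, gives $I_i(f) = \lambda^{q_i}\,I_i(g)$ for all $i$, where $\lambda = (\det M)^{d/2}$. Because $f$ has nonzero discriminant and $\disc$ is a nonconstant element of $\mathcal{R}_d$, hence a polynomial in $I_0,\dots,I_n$ with no constant term, the coordinates $I_0(f),\dots,I_n(f)$ cannot all vanish, so $\I(f) = [I_0(f):\cdots:I_n(f)]$ is a genuine point of $\wP_\w^n(k)$. The relation $I_i(f) = \lambda^{q_i} I_i(g)$ is precisely the statement $\I(f) = \lambda \star \I(g)$ with $\lambda = (\det M)^{d/2}$, which settles the ``only if'' part of the moreover clause.

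For the ``if'' part, suppose $\I(f) = \lambda \star \I(g)$, i.e. $I_i(f) = \lambda^{q_i} I_i(g)$ for every $i$ and some $\lambda \in k^\ast$. The first step is to absorb $\lambda$ into a scalar substitution: since $k$ is algebraically closed, choose $t \in k^\ast$ with $t^d = \lambda$ and set $g_1 := g^{\,\diag(t,t)}$. As $\det \diag(t,t) = t^2$ and $(t^2)^{d/2} = t^d = \lambda$, \cref{prop-2} gives $I_i(g_1) = \lambda^{q_i} I_i(g) = I_i(f)$ for all $i$, so $f$ and $g_1$ take the same value on every generator of $\mathcal{R}_d$, hence on every invariant, i.e. $\pi(f) = \pi(g_1)$ for the quotient map $\pi : V_d \to \Spec \mathcal{R}_d$. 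The second step is the orbit-separation input: since $\disc(f) = \disc(g_1) \neq 0$, both forms have only simple roots and so are stable points of $V_d$, whence their $SL_2(k)$-orbits are closed and the fibre of $\pi$ through a stable point is a single orbit; therefore $g_1 = f^N$ for some $N \in SL_2(k)$. Composing, and using that $\diag(t,t)$ is central, $f = g^M$ with $M = \diag(t,t)\,N^{-1}$, so $\det M = t^2$ and $(\det M)^{d/2} = t^d = \lambda$, as required. Specializing to the case $\I(g) = \I(f)$ recovers the first sentence of the corollary.

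The main obstacle is precisely the orbit-separation step: proving that the $SL_2(k)$-orbit of a binary form of nonzero discriminant is closed and that distinct closed orbits have distinct images in $\Spec \mathcal{R}_d$. In characteristic zero this is classical geometric invariant theory (linear reductivity) and is exactly the mechanism behind \cref{cor1a}; in characteristic $p>0$ it requires geometric reductivity of $SL_2$ together with the hypothesis $p > d$, so that reduction modulo $p$ preserves the relevant invariant theory — the same restriction already imposed after \cref{thm2a} and in the sextic and octavic discussions. A minor point worth recording is that the reduction to $\lambda = 1$ uses the algebraic closedness of $k$ to extract a $d$-th root $t$ of $\lambda$.
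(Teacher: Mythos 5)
Your proposal is correct, and it is in fact more complete than what the paper does. The paper treats this corollary as an immediate restatement of \cref{prop-2}, and the proof it offers for \cref{prop-2} only carries out (indeed, only begins) the direct computation showing that the generators transform by $\l^{q_i}$ with $\l=(\det M)^{d/2}$ under $f\mapsto f^M$; the converse direction — that proportionality of the invariant vectors forces the existence of such an $M$ — is left implicit, essentially deferred to the Clebsch--Bolza--Igusa-type statement \cref{cor1a} (proved elsewhere, for sextics, via geometric reductivity as in \cref{thm2a}). You, by contrast, isolate exactly where the real content lies and supply it: the reduction to $\lambda=1$ by a central substitution $\diag(t,t)$ with $t^d=\lambda$ (using that $k$ is algebraically closed), and then the orbit-separation input — forms of nonzero discriminant have simple roots, hence are stable, their $SL_2(k)$-orbits are closed, and the invariants separate closed orbits, so equality of all generators forces $g_1=f^N$ with $N\in SL_2(k)$. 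Your bookkeeping of determinants ($\det M=t^2$, so $(\det M)^{d/2}=\lambda$) matches the scaling in \cref{prop-2}. Two points you raise are worth keeping explicit, since the paper leaves them tacit: the statement needs $\disc(f)\neq 0$ (or at least that $f$ is not in the null cone) both for $\I(f)$ to be a legitimate point of $\wP_\w^n(k)$ and for the orbit-separation step, a hypothesis that is automatic in the superelliptic setting of the paper; and in characteristic $p>0$ the separation argument requires geometric reductivity and the same restrictions on $p$ already imposed after \cref{thm2a}. (A very minor caveat: stability in the strict GIT sense fails for $d=2$, but that case is irrelevant to the degrees used here.) So the mathematics is right; the difference is that the paper asserts the corollary as formal consequence of a proposition whose converse half is not actually proved there, while you prove that converse.
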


Since the isomorphism class of any superelliptic curve, given by
\begin{equation}\label{w-eq-super} 
\X : z^m y^{d-m}= f(x, y) \,,
\end{equation}
 is determined by the equivalence  class of binary form $f(x, y)$ we denote the set of invariants of $\X$ by $\I (\X) : = \I (f)$. Therefore, we have:
 
\begin{cor}
Let $\X$ be a superelliptic curve given by \cref{w-eq-super}. The $\bar k$-isomorphism class of $\X$ is determined by the weighted moduli point  $\p := \left[ \I (f) \right]  \in \wP_\w^n  (k)$. 
\end{cor}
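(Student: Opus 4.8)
The final statement is the corollary that the $\bar k$-isomorphism class of a superelliptic curve $\X: z^m y^{d-m} = f(x,y)$ is determined by the weighted moduli point $\p := [\I(f)] \in \wP_\w^n(k)$. Let me sketch how I would prove this.

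The plan is to prove the statement as a chain of equivalences, converting an isomorphism of curves first into a $GL_2(\bar k)$-equivalence of the associated binary forms and then into the defining equivalence relation \eqref{equivalence} of the weighted projective space. Before anything else one must check that $\p = [\I(f)]$ is a genuine point of $\wP_\w^n(\bar k)$, i.e. that $\I(f) \neq (0,\dots,0)$. This is where the superelliptic hypothesis enters: since $\X$ is superelliptic in the sense of \eqref{super}, its discriminant $\Delta(f)$ is nonzero, and $\Delta$ is a homogeneous $SL_2(\bar k)$-invariant of positive degree $2d-2$, hence lies in $\mathcal{R}_d$ and is a polynomial without constant term in the generators $I_0,\dots,I_n$; if all $I_i(f)$ vanished, then $\Delta(f)$ would vanish too, a contradiction. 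So $\p$ is well-defined.

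Now let $\X'\colon z^m y^{d-m}=g(x,y)$ be another superelliptic curve of the same degree $d$. The first equivalence is the reduction to binary forms recorded in \cref{sect-8}: two such curves are isomorphic over $\bar k$ if and only if their defining binary forms are conjugate under $GL_2(\bar k)$. Thus $\X\cong\X'$ over $\bar k$ if and only if $g=f^M$ for some $M\in GL_2(\bar k)$. The second equivalence is \cref{prop-2} together with the corollary immediately preceding the present statement: $f$ and $g$ are $GL_2(\bar k)$-conjugate if and only if $\I(g)=\lambda\star\I(f)$ for some scalar $\lambda$, namely $\lambda=(\det M)^{d/2}$ for a conjugating matrix $M$.

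It remains to match the two equivalence relations precisely. On one hand, if $g=f^M$ then $\I(g)=\lambda\star\I(f)$ with $\lambda=(\det M)^{d/2}\in\bar k^\ast$, so $[\I(f)]=[\I(g)]$ in $\wP_\w^n(\bar k)$. On the other hand, every $\lambda\in\bar k^\ast$ is realized as the scalar factor $(\det M)^{d/2}$ for a suitable $M\in GL_2(\bar k)$ — e.g. a diagonal matrix, using that $\bar k$ is algebraically closed — so the converse direction of the preceding corollary shows that $[\I(f)]=[\I(g)]$ forces $g=f^M$ for some $M$, hence $\X\cong\X'$. Concatenating the equivalences, $\X\cong\X'$ over $\bar k$ exactly when $[\I(f)]=[\I(g)]$; in particular the assignment $\X\mapsto\p=[\I(f)]$ descends to an injection on $\bar k$-isomorphism classes, so the class of $\X$ over $\bar k$ is determined by $\p$, as claimed.

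There is no deep obstacle here: the argument is an assembly of results already established in \cref{sect-8}, and the only genuine verifications are the non-vanishing of the invariant tuple (where the hypothesis $\Delta(f)\neq0$ is used) and the surjectivity of $M\mapsto(\det M)^{d/2}$ onto $\bar k^\ast$ (where algebraic closedness of the base field is used). This surjectivity is precisely why the statement is phrased over $\bar k$: over a field that is not algebraically closed the correspondence fails to be bijective, which is the source of the field-of-moduli versus field-of-definition phenomena treated in \cref{sect-11}.
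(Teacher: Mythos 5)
Your argument is correct and follows essentially the same route as the paper, which obtains this corollary immediately from \cref{prop-2} and the preceding corollary (binary forms up to $GL_2(\bar k)$-conjugacy correspond to points of $\wP_\w^n$) together with the identification of $\bar k$-isomorphism of superelliptic curves with $GL_2(\bar k)$-equivalence of their binary forms. The extra verifications you supply — that $\Delta(f)\neq 0$ forces $\I(f)\neq(0,\dots,0)$ so the weighted moduli point is well defined, and that every $\lambda\in\bar k^\ast$ arises as $(\det M)^{d/2}$ — are left implicit in the paper but are consistent with its argument.
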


\section{Minimal models}\label{sect-10}

Let $k$ be an algebraic number field   and  $\O_k$ its ring of integers.  The isomorphism class of a  smooth, irreducible algebraic curve $C$ defined over $\O_k$ its determined by its set invariants which are  homogenous polynomials in terms of the coefficients of $\X$.  When $\X$ is a superelliptic curve  then these invariants are generators of the invariant ring of binary forms of fixed degree.



If $C$ is a hyperelliptic curve over $k$, then 
 the discriminant of $C$ is a polynomial given in terms of the coefficients of the curve. Hence,    it is an ideal in the ring of integers $\O_k$. The valuation of this ideal is a positive integer.   A classical question is to find an equation of the curve such that this valuation is minimal, in other words the discriminant is minimal.  

The simplest example is for   $C$ being an elliptic curve.  There is an extensive theory  of the minimal discriminant ideal $\dis_{C/K}$.  Tate \cite{ta-75}  devised an algorithm how to determine the Weierstrass equation of an elliptic curve with minimal discriminant as part of his larger project of determining Neron models for elliptic curves.  An implementation of this approach for elliptic curves was done by Laska in \cite{la-82}.  Tate's approach was extended to genus 2 curves by Liu \cite{liu} for genus 2, and to all hyperelliptic curves by Lockhart \cite{lockhart}. 

\subsection{Minimal discriminants over local fields}
Let $K$ be a local field, complete with respect to a valuation $\v$.  Let $\O_K$ be the ring of integers of $K$, in other words 
$\O_K = \{ x\in K \, | \, \v (x) \geq 0\}$. 
We denote by $\O_K^\ast$ the group of units of $\O_K$ and by $\m$ the maximal ideal of $\O_K$.  Let $\pi$ be a generator  for $\m$ and $k=\O_K / \m$ the residue field. We assume that $k$ is perfect and denote its algebraic closure by $\bar k$. 

Let $\X_g$ be a superelliptic curve of genus $g \geq 2$ defined over $K$ and $P$ a $K$-rational point on $\X_g$.     By a suitable change of coordinates we can assume that all coefficients of $\X_g$ are in $\O_K$.      Then, the discriminant $\D \in \O_K$.  In this case we say that the equation of $\X_g$ is \textbf{integral}.

An equation for $\X_g$ is said to be a \textbf{minimal equation}  if it is integral and $\v (\D)$ is minimal among all integral equations of $\X_g$. The ideal $I=\m^{\v (\D)}$ is called the \textbf{minimal discriminant} of $\X_g$. 
 

\subsection{Minimal discriminants over global fields}
Let us assume now that $K$ is an algebraic number field with field of integers $\O_K$.  Let $M_K$ be the set of all inequivalent absolute values on $K$  and $M_K^0$ the set of all non-archimedean absolute values in $M_K$. 
We denote by $K_\v$ the completion of $K$ for each $\v \in M_K^0$ and by $\O_\v$ the valuation ring in $K_\v$. Let $\p_v$ be the prime ideal in $\O_K$ and $\m_v$ the corresponding maximal ideal in $K_\v$. Let $(\X, P)$ be a superelliptic curve of genus $g\geq 2$ over $K$.  

If $\v \in M_K^0$ we say that $\X$ is \textbf{integral at $\v$} if $\X$ is integral when viewed as a curve over $K_\v$.  We say that $\X$ is \textbf{minimal at $\v$} when it is minimal over $K_\v$. 

An equation of $\X$ over $K$ is called \textbf{integral} (resp. \textbf{minimal}) over $K$ if it is integral (resp. minimal) over $K_\v$, for each $\v \in M_K^0$. 
 
Next we will define the minimal discriminant over $K$ to be the product of all the local minimal discriminants. For each $\v \in M_K^0$ we denote by $\D_\v$ the minimal discriminant for $(\X, P)$ over $K_\v$.  The \textbf{minimal discriminant} of $(\X, P)$ over $K$ is the ideal 
\[ \D_{\X / K} = \prod_{\v \in M_K^0} \m_\v^{\v (\D_\v) } \,.\]
We denote by $\fa_\X$ the ideal   $ \fa_\X = \prod_{\v \in M_K^0} \p_\v^{\v (\D_\v) }$. 
%
\begin{thm}
Let $(\X_g, P)$ be a superelliptic curve  over $\Q$. Then its global minimal  discriminant $\D\in \Z$  is unique (up to multiplication by a unit).  There exists a minimal Weierstrass equation corresponding to this $\D$.
\end{thm}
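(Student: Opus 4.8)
The plan is to reduce the global statement to the local one, which is already available in the theory of minimal equations over local fields developed above (following Lockhart's generalization of Tate and Liu). First I would recall that over $\Q$ the set of non-archimedean absolute values $M_\Q^0$ is indexed by the rational primes $p$, and for each such $p$ the completion $\Q_p$ is a local field with valuation ring $\Z_p$, maximal ideal $p\Z_p$, and perfect residue field $\F_p$. So the hypotheses for the local theory are met at every place. For each prime $p$ let $v_p$ denote the normalized $p$-adic valuation and let $\delta_p := v_p(\Delta_{v_p})$ be the exponent of the minimal discriminant of $(\X_g,P)$ over $\Q_p$, which exists by the local minimality theory. The global minimal discriminant is then, by definition, the ideal
\[
\D_{\X_g/\Q} = \prod_{p} p^{\,\delta_p} \,,
\]
and since $\Z$ is a PID this ideal is generated by the integer $\Delta := \prod_p p^{\delta_p}$, which is well defined up to the unit $\pm 1$. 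The finiteness of this product is automatic: starting from any integral equation of $\X_g$ over $\Z$, the discriminant $\Delta_0\in\Z$ is a fixed nonzero integer, and $\delta_p \le v_p(\Delta_0)$ for every $p$ with equality for all but the finitely many primes dividing $\Delta_0$; so only finitely many factors differ from $1$.

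The substantive point is to produce a \emph{single} global Weierstrass equation that is simultaneously minimal at every prime, i.e. realizes $v_p(\Delta) = \delta_p$ for all $p$ at once. Here I would argue as follows. Begin with any integral model $\X_g: z^m y^{d-m} = f(x,y)$ over $\Z$. At each of the finitely many "bad" primes $p \mid \Delta_0$, the local theory furnishes a transformation $M_p \in \GL_2(\Q_p)$ (together with a rescaling of $z$) bringing the equation into a $p$-minimal integral form over $\Z_p$; the effect on the discriminant is governed by $\Delta(f^{M_p}) = (\det M_p)^{d(d-1)}\Delta(f)$ as recorded earlier. Since the prime $p$ at which we adjust is fixed and the $M_p$ at distinct primes act on disjoint "components" of the problem, one can patch these local choices into a single $M \in \GL_2(\Q)$ by a strong-approximation / Chinese-Remainder argument: choose the entries of $M$ and the scaling factor to match $M_p$ closely enough $p$-adically at each bad $p$ (it suffices to match $v_p$ of the relevant quantities, which is an open condition), while keeping $M$ integral at all good primes so that minimality there — where $\delta_p = 0$ and the original model is already minimal — is preserved. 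Applying $M$ yields an equation that is integral and minimal at every place simultaneously; its discriminant is then $\pm\Delta$ by construction.

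Finally I would address uniqueness of $\Delta$ itself (as opposed to uniqueness of the equation, which fails — one may always twist by $\GL_2(\Z)$ and unit scalings). If $\X_g$ and a second global minimal model are related by some $M \in \GL_2(\Q)$ and a $z$-scaling, then locally at each $p$ both are $p$-minimal, so the local theory forces $v_p(\det M)$ and the scaling exponent to take the unique values making the two $p$-adic discriminants equal; multiplying over all $p$ shows the two global discriminants generate the same ideal of $\Z$, hence agree up to $\pm 1$. The main obstacle I anticipate is the patching step in the previous paragraph: one must check that the local normalizing transformations can genuinely be chosen coherently over $\Q$ without disturbing minimality at the good primes, and this requires being careful about how $\GL_2$ acts on the coefficients of the binary form and on the auxiliary variable $z$ — in particular that "minimal at $v$" is stable under the residual ambiguity $\GL_2(\Z_p)$ at each place, so that approximating $M_p$ up to $\GL_2(\Z_p)$ is enough. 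Everything else is bookkeeping with valuations and the transformation law for $\Delta$.
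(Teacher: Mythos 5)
The paper states this theorem without proof (it is imported from the Tate--Laska--Liu--Lockhart line of work that the section summarizes), so your attempt has to be measured against the standard argument. The first half of your write-up is fine: the reduction to the local theory, the finiteness of the product, and the identification $\Delta=\prod_p p^{\delta_p}$ are correct, and note that uniqueness up to a unit is then \emph{immediate} from this definition together with $\Z^{\times}=\{\pm 1\}$ --- your final paragraph comparing two global models is not needed for that half. The substantive content is the existence of one equation that is minimal at every prime simultaneously, and there your patching step has a genuine gap.

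The gap is in the approximation argument. Choosing $M\in \GL_2(\Q)$ $p$-adically close to $M_p$ at the bad primes and merely \emph{integral} at the good primes does not control $v_q(\det M)$ at a good prime $q$: integrality of the entries does not make the determinant a unit, and if $v_q(\det M)>0$ then the transformation law $\Delta(f^M)=(\det M)^{d(d-1)}\Delta(f)$ forces $v_q(\Delta(f^M))=d(d-1)\,v_q(\det M)>0=\delta_q$, so minimality at $q$ is destroyed; the same problem afflicts the rescaling of $z$. The scaling data must be chosen \emph{exactly}, not approximately. The standard repair is to normalize the local transformations first: by Smith normal form write $M_p=A_pD_pB_p$ with $A_p,B_p\in\GL_2(\Z_p)$ and $D_p$ diagonal with entries powers of $p$; since minimality at $p$ is unchanged by the $\GL_2(\Z_p)$ factors, one may take $M_p=D_p$, and then the exact product $M=\prod_p D_p$ over the finitely many bad primes lies in $\GL_2(\Q)$, agrees with $M_p$ up to $\GL_2(\Z_p)$ at each bad prime, and lies in $\GL_2(\Z_q)$ for every good $q$ (CRT-type approximation is only needed for translation terms such as $y\mapsto y+t(x)$ in a genuine Weierstrass equation, where a unit scaling plus integral translations preserve minimality). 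This exact choice of the scaling is precisely where the hypothesis that $\Z$ is a principal ideal domain (class number one) enters, which is the whole point of the remark following the theorem that over a number field of class number $>1$ a global minimal equation may fail to exist; an argument, like yours, that never isolates this hypothesis cannot be complete as stated.
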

%
 
\begin{rem} 
In general ($K$ an algebraic number field) with class number $> 1$, then the curve may not have a minimal Weierstrass equation.
\end{rem}

\subsubsection{Elliptic curves and Tate's algorithm}
Let $E$  be an elliptic curve defined over a number field $K$ with equation
\begin{equation}\label{w-eq-ell}     y^2 + a_1 xy + a_3  y = x^3 + a_2 x^2 + a_4  x +a_6. \end{equation}
For simplicity we assume that $E$ is defined over $\Q$; the algorithm works exactly the same for   any algebraic number field $K$. 

We would like to find an equation
\begin{equation}\label{w-eq-2}     y^2 + a_1^\prime xy + a_3^\prime y = x^3 + a_2^\prime x^2 + a_4^\prime x +a_6^\prime. \end{equation}
such that the discriminant $\D^\prime$ of the curve in  \cref{w-eq-2} is minimal.  Since we want the new equation to have integer coefficients, the only transformations we can carry out are
\[ x = u^2 x^\prime + r, \qquad y = u^3 y^\prime + u^2 s x^\prime + t\]
for $u, r, s, t \in \Z$ and $u \neq 0$.  The coefficients of the two equations are related as follows:
\[
\begin{split} 
& ua_1^\prime  = a_1 + 2s,                          \\
& u^3 a_3^\prime  = a_3 + r a_1 + 2 t,               \\          
& u^2 a_2^\prime  = a_2 - sa_1 + 3 r - s^2,             \\
\end{split}
\qquad
\begin{split}
                   & u^4 a_4^\prime  = a_4 - s a_3 + 2 r a_2 - (t+rs)a_1 + 3 r^2 - 2st    \\
      & u^6 a_6^\prime  = a_6 + r a_4 + r^2 a_2 + r^3- ta_3 - rta_1 - t^2   \\          
    & u^{12} \D^\prime  =  \D                                              \\
\end{split}
\]
The version of the algorithm below is due to M. Laska; see \cite{la-82}. \\

\noindent \textsc{Step 1:} Compute the following
\[
\begin{split}
 c_4 & = (a_1^2+4a_2)^2 - 24(a_1a_3 +2a_4), \\
  c_6  & = - (a_1^2+4a_2)^3 + 36(a_1^2 + 4 a_2)(a_1a_3 + 2 a_4) - 216 (a_3^2 +4a_6) 
\end{split}
\]
\noindent  \textsc{Step 2:} Determine the set $S$ of integers $u \in \Z $ such that there exist $x_u$, $y_u \in \Z$ such that 
$ u^4 = x_u c_4$ and $ u^6 y_u = c_6$.  Notice that   $S$ is a finite set. \\

\noindent  \textsc{Step 3:} Choose the largest $u \in S$, say $u_0$ and factor it as $u_0 = 2^{e_2} \, 3^{e_3} \, v$, where $v$ is relatively prime to 6.  \\

\noindent  \textsc{Step 4:}  Choose 
\[
a_1^\prime, a_3^\prime \in \left\{ \sum_{i=1}^n \alpha_i w_i \, | \, \alpha_i = 0 \, \textbf{ or }  1 \, \right\} \, \textbf{ and } \,
a_2^\prime \in \left\{ \sum_{i=1}^n \alpha_i w_i \, | \, \alpha_i = -1, 0 \, \textbf{ or }  1 \, \right\}
\]
subject to the following conditions:
\[ (a_1^\prime)^4 \equiv x_u \mod 8, \quad (a_2^\prime)^3 \equiv - (a_1^\prime)^6 - y_u \mod 3.  \]

\noindent  \textsc{Step 5:}  Solve the following equations for $a_4^\prime$ and $a_6^\prime$
\[
\begin{split}
 x_u & = ({a_1^\prime}^2+4{a_2^\prime})^2 - 24({a_1^\prime}{a_3^\prime} +2{a_4^\prime}), \\
 y_u  & = - ({a_1^\prime}^2+4{a_2^\prime})^3 + 36({a_1^\prime}^2 + 4 {a_2^\prime})({a_1^\prime}{a_3^\prime} + 2 {a_4^\prime}) - 216 ({a_3^\prime}^2 +4{a_6^\prime}) 
\end{split}
\] 

\noindent  \textsc{Step 6:} Solve the equations for $s, r, t$ successively 
\[ u a_1^\prime = a_1 + 2s, \quad u^2 a_2^\prime = a_2 - s a_1 + 3 r - s^2, \quad u^3 a_3^\prime = a_3 + r a_1 + 2t \]
For these values of $a_1^\prime, \dots , a_6^\prime$  the  \cref{w-eq-2}  is the desired result.  

For a complete version of the algorithm see \cite{la-82}.


\subsection{Superelliptic curves with minimal weighted moduli point}

Now we will consider the minimal models of curves over $\O_k$.  Let $\X$ be as in \cref{w-eq-super}  and $\p = [\I(f)]\in \wP_\w^n (k)$.  
Let us assume that for a prime $p\in \O_k$, we have $\nu_p \left( \wgcd (\p) \right) = \alpha$.  If we use the transformation $x\to \frac x {p^\beta} x$, for $\beta \leq \alpha$, then from \cref{prop-2}  the invariants will be transformed according to
\[ \frac 1 {p^{ \frac d 2 \beta }}\star \I(f) \,.\]
To ensure that the moduli point $\p$ still has integer coefficients we must pick $\beta$ such that $p^{\frac {\beta d} 2}$ divides  $p^{\nu_p (x_i)}$ for $i= 0, \dots , n$.   Hence, we must pick $\beta$ as the maximum integer such that $\beta \leq \frac 2 d \nu_p (x_i)$, for all $i=0, \dots , n$. 
%
The transformation 
\[ (x, y) \to \left( \frac x {p^\beta} , y \right), \]
has a corresponding Jacobian matrix $M=\begin{bmatrix} \frac 1 {p^\beta} & 0 \\ 0 & 1 \end{bmatrix}$ with $\det M = \frac 1 {p^\beta}$.  Hence, \cref{prop-2} implies that the moduli point $\p$ changes according to
$  \p \to \left( \frac 1 {p^\beta} \right)^{d/2} \star \p $,  
which is still an integer tuple.  We can repeat this this for all primes $p$ dividing $\wgcd (\p)$.     Notice that the new point is not necessarily normalized in $\wP_\w^n (k)$ since $\beta $ is not necessarily equal to $\alpha$.  
This motivates the following definition. 

\begin{defi}
Let $\X$ be a superelliptic curve defined over an integer ring $\O_k$ and $\p \in \wP_\w^n (\O_k)$ its corresponding weighted moduli point. We say that $\X$ has a \textbf{minimal model} over $\O_k$ if for every prime $p \in \O_k$ the \textbf{valuation of the tuple} at $p$
\[
\val_p (\p) := \max \left\{  \nu_p (x_i)   \text{ for all }  i=0, \ldots n    \right\},
\]
is minimal, where $\nu_p (x_i)$ is the valuation of $x_i$ at the prime $p$. 
\end{defi}
%
The following is proved in \cite{super-min}.

\begin{thm}\label{thm-4}  Minimal models of superelliptic curves exist.  
In particular, the equation given by $\X :$ $ z^m y^{d-m}= f(x, y)$ is a minimal model over $\O_k$,   if   for every prime $p \in \O_k$ which divides  
$ p \, | \,  \wgcd \left( \I(f) \right)$,  the valuation $\val_p $  of $\I(f)$ at $p$ satisfies  
\begin{equation}\label{val}
\val_p (\I(f))  <   \frac d 2  \,  q_i,
\end{equation}
for all $i=0, \dots , n$.  Moreover,  then for  $\lambda=\wgcd (\I (f)) $ with respect  the weights
$\left( \left\lfloor  \frac {dq_0} {2} \right\rfloor,\ldots, \left\lfloor  \frac {dq_n} {2} \right\rfloor \right) $,    
the transformation 
\[ (x, y, z) \to \left(      \frac x \lambda, y, \lambda^{\frac d m} z \right) \] 
 gives the minimal model of  $\X$ over $\O_k$. If $m|d$ then this isomorphism is defined over $k$. 
\end{thm}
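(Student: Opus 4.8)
\textbf{Proof plan for \cref{thm-4}.}

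The plan is to reduce the statement to \cref{prop-2} (the transformation law for invariants under $GL_2(k)$) together with the definition of the weighted gcd and the valuation $\val_p$ of a weighted tuple. First I would fix a prime $p \in \O_k$ dividing $\wgcd\left(\I(f)\right)$ and set $\alpha := \nu_p\left(\wgcd(\I(f))\right)$; this means $p^{\alpha q_i} \mid I_{q_i}(f)$ for every $i$, with at least one index where equality holds in the appropriate weighted sense. The transformation $(x,y,z) \mapsto \left(\frac{x}{\lambda}, y, \lambda^{d/m} z\right)$, where $\lambda = \wgcd(\I(f))$ taken with respect to the weights $\left(\lfloor \frac{dq_0}{2}\rfloor, \dots, \lfloor \frac{dq_n}{2}\rfloor\right)$, has Jacobian matrix $M = \begin{bmatrix} 1/\lambda & 0 \\ 0 & 1 \end{bmatrix}$ on the $(x,y)$-part, so $\det M = 1/\lambda$. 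By \cref{prop-2}, the moduli point transforms as $\p \mapsto \left(\det M\right)^{d/2} \star \p = \lambda^{-d/2} \star \p$, i.e. $I_{q_i}(f) \mapsto \lambda^{-d q_i/2} I_{q_i}(f)$. The content of the theorem is then: (a) after this transformation the new invariant tuple still lies in $\O_k^{n+1}$ (integrality is preserved), and (b) the resulting $\val_p$ of the new tuple is minimal among all $GL_2(k)$-translates that keep the equation integral.

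For (a), I would check that $p^{\alpha q_i} \mid I_{q_i}(f)$ implies, for the factor of $p$ in $\lambda$, that $p^{\lfloor \alpha q_i d/2 \rfloor}$ divides $p^{\nu_p(I_{q_i}(f))}$; this is exactly the role of the floor functions $\lfloor d q_i/2 \rfloor$ in the definition of the weights used to compute $\lambda$, and it guarantees that dividing $I_{q_i}$ by $\lambda^{d q_i/2}$ (interpreted via these floors) leaves an algebraic integer. Running over all primes $p \mid \wgcd(\I(f))$ and multiplying the corresponding $\lambda$-factors gives the full $\lambda$. The hypothesis \cref{val}, namely $\val_p(\I(f)) < \frac{d}{2} q_i$ for all $i$, is what ensures that one cannot over-divide: any further division by a positive power of $p$ would send some coordinate $I_{q_i}$ out of $\O_k$, because its $p$-valuation would drop below zero. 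So the transformed point is integral and cannot be reduced further at $p$ — i.e. its $\val_p$ is minimal.

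For (b), I would argue that among all $GL_2(k)$-changes of variables preserving integrality, the only ones that scale the moduli point are those of the form $\lambda \star (-)$ by \cref{prop-2} (up to the $\star$-action, two integral models give the same point iff they differ by $\lambda = (\det M)^{d/2}$), and the constraint $\val_p(\I(f)) < \frac{d}{2} q_i$ for all $i$ shows $p^{\lfloor d q_i/2\rfloor}$ is the exact obstruction to a further rescaling by $p$ — hence $\val_p$ of the transformed tuple equals the minimum of $\nu_p$ over the orbit, for every $p$. Finally, for the rationality claim: the transformation $(x,y,z)\mapsto(x/\lambda, y, \lambda^{d/m}z)$ involves $\lambda^{d/m}$, which lies in $k$ precisely when $m \mid d$, so in that case the isomorphism to the minimal model is defined over $k$ rather than merely over $\bar k$. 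The main obstacle I anticipate is bookkeeping the floor functions $\lfloor d q_i / 2 \rfloor$ correctly: one must verify that dividing by $\lambda$ with these floor-adjusted weights is simultaneously (i) enough to clear the common factor and (ii) never too much, and that the two conditions are compatible exactly under hypothesis \cref{val}; this is where the proof in \cite{super-min} does the real work, and I would simply invoke it for the delicate valuation estimates rather than reprove them here.
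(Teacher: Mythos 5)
Your plan follows essentially the same route as the paper: a diagonal change of variable $(x,y)\mapsto(x/\lambda,y)$ analyzed prime-by-prime via the transformation law of \cref{prop-2}, with the floor-adjusted weights $\lfloor dq_i/2\rfloor$ guaranteeing integrality, hypothesis \cref{val} blocking any further division at $p$, the delicate valuation estimates delegated to \cite{super-min}, and the rationality claim reduced to $\lambda^{d/m}\in k$ exactly when $m\mid d$. This matches the paper's own argument, which likewise does the $\beta\le \frac{2}{d}\nu_p(x_i)$ bookkeeping, invokes \cref{prop-2} for the rescaled moduli point, and cites \cite{super-min} for the full details.
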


   Let $\X$ be a superelliptic curve given by \cref{w-eq-super} over $\O_k$ and   $\p = \I(f) \in \wP_\w^n (\O_k)$ with weights $\w=(q_0, \dots , q_n)$.  Then $\p \in \wP_\w^n (\O_k)$ and  exists $M\in SL_2 (\O_k)$ such that $M= \begin{bmatrix}  \frac 1 \lambda  & 0 \\ 0 & 1  \end{bmatrix} $ and $\lambda$ as in the theorem's hypothesis, and \cref{val} holds; see \cite{super-min} for details. 

Let us also determine how the equation of the curve $\X$ changes when we apply the transformation by $M$. We have 
%
\[ 
z^m y^{d-m} = f  \left(\frac x \lambda , y \right) =  a_d \frac {x^d} {\lambda^d} + a_{d-1} \frac {x^{d-1}} {\lambda^{d-1}} y + \cdots + a_1 \frac x \lambda y^{d-1} + a_0 y^d \,,
\]
or, equivalently,
\begin{equation}\label{eq-2}
\X^\prime : \;  \lambda^d  z^m y^{d-m} = a_d  x^d + \lambda a_{d-1}  x^{d-1}  y + \cdots + \lambda^{d-1} a_1 x   y^{d-1} + \lambda^d a_0  y^d \,.
\end{equation}
This equation has coefficients in $\O_k$.  Its weighted moduli point is 
\[ \I (f^M) = \frac 1 {\lambda^{\frac d 2}}\star \I(f), \]
and satisfies \cref{val}.   It is a twist of the curve $\X$ since $\lambda^d$ is not necessary a $m$-th power in $\O_k$. The isomorphism of the curves over the field  $k \left(\lambda^{\frac d m}\right)$ is given by 
\[ (x, y, z) \to \left( \frac x \lambda, y, \lambda^{\frac d m} z   \right) \,.
\]
If $m|d$ then this isomorphism is defined over $k$  and $\X^\prime$ has equation 
\[ 
\X^\prime : \;   z^m  y^{d-m} =  a_d  x^d + \lambda a_{d-1}  x^{d-1}  y + \cdots + \lambda^{d-1} a_1 x   y^{d-1} + \lambda^d a_0  y^d \,.
\]
Thus, we have the following:

\begin{cor}
There exists a  curve $\X^\prime$ given in \cref{eq-2}  isomorphic to $\X$ over the field $K:=k \left( \wgcd( \p)^{\frac d m}\right)$ with minimal   $SL_2 (\O_k)$-invariants.    Moreover, if $m  |  d$ then $\X$ and $\X^\prime$ are $k$-isomorphic.
\end{cor}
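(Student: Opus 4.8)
The plan is to obtain this Corollary as a repackaging of \cref{thm-4} together with the explicit coordinate change recorded just above the statement, so the argument is short. First I would fix notation: write $\X: z^m y^{d-m}=f(x,y)$ over $\O_k$ with $f$ of degree $d$, let $\p=\I(f)=[I_0(f):\dots:I_n(f)]\in\wP_\w^n(\O_k)$ be its weighted moduli point with weights $\w=(q_0,\dots,q_n)$, and set $\lambda:=\wgcd(\I(f))$ computed with respect to the weights $\left(\left\lfloor\frac{dq_0}{2}\right\rfloor,\dots,\left\lfloor\frac{dq_n}{2}\right\rfloor\right)$, exactly the $\lambda$ appearing in the hypothesis of \cref{thm-4}. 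By definition $K=k(\wgcd(\p)^{d/m})=k(\lambda^{d/m})$.

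Next I would apply the transformation $(x,y,z)\mapsto\left(\frac{x}{\lambda},y,\lambda^{d/m}z\right)$, whose effect on the binary form is that of the matrix $M=\begin{bmatrix}\lambda^{-1}&0\\0&1\end{bmatrix}$. Substituting into $z^m y^{d-m}=f\!\left(\frac{x}{\lambda},y\right)$ and clearing denominators by multiplying through by $\lambda^{d}$ produces precisely \cref{eq-2}; reading off coefficients termwise, the coefficient of $x^{d-i}y^{i}$ becomes $\lambda^{i}a_{d-i}$, all in $\O_k$, so the new curve $\X'$ is again defined over $\O_k$. By \cref{prop-2} the weighted moduli point transforms as $\I(f^{M})=\frac{1}{\lambda^{d/2}}\star\I(f)$, and the content of \cref{thm-4} is exactly that this tuple satisfies \cref{val} at every prime $p\mid\wgcd(\I(f))$, i.e.\ that $\X'$ has minimal $SL_2(\O_k)$-invariants. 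I would invoke \cref{thm-4} for this minimality assertion rather than redo the valuation estimate.

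It then remains to track the field of definition. The substitution involves $\lambda^{d/m}$, which lies in $K$ by construction, so the morphism $\X\to\X'$ is defined over $K$; the reason a field extension can be genuinely needed is that $\lambda^{d}$ need not be an $m$-th power in $\O_k$, and adjoining $\lambda^{d/m}$ (so that $\lambda^{d}=(\lambda^{d/m})^{m}$) is exactly what trivializes the resulting twist. When $m\mid d$ the exponent $d/m$ is a nonnegative integer, hence $\lambda^{d/m}\in k$, $K=k$, and the isomorphism is $k$-rational; in that case one may further divide the left side of \cref{eq-2} by $\lambda^{d}=(\lambda^{d/m})^{m}$ to reach the cleaner $k$-model $z^m y^{d-m}=a_d x^d+\lambda a_{d-1}x^{d-1}y+\cdots+\lambda^{d}a_0 y^d$.

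The only real subtlety — and the step I would be most careful with — is the \emph{minimality} claim: one must be sure that after this single normalization no prime admits a further reduction. This does not follow from the construction by itself and is precisely the work done by \cref{thm-4}, via the inequality \cref{val}, which forces the valuation $\val_p$ of the new invariant tuple to be as small as possible under all admissible scalings $x\mapsto x/p^{\beta}$. Everything else is bookkeeping with the $GL_2(k)$-action on the invariant ring and with the definition of the weighted gcd.
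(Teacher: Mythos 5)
Your proposal is correct and follows essentially the same route as the paper: apply the transformation $M=\begin{bmatrix} \frac 1 \lambda & 0\\ 0 & 1\end{bmatrix}$ with $\lambda=\wgcd(\p)$, obtain \cref{eq-2} with coefficients in $\O_k$, use \cref{prop-2} to see $\I(f^M)=\lambda^{-d/2}\star\I(f)$, invoke \cref{thm-4} (via \cref{val}) for minimality, and observe the twist is trivialized over $k\left(\lambda^{d/m}\right)$, hence over $k$ when $m\mid d$. Nothing essential is missing.
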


An immediate consequence of the above is that in the case of hyperelliptic curves we have $m=2$ and $d=2g+2$.  Hence,  the curves $\X$ and $\X^\prime$ are always isomorphic over $k$.  We have the following:

\begin{cor} Given a hyperelliptic curve  defined over a ring of integers $\O_k$. There exists a  curve $\X^\prime$  $k$-isomorphic to $\X$   with minimal   $SL_2 (\O_k)$-invariants.  
\end{cor}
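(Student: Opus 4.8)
The plan is to obtain this statement as the hyperelliptic specialization of the corollary immediately above, the only new input being an elementary divisibility. Recall that a hyperelliptic curve $\X$ of genus $g$ defined over $\O_k$ has a Weierstrass model of the form $z^m y^{d-m}=f(x,y)$ with $f$ a binary form of degree $d=2g+2$ and $m=2$; in particular $m\mid d$ and $d/m=g+1\in\Z$. So the whole argument reduces to tracking how the weighted moduli point and the field of definition behave in this case.

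First I would invoke \cref{thm-4} to produce a minimal model. Writing $\p=\I(f)\in\wP_\w^n(\O_k)$ for the weighted moduli point and setting $\lambda=\wgcd(\I(f))$ with respect to the weights $\bigl(\lfloor dq_0/2\rfloor,\dots,\lfloor dq_n/2\rfloor\bigr)$, the substitution $(x,y,z)\mapsto(x/\lambda,\,y,\,\lambda^{d/m}z)$ carries $\X$ to the curve $\X'$ of \cref{eq-2}. By \cref{prop-2} its invariant tuple is $\lambda^{-d/2}\star\I(f)$, and \cref{thm-4} guarantees that this tuple again lies in $\wP_\w^n(\O_k)$ and that the valuation bound \cref{val} holds at every prime of $\O_k$ dividing $\wgcd(\p)$ — that is, $\X'$ has minimal $SL_2(\O_k)$-invariants. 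Nothing beyond \cref{thm-4} is needed for this half.

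Second I would address the field of definition of the isomorphism $\X\to\X'$. In general the change of variables involves the factor $\lambda^{d/m}$, so when $d/m$ is not an integer it requires adjoining an $m$-th root of $\lambda^d$ and is only defined over $K=k\bigl(\wgcd(\p)^{d/m}\bigr)$; this is exactly what the preceding corollary asserts. But for hyperelliptic curves $m=2$ divides $d=2g+2$, so $d/m=g+1$ is an integer, $\lambda^{d/m}\in\O_k$, and the isomorphism descends to $k$ without any extension. Combining the two steps yields a $k$-isomorphic model $\X'$ with minimal $SL_2(\O_k)$-invariants, as claimed.

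The argument has essentially no hard step: it is a direct application of \cref{thm-4} and the corollary above, the only substantive remark being the trivial fact $2\mid 2g+2$. The one point that genuinely requires care — that the transformed coefficients in \cref{eq-2} stay integral while the minimality bound \cref{val} is achieved simultaneously at all relevant primes — is itself the content of \cref{thm-4}, which we are free to assume.
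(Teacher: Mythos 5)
Your proposal is correct and follows the paper's own route: the paper likewise deduces this corollary directly from the preceding one, observing that for hyperelliptic curves $m=2$ and $d=2g+2$, so $m\mid d$ and the isomorphism of \cref{thm-4} is defined over $k$. Nothing further is needed.
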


We give a detailed account of superelliptic curves with minimal invariants in \cite{super-min}.

\section{Field of moduli}\label{sect-11}


Let $\X$ be a genus $g$ projective, irreducible, algebraic curve defined over $k$, say given as the common zeroes of the polynomials $P_{1},\ldots, P_{r}$, and let us denote by $G=\Aut (\X)$ the full automorphism group of $\X$.  
If $\sigma \in {\rm Gal}(k)$, then $X^{\sigma}$ will denote the curve defined as the common zeroes of the polynomials $P_{1}^{\sigma},\ldots,P_{r}^{\sigma}$, where $P_{j}^{\sigma}$ is obtained from $P_{j}$ by applying $\sigma$ to its coefficients. In particular, if $\tau$ is also a field automorphism of $k$, then $X^{\tau \sigma}=(X^{\sigma})^{\tau}$. For details we refer to \cite{h-sh}.


A subfield $k_{0}$ of $k$ is called a \textbf{field of definition} of $\X$ if there is a curve ${\mathcal Y}$, defined over $k_{0}$, which is isomorphic to $\X$. It is clear that every subfield of $k$ containing $k_{0}$ is also a field of definition of it. In the other direction, a subfield of $k_{0}$ might not be a field of definition of $\X$.
Weil's descent theorem \cite{Weil} provides sufficient conditions for a subfield $k_{0}$ of $k$ to be a field of definition. Let us denote by ${\rm Gal}(k/k_{0})$ the group of field automorphisms of $k$ acting as the identity on $k_{0}$.

\begin{thm}[Weil's descent theorem \cite{Weil}]
Assume that for every $\sigma \in {\rm Gal}(k/k_{0})$ there is an isomorphism $f_{\sigma}:\X \to \X^{\sigma}$ so that
$$f_{\tau\sigma}=f_{\sigma}^{\tau} \circ f_{\tau}, \quad \forall \sigma, \tau \in {\rm Gal}(k/k_{0}).$$
Then there is a curve  ${\mathcal Y}$, defined over $k_{0}$, and there is an isomorphism $R:\X \to {\mathcal Y}$, defined over a finite extension of $k_{0}$, so that $R=R^{\sigma} \circ f_{\sigma}$, for every $\sigma \in {\rm Gal}(k/k_{0})$.
\end{thm}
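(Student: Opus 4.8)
The plan is to follow Weil's original cocycle/descent argument, constructing the model $\mathcal Y$ as a quotient of $\X$ by the twisted action of $\mathrm{Gal}(k/k_0)$ encoded by the family $\{f_\sigma\}$. First I would reduce to the case where $\X$ is quasi-projective (so that we may work with affine coordinate rings), and replace $k$ by a finite Galois extension $L/k_0$ over which all the relevant data is defined: by a standard argument the isomorphisms $f_\sigma$ depend only on the image of $\sigma$ in a finite quotient $\mathrm{Gal}(L/k_0)$, since each $f_\sigma$ is an isomorphism of finite-type schemes defined by finitely many equations with coefficients in some finitely generated extension of $k_0$. The cocycle condition $f_{\tau\sigma}=f_\sigma^\tau\circ f_\tau$ then becomes a genuine $1$-cocycle for the finite group $\Gamma=\mathrm{Gal}(L/k_0)$ with values in the (non-abelian) group of $L$-automorphisms of $\X_L$ intertwining the $\Gamma$-semilinear structure.

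The key step is to use the cocycle $\{f_\sigma\}$ to define a new, $\Gamma$-invariant $k_0$-structure on $\X$. Concretely: embed $\X$ in some $\mathbb P^N_L$, cover it by affine opens, and on the affine coordinate ring $A=L[\X]$ define a new semilinear $\Gamma$-action $\sigma \cdot a := f_\sigma^\ast(\sigma(a))$, where $\sigma(a)$ means applying $\sigma$ to the coefficients and $f_\sigma^\ast$ is the comorphism. The cocycle relation $f_{\tau\sigma}=f_\sigma^\tau\circ f_\tau$ is exactly what is needed to verify $(\tau\sigma)\cdot a = \tau\cdot(\sigma\cdot a)$, so that this is an honest semilinear action of $\Gamma$ on $A$. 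Then I would invoke Galois descent for affine schemes (equivalently, the fact that $L/k_0$ is faithfully flat and $H^1(\Gamma, \mathrm{GL}_n)$ is trivial, i.e. Speiser's/Hilbert 90 in its vector-space form): the fixed ring $B:=A^\Gamma$ is a finitely generated $k_0$-algebra with $B\otimes_{k_0} L \cong A$. Gluing the affine pieces (checking the descent data are compatible on overlaps, which again follows from the cocycle condition) yields a curve $\mathcal Y$ over $k_0$ with $\mathcal Y\times_{k_0} L \cong \X_L$; the canonical identification gives an isomorphism $R:\X\to\mathcal Y$ defined over $L$, and unwinding the construction shows precisely $R = R^\sigma\circ f_\sigma$ for all $\sigma\in\Gamma$, hence for all $\sigma\in\mathrm{Gal}(k/k_0)$. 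Finally one checks $\mathcal Y$ inherits smoothness, irreducibility and genus $g$ from $\X$ since these are geometric properties stable under the base change $L/k_0$.

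The main obstacle is the descent step itself: verifying that the twisted semilinear action on the coordinate rings is well-defined, that it glues across an affine cover in a $\Gamma$-equivariant way, and that the fixed-point ring is finitely generated with the expected base-change property. This is where the non-abelian cohomology vanishing (the generalized Hilbert 90 / Speiser's lemma stating $H^1(\mathrm{Gal}(L/k_0), \mathrm{GL}_n(L))=1$) is essential, and where one must be careful that the cover of $\X$ by affines can be chosen $\Gamma$-stable (possible after refining, using quasi-projectivity and that $\Gamma$ is finite). Everything else — reduction to a finite extension, the cocycle bookkeeping, and the transfer of geometric properties — is routine once this core descent is in place. For a complete treatment I would cite Weil's paper \cite{Weil} directly, as the statement is precisely his; the above merely indicates the route.
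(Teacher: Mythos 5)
The paper does not actually prove this statement: it is quoted verbatim as Weil's theorem and justified only by the citation to \cite{Weil}, so there is no internal argument to compare yours against. Your sketch is the standard modern Galois-descent proof and is essentially sound: the twisted semilinear action $\sigma\cdot a:=f_\sigma^{*}(\sigma(a))$ is well defined precisely because of the cocycle relation (one checks $(\tau\sigma)\cdot a=f_{\tau\sigma}^{*}(\tau\sigma(a))=f_\tau^{*}\bigl((f_\sigma^{\tau})^{*}(\tau\sigma(a))\bigr)=\tau\cdot(\sigma\cdot a)$), quasi-projectivity gives a $\Gamma$-stable affine cover, and Speiser/Hilbert 90 gives $A^{\Gamma}\otimes_{k_0}L\cong A$, from which $\mathcal Y$ and the relation $R=R^{\sigma}\circ f_\sigma$ follow by unwinding. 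Two remarks. First, your reduction to a finite quotient is the only genuinely delicate point: for an infinite extension $k/k_0$ a finite-index subgroup of ${\rm Gal}(k/k_0)$ need not be open, so the claim that $\sigma\mapsto f_\sigma$ factors through a finite quotient requires either a continuity hypothesis or an argument using that $\X$ is of finite type over a finitely generated subfield and that ${\rm Isom}(\X,\X^{\sigma})$ is finite (automatic for $g\geq 2$, which is the situation in which the paper applies the theorem); you should make that dependence explicit rather than call it standard. Second, your route differs from Weil's original one, which predates scheme-theoretic gluing and argues birationally with function fields and generic points; the semilinear-action/affine-descent formulation you use is cleaner and is what one would cite today (e.g.\ in treatments of fields of moduli of hyperelliptic curves), while Weil's formulation has the advantage of working directly with the birational data the paper cares about. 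With the finite-quotient point repaired, your proposal is a complete and correct proof of the statement as used here.
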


Clearly, the sufficient conditions in Weil's descent theorem are trivially satisfied if $\X$ has non-trivial automorphisms. This is the generic situation for $\X$ of genus at least three.

\begin{cor}\label{coro:weil}
If $\X$ has trivial group of automorphisms and for every $\sigma \in {\rm Gal}(k/k_{0})$ there is an isomorphism $f_{\sigma}:\X \to \X^{\sigma}$, then $\X$ can be defined over $k_{0}$.
\end{cor}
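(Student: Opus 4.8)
\textbf{Proof proposal for Corollary \ref{coro:weil}.}

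The plan is to deduce the corollary directly from Weil's descent theorem (the theorem stated immediately above) by checking that its hypothesis is automatically satisfied when $\Aut(\X)$ is trivial. First I would observe that the hypothesis we are given is precisely that for every $\sigma \in \Gal(k/k_0)$ there exists an isomorphism $f_\sigma : \X \to \X^\sigma$; what is \emph{not} given, and what Weil's theorem additionally requires, is the cocycle compatibility condition $f_{\tau\sigma} = f_\sigma^\tau \circ f_\tau$ for all $\sigma,\tau$. So the entire content of the proof is to show this cocycle condition comes for free.

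The key step is the following. Fix $\sigma,\tau \in \Gal(k/k_0)$. Consider the two maps $f_{\tau\sigma} : \X \to \X^{\tau\sigma}$ and $f_\sigma^\tau \circ f_\tau : \X \to \X^{\tau\sigma}$. Here $f_\sigma^\tau$ denotes the isomorphism $\X^\tau \to (\X^\sigma)^\tau = \X^{\tau\sigma}$ obtained by applying $\tau$ to the coefficients of the polynomial/rational maps defining $f_\sigma$; it is an isomorphism because $f_\sigma$ is, and $\tau$ is a field automorphism. Both $f_{\tau\sigma}$ and $f_\sigma^\tau\circ f_\tau$ have the same source $\X$ and the same target $\X^{\tau\sigma}$, so their composition
\[
\left(f_\sigma^\tau \circ f_\tau\right)^{-1} \circ f_{\tau\sigma} : \X \longrightarrow \X
\]
is an automorphism of $\X$. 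Since $\Aut(\X)$ is trivial by hypothesis, this automorphism is the identity, hence $f_{\tau\sigma} = f_\sigma^\tau \circ f_\tau$. This is exactly the compatibility condition required in Weil's descent theorem. Applying that theorem then yields a curve $\mathcal{Y}$ defined over $k_0$ together with an isomorphism $R : \X \to \mathcal{Y}$, which in particular shows $k_0$ is a field of definition of $\X$; that is, $\X$ can be defined over $k_0$.

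I do not expect any serious obstacle here: the argument is essentially a one-line reduction, and the only point that needs care is the bookkeeping of which objects live over which fields — specifically that $f_\sigma^\tau$ really is a well-defined isomorphism $\X^\tau \to \X^{\tau\sigma}$ (using $(\X^\sigma)^\tau = \X^{\tau\sigma}$, which was noted in the paragraph preceding Weil's theorem) and that the composite $(f_\sigma^\tau\circ f_\tau)^{-1}\circ f_{\tau\sigma}$ genuinely has both source and target equal to $\X$ so that triviality of $\Aut(\X)$ can be invoked. One might also remark, as the paper's preceding sentence does, that this is the generic situation in genus $\geq 3$, so the corollary covers "most" curves, but that remark is not needed for the proof itself.
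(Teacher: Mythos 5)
Your argument is correct and is exactly the justification the paper has in mind: since $\Aut(\X)$ is trivial, the composite $(f_\sigma^\tau\circ f_\tau)^{-1}\circ f_{\tau\sigma}$ is an automorphism of $\X$ and hence the identity, so Weil's cocycle condition holds automatically and the descent theorem applies. The paper states this only implicitly (as an immediate consequence of Weil's theorem), so your write-up simply makes the same one-line reduction explicit.
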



The notion of field of moduli was originally introduced by Shimura for the case of abelian varieties and later extended to more general algebraic varieties by Koizumi.
If $G_{\X}$ is the subgroup of ${\rm Gal}(k)$ consisting of those $\sigma$ so that $\X^{\sigma}$ is isomorphic to $\X$, then the fixed field $M_{\X}$ of $G_{\X}$ is called \textbf{the field of moduli} of $\X$.
As we are assuming that $k$ is algebraically closed and of characteristic zero, we have that $G_{\X}$ consists of all automorphisms of ${\rm Gal}(k)$ acting as the identity on $M_{\X}$.

Every curve of genus $g \leq 1$ can be defined over its field of moduli. If $g \geq 2$, then  there are known examples of curves which cannot be defined over their field of moduli.  A direct consequence of   \cref{coro:weil} is the following.

\begin{cor}\label{corotrivial}
Every curve with trivial group of automorphisms can be defined over its field of moduli.
\end{cor}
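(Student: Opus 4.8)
The plan is to derive \cref{corotrivial} directly from \cref{coro:weil}, which is itself the special case of Weil's descent theorem for curves with trivial automorphism group. So the only real content is to check that the hypothesis of \cref{coro:weil} is met when $k_0 = M_\X$ is the field of moduli.

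First I would recall the setup: let $k_0 := M_\X$ be the field of moduli of $\X$, i.e. the fixed field of $G_\X = \{\sigma \in \Gal(k) : \X^\sigma \cong \X\}$. Since $k$ is algebraically closed of characteristic zero, $G_\X = \Gal(k/M_\X)$, so for every $\sigma \in \Gal(k/k_0)$ we have $\sigma \in G_\X$ and hence, by the very definition of $G_\X$, there exists an isomorphism $f_\sigma : \X \to \X^\sigma$. This is exactly the hypothesis required by \cref{coro:weil} (together with the standing assumption that $\Aut(\X)$ is trivial). Applying \cref{coro:weil} with $k_0 = M_\X$ yields a curve $\mathcal Y$ defined over $M_\X$ together with an isomorphism $\X \to \mathcal Y$; that is, $\X$ can be defined over its field of moduli, which is the claim.

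The step that needs a word of care — and which I expect to be the only genuine obstacle — is the compatibility (cocycle) condition $f_{\tau\sigma} = f_\sigma^\tau \circ f_\tau$ that Weil's theorem demands. In \cref{coro:weil} this has already been absorbed: when $\Aut(\X)$ is trivial, any two isomorphisms $\X \to \X^\sigma$ coincide, so the collection $\{f_\sigma\}$ is forced to be a cocycle automatically. Concretely, $f_\sigma^\tau \circ f_\tau$ and $f_{\tau\sigma}$ are both isomorphisms $\X \to \X^{\tau\sigma}$, hence their composition with the inverse of one another lies in $\Aut(\X) = \{\id\}$, forcing equality. Thus no extra verification beyond triviality of the automorphism group is needed, and I would simply cite \cref{coro:weil}.

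In summary, the proof is a one-line deduction: take $k_0 = M_\X$, observe that $\Gal(k/M_\X) = G_\X$ so isomorphisms $f_\sigma : \X \to \X^\sigma$ exist for all $\sigma \in \Gal(k/M_\X)$, and invoke \cref{coro:weil} (whose hypotheses, including the automatic cocycle condition, are then satisfied) to conclude that $\X$ descends to $M_\X$. I would present it essentially as: ``Apply \cref{coro:weil} with $k_0$ equal to the field of moduli.''

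\begin{proof}
Let $k_{0}:=M_{\X}$ be the field of moduli of $\X$. Since $k$ is algebraically closed of characteristic zero, $G_{\X}={\rm Gal}(k/M_{\X})$; in particular, for every $\sigma \in {\rm Gal}(k/k_{0})$ we have $\sigma \in G_{\X}$, so by definition of $G_{\X}$ there is an isomorphism $f_{\sigma}:\X \to \X^{\sigma}$. As $\Aut(\X)$ is trivial, such an $f_{\sigma}$ is unique; hence the family $\{f_{\sigma}\}$ automatically satisfies $f_{\tau\sigma}=f_{\sigma}^{\tau}\circ f_{\tau}$ for all $\sigma,\tau$, because both sides are isomorphisms $\X \to \X^{\tau\sigma}$ and therefore differ by an element of $\Aut(\X)=\{\id\}$. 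Thus the hypothesis of \cref{coro:weil} holds with $k_{0}=M_{\X}$, and \cref{coro:weil} produces a curve defined over $M_{\X}$ isomorphic to $\X$. Hence $\X$ can be defined over its field of moduli.
\end{proof}
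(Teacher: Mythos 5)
Your proof is correct and follows exactly the paper's route: the paper presents \cref{corotrivial} as a direct consequence of \cref{coro:weil}, using the stated fact that $G_{\X}={\rm Gal}(k/M_{\X})$ when $k$ is algebraically closed of characteristic zero, which is precisely your argument. Your remark that triviality of $\Aut(\X)$ forces the Weil cocycle condition automatically is a correct (and welcome) elaboration of why \cref{coro:weil} itself holds, but it is not a departure from the paper's approach.
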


As a consequence of Belyi's theorem \cite{Belyi}, every quasiplatonic curve $\X$ can be defined over $\overline{\mathbb Q}$ (so over a finite extension of ${\mathbb Q}$).

\begin{thm}[Wolfart \cite{Wolfart}]\label{Wolfart}
Every quasiplatonic curve can be defined over its field of moduli (which is a number field).
\end{thm}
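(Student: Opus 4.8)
The plan is to reduce the statement to Weil's descent theorem, exactly as in the trivial-automorphism case, but now we must supply the descent cocycle using the rigidity coming from the quasiplatonic structure rather than from triviality of $\Aut(\X)$. First I would recall that a quasiplatonic (triatic) curve is one admitting a Galois cover $\X \to \P^1$ branched over exactly three points, equivalently $\X/G \cong \P^1$ with $G$ acting with signature $(0; a,b,c)$ for some group $G \le \Aut(\X)$; by Belyi together with the computation in \cref{sect-6}, such $\X$ correspond to $0$-dimensional strata $\M(g,G,\C)$ in $\M_g$, so that the Hurwitz space $\H(g,G,\C)$ is a finite set of points, all defined over $\overline{\mathbb Q}$. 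In particular $M_\X$ is a number field, which gives the parenthetical claim.

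The core argument: let $k_0 = M_\X$ be the field of moduli, and let $\sigma \in \Gal(k/k_0)$. By definition of the field of moduli there exists an isomorphism $f_\sigma : \X \to \X^\sigma$. The obstruction to defining $\X$ over $k_0$ is that $\sigma \mapsto f_\sigma$ need not be chosen to satisfy the cocycle condition $f_{\tau\sigma} = f_\sigma^\tau \circ f_\tau$; the discrepancy $f_\sigma^\tau \circ f_\tau \circ f_{\tau\sigma}^{-1}$ lies in $\Aut(\X)$, so a priori one gets a class in $H^2(\Gal(k/k_0), Z(\Aut(\X)))$ type obstruction, or more precisely a non-abelian cocycle issue. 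The key point for quasiplatonic curves is that the triangle group structure makes the pair $(\X, $ the branched cover to $\P^1)$ rigid: the three branch points on $\X/G \cong \P^1$ can be normalized (after a Möbius change) to $\{0,1,\infty\}$ in a way that is canonical up to the finite symmetry permuting branch points of equal order. Concretely, I would follow Wolfart's argument: pass to the canonical model, observe that $G = \Aut(\X)$ in the generic quasiplatonic case (or otherwise work with the characteristic cover given by the canonical embedding), use that the cover $\X \to \X/\Aut(\X) \cong \P^1$ together with its branch divisor is defined over $k_0$ up to the action of $\Gal(k/k_0)$, and that the finiteness of $\H(g,G,\C)$ forces the Galois action to factor through automorphisms, so one can actually choose the $f_\sigma$ compatibly. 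Then Weil's descent theorem (stated above) applies verbatim and produces a model $\mathcal Y$ over $k_0$.

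More carefully, the steps I would carry out are: (1) identify the canonical ideal sheaf / canonical model of $\X$, which is intrinsic and hence $\Gal(k/k_0)$ acts on the set of $k$-models preserving it; (2) show that the quotient map $\X \to \P^1$ by $\Aut(\X)$ descends the branch locus to a $\Gal(k/k_0)$-stable divisor of degree three on a form of $\P^1$, which by Châtelet (a conic with a rational point, forced here because a degree-$3$ divisor gives an odd-degree rational cycle class) is $\P^1_{k_0}$ itself, and the branch divisor is a $k_0$-rational effective divisor of degree three; (3) conclude that the function field extension $k(\X)/k_0(\P^1)$ is determined up to $k_0$-isomorphism by the branch data plus the monodromy datum $\C$, the latter being combinatorial and hence $\Gal$-invariant — this is where quasiplatonicity (only three branch points) is essential, because it removes the moduli of the base and pins down everything; (4) build the cocycle $\{f_\sigma\}$ from these canonical choices and verify the cocycle relation, then invoke Weil. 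I expect the main obstacle to be step (2)–(3): handling the case where $\Aut(\X)$ is larger than the triangle group $G$, or where the three branch points have equal ramification and so can be permuted by a nontrivial $\Gal(k/k_0)$-equivariant symmetry — one must check that this residual permutation action is realized by automorphisms of $\X$ and so gets absorbed into the descent datum rather than obstructing it. This is precisely the technical heart of Wolfart's theorem, and I would cite \cite{Wolfart} for the detailed verification while presenting the conceptual skeleton above.
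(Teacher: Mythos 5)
The paper does not actually prove this statement: it is quoted as Wolfart's theorem and the proof is delegated wholesale to \cite{Wolfart}, so there is no internal argument to compare yours against. Your outline does follow the standard route, and several ingredients are right: the stratum $\M(g,G,\C)$ attached to a quasiplatonic curve has $3g_0-3+r=0$, so the Hurwitz locus is finite, the curve is defined over $\overline{\Q}$ and $M_{\X}$ is a number field; and your observation that the branch locus is a Galois-stable effective divisor of degree $3$ on the canonical genus-zero model of $\X/\Aut(\X)$, so that this conic carries an odd-degree rational divisor class and hence a rational point and is $\P^1$ over $M_{\X}$, is exactly the right lever for the descent of the base.

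The genuine gap is at the center of the argument. The assertion that finiteness of $\H(g,G,\C)$ ``forces the Galois action to factor through automorphisms, so one can actually choose the $f_\sigma$ compatibly'' is not a deduction: for every $\sigma$ fixing $M_{\X}$ an isomorphism $f_\sigma:\X\to\X^\sigma$ exists by the very definition of the field of moduli, and the whole content of the theorem is that these can be chosen to satisfy Weil's cocycle condition. Rigidity by itself does not give this --- there are (non-quasiplatonic) curves with $\X^\sigma\cong\X$ for all such $\sigma$ whose field of moduli is nonetheless not a field of definition --- so the compatibility has to be extracted from the structure of the cover, e.g.\ by producing an $M_{\X}$-rational point of the canonical model of $\X/\Aut(\X)$ off the branch locus and invoking the D\`ebes--Emsalem criterion, or by Wolfart's uniformization/triangle-group argument. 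Since you defer precisely this verification to \cite{Wolfart}, the proposal is circular as a proof of the statement (which is acceptable only if, like the paper, you intend to cite rather than prove). A smaller inaccuracy: the monodromy datum $\C$ is not Galois-invariant in general --- Galois acts on the inertia classes through the cyclotomic character (branch cycle lemma) --- and it is only after restricting to $\Gal(k/M_{\X})$, i.e.\ after using the definition of the field of moduli, that the ramification data are preserved up to isomorphism.
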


\subsection{Two practical sufficient conditions}
When the curve $\X$ has a non-trivial group of automorphisms, then Weil's conditions (in Weil's descent theorem) are in general not easy to check. Next we consider certain cases for which it is possible to check for $\X$ to be definable over its field of moduli.

\medskip   \noindent \textbf{Sufficient condition 1: unique subgroups}
Let $H$ be a subgroup of $\Aut(\X)$. In general there might other different subgroups $K$ which are isomorphic to $H$ and with $\X/K$ and $\X/H$ having the same signature. For instance, the genus-two curve $\X$ defined by $y^{2}=x(x-1/2)(x-2)(x-1/3)(x-3)$ has two conformal involutions, $\tau_{1}$ and $\tau_{2}$, whose product is the hyperelliptic involution. The quotient $\X/\langle \tau_{j}\rangle$ has genus one and exactly two cone points (of order two). 
We say that $H$ is 
is \textbf{unique} in $\Aut(\X)$ if it is the unique subgroup of $\Aut(\X)$ isomorphic to $H$ and with quotient orbifold of same signature as  $\X/H$. Typical examples are (i) $H=\Aut(\X)$ and (ii) $H$ being the cyclic group generated by the hyperelliptic involution for the case of hyperelliptic curves. 
If $H$ is unique in $\Aut(\X)$, then it is a normal subgroup; so we may consider the reduced group $\bAut(\X)=\Aut(\X)/H$, which is a group of automorphisms of the quotient orbifold $\X/H$. In \cite{HQ} the following sufficient condition for a curve to definable over its field of moduli was obtained;

\begin{thm}\label{thm1}
Let $\X$ be a curve of genus $g \geq 2$ admitting a subgroup $H$ which is unique in $\Aut(\X)$ and so that $\X/H$ has genus zero.  If the reduced group of automorphisms $\bAut(\X)=\Aut(\X)/H$ is different from trivial or cyclic, then $\X$ is definable over its field of moduli.
\end{thm}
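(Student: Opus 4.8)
The plan is to verify Weil's cocycle condition (the hypothesis of the descent theorem quoted above) for the curve $\X$, using the uniqueness of $H$ together with the non-triviality/non-cyclicity of $\bAut(\X)$. First I would fix $k_0 = M_\X$, the field of moduli, so that for every $\sigma \in \Gal(k/k_0)$ there exists an isomorphism $f_\sigma : \X \to \X^\sigma$. The obstruction to descent is that the family $\{f_\sigma\}$ can be modified only by postcomposition with automorphisms, $f_\sigma \mapsto f_\sigma \circ a_\sigma$ with $a_\sigma \in \Aut(\X)$, and one must choose the $a_\sigma$ so that the resulting family satisfies $f_{\tau\sigma} = f_\sigma^\tau \circ f_\tau$. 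Define $c_{\sigma,\tau} := (f_\sigma^\tau)^{-1} \circ f_{\tau\sigma} \circ f_\tau^{-1} \in \Aut(\X)$; a short computation (using that $(X^\sigma)^\tau = X^{\tau\sigma}$) shows $c$ is a $2$-cocycle for $\Gal(k/k_0)$ with values in $\Aut(\X)$, and that $\X$ descends to $k_0$ precisely when this class is trivial in $H^2(\Gal(k/k_0), \Aut(\X))$ — more precisely when it can be killed by a suitable modification of the $f_\sigma$.

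The key step is to push the problem down to the quotient $\P^1 = \X/H$. Because $H$ is unique in $\Aut(\X)$ in the sense defined before the theorem, $H$ is normal, so $f_\sigma$ carries $H$ to the corresponding unique subgroup of $\Aut(\X^\sigma)$ and hence descends to an isomorphism $\bar f_\sigma : \X/H \to \X^\sigma/H = (\X/H)^\sigma$ of genus-zero orbifolds. Thus $\X/H$, as an orbifold (a $\P^1$ with a marked branch divisor), has field of moduli contained in $k_0$; since a genus-zero curve with a rational divisor class of the branch points is always definable over its field of moduli, we may assume after a coordinate change that $\X/H$ together with its branch locus is already defined over $k_0$ and that each $\bar f_\sigma$ is an automorphism of this $\P^1$ preserving the branch locus, i.e. $\bar f_\sigma \in \bAut(\X) \subseteq \pgl_2(k_0)$ lies in the (finite) automorphism group of the marked $\P^1$. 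The map $\sigma \mapsto \bar c_{\sigma,\tau}$ is then the image of our cocycle in $H^2(\Gal(k/k_0), \bAut(\X))$, and one checks it is a coboundary there because $\bAut(\X)$ acts on an object already rigidified over $k_0$. The hypothesis that $\bAut(\X)$ is neither trivial nor cyclic is exactly what is needed to lift this triviality upstairs: the central extension
\[
1 \to H \to \Aut(\X) \to \bAut(\X) \to 1
\]
(with $H$ cyclic, sitting centrally since $\tau$ is the superelliptic automorphism) has the property that the relevant obstruction in $H^2$ vanishes once we know the quotient cocycle is trivial — for $\bAut(\X)$ non-cyclic one can always choose the correcting automorphisms $a_\sigma$ consistently, whereas for cyclic $\bAut(\X)$ a genuine Brauer-type obstruction can survive (this is the content of the known counterexamples).

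Concretely I would argue as follows. Choose $\bar a_\sigma \in \bAut(\X)$ realizing the triviality of the quotient cocycle, replace $f_\sigma$ by $f_\sigma \circ a_\sigma$ for some lift $a_\sigma \in \Aut(\X)$ of $\bar a_\sigma$, and observe that the new cocycle $c'_{\sigma,\tau}$ now takes values in $H$. So it suffices to show $H^2(\Gal(k/k_0), H)$-class of $c'$ is trivial, where $H \cong C_n$ with trivial Galois action (the roots of unity defining $\tau$ may be taken in $k_0$ after enlarging by a cyclotomic extension, which does not affect definability by a further short descent argument). Here one invokes that $\bAut(\X)$ is non-cyclic: a non-cyclic finite subgroup of $\pgl_2$ contains two non-commuting elements, and lifting the relation between them to $\Aut(\X)$ produces an element of $H$ that allows one to adjust the $a_\sigma$ and kill the residual $H$-valued class — this is the technical heart, carried out in \cite{HQ}, and I would cite it rather than reprove it. The main obstacle, and the place where all the hypotheses are genuinely used, is precisely this last lifting step: controlling the central extension $1 \to H \to \Aut(\X) \to \bAut(\X) \to 1$ well enough to transfer the vanishing of the obstruction from $\bAut(\X)$ to $\Aut(\X)$. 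Everything else — the cocycle formalism, the descent of $\X/H$ to its field of moduli, the reduction to trivial Galois action on $H$ — is routine.
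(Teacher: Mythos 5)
The paper itself offers no proof of this statement: it is quoted from \cite{HQ}, so there is nothing internal to compare against, and your sketch has to stand on its own. It does not, for two reasons. First, the step you call the technical heart is again delegated to \cite{HQ}, so the proposal is not self-contained. Second, the parts you do spell out contain real gaps. The most serious is the assertion that ``we may assume after a coordinate change that $\X/H$ together with its branch locus is already defined over $k_0$ and that each $\bar f_\sigma$ is an automorphism of this $\P^1$.'' That is the crux of the whole problem, not a routine reduction: a genus-zero quotient descends a priori only to a conic over $k_0$ (it is $\P^1$ over $k_0$ only when that conic has a rational point), and even granting a $k_0$-model of the marked curve, the maps $\bar f_\sigma$ are canonical only up to composition with elements of $\bAut(\X)$, so arranging them into a Weil descent datum is precisely what has to be proved; ``one checks it is a coboundary there because $\bAut(\X)$ acts on an object already rigidified over $k_0$'' is not an argument. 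Relatedly, $\Aut(\X)$ is non-abelian, so the obstruction is not a class in an $H^2(\Gal(k/k_0),\Aut(\X))$ that can be manipulated the way you do.

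You also locate the hypothesis that $\bAut(\X)$ is neither trivial nor cyclic in the wrong place. In the arguments in the literature (Huggins for hyperelliptic curves, and \cite{HQ} here) the non-cyclicity is exploited on the base: a non-trivial non-cyclic finite subgroup of $\pgl_2$ is dihedral, $A_4$, $S_4$ or $A_5$, so the induced cover $\X/H \to \X/\Aut(\X)$ is branched over exactly three points with signature $(2,2,n)$, $(2,3,3)$, $(2,3,4)$ or $(2,3,5)$, and this rigid extra structure (a branch point singled out by its index, or failing that an odd-degree Galois-invariant divisor on the conic model of the quotient, which forces a rational point) is what allows one to normalize the $f_\sigma$ coherently. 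It is not a matter of lifting an already trivialized cocycle through the central extension $1\to H\to \Aut(\X)\to \bAut(\X)\to 1$; indeed the known failures of descent occur for cyclic (even trivial) reduced group and are caused by pointless conics, not by a residual $H$-valued class surviving a lifting step. Finally, your reduction assumes $H$ is cyclic and central (generated by a superelliptic automorphism), whereas the theorem is stated for an arbitrary subgroup $H$ that is unique in $\Aut(\X)$ with genus-zero quotient, so that restriction would also have to be removed or justified.
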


If $\X$ is a hyperelliptic curve, then a consequence of the above is the following result.

\begin{cor}\label{cor1b}
Let $\X$ be a hyperelliptic curve with extra automorphisms and reduced automorphism group $\bAut (\X)$ not isomorphic to a cyclic group.  Then, the field of moduli of $\X$  is a field of definition. 
\end{cor}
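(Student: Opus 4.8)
The plan is to deduce \cref{cor1b} directly from \cref{thm1} by checking that a hyperelliptic curve with non-cyclic reduced automorphism group falls into the scope of that theorem. First I would let $\X$ be a hyperelliptic curve of genus $g\geq 2$, write $H=\langle\iota\rangle$ for the cyclic group of order $2$ generated by the hyperelliptic involution $\iota$, and recall the classical fact that $\iota$ is central in $\Aut(\X)$ and that $\X/H\cong\P^1$ has genus zero. The key point to verify is that $H$ is \emph{unique} in $\Aut(\X)$ in the sense of \cref{thm1}: I would argue that any subgroup $K\leq\Aut(\X)$ with $K\cong C_2$ and with $\X/K$ of the same signature as $\X/H$ — namely a genus-zero orbifold with $2g+2$ cone points of order $2$ — must coincide with $H$. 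Indeed, a non-hyperelliptic involution on a hyperelliptic curve of genus $g\geq 2$ has at most $g+1<2g+2$ fixed points (this can be extracted from the fixed-point bounds in \cref{sect-4}, e.g. \cref{prop:number-fixed-pts} together with Riemann--Hurwitz applied to $\X\to\X/K$), so its quotient orbifold has strictly fewer cone points; hence only $\iota$ has the required signature and $H$ is unique.

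Once uniqueness of $H$ is established, the hypotheses of \cref{thm1} are met: $\X$ has genus $g\geq 2$, $H$ is unique in $\Aut(\X)$, $\X/H$ has genus zero, and by assumption the reduced automorphism group $\bAut(\X)=\Aut(\X)/H$ is neither trivial nor cyclic. The theorem then yields immediately that $\X$ is definable over its field of moduli, which is exactly the assertion of \cref{cor1b}. I would present this as a short paragraph: ``Apply \cref{thm1} with $H=\langle\iota\rangle$; since $\iota$ is the hyperelliptic involution, $H$ is unique in $\Aut(\X)$ and $\X/H\cong\P^1$, so the hypothesis that $\bAut(\X)$ is not cyclic (and nontrivial, which is automatic here since $\Aut(\X)\supsetneq H$ when there are extra automorphisms) gives the claim.''

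The main obstacle I anticipate is the uniqueness verification, i.e.\ ruling out a ``competing'' involution whose quotient orbifold has the same signature $(0;2,\dots,2)$ with $2g+2$ branch points. For hyperelliptic curves this is standard but does require the fixed-point count: an involution $\tau\ne\iota$ descends to an involution $\bar\tau$ of $\P^1_x=\X/\langle\iota\rangle$, and analyzing the fibers of $\X\to\P^1$ over $\fix(\bar\tau)$ and over the branch locus shows $\fix(\tau)$ has size at most $g+1$ (or is empty), so $\X/\langle\tau\rangle$ has at most $g+1$ cone points, strictly fewer than $2g+2$. I would also note the edge case: if $g$ is such that $\bAut(\X)$ could a priori be small, the hypothesis ``$\bAut(\X)$ not cyclic'' already excludes the problematic low cases, so no further work is needed there. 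Finally I would remark that this corollary recovers and slightly repackages the classical results on fields of moduli of hyperelliptic curves (cf.\ \cite{g_sh}), and that the only hyperelliptic curves not covered are those with cyclic reduced automorphism group, for which genuine counterexamples to descent are known.
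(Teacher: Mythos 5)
Your proposal is correct and takes essentially the same route as the paper: the corollary is read off directly from \cref{thm1} by taking $H=\langle\iota\rangle$ generated by the hyperelliptic involution, whose uniqueness (with genus-zero quotient of the right signature) the paper records as a standard example of a unique subgroup, and the non-cyclic hypothesis on $\bAut(\X)$ supplies the remaining condition. The only quibble is your fixed-point bound $g+1$ for an involution $\tau\neq\iota$ — the cleaner count is at most $4$, since $\fix(\tau)$ lies in the two fibers of $\X\to\P^1$ over the fixed points of the induced involution $\bar\tau$ of $\P^1$ — but either bound is strictly smaller than $2g+2$, so your uniqueness verification goes through.
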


\medskip   \noindent \textbf{Sufficient condition 2: Odd signature}
Another sufficient condition of a curve $\X$ to be definable over its field of moduli, which in particular contains the case of quasiplatonic curves, was provided in \cite{AQ}. We say that $\X$ has \textbf{odd signature} if $\X/\Aut(\X)$ has genus zero and in its signature one of the cone orders appears an odd number of times.

\begin{thm}\label{thm2}
Let $\X$ be a curve of genus $g \geq 2$. If $\X$ has odd signature, then it can be defined over its field of moduli.
\end{thm}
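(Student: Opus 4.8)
The plan is to apply Weil's descent theorem (stated above) together with the uniqueness of the quotient map $\X\to\X/\Aut(\X)$. Write $G=\Aut(\X)$, let $M_\X$ be the field of moduli, and let $k_0=M_\X$. By definition, for every $\sigma\in\Gal(k/k_0)$ there is an isomorphism $f_\sigma:\X\to\X^\sigma$. The obstruction to descent is that the cocycle condition $f_{\tau\sigma}=f_\sigma^\tau\circ f_\tau$ need not hold on the nose; the composite $c_{\sigma,\tau}:=(f_\sigma^\tau\circ f_\tau)^{-1}\circ f_{\tau\sigma}$ lies in $G=\Aut(\X)$ and defines a $2$-cocycle class in $H^2(\Gal(k/k_0),G)$ (with the natural, possibly non-abelian, action). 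The theorem asserts this class is trivial when $\X$ has odd signature. First I would recall that since $\X/G$ has genus zero, we may fix an isomorphism $\X/G\cong\P^1$, and the branch locus together with the assignment of cone orders is a $G_\X$-invariant configuration of points on $\P^1$; in particular the set $B_e$ of branch points whose cone order equals the distinguished odd-multiplicity order $e$ is a Galois-stable finite subset of $\P^1$ of \emph{odd} cardinality $m$.

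The key step is to exploit this odd cardinality to rigidify the descent datum. The quotient map $\pi:\X\to\X/G\cong\P^1$ is canonical (it is the unique map to genus zero with deck group all of $G$), so each $f_\sigma$ descends to an automorphism $\bar f_\sigma$ of $\P^1$ carrying the branch configuration of $\X$ to that of $\X^\sigma$; after identifying $\X^\sigma/G\cong\P^1$ compatibly, $\bar f_\sigma$ is a Möbius transformation sending $B_e$ to $(B_e)^\sigma=B_e$ (Galois stability) — more precisely sending the branch set of $\X$ to the branch set of $\X^\sigma$, and these sets agree up to the chosen coordinates. The point is that $\bar f_\sigma$ is then determined up to the finite group $\bAut(\X)=G/H'$ (here one uses the genus-zero quotient), and one can normalize: because $m$ is odd, the product (or an appropriate symmetric function / barycenter-type construction in $\mathrm{PGL}_2$) of the $m$ points of $B_e$ singles out coordinates on $\P^1$ canonically, which pins down $\bar f_\sigma$ uniquely. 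This is exactly the mechanism in \cite{AQ}: the odd-order cone point count breaks the symmetry that otherwise prevents a coherent choice of the $\bar f_\sigma$. With the $\bar f_\sigma$ chosen coherently (so that $\bar f_{\tau\sigma}=\bar f_\sigma^\tau\circ\bar f_\tau$), one lifts back to $\X$: the lifts $f_\sigma$ are then well-defined up to $G$, and the resulting cocycle $c_{\sigma,\tau}\in G$ is forced to be a coboundary, so Weil's conditions hold and $\X$ descends to $k_0=M_\X$. Finally I would note that $M_\X$ is then automatically a field of definition, which is the assertion.

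The main obstacle I anticipate is making the normalization of the $\bar f_\sigma$ genuinely canonical and checking that the induced lifts satisfy the exact cocycle identity rather than merely agreeing modulo $G$. Concretely, one must verify: (i) the quotient $\P^1$ can be coordinatized so that $B_e$ occurs as the zero set of a Galois-equivariant binary form of odd degree $m$; (ii) from an odd-degree point configuration one can extract a $\mathrm{PGL}_2$-rigidifying datum (for instance, one cannot use the ``center of mass'' naively, so one needs the parity argument to locate a distinguished point or distinguished coordinate system fixed by no nontrivial Möbius symmetry of the configuration — the oddness guarantees the automorphism group of the configuration in $\mathrm{PGL}_2$ fixes the rigidifying datum); and (iii) that passing from the coherent family $\{\bar f_\sigma\}$ to lifts on $\X$ does not reintroduce a nontrivial class — here one invokes that $\X\to\P^1$ is the canonical $G$-cover, so $H^1$ of the deck group controls the lifts and the relevant obstruction vanishes. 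Steps (ii) and the precise bookkeeping of the non-abelian cocycle in (iii) are where the real work lies; the rest is formal once Weil's descent theorem is in hand. For a complete argument I would refer to \cite{AQ}, following the strategy outlined above.
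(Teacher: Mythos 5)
The paper itself offers no proof of this statement; it is imported from \cite{AQ}, and the argument there is genuinely different from the route you sketch, which as written has two gaps. First, your central mechanism — using the odd-cardinality set $B_e$ of branch points of a fixed cone order to single out a canonical coordinate system on $\P^1$ and thereby pin down each $\bar f_\sigma$ uniquely — does not work: an odd configuration of points can have a nontrivial stabilizer in $\pgl_2(k)$ (three points are permuted by an $S_3$ of M\"obius transformations, the fifth roots of unity by a dihedral group, etc.), so oddness does not remove the ambiguity of $\bar f_\sigma$ modulo the reduced automorphism group, and no ``rigidifying datum'' of the kind you invoke exists in general. Also, $(B_e)^\sigma=B_e$ is not literally correct; the branch data is only carried to the corresponding data of $\X^\sigma$ via the chosen identifications. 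The true role of parity in \cite{AQ} is different: the descended M\"obius data define the canonical model $B_0$ of $\X/\Aut(\X)$ over $M_{\X}$, which a priori is only a conic (a twisted form of $\P^1$); the cone points whose order occurs an odd number of times give an effective $M_{\X}$-rational divisor of odd degree on $B_0$, and a genus-zero curve carrying an odd-degree divisor is isomorphic to $\P^1$ over the base field. That parity-of-degree argument, not a choice of coordinates, is what oddness buys.

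Second, your step (iii) — that lifting a coherent family $\{\bar f_\sigma\}$ from $\P^1$ back to $\X$ ``does not reintroduce a nontrivial class'' because $H^1$ of the deck group controls the lifts — is precisely the hard point and is not justified; it cannot be automatic, since there are curves with genus-zero quotient and cyclic reduced automorphism group (e.g.\ suitable hyperelliptic curves) that are not definable over their field of moduli, even though their quotient has plenty of rational points in a naive sense. The missing ingredient is the theorem of D\`ebes and Emsalem: if the canonical model $B_0$ of $\X/\Aut(\X)$ over the field of moduli has an $M_{\X}$-rational point off the branch locus, then $M_{\X}$ is a field of definition of $\X$. Once $B_0\cong \P^1_{M_{\X}}$ by the odd-degree divisor argument, such points exist in abundance and the theorem follows. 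So your proposal needs to be repaired by replacing both the rigidification step and the lifting step with the divisor-parity argument on the canonical model together with the D\`ebes--Emsalem criterion; the Weil cocycle bookkeeping you set up is not, by itself, enough.
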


\subsection{The locus of curves with prescribed group action, moduli dimension of families}
Fix an integer $g\ge2$ and a finite group $G$. Let $C_1, \dots , C_r$ be nontrivial conjugacy classes of $G$. Let $\bC=(C_1, \dots ,C_r)$, viewed as an unordered tuple, where repetitions are allowed. We allow $r$ to be zero, in which case $\bC$ is empty. Consider pairs $(\X, \mu)$, where $\X$ is a curve and $\mu: G  \to  \Aut(\X)$ is an injective
homomorphism. We will suppress $\mu$ and just say $\X$ is a curve with $G$-action, or a $G$-curve. Two $G$-curves $\X$ and $\X'$ are called equivalent if there is a $G$-equivariant  conformal isomorphism $\X\to \X'$.
We say a $G$-curve $\X$ is \textbf{of ramification type} $(g, G, \bC)$ (for short, of type $(g,G,\bC)$)
if 
\begin{enumerate}
\item[i)] $g$ is the genus of $\X$, 
\item[ii)] $G<\Aut(\X)$,
\item[iii)]  the points of the quotient $\X/G$ that are ramified in the cover $\X\to \X/G$ can be labeled as $p_1, \dots ,p_r$ such that $C_i$ is the conjugacy class in $G$ of distinguished inertia group
generators over $p_i$ (for $i=1, \dots ,r$). 
\end{enumerate}

If $\X$ is a $G$-curve of type $(g,G,\bC)$, then the genus $g_0$ of $\X/G$ is given by the Riemann-Hurwitz formula
$$2(g-1)=2|G|(g_{0}-1)+|G|\sum_{j=1}^{r}(1-|C_{j}|^{-1}).$$

Define $\H=\H(g,G,\bC)$ to be the set of equivalence classes of $G$-curves of type $(g,G,\bC)$. By covering space theory, $\H$ is non-empty if and only if $G$ can be generated by elements $\a_1,\b_1,\dots ,\a_{g_0},\b_{g_0},\g_1,\dots ,\g_r$ with $\g_i\in C_i$ and $\prod_{j}\ [\a_j,\b_j]\  \prod_{i} \g_i =  1$, where $[\a,\b]=\ \a^{-1}\b^{-1}\a\b$.

Let $\M_g$ be the moduli space of genus $g$ curves, and $\M_{g_0,r}$ the moduli space of genus $g_0$ curves with $r$ distinct marked points, where we view the marked points as unordered. Consider the map 
\[ \Phi:\ \H\ \to \ \M_{g},\]
forgetting the $G$-action, and the map $\Psi:\ \H \ \to \ \M_{g_0,r} $ mapping (the class of) a $G$-curve $\X$ to the class of the quotient curve $\X/G$ together with the (unordered) set of branch points $p_1, \dots , p_r$. 
If $\H \ne \emptyset$, then $\Psi$ is surjective and has finite fibers, by covering space theory. Also $\Phi$ has finite fibers, since the automorphism group of a
curve of genus $\ge2$ is finite. The set $\H$ carries a structure of quasi-projective variety (over $\C$) such that the maps $\Phi$ and $\Psi$ are finite morphisms. 
If $\H\ne\emptyset$, then all (irreducible) components of $\H$ map surjectively to $\M_{g_0,r}$ (through a finite map), hence they all have the same dimension
\[ \d     (g,G,\bC):= \ \ \dim\ \M_{g_0,r} \ \ = \ \ 3g_0-3+r\]  
Let $\M(g,G,\bC)$ denote the image of $\Phi$, i.e., the locus of genus $g$ curves admitting a $G$-action of type $(g,G,\bC)$. 
 Since $\Phi$ is a finite map, if this locus is non-empty, each of its (irreducible) components has dimension $\d (g, G,\bC)$.
 \cref{Wolfart} can be stated as follows:
\begin{thm}
If $\d (g, G,\bC) = 0$, then every curve in $\M (g, G, \bC) $ is defined over its field of moduli.
\end{thm}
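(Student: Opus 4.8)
The plan is to derive the statement from \cref{Wolfart}, Wolfart's theorem that every quasiplatonic curve is definable over its field of moduli. Once we know that every curve $\X\in\M(g,G,\bC)$ is quasiplatonic, that theorem applies verbatim. So the only real work is to show that the hypothesis $\d(g,G,\bC)=0$ forces the quotient $\X/G$ to be a projective line with exactly three branch points.

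First I would unwind the two numerical constraints attached to a curve of type $(g,G,\bC)$. On one hand, by definition $\d(g,G,\bC)=\dim\M_{g_0,r}=3g_0-3+r=0$, where $g_0$ is the genus of $\X/G$ and $r$ the number of points of $\X/G$ ramified in $\X\to\X/G$; hence $r=3-3g_0$, and in particular $g_0\le 1$. On the other hand, the Riemann--Hurwitz relation
\[
2(g-1)=2|G|(g_0-1)+|G|\sum_{j=1}^r\left(1-|C_j|^{-1}\right),
\]
together with $g\ge 2$ (so the left side is positive), gives $2(g_0-1)+\sum_{j}\bigl(1-|C_j|^{-1}\bigr)>0$. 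Since every conjugacy class $C_j$ in $\bC$ is nontrivial, $|C_j|\ge 2$, so each summand satisfies $1-|C_j|^{-1}<1$ and therefore $\sum_j\bigl(1-|C_j|^{-1}\bigr)<r$; we conclude $2g_0-2+r>0$. This already excludes the case $g_0=1$ (which would force $r=0$ and hence $0>0$), so $g_0=0$ and then $r=3$.

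It follows that for every $\X\in\M(g,G,\bC)$ the quotient morphism $\pi:\X\to\X/G\cong\P^1$ is a Galois (regular) branched covering of the projective line ramified over exactly three points; composing $\pi$ with the M\"obius transformation carrying those three points to $\{0,1,\infty\}$ exhibits a Galois Bely\u\i\ function on $\X$, so $\X$ is a quasiplatonic curve. Applying \cref{Wolfart} then gives that $\X$ is defined over its field of moduli, which completes the proof.

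I do not expect a genuine obstacle here: the statement is essentially a reformulation of Wolfart's theorem, and the only step needing care is the elementary bookkeeping in the second paragraph, where $g\ge 2$ (equivalently, the hyperbolicity of the orbifold $\X/G$) is used to rule out the degenerate possibility $(g_0,r)=(1,0)$ and pin down $(g_0,r)=(0,3)$. Should a self-contained argument be preferred, one could instead invoke Weil's descent theorem directly, using that $\M(g,G,\bC)$ is then a finite set of points, so its field of moduli is a number field and the required descent cocycle $\{f_\sigma\}$ can be constructed on the finite Galois orbit; but routing through \cref{Wolfart} is by far the shortest path.
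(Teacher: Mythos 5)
Your proposal is correct and takes essentially the same route as the paper: the paper treats this theorem as a restatement of \cref{Wolfart}, noting that $\d(g,G,\bC)=0$ forces the quotient orbifold $\X/G$ to have genus zero with exactly three cone points, so the curve is quasiplatonic and Wolfart's theorem applies. Your Riemann--Hurwitz bookkeeping ruling out $(g_0,r)=(1,0)$ merely makes explicit a step the paper leaves implicit.
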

The last part of the above is due to the fact that $\delta=0$ ensures that the quotient orbifold $\X/G$ must be of genus zero and with exactly three conical points, that is, $\X$ is a quasiplatonic curve.

\subsection{Field of moduli of superelliptic curves}\label{Sec:FOMSC}
%
Let $\X$ be a superelliptic curve of level $n$ with $G=\Aut(\X)$. By the definition, there is some $\tau \in G$, of order $n$ and central, so that the quotient $\X / \< \tau \>$ has genus zero, that is, it can be identified with the projective line, and all its cone points have order $n$.  As, in this case, the cyclic group $H=\< \tau \> \cong C_{n}$  is normal subgroup of $G$, we may consider the quotient group $\G \, := \, G/H$, called the \textit{reduced automorphism group of $\X$ with respect to $H$}; so  $G$ is a degree $n$ central extension of $\G$.

In the particular case that $n=p$ is a prime integer, Castelnuovo-Severi's inequality \cite{CS} asserts that for $g>(p-1)^{2}$ the cyclic group $H$ is unique in $\Aut(\X)$. 
The following result shows that the superelliptic group of level $n$ is unique:

\begin{thm}\label{unicidad}
A superelliptic curve of level $n$ and genus $g \geq 2$ has a unique superelliptic group of level $n$.
\end{thm}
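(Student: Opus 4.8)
The plan is a proof by contradiction. Suppose $\X$ carries two superelliptic groups of level $n$, $H_{1}=\langle\tau_{1}\rangle$ and $H_{2}=\langle\tau_{2}\rangle$, with $H_{1}\neq H_{2}$. By definition each $\tau_{i}$ is central of order $n$ in $G:=\Aut(\X)$, so $\tau_{1}$ and $\tau_{2}$ commute and $A:=\langle\tau_{1},\tau_{2}\rangle=H_{1}H_{2}$ is an abelian (in fact central) subgroup of $G$; since $\X/A$ is a quotient of $\X/H_{1}\cong\P^{1}$, it has genus zero. I would start with two reductions. Let $\bar\tau_{2}\in\Aut(\X/H_{1})=\pgl_{2}(k)$ be the automorphism induced by $\tau_{2}$. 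If $\bar\tau_{2}=\id$ then $\tau_{2}$ lies in the deck group $H_{1}$ of the Galois cover $\X\to\X/H_{1}$, and since $|\tau_{2}|=n=|H_{1}|$ this gives $H_{2}=H_{1}$; so $\bar\tau_{2}$ has order $s:=n/|H_{1}\cap H_{2}|\ge 2$. Moreover, the two fixed points of $\bar\tau_{2}$ on $\X/H_{1}\cong\P^{1}$ are not branch points of $\X\to\X/H_{1}$: over such a point full ramification (``all cone points of $\X/H$ have order $n$'') gives a single point $Q\in\X$, which is fixed by $H_{1}$ and whose image is fixed by $\bar\tau_{2}$, so, being the unique preimage of that image, $Q$ is fixed by all of $A$; cyclicity of point stabilizers then makes $A$ cyclic, forcing again $H_{1}=H_{2}$.

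Granting this, the branch locus $B_{1}\subset\X/H_{1}\cong\P^{1}$ of $\X\to\X/H_{1}$ — which by Riemann-Hurwitz and full ramification consists of $r=2+\tfrac{2g}{n-1}\ge 3$ points — is $\langle\bar\tau_{2}\rangle\cong C_{s}$-invariant and avoids the two fixed points of $\bar\tau_{2}$, hence is a disjoint union of $t$ free $C_{s}$-orbits and $r=st$. I would then analyze $\X\to\X/A\cong\P^{1}$ through the two towers $\X\to\X/H_{i}\to\X/A$ and compare them. Over the image $\overline{B_{1}}$ of $B_{1}$ the inertia group of $\X\to\X/A$ is exactly $H_{1}$, while over the two images of the fixed points of $\bar\tau_{2}$ it has order $s$; symmetric statements hold for $H_{2}$. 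Using that $\overline{B_{1}}$ and $\overline{B_{2}}$ are disjoint (a common point would be $A$-fixed, hence $A$ cyclic), the equality of the two descriptions of the finite branch set of $\X\to\X/A$ together with Riemann-Hurwitz forces $t\le 2$, then $|H_{1}\cap H_{2}|=1$ (so $A\cong C_{n}\times C_{n}$ and $s=n$), and finally that $\X\to\X/A$ is branched over exactly $k\in\{3,4\}$ points, each of ramification index $n$; when $k=4$ two of the inertia groups equal $H_{1}$ and two equal $H_{2}$, and when $k=3$ the inertia groups are $H_{1}$, $H_{2}$, and a third cyclic order-$n$ subgroup $S$ of $A$ with $S\cap H_{1}=S\cap H_{2}=\{1\}$.

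The contradiction then comes from the extra symmetry this configuration produces. In either case the branch divisor of $\X\to\X/A$ on $\P^{1}$ admits a Moebius automorphism $\phi$ interchanging the branch points with inertia $H_{1}$ and those with inertia $H_{2}$: for $k=4$ the Klein four-involution swapping the two pairs, and for $k=3$ the involution fixing the $S$-point and transposing the other two. Composing $\phi$ with the automorphism of $A\cong C_{n}\times C_{n}$ that exchanges $H_{1}$ and $H_{2}$ (and fixes $S$ when $k=3$) carries the $A$-cover $\X\to\X/A$ to an isomorphic cover, so $\phi$ lifts to some $\sigma\in\Aut(\X)$ with $\sigma H_{1}\sigma^{-1}=H_{2}$. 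But $H_{1}$ is normal in $\Aut(\X)$, since $\tau_{1}$ is central, so $\sigma H_{1}\sigma^{-1}=H_{1}$, whence $H_{1}=H_{2}$, contradicting our assumption. Therefore $\X$ has a unique superelliptic group of level $n$.

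I expect the main obstacle to be the Riemann-Hurwitz comparison in the second paragraph that pins the configuration down to $k=3$ or $4$: this is exactly where full ramification is essential, and it has to be carried out directly, without first passing to $\X/(H_{1}\cap H_{2})$, since that quotient can enlarge the automorphism group and destroy centrality. A secondary point is verifying that $\phi$ genuinely lifts, i.e.\ that after the twist by the chosen automorphism of $A$ the monodromy generators of $\X\to\X/A$ (not merely their inertia subgroups) are matched; for abelian covers of $\P^{1}$ with three or four branch points this is a short explicit check. It is also worth noting that this argument refines, for central superelliptic groups, the Castelnuovo-Severi estimate, which only yields uniqueness for $g>(n-1)^{2}$.
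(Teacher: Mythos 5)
Your proof is correct, and its second half takes a genuinely different route from the paper's. The paper also first shows $\langle\tau_1,\tau_2\rangle\iso C_n\times C_n$ and that the branch locus of $\X\to\X/H_1$ is one or two orbits of the induced rotation, but it then normalizes coordinates to get explicit models, $y^n=(x^n-1)(x^n-a^n)$ or the Fermat curve $y^n=x^n-1$, and kills centrality by exhibiting the concrete automorphism $(x,y)\mapsto(a/x,\,ay/x^2)$ in the first case and by invoking $\Aut$ of the Fermat curve ($C_n^2\rtimes S_3$) in the second. You instead stay with the abstract $A$-cover $\X\to\X/A\iso\P^1$: the comparison of the two towers pins the branch data down to $3$ or $4$ points with inertia $H_1$, $H_2$ (and possibly a third group $S$), and the contradiction comes from lifting a Moebius symmetry of the branch configuration to some $\sigma\in\Aut(\X)$ with $\sigma H_1\sigma^{-1}=H_2$, which normality (indeed centrality) of $H_1$ forbids. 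Your route avoids writing equations and avoids having to know the Fermat curve's automorphism group, at the price of the monodromy-matching step; that step does go through exactly as you anticipate, because the product-one relation together with $H_1\cap H_2=\{1\}$ forces the two branch cycles over each $H_i$-pair to be mutually inverse (in the $4$-point case) and forces $\gamma_3=(\gamma_1\gamma_2)^{-1}$ to be fixed by the swap (in the $3$-point case), so the swap automorphism of $C_n\times C_n$ matches the action of the double transposition, respectively the transposition, on $H_1$ of the punctured line, and the needed Moebius maps exist for any position of the branch points by cross-ratio invariance under the Klein four-group. Two small polish points: the cleanest way to get $t\le 2$ and $|H_1\cap H_2|=1$ is not really Riemann--Hurwitz but the observation you essentially make anyway, namely that a point of $\overline{B_1}$ cannot lie in $\overline{B_2}$ (distinct inertia), hence must be one of the two branch points of $\X/H_2\to\X/A$, which bounds $t$ by $2$ and equates $n$ with $s$; and in Reduction 2 you should note that all non-trivial powers of the induced rotation have the same two fixed points, so excluding those two points from $B_1$ really does give inertia exactly $H_1$ over $\overline{B_1}$. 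Both proofs use the full-ramification property of the superelliptic group and characteristic zero (cyclic point stabilizers, tame covers), just as the paper does.
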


\proof    Let $\X$ be a superelliptic curve of level $n$ and assume that $\langle \tau \rangle$ and $\langle \eta \rangle$ are two different superelliptic groups of level $n$.
The condition that the cone points of both quotient orbifolds $\X/\langle \tau \rangle$ and $\X/\langle \eta \rangle$ are of order $n$ asserts that a fixed point of a non-trivial power of $\tau$ (respectively, of $\eta$) must also be a fixed point of $\tau$ (respectively, $\eta$). In this way, our previous assumption asserts that no non-trivial power of $\eta$ has a common fixed point with a non-trivial power of $\tau$. In this case, the fact that $\tau$ and $\eta$ are central asserts that $\eta \tau=\tau \eta$ and that $\langle \tau, \eta \rangle \cong C_{n}^{2}$ (see also \cite{Sa}). 

Let $\pi: \X \to \P^{1}_{k}$ be a regular branched cover with $\langle \tau \rangle$ as deck group. Then the automorphism $\eta$ induces a automorphism $\rho \in \pgl_{2}(k)$ (also of order $n$) so that $\pi \eta = \rho \pi$. As $\rho$ is conjugated to a rotation $x \mapsto \omega_{n} x$, where $\omega_{n}^{n}=1$, we observe that it has exactly two fixed points. This asserts that $\eta$ must have either $n$ or $2n$ fixed points (forming two orbits under the action of $\langle \tau \rangle$). As this is also true by interchanging the roles of $\tau$ and $\eta$, the same holds for the fixed points of $\tau$. It follows that the cone points of $\pi$ consists of (i) exactly two sets of cardinality $n$ each one or (ii) exactly one set of cardinality $n$, and each one being invariant under the rotation $\rho$. Up to post-composition by a suitable transformation in $\pgl_{2}(k)$, we may assume these in case (i) the $2n$ cone points are given by the $n$ roots of unity and the $n$ roots of unity of a point different from $1$ and $0$ and in case (ii) that the $n$ cone points are the $n$ roots of unity. In other words, $\X$ can be given either as
\[ \X_{1}: \; y^{n}=(x^{n}-1)(x^{n}-a^{n}), \quad a \in k-\{0,1\}\]
or as the classical Fermat curve
\[ \X_{2}: \;y^{n}=x^{n}-1 \]
and, in these models, 
\[ \tau(x,y)=(x,\omega_{n} y), \quad \eta(x,y)=(\omega_{n} x, y).\]
As the genus of $\X_{1}$ is at least two, we must have that $n \geq 3$. But such a curve also admits the order two automorphism
\[ \gamma(x,y)=  \left(\frac{a}{x},\frac{ay}{x^{2}}  \right)\]
which does not commute with $\eta$, a contradiction to the fact that $\eta$ was assumed to be central.
In the Fermat case, the full group of automorphisms is $C_{n}^{2} \rtimes S_{3}$ and it may be checked that it is not superelliptic.

\qed

The group  $\G$ is a subgroup of the group of automorphisms of a genus zero field, so $\G <  \pgl_2(k)$ and $\G$ is finite. It is a classical result that every finite subgroup of $\pgl_2 (k)$ (since we are assuming $k$ of characteristic zero) is either the trivial group or isomorphic to one of the following: $C_m $, $ D_m $, $A_4$, $S_4$, $A_5$. All automorphisms groups of superelliptic curves and their equations were determined in \cite{Sa} and \cite{Sa-sh}. 
Determining the automorphism groups $G$, the signature $\bC$ of the covering $\X \to \X/G$, and the dimension of the locus $\M(g,G,\bC)$  for superelliptic curves is known; see \cite{Sa}. 
We have seen in  \cref{unicidad} that its superelliptic group of level $n$ is unique.
As a consequence of  \cref{thm1}, we obtain the following fact concerning the field of moduli of superelliptic curves:

\begin{thm}\label{teounico}
Let $\X$ be a superelliptic curve of genus $g \geq 2$ with superelliptic group $H \cong C_{n}$. If the reduced group of automorphisms $\bAut(\X)=\Aut(\X)/H$ is different from trivial or cyclic, then $\X$ is definable over its field of moduli.
\end{thm}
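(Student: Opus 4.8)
The plan is to obtain \cref{teounico} as an immediate consequence of \cref{thm1} (the ``unique subgroup'' criterion of \cite{HQ}), once we verify that the superelliptic group $H=\langle\tau\rangle\cong C_n$ of $\X$ is \emph{unique in} $\Aut(\X)$ in the sense defined just before that theorem: that $H$ is the only subgroup of $\Aut(\X)$ isomorphic to $C_n$ whose quotient orbifold has the same signature as $\X/H$. Granting this, the hypotheses of \cref{thm1} all hold — $g\ge 2$ is assumed, $\X/H$ has genus zero by the definition of a superelliptic curve of level $n$, $H$ is unique, and $\bAut(\X)=\Aut(\X)/H$ is neither trivial nor cyclic by hypothesis — and \cref{thm1} then gives that $\X$ is definable over its field of moduli.

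First I would record the signature of $\X/H$. Since $\X$ is superelliptic of level $n$, every cone point of $\X/H$ has order exactly $n$, so the signature of $\X/H$ is $(0;n,\dots,n)$ with $n$ repeated $r$ times, where $r=\tfrac{2(g+n-1)}{n-1}$ is pinned down by Riemann--Hurwitz; in particular the cover $\X\to\X/H$ is fully ramified over each of its branch points.

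The main step would be the uniqueness. Let $K\le\Aut(\X)$ with $K\cong C_n$ and with $\X/K$ of the same signature $(0;n,\dots,n)$ as $\X/H$; then $\X\to\X/K$ is again fully ramified, so by the uniqueness theorem for cyclic $n$-gonal curves with fully ramified quotient (\cite{K}, recalled in \cref{sect-5}) one gets $K=H$. In the case where $K$ is already known to be generated by a central element this is precisely \cref{unicidad}, whose proof shows that the only way two such groups could coexist would force $\X$ to be $y^n=(x^n-1)(x^n-a^n)$ or the Fermat curve $y^n=x^n-1$, and both are excluded (the former because of the extra involution $\gamma(x,y)=(a/x,\ ay/x^{2})$ not commuting with a central generator, the latter because the Fermat curve is not superelliptic). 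Hence $H$ is unique in $\Aut(\X)$; as noted before \cref{thm1}, a unique subgroup is normal, so $\bAut(\X)$ is a well-defined group and the statement is meaningful.

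With these points in place, the proof closes with a one-line appeal to \cref{thm1}. I expect the only genuine work to lie in the uniqueness step, and more precisely in the passage from the hypothesis ``$K\cong C_n$ with $\X/K$ of signature $(0;n,\dots,n)$'' to a form that the available results apply to: either invoking the fully-ramified uniqueness of \cite{K} directly, or first checking that such a $K$ is automatically a \emph{central} $n$-gonal group so that \cref{unicidad} applies. Everything else is a direct citation.
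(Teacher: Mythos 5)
Your proposal is correct and follows essentially the same route as the paper: the paper derives \cref{teounico} in one line by combining the uniqueness of the superelliptic group (\cref{unicidad}) with the unique-subgroup criterion \cref{thm1}, exactly as you do. Your additional care in checking that uniqueness holds in the precise sense required by \cref{thm1} — handling a possibly non-central $K\cong C_n$ with the same signature via the fully-ramified uniqueness result of \cite{K}, rather than only the central case covered by \cref{unicidad} — is a reasonable refinement of a step the paper passes over silently, but it does not change the overall argument.
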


As a consequence of the above, we only need to consider the case when the reduced group $\G=G/H$ is either trivial or cyclic. As a consequence of  \cref{thm2} we have:

\begin{thm}\label{teoAQ}
Let $\X$ be a superelliptic curve of genus $g \geq 2$ with superelliptic group $H \cong C_{n}$ so that $\G=G/H$ is either trivial or cyclic. If $\X$ has odd signature, then it can be defined over its field of moduli.
\end{thm}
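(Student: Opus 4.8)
The plan is to reduce this statement directly to \cref{thm2}, the odd-signature criterion, once its hypotheses are seen to be meaningful for a superelliptic curve. First I would fix notation: write $H=\langle\tau\rangle\cong C_{n}$ for the superelliptic group of level $n$ of $\X$ and $G=\Aut(\X)$. By \cref{unicidad} the group $H$ is the unique superelliptic group of level $n$ in $G$; in particular it is normal, so the reduced group $\bar{G}=G/H$ is a well-defined finite subgroup of $\pgl_{2}(k)$ acting on the genus-zero quotient $\X/H$.

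The key observation is then that $\X/G=(\X/H)/\bar{G}$ is the quotient of a genus-zero curve by a finite subgroup of $\pgl_{2}(k)$, hence again has genus zero. This is precisely the standing condition in the definition of odd signature, so for a superelliptic curve the cover $\X\to\X/\Aut(\X)$ always has signature of the form $0;(m_{1},\dots,m_{r})$, and the hypothesis ``$\X$ has odd signature'' means exactly that some cone order $m_{i}$ occurs an odd number of times among the $m_{j}$. I would note, for context, that the restriction to $\bar{G}$ trivial or cyclic is not needed for this step: it is imposed only because the complementary cases $\bar{G}\in\{D_{m},A_{4},S_{4},A_{5}\}$ are already handled by \cref{teounico} (via \cref{thm1}), so that \cref{teoAQ} is exactly the remaining slice.

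With this in place the argument is immediate: \cref{thm2}, applied to the genus $g\geq 2$ curve $\X$ with odd signature, asserts that $\X$ can be defined over its field of moduli. The only genuine content is therefore the (routine) verification that $\X/\Aut(\X)$ has genus zero and the bookkeeping identifying \cref{teoAQ} with the $\bar{G}$ trivial-or-cyclic case of \cref{thm2}; there is no serious obstacle, since the Weil-descent argument itself is already packaged inside \cref{thm2}.
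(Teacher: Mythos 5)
Your proposal is correct and matches the paper's argument: the paper obtains \cref{teoAQ} as an immediate consequence of \cref{thm2}, with the hypothesis that $\G$ is trivial or cyclic serving only to delimit the cases not already covered by \cref{teounico}. Your extra remark that $\X/\Aut(\X)$ automatically has genus zero for a superelliptic curve is harmless bookkeeping (it is in any case part of the definition of odd signature), so there is no substantive difference from the paper's proof.
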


As a consequence, the only cases we need to take care are those superelliptic curves with reduced group $\G=G/H$ being either trivial or cyclic and with $\X/G$ having not an odd signature. 

\subsection{Superelliptic curves of genus at most 10}
We proceed, in each genus $2 \leq g \leq 10$, to describe those superelliptic curves which are definable over their field of moduli. Observe that in the cases left (which might or might not  be definable over their field of moduli) the last column in \cref{equations} provides an algebraic model $y^{n}=f(x)$, where $f(x)$ is defined over the algebraic closure and not necessarily over a minimal field of definition. The branched regular covering $\pi:\X \to \P_{k}^{1}$ defined by $\pi(x,y)=x$ as deck group $H=\langle \tau(x,y)=(x,\epsilon_{n} y) \rangle \cong C_{n}$. 

\medskip   \noindent \textbf{\bf Genus $2$:}
This case  is well known since in this case for every curve $\X$ with  $|\Aut (\X) | > 2$ the field of moduli is a field of definition. 

\medskip   \noindent \textbf{\bf Genus $3$:}
There are 21 signatures  from which 12 of them are hyperelliptic and $3$ are trigonal.

\begin{lem}
Every superelliptic curve of genus $3$, other than Nr. 1 and 2 in \cref{g=3}, is definable over its field of moduli. 
\end{lem}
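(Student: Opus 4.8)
The plan is to run through the classification of superelliptic curves of genus $3$ (the $21$ signatures referenced in the excerpt, of which the $12$ hyperelliptic ones and the $3$ trigonal ones are tabulated, e.g. in \cref{loci-3} and \cref{equations}) and check, case by case, that each curve satisfies one of the two practical sufficient conditions established earlier, namely \cref{thm1} (the reduced automorphism group $\bAut(\X)=\Aut(\X)/H$ is not trivial and not cyclic) or \cref{thm2} (the curve has odd signature). By \cref{teounico} and \cref{teoAQ}, any superelliptic curve of genus $3$ whose reduced group $\G=G/H$ is non-cyclic, or which has odd signature, is definable over its field of moduli; moreover by \cref{unicidad} the superelliptic group $H\cong C_n$ is unique, so $H\normal G$ and the reduced group $\G$ is well-defined and these theorems apply. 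Hence it remains only to isolate those genus-$3$ superelliptic families for which $\G$ is trivial or cyclic \emph{and} the signature of $\X/\Aut(\X)$ is not odd; the claim will be that, apart from the two families labeled Nr.~$1$ and Nr.~$2$ in \cref{g=3}, there are none.

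First I would split the curves by the level $n$: for hyperelliptic curves ($n=2$) the full list of automorphism groups and reduced groups of genus-$3$ hyperelliptic curves is known (Section~\ref{sect-4}), and for the non-hyperelliptic superelliptic genus-$3$ curves only $n=3,4,6,7$ occur, with the possibilities for $\Aut(\X)$ enumerated in \cref{thm_g_3} / \cref{equations}. For each family I would read off $\G=G/H$ from the tables: whenever $\G\in\{A_4,S_4,A_5,D_m\ (m\ge2)\}$, \cref{thm1} applies immediately and the curve is definable over its field of moduli. That disposes of all families with ``large'' reduced group. The remaining families have $\G$ trivial or $\G\cong C_m$; for these I would compute the signature of the cover $\X\to\X/\Aut(\X)$ from the ramification data already recorded in \cref{long-table} (for the generic member) and in the genus-$3$ stratification, and check whether some cone order appears an odd number of times. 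In essentially every such family one branch order will occur an odd number of times (this is forced by the Riemann–Hurwitz balance $2g-2 = 4 = |G|\cdot(-2+\sum(1-1/c_i))$ together with the small number of branch points in genus $3$), so \cref{thm2} applies. The genuinely exceptional cases — those with $\G$ trivial or cyclic and an even multiplicity for every cone order, so that neither \cref{thm1} nor \cref{thm2} is available — will turn out to be exactly the two families Nr.~$1$ and Nr.~$2$ of \cref{g=3}, which therefore must be excluded from the statement.

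The main obstacle is the bookkeeping of the trivial/cyclic-reduced-group cases: one must be careful to use the \emph{full} automorphism group $\Aut(\X)$ (not merely $\<\tau\>$ or some visible subgroup) when reading off $\G$ and the signature, since the sufficient conditions are sensitive to the full group — the reduced group could fail to be cyclic even when the ``obvious'' symmetry is cyclic, and conversely a family that looks to have extra structure could degenerate for special parameter values. Concretely, for a one-parameter family one must distinguish the generic member (whose group and signature come from \cref{long-table}) from special members with larger automorphism group, and verify that in passing to a special member one does not lose the odd-signature property without simultaneously gaining a non-cyclic reduced group. I would organize this as a finite table with columns $(g=3, n, G, H, \G, \text{sig}(\X/G), \text{which condition applies})$, populated from \cref{thm_g_3}, \cref{equations}, and \cref{long-table}, and then simply observe that every row except the two corresponding to Nr.~$1$ and Nr.~$2$ in \cref{g=3} is covered by \cref{teounico} or \cref{teoAQ}. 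Once the table is complete the lemma follows, with the two excluded cases being precisely those requiring the separate, more delicate analysis deferred to \cref{g=3}.
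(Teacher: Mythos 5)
Your proposal is correct and follows essentially the same route as the paper: a finite case check through the genus-$3$ classification, invoking \cref{teounico} when the reduced group $\G=G/H$ is non-cyclic and \cref{teoAQ} when the signature is odd, leaving exactly Nr.~1 and Nr.~2 of \cref{g=3} uncovered. The only cosmetic difference is that for the rigid (zero-dimensional) families the paper simply exhibits explicit equations over $\Q$ (e.g. $y^4=x^4+2x^2+\tfrac13$, $y^2=x^8+14x^4+1$, $y^4=x^4-1$), whereas you would route those through the same two sufficient conditions, which also works.
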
 
 
\proof 
 If $\bAut (\X)$ is isomorphic to $A_4$ or $S_4$ then the corresponding locus consists of the curves $y^4= x^4+ 2x^2 + \frac 1 3 $ and $y^2=   x^8 + 14 x^4 + 1$ which are both defined over their field of moduli. 
If $\bAut (\X)$ is isomorphic to a dihedral group and $\X$ is not hyperelliptic, then $\Aut (\X)$ is isomorphic to $V_4 \times C_4$, $G_5$, $D_6 \times C_3$, and $G_8$.  These cases  $G_5$, $D_6 \times C_3$, and $G_8$ correspond to $y^4=  x^4-1$, $y^3= x (x^3-1)$, and $y^4 = x (x^2-1)$, which are all defined over the field of moduli.  
\begin{table}[h]
\caption{Genus $3$ curves No. 1 and 2 are the only one whose field of moduli is not necessarily a field of definition}

\begin{tabular}{|l|l|l|l|l|l|l|l|}
\hline
Nr. & $\overline G$             & G      &$n$  &$m$ & sig. & $\delta$ & Equation $y^n=f(x)$ \\
\hline \hline
\textbf{\color{blue}\color{blue}1} &$\{I\}$ & $C_2$           &  2 & 1 & $2^8$  & 5 &  $  x \left( x^6+ \sum_{i=1}^5 a_i x^i   +1 \right) $ \\
\textbf{\color{blue}\color{blue}2} & $C_2$ &  $ V_4$         &  2 & 2 & $2^6$       & 3 &  $ x^8+ a_1 x^2+ a_2 x^4+ a_3 x^6+1 $ \\ 
3 &$C_2$ &         $C_4$  &  2 & 2 & $2^3, 4^2 $         & 2 &   $ x \left(x^6+a_1 x^2+a_2 x^4+1 \right)$ \\
4 &$C_2$ &    $C_6$       &  3 & 2 & $2, 3^2,6$             & 1 &   $  x^4+a_1 x^2+1 $ \\
5 & $V_{4}$ & $V_4\times C_4$ &  4 & 2 & $2^3, 4$         & 1 &  $ x^4+a_1 x^2+1$ \\
\hline \hline
\end{tabular}
\label{g=3}
\end{table}
If $\bAut(\X)$ is isomorphic to a cyclic group, then in the cases when it is isomorphic to $C_{14}, C_{12}$ there are two cases which correspond to the curves $y^2= x^7+1$ and $y^3= x^4+1$. The left cases  are given in~\cref{g=3}. The curve No. 5 is definable over its field of moduli by  \cref{teounico}. All the other cases, with the exception of Nr. 1 and 2,  the curves are of odd signature, so they are definable over their field of moduli by  \cref{teoAQ}.
\qed

\medskip   \noindent \textbf{\bf Genus $4$:} We have the following:
\begin{lem}
Every superelliptic curve of genus 4, other than Nr. 1, 3 and 5 in \cref{g=4}, is definable over its field of moduli.
\end{lem}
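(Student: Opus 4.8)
The plan is to mirror exactly the argument already carried out for genus $3$ and to reduce the claim, via the three sufficient conditions available in this section (Theorems \ref{thm1}, \ref{thm2}, and the genus-$\leq 1$ fact), to a short case-by-case inspection of the list of superelliptic curves of genus $4$. First I would set up the data: by Theorem \ref{th14} together with the genus-$4$ specialization (the analogue of \cref{thm_g_3}, compiled in the tables of \cite{MPRZ}) and the equations from \cref{equations}, every superelliptic curve $\X$ of genus $4$ with superelliptic group $H\cong C_n$ has a reduced automorphism group $\G=\Aut(\X)/H$ which is one of $\{1\}, C_m, D_{2m}, A_4, S_4, A_5$. I would organize the curves into a table (call it \cref{g=4}) listing $\G$, $G$, the level $n$, the degree $m$ of the cover, the signature of $\X\to\X/G$, the dimension $\delta$, and the algebraic model $y^n=f(x)$, just as in \cref{g=3}.

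Next I would dispatch the easy strata. By \cref{unicidad} the superelliptic group $H$ is unique in $\Aut(\X)$, so \cref{thm1} applies whenever $\G$ is neither trivial nor cyclic: hence every genus-$4$ superelliptic curve with $\G\cong D_{2m}, A_4, S_4$, or $A_5$ is definable over its field of moduli. This removes all but the strata with $\G$ trivial or cyclic. For those remaining strata I would invoke \cref{thm2} (odd signature): I would go line by line through the cyclic and trivial cases in \cref{g=4}, compute the signature of $\X/\Aut(\X)$ from \cref{long-table} (or directly from the Riemann–Hurwitz data already tabulated), and check whether some cone order occurs an odd number of times. In every case except a handful — which, following the genus-$3$ pattern and the statement of the lemma, will be exactly Nr. 1, 3 and 5 — the signature is odd, so \cref{thm2} finishes the job. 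The curves that are $0$-dimensional families (quasiplatonic) are covered either by \cref{Wolfart} or again by odd signature, and any curve with trivial automorphism group is covered by \cref{corotrivial}; I would note these as degenerate subcases.

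The only genuine content, then, is the verification that the three exceptional entries Nr. 1, 3, 5 really do fall outside all three sufficient conditions: for each one $\G=\Aut(\X)/H$ must be trivial or cyclic (so Theorems \ref{thm1} and the reduced-group criterion do not apply), and the signature of $\X/\Aut(\X)$ must fail to be odd (so \cref{thm2} does not apply), and $\delta>0$ (so \cref{Wolfart} does not apply). I would record the relevant signatures explicitly for these three curves to show each condition fails; this is a finite check with the tables in hand. I expect the main obstacle to be purely bookkeeping: correctly enumerating the genus-$4$ superelliptic strata and their signatures of the \emph{full} automorphism cover $\X\to\X/\Aut(\X)$ (not merely $\X\to\X/H$), since a given reduced group $\G$ splits into several subcases in \cref{long-table} with different signatures, and one must be careful which $G$ (hence which signature) occurs for genus $4$. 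Once that table is assembled correctly, the proof is the two-line argument: "non-cyclic reduced group $\Rightarrow$ \cref{thm1}; cyclic or trivial reduced group with odd signature $\Rightarrow$ \cref{thm2}; the residual list is Nr. 1, 3, 5." I would close with the remark that, as in genus $3$, \cref{equations} still provides an algebraic model $y^n=f(x)$ for the three exceptional curves, defined a priori only over $\overline{\mathbb{Q}}$ and not necessarily over the field of moduli.
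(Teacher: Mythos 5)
Your proposal is correct and follows essentially the same route as the paper: non-cyclic reduced automorphism group handled via the unique superelliptic group together with \cref{teounico}, the remaining cyclic/trivial strata handled by the odd-signature criterion \cref{teoAQ} after a table check, with Nrs.\ 1, 3, 5 as the residual cases (the rigid, $\delta=0$ curves being given by explicit rational models). The only difference is cosmetic: the paper does not bother to verify that the three exceptions fail the sufficient conditions, since the lemma asserts nothing about them.
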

\proof 

There is only one case when  the reduced automorphism group $\bAut(\X)$  is not isomorphic to a cyclic or a dihedral group, namely $\bG\iso S_4$.  In this case, the curve is $y^3 = x (x^4-1)$ and is defined over the field of moduli. If $\bG$ is isomorphic to a dihedral group, then there are only $6$ signatures which give the groups $D_6\times C_3$, $D_4\times C_3$, $D_{12}\times C_3$, $D_4\times C_3$, $D_8 \times C_3$, and $D_4 \times C_5$.  The groups $D_{12}\times C_3$, $D_8 \times C_3$, and $D_4 \times C_5$ correspond to curves $y^3 = x^6-1$, $y^3=x(x^4-1)$, and $y^5= x(x^2-1)$ respectively.  
The remaining three cases are given by Nrs. 7, 8 and 9 in \cref{g=4} which are definable over their field of moduli by  \cref{teounico}. 

\begin{table}[ht]
\caption{Genus $4$ curves No. 1, 3 and 5 are the only ones whose field of moduli is not necessarily a field of definition}

\begin{tabular}{|l|l|l|l|l|l|l|l|}
\hline
Nr. & $\overline G$                & G      &$n$  &$m$ & sig. & $\delta$ & Equation $y^n=f(x)$ \\
\hline \hline
\textbf{\color{blue}\color{blue}1} & & $C_2$               &  2     & 1 & $2^{10}$       &  7     &  $  x \left( x^8+ \sum_{i=1}^7 a_i x^i   +1 \right) $ \\
2 & &  $ V_4$             &  2     & 2 & $2^7$       &  4     &  $ x^{10}+  \sum_{i=1}^4 a_i x^{2i}+1 $ \\ 
\textbf{\color{blue}\color{blue}3} & $C_m$ & $C_4$               &  2     & 2 & $2^4, 4^2$  &  3     &  $ x (x^8+a_3 x^6+a_2 x^4+a_1 x^2+1) $  \\
4 & & $C_6$               &  2     & 3 & $2^3, 3, 6$ &  2     &  $ x^9+a_1 x^3+a_2 x^6+1 $             \\
\textbf{\color{blue}\color{blue}5} &  & $C_3$              &  3     & 1 &$3^6$        &  3     &  $ x (x^4+a_1 x+a_2 x^2+a_3 x^3+1) $  \\
6 &  & $C_2 \times C_3$   &  3     & 2 & $2^2, 3^3 $  & 2     &  $ x^6+a_2 x^4+a_1 x^2+1 $             \\
\hline
7 &  & $D_6 \times C_3$   &  3     & 3 & $2^2, 3^2 $  & 1     &  $ x^6+a_1 x^3+1 $                   \\
8 & $D_{2m}$ & $V_4 \times C_3 $  &  3     & 2 & $2^2,  3, 6$ & 1     &  $ (x^2-1)(x^4+a_1 x^2+1) $         \\
9 & &  $V_4 \times C_3$   &  3     & 2 & $2^2, 3, 6$ & 1     &  $ x (x^4+a_1 x^2+1)   $              \\
\hline \hline
\end{tabular}
\label{g=4}
\end{table}

If $\bAut(\X)$ is isomorphic to a cyclic group, then there are two signatures for each of the groups $C_{18}$ and $C_{15}$.  In each case, both signatures give the same curve, namely $y^2 = x^9+1$ and $y^3=x^5+1$ respectively.  The left cases are given by cases 1 to 6 in  \cref{g=4}. As all cases, with the exception of cases 1, 3 and 5, the curves are of odd signature; so definable over their field of moduli by  \cref{teoAQ}.
\qed

\section{Theta functions}\label{sect-12}
\def\T{\theta}
\def\C{\mathbb C}

\renewcommand{\ch}[2]
{\begin{bmatrix}
 #1 \\
 #2\\
\end{bmatrix}}

\renewcommand{\chr}[4]
{\begin{bmatrix}
 #1 & #2\\
 #3 & #4
\end{bmatrix}}

\renewcommand{\chs}[6]
{\begin{bmatrix}
 #1 & #2 & #3\\
 #4 & #5 & #6
\end{bmatrix}}

In this section we describe the theory of theta functions for hyperelliptic curves and steer the reader toward the theta functions for superelliptic curves in the light of recent developments in the area \cite{kop-1}, \cite{kop-2}, \cite{book}. 
 
%
An \textbf{algebraic function} $y(x)$ is a function which satisfies some equation 
\[ f(x, y(x)) =0, \]
where $f(x, y) \in \C[x, y]$ is an irreducible polynomial. 
Recall from calculus that $\int F(x) \, dx$,  for $F(x) \in \C (x)$,  can be integrated using partial fractions and expressing this as a sum of rational functions in $x$ or logarithms of $x$.    Also, the integral  
\[ \int F(x, y) \, dx, \] 
where $F \in \C (x, y)$ and $x, y \in \C(t)$, can be easily solved   by replacing for $x=x(t)$ and $y=y(t)$ this reduces to the previous case.   Similarly, we can deal with the case   
\[ \int F \left(x,  \sqrt{ax^2+bx+c} \right) \, dx. \]
Indeed, let $y=\sqrt{ax^2+bx+c}$.  Then, $y^2=ax^2+bx+c$ is the equation of a conic.  As such it can be parametrized as $x= x(t)$,  $y=y(t)$ 
and again reduces to the previous case.  
However, the integral   
\[ \int F \left(x,  \sqrt{ax^3+bx^2+cx+d} \right) \, dx \]  
can not be solved this way because  
\[ y^2= ax^3+bx^2+cx+d\] 
is not a genus 0 curve, and therefore can not be parametrized.    Such integrals are called \textbf{elliptic integrals}.  To solve them one needs to understand the concept of  \textbf{elliptic functions} which will be developed later.  It can be easily shown that these integrals can be transformed to the form 
\[ \int \frac {p(x)} {\sqrt{q(x)} } dx \]
where $p(x), q(x)$ are polynomials such that $\deg q = 3, 4$ and $q(x)$ is separable.  The term \textbf{elliptic} comes from the fact that such integrals appear in the computation of the length of an ellipse. 

%
A natural generalization of the elliptic integrals are the \textbf{hyperelliptic integrals} which are of the form 
$ \int \frac {p(x)} {\sqrt{q(x)} } dx $
where $p(x), q(x)$ are polynomials such that $\deg q \geq 5$ and $q(x)$ is separable. 
Naturally, the square root above can be assumed to be a n-th root.  We will call such integrals \textbf{superelliptic integrals}.  Hence, a superelliptic integral is of the form 
\[ \int \frac {p(x)} {\sqrt[n]{q(x)} } dx \]
where $n \geq 3$, $p(x), q(x)$ are polynomials such that $\deg q \geq 5$ and $q(x)$ is separable. 
What about the general case when 
$ \int R(x, y) \, dx$,   where $R \in \C (x, y)$ and $y$ is an algebraic function of $x$ given by some equation $F(x, y) =0$, 
for $F(x, y)\in \C [x, y]$?   An integral of this type is called an \textbf{Abelian integral}.  


There are several version of what is called the Abel's theorem in the literature. For original versions of what Abel actually stated and proved one can check the classic books \cite{baker} and \cite{Clebsch}.  For modern interpretations of Abel's theorem and its historical perspectives there are the following wonderful references \cite{gri},  \cite{gri-2} and \cite{kleiman}. In this short notes we will try to stay as close as possible to the original version of Abel.  
Let $y$ be an algebraic function of $x$ defined by an equation of the form 
\[ f(x, y) = y^n + A_1\, y^{n-1} + \cdots A_n=0,\]
with $A_0, \dots , A_n \in \C (x)$. Let $R (x, y) \in \C(x, y)$. 
\begin{thm}[Abel]
The sum \[ \int_{(a_1, b_1)}^{(x_1, y_1)} R(x, y) + \cdots + \int_{(a_m, b_m)}^{(x_m, y_m)} R(x, y) \]
for arbitrary $a_i, b_i$, is expressible as a sum of rational functions of the variables $(x_1, y_1)$, $\dots$, $(x_m, y_m)$ and logarithms of such rational functions with the addition of 
\[ - \int^{(z_1, s_1)} R(x, y)  - \cdots - \int^{(z_k, s_k)} R(x, y) \,,\]
where $z_i, s_i$ are determined by $x_i, y_i$ as the roots of an algebraic equation whose coefficients are rational coefficients of $x_1, y_1, \dots , x_m, y_m$. Moreover, $s_1, \dots , s_k$ are the corresponding values of $y$, for which any $s_i$ is determined as a rational function of $z_i$ and   $x_1, y_1, \dots , x_m, y_m$.
The number $k$ does not depend on $m$, $R(x, y)$, or the values $(x_i, y_i)$, but only on the equation 
\[ f(x, y) =0.\]
\end{thm}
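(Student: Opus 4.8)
The plan is to follow Abel's original strategy: introduce an auxiliary family of plane curves cutting the curve $f(x,y)=0$ in a variable group of points, and exploit the fact that a symmetric function of that group of points is a rational function of the parameters of the family. Concretely, I would choose a polynomial $\varphi(x,y;\mathbf a)=a_0\theta_0(x,y)+a_1\theta_1(x,y)+\cdots+a_\mu\theta_\mu(x,y)$, where $\theta_0,\dots,\theta_\mu$ is a suitable finite set of monomials, taken large enough that among the intersection points of $\varphi=0$ with $f=0$ one can prescribe $m$ of them freely. Eliminating $y$ between $f(x,y)=0$ and $\varphi(x,y;\mathbf a)=0$ produces a single equation $F(x;\mathbf a)=0$ of some degree $N$ in $x$ whose roots $x_1(\mathbf a),\dots,x_N(\mathbf a)$ are the abscissae of the intersection points and whose coefficients are rational functions of $\mathbf a$ (and of the coefficients of $f$). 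The corresponding ordinates $y_i$ are common roots of $f(x_i,\cdot)$ and $\varphi(x_i,\cdot;\mathbf a)$, hence algebraic functions of $x_i$ and $\mathbf a$.

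Next, set $\Pi(\mathbf a):=\sum_{i=1}^N \int^{(x_i(\mathbf a),\,y_i(\mathbf a))} R(x,y)\,dx$. Differentiating with respect to a parameter $a_\ell$ and using implicit differentiation on $F(x_i;\mathbf a)=0$, one gets $\partial x_i/\partial a_\ell=-\big(\partial F/\partial a_\ell\big)\big/\big(\partial F/\partial x\big)$ evaluated at $x=x_i$, so that
\[
\frac{\partial \Pi}{\partial a_\ell}=\sum_{i=1}^N R(x_i,y_i)\,\frac{\partial x_i}{\partial a_\ell}.
\]
The crucial point is that the right-hand side is a rational function of $\mathbf a$: it is a sum, over the full group of intersection points, of a fixed rational expression in $(x,y;\mathbf a)$, hence a symmetric function of the roots of $F$, and therefore expressible rationally through the coefficients of $F$, which are rational in $\mathbf a$. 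Consequently $d\Pi$ is a rational differential form in the variables $\mathbf a$, and integrating it via partial fractions shows that $\Pi(\mathbf a)$ is a sum of rational functions of $\mathbf a$ and of logarithms of rational functions of $\mathbf a$, up to an additive constant.

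Finally, I would specialize the parameters. Because $\varphi$ was chosen with enough freedom, $m$ of the intersection points, say $(x_1,y_1),\dots,(x_m,y_m)$, can be prescribed arbitrarily; solving the linear conditions $\varphi(x_i,y_i;\mathbf a)=0$ expresses $\mathbf a$ as a rational function of these $m$ points. The remaining $k:=N-m$ intersection points $(z_1,s_1),\dots,(z_k,s_k)$ are then the residual roots of $F(x;\mathbf a)=0$: each $z_j$ is algebraic over $\C(x_1,y_1,\dots,x_m,y_m)$, and each $s_j$ is a rational function of $z_j$ and of $x_1,y_1,\dots,x_m,y_m$. Substituting into the identity for $\Pi$ and moving the $k$ residual integrals to the other side yields exactly the asserted formula. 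Since $N$ and the number of freely prescribable points depend only on the degrees of the monomials occurring in $f$ — i.e. only on the equation $f(x,y)=0$ — so does $k$; in fact it is the genus of the curve.

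The main obstacle is the middle step: rigorously justifying that $\sum_{i=1}^N R(x_i,y_i)\,\partial x_i/\partial a_\ell$ is a rational function of $\mathbf a$. This requires handling the fact that $y_i$ is not individually a rational function of $x_i$ and $\mathbf a$ (it is only one branch among several), so one must argue that the particular symmetric combination appearing here — summed over the whole intersection divisor, with $y_i$ eliminated through the resultant — does descend to a rational function of the parameters, and one must control the poles introduced by $\partial F/\partial x$ vanishing (coincidences among the $x_i$). This bookkeeping, together with the verification that the auxiliary family can indeed be chosen to pass through $m$ arbitrary points while keeping $N$ minimal, is the delicate part of the argument.
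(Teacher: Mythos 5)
Your strategy is the classical Abel--Baker elimination argument, which is exactly the route the paper points to (the paper gives no proof of this statement itself; it defers to Baker, pp.\ 207--235). The obstacle you flag in the middle step is real but has a standard resolution that you do not supply: at a common point of $f=0$ and $\varphi(\cdot,\cdot;\mathbf{a})=0$ the ordinate is, for generic parameters, determined \emph{rationally} by the abscissa and $\mathbf{a}$, because the greatest common divisor of $f(x,\cdot)$ and $\varphi(x,\cdot;\mathbf{a})$ as polynomials in $y$ is generically of degree one; hence $R(x_i,y_i)\,\partial x_i/\partial a_\ell$ is a single rational expression $\rho(x_i;\mathbf{a})$, the sum over all roots of $F(\cdot;\mathbf{a})$ is a symmetric function of those roots and therefore rational in the coefficients of $F$, i.e.\ in $\mathbf{a}$, and the bad parameter values (vanishing of $\partial F/\partial x$, coincident abscissae) form a proper closed subset, so the identity of rational differential forms persists. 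Without this (or an equivalent trace/resultant argument) your proof is incomplete precisely at its crucial step, though the step does not fail.

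The more serious gap is the final clause of the theorem, the independence of $k$. You define $k=N-m$ and assert that ``$N$ and the number of freely prescribable points depend only on the degrees of the monomials occurring in $f$,'' but that is not true as stated: $N$ is the number of intersections of $f=0$ with the auxiliary curve, so it depends on the monomials $\theta_0,\dots,\theta_\mu$ chosen for $\varphi$, and the number of points one can impose depends on $\mu$ as well; both change when you enlarge the family to accommodate a larger $m$. What must actually be proved is that the \emph{difference} between the number of intersection points and the number of independent conditions one can impose is the same for every admissible auxiliary family and for generically chosen prescribed points --- this is Abel's dimension count, and it is exactly where the invariant $k$ (the genus) comes from. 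As written, your argument establishes the displayed identity for a fixed family but does not establish that $k$ is independent of $m$, of $R$, and of the choice of $(x_i,y_i)$, which is the part of the statement that carries the theorem's content.
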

For more details of this version of Abel's theorem and its proof see \cite{baker}*{pg. 207-235}.
A modern version of the Abel's theorem, which is found in most textbooks says that the Abel-Jacobi's map is injective; see  ~\cref{thm-abel} for details. A nice discussion from a modern view point was provided in \cite{kleiman}. 
The new idea of Jacobi was to consider integrals $\int_{c}^{w} R(x, y)$ as variables and to try to determine $w$ in terms of such variables. This idea led to the fundamental concept of theta functions, which will be formally defined in the next section.

First, consider the Abelian integrals  \[  z_i : =\int_{c_i}^{w_i} R(x, y) \]  for $i=1, \dots g$.  Consider $z_i$ as variables and express $w_i$ as functions of $z_i$,
\[ w_i = f(z_i).\]
This is known as the \textbf{Jacobi inversion problem}.  
\begin{exa}[Elliptic integrals]   Let be given the integral (i.e. $g=1$)
\[ \int_0^{w_1} \frac {dt}   {\sqrt{(1-t^2)(1-k^2t^2) } } = z_1\]
Then \[w_1= \sn (z_1)= \sn (u; k) = \frac  {\th_3(0) \th_1 (v)} {\th_2 (0) \th_0 (v)}, \]   where $u=v\, \pi \, \th_3^2 (0)$ and $\th_0, \th_1, \th_2, \th_3$ are the Jacobi theta functions; see \cite{baker} for details. 
\end{exa}
%
It was exactly the above case that motivated Jacobi to introduce the theta functions.  In terms of these functions, he expressed his functions $\sn u$, $\cn u$, and $\dn u$ as fractions with the same denominator whose zeroes form the common poles of $\sn u$, $\cn u$, and $\dn u$. 
For  $g=2$, G\"opel found similar functions, building on work of Hermite.  We will say more about this case in the coming sections. G\"opel and later Rosenhain notice that integrals of the first kind, which exist for $g=2$ become elliptic integrals of the first and third kind, when two branch points of the curve of $g=2$ coincide.  This case corresponds to the degenerate cases of the $\L_n$ spaces as described in \cite{shaska-thesis} and later in \cite{deg3}. Both G\"opel and Rosenhain, when developing theta functions for genus $g=2$, were motivated by the Jacobi inversion problem.  Weierstrass considered functions which are quotients of theta functions for the  hyperelliptic curves, even though it appears that he never used the term "theta functions".  
In their generality, theta functions were developed by Riemann for $g \geq 2$.  It is Riemann's approach that is found in most modern books and that we will briefly describe in the next section. Most known references for what comes next can be found in \cites{Igusa, Mu1, Mu2, Mu3}.
 
 
\subsection{Riemann's theta functions} 
%

\subsubsection{Introduction to theta functions of curves}
Let $\X$ be an irreducible, smooth, projective curve of genus $g \geq 2$ defined over the complex field $\C.$ We denote the moduli space of genus $g$ by $\M_g$ and  the hyperelliptic locus in $\M_g$ by $\H_g.$ It is well known that  $\dim \M_g = 3g-3$ and $\H_g$ is a $(2g-1)$ dimensional subvariety of $\M_g.$  
Choose a symplectic homology basis for $\X$, say
\[ \{ A_1, \dots, A_g, B_1, \dots , B_g\}\]
such that the intersection products $A_i \cdot A_j = B_i \cdot B_j =0$ and $A_i \cdot B_j= \d_{i j}$.
We choose a basis $\{ w_i\}$ for the space of holomorphic 1-forms such that $\int_{A_i} w_j = \d_{i j},$ where $\d_{i j}$ is the Kronecker delta. The matrix $\O= \left[ \int_{B_i} w_j \right] $ is  the \textbf{period matrix} of $\X$.
The columns of the matrix $\left[ I \ | \O \right]$ form a lattice $L$ in  $\C^g$ and the Jacobian  of $\X$ is $\J(\X)= \C^g/ L$. 
Fix a point $p_0 \in \X$.  Then, the Abel-Jacobi map is defined as follows
\[
\begin{split}
\mu_p : \X & \to \J (\X) \\
p & \to \left( \int_{p_0}^p w_1, \dots ,   \int_{p_0}^p w_g \right) \mod L \,.\\
\end{split}
\]
The Abel-Jacobi map can be extended to divisors of $\X$ the natural way. For example, for a divisor $D= \sum_i n_i P_i$ we define %
\[ \mu (D) = \sum_i n_i \mu (P_i). \]
The following two theorems are part of the folklore on the subject and their proofs can be found in all classical textbooks. 
\begin{thm}[Abel]\label{thm-abel}
The Abel-Jacobi map is injective.
\end{thm}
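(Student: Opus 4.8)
The plan is to reduce the statement to its essential content: a divisor $D$ of degree zero is principal if and only if $\mu(D) = 0$ in $\J(\X)$, and then the injectivity on degree-zero divisor classes follows. First I would set up the precise statement. The map $\mu$ extends $\Z$-linearly to $\Div_\X(\C)$, and on degree-zero divisors it descends to a map $\Pic^0_\X(\C) \to \J(\X)$, because the value on a principal divisor must be $0$ (this is the nontrivial direction). So the real content to prove is: if $f \in \C(\X)^\times$, then $\mu(\div f) = 0$, and conversely if $\mu(D) = 0$ for a degree-zero divisor $D$ then $D$ is principal. The first assertion gives that $\mu$ is \emph{well-defined} on $\Pic^0$; the second gives that it is \emph{injective} there, which is the form of the theorem we want.

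The key steps, in order. (1) For the forward implication, take $f \in \C(\X)^\times$ and consider the meromorphic function $t \mapsto f$ viewed as a branched cover $f : \X \to \P^1$. Writing $\div f = \sum_j P_j - \sum_k Q_k$, I would express $\mu(\div f)$ as an integral and deform the paths of integration using the cover $f$: the family of divisors $f^{-1}(\lambda)$ for $\lambda$ moving along a path in $\P^1$ from $0$ to $\infty$ interpolates between $\div_0 f$ and $\div_\infty f$, and the resulting variation of $\sum \mu(\text{points in the fiber})$ is, by the reciprocity law for differentials of the first and third kind (equivalently, by a residue computation on $\X$), a period, hence $0$ in $\J(\X)$. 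Concretely this is the bilinear-relations / Riemann-relations argument: $\int_{\partial \Pi} (\log f) \, \omega_i$ over the boundary of the fundamental polygon $\Pi$ computes $2\pi i \sum \mu_i(\div f)$ on one hand and a $\Z$-linear combination of periods $\int_{A_i}\omega, \int_{B_i}\omega$ on the other. (2) For the converse, suppose $D = \sum n_i P_i$ has degree $0$ and $\mu(D) = 0$. Then $\ell(D) \geq 1$ would finish it, since a nonzero $f \in \L(D)$ gives $\div f \geq -D$, hence $\div f + D \geq 0$ of degree $0$, hence $\div f = -D$. To get $\ell(D) \geq 1$ I would invoke the Riemann--Roch theorem (stated earlier in the excerpt) together with the Jacobi inversion / theta-function input: the hypothesis $\mu(D) = 0$ says the point $\mu(D) \in \J(\X)$ lies in the image of effective divisors, and a vanishing argument for the Riemann theta function (or, more elementarily, dimension counting on $\Sym^g \X \to \J(\X)$ combined with Riemann--Roch) produces the required function.

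The main obstacle will be step (1): making the path-deformation and reciprocity argument rigorous requires care with the choice of fundamental polygon, the branch cuts of $\log f$, and bookkeeping of which periods appear — it is exactly the place where Abel's original computation and the modern "Riemann bilinear relations" proof do their real work. Step (2) is comparatively formal once Riemann--Roch and the basic theory of the theta divisor are granted, both of which are available to us from the earlier sections. I would therefore present step (1) in full detail and treat step (2) more briefly, citing \cite{baker}, \cite{gri}, or \cite{Mu1} for the standard facts about $\Theta$ and Jacobi inversion that are used.
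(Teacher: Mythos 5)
The paper offers no proof of this statement to compare yours against: it is introduced as ``part of the folklore on the subject'' with a pointer to the classical textbooks, so the only question is whether your plan would stand on its own. In outline it would, and it is the standard classical argument found in exactly the references you cite: necessity (principal divisors map to $0$) via the fundamental-polygon/bilinear-relations computation or, equivalently, your deformation of the fibres $f^{-1}(\lambda)$, and sufficiency via theta functions or differentials of the third kind.

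The problem is where you put the weight. The assertion ``the Abel--Jacobi map is injective'' \emph{is} your step (2): if $\deg D=0$ and $\mu(D)=0$ in $\J(\X)$, then $D=(f)$ for some $f\in\C(\X)^\times$. Your step (1) only shows the map is well defined on $\Pic^0$, yet you propose to present step (1) in full and to treat step (2) as ``comparatively formal,'' which it is not, and your sketch of it does not yet close. The remark that $\mu(D)$ ``lies in the image of effective divisors'' carries no information (every point of $\J(\X)$ does, by Jacobi inversion), and dimension counting on $\Sym^g\X\to\J(\X)$ together with Riemann--Roch does not by itself produce a function with $(f)=D$; that is circular unless you already know the fibres of $\mu$ are complete linear systems, which is the theorem. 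The two classical ways to finish are: (a) take a differential of the third kind with simple poles and residues prescribed by $D$ and vanishing $A$-periods; the hypothesis that $\mu(D)$ lies in the period lattice lets you correct it by a differential of the first kind so that all periods land in $2\pi i\,\Z$, and then $f=\exp\bigl(\int\omega\bigr)$ is single-valued meromorphic with $(f)=D$; or (b) build $f$ as a ratio of translated theta functions and use the quasi-periodicity relations recalled in the theta-function section, where $\mu(D)=0$ is precisely the condition making the exponential multipliers cancel so that $f$ descends to $\X$. Whichever you choose, that is the half that needs the detailed write-up; your step (1), by contrast, is the routine direction.
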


\begin{thm}[Jacobi]\label{thm-jacobi}
The Abel-Jacobi map is surjective
\end{thm}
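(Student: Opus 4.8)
The plan is to deduce surjectivity by extending the Abel--Jacobi map to the $g$-fold symmetric product $\X^{(g)}$ and showing that this extension $\mu^{(g)}\colon \X^{(g)}\to \J(\X)$ is \emph{already} surjective; surjectivity of the map on degree-zero divisors then follows at once, since for any $z\in\J(\X)$ one writes $z+\mu(g\,p_0)=\mu(D)$ with $D$ effective of degree $g$, whence $z=\mu(D-g\,p_0)$ with $D-g\,p_0$ of degree zero. Now $\X^{(g)}$ is a smooth, irreducible, compact complex manifold of dimension $g$ and $\mu^{(g)}$ is holomorphic, so its image is an irreducible compact analytic subvariety of the complex torus $\J(\X)=\C^g/L$, which is connected of the same dimension $g$. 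Hence it suffices to exhibit one point of $\X^{(g)}$ at which the differential of $\mu^{(g)}$ has rank $g$: the image is then a $g$-dimensional irreducible compact analytic subvariety of a connected $g$-dimensional complex torus, which forces it to be all of $\J(\X)$ (at a smooth point of full dimension the subvariety is open in the torus, and being compact it is also closed, so by connectedness it is everything).

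First I would identify the differential concretely. Near a reduced divisor $D=P_1+\cdots+P_g$ with the $P_i$ distinct, $\X^{(g)}$ is locally $\X^g$, and the differential of $\mu^{(g)}$ is the block-diagonal map whose $i$-th block is the differential of the Abel--Jacobi map at $P_i$; in the chosen basis $\{w_1,\dots,w_g\}$ of holomorphic $1$-forms this block is the row $\bigl(w_1(P_i),\dots,w_g(P_i)\bigr)$ expressed in a local coordinate at $P_i$. So the rank-$g$ claim reduces to finding $g$ points $P_1,\dots,P_g$ on $\X$ for which the evaluation vectors $\bigl(w_1(P_i),\dots,w_g(P_i)\bigr)$, $i=1,\dots,g$, are linearly independent in $\C^g$.

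To produce such points I would argue inductively, using the Riemann--Roch theorem (\cref{riemann} and the statement following it). Put $D_0=0$. Suppose $P_1,\dots,P_i$ with $i<g$ have been chosen so that the corresponding $i$ evaluation vectors are independent. Since $\ell(K-D_i)=\ell(D_i)+(g-1-i)\geq g-i\geq 1$ by Riemann--Roch, the linear system $|K-D_i|$ is nonempty; moreover its base locus is contained in the zero set of any single nonzero section and hence is finite. Choosing $P_{i+1}$ outside this base locus yields a holomorphic $1$-form vanishing on $D_i$ but not at $P_{i+1}$, which forces the evaluation vector at $P_{i+1}$ to be independent of those at $P_1,\dots,P_i$. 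After $g$ steps we obtain the desired reduced divisor $D=P_1+\cdots+P_g$, the differential of $\mu^{(g)}$ has rank $g$ there, and the argument is complete.

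The genuine obstacle is precisely this genericity input: one must be certain that at every stage $|K-D_i|$ is nonempty with base locus a proper (hence finite) subset of $\X$, so that an admissible next point exists; this is where Riemann--Roch and the fact that we stop strictly before degree $g$ are essential, and it is the only place where the hypothesis $g\geq 2$ intervenes beyond generalities about complex tori. An alternative route, more in keeping with \cref{sect-12}, would be to use Riemann's theta function directly: for a suitable constant $c$ the function $p\mapsto \theta\bigl(\mu(p)+c\bigr)$ on $\X$ has exactly $g$ zeros $P_1,\dots,P_g$, with $\mu(P_1+\cdots+P_g)$ determined by $c$ and the vector $\kappa$ of Riemann constants, and letting $c$ vary over $\C^g$ shows that every point of $\J(\X)$ is attained; but this relies on the vanishing theorem for $\theta$ along the locus $W_{g-1}$, so I would keep the dimension-count argument above as the primary proof.
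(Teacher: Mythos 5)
Your argument is correct: it is the classical Jacobi-inversion proof (extend $\mu$ to $\Sym^g(\X)$, exhibit a reduced divisor $P_1+\cdots+P_g$ where the Jacobian matrix $\bigl(w_j(P_i)\bigr)$ has rank $g$ via Riemann--Roch, then use properness/dimension of the image in the connected $g$-dimensional torus), and the Riemann--Roch step $\ell(K-D_i)\geq g-i\geq 1$ with the choice of $P_{i+1}$ off the zero divisor of a nonzero form vanishing on $D_i$ is exactly the right genericity input. The paper itself offers no proof of this statement, deferring to the classical literature, and your write-up is precisely the standard textbook argument it has in mind, so there is nothing to reconcile between the two.
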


We continue with our goal of defining  theta functions and theta characteristics.   Let 
\[ \HS_g =\{\t : \t \,\, \textbf{is symmetric}\,\, g \times g \, \textbf{matrix with positive definite imaginary part} \}\]
be the \textbf{Siegel upper-half space}. Then $\O \in \HS_g$. The group of all $2g \times 2g$ matrices $M \in GL_{2g}(\Z)$ satisfying
\[M^t J M = J  \,\,\,\,\,\,\,\, \textbf{with} \, \,\,\,\,\,\, J = \begin{pmatrix}  0 & I_g \\ -I_g & 0 \end{pmatrix} \]
is called the \textbf{symplectic group} and denoted  by $Sp_{2g}(\Z)$.
Let $M = \begin{pmatrix} R & S \\ T & U \end{pmatrix} \in Sp_{2g}(\Z) $ and $\t \in \HS_g$ where $R,$ $S,$ $T$ and $U$ are $g \times g$ matrices. 
$Sp_{2g}(\Z) $ acts transitively on $\HS_g$ as
\[ M(\t) = (R \t + S)(T \t + U)^{-1}. \]
\noindent Here, the multiplication is matrix multiplication. There is an injection
\[ \M_g \embd \HS_g/ Sp_{2g}(\Z) =: \A_g, \] 
where each curve $C$ (up to isomorphism) is mapped to its Jacobian in $\A_g.$
If $\ell$ is a positive integer, the principal congruence group of degree $g$ and of level $\ell$ is defined as a subgroup of $Sp_{2g}(\Z)$ by the condition $M \equiv I_{2g} \mod \ell.$ We shall denote this group by $Sp_{2g}(\Z)(\ell)$.
For any $z \in \C^g$ and $\t \in \HS_g$ the \textbf{Riemann's theta function} is defined as
\[ \T (z , \t) = \sum_{u\in \Z^g} e^{\pi i ( u^t \t u + 2 u^t z )  }\]
where $u$ and $z$ are $g$-dimensional column vectors and the products involved in the formula are matrix products. The fact that the imaginary part of $\t$ is positive makes the series absolutely convergent over every compact subset of $\C^g \times \HS_g$.

The theta function is holomorphic on $\C^g\times \HS_g$ and has quasi periodic properties,
\[ \T(z+u,\tau)=\T(z,\tau)\quad \textbf{and}\quad \T(z+u\tau,\tau)=e^{-\pi i( u^t \tau u+2z^t u )}\cdot  \T(z,\tau), \]
where $u\in \Z^g$; see \cite{Mu1} for details.  The locus 
\[ \Theta: = \{ z \in \C^g/L : \T(z, \O)=0 \}\]
is called the \textbf{theta divisor} of $\X$.
Any point $e \in \J (\X)$ can be uniquely written  as $e = (b,a) \begin{pmatrix} 1_g \\ \O \end{pmatrix}$ where $a,b \in \R^g$ are the characteristics of $e.$
We shall use  the notation $[e]$ for the characteristic of $e$ where $[e] = \ch{a}{b}.$ For any $a, b \in \Q^g$, the theta function with rational characteristic is defined as a translate of Riemann's theta function multiplied by an exponential factor
\begin{equation} \label{ThetaFunctionWithCharac} \T  \ch{a}{b} (z , \t) = e^{\pi i( a^t \t a + 2 a^t(z+b))} \T(z+\t a+b ,\t).\end{equation}
\noindent By writing out  \cref{ThetaFunctionWithCharac}, we obtain
\[ \T  \ch{a}{b} (z , \t) = \sum_{u\in \Z^g} e^{\pi i ( (u+a)^t \t (u+a) + 2 (u+a)^t (z+b) )  }. \]
The Riemann's theta function is $\T \ch{0}{0}.$ Theta functions with rational characteristics have the following properties:
\begin{equation}\label{periodicproperty}
\begin{split}
& \T \ch{a+n} {b+m} (z,\t) = e^{2\pi i a^t m}\T \ch {a} {b} (z,\t),\\
&\T \ch{a} {b} (z+m,\t) = e^{2\pi i a^t m}\T \ch {a} {b} (z,\t),\\
&\T \ch{a} {b} (z+\t m,\t) = e^{\pi i (-2b^t m -m^t \t m - 2m^t z)}\T \ch {a} {b} (z,\t)\\
\end{split}
\end{equation}
with $n,m \in \Z^n.$ All of these properties are immediately verified by writing them out.
A scalar obtained by evaluating a theta function with characteristic at $z=0$ is called a \textbf{theta constant} or \textbf{theta-nulls}. When the entries of column vectors $a$ and $b$ take values in $\{ 0,\frac{1}{2}\}$, then the characteristics $ \ch {a}{b} $ are called the \textbf{half-integer characteristics}. The corresponding theta functions with rational characteristics are called \textbf{theta characteristics}.

Points of order $n$ on $\J(\X)$ are called the $\frac 1 n$-\textbf{periods}. Any point $p$ of $\J(\X)$ can be written as $p = \t \,a + b. $ If $\ch{a}{b}$ is a $\frac 1 n$-period, then $a,b \in (\frac{1}{n}\Z /\Z)^{g}.$ The $\frac 1 n$-period $p$ can be associated with an element of $H_1(\X,\Z / n\Z)$ as follows: 
Let $a = (a_1,\cdots,a_g)^t,$ and $b = (b_1,\cdots,b_g)^t.$ We have
\[
\begin{split}
p   = \t a + b  & = \left(      \sum a_i \int_{B_i} \om_1, \cdots , \sum a_i \int_{B_i} \om_g \right)^t     +       
           \left(b_1 \int_{A_1} \om_1, \cdots  ,   b_g       \int_{A_g} \om_g \right) \\
           & = \left(\sum (a_i \int_{B_i} \om_1 + b_i\int_{A_i} \om_1 \right) , \cdots , \sum \left(a_i \int_{B_i} \om_g + b_i\int_{A_i} \om_g) \right)^t\\
           & = \left( \int_C \om_1, \cdots, \int_C \om_g \right)^t
\end{split}
\]
with $C = \sum a_i B_i + b_i A_i. $ We identify the point $p$ with the cycle $\bar{C} \in H_1(\X,\Z / n\Z)$ where $\bar{C} =\sum \bar{a_i} B_i + \bar{b_i} A_i,$  $\bar{a_i} = n a_i$ and $\bar{b_i} = n b_i$ for all $i$;  see  \cite{Accola} for more details. 

\subsection{Half-Integer Characteristics and the G\"opel Group} 
In this section we study the groups of half-integer characteristics. Any half-integer characteristic $\m \in\frac{1}{2}\Z^{2g}/\Z^{2g}$ is given by
\[
\m = \frac{1}{2}m = \frac{1}{2}
\begin{pmatrix} m_1 & m_2 &  \cdots &  m_g \\ m_1^{\prime} & m_2^{\prime} & \cdots & m_g^{\prime}  \end{pmatrix},
\]
where $m_i, m_i^{\prime} \in \Z.$ For $\m = \ch{m ^\prime}{m^{\prime \prime}} \in \frac{1}{2}\Z^{2g}/\Z^{2g},$ we define $e_*(\m) = (-1)^{4 (m^\prime)^t m^{\prime \prime}}.$ We say that $\m$ is an \textbf{even} (resp. \textbf{odd}) characteristic if $e_*(\m) = 1$ (resp. $e_*(\m) = -1$). For any curve of genus $g$, there are $2^{g-1}(2^g+1)$ (resp., $2^{g-1}(2^g-1)$ ) even theta functions (resp., odd theta functions). Let $\an$ be another half-integer characteristic. We define
\[
 \m \, \an = \frac{1}{2} \begin{pmatrix} t_1 & t_2 &  \cdots &  t_g \\ t_1^{\prime} & t_2^{\prime} & \cdots &
t_g^{\prime}
\end{pmatrix}
\]
where $t_i \equiv (m_i\, + a_i)  \mod 2$ and $t_i^{\prime} \equiv (m_i^{\prime}\, + a_i^{\prime} ) \mod 2.$
We only consider  characteristics $\frac{1}{2}q$ in which each of the elements
$q_i,q_i^{\prime}$ is either 0 or 1. We use the following abbreviations:
\[
\begin{split}
&|\m| = \sum_{i=1}^g m_i m_i^{\prime},  \quad \quad \quad \quad \quad \quad \quad \quad \quad
|\m, \an| = \sum_{i=1}^g (m_i^{\prime} a_i - m_i a_i^{\prime}), \\
& |\m, \an, \bn| = |\an, \bn| + |\bn, \m| + |\m, \an|, \quad \quad \binom{\m}{\an} = e^{\pi i \sum_{j=1}^g m_j
a_j^{\prime}}.
\end{split}
\]
\indent The set of all half-integer characteristics forms a group $\G$ which has $2^{2g}$ elements. We say that two half integer characteristics $\m$ and $\an$ are \textbf{syzygetic} (resp., \textbf{azygetic}) if $|\m, \an| \equiv 0 \mod 2$ (resp., $|\m, \an| \equiv 1 \mod 2$) and three half-integer characteristics $\m, \an$, and $\bn$ are syzygetic if
$|\m, \an, \bn| \equiv 0 \mod 2$.
A \textbf{G\"opel group} $G$ is a group of $2^r$ half-integer characteristics where $r \leq g$ such that every two characteristics are syzygetic. The elements of the group $G$ are formed by the sums of $r$ fundamental characteristics; see \cite{baker}*{pg. 489} for details. Obviously, a G\"opel group of order $2^r$ is isomorphic to $C^r_2$. The proof of the following lemma can be found on   \cite{baker}*{pg.  490}.
\begin{lem}
The number of different G\"opel groups which have $2^r$ characteristics is
\[ \frac{(2^{2g}-1)(2^{2g-2}-1)\cdots(2^{2g-2r+2}-1)}{(2^r-1)(2^{r-1}-1)\cdots(2-1)}. \]
\end{lem}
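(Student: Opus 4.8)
The plan is to count pairs $(G, \mathcal{B})$, where $G$ is a G\"opel group of order $2^r$ and $\mathcal{B} = (\mathfrak{c}_1, \dots, \mathfrak{c}_r)$ is an ordered basis of $G$ consisting of $r$ nonzero half-integer characteristics that are pairwise syzygetic, and then divide by the number of such bases a fixed $G$ admits. First I would set up the ambient structure: the group $\Gamma$ of half-integer characteristics is the $\mathbb{F}_2$-vector space $(\mathbb{F}_2)^{2g}$ equipped with the alternating bilinear form $\langle \mathfrak{m}, \mathfrak{n}\rangle = |\mathfrak{m},\mathfrak{n}| \bmod 2$, which is nondegenerate (symplectic). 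A G\"opel group of order $2^r$ is then exactly an $r$-dimensional totally isotropic subspace of this symplectic $\mathbb{F}_2$-space. So the lemma is the standard count of isotropic $r$-subspaces of a $2g$-dimensional symplectic space over $\mathbb{F}_2$.

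The key steps, in order: (1) count ordered tuples $(\mathfrak{c}_1, \dots, \mathfrak{c}_r)$ of characteristics spanning an isotropic $r$-subspace. Choose $\mathfrak{c}_1 \ne 0$: there are $2^{2g} - 1$ choices. Having chosen an isotropic $\mathfrak{c}_1, \dots, \mathfrak{c}_{i}$ spanning an isotropic space $V_i$, the next vector $\mathfrak{c}_{i+1}$ must lie in $V_i^{\perp}$ (to stay isotropic against all previous ones) but not in $V_i$ (to be a genuine basis extension). Since the form is nondegenerate, $\dim V_i^{\perp} = 2g - i$, so $V_i^{\perp}$ has $2^{2g-i}$ elements and $V_i$ has $2^i$, giving $2^{2g-i} - 2^i$ choices for $\mathfrak{c}_{i+1}$. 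Multiplying, the number of ordered isotropic bases is $\prod_{i=0}^{r-1}(2^{2g-i} - 2^i)$. (2) Count ordered bases of a fixed $r$-dimensional $\mathbb{F}_2$-space: this is $\prod_{i=0}^{r-1}(2^r - 2^i)$, the order of $\mathrm{GL}_r(\mathbb{F}_2)$. (3) Divide: the number of G\"opel groups is
\[
\frac{\prod_{i=0}^{r-1}(2^{2g-i} - 2^i)}{\prod_{i=0}^{r-1}(2^r - 2^i)}.
\]
(4) Simplify to the stated form by factoring $2^i$ out of numerator and denominator in each factor: $2^{2g-i} - 2^i = 2^i(2^{2g-2i} - 1)$ and $2^r - 2^i = 2^i(2^{r-i} - 1)$, so the powers of $2$ cancel factor-by-factor, leaving $\prod_{i=0}^{r-1}(2^{2g-2i}-1) \big/ \prod_{i=0}^{r-1}(2^{r-i}-1)$, which is exactly
\[
\frac{(2^{2g}-1)(2^{2g-2}-1)\cdots(2^{2g-2r+2}-1)}{(2^r-1)(2^{r-1}-1)\cdots(2-1)}.
\]

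The main obstacle I anticipate is not the linear algebra but justifying step (1) cleanly — specifically, verifying that "pairwise syzygetic" for a spanning set is equivalent to the whole subspace being totally isotropic (this uses bilinearity of $|\cdot,\cdot|$ over $\mathbb{F}_2$, so that $\langle \mathfrak{m},\mathfrak{m}\rangle = 0$ automatically and isotropy propagates to all linear combinations), and that $\langle\cdot,\cdot\rangle$ is genuinely nondegenerate so that $\dim V^{\perp} = 2g - \dim V$ holds at each stage. I would also need to confirm that every totally isotropic $r$-subspace indeed arises as a G\"opel group in the sense of the text (a group of $2^r$ characteristics generated by $r$ fundamental ones, pairwise syzygetic) — this is a matter of matching definitions, and the identification with isotropic subspaces makes it transparent. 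Once those foundational points are pinned down, the counting is the routine Gaussian-binomial-type argument sketched above.
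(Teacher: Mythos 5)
Your proposal is correct. The paper itself gives no argument for this lemma — it simply cites Baker (pg.\ 490) — and what you have written is the standard proof: identifying the group of half-integer characteristics with the symplectic $\mathbb{F}_2$-space of dimension $2g$ under $|\cdot,\cdot| \bmod 2$, so that G\"opel groups are exactly the totally isotropic $r$-dimensional subspaces, and then performing the Gaussian-type count $\prod_{i=0}^{r-1}(2^{2g-i}-2^i)\big/\prod_{i=0}^{r-1}(2^{r}-2^i)$, which simplifies to the stated formula. This is essentially the modern rendering of Baker's classical successive-choice argument, and the two foundational points you flag (bilinearity of the pairing so that pairwise syzygetic generators give a totally isotropic span, and nondegeneracy so that $\dim V_i^{\perp}=2g-i$ at each stage) are exactly the right things to pin down; both hold for the form $\sum(m_i'a_i-m_ia_i') \bmod 2$, so the count goes through with $2^{2g-i}-2^i$ choices at step $i$ because $V_i\subseteq V_i^{\perp}$.
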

If $G$ is a G\"opel group with $2^r$ elements, it has $2^{2g-r}$ cosets. The cosets are called \textbf{G\"opel systems}
and are denoted by $\an G$, $\an \in \G$. Any three characteristics of a G\"opel system are syzygetic. We can find a
set of characteristics called a basis of the G\"opel system which derives all its $2^r$ characteristics by taking only
combinations of any odd number of characteristics of the basis.
\begin{lem}
Let $g \geq 1$ be a fixed integer, $r$ be as defined above and $\sigma = g-r.$ Then there are
$2^{\sigma-1}(2^\sigma+1)$ G\"opel systems which only consist of even characteristics and there are
$2^{\sigma-1}(2^\sigma-1)$ G\"opel systems which consist of odd characteristics. The other $2^{2\sigma}(2^r-1)$ G\"opel
systems consist of as many odd characteristics as even characteristics.
\end{lem}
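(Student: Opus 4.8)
Fix $g \ge 1$, a G\"opel group $G$ of order $2^r$, and set $\sigma = g-r$. Then among the $2^{2\sigma}$ G\"opel systems (cosets of $G$ in $\Gamma$), exactly $2^{\sigma-1}(2^\sigma+1)$ consist entirely of even characteristics, exactly $2^{\sigma-1}(2^\sigma-1)$ consist entirely of odd characteristics, and the remaining $2^{2\sigma}(2^r-1)$ contain equally many odd and even characteristics.

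**Approach.** The plan is to exploit the fact that on each G\"opel system the parity function $e_*$ behaves like (a translate of) a quadratic form, so that its restriction to a coset $\mathfrak{a}G$ is either constant or ``balanced'' (equidistributed between $\pm1$), and then to count the constant-even and constant-odd cosets by a generating-function / Gauss-sum identity. First I would recall the cocycle-type identity for $e_*$: for half-integer characteristics $\mathfrak{m},\mathfrak{n}$ one has $e_*(\mathfrak{m}\mathfrak{n}) = e_*(\mathfrak{m})\, e_*(\mathfrak{n})\, (-1)^{|\mathfrak{m},\mathfrak{n}|}$, where $|\mathfrak{m},\mathfrak{n}|$ is the (alternating, hence symmetric mod $2$) pairing used earlier to define syzygetic pairs. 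Restricting to a fixed coset $\mathfrak{a}G$ and writing a general element as $\mathfrak{a}\mathfrak{h}$ with $\mathfrak{h}\in G$, the syzygetic condition $|\mathfrak{h},\mathfrak{h}'|\equiv 0$ for all $\mathfrak{h},\mathfrak{h}'\in G$ kills the cross terms, and one gets $e_*(\mathfrak{a}\mathfrak{h}) = e_*(\mathfrak{a})\,(-1)^{\ell_{\mathfrak a}(\mathfrak h)}$ for a \emph{linear} functional $\ell_{\mathfrak a}: G \to \mathbb{Z}/2$ depending on $\mathfrak a$. Hence on each coset, $e_*$ is constant (if $\ell_{\mathfrak a}\equiv 0$) or takes each value $\pm1$ exactly $2^{r-1}$ times (if $\ell_{\mathfrak a}\not\equiv 0$). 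This immediately gives the ``balanced'' statement and reduces the problem to counting the cosets on which $\ell_{\mathfrak a}$ vanishes, split by the sign $e_*(\mathfrak a)$.

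**Counting the constant cosets.** For the cosets where $e_*$ is constant, I would count even and odd ones separately using a Gauss-sum evaluation. The number of even characteristics in all of $\Gamma$ is $2^{g-1}(2^g+1)$ and of odd ones $2^{g-1}(2^g-1)$ (stated in the excerpt). The key step is to compute $\sum_{\mathfrak{a}G}\big(\sum_{\mathfrak{h}\in G} e_*(\mathfrak{a}\mathfrak{h})\big)$ and $\sum_{\mathfrak{a}G}\big(\sum_{\mathfrak{h}\in G} e_*(\mathfrak{a}\mathfrak{h})\big)^2$: the first sum is just $\sum_{\mathfrak{m}\in\Gamma} e_*(\mathfrak{m}) = 2^g$, and the second, after expanding and using the cocycle identity together with the syzygetic vanishing, collapses to $2^r \sum_{\mathfrak a G} e_*(\mathfrak a)^2 \cdot(\text{indicator of }\ell_{\mathfrak a}\equiv0)$-type terms and evaluates to $2^{r}\cdot 2^{2\sigma} \cdot (\pm\text{correction})$; more cleanly, one shows the coset-sum $S(\mathfrak a):=\sum_{\mathfrak h\in G}e_*(\mathfrak a\mathfrak h)$ equals $2^r e_*(\mathfrak a)$ when $\ell_{\mathfrak a}\equiv 0$ and $0$ otherwise. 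Letting $N_+, N_-$ be the numbers of constant-even and constant-odd cosets, the two identities become $2^r(N_+ - N_-) = 2^g$ and $2^{2r}(N_+ + N_-) = $ (value of $\sum S(\mathfrak a)^2$), which one evaluates to $2^{2r}\cdot 2^{\sigma}\cdot 2^{\sigma}$, i.e. $N_+ + N_- = 2^{2\sigma}$... — here I must be careful, since that would force all cosets constant, so the correct evaluation is $\sum_{\mathfrak a G} S(\mathfrak a)^2 = 2^{2r}\cdot 2^{\sigma}$ giving $N_+ + N_- = 2^\sigma$, whence $N_+ = 2^{\sigma-1}(2^\sigma+1)/2^{\sigma}\cdot 2^{\sigma-1}$; solving the linear system $N_+-N_- = 2^{\sigma}$, $N_++N_- = 2^{2\sigma}-$(balanced count)$/2^{r-1}$ yields $N_+ = 2^{\sigma-1}(2^\sigma+1)$ and $N_- = 2^{\sigma-1}(2^\sigma-1)$, and the balanced cosets number $2^{2\sigma} - 2^\sigma - \ldots$; the arithmetic must be pinned down so that all three counts are non-negative integers summing to $2^{2\sigma}$ and consistent with $N_+ + N_- = 2^\sigma(2^{2\sigma}+\cdots)$.

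**Main obstacle.** The delicate point — and the step I expect to consume most of the work — is the exact evaluation of the quadratic Gauss sum $\sum_{\mathfrak a G} S(\mathfrak a)^2$ over the coset space, because this is where the precise constants $2^{\sigma-1}(2^\sigma\pm1)$ are forced; getting the sign and the power of $2$ right requires carefully tracking how the ambient parity counts $2^{g-1}(2^g\pm 1)$ distribute once we quotient by a maximal-isotropic-inside-$G$ structure, and verifying that $G$ being G\"opel (totally syzygetic) is exactly the hypothesis that makes $S(\mathfrak a)\in\{0,\pm 2^r\}$ rather than a general Gauss sum. An alternative, possibly cleaner route I would keep in reserve is to induct on $r$: passing from a G\"opel group of order $2^{r-1}$ to one of order $2^r$ by adjoining one fundamental characteristic $\mathfrak b$, and tracking how each old coset splits into two new ones with parities governed by $|\mathfrak b, \cdot|$; the base case $r=0$ (where $\sigma=g$ and the three numbers are the classical even/odd theta counts and $0$) is immediate, and the inductive step is a finite bookkeeping of how $e_*$ restricts, which avoids Gauss sums entirely. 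I would write up whichever of the two is shorter once the constants check out; the induction is more likely to be robust.
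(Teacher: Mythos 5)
Your strategy (parity is affine-linear on each G\"opel coset, then a first and second moment computation) is sound and genuinely independent of the paper, which gives no argument at all and simply cites Baker, p.~492; but as written your counting step contains a concrete error that leaves the final constants unjustified. The slip is at the very start: the number of G\"opel systems, i.e.\ of cosets of the order-$2^{r}$ group $G$ inside the group of all $2^{2g}$ half-integer characteristics, is $2^{2g-r}=2^{2\sigma+r}$, not $2^{2\sigma}$ (the paper states this count explicitly). Because of that miscount you talk yourself out of the correct second-moment value. Writing $S(\mathfrak{a})=\sum_{\mathfrak{h}\in G}e_{*}(\mathfrak{a}\mathfrak{h})$, orthogonality coming from the nondegeneracy of the pairing $|\cdot,\cdot|$ on $\tfrac12\mathbb{Z}^{2g}/\mathbb{Z}^{2g}\cong \mathbb{F}_{2}^{2g}$ gives $\sum_{\mathfrak{a}}S(\mathfrak{a})^{2}=2^{r}\cdot 2^{2g}$, the sum over all characteristics $\mathfrak{a}$; dividing by $2^{r}$ to pass to cosets yields $2^{2r}(N_{+}+N_{-})=2^{2g}$, i.e.\ $N_{+}+N_{-}=2^{2\sigma}$. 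This is perfectly consistent and does \emph{not} force every coset to be constant, since there are $2^{2\sigma+r}$ cosets in all; whereas your ``corrected'' value $N_{+}+N_{-}=2^{\sigma}$ contradicts your own first-moment identity $N_{+}-N_{-}=2^{\sigma}$ (it would force $N_{-}=0$) and the answers you then quote. With the count fixed everything closes: $N_{\pm}=2^{\sigma-1}(2^{\sigma}\pm 1)$, and the balanced systems number $2^{2g-r}-2^{2\sigma}=2^{2\sigma}(2^{r}-1)$, exactly as in the lemma.

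Two smaller points to pin down when you write this up. First, the linear functional on $G$ is $\ell_{\mathfrak{a}}(\mathfrak{h})=q(\mathfrak{h})+|\mathfrak{a},\mathfrak{h}|$, where $e_{*}=(-1)^{q}$: the syzygetic hypothesis makes $q$ restricted to $G$ linear, but it does not make it vanish, so ``$e_{*}$ constant on the coset'' means $|\mathfrak{a},\cdot|$ agrees with $q$ on $G$, not that $\mathfrak{a}$ pairs trivially with $G$; your sketch blurs this. Second, the first-moment identity $2^{r}(N_{+}-N_{-})=2^{g}$ uses the ambient counts $2^{g-1}(2^{g}\pm 1)$ of even and odd characteristics, which is fine since the paper records them. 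Your fallback induction on $r$ would also work, but once the coset count is corrected the moment argument is complete and there is no need for it.
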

\proof The proof can be found on \cite{baker}*{pg. 492}. \qed
\begin{cor}\label{numb_systems}
When $r=g,$ we have only one (resp., 0) G\"opel system which consists of even (resp., odd) characteristics.
\end{cor}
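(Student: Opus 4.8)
The plan is to obtain this statement as the degenerate case $r = g$ of the preceding Lemma. Setting $r = g$ forces the auxiliary quantity $\sigma = g - r$ to equal $0$, so it suffices to substitute $\sigma = 0$ into the three enumerative formulas recorded there and to simplify.

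First I would evaluate the count of G\"opel systems consisting only of even characteristics,
\[
2^{\sigma - 1}\,(2^{\sigma} + 1) \;=\; 2^{-1}\,(2^{0} + 1) \;=\; \frac{1}{2}\cdot 2 \;=\; 1,
\]
and the count of G\"opel systems consisting only of odd characteristics,
\[
2^{\sigma - 1}\,(2^{\sigma} - 1) \;=\; 2^{-1}\,(2^{0} - 1) \;=\; \frac{1}{2}\cdot 0 \;=\; 0,
\]
which are exactly the two assertions of the corollary. The one point deserving a remark is that the factor $2^{\sigma - 1}$ is not itself an integer when $\sigma = 0$; this is harmless, since in both formulas it is multiplied by the even number $2^{\sigma} \pm 1 \in \{2, 0\}$, so the products are the integers $1$ and $0$ as claimed.

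For a sanity check I would also confirm the bookkeeping: the remaining G\"opel systems, those with equally many even and odd characteristics, number $2^{2\sigma}(2^{r} - 1) = 2^{0}(2^{g} - 1) = 2^{g} - 1$, and hence the total number of systems is $1 + 0 + (2^{g} - 1) = 2^{g}$, which indeed equals $2^{2g - r}$, the number of cosets of a G\"opel group of order $2^{r} = 2^{g}$. There is essentially no obstacle in this argument beyond the cosmetic issue with $2^{-1}$; the whole content is the substitution $\sigma = 0$, and the only thing one must be sure of is that $r = g$ is an admissible value of $r$ (it is, since the constraint there is merely $r \le g$), so that the Lemma applies verbatim. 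If desired, one could also note concretely that the unique all-even system is forced to be the one containing the zero characteristic, but this extra observation is not needed for the proof.
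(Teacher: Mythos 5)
Your proposal is correct and matches the paper's (implicit) reasoning: the corollary is stated as an immediate consequence of the preceding lemma, obtained by setting $\sigma = g - r = 0$ in the counts $2^{\sigma-1}(2^{\sigma}+1)$ and $2^{\sigma-1}(2^{\sigma}-1)$, exactly as you do. Your remarks on the harmless factor $2^{-1}$ and the consistency check that the totals sum to $2^{2g-r}$ are fine additions but not needed.
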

Consider $s=2^{2\sigma}$ G\"opel systems which have  distinct characters and denote them by
\[\an_1 G,\an_2 G,\cdots,\an_s G.\] 
We have the following lemma.
\begin{lem}
It is possible to choose $2\sigma+1$ characteristics from $\an_1, \an_2,\cdots, \an_s,$  say $\bar{\an}_1,$
$\bar{\an}_2,$ $\cdots,$ $\bar{\an}_{2\sigma+1}$, such that every three of them are azygetic and all have the same
character. The above $2\sigma+1$ fundamental characteristics are even (resp., odd) if $\sigma \equiv 1,0 \mod 4$
(resp.,$\equiv 2,3 \mod 4$).
\end{lem}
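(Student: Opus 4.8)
The statement to be proved concerns the classical combinatorics of half-integer theta characteristics: among the $s = 2^{2\sigma}$ G\"opel systems $\an_1 G, \dots, \an_s G$ (with $\sigma = g - r$), one can extract $2\sigma + 1$ coset representatives $\bar\an_1, \dots, \bar\an_{2\sigma+1}$ that are pairwise (indeed triple-wise) azygetic and all share the same character, with an explicit parity depending on $\sigma \bmod 4$. My plan is to follow Baker's line of argument, which proceeds by induction on $\sigma$ together with a careful bookkeeping of the quadratic form $|\m, \an|$ and the bilinear pairing it induces on $\G / G$. First I would record the linear-algebra setup: fixing the G\"opel group $G$, the quotient $\G / G \cong \mathbb{F}_2^{2\sigma}$ inherits a nondegenerate symplectic form from $|\cdot, \cdot|$ (nondegeneracy is exactly the statement that $G$ is a maximal isotropic subgroup only when $r = g$; for $r < g$ the radical is trivial on the quotient), and the character $e_*$ descends to a quadratic form $\bar q$ on $\G/G$ refining this symplectic form. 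An "azygetic triple" then translates into the condition that $\bar q(\bar\an_i) + \bar q(\bar\an_j) + \bar q(\bar\an_k) \ne \bar q(\bar\an_i + \bar\an_j + \bar\an_k)$, i.e. the three vectors are in "general position" with respect to $\bar q$.

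The core of the argument is then a counting/greedy-selection step. I would show that one can successively choose $\bar\an_1, \bar\an_2, \dots$ so that each new representative is azygetic to all pairs already chosen and retains the prescribed value of the character; the point is that the "bad" set (representatives failing azygety with some previously chosen pair, or having the wrong character) is a union of affine subspaces whose total cardinality is strictly less than $2^{2\sigma}$ as long as fewer than $2\sigma + 1$ have been selected. This is where the precise arithmetic $2^{\sigma-1}(2^\sigma \pm 1)$ counts of even/odd G\"opel systems from the preceding lemma enter: they guarantee that the subset of systems of a fixed parity is large enough to keep the selection going for exactly $2\sigma + 1$ steps, and no further. The parity assertion — that the $\bar\an_i$ are even precisely when $\sigma \equiv 0, 1 \pmod 4$ and odd when $\sigma \equiv 2, 3 \pmod 4$ — I would extract by evaluating the character on the sum of all $2\sigma+1$ chosen representatives and using the Arf-invariant-type identity $e_*(\bar\an_1 \cdots \bar\an_{2\sigma+1})$ expressed through $\sum |\bar\an_i, \bar\an_j|$ over the azygetic (hence all-odd) pairings, which contributes a term $\binom{2\sigma+1}{2} \equiv \sigma(2\sigma+1) \pmod 2$ and a Gauss-sum factor $i^{\,\sigma^2}$ or similar; reducing $\sigma^2 + \sigma \pmod 4$ (or tracking the sign of the relevant Gauss sum $\sum_{x \in \mathbb{F}_2^{2\sigma}} i^{\,\text{something}}$) produces exactly the stated residue classes.

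The main obstacle, as in Baker, is the last step: pinning down the parity of the fundamental characteristics as a function of $\sigma \bmod 4$. The azygety condition forces every pairwise "inner product" to be odd, so the exponential sums that compute $e_*$ of a product of the $\bar\an_i$ become genuine Gauss sums over $\mathbb{F}_2$-vector spaces, and their sign alternates with period $4$ in $\sigma$; getting the bookkeeping of the cross-terms $|\bar\an_i, \bar\an_j, \bar\an_k|$ consistent with a single well-defined character value requires the triple-azygety (not just pairwise), which is why the selection must be done so that \emph{every three} of the chosen representatives are azygetic rather than merely every two. I expect the existence/greedy part to be a routine inclusion-exclusion once the symplectic-space dictionary is set up, and I would reference \cite{baker}*{pg. 492--494} for the detailed verification of the Gauss-sum evaluation rather than reproducing it, since it is a long but elementary computation.
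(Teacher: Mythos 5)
The paper itself offers no proof of this lemma: it is stated between two results whose proofs are simply cited to Baker (pg.~489--511), so your sketch has to stand on its own rather than be compared line-by-line. Your framework --- translating azygety into a quadratic form refining a symplectic form on a $2\sigma$-dimensional $\mathbb{F}_{2}$-space --- is the right dictionary, but the space is misidentified. The quotient of the full group of half-integer characteristics by $G$ has order $2^{2g-r}=2^{2\sigma+r}$, and the pairing $|\cdot,\cdot|$ does not even descend to it; the nondegenerate symplectic space of dimension $2\sigma$ is $G^{\perp}/G$, and the $s=2^{2\sigma}$ G\"opel systems of definite character are the cosets lying in a suitable translate of $G^{\perp}$. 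That slip is repairable, but it matters for everything that follows.

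The genuine gap is the selection step. The requirement that a new representative $\an$ be azygetic with an already chosen pair $\bar{\an}_i,\bar{\an}_j$ reads $|\bar{\an}_i,\bar{\an}_j|+|\bar{\an}_i\bar{\an}_j,\an| \equiv 1 \pmod 2$, an affine-linear condition whose failure set is exactly half of the space; already with two such conditions a union/cardinality bound of the kind you invoke cannot show the good set is nonempty, and with $\binom{k}{2}$ conditions whose linear parts (pairings against $\bar{\an}_i\bar{\an}_j$) are heavily dependent, the real issue is consistency of the affine system, not counting. Likewise the parity assertion is not a constraint you may prescribe during a greedy search --- the preceding counting lemma shows both parities occur among the pure systems in comparable numbers --- it is \emph{forced} by triple-azygety, and your Gauss-sum paragraph only gestures at this: evaluating $e_*$ on the product of all $2\sigma+1$ chosen characteristics does not pin down the parity of each individual one. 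The classical argument (Frobenius, Baker) instead constructs a fundamental system explicitly in a symplectic normal form, equivalently by induction on $\sigma$ adjoining one hyperbolic plane at a time and tracking the Arf-type invariant; this simultaneously produces the $2\sigma+1$ characteristics with every triple azygetic and computes their common character as a function of $\sigma \bmod 4$. Deferring that construction wholesale to Baker, as your last sentence suggests, defers essentially the entire content of the lemma; that is what the survey itself does, but it is not an independent proof.
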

\noindent The proof of the following lemma can be found on \cite{baker}*{pg. 511}.
\begin{lem}
For any half-integer characteristics $\an$ and $\hn,$ we have the following:
\begin{equation}\label {Bakereq1}
\T^2[\an](z_1,\t) \T^2[\an \hn](z_2,\t) = \frac{1}{2^{g}} \sum_\en  e^{\pi i |\an \en|} \binom{ \hn}{ \an \en}
\T^2[\en](z_1,\t)\T^2[\en \hn](z_2,\t),
\end{equation}
where the sum runs over all half-integer
characteristics.
\end{lem}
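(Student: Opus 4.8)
The statement to be proved is the classical quadratic theta relation

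\begin{equation*}
\T^2[\an](z_1,\t)\,\T^2[\an\hn](z_2,\t) = \frac{1}{2^g}\sum_\en e^{\pi i |\an\en|}\binom{\hn}{\an\en}\T^2[\en](z_1,\t)\,\T^2[\en\hn](z_2,\t),
\end{equation*}

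and my plan is to derive it by Fourier-type averaging over the $2$-torsion, exactly in the spirit of Baker's treatment cited in the excerpt. First I would record the fundamental duality on half-integer characteristics: for a fixed characteristic $\en$, summing $e^{\pi i |\en\an|}$ (equivalently $(-1)^{\langle\cdot\rangle}$ pairings) over all $2^{2g}$ half-integer characteristics $\an$ gives $2^{2g}$ if $\en=0$ and $0$ otherwise. This orthogonality of the additive characters of the group $\G\cong(\Z/2\Z)^{2g}$ is the engine of the whole identity; it lets one invert the linear system, so that proving the displayed formula is equivalent to proving its ``inverse'', a statement expressing $\T^2[\en](z_1)\T^2[\en\hn](z_2)$ as a sum over $\an$ of the left-hand-side-type products with the conjugate coefficients. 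Because the transformation matrix $\left(e^{\pi i|\an\en|}\binom{\hn}{\an\en}\right)_{\an,\en}$ is, up to the explicit $\binom{\hn}{\cdot}$ twist, a Hadamard matrix, it suffices to check one direction.

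Second, I would prove the chosen direction directly from the series definition. Writing $\T[\an](z,\t)=\sum_{u\in\Z^g}\exp\bigl(\pi i((u+a')^t\t(u+a')+2(u+a')^t(z+a''))\bigr)$ with $\an=\ch{a'}{a''}$, the product $\T[\an](z_1)\T[\an](z_2)$ is a double sum over $u,v\in\Z^g$. Then I perform the standard substitution $u=p+q$, $v=p-q$ where $p,q$ range over $\Z^g$ together with a common half-integer shift $\frac12\,\Z^g/\Z^g$ bookkeeping term — this is the change of variables that turns $\t(u+a')^t(u+a')+\t(v+a')^t(v+a')$ into $2\t(p+\cdot)^t(p+\cdot)+2\t(q+\cdot)^t(q+\cdot)$, i.e. that diagonalizes the quadratic form and produces period matrix $2\t$ (or, after rescaling, $\t$ again). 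The linear terms recombine to give $z_1+z_2$ paired with the $p$-sum and $z_1-z_2$ paired with the $q$-sum, which is precisely why the right side of the identity involves $\T^2[\en](z_1,\t)\T^2[\en\hn](z_2,\t)$ with $z_1,z_2$ still appearing separately after one undoes the rescaling; the new characteristics $\en$ that appear are exactly the coset representatives $\frac12\,\Z^g/\Z^g$ arising from the parity of $p+q$, and $\en\hn$ appears because of how the shift by $\hn$ interacts with the $q$-variable. Collecting the exponential prefactors that fall out of completing the square yields the coefficient $e^{\pi i|\an\en|}\binom{\hn}{\an\en}$ and the normalizing $2^{-g}$ (from $2^g$ cosets, or equivalently from the Jacobian of $u,v\mapsto p,q$ over $(\Z/2)^g$).

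Third, I would assemble these pieces: apply the orthogonality relation from step one to the identity produced in step two to solve for the asserted formula, using the quasi-periodicity relations \eqref{periodicproperty} of the excerpt to normalize all characteristics into the standard fundamental domain and to absorb the integer parts of $\an,\hn,\en$ into the explicit sign $\binom{\hn}{\an\en}$ and the bilinear pairing $|\an\en|$. I would also note that both sides are manifestly entire in $(z_1,z_2)$ and that, since the identity is an equality of absolutely convergent series, rearrangement is justified on compacta — so no analytic subtlety beyond convergence enters.

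The main obstacle I anticipate is purely bookkeeping: tracking the exponential factors through the $u,v\mapsto p,q$ substitution so that the prefactor comes out to be \emph{exactly} $e^{\pi i|\an\en|}\binom{\hn}{\an\en}$ rather than some equivalent expression differing by a sign or a factor depending on the (non-canonical) choice of coset representatives. This is where the definitions of $|\m,\an|$, $\binom{\m}{\an}$, and the even/odd conventions in the excerpt must be used with great care, and where a sign error is easiest to make; I would hedge by checking the $g=1$ case and the specialization $\hn=0$ (which must reduce to the well-known duplication/addition formula for $\T^2$) before claiming the general statement. The conceptual content — orthogonality of characters plus diagonalization of the quadratic form — is routine; the risk is entirely in the constants.
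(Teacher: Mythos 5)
The paper does not actually prove this lemma: it only cites Baker, pg.~511, so the relevant comparison is with the classical argument you are reconstructing. Your overall strategy --- orthogonality of the characters of the group of half-integer characteristics combined with the quadratic substitution that diagonalizes the quadratic form in the theta series --- is exactly the engine of Baker's derivation, so the route is the right one.

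One step, as you describe it, would not go through literally. Both sides of the identity involve squares at the \emph{same} argument, $\theta^2[\an](z_1,\t)$ and $\theta^2[\an\hn](z_2,\t)$, so the substitution $u=p+q$, $v=p-q$ must be applied to a pair of series taken at equal arguments: it then produces second-order theta functions in $2z_1$ (resp.\ $2z_2$) with modulus $2\t$, multiplied by theta constants with modulus $2\t$, and the asserted identity reduces to a finite character-sum identity among the coefficients of these products, which is where the factor $2^{-g}$ and the signs $e^{\pi i |\an\en|}\binom{\hn}{\an\en}$ arise. Pairing $z_1$ with $z_2$, as in your sketch, instead yields theta functions of $z_1+z_2$ and $z_1-z_2$, which do not match the right-hand side directly; your remark that $z_1,z_2$ ``still appear separately after one undoes the rescaling'' glosses over precisely the point where the computation must be organized differently (or the product formula applied twice and then inverted). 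Note also that the parity cosets in the substitution produce only $2^g$ terms, while the inversion in your first step runs over all $2^{2g}$ characteristics, so the bookkeeping of which sum contributes which power of $2$ needs care. None of this is a conceptual obstruction --- it is the ``constants'' risk you already flag --- but the middle step has to be redone with the equal-argument pairing before the assembled identity is the one asserted. A slightly cleaner variant of the same classical argument: for fixed $\hn$, both sides, viewed as functions of $z_1$, lie in the $2^g$-dimensional space of second-order theta functions of a fixed characteristic, which is spanned by the products appearing on the right; the coefficients are then computed by the same character orthogonality.
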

We can use this relation to get identities among half-integer thetanulls.  We know that we have $2^{g-1}(2^g+1)$ even characteristics. As the genus increases, we have multiple choices for $\en.$ In the following, we explain how we reduce the number of possibilities for $\en$ and how to get
identities among thetanulls.
First we replace $\en$ by $\en \hn$ and $z_1=z_2= 0$ in  \cref{Bakereq1}. \cref{Bakereq1} can then be written
as follows:
\begin{equation}\label {Bakereq2}
\T^2[\an] \T^2[\an \hn] = 2^{-g} \sum_\en  e^{\pi i |\an \en \hn|} \binom{ \hn }{ \an \en \hn} \T^2[\en] \T^2[\en \hn].
\end{equation}
We have $e^{\pi i |\an \en \hn|}\binom{ \hn }{ \an \en \hn} = e^{\pi i |\an \en|}\binom{ \hn }{ \an \en} e^{\pi i |\an
\en, \hn|}.$ Next we put $z_1=z_2= 0$ in  \cref{Bakereq1} and add it to  \cref{Bakereq2} and obtain the following
identity:
\begin{equation}\label {Bakereq3}
2\T^2[\an] \T^2[\an \hn] = 2^{-g} \sum_\en  e^{\pi i |\an \en|} (1 + e^{\pi i|\an \en, \hn|}) \T^2[\en] \T^2[\en \hn].
\end{equation}
If $|\an \en, \hn| \equiv  1 \mod 2$, the corresponding terms in the summation vanish. Otherwise $1 + e^{\pi i|\an \en,
\hn|} = 2.$ In this case, if either $\en$ is odd or $\en \hn$ is odd, the corresponding terms in the summation vanish
again. Therefore, we need $|\an \en, \hn| \equiv 0 \mod 2$ and $|\en| \equiv |\en \hn| \equiv 0 \mod 2,$ in order to
get nonzero terms in the summation. If $\en^*$ satisfies $|\en^*| \equiv |\en^* \hn^*| \equiv 0 \mod 2$ for some
$\hn^*,$ then $\en^*\hn^*$ is also a candidate for the left hand side of the summation. Only one of such two values
$\en^*$ and $\en^* \hn^*$ is taken. As  a result, we have the following identity among thetanulls
\begin{equation} \label{theta-eq1}
\T^2[\an] \T^2[\an \hn] = \frac{1}{2^{g-1}} \sum_\en  e^{\pi i |\an \en|} \binom{ \hn }{ \an \en} \T^2[\en]\T^2[\en
\hn],
\end{equation}
where $\an, \hn$ are any characteristics and $\en$ is a characteristics such that $|\an \en, \hn| \equiv 0 \mod 2,$
$|\en| \equiv |\en \hn| \equiv 0 \mod 2$ and $\en \neq \en \hn.$

By starting from the  \cref{Bakereq1} with $z_1 = z_2$ and following a similar argument to the one above, we can
derive the identity,
\begin{equation}\label{eq2}
\T^4[\an] + e^{\pi i |\an, \hn|} \T^4[\an \hn] = \frac{1}{2^{g-1}} \sum_\en  e^{\pi i |\an \en|} \{ \T^4[\en] + e^{ \pi
i |\an, \hn|} \T^4[\en \hn]\}
\end{equation}
where $\an, \hn$ are any characteristics and $\en$ is a characteristic such that $|\hn| + |\en, \hn| \equiv 0 \mod 2,$
$|\en| \equiv |\en \hn| \equiv 0 \mod 2$ and $\en \neq \en \hn.$
\begin{rem}
$|\an \en ,\hn| \equiv 0 \mod 2$ and $|\en \hn| \equiv |\en| \equiv 0 \mod 2$ implies $|\an, \hn| + |\hn| \equiv 0 \mod
2.$
\end{rem}
We use  \cref{theta-eq1} and  \cref{eq2} to get identities among theta-nulls.
%

\subsection{Hyperelliptic curves and their theta functions}
A hyperelliptic curve $\X,$ defined over $\C,$ is a cover of order two of the projective line $\P^1.$  
 Let $\X \longrightarrow \P^1$ be the degree 2 hyperelliptic projection.
We can assume that $\infty$ is a branch point.
Let  $B := \{\a_1,\a_2, \cdots ,\a_{2g+1} \}$
be the set of other branch points and let $S = \{1,2, \cdots, 2g+1\}$ be the index set of $B$ and $\e : S \longrightarrow
\frac{1}{2}\Z^{2g}/\Z^{2g}$  be a map defined as follows:
%
\[
\e(2i-1)  = \begin{bmatrix}
              0 & \cdots & 0 & \frac{1}{2} & 0 & \cdots & 0\\
              \frac{1}{2} & \cdots & \frac{1}{2} & 0 & 0 & \cdots & 0\\
            \end{bmatrix}, 
\quad             
 \e(2i)  =\begin{bmatrix}
              0 & \cdots & 0 & \frac{1}{2} & 0 & \cdots & 0\\
              \frac{1}{2} & \cdots & \frac{1}{2} & \frac{1}{2} & 0 & \cdots & 0\\
            \end{bmatrix}
\]
%
where the nonzero element of the first row appears in $i^{th}$ column.
We define  $\e(\infty) $ to be $
\begin{bmatrix}
              0 & \cdots & 0 & 0\\
              0 & \cdots & 0 & 0\\
            \end{bmatrix}$.
For any $T \subset B $, we define the half-integer characteristic as
\[ \e_T = \sum_{a_k \in T } \e(k) .\]
Let $T^c$ denote the complement of $T$ in $B.$ Note that $\e_B \in \Z^{2g}.$ If we view $\e_T$ as an element of
$\frac{1}{2}\Z^{2g}/\Z^{2g}$ then $\e_T= \e_{T^c}.$ Let $\triangle$ denote the symmetric difference of sets, that is $T
\triangle R = (T \cup R) - (T \cap R).$ It can be shown that the set of subsets of $B$ is a group under $\triangle.$ We
have the following group isomorphism:
\[ \{T \subset B\,  |\, \#T \equiv g+1 \mod 2\} / T \sim T^c \cong \frac{1}{2}\Z^{2g}/\Z^{2g}.\]
For $\gamma = \ch{\gamma ^\prime}{\gamma^{\prime \prime}} \in \frac{1}{2}\Z^{2g}/\Z^{2g}$, we have
\begin{equation}\label{parityIdentity}
 \T [\gamma] (-z , \t) = e_* (\gamma) \T [\gamma] (z , \t).\end{equation}
It is known that for hyperelliptic curves, $2^{g-1}(2^g+1) -  \binom{2g+1 }{ g}$ of the even thetanulls are zero.
The following theorem provides a condition for the characteristics in which theta characteristics become zero. The
proof of the theorem can be found  in \cite{Mu2}.
\begin{thm}\label{vanishingProperty}
Let $\X$ be a hyperelliptic curve, with a set $B$ of branch points. Let $S$ be the index set as above and $U $ be the
set of all odd values of $S$. Then for all $T \subset S$ with even cardinality, we have $ \T[\e_T] = 0$  if and only if
$\#(T \triangle U) \neq g+1$, where $\T[\e_T]$ is the theta constant corresponding to the characteristics $\e_T$.
\end{thm}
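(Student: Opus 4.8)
The plan is to reduce everything to the explicit description of theta characteristics on a hyperelliptic curve via subsets of the branch locus, combined with the classical vanishing criterion for Riemann theta functions at half-periods. First I would recall the Riemann vanishing theorem: for the Jacobian $\J(\X)$ with theta divisor $\Theta$, and the Abel--Jacobi embedding based at a Weierstrass point, a point $e = \mu(D) + \kappa$ (where $\kappa$ is the Riemann constant) lies on $\Theta$ if and only if $h^0(D) > 0$ for the corresponding degree $g-1$ divisor class, or more precisely $\T(e) = 0 \iff e \in W_{g-1} + \kappa$ where $W_{g-1}$ is the image of the $(g-1)$-st symmetric power. So the entire statement becomes a combinatorial question: for which subsets $T \subseteq S$ of even cardinality does the half-integer characteristic $\e_T$ correspond to an effective divisor class of degree $g-1$?

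The key computation is the translation dictionary between subsets and divisor classes. On the hyperelliptic curve $y^2 = \prod_{k \in S}(x - \a_k)$ with the point at infinity a Weierstrass point, every half-period is $\mu$ of a sum of Weierstrass points (the $\br_k = (\a_k, 0)$ together with $\infty$), and two such sums give the same point in $\J(\X)$ precisely when their symmetric difference is the full branch set $B$ (because $\sum_{k \in S} \br_k + \infty \sim (g+1) \cdot (\text{hyperelliptic class})$, i.e. is linearly trivial after the appropriate normalization). This gives the group isomorphism already stated in the excerpt, $\{T \subseteq B : \#T \equiv g+1 \bmod 2\}/(T \sim T^c) \cong \tfrac12\Z^{2g}/\Z^{2g}$, with $\e_T \mapsto$ the class of $\sum_{k \in T}\br_k$ suitably shifted. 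The Riemann constant $\kappa$ for this base point is itself a half-period, and one checks that it corresponds to the set $U$ of odd indices (this is the standard normalization, where $\e(2i-1)$ and $\e(2i)$ are chosen so that the odd-index Weierstrass points assemble into $\kappa$). I would carry this out by writing $\e_T$ as $\mu(\sum_{k \in T}\br_k)$ and noting that, modulo the hyperelliptic involution identification, $\T[\e_T] = 0$ iff $\sum_{k \in T \triangle U}\br_k$ (reduced mod $B$ so as to have cardinality $\le g+1$ in its smaller representative) is a special divisor of degree $g-1$.

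Then the final step is purely combinatorial. A sum of distinct Weierstrass points $\br_{k_1} + \cdots + \br_{k_m}$ on a hyperelliptic curve, with the $k_i$ distinct, is non-special if and only if $m \le g$; and when we further demand the divisor class have degree exactly $g-1$ (so that it defines a point of $\Theta$ rather than just lying in $W_m$), the relevant count is $\#(T \triangle U) = g+1$ versus $\ne g+1$ — here using $T^c \sim T$ to always pick the representative of size $\le g+1$, and noting that a subset of size exactly $g+1$ gives a divisor whose class (after subtracting the hyperelliptic pencil once) is effective of degree $g-1$ but generic, hence off $\Theta$, whereas any other parity/size forces the class onto the theta divisor. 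I expect the main obstacle to be pinning down the Riemann-constant normalization precisely — i.e. proving that with the specific maps $\e(2i-1), \e(2i), \e(\infty)$ defined in the excerpt the constant is exactly $\e_U$ — since the sign and shift bookkeeping (parity of $\#T$, the $e_*$ parity function, and the choice $U = $ odd indices) must all be made consistent; once that is fixed, the vanishing criterion and the effective-divisor count are routine. I would either cite \cite{Mu2} for this normalization or verify it directly in genus $1$ and $2$ and then argue by the inductive structure of the $\e$-map.

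\medskip

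\noindent(For a fully self-contained argument one instead follows Mumford, \emph{Tata Lectures on Theta II}, Chapter IIIa: define $\eta: S \cup \{\infty\} \to \tfrac12\Z^{2g}/\Z^{2g}$ as above, extend additively to even-cardinality subsets, show $\eta$ intertwines the symmetric-difference group law with addition of characteristics, identify $\eta(U)$ with the vector of Riemann constants, and invoke $\T[\e_T](0) = 0 \iff \dim|D_{T \triangle U}| > $ expected, which by Clifford's theorem on the hyperelliptic curve is controlled exactly by whether $\#(T\triangle U) = g+1$.)
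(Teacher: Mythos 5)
The paper does not actually prove this statement: it records it and cites \cite{Mu2} for the proof. Your proposal reconstructs precisely the argument of that source --- Riemann's vanishing theorem, the identification of half-integer characteristics with even-cardinality subsets of the branch locus modulo $T\sim T^c$, the identification of the vector of Riemann constants (for the base point $\infty$) with $\e_U$, and a final effectivity computation for sums of distinct Weierstrass points --- so your route is the same as the one the paper defers to, and the skeleton is sound.

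Two details in your last step are stated incorrectly, and they are exactly the substance of the ``only if'' direction, the only part that requires real work. First, a sum $D$ of $m\le g-1$ distinct Weierstrass points is \emph{special}, since $K-D\sim D+(2g-2-2m)\infty$ is effective; the fact you actually need is the different one that $h^0(D)=1$ for $m\le g$, so ``non-special iff $m\le g$'' conflates the two notions. Second, your sentence about $\#(T\triangle U)=g+1$ is backwards: if the degree-$(g-1)$ class obtained by subtracting the $g^1_2$ once were effective, it would lie in $W_{g-1}$, whose $\kappa$-translate is exactly $\Theta$, and $\theta[\e_T](0)$ would vanish. What must be shown is that for $g+1$ distinct branch points the class of $\sum_{k\in T\triangle U}\mathfrak{b}_k-2\infty$ is \emph{not} effective; this follows, for instance, by writing a putative section as $p(x)+y\,q(x)$ and comparing pole orders at the points $\mathfrak{b}_k$ and at $\infty$ (or by the Clifford-type bound you invoke in your parenthetical). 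Conversely, when $\#(T\triangle U)\le g-1$, or $\ge g+3$ after replacing $T\triangle U$ by its complement, one pads with copies of $\infty$ to exhibit an effective degree-$(g-1)$ representative, so the theta constant vanishes. With these repairs, plus the normalization $\kappa=\e_U$ that you correctly single out as the bookkeeping to verify (it is carried out in \cite{Mu2} with exactly this labeling of $\e(2i-1)$ and $\e(2i)$), your sketch coincides with the cited proof.
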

When the characteristic $\gamma$ is odd, $e_* (\gamma)=1.$ Then from  \cref{parityIdentity} all odd thetanulls
are zero. There is a formula which satisfies half-integer theta characteristics for hyperelliptic curves called
\textbf{Frobenius' theta formula}.
\begin{lem}[Frobenius]\label{Frob}
For all $z_i \in \C^g$, $1\leq i \leq 4$ such that $z_1 + z_2 + z_3 + z_4 = 0$ and for all $b_i \in \Q^{2g}$, $1\leq i
\leq 4$ such that $b_1 + b_2 + b_3 + b_4 = 0$, we have
\[ \sum_{j \in S \cup \{\infty\}} \epsilon_U(j) \prod_{i =1}^4 \T[b_i+\e(j)](z_i) = 0, \]
where for any $A \subset B$,
\[
\epsilon_A(k) =
          \begin{cases}
           1 & \textit {if $k \in A$}, \\
           -1 & \textit {otherwise}.
          \end{cases}
\]
\end{lem}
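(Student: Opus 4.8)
The plan is to prove the identity by realizing its left–hand side as a holomorphic section of a fixed line bundle on a product of abelian varieties and then forcing that section to be zero using the special vanishing of hyperelliptic thetanulls. Write $A=\C^{g}/(\Z^{g}+\tau\Z^{g})$, set $b_{4}=-(b_{1}+b_{2}+b_{3})$ and $z_{4}=-(z_{1}+z_{2}+z_{3})$, and regard
\[
\Phi(z_{1},z_{2},z_{3})\;=\;\sum_{j\in S\cup\{\infty\}}\epsilon_{U}(j)\,\prod_{i=1}^{4}\T[b_{i}+\e(j)](z_{i})
\]
as a function on $\C^{3g}$. First I would run the quasi-periodicity computation using the three transformation laws in \cref{periodicproperty}: replacing $(z_{1},z_{2},z_{3})$ by $(z_{1}+\mu_{1},z_{2}+\mu_{2},z_{3}+\mu_{3})$ with $\mu_{i}\in\Z^{g}+\tau\Z^{g}$ forces $z_{4}\mapsto z_{4}-(\mu_{1}+\mu_{2}+\mu_{3})$, and the $\e(j)$–dependent phases enter only through $\e(j)$ paired with $\mu_{1}+\mu_{2}+\mu_{3}+\mu_{4}=0$, hence cancel; so every summand acquires one and the same automorphy factor, independent of $j$. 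This exhibits $\Phi$ as a section of a line bundle $M$ on $A\times A\times A$ that depends on the $b_{i}$ only up to translation. The same bookkeeping shows that translating the $b_{i}$ by integral vectors merely multiplies $\Phi$ by a root of unity, so it suffices to treat half-integer characteristics $b_{i}=\e_{T_{i}}$ with $T_{1}\triangle T_{2}\triangle T_{3}\triangle T_{4}=\varnothing$ under the identification of subsets of branch points with elements of $\tfrac12\Z^{2g}/\Z^{2g}$ recalled before the lemma — and, in fact, the argument below never uses more than that the characteristics are real, so the stated generality $b_{i}\in\Q^{2g}$ comes for free.

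Next I would compute $h^{0}(M)$ by Künneth on $A^{3}$ and pin down its dimension, then impose the additional linear conditions that $\Phi$ satisfies: (i) the parity relation \cref{parityIdentity}, which controls $\Phi$ under $z_{i}\mapsto -z_{i}$ once the induced flip of characteristics and the $2$-torsion identity $-\e(j)\equiv\e(j)$ are tracked; and (ii) the specializations where each $z_{i}$ is a half-period, say an $\e(k_{i})$–point, which by the quasi-periodicity turn every factor $\T[\e_{T_{i}}+\e(j)](z_{i})$ into a thetanull whose vanishing is decided by \cref{vanishingProperty} (it is zero unless the relevant symmetric difference with $U$ has cardinality $g+1$). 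For suitable choices of the $k_{i}$ this kills all but at most one of the $2g+2$ terms of $\Phi$, and what survives is itself a thetanull of forbidden parity, hence zero; cutting down $H^{0}(M)$ by these conditions leaves a space that is at most one–dimensional, spanned by an explicit section whose divisor is read off from the hyperelliptic vanishing pattern, and a comparison of leading coefficients at a conveniently chosen half-period forces the spanning constant to be $0$.

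A more classical route I would keep in reserve is to deduce the formula from Riemann's four–factor quadratic addition theorem (the $4\times4$ orthogonal-transformation identity, of which \cref{Bakereq1} is the two–factor shadow): one specializes the eight characteristics there to branch-point characteristics $\e(j)$ and then invokes \cref{vanishingProperty} together with the group isomorphism $\{T\subseteq B:\#T\equiv g+1\}/(T\sim T^{c})\cong\tfrac12\Z^{2g}/\Z^{2g}$ to see that every term in the resulting sum except those occurring in Frobenius's formula carries a vanishing thetanull, while the surviving partial sum must match the single product on the other side evaluated where it, too, is forced to vanish. Either way, I expect the main obstacle to be exactly the same combinatorial matching step: aligning the partial sum over the $2g+2$ branch points with a full sum over $\tfrac12\Z^{2g}/\Z^{2g}$ (respectively, with $H^{0}(M)$) tightly enough to conclude that the residual section is genuinely $0$ rather than an a priori unknown element of the right line bundle — and it is precisely here that hyperellipticity is indispensable, since for a non-hyperelliptic curve the thetanull vanishing of \cref{vanishingProperty} fails and no such clean identity is available.
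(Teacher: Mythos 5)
The paper does not actually prove this lemma: its ``proof'' is a citation to Mumford (\cite{Mu1}, pg.~107), so there is no in-paper argument to compare with, and your proposal has to stand on its own as a proof. As it stands it does not. Both of your routes stop exactly where the real content of Frobenius' formula begins: you yourself identify the ``combinatorial matching step'' as the main obstacle and then leave it open, but that step \emph{is} the theorem --- everything before it (quasi-periodicity bookkeeping, the observation that the $j$-dependent phases cancel because $\mu_1+\mu_2+\mu_3+\mu_4=0$) only shows that the left-hand side is a well-defined section of a line bundle, not that it vanishes. Your second route, Riemann's quartic theta relation combined with the hyperelliptic vanishing criterion of \cref{vanishingProperty}, is indeed the standard path and essentially the argument in the reference the paper cites, so the skeleton is right; but as written it is a strategy statement, not a proof.

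Two specific steps would fail as described. First, the dimension count in route 1: with $z_4=-(z_1+z_2+z_3)$, each summand is a section of a line bundle $M$ on $A^3$ whose space of sections has dimension $4^{g}$ (the determinant of the relevant $3\times 3$ intersection matrix, raised to the $g$-th power), so parity under $z_i\mapsto -z_i$ plus finitely many half-period specializations cannot cut $H^{0}(M)$ down to dimension at most one; you would need on the order of $4^{g}$ independent linear conditions, and none are exhibited. Second, the claimed reduction from $b_i\in\Q^{2g}$ to half-integer characteristics does not ``come for free'': translating by integral vectors only moves within a fixed class mod $\Z^{2g}$, the half-integer characteristics form a finite discrete set, and the identity is not amenable to analytic continuation in the characteristics from such a set; moreover \cref{vanishingProperty} applies only to thetanulls with half-integer characteristics, so the specialization argument you invoke in either route does not reach the stated generality of rational $b_i$ and arbitrary $z_i$ with $\sum z_i=0$. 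To repair the argument you would need to carry out the actual computation behind Mumford's proof: apply Riemann's quartic relation to each product $\prod_i\T[b_i+\e(j)](z_i)$, interchange the sum over $j\in S\cup\{\infty\}$ with the sum over half-integer characteristics, and show via the hyperelliptic vanishing theorem and the identification of branch-point subsets with $\tfrac12\Z^{2g}/\Z^{2g}$ that every resulting coefficient vanishes --- for general $b_i$, not just half-integer ones.
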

\proof See \cite{Mu1}*{pg.107}. \qed

A relationship between thetanulls and the branch points of the hyperelliptic curve is given by Thomae's formula:
\begin{lem}[Thomae]\label{Thomae}
%
For all sets of branch points $B=\{\a_1,\a_2, \cdots ,\a_{2g+1} \},$ there is a constant $A$ such that for all $T\subset B,$ $\# T$ is even, \\
\[\T[\eta_T](0;\t)^4 =(-1)^{\#T \cap U} A \prod_{\substack{i<j \\ i,j \in T \triangle U}} (\a_{i} - \a_{j}) \prod_{\substack{i<j \\ i,j \notin T \triangle U}} (\a_{i} -
\a_{j})  \] \\
\noindent where $\eta_T$ is a non singular even half-integer characteristic corresponding to the subset $T$ of branch
points.
\end{lem}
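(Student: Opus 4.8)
The statement asserts the existence of a single constant $A$ — depending only on the curve, i.e.\ on the period matrix $\tau$ and the chosen homology basis and normalization of differentials — such that for every admissible $T$ one has $\theta[\eta_T](0;\tau)^4 = (-1)^{\#(T\cap U)} A\,\Pi(T\triangle U)$, where $\Pi(V):=\prod_{i<j,\ i,j\in V}(\alpha_i-\alpha_j)\cdot\prod_{i<j,\ i,j\notin V}(\alpha_i-\alpha_j)$. The entire content is therefore that the quotient
\[
Q(T)\ :=\ \frac{\theta[\eta_T](0;\tau)^4}{(-1)^{\#(T\cap U)}\,\Pi(T\triangle U)}
\]
does not depend on $T$; once this is known we simply set $A:=Q(T_0)$ for one non-singular even $\eta_{T_0}$. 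By \cref{vanishingProperty} the admissible $T$ — those for which $\eta_T$ is a non-singular even half-integer characteristic — are exactly those with $\#(T\triangle U)=g+1$, and one must keep in mind the identification $T\sim T^{c}$. So the plan is to prove $Q(T)=Q(T')$ for all such $T,T'$.

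First I would dispose of the combinatorics. Writing $V:=T\triangle U$, the admissible characteristics correspond to the $(g+1)$-element subsets $V$ of $S$, and the move $T\mapsto T\triangle\{j,k\}$ corresponds to $V\mapsto V\triangle\{j,k\}$; it keeps us among admissible characteristics precisely when exactly one of $j,k$ lies in $V$, i.e.\ when it is an ``in/out swap''. Since any $(g+1)$-subset of $S$ can be carried to any other by a chain of such swaps (connectedness of the Johnson graph), it suffices to establish $Q(T)=Q(T\triangle\{j,k\})$ for a single swap, with $j\in V$ and $k\notin V$, so that $V'=(V\setminus\{j\})\cup\{k\}$.

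For that single relation I would use function theory on $\X:\,y^2=\prod_{\alpha\in B}(x-\alpha)$ together with Riemann's vanishing theorem. Translating the theta divisor by the appropriate vector and restricting to $\X$ via the Abel--Jacobi map $\mu_{p_0}$ with $p_0=\infty$, one checks — using the explicit dictionary between the half-integer characteristics $\e(i)$, $\eta_T$ and subsets of branch points set up just before \cref{vanishingProperty} — that a suitable ratio of translated theta functions restricts to a \emph{meromorphic} function on $\X$ whose divisor is supported on the Weierstrass points $(\alpha_i,0)$, with zeros and poles read off from $V$ and $V'$. Such a function is, up to a scalar, the appropriate ratio of the rational functions $\prod_i(x-\alpha_i)$ and of $y$; evaluating leading coefficients at the two branch points $(\alpha_j,0)$ and $(\alpha_k,0)$ that were swapped pins the scalar down and expresses $\theta[\eta_T]^{2}/\theta[\eta_{T'}]^{2}$ (hence, after squaring, the fourth powers) as an explicit monomial in the differences $\alpha_i-\alpha_j$. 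Matching that monomial and its sign against $(-1)^{\#(T\cap U)}\Pi(V)/\Pi(V')$ yields $Q(T)=Q(T\triangle\{j,k\})$, which completes the argument. As an alternative to this step one can instead feed carefully chosen half-period arguments $z_1,\dots,z_4$ and characteristics $b_1,\dots,b_4$ into Frobenius' theta formula \cref{Frob}: by \cref{vanishingProperty} all but three or four terms of the sum drop out, producing an identity among non-singular thetanulls, and combining two such identities cancels a common factor to isolate the same proportionality; the quadratic identities \cref{theta-eq1} and \cref{eq2} serve the same purpose.

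The step I expect to be the genuine obstacle is the bookkeeping in that last paragraph: keeping the correspondence $T\leftrightarrow\eta_T\leftrightarrow V=T\triangle U$ consistent modulo $\Z^{2g}$ and under $T\sim T^{c}$, tracking exactly which terms of Frobenius' sum survive the vanishing theorem, choosing the half-periods at which to specialize, and — above all — checking that the sign $(-1)^{\#(T\cap U)}$ emerging from the computation is precisely the one in the statement. Once the independence of $Q(T)$ on $T$ is in hand, the existence (as opposed to the value) of $A$ costs nothing further.
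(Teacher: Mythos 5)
The paper does not actually prove this lemma: the ``proof'' given is a citation to Mumford's Tata Lectures (pg.~120 for the proof, pg.~128 for the constant $A$), so there is no in-text argument to compare yours against line by line. What you outline --- reduce Thomae to the constancy of the quotient $Q(T)$, connect the admissible characteristics by in/out swaps on the $(g+1)$-sets $V=T\triangle U$, and prove the one-swap relation by restricting quotients of translated theta functions to the curve via the Abel--Jacobi map, computing their divisors with Riemann's vanishing theorem, identifying them with explicit rational functions in $x$ and $y$, and evaluating at the two swapped branch points --- is essentially the classical argument, and in particular the strategy of the very source the paper cites. As an outline it is sound, and the reduction steps (admissibility $\Leftrightarrow \#(T\triangle U)=g+1$ via \cref{vanishingProperty}, connectedness under swaps, $T\sim T^{c}$) are unproblematic.

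Two caveats. First, the one-swap identity is the entire theorem, and you leave it at the level of a plan. To carry it out you need the identification of the vector of Riemann constants for the base point $\infty$ with the half-period attached to $U$ (this is what makes the dictionary $T\leftrightarrow \eta_T$ interact correctly with Riemann's vanishing theorem and underlies \cref{vanishingProperty}), and the function-theoretic evaluation naturally yields $\theta[\eta_T](0;\tau)^2/\theta[\eta_{T'}](0;\tau)^2$ as a square root of a ratio of difference products, so the passage to fourth powers and the sign $(-1)^{\#(T\cap U)}$ demand exactly the bookkeeping you flag; that is where the proof lives, not a routine verification. Second, the proposed alternative via Frobenius' theta formula cannot stand on its own: \cref{Frob} and its consequences \cref{theta-eq1} and \cref{eq2} are identities among thetanulls alone, in which the branch points $\alpha_i$ never appear, so no choice of half-period arguments in them can produce the factors $\alpha_i-\alpha_j$. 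Some bridge from theta functions to functions on the hyperelliptic curve --- Riemann's vanishing theorem applied to $\theta[\eta](\mu(P))$, or equivalently the prime form --- is indispensable; the Frobenius-type relations can at most be used afterwards to organize signs and eliminate common factors, which is how the paper itself uses them following the lemma.
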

See \cite{Mu1}*{pg. 128} for the description of $A$ and \cite{Mu1}*{pg. 120} for the proof. Using Thomae's formula and Frobenius' theta identities we express the branch points of the hyperelliptic curves in terms of even thetanulls.   In \cite{sh-1} and \cite{sh-2} it is shown how such relations are computed for genus $g=2, 3$. 
 
\subsection{Superelliptic  curves and their  theta functions}

Generalizing the theory of theta functions of hyperelliptic curves to all cyclic covers of the projective line has been the focus of research of the last few decades.  The main efforts have been on generalizing the Thomae's formula to such curves.  In the literature of Riemann surfaces such curves are called for historical reasons the $C_n$ curves. 
As a more recent development and new developments on this topic see \cite{book}.


\section{Jacobian varieties}\label{sect-13}
Let $\X$ be a  smooth, irreducible, algebraic  curve of  genus $g\geq 2$, defined over a field $K$.   Let $S_d$ denote the symmetric group of permutations.  Then $S_d$ acts on $\X^d$ as follows:
\begin{equation}
\begin{split}
S_d \times \X^d & \rightarrow \X^d \\
\left( \sigma , \left(  P_1, \dots , P_d \right) \right) & \rightarrow \left(\dots ,  P_i^{\sigma}, \dots \right) \\
\end{split}
\end{equation}
We denote the orbit space of this action by $\Sym^d (\X)$. 
Denote by   $\Div^d (\X)$  the set of degree $v$ divisors in $\Div (\X)$ and by $\Div^{+, d} (\X)$ the set of positive ones in $\Div^d (\X)$.
\begin{lem}
$\Div^{+ \, d} (\X) \cong \Sym^d (\X)$. 
\end{lem}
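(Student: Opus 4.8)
The plan is to exhibit an explicit bijection between $\Div^{+,d}(\X)$, the set of effective divisors of degree $d$, and $\Sym^d(\X) = \X^d/S_d$, and then note that this bijection is the obvious one coming from "forgetting the order of points". First I would define a map
\[
\pi : \X^d \To \Div^{+,d}(\X), \qquad (P_1, \dots, P_d) \mapsto P_1 + \cdots + P_d,
\]
which sends a $d$-tuple of (not necessarily distinct) points of $\X$ to the formal sum counted with multiplicity. This is well-defined since each $P_i$ contributes coefficient $\geq 0$ and the total degree is $d$, and it is visibly surjective: any effective divisor $D = \sum_P z_P \, P$ with $\sum_P z_P = d$ is the image of the tuple listing each $P$ exactly $z_P$ times (in any order).

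The key step is to check that $\pi$ is constant on $S_d$-orbits and separates distinct orbits. For the first part, a permutation $\sigma \in S_d$ merely rearranges the summands $P_1, \dots, P_d$, and the formal sum $P_1 + \cdots + P_d$ does not depend on the order; hence $\pi$ factors through a map $\bar\pi : \Sym^d(\X) \to \Div^{+,d}(\X)$. For injectivity of $\bar\pi$, suppose two tuples $(P_1, \dots, P_d)$ and $(Q_1, \dots, Q_d)$ have the same image $D$. Then for every point $P \in \X$, the number of indices $i$ with $P_i = P$ equals the coefficient $z_P$ of $P$ in $D$, which equals the number of indices $j$ with $Q_j = P$. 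So the two tuples contain each point of $\X$ with the same multiplicity, and therefore differ by a permutation of the index set $\{1, \dots, d\}$; that is, they lie in the same $S_d$-orbit. Hence $\bar\pi$ is injective, and combined with surjectivity of $\pi$ we conclude $\bar\pi$ is a bijection $\Sym^d(\X) \cong \Div^{+,d}(\X)$.

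I do not expect any serious obstacle here; the content is purely combinatorial/set-theoretic once one unwinds the definition of the orbit space and of an effective divisor. The only point worth stating carefully is that "$\cong$" in the lemma should be read as an isomorphism of sets (or, if one wants, of varieties: $\Sym^d(\X)$ carries its quotient structure and $\Div^{+,d}(\X) \cong \Sym^d(\X)$ transports it), and that we are using $d$ in place of the $v$ that appears a line earlier in the excerpt — a harmless notational slip. If a scheme-theoretic statement is intended, one would additionally remark that the quotient $\X^d/S_d$ exists as a variety because $\X$ is quasi-projective and $S_d$ is finite, and that $\pi$ is a finite morphism, but for the stated lemma the set-level bijection above suffices.
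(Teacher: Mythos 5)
Your argument is correct and is exactly the standard identification: the paper states this lemma without proof, so your set-theoretic bijection (summing a tuple's entries, factoring through the $S_d$-action, and matching multiplicities for injectivity) supplies precisely what is implicitly being used. Your closing remark is also apt for the paper's context: the set-level bijection is all the lemma asserts, and the variety structure is what the paper builds afterwards on $\Sym^d(\X)$ via the Segre embedding and the invariant ring $R^{S_d}$, then transports to divisors through this identification.
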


Let 
$ j : \X^d \hookrightarrow \P^{(n+1)d -1 } $
 be the Segre embedding. Let $R:= \C [\X^d] $ be the homogenous coordinate ring   of $\X^d$.  Then $S_d$ acts on $R$ by permuting the coordinates.  This action preserves the grading.  
Then $j$ is equivariant under the above action.     Hence,    the ring of invariants $R^{S_d}$  is finitely generated by homogenous polynomials $f_0, \dots , f_N$ of degree $M$. Thus, we have 
\[
\C [ f_0, \dots , f_N] \subset \{ f \in R^{S_d } \, \text{ such that }  \, M | \deg f \} \subset R^{S_d } \,.
\]
Hence, every element in $\C [ f_0, \dots , f_N]$ we can express it as a vector in $\P^N$ via the basis $\{f_0, \dots , f_N\}$.
Then we have an embedding 
\[ \Sym^d (\X) \hookrightarrow \P^N \]
with the corresponding  following diagram:
\[
\xymatrix{
		\X^d   \ar[d]     \ar@{^{(}->}[r]^j            &       \P^{(n+1)d -1}     \ar[d]           \\
        \Sym^d (\X)   \ar@{^{(}->}[r]      &        \P^N      \\
        }
\]
Thus, any divisor $D \in \Div^{+ \, d} (\X)$ we identify with its correspondent point in $\Sym^d (\X)$ and then express it in coordinates in $\P^N$.  
The variety $\Sym^d (\X)$ is smooth because $\Sym^d (\X) \setminus\{\Delta = 0 \}$ is biholomorphically to an open set in $\C^d$.

The known result which we will use in our approach is  the following:
\begin{thm} Let $\X$ be a genus $g\geq 2$ curve. The map 
\[ 
\begin{split}
\phi : & \Sym^g (\X) \longrightarrow \Jac \X \\
& \sum P_i \longrightarrow \sum P_i - g \infty \\
\end{split}
\]
is surjective.  In other words, for every divisor $D$ of degree zero, there exist $P_1, \dots , P_g$ such that 
$D$ is linearly equivalent to $\sum_{i=1}^g P_i - g\infty$. 
\end{thm}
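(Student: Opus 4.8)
The statement to be proved is that the Abel--Jacobi map $\phi:\Sym^g(\X)\to\Jac\X$, $\sum P_i\mapsto \sum P_i-g\infty$, is surjective; equivalently, every degree-zero divisor class contains a representative of the form $\sum_{i=1}^g P_i-g\infty$. The plan is to reduce this to the Riemann--Roch theorem, which has already been stated (and its corollary ``in every divisor class of degree $g$ there is a positive divisor'' is exactly what is needed).

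\emph{Step 1: Reduce to finding effective representatives.} Let $D_0\in\Div^0_\X(k)$ be an arbitrary degree-zero divisor. Consider the divisor $D:=D_0+g\infty$, which has $\degrm(D)=g$. It suffices to show $D$ is linearly equivalent to an effective (positive) divisor $E\geq 0$ of degree $g$; for then $E=\sum_{i=1}^g P_i$ for suitable points $P_i\in\X$ (counted with multiplicity), and $D_0=D-g\infty\sim E-g\infty=\sum_{i=1}^g P_i - g\infty$, so $D_0$ lies in the image of $\phi$ applied to the point of $\Sym^g(\X)$ represented by $(P_1,\dots,P_g)$.

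\emph{Step 2: Apply Riemann--Roch.} By the second consequence of the Riemann--Roch theorem recorded in the excerpt (``In every divisor class of degree $g$ there is a positive divisor''), and its proof there: taking $D$ with $\degrm(D)=g$ we get $\ell(D)=1+\ell(K-D)\geq 1$, hence there exists a nonzero $f\in\L(D)$. Then $D+(f)\geq 0$ is effective of degree $\degrm(D)=g$, and $D\sim D+(f)=:E$. This furnishes the required effective divisor. Unwinding Step 1 gives $D_0\sim E-g\infty$ with $E\geq 0$ of degree $g$, which is precisely the asserted surjectivity; moreover $\Sym^g(\X)\cong\Div^{+,g}(\X)$ by the lemma preceding the theorem, so $E$ genuinely corresponds to a point of $\Sym^g(\X)$.

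\emph{Step 3: Well-definedness and the diagram.} Finally I would note that $\phi$ is well-defined on $\Sym^g(\X)$: permuting the $P_i$ does not change the divisor $\sum P_i$, and the target $\Jac\X=\Pic^0_\X$ only sees the linear equivalence class, so the map descends from $\X^g$ through $\Sym^g(\X)$ as in the commutative diagram. No genuine obstacle arises here; the only subtlety worth a sentence is that the points $P_i$ need not be distinct and need not be defined individually over $k$ (only their sum, i.e.\ the point of $\Sym^g(\X)(k)$, is), but this is exactly why one works with $\Sym^g(\X)$ rather than $\X^g$. The ``hard part,'' such as it is, is purely bookkeeping: matching the effective divisor $E$ produced by Riemann--Roch with an actual $k$-point of $\Sym^g(\X)$, which is handled by the identification $\Div^{+,g}(\X)\cong\Sym^g(\X)$ already established.
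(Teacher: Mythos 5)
Your proof is correct: the paper itself gives no argument (it simply cites Mumford's Tata lectures, pg.\ 3.30), and your reduction via $D_0+g\infty$ and Riemann--Roch is exactly the standard Jacobi-inversion argument, which moreover is already contained in the paper's earlier corollary that ``in every divisor class of degree $g$ there is a positive divisor.'' Nothing is missing; the identification $\Div^{+,g}(\X)\cong\Sym^g(\X)$ you invoke for the bookkeeping is the lemma stated just before the theorem.
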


See \cite{tata-2}*{pg. 3.30}.   The simplest case (hyperelliptic case)   of the above construction was suggested by Jacobi and worked out by Mumford in \cite{tata-2}.  We explained it briefly below.


\subsection{Hyperelliptic curves}
We would like to see how the above construction applies to hyperelliptic curves. Let's start with a hyperelliptic curve $\X$ with affine equation
\[ y^2 = f(x) = \prod_{i=1}^{2g+1} (x-\a_i) \]
defined over a field $k$. Then $\X$ has a point at infinity and $(x)_\infty= 2 \cdot \infty$ and $(y)_\infty = (2g+1) \cdot \infty$.

Denote by   $\Div^d (\X)$  the set of degree $v$ divisors in $\Div (\X)$ and by $\Div^{+, d} (\X)$ the set of positive ones in $\Div^d (\X)$. Then $\Div^{+, d}_0 (\X)$ is the set
\[ 
\begin{split}
\Div^{+, d}_0 (\X)  = 
& \left\{       D \in \Div^{+, d} (\X) \, |   \;  \text{ if } \; D= \sum_{i=1}^d P_i, \, \text{ then } \, P_i \neq \infty,   \text{ for all } \, i  \right.   \\
& \left.   \; \text{ and } P_i \neq \tau P_j, \; \text{ for } i\neq j   \right\}    \\
\end{split}
\]
where $\tau$ is the hyperelliptic involution.     Let $D \in \Div^{+, d}_0 (\X)$  given by
$ D= \displaystyle\sum_{i=1}^d \, \p_i$, 
where $\p_i = (\l_i, u_i)$.   By $x( \p_i)$ we denote the value of $x$ at $\p_i$.  Thus $x(\p_i)=\l_i$ and $y(\p_i)=u_i$. 
We follow the idea of Jacobi \cite{jacobi-1846} explained in details in \cite{tata-2} and define 
%
%
\begin{equation}
\begin{split}
U (x)  & = \prod_{i-1}^d  (x-\l_i) \\
\end{split}
\end{equation}
We want to determine a unique polynomial $V(x)$ of degree $< d-1$ such that 
\[ 
V (\l_i) = u_i, \; \; 1 \leq i \leq d. 
\]
Then we have:

\begin{lem}
The unique polynomial $V(x)$ of degree $\leq d-1$ such that 
\[ 
V (\l_i) = u_i, \; \; 1 \leq i \leq d
\]
is given by 
\begin{equation}
V(x) = \displaystyle\sum_{i=1}^d  u_i \; \displaystyle\frac { \displaystyle\prod_{j\neq i} (x- \l_i)   }   {  \displaystyle\prod_{j\neq i} (\l_i - \l_j)  } 
\end{equation}
Moreover,
$ U(x) \, | \, (f(x) - V(x)^2)$.
\end{lem}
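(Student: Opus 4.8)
The plan is to verify the two claims in turn, both of which are elementary once the interpolation formula is in place. First I would check that the stated polynomial $V(x)$ indeed satisfies the interpolation conditions $V(\lambda_i)=u_i$. This is the classical Lagrange interpolation identity: for each fixed $i$, the $i$-th summand equals $u_i$ at $x=\lambda_i$ (the product $\prod_{j\neq i}(\lambda_i-\lambda_j)/\prod_{j\neq i}(\lambda_i-\lambda_j)=1$) and every other summand vanishes at $x=\lambda_i$ because it carries a factor $(x-\lambda_i)$. One should note that the $\lambda_i$ are pairwise distinct, which holds because $D\in\Div^{+,d}_0(\X)$ forbids $\p_i=\tau\p_j$, and in particular forbids $\p_i=\p_j$ with the same $x$-coordinate unless $u_i=u_j$; actually the condition we really need is $\lambda_i\neq\lambda_j$ for $i\neq j$, so I would remark that if two of the $x$-coordinates coincide then the corresponding points must be equal (they cannot be swapped by $\tau$), and the divisor is then not reduced in the required sense — so without loss of generality the $\lambda_i$ are distinct. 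Uniqueness of $V$ among polynomials of degree $\le d-1$ is standard: the difference of two such interpolating polynomials has $d$ roots but degree $\le d-1$, hence is zero.

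Next I would prove the divisibility $U(x)\mid \bigl(f(x)-V(x)^2\bigr)$. The key observation is that $U(x)=\prod_{i=1}^d(x-\lambda_i)$ has simple roots $\lambda_1,\dots,\lambda_d$, so it suffices to show each $\lambda_i$ is a root of $f(x)-V(x)^2$. But $f(\lambda_i)=u_i^2$ since $\p_i=(\lambda_i,u_i)$ lies on the curve $y^2=f(x)$, and $V(\lambda_i)^2=u_i^2$ by the interpolation property just established. Hence $f(\lambda_i)-V(\lambda_i)^2=0$ for every $i$, and since $U$ is separable with exactly these roots, $U(x)$ divides $f(x)-V(x)^2$ in $k[x]$.

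I do not expect a genuine obstacle here: this is Mumford's representation of divisors on a hyperelliptic curve, and the proof is a two-line Lagrange interpolation argument plus the defining equation of the curve. The only subtlety worth flagging explicitly is the distinctness of the $\lambda_i$, which I would handle by invoking the definition of $\Div^{+,d}_0(\X)$ as in the excerpt: since no $\p_i$ equals $\tau\p_j$ for $i\neq j$, two distinct points of $D$ with the same $x$-coordinate would have to be literally equal, so after collecting terms we may assume the $\lambda_i$ pairwise distinct (alternatively, treat $D$ as a reduced divisor). With that understood, the proof is complete, and it naturally sets up the passage to $W(x)=(f(x)-V(x)^2)/U(x)$ used in Cantor's algorithm for the Jacobian.
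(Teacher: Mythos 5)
Your proof is correct and is the standard argument: Lagrange interpolation gives $V(\lambda_i)=u_i$, uniqueness follows by counting roots of the difference of two interpolants of degree $\leq d-1$, and since $U(x)=\prod_i(x-\lambda_i)$ is separable the divisibility $U \mid (f-V^2)$ reduces to checking $f(\lambda_i)=u_i^2=V(\lambda_i)^2$ at each root, which is just the curve equation. The paper itself states this lemma without proof (it is taken from Mumford's treatment, cited for the proposition that follows), so there is no different route to compare against. One small correction to your discussion of distinctness: the paper's $\Div^{+,d}_0(\X)$ only excludes $\p_i=\tau\p_j$ for $i\neq j$, so a non-Weierstrass point may occur with multiplicity, and such a divisor is still a legitimate reduced divisor; your claim that it is ``not reduced in the required sense'' is therefore inaccurate. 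In that multiplicity case the Lagrange formula is simply undefined and one needs Hermite-type conditions, namely the derivative conditions $(d/dx)^j\left[V(x)^2-f(x)\right]_{x=\lambda_i}=0$ that appear later in the section in the Mumford-representation theorem. Since the lemma as stated presupposes pairwise distinct $\lambda_i$ (its formula makes no sense otherwise), your argument does prove exactly the statement at hand; just drop the ``not reduced'' justification and say instead that the formula implicitly assumes the points of $D$ are distinct.
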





Let $W(x)$ be defined as follows
\begin{equation}
\begin{split}
W(x) & = \frac 1 {U(x)} \left( f(x) - V(x)^2  \right),  \\
\end{split}
\end{equation}
which from the above is a polynomial.  
Then we have the following:

\begin{prop}  
There is a bijection between $\Div^{+, d}_0 (\X)$ and triples $(U, V, W)$ such that 
%
%
$U$ and $W$ are monic and $\deg V \leq d-1$, $\deg U = d$, $\deg W = 2g+1 -d$. 
\end{prop}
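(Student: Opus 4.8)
\textbf{Proof plan for the bijection $\Div^{+,d}_0(\X) \leftrightarrow (U,V,W)$.}

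The plan is to construct the two maps explicitly and check they are mutually inverse, exploiting the lemmas that precede the statement. First I would fix the forward map: given $D = \sum_{i=1}^d \p_i \in \Div^{+,d}_0(\X)$ with $\p_i = (\l_i, u_i)$, set $U(x) = \prod_{i=1}^d (x-\l_i)$, let $V(x)$ be the interpolation polynomial of degree $\le d-1$ with $V(\l_i)=u_i$ from the lemma above, and set $W(x) = \frac{1}{U(x)}\bigl(f(x)-V(x)^2\bigr)$, which is a genuine polynomial by the same lemma. The degree bookkeeping is routine: $\deg U = d$ and $U$ monic are immediate; $\deg V \le d-1$ is built into the interpolation; and $\deg W = \deg f - \deg U = (2g+1) - d$ since $f$ is monic of degree $2g+1$ and $V^2$ has degree $\le 2d-2 < 2g+1$ exactly when $d \le g$ (I should note the standing assumption $d \le g$ here, which is what makes the leading term of $f(x)-V(x)^2$ come from $f$, forcing $W$ monic). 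One subtlety to flag: the condition $\p_i \ne \tau\p_j$ for $i\ne j$ is precisely what guarantees that if $\l_i = \l_j$ then $u_i = u_j$, so the interpolation data is consistent and $D$ is recovered by multiplicity; I would check that $D$ with a repeated point $\p_i$ of multiplicity $m$ forces $(x-\l_i)^m \mid U$ and that the pair $(U,V)$ still determines the multiplicity via $\ord$ of $y - V(x)$ along the curve.

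Next I would construct the inverse map: given a triple $(U,V,W)$ with the stated monicity and degree constraints and satisfying $U W = f - V^2$, define $D$ to be the divisor of zeros of the ideal $(U(x),\, y - V(x))$ on $\X$, i.e. $D = \sum_i m_i \p_i$ where the $\l_i$ are the roots of $U$ with multiplicities $m_i$, and for each such root $\p_i := (\l_i, V(\l_i))$. One must check $\p_i \in \X$: from $U W = f - V^2$ and $U(\l_i)=0$ we get $V(\l_i)^2 = f(\l_i)$, so $(\l_i, V(\l_i))$ lies on the curve. The condition $\p_i \ne \infty$ holds because the $\l_i$ are finite. For $\p_i \ne \tau\p_j$ when $i \ne j$: if $\l_i = \l_j$ (a multiple root of $U$) then $V(\l_i) = V(\l_j)$ trivially, so they are the same point, not $\tau$-conjugate; and if two distinct roots $\l_i \ne \l_j$ had $\p_i = \tau \p_j$ that would need $\l_i = \l_j$, contradiction — so the only thing to rule out is a root $\l_i$ with $V(\l_i) = 0 = -V(\l_i)$ being "its own conjugate", which is a ramification point and is allowed as long as it appears with multiplicity one; here I would use that $U \mid f - V^2$ forces the multiplicity of such a root in $U$ to be at most one, since a double root would force $(x-\l_i)^2 \mid f$, impossible as $f$ is separable. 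This degree/separability argument is the one genuinely delicate point and deserves a careful line.

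Finally I would verify the two composites are the identity. Starting from $D$, forming $(U,V,W)$, and reading back the divisor of $(U, y-V)$: the roots of $U$ are exactly the $\l_i = x(\p_i)$ with correct multiplicities, and $V(\l_i) = u_i$ by construction, so we recover $D$. Conversely, starting from $(U,V,W)$: the forward map applied to $D = V((U, y-V))$ produces $\tilde U = \prod(x-\l_i)^{m_i} = U$ (same roots, same multiplicities, both monic of degree $d$), produces $\tilde V$ as the unique polynomial of degree $\le d-1$ interpolating $V$ at the roots of $U$ counted with multiplicity — and since $\deg V \le d-1$ already, uniqueness of Hermite interpolation gives $\tilde V = V$ — and then $\tilde W = (f - \tilde V^2)/\tilde U = (f-V^2)/U = W$. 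I expect the main obstacle to be handling the ramification points and repeated roots cleanly (the separability-of-$f$ argument bounding multiplicities, and the Hermite-interpolation uniqueness at repeated nodes); the rest is bookkeeping that follows directly from the preceding lemma on $V(x)$ and $U(x) \mid (f(x)-V(x)^2)$.
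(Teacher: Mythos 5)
Your construction is correct, and it is exactly the argument behind the citation the paper gives in place of a proof (the paper's proof is literally ``See Mumford, Prop.~1.2''): divisor $\mapsto (U,V,W)$ with $U=\prod(x-\lambda_i)$, $V$ the interpolating polynomial, $W=(f-V^2)/U$, inverse via the zero locus of $(U,\,y-V(x))$, with separability of $f$ ruling out multiple Weierstrass roots and $\tau$-conjugate pairs. One point to tighten when you write it up: in the forward direction, when a non-Weierstrass point occurs with multiplicity $m_i\ge 2$, the polynomial $V$ cannot be defined by plain value interpolation -- the Lagrange formula of the preceding lemma is not even defined for repeated nodes, and matching values only yields $(x-\lambda_i)\mid f-V^2$, not $(x-\lambda_i)^{m_i}\mid f-V^2$; the correct definition is $\ord_{P_i}\big(y-V(x)\big)\ge m_i$, i.e.\ $V^2\equiv f \bmod (x-\lambda_i)^{m_i}$ with $V(\lambda_i)=u_i$, which exists and is unique precisely because repeated points are non-Weierstrass, so $2u_i\neq 0$ and the square root of $f$ Hensel-lifts. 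You do gesture at this (order of $y-V$, Hermite uniqueness in the inverse direction), so the plan is sound -- just make that order condition the definition of $V$ in the forward map. A minor arithmetic remark: the threshold making $f-V^2$ monic of degree $2g+1$ is $d\le g+1$ (since $2(d-1)<2g+1$ iff $d\le g+1$), not $d\le g$; in the intended range $d\le g$ of the Mumford representation this is automatic, so nothing breaks.
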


\proof
See \cite{Mum}*{Prop. 1.2}.
\qed

Polynomials $U(x)$, $V(x)$, and $W(x)$ are called \textbf{Jacobi polynomials}. 
%
%
%
%
Take  a genus $g \geq 2$ hyperelliptic curve $\X$ with at least one rational Weierstrass point  given by the affine Weierstrass  equation
\begin{equation}\label{hyp}
W_\X : \; y^2 + h(x) \, y = x^{2g+1} + a_{2g} x^{2g} + \dots + a_1 x + a_0 
\end{equation}
over $k$. We  denote the prime divisor corresponding to  $P_\infty =(0:1:0)$  by $\mathfrak {p}_\infty$.   The affine coordinate ring of $W_\X$ is  
\[  
\O   =  k[x,y]/( y^2 + h(x),      y -( x^{2g+1} + a_{2g} x^{2g} + \dots + a_1 x + a_0)    )
\]
and so prime divisors $\p$ of degree $d$ of  $\X$ correspond to prime ideals $P\neq 0$ with $[\O/P:k]=d$.    Let $\omega $ be the hyperelliptic involution of $\X$. It operates on $\O$ and on $\Spec(\O)$ and fixes exactly the prime ideals which   "belong" to Weierstrass points, i.e. split up in such points over $\bar{k}$.
 
Following Mumford \cite{Mum} we introduce polynomial coordinates for points in $\Jac_k (\X)$. The first step is to normalize representations of divisor classes. In each divisor class $c\in \Pic^0(k)$ we find a unique \textbf{reduced} divisor
\[ D=n_1\p_1+\cdots +n_r \p_r - d\, \p_\infty \]
with 
\[ \sum_{i=1}^r n_i\deg (\p_i) = d\leq g,\]
 $\p_i \neq \omega(\p_j)$ for $i\neq j$ and $\p_i\neq \p_\infty$ (we use Riemann-Roch and the fact that $\omega$ induces $-id_{J_\X}$).

Using the relation between divisors and ideals in coordinate rings, we obtain that
\[ n_1\p_1+\cdots +n_r \p_r\]
corresponds to an ideal $I\subset \O$ of degree $d$ and the property that if the prime ideal $P_i$ is such that  both $P$ and $\omega(P)$ divide $I$ then it belongs to a Weierstrass point.  The  ideal $I$ is a free $\O$-module of rank $2$ and we have
\[I=k[x]u(x)+k[x](v(x)-y).\]
$u(x)$, $v(x)\in k[x]$, $u$ are monic of degree $d$, $\deg(v)<d$,  and 
\[  
u \, | \, \left(  v^2+h(x)v-f(x) \right).
\]
Moreover, $c$ is uniquely determined by $I$, $I$ is uniquely determined by $(u,v)$ and so we can \textbf{take $(u,v)$ as coordinates for $c$}.  Polynomials $u$ and $v$ are determined by the following:

\begin{thm}[Mumford representation] \label{Mum-rep}
Let $\X$ be a hyperelliptic curve of genus $g\geq 2$ with affine equation 
\[ y^2 + h(x)\, y  \, = \,  f(x), \]
where $h, f \in k[x]$, $\deg f = 2g +1 $, $\deg h \leq g$.

 Every non-trivial group element $c \in \Pic^0_\X (k)$ can be represented in a unique way by a pair of polynomials $u, v \in k[x]$, such that 

i) $u$ is a monic,

ii) $\deg v < \deg u \leq g$,

iii) $u \, | \, v^2+ vh -f$.
\end{thm}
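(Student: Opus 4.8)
The plan is to establish the bijection between nontrivial elements of $\Pic^0_\X(k)$ and pairs $(u,v)$ satisfying conditions (i)--(iii) by going through the intermediate object of reduced divisors and their associated ideals in the affine coordinate ring $\O = k[x,y]/(y^2+h(x)y-f(x))$, exactly as sketched in the paragraphs immediately preceding the statement. The key point is that this amounts to assembling into one clean statement several facts already developed in the excerpt: the identification $\Div^{+,d}(\X)\cong\Sym^d(\X)$, the Riemann--Roch theorem (Section~2), the fact that the hyperelliptic involution $\omega$ induces $-\mathrm{id}$ on $\Jac\X$, and the Jacobi-polynomial construction given above for the case $h=0$, which adapts verbatim to general $h$.

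First I would prove existence and uniqueness of a \emph{reduced divisor} in each class. Given $c\in\Pic^0_\X(k)$, $c\neq 0$, write $c$ as the class of $D_0 - (\deg D_0)\p_\infty$ for some effective $D_0$. Using Riemann--Roch (Theorem of Riemann--Roch in Section~2, together with $\deg(K)=2g-2$ and the corollary that divisors of degree $>2g-2$ are nonspecial), one lowers the degree: whenever the effective part has degree $d>g$, the complete linear system $|\,D_0 - (d-g)\p_\infty - \p - \omega(\p)\,|$ is nonempty for a suitable choice, which lets one cancel a pair $\p+\omega(\p)$ (this is linearly equivalent to $2\p_\infty$ since $x-x(\p)$ has divisor $\p+\omega(\p)-2\p_\infty$). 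Iterating, one reaches a representative $D = \sum n_i\p_i - d\p_\infty$ with $d\le g$, all $\p_i\neq\p_\infty$, and no cancelling pair $\p_i=\omega(\p_j)$ with $i\neq j$; moreover if both $\p$ and $\omega(\p)$ occur then $\p$ is a Weierstrass point. Uniqueness follows because two distinct reduced divisors in the same class would give a function whose divisor is supported so as to force, again via Riemann--Roch and $\omega_* = -\mathrm{id}$, a contradiction with $d\le g$.

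Next I would translate the reduced divisor into ideal-theoretic data: the finite effective part $\sum n_i\p_i$ corresponds to an $\O$-ideal $I$ of degree $d$, free of rank $2$ as a $k[x]$-module, hence of the form $I = k[x]\,u(x) + k[x]\,(v(x)-y)$ with $u$ monic of degree $d$ and $\deg v < d$. The condition that $I$ be an ideal (closed under multiplication by $y$) forces $u \mid v^2 + v h - f$; conversely any such pair $(u,v)$ with $\deg u \le g$ defines an ideal, hence a reduced divisor, hence a class. Here the Jacobi-polynomial computation above ($U$, $V$, $W$ with $UW = f - V^2$, which becomes $UW = f - V^2 - Vh$ when $h\neq 0$) supplies the explicit reconstruction of $v$ from the points and shows $W = (v^2+vh-f)/u$ is a genuine polynomial. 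Combining the three bijections --- classes $\leftrightarrow$ reduced divisors $\leftrightarrow$ ideals $I$ $\leftrightarrow$ pairs $(u,v)$ --- yields conditions (i)--(iii) and completes the proof.

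The main obstacle, and the place I would spend the most care, is the degree-reduction / uniqueness step: one must check that the process of cancelling $\omega$-pairs terminates precisely at $d\le g$ and that the resulting reduced form is unique, which is where Riemann--Roch, the degree bound $\deg K = 2g-2$, and the identity $\omega_*=-\mathrm{id}_{\Jac\X}$ all have to be used together. A secondary technical point is verifying that the $k[x]$-module $I$ really is free of rank $2$ with a generating pair of the stated shape (this uses that $\O$ is finite free of rank $2$ over $k[x]$ because $\deg f = 2g+1$ is odd, so there is exactly one point at infinity); once that structural fact is in place, the conditions on $u$ and $v$ drop out of the requirement that $I$ be an ideal.
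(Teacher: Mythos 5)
Your proposal is correct and follows essentially the same route as the paper's treatment (which itself is only a sketch referencing Mumford): normalize each class to a unique reduced divisor $D=\sum n_i\p_i - d\p_\infty$ with $d\le g$ via Riemann--Roch, the relation $\p+\omega(\p)\sim 2\p_\infty$, and the fact that $\omega$ induces $-\mathrm{id}$ on the Jacobian, then translate the effective part into the ideal $I=k[x]\,u(x)+k[x]\,(v(x)-y)$ of the affine coordinate ring, with $u\mid v^2+vh-f$ forced by the ideal condition. Your filling-in of the termination/uniqueness argument and of the rank-two freeness of $I$ over $k[x]$ (using the single point at infinity since $\deg f=2g+1$) is exactly the detail the paper delegates to Mumford and to \cite{frey-shaska}.
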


How does one find the polynomials $u,v$?   We can assume without loss of generality that $k=\bar{k}$ and identify prime divisors $\p_i$ with points $P_i=(x_i,y_i)\in k\times k$. Taking the reduced divisor $D =n_1\p_1+\cdots + n_r \p_r - d \p_\infty$ with $r=d\leq g$, we have
\[  u(x)= \prod_{i=1}^r(x-x_i)^{n_i}. \]
Since $(x-x_i)$ occurs with multiplicity $n_i$ in $u(x)$ we must have for $v(x)$ that 
\[  \left( \frac d {dx}  \right)^j \left[ v(x)^2 + v(x) \, h(x) - f(x)  \right]_{x=x_i} =0, \]
and one determines $v(x)$ by solving this system of equations; see \cite{frey-shaska} for details.


Take the divisor classes represented by $[(u_1,v_1)]$  and $[(u_2,v_2)]$ and  in "general position". Then the product is represented by the ideal $I\in\O$ 
given by 
\[ \< u_1u_2, u_1(y-v_2), u_2(y-v_1), (y-v_1)(y-v_2) \>.\]
We have to determine a base, and this is done by Hermite reduction.  The resulting ideal is of the form $\<u'_3(X), v'_3(X)+w'_3(X)Y\>$ but not necessarily reduced. To reduce it one uses recursively the fact that $u \, \mid \, (v^2-hv-f)$.     Generalization of this  procedure   is called \textbf{Cantor's algorithm}; see \cite{frey-shaska} for details.  


Another approach to describing addition in the Jacobians of hyperelliptic curves is to use approximation by rational functions;  see \cite{Leitenberger}. This is analogous to the geometric method used for elliptic curves. 

For simplicity we assume that $k=\bar{k}$.  Let $D_1$ and $D_2$ be reduced divisors on $\Jac_k \X $ given by 
\begin{equation} 
\begin{split}
D_1  & = \p_1 + \p_2 + \dots + \p_{h_1} - h_1 \p_\infty,  \\
D_2  & = \mathfrak{q}_1 + \mathfrak{q}_2 + \dots + \mathfrak{q}_{h_2} - h_2\p_\infty,  \\
\end{split}
\end{equation}
where  $\p_i$ and $\mathfrak{q}_j$ can occur with multiplicities,  and $0 \leq h_i \leq g$, $i=1, 2$. 
As usual we denote by $P_i$ respectively $Q_j$ the points on $\X$ corresponding to $\p_i$ and $\mathfrak{q}_j$.

Let $g(X)= \frac {b(X)} {c(X)} $ be the unique  rational function going through the points $P_i$, $Q_j$. In other words we are determining $b(X)$ and $c(X)$ such that $h_1+h_2-2r$ points $P_i$, $Q_j$ lie on the curve
\[ Y \, c(X) - b(X) \, =  \, 0.\] 
This rational function is uniquely determined and has the form
\begin{equation}\label{pol} 
Y= \frac {b(X)} {c(X)}    = \frac     {b_0 X^p + \dots b_{p-1} X + b_p} {c_0 X^q + c_1 X^{q-1} + \dots + c_q} 
\end{equation}
where 
\[
p=\frac {h_1+h_2+g-2r-\epsilon} 2, \; \; q = \frac {h_1+h_2-g-2r-2+\epsilon} 2,
\]
$\epsilon$ is the parity of $h_1+h_2+g$.    By replacing $Y$ from  \cref{pol} in  \cref{hyp} we get a polynomial of degree $\max \{ 2p, \, 2q (2g-1) \}$, which gives $h_3\leq g$ new roots apart from the $X$-coordinates of $P_i, Q_j$.   Denote the corresponding points on $\X$ by $R_1, \dots , R_{h_3}$ and   $\bar R_1, \dots , \bar R_{h_3}$ are the   corresponding symmetric points with respect to the $y=0$ line. 
Then, we define 
\[ D_1 + D_2 = \bar R_1 + \dots \bar R_{h_3} - h_3 \O.\]
For details we refer the reader to \cite{Leitenberger}.  

\begin{rem}
For $g=1, 2$ we can take $g(X)$ to be a cubic polynomial. 
\end{rem}

\subsubsection{Curves of genus $2$}\label{gen-2-add}
Let $\X$ be a genus 2 curve defined over a field $k$ with a rational Weierstrass point.  If $k \neq 2, 3$ the curve $\X$ is  birationally isomorphic  to an affine plane curve with equation
\begin{equation} 
Y^2 = a_5 X^5 + a_4 X^4+ a_3 X^3 + a_2 X^2 + a_1 X + a_0.
\end{equation}
Let $\p_\infty$ be the prime divisor corresponding to the point at infinity. Reduced divisors in generic position  are  given by 
\[ D = \p_1 + \p_2 - 2 \p_\infty \,,\]
where $P_1(x_1, y_1) $, $P_2(x_2, y_2)$ are points in $\X(k)$ (since $k$ is algebraically closed) and $x_1\neq x_2$.  For any two divisors $D_1=\p_1 + \p_2 - 2\p_\infty$ and $D_2 = \mathfrak{q}_1 + \mathfrak{q}_2 - 2\p_\infty$ in reduced form, we determine the cubic polynomial 
\begin{equation} Y= g(X) = b_0 X^3 + b_1 X^2 + b_2 X + b_3,\end{equation}
going through the points $P_1 (x_1, y_1)$, $P_2(x_2, y_2)$, $Q_1(x_3, y_3)$, and $Q_2(x_4, y_4)$.   This cubic will intersect the curve $\X$ at exactly two other points $R_1$ and $R_2$ with coordinates 
\begin{figure}[htbp] 
   \centering
   \includegraphics[width=3in]{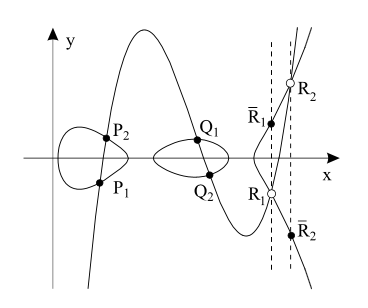} 
   \caption{A geometric interpretation of addition on a 2-dimensional  Jacobian.}
   \label{fig:genus-2}
\end{figure}
\begin{equation}
R_1  = \left( x_5,   g(x_5)    \right) \; \text{ and } \; R_2  =  \left( x_6,  g(x_6)    \right),  
\end{equation}
where $x_5$, $x_6$ are roots of the quadratic equation
\begin{equation}  
x^2 + \left( \sum_{i=1}^4 x_i   \right) x + \frac  {b_3^2-a_5} { b_0^2 \prod_{i=1}^4 x_i} = 0. 
\end{equation}
Let us denote by $\overline R_1 = (x_5, - g(x_5) )$ and $\overline R_2 = (x_6, - g(x_6) )$. 
Then,
\begin{equation} 
[D_1] \oplus [D_2] = [\overline R_1  + \overline R_2 - 2 \p_\infty].
\end{equation}


\begin{exa}
Let $\X$ be a genus 3 hyperelliptic curve with equation $y^2=f(x)$ where $\deg f = 7$.  Then   $g(x)=\frac {b(x) } {c(x)} $ must be such that 
\[ \left( b(x) \right)^2 - c(x) \cdot f(x) =0 ,\]
must have degree 9.  Hence, $\deg b = 4$ and $\deg c = 2$.  This is the first case where one has to use a rational function instead of a polynomial. 
\end{exa}

\subsection{Addition on superelliptic Jacobians, generalized Jacobi polynomials}

A natural question is the following:
\begin{prob}\label{jacobi-prob}
Is it possible to generalize the above procedure to a general curve?
\end{prob}
A complete answer to \cref{jacobi-prob} would be challenging for the very simple reason that in general we are not even able to write down an equation for a curve.  However,  from   \cref{sect-7} we know that we can write down precise equations for superelliptic curves.  Thus, the following question seems more reasonable:

\begin{prob}
Is it possible to generalize the Cantor's algorithm  to superelliptic curves?
\end{prob}

The main difference between hyperelliptic and superelliptic curves is that the hyperelliptic involution $\tau: (x, y) \to (x, -y)$ is now replaced by the order $n\geq 2$ automorphism $\a: (x, y) \to (x, \e_n \, y)$, where $\e_n$ is a primitive $n$-th root of unity. Hence, a naive extension of the Cantor's algorithm to superelliptic curves would be to determine the degrees of the $b(x)$ and $c(x)$ now for the curve $\X : y^n= f(x)$ so that the graph of $y= \frac {b(x)} {c(x)}$ intersect the curve $\C$ in exactly $g$ places $\bar R_1, \dots , \bar R_g$.   This is not possible.  Hence, we have to attempt a more general function, which is not necessary rational in one variable, in order to solve the problem.
A further discussion on this is intended in  \cite{kop-sh}.

There are a few issues that still are mysterious.  For example, there is a long and detailed discussion in \cite{Mum}, \cite{previato}, \cite{previato-87}. 
on what the Jacobi polynomials mean in terms of differential equations.  Do the corresponding \textbf{generalized Jacobi polynomials} for superelliptic curves have any significance in the theory of differential equations along the lines of \cite{previato}, et al. 

\begin{prob}
Investigate whether  Jacobi polynomials of hyperelliptic curves can be generalized to superelliptic curves and determine their significance in the theory of differential equations. 
\end{prob}
\subsection{Superelliptic Jacobians}

\newcommand\Ss{\mathcal{S}}
\def\AA{\mathcal A}
\def\BB{\mathcal B}
\newcommand\Fp{{\F}_p}
\newcommand\Qp{{\Q}_p}
\newcommand\Fq{\mathbb{F}_q}
\def\CC{\mathcal{C}}

A \textit{superelliptic Jacobian} is the Jacobian of a superelliptic curve. 
%
%
We assume that the reader is familiar with the basic definitions of Abelian varieties. For details one can check \cite{Mum} or \cite{frey-shaska}.

Let $\AA$, $\BB$ be abelian varieties over a  field $k$.  We denote the $\Z$-module of homomorphisms  $\AA \mapsto  \B$  by $\Hom( \AA, \B)$  and the ring of endomorphisms $\AA \mapsto \AA$ by $\End \AA$. In the context of Linear Algebra it can be more convenient  to   work with the $\Q$-vector spaces $\Hom^0 (\AA, \BB):= \Hom(\AA, \BB) \otimes_\Z \Q$, and $\End^0 \AA:= \End \AA\otimes_\Z \Q$. Determining $\End \AA$ or $\End^0 \AA$ is an interesting problem on its own; see  \cite{Oort}.


A homomorphism $f : \AA \to  \BB$ is called an \textbf{isogeny} if $\Img f = \BB$  and $\ker f$ is a finite group scheme. If an isogeny $\AA \to \BB$ exists we say that $\AA$ and $\BB$ are \textbf{isogenous}.  
%
The degree of an isogeny $f : \AA \to \B$ is the degree of the function field extension
\[ \deg f  := \left[k(\AA) : f^\star k(\B) \right].\]
It is equal to the order of the group scheme $\ker (f)$, which is, by definition, the scheme theoretical inverse image $f^{-1}(\{0_\AA\})$.

The group of $\bar{k}$-rational points has order 
\[ 
\#(\ker f)(\bar{k}) = [  k(\AA) : f^\star k(\BB)  ]^{sep},
\]
 where $[k(A) : f^\star k(\BB)]^{sep}$ denotes the degree of the maximally separable extension in $k(\AA)/ f^\star k(\BB)$.   $f$ is a \textbf{separable isogeny} if and only if 
\[ \# \ker f(\bar{k}) = \deg f.\]
%
%
The following result   should be compared with the well known result for quotient groups of abelian groups.

\begin{lem}\label{noether} 
For any Abelian variety $\AA/k$ there is a one to one correspondence between the finite subgroup schemes $\K \leq \AA$ and   isogenies $f : \AA \to \BB$, where $\BB$ is determined up to isomorphism.   Moreover, $\K = \ker f$ and $\BB = \AA/\K$. 
\end{lem}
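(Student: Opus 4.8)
The plan is to prove both directions of the correspondence between finite subgroup schemes $\K \leq \AA$ and isogenies $f \colon \AA \to \BB$, exhibiting quotients and showing they represent the right functor, then checking uniqueness up to isomorphism.

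First I would construct the quotient. Given a finite subgroup scheme $\K \leq \AA$, I want to produce an Abelian variety $\BB = \AA/\K$ together with a morphism $f \colon \AA \to \BB$ that is faithfully flat with kernel exactly $\K$. The standard approach is to use the action of $\K$ on $\AA$ by translation and to form the categorical quotient: since $\AA$ is projective and $\K$ is finite, the quotient exists as a projective scheme (one covers $\AA$ by $\K$-stable affine opens — using that $\AA$ is covered by translates and that $\K$ is finite, hence the orbit of any point is contained in an affine open — and takes $\Spec$ of the ring of $\K$-invariants on each, then glues). The quotient map is finite of degree equal to the order of $\K$. One then checks $\BB$ inherits a group scheme structure making $f$ a homomorphism: the multiplication on $\AA$ descends because $\K$ is central (being a subgroup scheme of a commutative group scheme), and $\BB$ is absolutely irreducible, smooth and projective since $\AA$ is and $f$ is finite flat, so $\BB$ is an Abelian variety. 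By construction $\ker f = \K$ and $f$ is an isogeny.

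Next I would prove the reverse assignment and that the two are mutually inverse. Given an isogeny $f \colon \AA \to \BB$, set $\K := \ker f$, which is a finite group scheme (finite because $\deg f < \infty$, as the order of $\ker f$ equals $\deg f$ up to inseparability, by the degree discussion preceding the lemma). Then I must show the canonical map $\AA/\K \to \BB$ induced by $f$ is an isomorphism: it is a finite morphism of the same degree between smooth projective varieties, and it is bijective on points and an isomorphism on tangent spaces (since both quotients have the same kernel scheme), hence an isomorphism — alternatively, invoke the universal property of the quotient, which gives a morphism $\AA/\K \to \BB$ that is a monomorphism of group schemes of the same order, hence an isomorphism. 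This shows $f$ factors as $\AA \to \AA/\K \xrightarrow{\ \sim\ } \BB$, so $\BB$ is determined up to isomorphism by $\K$, and conversely starting from $\K$, forming $\AA/\K$, and taking its kernel recovers $\K$.

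I expect the main obstacle to be the existence of the quotient $\AA/\K$ as a \emph{projective} scheme in the generality of group schemes (not just group actions by honest finite groups), i.e., handling the possibly non-reduced or infinitesimal part of $\K$ in characteristic $p$; this is where one needs the fact that an Abelian variety can be covered by $\K$-stable affine opens together with faithfully flat descent for the group scheme structure. The remaining points — that $\BB$ is again an Abelian variety, that $f$ has the expected degree, and the uniqueness up to isomorphism — are then comparatively routine, following the general theory of quotients by finite flat group schemes (for instance as in Mumford's treatment, \cite{Mum}).
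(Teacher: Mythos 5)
Your proposal is correct, but note that the paper itself gives no argument for this lemma: it is stated as a standard fact from the general theory of abelian varieties (the background reference being Mumford's treatment), immediately after the discussion of degrees and kernels of isogenies. Your sketch reconstructs exactly that standard proof — form the quotient $\AA/\K$ by the free translation action of the finite flat subgroup scheme $\K$ (using that orbits lie in affine opens, so the finite flat quotient exists and is again an abelian variety, with $\ker = \K$), and conversely factor an isogeny $f$ through $\AA/\ker f$ and conclude the induced map is an isomorphism by the degree count $\deg f = \mathrm{rank}(\ker f)$, which is the fact recorded just before the lemma. You also correctly identify the only genuinely delicate point, namely handling non-reduced (infinitesimal) kernels in characteristic $p$, where "invariants" must be taken in the group-scheme sense and flat descent is used; since you delegate precisely that step to the general theory in \cite{Mum}, the outline is sound. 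One small cleanup: the order of the finite group scheme $\ker f$ equals $\deg f$ exactly (it is the number of $\bar{k}$-points that only sees the separable degree), and the cleanest way to finish the converse is that the induced isogeny $\AA/\K \to \BB$ has degree $\deg f/\mathrm{rank}(\K)=1$, rather than the "bijective on points and tangent spaces" phrasing.
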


 Isogenous Abelian varieties have  isomorphic endomorphism rings.

\begin{lem} 
If  $\AA$ and $\B$ are isogenous then $\End^0 (\AA) \iso \End^0 (\B)$. 
\end{lem}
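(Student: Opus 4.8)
The plan is to exhibit a natural $\Q$-linear isomorphism
\[
\Phi\colon \End^0(\AA)\;=\;\End(\AA)\otimes_\Z\Q\;\longrightarrow\;\End^0(\BB)\;=\;\End(\BB)\otimes_\Z\Q,
\]
built from a fixed isogeny $f\colon \AA\to\BB$ together with its ``complementary'' isogeny. First I would invoke the standard fact (which follows from \cref{noether}: take $\K=\ker f$, then $\AA/\K\cong\BB$) that for an isogeny $f$ of degree $m$ there exists an isogeny $g\colon \BB\to\AA$ with $g\circ f=[m]_\AA$ and $f\circ g=[m]_\BB$, where $[m]$ denotes multiplication by $m$. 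Indeed $\ker f$ is a finite group scheme killed by its order $m$, so $\ker f\subseteq\ker[m]_\AA$, and the universal property of the quotient $\AA\to\AA/\ker f\cong\BB$ produces $g$; one checks $f\circ g=[m]_\BB$ by comparing on the quotient. This $g$ is the key tool: it lets us transport endomorphisms back and forth at the cost of multiplication by the integer $m$, which becomes invertible after tensoring with $\Q$.

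Next I would define, for $\varphi\in\End(\AA)$,
\[
\Phi(\varphi)\;:=\;\frac{1}{m}\,\bigl(f\circ\varphi\circ g\bigr)\;\in\;\End^0(\BB),
\]
and check that $\Phi$ is $\Z$-linear, hence extends to a $\Q$-linear map $\End^0(\AA)\to\End^0(\BB)$. Multiplicativity is the main computational point: for $\varphi,\psi\in\End(\AA)$ one has
\[
\Phi(\varphi)\circ\Phi(\psi)=\frac{1}{m^2}\,f\varphi g f\psi g=\frac{1}{m^2}\,f\varphi[m]_\AA\psi g=\frac{1}{m^2}\,f\varphi\psi[m]_\AA g=\frac{1}{m}\,f\varphi\psi g=\Phi(\varphi\psi),
\]
using $gf=[m]_\AA$ and that $[m]_\AA$ is central in $\End(\AA)$. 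Similarly $\Phi$ sends $\id_\AA$ to $\frac1m f g=\frac1m[m]_\BB=\id_\BB$, so $\Phi$ is a ring homomorphism of $\Q$-algebras. The symmetric construction $\Psi(\chi):=\frac1m\,g\circ\chi\circ f$ gives a $\Q$-algebra homomorphism $\End^0(\BB)\to\End^0(\AA)$, and a parallel computation ($\Psi\circ\Phi(\varphi)=\frac1{m^2}g f\varphi g f=\frac1{m^2}[m]_\AA\varphi[m]_\AA=\varphi$) shows $\Psi\circ\Phi=\id$ and $\Phi\circ\Psi=\id$. Therefore $\Phi$ is an isomorphism of $\Q$-algebras, proving $\End^0(\AA)\cong\End^0(\BB)$.

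I expect the only genuine obstacle to be the initial existence of the complementary isogeny $g$ with $g\circ f=[m]_\AA$; everything after that is formal manipulation inside $\End^0$. One must be slightly careful that ``degree'' here is taken as the order of the group scheme $\ker f$ (so that $\ker f$ is annihilated by $m$ even in positive characteristic, where $\ker f$ need not be reduced), which is exactly the notion of degree recalled just before \cref{noether}. With $g$ in hand, the passage to $\Q$-coefficients is what makes the factors of $m$ harmless, and the two mutually inverse homomorphisms $\Phi,\Psi$ finish the argument. If one wishes, the same $\Phi,\Psi$ also realize $\Hom^0(\AA,\BB)$ and $\Hom^0(\BB,\AA)$ as invertible bimodules, but for the stated lemma the ring isomorphism suffices.
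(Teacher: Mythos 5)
Your proof is correct; the paper states this lemma without giving any proof, and your argument --- conjugating by a fixed isogeny $f$ and its complementary isogeny $g$ satisfying $g\circ f=[m]_\AA$ and $f\circ g=[m]_\BB$, with the resulting factors of $m$ becoming invertible after tensoring with $\Q$ --- is the standard one. The only non-formal ingredient, the existence of $g$, is handled correctly: $\ker f$ is a finite commutative group scheme of order $m$ and hence is annihilated by $m$ even when it is non-reduced in positive characteristic, so $[m]_\AA$ factors through $\AA/\ker f\cong\BB$ and the rest is the formal computation you carry out.
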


\begin{lem}
If $\AA$ is a  absolutely simple Abelian variety then every endomorphism  not equal $0$ is an isogeny. 
\end{lem}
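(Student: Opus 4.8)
The statement to prove is: if $\AA$ is an absolutely simple abelian variety over $k$, then every endomorphism $f \in \End(\AA)$ with $f \neq 0$ is an isogeny. Recall from \cref{noether} and the surrounding discussion that a homomorphism $f\colon \AA\to\AA$ is an isogeny precisely when $\Img f = \AA$ and $\ker f$ is a finite group scheme. The plan is to examine the image and the kernel of a nonzero $f$ separately, and in each case invoke absolute simplicity to rule out the proper-subvariety possibilities. Since the conclusion is geometric in nature (isogeny can be checked after base change to $\bar k$, and ``absolutely simple'' means ``simple over $\bar k$''), I would first reduce to the case $k = \bar k$, so that all the subvarieties and group schemes in play may be treated on the level of $\bar k$-points together with their reduced structure.

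First I would handle the image. The scheme-theoretic image $\BB := \Img f$ is a closed subvariety of $\AA$; moreover it is itself an abelian variety, being the image of an abelian variety under a homomorphism (the image of a group scheme homomorphism is a subgroup scheme, and it is connected, complete, and irreducible as a continuous image of the irreducible $\AA$). Since $f \neq 0$, the image $\BB$ is nonzero. By the absolute simplicity of $\AA$, the only abelian subvarieties of $\AA$ are $\{0\}$ and $\AA$ itself; hence $\BB = \AA$, i.e.\ $f$ is surjective. Next I would handle the kernel. Consider $\K := \ker f$, a closed subgroup scheme of $\AA$. Let $\K^0$ be its identity component and $\K^0_{\mathrm{red}}$ the associated reduced subscheme; in characteristic zero this is automatically reduced, and in general $\K^0_{\mathrm{red}}$ is an abelian subvariety of $\AA$ (a connected, complete, reduced subgroup scheme of an abelian variety is an abelian subvariety). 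Again by absolute simplicity, $\K^0_{\mathrm{red}}$ is either $\AA$ or $\{0\}$. If it were all of $\AA$, then $f$ would vanish on a dense subset and hence be the zero map, contradicting $f \neq 0$. Therefore $\K^0_{\mathrm{red}} = \{0\}$, so $\K^0$ is an infinitesimal (hence finite) group scheme, and since $\AA$ is of finite type the component group $\K/\K^0$ is finite as well; thus $\K = \ker f$ is a finite group scheme. Combining the two parts, $f$ is surjective with finite kernel, i.e.\ an isogeny.

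The main obstacle, and the point requiring the most care, is the precise handling of the kernel in positive characteristic: one must argue that a connected finite-type group subscheme of $\AA$ whose underlying reduced scheme is trivial is automatically finite, and that the full kernel $\ker f$ is finite rather than merely having finite reduced part. This is where the hypothesis that $\AA$ is \emph{absolutely} simple (not just simple over $k$) is genuinely used, since a group scheme that is simple over $k$ may acquire proper abelian subvarieties over $\bar k$; and it is also where one should be attentive to the distinction between $\ker f$ as a scheme and its group of $\bar k$-points, in the spirit of the separability discussion preceding this lemma. A clean way to organize this is: dimension count. Since $f$ is surjective and $\dim \AA = \dim \Img f = \dim \AA$, the generic fiber of $f$ is $0$-dimensional; as $f$ is a homomorphism, every fiber is a translate of $\ker f$, so $\dim \ker f = 0$, and a $0$-dimensional group scheme of finite type over a field is finite. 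This fiber-dimension argument in fact subsumes the connected-component discussion and is probably the slickest route; I would present it as the core of the kernel step, with the absolute-simplicity input entering only through the surjectivity established in the first step.
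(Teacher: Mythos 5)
Your proof is correct and follows essentially the same route as the paper: the paper's argument likewise works over $\bar k$, observes that the connected component of $\ker f$ is an abelian subvariety which simplicity forces to be $\{0_\AA\}$, and concludes finiteness of the kernel from the finite index of that component. Your write-up is in fact a bit more complete than the paper's, since you also verify surjectivity (via simplicity applied to the image, or the dimension count) and attend to non-reducedness of the kernel in positive characteristic, points the paper leaves implicit.
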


We can assume that $k=\bar{k}$. Let $f$ be a nonzero  isogeny   of $\AA$. Its kernel $\ker f$ is a subgroup scheme of $\AA$ (since it is closed in the Zariski topology because of continuity and under $\oplus$ because of homomorphism). It contains $0_\AA$ and so its connected component, which is, by definition, an Abelian variety. 

Since $\AA$ is simple and $f\neq 0$ this component is equal to $\{0_\AA\}$. But it has finite index in $\ker f$ (Noether property) and so $\ker f$ is a finite group scheme.

The ring of endomorphisms of generic Abelian varieties is "as small as  possible". For instance, if $\chara (k)  =0$, then $\End(\AA)=\Z$ in general. If $k$ is a finite field, the Frobenius endomorphism  will generate a larger ring, but again, in the generic case. Determining endomorphism rings of superelliptic Jacobians is an interesting problem. 
A concrete result is  the following \cite{zarhin-1}:
\begin{thm}[Zarhin]\label{zarhin-thm}
Let $\CC$ be a hyperelliptic curve with affine equation $y^2=f(x)$, $n=\deg f$, and $f \in \Q[x]$. If $\Gal (f) $ is isomorphic to $A_n$ or $S_n$  then $\End_{\overline \Q} \, (\Jac \CC ) \iso \Z$.
\end{thm}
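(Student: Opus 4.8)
\textbf{Proof proposal for Zarhin's theorem (Theorem~\ref{zarhin-thm}).}

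The plan is to deduce the assertion $\End_{\overline{\Q}}(\Jac \CC) \cong \Z$ from two structural facts: that the Galois action on the $\ell$-torsion is as large as possible, and that a sufficiently large Galois image forces the endomorphism algebra to be trivial. First I would observe that since $f$ is separable of degree $n$ with $\Gal(f)$ equal to $A_n$ or $S_n$, the hyperelliptic curve $\CC$ has its $2g+2$ (or $2g+1$) Weierstrass points permuted by $\Gal(\overline{\Q}/\Q)$ via the full group $A_n$ or $S_n$. The $2$-torsion subgroup $\Jac(\CC)[2]$ is, as a Galois module, identified with the quotient of the permutation module on the roots of $f$ by the all-ones vector (the standard model of $(\Z/2)^{2g}$ as even-weight divisors supported on Weierstrass points modulo the hyperelliptic class, exactly as recalled in Section~\ref{sect-12}). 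Thus the image of Galois in $\Aut(\Jac(\CC)[2]) = \Sp_{2g}(\F_2)$ contains the image of $A_n$ or $S_n$ acting on this ``heart'' of the permutation module, which is known to be an irreducible $\F_2[A_n]$-module (for $n$ not too small).

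The main engine of the proof is the following: if the Galois image on $\Jac(\CC)[\ell]$ for some prime $\ell$ acts irreducibly on $\Jac(\CC)[\ell] \otimes \overline{\F_\ell}$, or more precisely if it contains a subgroup whose only invariant subspaces are trivial and whose centralizer in $\mathrm{End}$ is a field, then $\End^0_{\overline{\Q}}(\Jac \CC)$ must be a division algebra whose reduction is constrained to be $\Q$ itself. More concretely, I would argue: any endomorphism $\varphi \in \End_{\overline{\Q}}(\Jac \CC)$ is defined over some finite Galois extension, hence the $\overline{\Q}$-endomorphism ring carries a Galois action, and $\End(\Jac \CC)$ acting on $\Jac(\CC)[\ell]$ must commute with the Galois image. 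If the Galois image contains $A_n$ (acting through the irreducible heart module), then by Schur's lemma over $\F_\ell$ the commutant is a finite field; combining this for infinitely many $\ell$ and using that $\End^0(\Jac \CC)$ is a finite-dimensional semisimple $\Q$-algebra with a compatible action, one forces $\End^0(\Jac \CC) = \Q$, hence $\End(\Jac \CC) = \Z$ since $\Jac \CC$ has no nontrivial idempotents in its endomorphism ring and $\Z$ is integrally closed.

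The hard part will be establishing the irreducibility/primitivity of the Galois module $\Jac(\CC)[\ell]$ in a way robust enough to pin down the full endomorphism algebra, not merely to show $\Jac \CC$ is simple. The key step is really a theorem of Zarhin (built on work of Mori and others): one shows that for $\Gal(f) \supseteq A_n$ the image of $\Gal(\overline{\Q}/\Q) \to \GL(\Jac(\CC)[2])$ is $A_n$ or $S_n$ in its heart representation, that this module is \emph{very simple} in Zarhin's sense (no proper nonzero invariant subspaces even after extension of scalars, and the only endomorphisms commuting with it are scalars), and that a very simple Galois module at one prime suffices to conclude $\End_{\overline{\Q}}(\Jac \CC) \otimes \F_2 \hookrightarrow \F_2$, whence $\End_{\overline{\Q}}(\Jac \CC) = \Z$. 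I would therefore structure the write-up as: (1) identify $\Jac(\CC)[2]$ as the heart of the permutation module, (2) cite/prove that this is a very simple $\F_2[\Gal(f)]$-module for $\Gal(f) \in \{A_n, S_n\}$, (3) invoke Zarhin's criterion that a very simple torsion module forces trivial endomorphism ring, and (4) conclude. Steps (2) and (3) carry all the weight; step (1) is the bridge to the theta-function description of $2$-torsion already set up in the excerpt, and I would lean on \cite{zarhin-1} for the precise form of the criterion in step (3).
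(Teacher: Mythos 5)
The survey itself contains no proof of this statement: it is quoted from Zarhin and supported only by the citations \cite{zarhin-1} and \cite{zarhin-2}, so there is nothing internal to the paper to compare your argument with line by line. What you propose is, in outline, Zarhin's own proof: identify $\Jac \CC[2]$ with the heart of the $\F_2$-permutation module on the roots of $f$, prove that for $\Gal(f)\supseteq A_n$ (with $n\geq 5$, automatic here since $g\geq 2$) this module is \emph{very simple}, and invoke the criterion that very simplicity of the $2$-torsion forces $\End_{\overline \Q}(\Jac \CC)=\Z$ in characteristic zero. Since you defer the two load-bearing steps (2) and (3) to \cite{zarhin-1}, your write-up is in substance the same citation the survey makes, supplemented by a correct roadmap of the cited proof.

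One intermediate argument in your second paragraph should be removed, because it is both unsupported and incorrect as stated. First, the hypothesis controls only the prime $2$: knowing $\Gal(f)$ gives no information about the Galois image on $\Jac\CC[\ell]$ for odd $\ell$, so there is no basis for ``combining this for infinitely many $\ell$.'' Second, an endomorphism defined over a finite extension $L/\Q$ commutes only with the image of $\Gal(\overline\Q/L)$; the full Galois group merely permutes the endomorphisms. So Schur's lemma applied to the centralizer of the large Galois image constrains endomorphisms defined over $\Q$, not $\End_{\overline\Q}$, and at the single available prime $2$ the subgroup $\Gal(\overline\Q/L)$ may have large index, so irreducibility of \emph{its} action is not automatic. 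This is exactly the gap that Zarhin's notion of very simplicity closes: the image of $\End_{\overline\Q}(\Jac\CC)\otimes\F_2$ inside $\End(\Jac\CC[2])$ is a subalgebra stable under conjugation by the full Galois image (it need not lie in the centralizer), and very simplicity says any such subalgebra is either $\F_2$ or all of $\End(\Jac\CC[2])$, the latter case being excluded by the structure theory of endomorphism algebras; this yields $\End_{\overline\Q}(\Jac\CC)=\Z$. Keep your steps (1)--(4), drop the centralizer-over-many-$\ell$ detour, and make sure the very-simplicity statement you import from \cite{zarhin-1} is the one for the heart of the permutation module at the prime $2$.
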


The theorem is actually true over any number field $K$. See \cite{zarhin-2} for detailed results on endomorphisms of Jacobians of hyperelliptic and superelliptic curves. 
It is an interesting task to find Abelian varieties with larger endomorphism rings. This leads to the theory of real and complex multiplication.  For instance, the endomorphism ring of the Jacobian of the Klein quartic contains an order in a totally real field of degree $3$ over $\Q$.

An abelian variety $\cA/k$  is said to have \textbf{complex multiplication} over $k$ if $\End_k^0 (\cA)$ is larger than $\Z$. Normally we say that an  Abelian variety with complex multiplication by CM; see next section for more details.

\subsection{Jacobians of genus 2 curves}

For $\chara k \neq 2$, a point $\p $ in the moduli space $\M_2$ is determined by the tuple $(J_2, J_4, J_6, J_{10})$, for discriminant $D:=J_{10} \neq 0$. In the case of $\chara k =2$ another invariant $J_8$ is needed. 

  For every $D:=J_{10} > 0$ there is a Humbert hypersurface $H_D$ in $\M_2$ which parametrizes curves $\CC$ whose Jacobians admit an optimal action on $\O_D$; see \cite{HM95}.   Points on $H_{n^2}$ parametrize curves whose Jacobian admits an $(n, n)$-isogeny to a product of two elliptic curves.

  For every quaternion ring $R$  there are irreducible curves $S_{R, 1}$, $\dots$, $S_{R, s}$ in $\M_2$ that parametrize curves whose Jacobians admit an optimal action of $R$. Those $S_{R, 1}$, $\dots$ , $S_{R, s}$ are called  \textbf{Shimura curves}.


We have the following:

\begin{prop}   
$\Jac (\CC)$ is a geometrically simple Abelian variety if and only if it is not $(n, n)$-decomposable for some $n>1$. 
\end{prop}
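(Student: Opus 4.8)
The plan is to prove the two implications separately and to package the combinatorial input about decomposability and the structure theory of abelian varieties into a clean equivalence. First I would fix terminology: say that $\Jac(\calC)$ is \emph{$(n,n)$-decomposable} (for $n>1$) if there exist elliptic curves $E_1,E_2$ over $\bar k$ and an isogeny $\Jac(\calC)\to E_1\times E_2$ whose kernel is, fibrewise over $\bar k$, isomorphic to $(\Z/n\Z)^2$ and is isotropic for the Weil pairing attached to the canonical principal polarization; equivalently, the image of $H_{n^2}$ in $\M_2$ passes through the point $[\calC]$. This is the notion implicit in the discussion of Humbert hypersurfaces $H_D$ just above the statement, so I would only need to recall it, not re-develop it.

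\textbf{The easy direction.}
Suppose $\Jac(\calC)$ is $(n,n)$-decomposable with $n>1$. Then there is a surjective homomorphism $\Jac(\calC)\to E_1\times E_2$ with finite kernel, hence $\Jac(\calC)$ is isogenous to $E_1\times E_2$. Since $\dim\Jac(\calC)=2=\dim(E_1\times E_2)$ and $E_i$ are $1$-dimensional abelian subvarieties of $E_1\times E_2$, their preimages (or rather the images of $E_i\hookrightarrow E_1\times E_2$ pulled back under the dual isogeny) give two nonzero proper abelian subvarieties of $\Jac(\calC)$ of dimension $1$. Concretely, by Poincaré reducibility (which I may cite as standard, in the spirit of the endomorphism/isogeny material in \cref{sect-13}) an isogeny $E_1\times E_2\to\Jac(\calC)$ exists, and the image of $E_1\times\{0\}$ is a one-dimensional abelian subvariety. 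Therefore $\Jac(\calC)$ is not geometrically simple. This shows: geometrically simple $\Rightarrow$ not $(n,n)$-decomposable for any $n>1$.

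\textbf{The harder direction.}
Conversely, suppose $\Jac(\calC)$ is not geometrically simple. Working over $\bar k$, let $A\subset\Jac(\calC)$ be a nonzero proper abelian subvariety; since $\dim\Jac(\calC)=2$, necessarily $\dim A=1$, so $A$ is an elliptic curve. The principal polarization $\lambda$ of $\Jac(\calC)$ restricts to a polarization on $A$; let $B$ be the complementary abelian subvariety (the connected component of the identity of $\ker(\Jac(\calC)\xrightarrow{\lambda}\widehat{\Jac(\calC)}\to\widehat A)$), which is again an elliptic curve. The addition map $A\times B\to\Jac(\calC)$ is an isogeny, and the pulled-back polarization on $A\times B$ is a product polarization of some type $(n,n)$ with $n\ge 1$ (the two factors get the same degree because $A$ and $B$ are complementary with respect to a \emph{principal} polarization — this is the standard fact that the exponents of the induced polarizations on complementary subvarieties coincide). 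If $n=1$ the isogeny is an isomorphism of principally polarized abelian varieties, so $\Jac(\calC)\cong E_1\times E_2$ as ppav's; but the Jacobian of a smooth genus-$2$ curve is indecomposable as a ppav (its theta divisor is irreducible, being the image of the Abel–Jacobi embedded curve), a contradiction. Hence $n>1$, and the isogeny $\Jac(\calC)\to A\times B$ together with the fact that its kernel is $(\Z/n\Z)^2$ and isotropic exhibits $\Jac(\calC)$ as $(n,n)$-decomposable. The main obstacle here is the clean bookkeeping of polarization types on complementary subvarieties and the indecomposability of a genus-$2$ Jacobian as a principally polarized abelian variety; I would invoke these as classical facts (Mumford, and the Humbert-surface literature already referenced, e.g.\ \cite{HM95}) rather than reprove them, and then assemble the two implications into the stated equivalence.
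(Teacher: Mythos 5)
The paper itself offers no proof of this proposition: it is stated as a summary fact right after the discussion of Humbert hypersurfaces, with the surrounding literature (e.g.\ \cite{HM95}) implicitly standing in for an argument, so there is nothing to compare step by step. Judged on its own, your proof is correct and is the standard one. The forward direction is immediate, and in the converse you use exactly the right ingredients: an elliptic subvariety $A$, its complementary subvariety $B$ with respect to the canonical principal polarization, the standard fact that the induced polarizations on complementary subvarieties have the same exponent $n$ (so the addition map $A\times B\to \Jac(\CC)$ is an isogeny whose kernel is the anti-diagonal copy of $A\cap B\cong A[n]\cong(\Z/n\Z)^2$), and irreducibility of the theta divisor of a genus-two Jacobian to exclude $n=1$. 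Two small points to tidy rather than genuine gaps: (i) you build the isogeny in the direction $A\times B\to\Jac(\CC)$, while your own definition (and the paper's phrase ``$(n,n)$-isogeny to a product of two elliptic curves'') wants $\Jac(\CC)\to E_1\times E_2$; since the kernel is killed by $n$, factor $[n]$ through the addition map to get the isogeny in the other direction, and a one-line computation shows its kernel is again $(\Z/n\Z)^2$. (ii) The isotropy of the kernel for the Weil pairing appears in your definition but is never verified in the construction; it is standard for kernels of isogenies pulling a principal polarization back to a product polarization, but if you keep it in the definition you should either say a word about it or drop it, as the paper's looser formulation does. Finally, in characteristic $p$ dividing $n$ the kernel must be read as a group scheme rather than through its $\bar k$-points, though in the paper's setting this caveat is harmless.
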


The endomorphism rings of Abelian surfaces can be determined by the Albert's classification and results in \cite{Oort}. We summarize in the following:   

\begin{prop}  
The endomorphism ring $\End_{\overline \Q}^0  \, (\Jac \CC )$ of an abelian surface is either 
$\Q$, a real quadratic field, a CM-field of degree 4, a non-split quaternion algebra over $\Q$, $F_1 \oplus F_2$,  where each $F_i$ is either $\Q$ or an imaginary quadratic field, the Mumford-Tate group $F$,  where $F$ is either $\Q$ or an imaginary quadratic field.   
\end{prop}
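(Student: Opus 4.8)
The plan is to combine the Poincar\'e complete reducibility theorem with the Albert classification of endomorphism algebras of simple abelian varieties, specialised to dimension $2$, together with the classical description of endomorphism algebras of elliptic curves in characteristic zero.

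First I would invoke Poincar\'e's theorem to write $\Jac \CC$, up to isogeny over $\overline{\Q}$, as a product $A_1^{m_1}\times\cdots\times A_r^{m_r}$ of powers of pairwise non-isogenous simple abelian varieties $A_i$; then
\[
\End_{\overline{\Q}}^0(\Jac\CC)\;\cong\;\prod_{i=1}^{r} M_{m_i}(D_i),\qquad D_i:=\End_{\overline{\Q}}^0(A_i),
\]
where each $D_i$ is a finite-dimensional division algebra over $\Q$. Because $\dim\Jac\CC=2$ and $\sum_i m_i\dim A_i=2$, there are exactly three possibilities for the decomposition type: (a) $\Jac\CC$ is simple ($r=1$, $m_1=1$, $\dim A_1=2$); (b) $\Jac\CC$ is isogenous to $E_1\times E_2$ with $E_1,E_2$ non-isogenous elliptic curves ($r=2$, $m_1=m_2=1$); (c) $\Jac\CC$ is isogenous to $E^{2}$ for a single elliptic curve $E$ ($r=1$, $m_1=2$, $\dim A_1=1$).

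For cases (b) and (c) I would use that the endomorphism algebra of an elliptic curve over a field of characteristic zero is either $\Q$ or an imaginary quadratic field (there are no supersingular curves over $\overline{\Q}$, so the quaternionic case does not occur). This yields $\End_{\overline{\Q}}^0(\Jac\CC)\cong F_1\oplus F_2$ with each $F_i\in\{\Q,\text{imaginary quadratic field}\}$ in case (b), and $\End_{\overline{\Q}}^0(\Jac\CC)\cong M_2(F)$ with $F\in\{\Q,\text{imaginary quadratic field}\}$ in case (c), which are the last two entries of the list. For case (a) the division algebra $D=\End_{\overline{\Q}}^0(\Jac\CC)$ is equipped with the positive Rosati involution attached to a polarisation of $\Jac\CC$, so Albert's classification applies and $D$ is of one of the four Albert types. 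Imposing the numerical restrictions (relating $[D:\Q]$, the degree of the centre of $D$, and $\dim\Jac\CC=2$) I would check: Type I forces $D$ to be a totally real field of degree dividing $2$, hence $\Q$ or a real quadratic field; Type IV forces $D$ to be a CM field with $[D:\Q]\mid 4$ (the index bound rules out genuinely non-commutative division algebras over a CM centre in dimension $2$), hence a CM field of degree $4$ (or, degenerately, an imaginary quadratic field); Type II forces $D$ to be an indefinite, i.e.\ ``non-split'', quaternion algebra over $\Q$; and Type III (totally definite quaternion) must be excluded. These verifications are precisely those carried out in \cite{Oort} and in Mumford's treatment of the analytic theory.

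The main obstacle is this last point: showing that no simple abelian surface carries an action of a totally definite quaternion algebra over $\Q$, even though the crude numerical restriction $2[K_0:\Q]\mid g$ would permit it when $g=2$. This requires a genuinely separate argument — for instance, examining the $\ell$-adic or complex-analytic representation of $D$ and concluding that any abelian surface admitting such an action is isogenous to a product of elliptic curves, contradicting simplicity. The remainder of the proof (Poincar\'e reducibility, the elliptic-curve dichotomy, and the bookkeeping for Types I, II, and IV) is routine once this exclusion is in hand.
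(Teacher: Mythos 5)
The paper offers no proof of this proposition at all --- it simply cites Albert's classification and \cite{Oort} --- so your sketch is in effect an expansion of the intended argument, and the route you take (Poincar\'e complete reducibility, the three decomposition types for $\dim = 2$, the elliptic-curve dichotomy in characteristic zero for the non-simple cases, and Albert's types with the numerical restrictions for the simple case) is the standard and correct one. Two small points of bookkeeping. First, ``indefinite'' and ``non-split'' are not synonyms for quaternion algebras over $\Q$: a non-split (i.e.\ division) quaternion algebra may be definite or indefinite, and Type II is precisely the \emph{indefinite division} case, while the definite division algebras are the Type III case you must exclude. Second, your Type IV analysis correctly allows the degenerate possibility that $D$ is an imaginary quadratic field acting on a \emph{simple} abelian surface; such surfaces do exist (signature $(1,1)$ actions of an imaginary quadratic field give one-dimensional families whose generic member is simple), and this case is absent from the proposition's list as stated --- it is not subsumed by the $F_1\oplus F_2$ or matrix-algebra entries, which correspond to the non-simple cases --- so your more careful case analysis actually exposes an omission in the statement rather than an error in your argument.

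The one genuine gap is the one you flag yourself: excluding Type III (a totally definite quaternion algebra over $\Q$) for a simple abelian surface, since the divisibility restriction alone permits it when $g=2$. To close it, note that if such a $D$ acted on $A$ with $\dim A = 2$, then $H_1(A,\Q)$ would be a one-dimensional left $D$-vector space, so the rational representation of $D$ is the regular representation and the analytic representation on the two-dimensional tangent space is ``half'' of it; Shimura's analysis of the associated period domain (equivalently, the positivity of the Rosati involution against this representation) shows the relevant moduli space is zero-dimensional and every member of the family is isogenous to a product of elliptic curves, contradicting simplicity. This is a genuinely separate lemma, recorded in \cite{Oort} and in Mumford's table of exceptional cases in low dimension \cite{Mum}, and your proof is incomplete without either reproducing it or citing it explicitly; everything else in your write-up is routine and correct.
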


\begin{rem}  Genus 2 curves with extra involutions have endomorphism ring larger than $\Z$. 
Let $\CC$ be a   genus 2 curve defined over $\Q$. If $\Aut (\CC)$ is isomorphic to the Klein 4-group $V_4$, then  $\CC$ is isomorphic to a curve $\CC^\prime$ with equation 
\[ y^2 = f(x)= x^6 - a x^4+b x^2-1.\]
We denote  $u=a^3+b^3$ and $v=ab$. The    discriminant   
\[\Delta_f = - 2^6 \cdot \left(  27-18v+4u-u^2   \right)^2,\]  
is not a complete square in $\Q$ for any values of $a, b \in \Q$.  In this case $\Gal_{\Q} (f) $ has order 24.  There is a twist of this curve, namely $y^2=f(x)=x^6+a^\prime x^4+ b^\prime x^2 +1$, in which case $\Delta_f$ is a complete square in $\Q$ and $\Gal_{\Q} (f) $ has order 48. In both cases, from \cref{zarhin-thm} we have that   $\End_{\overline \Q} (\Jac \CC^\prime ) \neq  \Z$.
\end{rem}


Next, we turn our attention to determining the endomorphism ring of abelian surfaces. Let us first recall a few facts on characteristic polynomials of Frobenius for abelian surfaces. The Weil $q$-polynomial arising in genus 2 have the form 
\begin{equation}\label{weil-g-2}
 f (T ) = T^4 - aT^3 + (b+2q) T^2 - aq T + q^2,
\end{equation}
for $a, b \in \Z$ satisfying the inequalities 
\[ 2 |a| \sqrt{q} - 4q \leq b \leq \frac 1 4 a^2 \leq 4q .\]
We follow the terminology from  \cite{bhls}.  Let $\CC$ be a curve of genus 2 over $\F_q$ and $\J=\Jac \CC$.  Let $f$ be the Weil polynomial of $J$ in \cref{weil-g-2}. We have that $\# \CC (\F_q)=q+1-a$, $\#J (\F_q)=f(1)$ and it lies in the genus-2 Hasse interval 
\[ \H_q^{(2)} = \left[  (\sqrt{q}-1)^4, (\sqrt{q}+1)^4    \right] \,. \]
In \cite{bhls} are constructed decomposable $(3, 3)$-Jacobians with a given number of rational points by glueing two elliptic curves together.

Next we briefly summarize some of the results obtained in \cite{lombardo} for $\End_K (\AA)$ in terms of the characteristic polynomial of the Frobenius. We let $K$ be a number field and $M_K$ be the set of norms of $K$.   Let $\AA$ be an abelian surface defined over $K$  and $f_v$ the characteristic Frobenius for every norm $v \in M_K$. 


\begin{lem}
Let $v$ be a place of characteristic $p$ such that $\AA$ has good reduction.  Then $\AA_v$ is ordinary if and only if the characteristic polynomial of the Frobenius 
\[ f_v (x) = x^4 + a x^3 + b x^2 + ap x + p^2, \]
satisfies $b \not\equiv 0 \mod p$.   
\end{lem}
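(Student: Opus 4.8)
The statement to prove is the classical criterion that an abelian surface $\AA_v$ with good reduction at a place $v$ of residue characteristic $p$ is ordinary if and only if the middle coefficient $b$ of its Frobenius characteristic polynomial $f_v(x) = x^4 + ax^3 + bx^2 + apx + p^2$ is prime to $p$. The plan is to reduce this to the classical fact that an abelian variety over $\Fq$ (here $q = p^{\deg v}$, or we may first base-change to a finite extension so that the residue field is $\F_p$ — though the argument works verbatim over any $\Fq$) is ordinary precisely when its Frobenius endomorphism acts invertibly on the formal group, equivalently when exactly half of the roots of $f_v$ in $\overline{\Q}_p$ are $p$-adic units. Recall that $\AA_v$ is \emph{ordinary} means its $p$-rank equals $\dim \AA_v = 2$, i.e.\ $\AA_v[p](\overline{\F}_p) \cong (\Z/p\Z)^2$; by the Newton-polygon description of the $p$-divisible group, this is equivalent to the Newton polygon of $f_v$ (with respect to the valuation $v_p$ normalised so $v_p(q) = \deg v$, or $v_p(p)=1$ after base change) having slopes only $0$ and $1$, each with multiplicity $2$.

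First I would reduce to the case $q = p$ by replacing $K$ with the completion $K_v$ and then by an unramified base extension, noting that ordinariness and the coefficient condition $b \not\equiv 0 \pmod p$ are both stable under such base change (the Frobenius polynomial gets replaced by the polynomial whose roots are the $\deg v$-th powers, but the slope structure of the Newton polygon, hence ordinariness, is preserved, and likewise one checks the $b$-condition transforms correctly). Then I would write $f_v(x) = \prod_{i=1}^4 (x - \pi_i)$ with $\pi_1 \pi_2 = \pi_3 \pi_4 = p$ suitably paired (using the functional equation $x^4 f_v(p/x) = p^2 f_v(x)$, which forces the roots to come in pairs $\pi, p/\pi$). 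Ordinary is equivalent to: exactly two of the $\pi_i$ are $p$-adic units and the other two have valuation $1$. Now I would expand $b = \sum_{i<j} \pi_i \pi_j$ and analyse its valuation: if $\AA_v$ is ordinary, say $v_p(\pi_1) = v_p(\pi_2) = 0$ and $v_p(\pi_3) = v_p(\pi_4) = 1$, then the term $\pi_1 \pi_2$ is a unit while every other term $\pi_i\pi_j$ has valuation $\geq 1$, so $v_p(b) = 0$, i.e.\ $b \not\equiv 0 \pmod p$. Conversely, if $\AA_v$ is not ordinary, the Newton polygon forces all slopes to be $\geq 1/2$ in the supersingular case (all $v_p(\pi_i) = 1/2$) or to have a slope-$1/2$ segment or a $0,1$ pattern with multiplicities $(1,2,1)$ etc.; in every non-ordinary configuration one checks $v_p(\pi_i \pi_j) \geq 1$ for \emph{all} pairs, whence $v_p(b) \geq 1$ and $b \equiv 0 \pmod p$. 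The key combinatorial check is: the only way to get a pair of roots with valuations summing to $0$ is to have two unit roots, and that is exactly the ordinary slope pattern.

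The main obstacle — really the only subtlety — is the careful enumeration of the possible Newton polygons of a Weil $p$-polynomial of degree $4$ and the verification that in each non-ordinary case \emph{every} product $\pi_i\pi_j$ has positive valuation, together with handling the base-change step cleanly so that the statement over $\Fq$ with $q = p^{\deg v}$ follows from the statement over $\F_p$. One must also invoke (from the theory of abelian varieties over finite fields, which we may cite as standard) the equivalence between the $p$-rank definition of ordinariness and the Newton-polygon/slope description; this is where the bulk of the ``real content'' sits, but it is entirely classical and can be quoted. Everything else is an elementary valuation computation on the symmetric functions of the $\pi_i$, using only $\pi_1\pi_2\pi_3\pi_4 = p^2$ and $v_p(\pi_i) \in \{0, 1/2, 1\}$ with the pairing $\pi_i \leftrightarrow p/\pi_i$.
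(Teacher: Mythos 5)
The paper never proves this lemma: it is imported from \cite{lombardo} with only a citation (``we briefly summarize some of the results obtained in \cite{lombardo}\ldots''), so there is no internal argument to compare yours against. Your Newton-polygon proof is the standard one for this classical criterion, and its skeleton is right: ordinarity of the reduction $\mathcal{A}_v$ is equivalent to the Frobenius eigenvalues having slope multiset $\{0,0,1,1\}$, and the $p$-adic valuation of $b=e_2(\pi_1,\dots,\pi_4)$ detects exactly whether two of the eigenvalues are units.

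One step is wrong as stated, though easily repaired. In the non-ordinary $p$-rank-one case the slope multiset is $\{0,\tfrac12,\tfrac12,1\}$, and there the product of the unit root with a slope-$\tfrac12$ root has valuation $\tfrac12$; so your claim that in \emph{every} non-ordinary configuration \emph{every} product $\pi_i\pi_j$ has valuation at least $1$ is false. What is true, and all you need, is that every such product has strictly positive valuation, hence $v_p(b)>0$; since $f_v\in\Z[x]$, so $b\in\Z$, positivity already forces $b\equiv 0 \pmod p$. With that one-line fix the converse direction goes through. Two further simplifications: the reduction to residue field $\F_p$ is unnecessary, since the identical computation works over $\F_q$ (with $\pi\cdot q/\pi$ pairing and the same criterion $p\nmid b$); and you can avoid the root bookkeeping altogether by reading slopes off the Newton polygon of $f_v$ itself, formed from the points $(0,0)$, $(1,v_p(a))$, $(2,v_p(b))$, $(3,v_p(a)+1)$, $(4,2)$: the ordinary slope pattern $0,0,1,1$ occurs precisely when the polygon has a vertex at $(2,0)$, which happens if and only if $v_p(b)=0$. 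The genuinely nontrivial input in either phrasing is the equivalence of the $p$-rank definition of ordinarity with the slope-$0$ multiplicity of the Frobenius polynomial, which, as you say, is classical and may be quoted.
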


Then from   \cite{lombardo}*{Lemma~4.3}   we have  the following.

\begin{lem}
Let $\AA$ be an absolutely simple abelian surface. The endomorphism algebra $\End_{\bar K}^0 (\AA)$ is non-commutative (thus a division quaternion algebra) if and only if for every $v \in M_K$, the polynomial $f_v (x^{12})$ is a square in $\Z[x]$. 
\end{lem}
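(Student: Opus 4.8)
The plan is to reduce the statement to the Albert classification of endomorphism algebras of abelian surfaces, combined with the observation that the quaternionic case is detected by the behaviour of the Frobenius eigenvalues at every place of good reduction. First I would recall that, since $\AA$ is absolutely simple, $D := \End_{\bar K}^0(\AA)$ is a division algebra over $\Q$, and by Albert's classification of endomorphism algebras of abelian surfaces (as invoked in the discussion following Zarhin's theorem and in \cite{Oort}) the only non-commutative possibility is that $D$ is an indefinite (equivalently, non-split over $\R$ but this is excluded by positivity of the Rosati involution, so in fact a \emph{definite} — "non-split") quaternion algebra over $\Q$. Thus the dichotomy is: either $D$ is a field (of degree $1$, $2$ or $4$ over $\Q$), or $D$ is a quaternion division algebra over $\Q$. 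The task is therefore to show that the quaternionic case is equivalent to the stated divisibility-by-a-square condition on $f_v(x^{12})$.

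Next I would bring in the Tate/Honda-Tate description of $\End^0$ of the reduction $\AA_v$ at a place $v$ of good reduction: $\End^0(\AA_{v,\bar{\F_{q_v}}})$ is controlled by the Weil polynomial $f_v$, and more precisely by the splitting field of $f_v$ and the local invariants at the places above $p = \Char(v)$. The key elementary input is that, over $\bar{\F_{q_v}}$, after passing to a suitable power of Frobenius, $\AA_v$ becomes either a product of elliptic curves or a power of a single elliptic curve, and the occurrence of extra (quaternionic) endomorphisms over $\bar{K}$ forces, for \emph{every} good $v$, that the twelfth power of Frobenius acts as a scalar on each isotypic factor — equivalently that the reciprocal roots of $f_v$, raised to the twelfth power, collapse to at most the eigenvalues of a CM elliptic curve with two equal pairs. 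Writing this out, $f_v(x^{12})$ being a perfect square in $\Z[x]$ is precisely the statement that the multiset of twelfth powers of the four Frobenius roots consists of two pairs of equal values; the exponent $12$ is the lcm of the orders of the torsion in the possible CM root-of-unity twists that can appear for elliptic curves over finite fields (orders dividing $1,2,3,4,6$), so it is uniform over all $v$. Conversely, if $f_v(x^{12})$ is a square for every $v$, then by Chebotarev / the local-global principle for the Brauer class determining $D$ (the algebra $D$ is pinned down by its local invariants, and these are read off from the $f_v$), the global algebra $D$ must be non-commutative, i.e. the quaternion case. I would cite \cite{lombardo}*{Lemma~4.3} for the precise form of this equivalence, as already flagged in the text.

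The main obstacle I expect is the careful bookkeeping of the local invariants: one must check that the condition "$f_v(x^{12})$ is a square for all $v$" genuinely captures the splitting behaviour of $D$ at \emph{all} rational primes (including the archimedean place and the primes of bad reduction, which are handled by a density argument rather than directly), and that no smaller exponent than $12$ works while no larger one is needed — this rests on the classification of characteristic polynomials of Frobenius for elliptic curves over $\F_q$ and the fact that an absolutely simple quaternionic abelian surface becomes, up to isogeny over $\bar{\F_q}$, the square of a supersingular or CM elliptic curve after bounded base change. Once the exponent $12$ is justified uniformly, the equivalence follows by matching the two descriptions of $D$: the global one from Albert's classification and the local-at-each-$v$ one from Honda-Tate, so the remaining work is organisational rather than conceptual.
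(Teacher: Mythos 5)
The paper does not actually prove this lemma: it is quoted directly from \cite{lombardo}*{Lemma~4.3}, so there is no in-text argument to measure yours against, and your proposal has to stand on its own. Its architecture is the right one for Lombardo's proof (Albert's classification to reduce the non-commutative case to a quaternion division algebra; reduction at each good place plus Tate's theorem to extract a local condition on Frobenius; Chebotarev and Faltings' semisimplicity for the converse), but it contains two genuine errors. First, for an absolutely simple abelian surface in characteristic zero the quaternion algebra must be \emph{indefinite} (Albert Type~II); totally definite quaternion algebras (Type~III) are ruled out by the Albert--Shimura restrictions, so your parenthetical conclusion that $D$ is ``definite'' is backwards --- you have conflated ``non-split'' in the sense of being a division algebra with ``non-split over $\R$''. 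Second, in the quaternionic case $V_\ell(\AA)$ is free of rank one over $D\otimes\Q_\ell$, so the relevant power of Frobenius acts through a two-dimensional representation whose characteristic polynomial occurs with multiplicity two; it does \emph{not} act as a scalar on isotypic factors --- that degenerate behaviour is the geometrically split case $E^2$, which absolute simplicity excludes.

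The substantive gap is the converse, which is the hard direction: from ``the characteristic polynomial of $\mathrm{Frob}_v^{12}$ is a square for every good $v$'' you must produce, over a \emph{fixed} bounded extension of $K$, a splitting of the semisimplified $\ell$-adic representation into two copies of a two-dimensional piece, and only then does Faltings convert this into non-commuting endomorphisms; invoking ``Chebotarev / the local-global principle for the Brauer class'' does not do this work, and it is exactly here that the uniform exponent $12$ has to be earned rather than asserted. Finally, note that the condition as printed, ``$f_v(x^{12})$ is a square,'' must be read as ``$f_v^{(12)}(x)=\prod_i(x-\alpha_i^{12})$ is a square'': taken literally, $f_v(x^{12})=\prod_i(x^{12}-\alpha_i)$ is a square if and only if $f_v$ itself is, since distinct $\alpha_i$ give coprime factors $x^{12}-\alpha_i$. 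Your gloss (the twelfth powers of the roots form two equal pairs) uses the correct reading, but your claim that this is what ``writing out'' the printed condition yields is false as stated.
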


The following gives a condition for geometrically reducible abelian surfaces. 


\begin{prop}[\cite{lombardo}]
i) If $\AA/K$ is geometrically reducible then for all $v\in M_k$ for which $\AA$ has good reduction the polynomial $f_v (x^{12})$ is reducible in $\Z[x]$. 

ii) If $\CC$ is a  smooth, irreducible genus 2 curve with affine equation $y^2=f(x)$  such that $f(x) \in K[x]$ is an irreducible polynomial of degree 5   then $\Jac \CC$ is absolutely irreducible. 
\end{prop}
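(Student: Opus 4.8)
The plan is to prove the two statements separately, both via the interplay between the factorization type of the Frobenius characteristic polynomials $f_v$ and the structure of $\End^0_{\bar K}(\AA)$, as set up in the lemmas quoted from \cite{lombardo}. For part (i), I would argue by contraposition together with a reduction argument. Suppose $\AA/K$ is geometrically reducible, so that over $\bar K$ there is an isogeny $\AA_{\bar K} \sim E_1 \times E_2$ with $E_1, E_2$ elliptic curves (or, in the non-simple but indecomposable-up-to-isogeny cases, $\AA_{\bar K}$ contains an elliptic subcurve). Enlarging $K$ to a finite extension $L$ over which this decomposition and all the relevant endomorphisms are defined does not change the conclusion we want, since good reduction is preserved under restriction and for a place $v$ of $K$ with good reduction, any place $w \mid v$ of $L$ also gives good reduction, with $f_w(x) = f_v(x^{[L_w:K_v]})$ up to the usual substitution; a reducibility statement for $f_w(x^{12})$ will then feed back to one for $f_v(x^{12})$ because composition of polynomials with $x \mapsto x^m$ only makes factorization easier to detect. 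So I may assume the splitting is defined over $K$ itself. Then for every place $v$ of good reduction the reduction $\AA_v$ is isogenous to $E_{1,v} \times E_{2,v}$, hence $f_v(x) = f_{E_{1,v}}(x) \cdot f_{E_{2,v}}(x)$ is already a product of two quadratics, and a fortiori $f_v(x^{12})$ is reducible in $\Z[x]$. This is essentially the converse direction of the lemma on geometrically reducible surfaces, and the only real content is the base-change bookkeeping to pass from $\bar K$ to $K$; I expect this to be the main (though still modest) obstacle, since one must be careful that "geometrically reducible" is witnessed by honest subvarieties defined over a controlled finite extension, and that the places of bad reduction of $\AA$ form the same finite set before and after — which follows from the Néron–Ogg–Shafarevich criterion.

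For part (ii), the cleanest route is to use the hyperelliptic structure directly rather than Frobenius polynomials. Let $\CC : y^2 = f(x)$ with $f \in K[x]$ irreducible of degree $5$, and set $L = K[x]/(f(x))$, a degree-$5$ number field. Suppose for contradiction that $\Jac\,\CC$ is \emph{not} absolutely simple; then over $\bar K$ it is isogenous to a product of two elliptic curves, or more precisely it admits a non-trivial abelian subvariety, hence (being an abelian surface) an elliptic curve $E \hookrightarrow \Jac\,\CC_{\bar K}$. This in turn forces $\CC_{\bar K}$ to admit a non-constant map to $E$, i.e. $\CC$ has gonality-$2$-type extra structure giving a degree-$m$ map $\CC \to E$ for some $m \geq 2$, and by standard reduction (composing with the hyperelliptic involution and using that $\Jac\,\CC$ is $2$-dimensional) one can take $m = 2$; thus $\CC_{\bar K}$ is bielliptic, meaning there is an involution $\iota$ on $\CC_{\bar K}$, distinct from the hyperelliptic involution $\omega$, with quotient an elliptic curve. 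The product $\iota\omega$ is then another involution, and the three involutions $\{1,\omega,\iota,\iota\omega\}$ generate a Klein four-group $V_4 \le \Aut(\CC_{\bar K})$. Now I invoke the classification of Weierstrass points and the reduced automorphism group from Section~4: an order-two automorphism $\iota \ne \omega$ acts on $\P^1 = \CC/\langle\omega\rangle$ as an involution of the projective line, hence after a coordinate change on $\P^1$ as $x \mapsto -x$, and it permutes the six branch points (the roots of $f$ together with $\infty$) in three orbits of size two fixed by this involution — in particular it must either fix or pair up the roots of $f$ and the point at infinity compatibly with an order-two Möbius transformation. A degree-$5$ set of finite branch points plus the point at infinity: the involution $x \mapsto -x$ fixes $0$ and $\infty$ and pairs the remaining four, so the six branch points split as $\{0, \infty\} \cup \{\pm a, \pm b\}$; forcing the point at infinity to be a branch point means $\deg f$ is odd, consistent with $5$, and then the five finite branch points are $\{0, a, -a, b, -b\}$, so $f(x) = x(x^2-a^2)(x^2-b^2)$ up to scaling — but then $f$ is reducible over $\bar K$, and since $f \in K[x]$ this reducibility pattern is Galois-stable, contradicting the irreducibility of $f$ over $K$ (concretely, $f$ would have a root at $x=0$, so $x \mid f(x)$). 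The one alternative configuration, where the two fixed points of the Möbius involution are both among the finite roots, gives $f(x) = (x^2 - c^2)\,g(x^2)$ type shapes with $g$ quadratic, again reducible; either way irreducibility of $f$ is violated. Hence $\Jac\,\CC$ is absolutely simple.

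The step I expect to be genuinely delicate is the descent from "$\Jac\,\CC_{\bar K}$ has an elliptic subvariety" to "$\CC_{\bar K}$ is bielliptic with the extra involution visible in the branch locus," because one must ensure the elliptic quotient map can be normalized to a degree-$2$ map (a priori the induced map could have larger degree) — here I would use that any elliptic subvariety $E$ of an abelian surface $\Jac\,\CC$ yields, via the Abel–Jacobi embedding $\CC \hookrightarrow \Jac\,\CC$ composed with the quotient $\Jac\,\CC \to \Jac\,\CC/E \sim E'$, a non-constant morphism $\CC \to E'$ whose degree $d$ satisfies $d \cdot (\text{something}) = $ a bounded quantity determined by $2 = \dim\Jac\,\CC$, forcing $d \le 2$; and $d=1$ is impossible since $\CC$ has genus $2$. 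Once biellipticity is in hand, the branch-locus analysis is the same kind of elementary Möbius-transformation computation already used implicitly in Section~4 for the reduced automorphism group $D_m$ of hyperelliptic curves, so it should go through cleanly. I would also remark that part (ii) is consistent with part (i): if $f$ is irreducible of degree $5$ then for every good place $v$ the reduction is again of the form $y^2 = \bar f(x)$ with $\bar f$ irreducible degree $5$ for all but finitely many $v$ (those where $f$ stays irreducible mod $v$, infinitely many by Chebotarev unless $\Gal(f)$ is trivial, which it is not since $\deg f = 5$), and for such $v$ the polynomial $f_v(x^{12})$ is irreducible-enough to be non-square, matching the quaternion/absolute-simplicity dichotomy — but this is a sanity check, not part of the proof.
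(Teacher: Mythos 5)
The paper itself gives no proof of this proposition---it is quoted directly from \cite{lombardo}---so there is no in-paper argument to compare against; judged on its own terms, your proposal has genuine gaps in both parts. For (i), two problems. First, the base-change relation you rely on is wrong: if $w\mid v$ has residue degree $d$, then the roots of $f_w$ are the $d$-th powers of the roots of $f_v$, i.e.\ $f_w(x)=\prod_i(x-\alpha_i^d)$, which is \emph{not} $f_v(x^d)=\prod_i(x^d-\alpha_i)$. Second, and more fundamentally, your argument never explains where the exponent $12$ comes from: you pass to an arbitrary finite extension $L$ over which the splitting is defined, with no control whatsoever on $[L:K]$, and then assert that reducibility over $L$ ``feeds back'' to reducibility of $f_v(x^{12})$. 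With an uncontrolled degree you could at best say something about Frobenius raised to some power depending on the surface, not the fixed power $12$. The essential input---that the geometric isogeny decomposition (equivalently, all geometric endomorphisms) of an abelian surface is defined over an extension of uniformly bounded degree, which is precisely why $12$ appears---is the nontrivial content of Lombardo's statement and is missing from your proof.

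For (ii), the fatal step is the reduction to $m=2$. It is false that a genus-$2$ curve with geometrically split Jacobian must be bielliptic: splittings occur in types $(n,n)$ for every $n\ge 2$ (the Humbert surfaces $H_{n^2}$), and for $n\ge 3$ the curve generically has no extra involution, hence no $V_4$ inside $\Aut(\mathcal{C}_{\bar K})$ and no symmetric branch locus. Your justification ``$d\le 2$ because $\dim \Jac \mathcal{C}=2$'' has no basis; the minimal degree of a morphism to an elliptic curve equals the $n$ of the splitting and is unbounded. Since your entire contradiction rests on producing the extra involution and normalizing it to $x\mapsto -x$, the proof collapses. (Even in the genuinely bielliptic case, that involution and the coordinate change exist only over $\bar K$, so concluding $x\mid f(x)$ in $K[x]$ would require a descent argument you do not give; reducibility over $\bar K$ is vacuous.) A workable route is instead through the $2$-torsion: irreducibility of the degree-$5$ polynomial forces $5$ to divide the order of the image of $\Gal(\bar K/K)$ acting on $\Jac \mathcal{C}[2]\cong\mathbb{F}_2^4$, an element of order $5$ acts irreducibly on that space (the fifth cyclotomic polynomial is irreducible mod $2$), while a geometric elliptic subvariety $E$ would furnish a two-dimensional subspace $E[2]$ stable under an open subgroup whose index is bounded by a constant prime to $5$---again the bounded field of definition of the splitting---a contradiction. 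Note that both parts hinge on that same bounded-degree ingredient, which is exactly what your proposal leaves out.
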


\subsection{Decomposition of superelliptic Jacobians}

Let $\X$ be a superelliptic curve and $\s \in \Aut (\X_g)$ such that its projection $\bs \in \bAut (\X)$ has order  $m\geq 2$ and equation $y^n=f(x)$.  We can choose a coordinate in $\P^1$ such that $\bs (x) = x^m$. Since $\s$ permutes the Weierstrass points of $\X$ and it has two fixed points then the equation of the curve will be $y^n= f(x^m)$ or $y^n=x f(x^m)$; see \cite{b-sh-sh} for details of this part. 

Assume that $\X$ has equation 
\begin{equation}\label{first_eq_orig}
 y^n = f(x^m) := x^{\d m } + a_1 x^{(\d-1) m}  \dots + a_{\d-1} x^m + 1 .
\end{equation} 
We assume that $\bs$ lifts to $G$ to an element of order $m$.   Then, $\s( x, y) \to (\e_m x, y)$.  Denote by $\t: (x, y) \to (x, \e_n y)$ its superelliptic automorphism. Since $\t$ is central in $G$ then $\t\s=\s\t$.    We will denote by $\X_1$ and $\X_2$ the quotient curves $\X/\<\s\>$ and $\X/\< \t \s\>$ respectively.  The next theorem determines the equations of $\X_1$ and $\X_2$.  We denote by $K$ the function field of $\X$ and by  $F$ and $L$ the function fields of $\X_1$ and $\X_2$ respectively. 

\begin{thm}\label{thm_1}
Let $K$ be a genus $g\geq 2$  level $n$ superelliptic field and $F$ a  degree $m$ subfield fixed by $\s: (s, y) \to (\e_m s, y)$. 

i) Then,  $K=k(x, y)$ such that 
\begin{equation}\label{first_eq}
 y^n = f(x^m) := x^{\d m } + a_1 x^{(\d-1) m}  \dots + a_{\d-1} x^m + 1 .
\end{equation} 
for    $\D (f, x)  \neq 0$.

ii) $F = k(U, V) $ where $U=x^m$,     $V=y$  and 
\begin{equation}\label{ell_curve}
 V^n= f(U).
 \end{equation}

iii) There is another subfield $L= k(u, v) $ where  $u=x^m$,   $v=x^i y$,  and 
\begin{equation}\label{gen_2}
 v^n = u \cdot f(u),
 \end{equation}
for $m=\l n$ and $i=\l (n-1)$. 
\end{thm}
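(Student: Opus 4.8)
The plan is to exhibit the three function fields explicitly as subfields of $K = k(x,y)$ with $y^n = f(x^m)$, and to verify in each case that the generators satisfy the claimed equation and that the degree of the extension $K/\text{(subfield)}$ is $n$ (for $F$) or $mn/\gcd$-type bookkeeping (for the others), so that the subfield is exactly the fixed field of the prescribed automorphism. Throughout I would use that $\tau\colon (x,y)\mapsto (x,\epsilon_n y)$ is central of order $n$, that $\bar\sigma$ lifts to $\sigma\colon (x,y)\mapsto(\epsilon_m x,y)$ of order $m$, and that $\sigma\tau=\tau\sigma$, so that $\langle\sigma\rangle$, $\langle\tau\sigma\rangle$ are cyclic subgroups of $G$ whose fixed fields are what we must identify.

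For part (i): since $\bar\sigma$ has order $m$ and fixes $0,\infty\in\P^1_x$, after a coordinate change $\bar\sigma(x)=\epsilon_m x$, so $\bar\sigma$-invariance of the branch locus forces $f$ to be a polynomial in $x^m$ up to the factor $x$; the normal-form argument (put the defining polynomial in the normalized shape, use that $\sigma$ permutes Weierstrass points as in \cref{lem-1}) gives $y^n = f(x^m)$ with $f$ of degree $\delta$ and $\Delta(f,x)\neq 0$ because $\X$ is smooth. This is essentially a citation of \cite{b-sh-sh} together with the discussion preceding the theorem; I would only spell out why the leading and constant coefficients may be normalized to $1$ via the dihedral coordinate changes $x\mapsto \epsilon x$, $x\mapsto 1/x$ described in \cref{sect-8}.

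For part (ii): set $U=x^m$, $V=y$. Then $V^n = y^n = f(x^m) = f(U)$, so $k(U,V)\subseteq K$ and $k(U,V)$ is the function field of the curve \cref{ell_curve}. Both $U$ and $V$ are fixed by $\sigma$, so $k(U,V)\subseteq K^{\langle\sigma\rangle}$. For the reverse inclusion I would compute degrees: $[k(x,y):k(U,V)] = [k(x,y):k(x^m,y)] = m$ because $x$ satisfies $T^m - U = 0$ over $k(U,V)$ (irreducible since $U$ is a uniformizer-type element, not an $\ell$-th power for $\ell\mid m$), and $|\langle\sigma\rangle| = m$; hence by Galois theory $k(U,V) = K^{\langle\sigma\rangle} = F$. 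The genus of $\X_1$ then follows from the Riemann–Hurwitz formula \cref{e1} applied to $K/F$, though the statement as given only asserts the equation.

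For part (iii): here is the delicate point and the main obstacle. One wants an element $v$ of the form $x^i y$ that is invariant under $\tau\sigma$; since $(\tau\sigma)(x^i y) = \epsilon_m^i\,\epsilon_n\, x^i y$, invariance needs $\epsilon_m^i\epsilon_n = 1$, i.e. $i/m \equiv -1/n \pmod{\Z}$. This is solvable exactly under the divisibility hypothesis recorded in the theorem — writing $m = \lambda n$, one takes $i = \lambda(n-1)$, so that $i/m = (n-1)/n \equiv -1/n$ and indeed $\epsilon_m^i\epsilon_n = \epsilon_m^{\lambda(n-1)}\epsilon_n = \epsilon_n^{n-1}\epsilon_n = 1$. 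Then with $u = x^m$ one computes
\[
v^n = x^{in} y^n = x^{\lambda(n-1)n} f(x^m) = (x^m)^{\lambda(n-1)} f(u) = u^{\lambda(n-1)} f(u).
\]
This is not yet literally $u\cdot f(u)$, so the remaining work — and I expect this to be the crux — is to see that $k(u,v)$ equals $k(u, v')$ for $v' $ a version with the normalized equation $v'^n = u f(u)$; one replaces $v$ by $v\cdot u^{-\lceil \lambda(n-1)/n\rceil}$ or argues that $u^{\lambda(n-1)}$ differs from $u$ by an $n$-th power in $k(u)$ up to the residue $\lambda(n-1) \bmod n = n-1 \equiv -1$, hence $v^n = u^{-1}f(u)\cdot(\text{$n$-th power})$, and then invert. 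I would carry this out carefully, then confirm $[K : k(u,v)] = |\langle\tau\sigma\rangle|$: the order of $\tau\sigma$ is $\mathrm{lcm}$-controlled and under $m=\lambda n$ one checks it is $m$, while $[k(x,y):k(x^m, x^iy)]$ is likewise $m$ by a norm/conorm computation, giving $L = K^{\langle\tau\sigma\rangle}$ by Galois theory. The genus of $\X_2$ again comes from \cref{e1}.

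\textbf{Main obstacle.} The genuinely fiddly step is part (iii): pinning down the exact exponent $i$ so that $x^iy$ is $\tau\sigma$-invariant, and then massaging $v^n = u^{\lambda(n-1)}f(u)$ into the normalized form $v^n = u\,f(u)$ by absorbing $n$-th powers of $u$ — i.e. tracking everything modulo $n$ and modulo $n$-th powers in $k(u)$. Everything else (parts (i)–(ii)) is a routine combination of the normal-form classification of \cite{b-sh-sh}, a degree count, and Galois correspondence.
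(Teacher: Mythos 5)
Your parts (i) and (ii) are fine and follow the natural route (normal form for the $\sigma$-action, then identify $k(x^m,y)$ with the fixed field of $\langle\sigma\rangle$ by a degree count). The genuine problem is in part (iii), and it is not just the bookkeeping you flagged. First, a computational slip: with $u=x^{m}$, $m=\lambda n$ and $i=\lambda(n-1)$ one has $x^{in}=x^{m(n-1)}=u^{\,n-1}$, so $v^{n}=u^{\,n-1}f(u)$, not $u^{\lambda(n-1)}f(u)$. Second, and more seriously, the repair you propose -- absorbing $n$-th powers of $u$ so as to replace $u^{\,n-1}f(u)$ by $u\,f(u)$ -- cannot work for $n>2$. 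Since $k$ contains the $n$-th roots of unity, Kummer theory applies over $k(u)$: if some $w$ in the fixed field satisfied $w^{n}=u f(u)$, then $u f(u)$ and $u^{\,n-1}f(u)$ would generate the same subgroup of $k(u)^{\ast}/(k(u)^{\ast})^{n}$; comparing valuations at $u=0$ and at a root of $f$ (recall $f$ is separable with $f(0)\neq 0$) forces $j\equiv 1$ and $j\equiv -1 \pmod n$, i.e.\ $n\mid 2$. So the two covers $v^{n}=u^{\,n-1}f(u)$ and $v^{n}=u f(u)$ are genuinely different for $n>2$ (in general they even have different genera), ``inverting'' only produces $u\,f(u)^{-1}$, and your plan stalls exactly at the step you called the crux.

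The correct way out is to choose the right invariant monomial rather than to normalize the equation afterwards: the element $v=x^{\lambda}y$ satisfies $v^{n}=x^{\lambda n}f(x^{m})=u\,f(u)$ on the nose, i.e.\ \cref{gen_2} holds with $i=\lambda$, and $v$ is invariant under $\tau^{-1}\sigma$ (equivalently under $\tau\sigma$ once the roots of unity are normalized so that $\tau$ acts by $\varepsilon_{n}=\varepsilon_{m}^{-\lambda}$). Since $T^{n}-u f(u)$ is irreducible over $k(u)$ and $\tau^{\pm 1}\sigma$ has order $\mathrm{lcm}(m,n)=m$, the same degree count as in your part (ii) gives $L=k(u,v)=K^{\langle\tau^{-1}\sigma\rangle}$ with equation \cref{gen_2}; with the exponent $i=\lambda(n-1)$ of the printed statement one instead gets the companion quotient with equation $v^{n}=u^{\,n-1}f(u)$. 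In short: the identification of the second subfield must be made by picking the correct exponent $i$ (equivalently the correct generator $\tau^{j}\sigma$ of the complementary cyclic subgroup), and no after-the-fact manipulation modulo $n$-th powers can substitute for this.
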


\xymatrixrowsep{7ex}
\xymatrixcolsep{5ex}
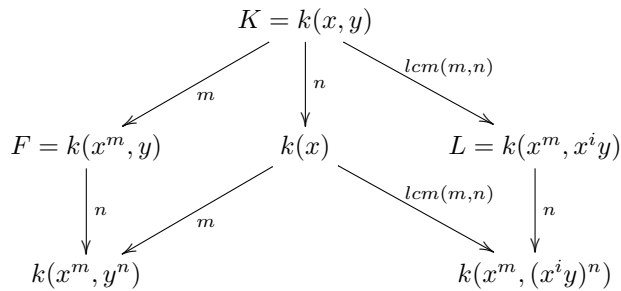
\begin{figure}[hp] 
\[
\xymatrix{ 
  & K=k(x, y) \ar@{->}[d]^{ n}  \ar@{->}[ld]^{  m}  \ar@{->}[rd]^{  lcm(m, n)} \\
F=k(x^m, y) \ar@{->}[d]^{n}         &        k(x)  \ar@{->}[ld]^{ m}  \ar@{->}[rd]^{lcm(m, n)}  & L=k(x^m, x^i y)   \ar@{->}[d]^{n}   \\
k(x^m, y^n)       &     &    k( x^m, (x^i y)^n)  }
\]
\caption{Lattice of subfields}
   \label{super-fig}
\end{figure}

\medskip

For the rest of this section we want to find necessary and sufficient conditions on $n$ and $m$ such that the Jacobian $\Jac  (\X)$ is isogenous to the product $\Jac  (\X_1) \times \Jac  (\X_2)$. First we focus on  hyperelliptic curves. 


\begin{thm} \label{lem_2}
Let $\X_g$ be a hyperelliptic curve. We denote its reduced automorphism group by $ \bAut (\X_g) \iso  C_m = \<\sigma \>$. Then $\X_g$ is isomorphic to a curve with equation
\[\X_g: Y^2=x^{\d  m} + a_1 x^{(\d-1)  m} + \dots + a_{\d-1} x^m + 1.\]
There exists subcovers $\pi_i : \X_g \to \X_i$, for $i=1, 2$ such that 
 \[ 
 \begin{split}
&  \X_1 : \quad Y^2= X^{\d} + a_1 X^{\d-1} + \dots + a_{\d-1} X + 1, \\
& \X_2 : \quad Y^2=X(  X^{\d} + a_1 X^{\d-1} + \dots + a_{\d-1} X + 1  ). \\
\end{split}
\] 
The  Jacobian of $\X$ is isogenous to the product
\[ \Jac  (\X) \iso  \Jac  (\X_1)   \times   \Jac  (\X_2) \]
%
if and only if the full automorphism group $\Aut (\X)$ is isomorphic to the Klein 4-group $V_4$. 
\end{thm}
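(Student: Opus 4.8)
The plan is to analyze the quotient maps $\pi_i:\X_g\to\X_i$ explicitly and apply the general decomposition machinery for Jacobians of curves with automorphisms (as in the work referenced by \cref{thm_1} and \cite{b-sh-sh}). First I would set up the two intermediate covers: from \cref{thm_1} applied to the hyperelliptic case $n=2$, with $\s:(x,y)\to(\e_m x,y)$, the fixed fields give $\X_1: Y^2=f(X)$ and $\X_2: Y^2=X\,f(X)$ where $f(X)=X^\d+a_1X^{\d-1}+\cdots+a_{\d-1}X+1$ and $X=x^m$. The covers $\X_g\to\X_1$ and $\X_g\to\X_2$ both have degree $m$, and one checks directly that these are the quotients by $\<\s\>$ and $\<\t\s\>$ respectively, where $\t$ is the hyperelliptic involution. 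The key first step is therefore to record the genera $g_1=\genus(\X_1)$, $g_2=\genus(\X_2)$ via Riemann--Hurwitz (or the formula $2g-2=nd-n-d-\gcd(n,d)$ specialized appropriately), and to verify that $g_1+g_2=g$ precisely when $m=2$: for $m\ge 3$ the sum $g_1+g_2$ falls short of $g$, so no such isogeny can exist on dimension grounds alone.

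Next I would prove the forward implication: if $\Jac(\X)\sim\Jac(\X_1)\times\Jac(\X_2)$ then $\dim\Jac(\X_1)+\dim\Jac(\X_2)=g$, forcing $g_1+g_2=g$, hence $m=2$. When $m=2$ the curve has the shape $Y^2=x^{2\d}+a_1x^{2\d-2}+\cdots+a_{\d-1}x^2+1$, which is exactly the normal form of a hyperelliptic curve admitting the extra involution $\s:(x,y)\to(-x,y)$ commuting with the hyperelliptic involution $\t$; thus $\<\s,\t\>\cong V_4\le\Aut(\X)$. The converse is the substantive direction: assuming $\Aut(\X)\cong V_4$, I must show the decomposition actually holds. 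Here the plan is to invoke the standard fact (Kani--Rosen type decomposition, or the trace-idempotent argument in the group algebra $\Q[V_4]$) that for a curve with $V_4$-action whose three non-trivial involutions have quotient curves $\X/\t\cong\P^1$, $\X/\s$, $\X/\t\s$, one has the isogeny
\[
\Jac(\X)\sim \Jac(\X/\t)\times\Jac(\X/\s)\times\Jac(\X/\t\s).
\]
Since $\X/\t\cong\P^1$ has trivial Jacobian, this collapses to $\Jac(\X)\sim\Jac(\X_1)\times\Jac(\X_2)$, which is the claim.

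The main obstacle, and the step requiring the most care, is the converse direction --- specifically justifying that $\Aut(\X)\cong V_4$ genuinely forces the equation into the form with $m=2$ and that no larger automorphism group sneaks in that would still give the product decomposition by a different mechanism. I would handle this by citing the classification of automorphism groups of hyperelliptic curves (the theorem at the end of \cref{sect-4}, or \cite{g_sh}): among the listed groups, $V_4$ as the \emph{full} automorphism group corresponds exactly to $\bAut(\X)\cong C_2$ with the non-trivial lift, i.e.\ $m=2$, and in that case the Kani--Rosen decomposition applies verbatim. Conversely, if $m\ge 3$ then $\bAut(\X)\supseteq C_m$ with $m\ge 3$, so $\Aut(\X)\not\cong V_4$ and simultaneously $g_1+g_2<g$, so the isogeny fails; and if $\bAut(\X)=C_2$ but the full group is strictly larger than $V_4$, then $\X$ has additional structure and one must check (again via the classification) that the decomposition into $\Jac(\X_1)\times\Jac(\X_2)$ with \emph{these particular} $\X_i$ is no longer forced. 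I expect the cleanest writeup pairs the dimension count (Riemann--Hurwitz) with a direct reference to \cref{thm_1} for the models and to the $V_4$-decomposition theorem for the Jacobian, so that the only genuinely new computation is verifying $g_1+g_2=g\iff m=2$.
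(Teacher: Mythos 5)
Your proposal is correct and follows essentially the same route as the paper's proof: form the quotients of $\X_g$ by $\sigma$ and $\sigma\tau$, compute the genera $g_1=\lfloor (\d-1)/2\rfloor$ and $g_2=\lfloor \d/2\rfloor$, observe that $g_1+g_2=g$ exactly when $m=2$ (which settles the forward direction by a dimension count), and obtain the isogeny in the converse from the $V_4$-action on $\X_g$. The only real difference is that you explicitly invoke the Kani--Rosen idempotent relation $\Jac(\X_g)\times \Jac\bigl(\X_g/\langle\sigma,\tau\rangle\bigr)^2 \sim \Jac(\X_g/\tau)\times\Jac(\X_1)\times\Jac(\X_2)$, in which the genus-zero factors are trivial, whereas the paper stops at the genus equality $g=g_1+g_2$ and asserts the isogeny with that decomposition step left implicit---so your converse is, if anything, spelled out more fully.
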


\noindent Next, we generalize the previous theorem.

\begin{thm}\label{thm:superelliptic}
 Let $\X_g$ be a level $n$ superelliptic curve and $C_m = \< \bar \s \> \embd \bAut (\X_g)$, where  $m \geq 2$ and  the equation of $\X_g$ is  $ y^n = f(x^m)$,
with $\deg (f) = d = \d\, m$,  $d > n$.  Then there exist degree $m$ coverings $\pi : \X_g \to \X_i$, $i=1, 2$  where 
\[  \X_1 : \quad y^n=f(x) \quad and  \quad \X_2 : \quad  y^n= x f(x).  \]
Then,  
\[ \Jac  (\X) \iso \Jac  (\X_1 )  \times \Jac  ( \X_2 ) \] 
if and only if 
\begin{equation}\label{eq_m}
  \d (n-1) (m-2) = 1 -   \left(\gcd(\d+1, n) + \gcd(\d, n) - \gcd(\d m, n)    \right).
\end{equation} 
\end{thm}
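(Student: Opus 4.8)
The strategy is to compare genera via the Riemann-Hurwitz formula for the two quotient maps $\pi_i:\X_g\to\X_i$, exactly as in the hyperelliptic special case of \cref{lem_2}. The point is that $\Jac(\X)\sim\Jac(\X_1)\times\Jac(\X_2)$ holds if and only if there is no "extra'' part of the Jacobian beyond the two pieces coming from $\X_1$ and $\X_2$; equivalently, if and only if
\[
g_{\X} \;=\; g_1 + g_2,
\]
where $g_i=\genus(\X_i)$. (One direction is automatic: the map $\Jac(\X)\to\Jac(\X_1)\times\Jac(\X_2)$ induced by $\pi_1^*$ and $\pi_2^*$ is always an isogeny onto its image, and surjectivity — hence isogeny — is equivalent to the dimension count $g=g_1+g_2$; this uses that $\pi_1^*$ and $\pi_2^*$ have finite kernels and that the images intersect in a finite group because $\langle\s\rangle$ and $\langle\t\s\rangle$ together generate a group whose fixed field is $k(x^m,y^n)$, of genus $0$.) So the whole statement reduces to extracting the identity \cref{eq_m} from the three genus formulas.

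First I would record, using \cref{th1} / the basic Riemann-Hurwitz count for superelliptic curves given just before \cref{super-basis}, the genus of $\X_g: y^n=f(x^m)$ with $\deg f(x^m)=\d m$. Here one must be careful about whether $\infty$ is a branch point of the superelliptic projection, i.e.\ about $\gcd(\d m,n)$; the formula
\[
2g_{\X}-2 \;=\; n(\d m) - n - \d m - \gcd(n,\d m)
\]
from the discussion after the divisor computations applies. Next, for $\X_1: y^n=f(x)$ with $\deg f=\d$ one gets
\[
2g_1-2 \;=\; n\d - n - \d - \gcd(n,\d),
\]
and for $\X_2: y^n = x f(x)$ with $\deg(xf(x))=\d+1$,
\[
2g_2-2 \;=\; n(\d+1) - n - (\d+1) - \gcd(n,\d+1).
\]
I would also need to justify that $\pi_i$ really are degree-$m$ maps with $\X_i$ having these equations; this is \cref{thm_1} (taking the subcover fixed by $\s$ and by $\t\s$, with the substitution $U=x^m$), so I may cite it. Then the equivalence $\Jac(\X)\sim\Jac(\X_1)\times\Jac(\X_2)\iff g_\X=g_1+g_2$ becomes
\[
\big(n\d m - n - \d m - \gcd(n,\d m)\big) \;=\;
\big(n\d - n - \d - \gcd(n,\d)\big) + \big(n(\d+1) - n - (\d+1) - \gcd(n,\d+1)\big),
\]
and subtracting and regrouping the linear terms in $\d$, $m$, $n$ yields precisely
\[
\d(n-1)(m-2) \;=\; 1 - \big(\gcd(\d+1,n)+\gcd(\d,n)-\gcd(\d m,n)\big),
\]
which is \cref{eq_m}. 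This last step is pure algebra: collect the $n\d m$, $n\d$, $n$, $\d m$, $\d$ coefficients on each side, observe the non-gcd terms cancel down to $\d(n-1)(m-2)$ up to the constant $1$, and move the three gcd's to the right.

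\textbf{Main obstacle.} The genuinely delicate point is the equivalence between the isogeny decomposition and the additivity of genera — that is, showing $\Jac(\X)$ has no Jacobian factor "orthogonal'' to both $\pi_1^*\Jac(\X_1)$ and $\pi_2^*\Jac(\X_2)$ beyond what the dimension count detects, and conversely that equality of dimensions forces the natural map to be an isogeny. For this I would invoke the standard fact (e.g.\ as used in \cref{lem_2} and in \cite{b-sh-sh}) that for a curve with two quotient maps $\pi_1,\pi_2$ whose fibre product / common quotient is $\P^1$, the map $(\pi_1^*,\pi_2^*):\Jac(\X_1)\times\Jac(\X_2)\to\Jac(\X)$ has finite kernel, so it is an isogeny precisely when $g_1+g_2=g_\X$; the key input is that $k(x^m,y^n)$ has genus $0$ so there is no third independent quotient contributing. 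Everything else — the three Riemann-Hurwitz computations and the final regrouping — is routine, though care with the $\gcd$ terms (and with the edge cases where $\infty$ is or is not a branch point, and where $\d$ might be even or odd) is needed to make sure the formula \cref{eq_m} comes out in exactly that form and not off by a gcd correction.
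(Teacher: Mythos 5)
Your proposal is correct and follows essentially the same route as the paper: Riemann--Hurwitz genus computations for $\X_g$, $\X_1$, $\X_2$, reduction of the isogeny decomposition to the dimension count $g=g_1+g_2$ (using that the two cyclic subgroups $\langle\s\rangle$, $\langle\s\t\rangle$ generate a group with genus-zero quotient), and the algebraic regrouping that yields \cref{eq_m}. If anything, you are more explicit than the paper about why genus additivity is equivalent to the isogeny, a step the paper's proof leaves largely implicit.
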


\proof
 Let $\X_g$ be a superelliptic curve with and extra automorphism of order $m \geq 2$ and equation $y^n = f(x^m)$.  There is the superelliptic automorphism 
\[ \tau :   \, \,   (x, y) \to (x, \e_n y), \quad and \quad \bar \sigma :   \, \,  (x, y) \to (\e_m x, y) .  \]
   We denote by $\sigma$ the lifting of $\bar \sigma$ in $\Aut (\X)$.   Then, $\sigma\tau=\tau\sigma$.   

Let  $H_1 : =\<  \sigma \> $ and  $H_2 : = \< \s \tau \>$ be subgroups in $G$.  Then, $| H_1 | = n$ and $ | H_2 | = \lcm (n, m)$.    Thus, we have $H:=H_1 \times H_2 \embd G$. It is easy to check that $g \left(\X_g / (H_1 H_2) \right)=0$.

Moreover,    $\sigma $ and $\sigma \tau$ fix the curves 
 \[ \X_1 : \quad Y^n= X^{\d} + a_1 X^{\d-1} + \dots + a_{\d-1} X + 1\]
and \[ \X_2 : \quad Y^n=X(  X^{\d} + a_1 X^{\d-1} + \dots + a_{\d-1} X + 1  ).\] 
Let $g_1$ and $g_2$ denoted their genera respectively.  Then
\[ g_1 = 1 + \frac 1 2 \left( n \d - n - \d - \gcd(\d, n)   \right)  \]
and 
\[ g_2 = 1 + \frac 1 2 \left( n (\d+1) - n - (\d + 1)  - \gcd(\d + 1, n)   \right).  \]
Hence,  we have 
\[ g_1 + g_2 =  \frac 3 2 + n\d - \frac n 2 - \d - \frac 1 2 \left( \gcd( \d, n) + \gcd(\d+1, n) \right). \]
The genus of $\X$ is 
 \[ g = 1 + \frac 1 2 \left(  n \d m - n - \d m - \gcd(m \d, n)  \right). \]
Then, $g=g_1+g_2$ implies that 
\[  \d  (n-1) (m-2) = 1 -   \left(\gcd(\d+1, n) + \gcd(\d, n) - \gcd(\d m, n)    \right). \]
Thus,   
\[ \Jac  (X_g) \iso \Jac  (\X / H_1) \times \Jac  (\X / H_2) \]
which completes the proof. 
\endproof

\subsection{Jacobians with superelliptic components}
Next let us consider a family of non-hyperelliptic curves  whose Jacobians decompose into factors which are  superelliptic Jacobians.  
In \cite{Ya}  were studied a family of curves  in $\P^{s+2}$ given by the equations 
\begin{equation}\label{curve}
\left\{
\begin{aligned}
zw & = c_0x^2+c_1xw+c_2w^2 \\
y_1^r & = h_1 (z, w) := z^r+ c_{1, 1} z^{r-1} w + \cdots + c_{r-1, 1} zw^{r-1} + w^r,\\
 & \dots  \\
y_s^r & = h_s (z, w) :=  z^r+ c_{1, s} z^{r-1} w + \cdots + c_{r-1, s} zw^{r-1} + w^r,  \\
\end{aligned}
\right.
\end{equation}
where  $c_i \in k$, $i=0, 1, 2,$ and   $c_{i,j} \in k$ for $i=1, \cdots, r$, $ j=1, \cdots , s$. 
The variety $\X_{r, s}$ is an algebraic curve since the function field of $\X_{r, s}$ is a finite extension of $k(z)$. $\X_{r, s}$ is a complete intersection. 


Let $\X_{r, s}$ be as above. Assume that $\X_{r, s}$ is smooth and $c_0\neq 0$. Then     the genus of  $\X_{r, s}$ is  
\[ g(\X_{r, s}) = (r-1)(rs\cdot2^{s-1}-2^s+1).\] 
$r\geq3$ and $s\geq1$, then $\X_{r, s}$ is non-hyperelliptic.


Fix $r\geq 2$.  Let $\l$ be an integer such that $1  \leq \l\leq s$.    Define the superelliptic curve $C_{r, \l, m}$ as follows
\[ C_{r, \l, m}:  \qquad Y^r=\prod_{i=1}^{\l} h_{i}(X^m, 1),\]
for some $m\geq 2$.  The right side of the above equation has degree $d = rm \l$. Using  Lemma~\ref{lem_1} we have that 
\[ g( C_{r, \l, m} ) = 1 + \frac 1 2 \left(  r^2 m\l - r- m\l r - \gcd(\l r m, r)\right). \]
Hence,
\begin{equation} \label{eq_g}
g( C_{r, \l, m} ) = 1 + \frac r 2 \left((r-1)\l m -2  \right).
\end{equation}

In  \cite{b-sh-sh} automorphism groups of such curves were determined. We have  
 $\bAut \left( C_{r, \l, m} \right) \iso C_m$ or $\bAut \left( C_{2, \l, m} \right) \iso D_{2m}$.
%
%
From \cref{th14} we can now determine  the  automorphism group as follows.

 If    $\bAut \left( C_{r, \l, m} \right) \iso C_m$, then $G \cong C_{mn}$ or   $G$ is isomorphic to 
\begin{center}
$\left\langle \r, \s \right|\r^n=1,\s^m=1,\s\r\s^{-1}=\r^l \rangle$
\end{center}
where (l,n)=1 and $l^m\equiv 1$ (mod n). But if $(m,n)=1$, then $l=n-1$.  

If    $\bAut \left( C_{r, \l, m} \right) \iso D_{2m} $, then  
\begin{itemize}
\item[(1)] If n is odd then $G \cong D_{2m} \times C_n$.

\item[(2)] If n is even and $m$ is odd then $G \cong D_{2m} \times C_n$ or $G$ is isomorphic to the group with presentation 
\[ \left\langle \r, \s, \t \right|\r^n=1,\s^2=\r,\t^2=\r^{n-1},(\s\t)^m=\r^{\frac{n}{2}},\s\r\s^{-1}=\r,\t\r\t^{-1}=\r \rangle. \]

\item[(3)] If n is even and m is even then $G $ is isomorphic to one of the following groups $D_{2m} \times C_n$, $D_{2mn}$, or one of the following 
\begin{align*}
\begin{split}
G_1=& \left\langle \r, \s, \t \right|\r^n=1,\s^2=\r,\t^2=1,(\s\t)^m=1,\s\r\s^{-1}=\r,\t\r\t^{-1}=\r^{n-1} \rangle,\\
G_2=& \left\langle \r, \s, \t \right|\r^n=1,\s^2=\r,\t^2=\r^{n-1},(\s\t)^m=1,\s\r\s^{-1}=\r,\t\r\t^{-1}=\r \rangle,\\
G_3=& \left\langle \r, \s, \t \right|\r^n=1,\s^2=\r,\t^2=1,(\s\t)^m=\r^{\frac{n}{2}},\s\r\s^{-1}=\r,\t\r\t^{-1}=\r^{n-1} \rangle, \\
G_4=& \left\langle \r, \s, \t \right|\r^n=1,\s^2=\r,\t^2=\r^{n-1},(\s\t)^m=\r^{\frac{n}{2}},\s\r\s^{-1}=\r,\t\r\t^{-1}=\r \rangle.
\end{split}
\end{align*}
\end{itemize} 

Let  $\X_{r, s}$ be a generic algebraic  curve  defined over an algebraically closed field  $k$   and $C_{r, \l, m}$ as above. Then we have the following.

\begin{thm}[\cite{b-sh-sh}]
The Jacobian $\Jac  (\X_{r, s})$ is isogenous to the product of the $C_{r, \l, m}$, for $1 \leq \l \leq s$, namely 
\[  \Jac (\X_{r, s}) \iso  \prod_{1 = \l}^s   \Jac (C_{r, \l, m}),   \] 
if and only if 
\begin{equation} r=4\cdot\frac{1+s-2^s}{ms(s+1)-s\cdot2^{s+1}}\label{num-thy-eqn}.
\end{equation}
\end{thm}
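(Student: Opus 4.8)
The plan is to reduce the isogeny statement to a purely numerical comparison of genera, exactly as in the proof of Theorem~\ref{thm:superelliptic}, and then to use automorphisms of $\X_{r,s}$ to exhibit enough subcovers. First I would observe that for each $\l$ with $1\le\l\le s$ the curve $\X_{r,s}$ carries the automorphism $\tau_\l:(z,w,y_1,\dots,y_s)\mapsto(z,w,y_1,\dots,\zeta_r y_\l,\dots,y_s)$ of order $r$, together with the order-$m$ automorphism $\sigma$ coming from the involution on the conic $zw=c_0x^2+c_1xw+c_2w^2$ (after choosing a coordinate in which the double cover $\{zw=\cdots\}\to\P^1$ looks like $z\mapsto z^m$ on the base, as in the hyperelliptic reduction preceding Theorem~\ref{thm_1}). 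The quotient of $\X_{r,s}$ by the group generated by $\sigma$ together with all $\tau_j$ for $j\ne\l$ is, up to normalization, precisely the superelliptic curve $C_{r,\l,m}:Y^r=\prod_{i=1}^\l h_i(X^m,1)$; this is the geometric heart of the argument and the place where one must be careful that the $h_i$ transform correctly and that no extra ramification is introduced, i.e. that $\X_{r,s}$ is smooth and $c_0\ne0$ so that the formula $g(\X_{r,s})=(r-1)(rs\cdot2^{s-1}-2^s+1)$ from the excerpt applies.

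Next I would assemble these quotient maps into a single map $\X_{r,s}\to\prod_{\l=1}^s C_{r,\l,m}$ and invoke the standard fact (already used for Theorem~\ref{lem_2} and Theorem~\ref{thm:superelliptic}) that a Jacobian is isogenous to the product of the Jacobians of a collection of quotient curves precisely when the sum of the genera of the quotients equals the genus of the original curve — this is the Kani--Rosen type criterion, which in the present survey is consistently applied in the elementary form ``$g=\sum g_i$''. So the theorem becomes the assertion
\[
g(\X_{r,s})=\sum_{\l=1}^{s} g(C_{r,\l,m}).
\]
By equation~\eqref{eq_g} of the excerpt, $g(C_{r,\l,m})=1+\tfrac r2\big((r-1)\l m-2\big)$, so
\[
\sum_{\l=1}^{s} g(C_{r,\l,m})
= s+\frac r2\Big((r-1)m\cdot\frac{s(s+1)}{2}-2s\Big)
= s+\frac{r(r-1)m\,s(s+1)}{4}-rs.
\]
I would then set this equal to $g(\X_{r,s})=(r-1)\big(rs\cdot2^{s-1}-2^s+1\big)$ and solve for $r$; collecting the terms linear in $r$ on one side and the quadratic term on the other should yield exactly the stated relation $r=4(1+s-2^s)/\big(ms(s+1)-s\cdot2^{s+1}\big)$, which establishes both directions at once since every step is an equivalence.

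The main obstacle, I expect, is not the algebra — that is a routine rearrangement — but the geometric identification of the quotient $\X_{r,s}/\langle\sigma,\tau_j:j\ne\l\rangle$ with $C_{r,\l,m}$, including verifying that the quotient is irreducible, that its function field is generated by the claimed $X,Y$, and that the genus of the quotient is correctly given by \eqref{eq_g} (one must rule out that the conic relation forces additional identifications among branch points). A secondary subtlety is checking that the product map $\X_{r,s}\to\prod_\l C_{r,\l,m}$ has connected fibres / that the induced map on Jacobians has the right kernel dimension so that ``$\sum g_i=g$'' is genuinely equivalent to the isogeny; here I would lean on the argument already invoked in \cite{b-sh-sh} for the case $m=2$, which the excerpt asserts goes through verbatim. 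Once these structural points are in place, the numerical equivalence \eqref{num-thy-eqn} drops out immediately.
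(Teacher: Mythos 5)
You follow the same route as the paper: produce quotient maps from $\X_{r,s}$ onto the curves $C_{r,\lambda,m}$, reduce the isogeny statement to the genus count $g(\X_{r,s})=\sum_{\lambda=1}^{s}g(C_{r,\lambda,m})$, and solve that identity for $r$; your arithmetic agrees with the paper's computation. The genuine gap is in your identification of the quotients. The fixed field of $\langle \tau_j : j\neq\lambda\rangle$ in $k(\X_{r,s})$ is $k(x,y_\lambda)$, so that quotient is the curve $Y^r=h_\lambda(X^m,1)$, whose right-hand side has degree $rm$, not $rm\lambda$; adjoining your putative order-$m$ automorphism $\sigma$ of the base only shrinks it further, to $Y^r=h_\lambda(U,1)$ with $U=X^m$. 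For $\lambda\geq 2$ these curves have strictly smaller genus than $C_{r,\lambda,m}$ (compare \eqref{eq_g} with the case $\lambda=1$), so with your quotients the sum of the genera is not $g(\X_{r,s})$ and the numerical identity you would derive is not the one in the statement. The correct covering $\X_{r,s}\to C_{r,\lambda,m}$ is $(x,y_1,\dots,y_s)\mapsto (x,\; y_1y_2\cdots y_\lambda)$: one quotients by the kernel of the character $\mu_r^{s}\to\mu_r$, $(\zeta_1,\dots,\zeta_s)\mapsto \zeta_1\cdots\zeta_\lambda$, a subgroup of order $r^{s-1}$, and no automorphism of the $x$-line is involved at all. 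Indeed none is available: the conic $zw=c_0x^2+c_1xw+c_2w^2$ only makes $z$ a degree-two function of $x$, so the ``order-$m$ automorphism coming from the conic'' does not exist for $m\neq 2$. (The paper's own write-up is terse here --- it quotients by the single cyclic groups $\langle\sigma_i\rangle$ and leans on \cite{b-sh-sh} --- but in any event the subgroups must be chosen so that the products $y_1\cdots y_\lambda$, not the individual variables $y_\lambda$, are the surviving invariants.)

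A secondary point: your parenthetical claim that ``every step is an equivalence'' hides the only nontrivial implication. That an isogeny forces $g(\X_{r,s})=\sum_\lambda g(C_{r,\lambda,m})$ is a dimension count; the converse needs the homomorphism $\Jac(\X_{r,s})\to\prod_\lambda \Jac(C_{r,\lambda,m})$ induced by the (corrected) quotient maps to have finite kernel once the dimensions match, which is what the paper's observation that the relevant subgroups are central, normal and pairwise intersect trivially --- a Kani--Rosen type argument --- is meant to deliver. You rightly flag this subtlety and defer to \cite{b-sh-sh}, but it has to be run for the order-$r^{s-1}$ subgroups described above, not for the groups $\langle\sigma,\tau_j : j\neq\lambda\rangle$ you wrote down.
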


\proof
We denote by $\s_i (x, y_i, z) \to (x, \e_r y_i, z)$, for $i=1, \dots , s$.  Then the quotient spaces $\X_{r, s} / \< \s_i \>$ are  the curves $C_{r, i, s}$, for  $i=1, \dots , s$.  Since $\s_i$ is a central element in $G = \Aut (\X_{r, s} )$ then $H_i := \< \s_i \> \normal G$, for all $i=1, \dots , s$.  Obviously, for all $i \neq j$ we have $H_i \cap H_j = \{ e \}$.  Hence, $H_1, \dots , H_s$ forms a partition for $G$.

The genus for every $C_{r, i, s}$, by Lemma~\ref{lem_1} is given by  \cref{eq_g}.   Then we have 
\[
\begin{split}
\sum_{\l=1}^s \, g \left( C_{r, \l, m} \right)  & = \sum_{\l=1}^s \left( 1 + \frac r 2 \left((r-1)\l m -2  \right)  \right)  \\
& = s(r-1) \left( \frac r 4 m (s+1) -1    \right).   \\
\end{split}
 \]
%
Then we have that 
\[  \frac r 4  m s (s+1) - s   =  rs \cdot 2^{s-1} - 2^s +1.  \]
Hence, 
\[ r = 4 \cdot \frac {1+s-2^s} {ms(s+1)- s \cdot 2^{s+1}}.\]
This completes the proof.
\qed

\begin{rem}
For $m=2$ this result is the case of Theorem~4.2 in \cite{Ya}. We get  $r= \frac  2 s$. Hence, $s=1$ or $s=2$. 
Therefore,    Theorem~4.2 in \cite{Ya} is true only for curves $F_{m, 1}$ or $F_{m, 2}$. 
\end{rem}


Suppose $r,m,s\in\mathbb{N}$ satisfy Eq.~\ref{num-thy-eqn}.  Then $mrs=4k$ for some odd integer $k$. Moreover, 

i) If $s\equiv1\text{ (\mod 2)}$, then $s=1$.

ii) If $s\equiv2\text{ (\mod 4)}$, then $s=2t$ for some odd integer $t$ which satisfies $4^t\equiv1\text{ (\mod }t)$.  Furthermore, $t$ is a multiple of 3.

iii) If $s\equiv0\text{ (\mod 4)}$, then $s=4u$ for some odd integer $u$ which satisfies $16^u\equiv1\text{ (\mod }u)$.  Furthermore, $u$ is a multiple of 3 or 5.
%

\section{Jacobians with complex multiplication}\label{sect-14}

We start with some preliminaries.  An \textbf{Abelian variety} defined over $k$ is an absolutely irreducible projective variety defined over $k$ which is  a group scheme.
A morphism of  Abelian varieties  $\AA$ to   $\B$ is a \textbf{homomorphism} if and only if it maps the identity element of $\AA$ to the identity element of $\B$.  An abelian variety   $\AA/k$ is called \textbf{simple} if it has no   proper non-zero Abelian subvariety over $k$,  it is called \textbf{absolutely simple} (or \textbf{geometrically simple}) if it   is simple over the algebraic closure of $k$. 
 
Let $\AA$, $\B$ be abelian varieties over a  field $k$.  We denote the $\Z$-module of homomorphisms  $\AA \mapsto  \B$  by $\Hom( \AA, \B)$  and the ring of endomorphisms $\AA \mapsto \AA$ by $\End \AA$.  It turns out to be more convenient  to   work with the $\Q$-vector spaces $\Hom^0 (\AA, \B):= \Hom(\AA, \B) \otimes_\Z \Q$, and $ \End^0 \AA:= \End \AA\otimes_\Z \Q$.   Determining $\End \AA$ or $\End^0 \AA$ is an interesting problem on its own; see  \cite{Oort}.

The ring of endomorphisms of generic Abelian varieties is "as small as  possible". For instance, if $\chara (k)=0$ then  $\End(\AA)=\Z$ in general.  If $k$ is a finite field, the Frobenius endomorphism  will generate a larger ring, but again, this will be all in the generic case. 


$\End^0(\AA)$ is a  $\Q$-algebra of dimension $\leq 4\dim (\AA)^2$. Indeed, $\End^0 (\AA)$ is a semi-simple algebra, and by duality  one can apply  a complete classification due to Albert of \emph{possible} algebra structures on $\End^0(\AA)$, which can be found on \cite{Mum}*{pg. 202}. 

We say that an abelian variety $\AA$ has \textbf{complex multiplication} over a field $K$ if the algebra $\End_K^0 (\AA)$ contains a commutative, semisimple $\Q$-algebra of dimension $2\dim \AA$. 

The natural question is which algebras occur as endomorphism algebras? The situation is well understood if $k$ has characteristic $0$ (due to Albert)  but wide open in characteristic $p>0$. 


For $g=1$ (elliptic curves) everything is explicitly known due to M. Deuring. The endomorphism ring of an elliptic curve over a finite field $\Fq$ is never equal to $\Z$ since there is the  Frobenius endomorphism $\phi_{\Fq,\E}$ induced by the Frobenius automorphism of $\Fq$ which has degree $q$. 


Let $\CC$ be a genus 2 curve defined over $k$. What can we say about the $\End_{k}^0  \, (\Jac \CC )$?

\begin{prop}  
Given a genus-two curve $\CC$ defined over $\Q$ and its abelian surface $\Jac \CC$, the endomorphism ring $\End_{\overline \Q}^0  \, (\Jac \CC )$ is either 
$\Q$, a real quadratic field, a CM field of degree 4, a non-split quaternion algebra over $\Q$, $F_1 \oplus F_2$,  where each $F_i$ is either $\Q$ or an imaginary quadratic field, the Mumford-Tate group $F$,  where $F$ is either $\Q$ or an imaginary quadratic field.   
\end{prop}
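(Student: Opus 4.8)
The statement is the classical classification of endomorphism algebras of a two-dimensional abelian variety, specialized to Jacobians of genus-two curves over $\Q$. The plan is to deduce it from the Albert classification of endomorphism algebras of abelian varieties together with the numerical constraint coming from $\dim \Jac\CC = 2$. First I would recall (from the discussion preceding this proposition, and from \cite{Mum}*{pg. 202}) that for any abelian variety $\AA/k$ of dimension $g$ the algebra $\End^0_{\bar k}(\AA)$ is a finite-dimensional semisimple $\Q$-algebra carrying a positive (Rosati) involution, hence is a product $\prod_i M_{n_i}(D_i)$ of matrix algebras over division algebras $D_i$, each $(D_i,\ast)$ being of one of Albert's four types (totally real field; totally indefinite quaternion algebra over a totally real field; totally definite quaternion algebra over a totally real field; or a division algebra with center a CM field). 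The key quantitative input is that for each simple factor, the reduced degree over $\Q$ of $M_{n_i}(D_i)$ together with the Rosati-positivity imposes $e_0 \cdot n_i \mid 2g$ (where $e_0 = [\text{center}:\Q]$ or its totally real subfield, depending on type), so that for $g=2$ only very small values are possible.

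The core of the argument is then a finite case analysis. Since $\Jac\CC$ is an abelian surface, $\End^0_{\bar\Q}(\Jac\CC)$ is either simple (as an abelian variety) or isogenous to a product $E_1\times E_2$ of elliptic curves, or isogenous to $E^2$ for a single elliptic curve $E$. In the first case $\End^0$ is a division algebra $D$ with a positive involution and $[D:\Q]\mid 4$: running through Albert's types with this bound yields exactly $\Q$, a real quadratic field, a CM field of degree $4$, or an indefinite quaternion algebra over $\Q$ (the totally definite quaternion case is excluded for a simple abelian surface over a field of characteristic zero because its reduced degree would force $g\geq 4$, or alternatively by the Rosati-positivity/Shimura-type restriction). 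In the decomposable case $\End^0 \cong \End^0(E_1)\oplus \End^0(E_2)$ with each $\End^0(E_i)$ equal to $\Q$ or an imaginary quadratic field, giving the $F_1\oplus F_2$ entry; and the case $\Jac\CC\sim E^2$ gives $M_2(\End^0(E))$ with $\End^0(E)$ equal to $\Q$ or imaginary quadratic — this is the ``Mumford--Tate'' entry $F$ referred to in the statement (here I would note that the proposition's phrasing lumps $E\times E$ and the genuinely non-split cases together under the label $F$, and I would make precise that what is meant is that the commutative part controlling the Hodge/Mumford--Tate structure is $\Q$ or an imaginary quadratic field). Throughout, one uses that over a field of characteristic zero the only quaternion algebras over $\Q$ admitting a positive involution and acting on a $2$-dimensional abelian variety are the indefinite ones, and that CM fields of degree exactly $4$ do occur (e.g. via the Jacobians with many automorphisms listed in \cref{sect-5}).

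\textbf{Main obstacle.} The genuinely delicate point is not the algebra bookkeeping but pinning down \emph{which} of the Albert types are actually realized by an abelian \emph{surface} in characteristic zero, i.e.\ eliminating the totally definite quaternion case and correctly accounting for the $E\times E$ versus ``type III-like'' possibilities. This requires invoking the precise bound relating $g$, the reduced degree of the simple factor, and the type (Albert's inequalities, as in \cite{Mum}*{pg. 201--202}): for a simple abelian surface, type III (totally definite quaternion over a totally real field $F_0$) would force $[F_0:\Q]\cdot 1 \le \tfrac{2g}{?}$ with the quaternionic contribution making $2g$ divisible by $4\,[F_0:\Q]$ only when $g$ is even and $\geq 2$, but positivity of the Rosati involution on such a factor additionally rules it out for $g=2$ — I would spell this out carefully rather than wave at it. A secondary subtlety is being honest about the word ``either'' in the statement: strictly one should present the classification as a disjoint list indexed by (a) whether $\Jac\CC$ is simple, and (b) if not, the isogeny type of its elliptic factors, and then check each resulting $\End^0$ lands in the stated list. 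I would organize the write-up as: (1) semisimplicity + Rosati positivity recalled; (2) Albert's four types and the dimension bound for $g=2$; (3) the simple case, four subcases; (4) the non-simple case, the $E_1\times E_2$ and $E^2$ subcases; (5) conclude. The only real work is step (2)--(3); everything else is assembly.
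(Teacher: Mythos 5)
The paper never actually proves this proposition: it is stated as a summary, with the justification delegated wholesale to Albert's classification of endomorphism algebras with positive involution (Mumford, pg.~202) and to Oort's results, so your write-up is a reconstruction of precisely the argument the paper has in mind — Albert's four types plus the numerical constraints for $g=2$, split according to whether $\Jac \CC$ is simple, isogenous to $E_1\times E_2$ with $E_1\not\sim E_2$, or isogenous to $E^2$. Your reading of the oddly phrased ``Mumford--Tate group $F$'' item as the case $\End^0_{\overline\Q}(\Jac \CC)\cong M_2(F)$ with $F=\Q$ or imaginary quadratic is also the intended meaning, and flagging that the statement conflates this with the genuinely non-split cases is fair.

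The one genuine gap is your treatment of Albert type III (totally definite quaternion algebra over a totally real field $F_0$). The degree/divisibility constraint does not eliminate it: the standard necessary condition for type III is $2[F_0:\Q]\mid g$, which is satisfied by $F_0=\Q$ and $g=2$, so your first justification (``its reduced degree would force $g\geq 4$'') is simply false, and ``Rosati-positivity/Shimura-type restriction'' as stated is too vague to carry the step. What actually closes this case is Shimura's finer result that a two-dimensional abelian variety whose endomorphism algebra contains a totally definite quaternion algebra cannot be simple — it is isogenous to the square of an elliptic curve with CM — so the type III possibility is absorbed into your non-simple branch, where $\End^0\cong M_2(F)$ with $F$ imaginary quadratic (an algebra that indeed contains definite quaternion orders). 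You must either cite this result or prove it (e.g.\ via the classification of positive involutions on $M_2(F)$ together with a moduli-dimension count); the Albert bookkeeping alone will not do it. With that step supplied, the rest of your plan — semisimplicity and Rosati positivity, the simple case giving $\Q$, a real quadratic field, a quartic CM field, or an indefinite quaternion division algebra over $\Q$, and the decomposable case giving $F_1\oplus F_2$ or $M_2(F)$ — assembles correctly into the stated list.
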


\begin{rem}  Genus 2 curves with extra involutions have endomorphism ring larger than $\Z$. 
Let $\CC$ be a   genus 2 curve defined over $\Q$. If $\Aut (\CC)$ is isomorphic to the Klein 4-group $V_4$, then  $\CC$ is isomorphic to a curve $\CC^\prime$ with equation 
\[ y^2 = f(x)= x^6 - a x^4+b x^2-1.\]
We denote  $u=a^3+b^3$ and $v=ab$. The    discriminant   
\[\Delta_f = - 2^6 \cdot \left(  27-18v+4u-u^2   \right)^2,\]  
is not a complete square in $\Q$ for any values of $a, b \in \Q$.  In this case $\Gal_{\Q} (f) $ has order 24.  There is a twist of this curve, namely $y^2=f(x)=x^6+a^\prime x^4+ b^\prime x^2 +1$, in which case $\Delta_f$ is a complete square in $\Q$ and $\Gal_{\Q} (f) $ has order 48. In both cases, from \ref{zarhin-thm} we have that   $\End_{\overline \Q} (\Jac \CC^\prime ) \neq  \Z$.
\end{rem}


The following are proved in \cite{zarhin-2}.
 
\begin{thm}
Let $K$ be a field, $\chara K \neq 2$ and $f(x) \in K[x]$ an irreducible polynomial with $\deg f \geq 5$.    If one of the following conditions is satisfied:

\begin{itemize}
\item $\chara K \neq 3$ and $\Gal_K (f) \iso A_n$ or $S_n$ 

\item $\Gal_K (f) \iso M_n$ (Mathiew group) for $n= 11, 12, 22, 23, 24$    
\end{itemize}
then the curve $C: y^2=f(x)$ has $\End J = \Z$. In particular, $\Jac C $ is absolutely simple. 
\end{thm}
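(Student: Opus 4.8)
The plan is to reduce the statement to a purely representation-theoretic fact about the Galois action on the space of $\ell$-torsion of $\Jac C$, combined with the structural constraints that the endomorphism algebra of an abelian variety places on this representation. The key classical input, due to Zarhin, is that for an irreducible polynomial $f$ of degree $n \geq 5$ with $p = \Char K \neq 2$, if $\Gal_K(f)$ (viewed as a subgroup of the symmetric group $S_n$ acting on the roots) is "very transitive" in an appropriate sense, then the $\mathbb{F}_\ell$-vector space $(\Jac C)[\ell]$, for a suitable prime $\ell$, is an absolutely simple $\Gal(\overline K / K)$-module, and moreover the centralizer of the Galois image in $\End_{\mathbb{F}_\ell}((\Jac C)[\ell])$ is just $\mathbb{F}_\ell$ itself.

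\textbf{Main steps.} First I would recall the standard construction: for a hyperelliptic curve $C : y^2 = f(x)$ with $f$ separable of degree $n$, the $2$-torsion $(\Jac C)[2]$ is identified, as a $\Gal(\overline K/K)$-module, with the "heart" of the permutation module $\mathbb{F}_2^n$ on the roots of $f$ — concretely, the even-weight subspace modulo the all-ones vector when $n$ is even, or the even-weight subspace itself when $n$ is odd. Transitivity properties of $\Gal_K(f)$ translate directly into irreducibility (indeed absolute irreducibility) of this module. Second, I would invoke Zarhin's theorem that $\Gal_K(f) \cong A_n$, $S_n$ (with the extra hypothesis $p \neq 3$, needed to handle small cases and the behavior modulo the relevant prime), or $\Gal_K(f) \cong M_n$ for $n \in \{11,12,22,23,24\}$, implies that $(\Jac C)[\ell]$ is an absolutely simple module over $\mathbb{F}_\ell[\Gal(\overline K/K)]$ with scalar endomorphisms, for a well-chosen $\ell$ (often $\ell = 2$, but occasionally another small prime, which is precisely where the Mathieu-group case and the characteristic restrictions enter). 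Third, I would run the now-standard reduction argument: any endomorphism of $\Jac C$ over $\overline K$ defined over a finite extension reduces to an endomorphism of the Galois module $(\Jac C)[\ell]$; by Schur's lemma and the scalar-centralizer property, the image of $\End^0(\Jac C)$ in $\End((\Jac C)[\ell])$ must lie in $\mathbb{F}_\ell$, which forces $\End_{\overline K}(\Jac C) \otimes \mathbb{Z}/\ell\mathbb{Z}$ to have $\mathbb{F}_\ell$-dimension one; since $\End_{\overline K}(\Jac C)$ is a torsion-free finitely generated $\mathbb{Z}$-module, this gives $\End_{\overline K}(\Jac C) = \mathbb{Z}$. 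Finally, absolute simplicity follows because a nontrivial decomposition up to isogeny, or a nonzero proper abelian subvariety, would produce idempotents or non-invertible elements in $\End^0_{\overline K}(\Jac C)$, contradicting $\End^0_{\overline K}(\Jac C) = \mathbb{Q}$.

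\textbf{The main obstacle.} The delicate point is not the abstract Schur-lemma reduction, which is routine, but establishing that the torsion module is absolutely simple \emph{with $\mathbb{F}_\ell$ as its endomorphism algebra} for the specific groups in the hypothesis — in particular ruling out that the Galois image lands in a larger classical group (symplectic, orthogonal) over a proper subfield of $\mathbb{F}_\ell$, or inside a group preserving extra structure. This is exactly the content of Zarhin's deep work on the subject, relying on the classification of finite simple groups (for the Mathieu cases) and on careful analysis of the minuscule-type modules for $A_n$, $S_n$; the characteristic hypotheses $p \neq 2$ and (for the alternating/symmetric case) $p \neq 3$ are imposed precisely to keep these modular representations in the well-behaved range and to ensure that $\Jac C$ has good reduction properties at the auxiliary prime. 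I would therefore present the proof as a citation-driven argument: quote the relevant theorems of Zarhin from \cite{zarhin-1}, \cite{zarhin-2} for the module-theoretic statement, and supply only the short endomorphism-reduction argument in detail.
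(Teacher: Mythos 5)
Your proposal is correct and matches the paper's treatment: the paper offers no proof of this statement at all, quoting it directly from Zarhin \cite{zarhin-2}, and your citation-driven outline (the identification of $(\Jac C)[2]$ with the heart of the $\F_2$-permutation module on the roots of $f$, Zarhin's very-simplicity/scalar-centralizer result for $A_n$, $S_n$ and the Mathieu groups, followed by the Schur-lemma reduction forcing $\End_{\overline K}(\Jac C)=\Z$ and hence absolute simplicity) is an accurate sketch of the argument in that source. Since the deep module-theoretic input is exactly what you propose to cite, your write-up is consistent with, and somewhat more informative than, the paper's bare citation.
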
  

\begin{thm} 

If $f(x)$ is as above, $\chara K =0$, and $p$ an odd prime then the superelliptic curve
$ \CC : y^p = f(x)$ 
has $\Jac (\CC)$ absolutely simple and   $\End (\Jac C) \iso \Z[\varepsilon_p]$.
\end{thm}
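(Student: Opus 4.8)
The plan is to reduce the superelliptic statement to the hyperelliptic/degree-two case already recorded (the Zarhin-type theorem stated just above) by the standard device of passing to the cyclotomic action of $\varepsilon_p$ on the Jacobian. First I would set $F = \Q(\varepsilon_p)$, the $p$-th cyclotomic field, and observe that the automorphism $\tau\colon (x,y)\mapsto (x,\varepsilon_p y)$ of $\CC\colon y^p=f(x)$ induces an embedding $\Z[\varepsilon_p]\hookrightarrow \End(\Jac\CC)$, so that $J:=\Jac\CC$ is a module over $\Z[\varepsilon_p]$; since $g=(p-1)(d-1)/2$ (from the Riemann--Hurwitz computation in Section~\ref{sect-5}, using that $f$ is separable of degree $d=\deg f\geq 5$, so $\gcd(p,d)=1$ when $p\nmid d$, and in general $\dim J = g$), the $F$-vector space $V_\ell(J)\otimes\Q_\ell$ has dimension $2g/(p-1) = d-1$ over $F_\ell := F\otimes\Q_\ell$. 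The key point is that the Galois group $\Gal_K(f)$ acts on the $d$ roots of $f$, hence on this $F$-module, through a large permutation representation, and the hypothesis $\Gal_K(f)\cong A_n$ or $S_n$ with $n=d\geq 5$ forces this representation to be as irreducible as possible over $F$.

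The core of the argument is then exactly the mechanism of Zarhin's method, which I would cite via the theorem stated immediately above (the $p=2$ case) and its proof in \cite{zarhin-2}: one shows that the image of $\Gal_K(\overline K/K)$ in $\mathrm{Aut}_{F_\ell}(V_\ell(J))$ contains enough elements (coming from the inertia at the branch points $\alpha_i$, which act as transvections/pseudo-reflections, together with the $A_n$ or $S_n$ action) that the only $F_\ell$-linear endomorphisms commuting with the Galois action are the scalars $F_\ell$. By the semisimplicity of $V_\ell(J)\otimes\Q_\ell$ as a Galois module (Faltings, when $\chara K = 0$) and Tate's conjecture type isomorphism $\End^0(J)\otimes\Q_\ell \cong \End_{\Gal_K}(V_\ell(J)\otimes\Q_\ell)$, this commutant computation shows $\End^0(J)$ is a field containing $F=\Q(\varepsilon_p)$ and of degree $\leq 2g/[\text{something}]$; a dimension count pins it down to $\End^0(J)=F$ and hence $\End(J)=\Z[\varepsilon_p]$ (using that $\Z[\varepsilon_p]$ is the maximal order, being the ring of integers of $F$). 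Absolute simplicity follows because a field acting on $J$ with $[\End^0(J):\Q]=\dim J$ (here $d-1 = 2g/(p-1)$, wait — more precisely $\End^0(J)=F$ gives $J$ of CM-type only if $[F:\Q]=2\dim J$, which is false here, so instead absolute simplicity follows from: any nontrivial decomposition $J\sim A\times B$ would give $F$-stable factors whose $\ell$-adic Tate modules are proper $F_\ell[\Gal]$-submodules, contradicting the irreducibility established above).

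The step I expect to be the main obstacle is verifying the irreducibility of the $\Gal_K(f)$-action on the relevant $F$-isotypic pieces of $V_\ell(J)$ --- that is, checking that the permutation module on the roots, after twisting by the cyclotomic character and decomposing under $\Z[\varepsilon_p]$, remains irreducible as an $F_\ell[A_n]$- or $F_\ell[S_n]$-module, and that the inertia generators act by the right kind of reflections. For the $p=2$ case this is precisely Zarhin's input about the standard $(n-1)$-dimensional representation of $A_n$ or $S_n$ over $\mathbb{F}_2$ (or $\Q_2$) being absolutely irreducible for $n\geq 5$ with small exceptions; for odd $p$ one must instead work with the permutation module over $\mathbb{F}_\ell$ and track how $\Z[\varepsilon_p]\otimes\mathbb{F}_\ell$ splits (depending on the factorization of $\ell$ in $F$), and argue that each constituent carries an irreducible $A_n$- or $S_n$-action. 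I would handle this by invoking the corresponding representation-theoretic lemmas from \cite{zarhin-1} and \cite{zarhin-2} verbatim, since the excerpt explicitly states these results are proved there, and then assemble the commutant argument as above; the cleanest exposition is to state it as a direct consequence of the general machinery of Zarhin applied with $G=\Z[\varepsilon_p]$ in place of $G=\Z$.
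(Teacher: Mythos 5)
The paper does not actually prove this theorem: it is quoted as one of the results ``proved in \cite{zarhin-2}'', so the only fair comparison is with Zarhin's argument, whose shape you have partly reproduced (use $\tau:(x,y)\mapsto(x,\varepsilon_p y)$ to embed $\Z[\varepsilon_p]$ into $\End(\Jac \CC)$, then show the commutant is no larger). However, the way you propose to close the argument has a genuine gap. Your key step is that the hypothesis $\Gal_K(f)\cong A_n$ or $S_n$ forces the image of $\Gal(\overline K/K)$ on the $\ell$-adic module $V_\ell(\Jac \CC)$ to be so large that its commutant over $F\otimes\Q_\ell$ is just the scalars, and you support this by appealing to ``inertia at the branch points $\alpha_i$ acting as transvections.'' Neither ingredient is available here: for a \emph{fixed} curve over a field of characteristic $0$ there is no inertia action attached to the branch points on $V_\ell$ (that is a geometric-monodromy statement about a \emph{family} of curves over the configuration space, not about one member), and the Galois group $\Gal_K(f)$ of the splitting field of $f$ has no natural action on $V_\ell(\Jac\CC)$ at all for $\ell\neq p$. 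So the Faltings/Tate commutant computation you outline cannot even be started from the stated hypothesis; knowing $\Gal_K(f)$ does not control the $\ell$-adic image.

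What Zarhin actually uses is the mod-$\lambda$ representation, $\lambda=(1-\varepsilon_p)$: the classes of the points $(\alpha_i,0)$ generate $J[\lambda]=\ker(1-\varepsilon_p)$, which is thereby identified with the heart of the $\F_p$-permutation module on the roots of $f$, and on \emph{this} finite module $\Gal(\overline K/K)$ does act through $\Gal_K(f)$. The theorem then follows from his ``very simple module'' machinery, which supplies the arithmetic bridge from the absolute irreducibility (in a strengthened sense) of that $A_n$- or $S_n$-module over $\F_p$ to the conclusion $\End(\Jac\CC)=\Z[\varepsilon_p]$, with absolute simplicity as a consequence of $\End^0$ being the field $\Q(\varepsilon_p)$. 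Your closing remark that you would import ``the representation-theoretic lemmas from \cite{zarhin-1}, \cite{zarhin-2} verbatim'' does not repair the gap, because what is missing is not the representation theory of $A_n$ and $S_n$ but precisely this torsion-to-endomorphism bridge, which in your sketch has been replaced by an $\ell$-adic big-image claim that does not follow from the hypotheses. If you reorganize the proof around $J[\lambda]$ (rather than $V_\ell$ and Faltings) and quote Zarhin's very-simplicity criterion, the argument becomes correct and is essentially the cited one.
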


\subsection{Curves with many automorphisms}
Let $\CC$ be e genus $g\geq 2$ curve defined over $\C$,  $\p \in \M_g$ its corresponding moduli point, and $G:=\Aut_\C (\CC)$. 

We say that $\CC$ has \textbf{many automorphisms} if $\p \in \M_g$ has a neighborhood $U$ (in the complex topology) such that all curves corresponding to points in $U \setminus \{\p \}$ have automorphism group strictly smaller than $\p$.

\begin{lem} 
The following are equivalent:

\begin{itemize}

\item  $\CC$ has many automorphisms 

\item  There exists a subgroup $H < G$ such that $g \left(  \CC/H \right) = 0$ and $\CC \to \CC/H$ has at most 3 branch points.  

\item  The quotient $\CC/G$ has genus 0 and $\CC \to \CC/G$ has at most three points.  

\end{itemize}

\end{lem}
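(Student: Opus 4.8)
The plan is to establish the three-way equivalence by proving a cycle of implications, exploiting the Riemann--Hurwitz formula together with the fact (recalled earlier in the excerpt, e.g. around the discussion of large automorphism groups and quasiplatonic curves) that the deformation dimension of the locus of a $G$-curve of type $(g,G,\bC)$ is $\delta = 3g_0 - 3 + r$, where $g_0$ is the genus of the quotient and $r$ the number of branch points. The key observation tying ``many automorphisms'' to this invariant is that $\CC$ has many automorphisms precisely when its moduli point is isolated in the appropriate locus, i.e.\ when $\delta(g,G,\bC) = 0$ for the full automorphism group $G = \Aut_\C(\CC)$ with its natural ramification type.

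First I would prove that the third condition implies the second: this is immediate, taking $H = G$. Next, the second implies the first. Suppose $H < G$ with $g(\CC/H) = 0$ and $\CC \to \CC/H$ branched over at most $3$ points. Then $3g_0 - 3 + r \le 3\cdot 0 - 3 + 3 = 0$, and since $\delta$ is the dimension of a nonempty locus it is $\ge 0$, forcing $\delta(g,H,\bC') = 0$. Now the locus $\M(g,H,\bC')$ is a single point (a $0$-dimensional component), and since $\M(g,G,\bC) \subseteq \M(g,H,\bC')$ (here $G$ being the full automorphism group), the moduli point $\p$ of $\CC$ is isolated among curves admitting the $H$-action of type $\bC'$; in particular every curve in a punctured neighborhood of $\p$ has automorphism group strictly smaller, so $\CC$ has many automorphisms. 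I would be careful here to invoke the finiteness of the maps $\Phi, \Psi$ from the Hurwitz-space discussion so that ``dimension zero'' genuinely means ``finitely many points.''

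The substantive implication is the first implies the third. Assume $\CC$ has many automorphisms, so $\p \in \M_g$ is isolated in $\M(g,G,\bC)$ for $G = \Aut_\C(\CC)$ and its ramification type $(g,G,\bC)$; hence $\delta(g,G,\bC) = 3g_0 - 3 + r = 0$. One must rule out $g_0 \ge 1$: if $g_0 \ge 2$ then $\delta \ge 3g_0 - 3 \ge 3 > 0$; if $g_0 = 1$ then $\delta = r \ge 0$ with equality iff $r = 0$, but $r = 0$ and $g_0 = 1$ would make $\CC \to \CC/G$ unramified, forcing (by Riemann--Hurwitz) $g - 1 = |G|(g_0 - 1) = 0$, contradicting $g \ge 2$. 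So $g_0 = 0$ and then $\delta = r - 3 = 0$ gives $r = 3$, which is exactly the third condition. I would also note the degenerate subcases $r \le 2$ with $g_0 = 0$ are excluded automatically since $\delta = r - 3 < 0$ is impossible for a nonempty locus; thus $r$ is exactly $3$, not merely ``at most $3$,'' for the \emph{full} group, while the ``at most three'' phrasing in the statement accommodates intermediate subgroups $H$ in condition two.

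The main obstacle I anticipate is the bookkeeping around ramification \emph{types} versus \emph{signatures}: the quantity $\delta$ depends only on $g_0$ and $r$, but one must make sure that the ramification type attached to $\CC$ with its full automorphism group is well defined and that ``many automorphisms'' is correctly translated into ``the moduli point is an isolated point of the corresponding Hurwitz locus'' rather than some larger stratum. This requires knowing that for a curve with many automorphisms no nearby curve shares the same type, which is precisely what $\delta = 0$ encodes; conversely one needs that $\delta = 0$ really does force the point to be isolated, which follows from the finiteness of $\Phi$ established in the Hurwitz-space subsection. Handling this translation cleanly — and making explicit that we pass to $\Aut_\C(\CC)$ rather than an arbitrary subgroup in the ``isolated'' direction — is where the argument needs care; the Riemann--Hurwitz computations themselves are routine.
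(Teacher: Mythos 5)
The paper states this lemma without proof (it is a known result going back to Wolfart), so your argument has to stand on its own. Your overall strategy — run everything through $\delta(g,G,\bC)=3g_0-3+r$ for the Hurwitz loci — is the right one, and two thirds of it are fine: (3)$\Rightarrow$(2) is trivial with $H=G$, and (1)$\Rightarrow$(3) is correct as you give it, since points of $\M(g,G,\bC)$ near $[\CC]$ carry automorphism groups of order $\geq |\Aut(\CC)|$, so ``many automorphisms'' forces $[\CC]$ to be isolated in $\M(g,G,\bC)$, hence $\delta=0$, and Riemann--Hurwitz eliminates $g_0\geq 1$ and $r\leq 2$. The genuine gap is in (2)$\Rightarrow$(1), at the sentence ``the moduli point of $\CC$ is isolated among curves admitting the $H$-action of type ${\bC}'$; in particular every curve in a punctured neighborhood has automorphism group strictly smaller.'' That ``in particular'' is a non sequitur: isolation in the single stratum $\M(g,H,{\bC}')$ says nothing about nearby curves whose automorphism groups are large but do not contain an $H$-action of that type. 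A priori $\CC$ could be a limit of curves with $|\Aut|\geq|\Aut(\CC)|$ of some other ramification type, in which case $\CC$ would satisfy (2) yet fail the definition of many automorphisms; this is exactly the scenario your argument must exclude and does not.

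To close it you need three ingredients, all implicit in the paper's Hurwitz-space discussion. First, in a fixed genus there are only finitely many signature-group pairs, so from a sequence $q_n\to[\CC]$, $q_n\neq[\CC]$, of curves with $|\Aut|\geq|\Aut(\CC)|$ you may pass to a subsequence lying in a single stratum $\M(g,G',{\bC}'')$ with $|G'|\geq|\Aut(\CC)|$. Second, each such stratum is closed in $\M_g$ because $\Phi$ is a finite, hence proper, morphism; therefore $[\CC]\in\M(g,G',{\bC}'')$, i.e. $\CC$ itself admits the $G'$-action, so $G'\leq\Aut(\CC)$ and by the order count $G'=\Aut(\CC)$ acting with its intrinsic type. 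Third, by the restriction inequality $\delta\bigl(g,\Aut(\CC),\bC\bigr)\leq\delta(g,H,{\bC}')=0$, that stratum is $0$-dimensional, hence a finite set — contradicting that it contains the infinitely many points $q_n$ accumulating at $[\CC]$. (Alternatively one can argue via uniformization: condition (2) realizes $\CC$ as a quotient by a surface subgroup of a triangle group, and rigidity of triangle groups gives (1).) With this insertion your cycle of implications is complete; the Riemann--Hurwitz bookkeeping you did is otherwise correct.
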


\begin{question}[F. Oort]
If $\CC$ has many automorphisms, does $\End (\Jac \CC)$ have complex multiplication? 
\end{question}

Wolfart answered this question for all curves of genus $g \leq 4$. For the remainder of this paper we will determine which superelliptic curves of genus $g \geq 10$ have CM.  

Wolfart answered this question for all curves of genus $g \leq 4$. We now determine all superelliptic curves with many automorphisms with genus $5 \leq g \leq 10$. 
The automorphism groups of superelliptic curves, the ramification structure of $\CC \to \CC/G$, and the moduli dimension of each family are determined in \cite{Sa}*{Table~1} for every characteristic $p>5$.

\begin{cor}
A curve $\CC$ with automorphism group $G$ and signature $\sigma$ has many automorphisms if and only if $g (\CC/G ) =0$ and the moduli dimension of the Hurwitz space $\mathcal H (g, G, \sigma)$ is 0.
\end{cor}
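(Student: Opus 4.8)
The statement to prove is the equivalence: a superelliptic curve $\CC$ with automorphism group $G$ and signature $\sigma$ has many automorphisms if and only if $g(\CC/G)=0$ and the Hurwitz space $\mathcal H(g,G,\sigma)$ has dimension zero. The plan is to invoke the characterization of ``many automorphisms'' given in the lemma preceding \cref{thm2} (the one with three equivalent conditions), and then translate the ramification-theoretic conditions there into the language of Hurwitz spaces and their dimensions as set up in \cref{section2} and \cref{Lemma1}. Since this is stated as a corollary, the work is entirely one of unwinding definitions, so I will keep the argument short.

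First I would recall from the three-part lemma that $\CC$ has many automorphisms if and only if the quotient $\CC/G$ has genus $0$ and the cover $\CC \to \CC/G$ has at most three branch points. So the only thing to check is that, given $g(\CC/G)=0$, the condition ``at most three branch points'' is equivalent to $\dim \mathcal H(g,G,\sigma)=0$. For this I would use the formula $\delta(g,G,\bC) = 3g_0 - 3 + r$ from \cref{section2} (and repeated around \cref{Lemma1}), where $g_0 = g(\CC/G)$ is the orbit genus and $r$ is the number of branch points appearing in the signature $\sigma$. When $g_0 = 0$ this becomes $\delta = r - 3$. Hence $\delta = 0$ exactly when $r = 3$, and $\delta < 0$ is impossible for a nonempty Hurwitz space while $r \le 2$ would force $\delta < 0$; more precisely, a nonempty $\mathcal H(g,G,\sigma)$ with $g_0 = 0$ must have $r \ge 3$ by the Riemann--Hurwitz constraint (a genus $g \ge 2$ cover of $\P^1$ cannot ramify over fewer than three points), so ``$r \le 3$'' together with nonemptiness is the same as ``$r = 3$'', which is the same as $\delta(g,G,\sigma) = 0$. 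Combining, $g(\CC/G) = 0$ and $r \le 3$ is equivalent to $g(\CC/G) = 0$ and $\dim \mathcal H(g,G,\sigma) = 0$.

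The one genuine point that needs care — and the only place where I expect a subtlety — is the distinction between the signature of the $G$-cover as an abstract ramification type and the three-branch-point count in the lemma: I must make sure the $r$ in $\delta = 3g_0 - 3 + r$ counts the branch points of $\CC \to \CC/G$ with the same convention (unordered, with the trivial class excluded) as in the characterization lemma. This is exactly the setup of \cref{section2}, where $\bC = (C_1,\dots,C_r)$ lists the nontrivial inertia classes over the $r$ branch points, so the conventions match and no discrepancy arises. I would therefore simply state that $r$ is the length of $\sigma$ and that it equals the number of branch points of $\CC\to\CC/G$, citing the ramification-type definition. With that identification in place, the corollary follows immediately from the displayed dimension formula and the three-part lemma, with no further computation required.
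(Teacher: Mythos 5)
Your proposal is correct and is essentially the argument the paper leaves implicit: the paper states this corollary without proof, and the intended reasoning is exactly your combination of the three-part lemma with the formula $\delta(g,G,\sigma)=3g_0-3+r$, so that $g_0=0$ gives $\delta=r-3$. You also correctly handle the only subtle point, namely that ``at most three branch points'' is equivalent to ``exactly three'' here, since a genus $g\geq 2$ Galois cover of $\P^1$ cannot be branched over fewer than three points by Riemann--Hurwitz.
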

See \cite{kyoto} on details the moduli dimension. 

\begin{lem}
Superelliptic curves of genus $5 \leq g \leq 10$ which are not hyperelliptic and with many automorphisms are presented in \cref{many-aut}. 
\end{lem}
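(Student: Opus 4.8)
The statement to be proved is essentially a finite classification: list all superelliptic curves of genus $5 \le g \le 10$ that are non-hyperelliptic and possess many automorphisms, i.e. produce the entries of \cref{many-aut}. The strategy is completely algorithmic, relying on the infrastructure already assembled in \cref{sect-5} and \cref{sect-6}. First I would recall, via \cref{th14} and especially \cref{th1} together with \cref{long-table}, the complete list of possible reduced automorphism groups $\bar G$, full automorphism groups $G$, signatures $\bC$, and the corresponding moduli dimensions $\delta(G,\bC)$ for superelliptic curves of each fixed genus. This list is finite for each $g$ because $g$ bounds $|G|$ (the Hurwitz-type bounds of \cref{sect-4}) and because the arithmetic constraints coming from the dimension formulas in \cref{long-table} (each expression of the form $\frac{g - cn + c}{k(n-1)}$ must be a nonnegative integer) leave only finitely many admissible tuples $(n, m, \delta, \dots)$ for $2 \le g \le 10$.

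Next I would apply the criterion for ``many automorphisms.'' By the \cref{cor} following \cref{many-aut}'s setup — namely that $\CC$ has many automorphisms if and only if $g(\CC/G) = 0$ and the Hurwitz space $\mathcal H(g, G, \sigma)$ has dimension $0$ — the task reduces to selecting from the genus-$g$ table exactly those rows with $\delta(G,\bC) = 0$ and quotient genus zero. Since every superelliptic curve has $\X/\langle\tau\rangle$ of genus zero, and for the full group $G$ one then checks $g(\X/G) = 0$ via the reduced cover $\P^1 \to \P^1/\bar G$, this is a mechanical scan of the tables. Then I would discard the hyperelliptic cases (those with $n = 2$, which are classified separately and excluded from the statement) and, for each surviving tuple, write down the defining equation $y^n = f(x)$ using \cref{equations} (\cref{thm_4}), where the $\delta = 0$ condition forces $f$ to have no free parameters, so the curve is rigid and the equation is explicit.

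The main obstacle, as usual in such classifications, is bookkeeping completeness rather than any single hard idea: one must be certain that (i) no admissible signature-group pair has been omitted for $5 \le g \le 10$, (ii) the passage from $\bar G$ to $G$ via the degree-$n$ central extension has been carried out correctly in every case (the groups $G_i$ of \cref{th14} must each be tested), and (iii) genuinely distinct rows of the table are not producing isomorphic curves, which would require checking the weighted moduli point invariants of \cref{sect-9}. I expect (ii) to be the most delicate, since for $\bar G \cong D_{2m}, A_4, S_4, A_5$ there are several non-isomorphic extensions $G$ with the same reduced group, and only some of them actually occur as full automorphism groups of a $\delta = 0$ family. For this I would lean on the data compiled in \cite{Sa}*{Table~1} and in \cite{MPRZ} for $2 \le g \le 10$, cross-checking each entry of \cref{many-aut} against those references, and confirming in each case that the locus is a single point in $\M_g$ by the dimension count $\delta = 3g_0 - 3 + r = 0$, which forces $g_0 = 0$ and $r = 3$. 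The final assembled list, with columns $g$, $G$, signature, and equation $y^n = f(x)$, is precisely \cref{many-aut}.
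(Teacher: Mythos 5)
Your proposal follows essentially the same route as the paper: the paper's proof simply selects from \cite{Sa}*{Table~1} all signature-group pairs with moduli dimension $\delta=0$ (which, by the stated corollary, is exactly the many-automorphisms condition), discards the $n=2$ hyperelliptic cases already treated in \cite{muller-pink}, and lists the resulting rigid equations. Your additional remarks on completeness, the central-extension bookkeeping, and cross-checking against \cite{MPRZ} are sensible elaborations of the same mechanical scan rather than a different argument.
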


\proof
From \cite{Sa}*{Table~1} we picked all cases such that $\delta =0$.  These cases are exactly superelliptic curves with many automorphisms.  Since the hyperelliptic curves with many automorphisms and CM were already studied in \cite{muller-pink}, we delete the cases for which $n=2$.  The rest of the cases are presented below.
\endproof

\begin{prob}\label{problem-equation}
Determine which curves from \cref{many-aut} have Jacobians with complex multiplication.
\end{prob}

Our goal is to determine which of the curves in the above table have CM. We have a  first simple criteria.

\begin{lem}
Let  $\CC$ be an algebraic curve and $\psi : \CC \to \E$ a degree $n$ covering to an elliptic curve.  If the $j$-invariant  $j(\E)$ is not an algebraic integer then $\Jac (\CC)$ does not have CM.
\end{lem}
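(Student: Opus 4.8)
The statement to prove is: if $\psi : \calC \to \calE$ is a degree $n$ covering to an elliptic curve and the $j$-invariant $j(\calE)$ is not an algebraic integer, then $\Jac(\calC)$ does not have complex multiplication. The natural strategy is contrapositive: assume $\Jac(\calC)$ has CM and deduce that $j(\calE)$ is an algebraic integer. First I would invoke the functoriality of Jacobians: a nonconstant morphism $\psi : \calC \to \calE$ induces, via pullback of differentials (or equivalently via the Albanese property), a nonzero homomorphism $\psi^* : \calE = \Jac(\calE) \to \Jac(\calC)$ of abelian varieties, with finite kernel; dually there is a surjection $\psi_* : \Jac(\calC) \to \calE$. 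Hence $\calE$ is isogenous to an abelian subvariety of $\Jac(\calC)$, i.e. $\calE$ appears (up to isogeny) as a factor in the decomposition of $\Jac(\calC)$ up to isogeny (Poincaré reducibility).

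The key step is then: an isogeny factor of a CM abelian variety is itself a CM abelian variety. Concretely, if $A = \Jac(\calC)$ has CM, then $\End^0(A)$ contains a commutative semisimple $\Q$-algebra of dimension $2\dim A$; restricting to an isogeny factor $B$ (here $B \sim \calE$, of dimension $1$) one checks that $\End^0(B)$ contains a field of degree $2\dim B = 2$ over $\Q$, so $B$ — and therefore $\calE$ — has CM in the classical sense of an elliptic curve with $\End^0(\calE)$ an imaginary quadratic field. I would cite the standard fact (from the theory of CM abelian varieties, already alluded to in Section~\ref{sect-14}) that isogenous abelian varieties have isomorphic endomorphism algebras, and that passing to a simple factor of a CM abelian variety preserves the CM property; combined with the isogeny $\calE \sim B$ established above, this gives that $\calE$ has complex multiplication.

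Finally I would invoke the classical theorem of the theory of complex multiplication of elliptic curves: an elliptic curve with complex multiplication has $j$-invariant equal to an algebraic integer (indeed $\Q(j(\calE))$ is the Hilbert class field type extension, and $j$ of a CM point is an algebraic integer — this is the integrality of singular moduli, due essentially to the fact that $j$ takes algebraic integer values at imaginary quadratic points of the upper half-plane). This contradicts the hypothesis that $j(\calE)$ is not an algebraic integer, completing the contrapositive argument.

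\textbf{Main obstacle.} The only genuinely delicate point is making precise and correctly attributing the statement ``an isogeny factor of a CM abelian variety is CM,'' since one must be careful about whether CM is taken over $k$ or over $\bar k$, and about the fact that $\calE$ need not itself be a subvariety of $\Jac(\calC)$ but only isogenous to one. I expect to handle this by working up to isogeny throughout and over $\bar k$ (or a sufficiently large field), where $\Jac(\calC)$ decomposes up to isogeny as a product of simple abelian varieties each of which inherits CM; the factor isogenous to $\calE$ is then an elliptic curve with CM, and the integrality of $j$ follows. Everything else — functoriality of $\Jac$, Poincaré reducibility, and integrality of singular moduli — is standard and can simply be quoted.
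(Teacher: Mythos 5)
Your argument is correct and is precisely the standard one this lemma rests on: the paper states it without proof, and the intended justification is exactly your chain of reasoning — $\psi^*$ realizes $\E$ (up to isogeny) as a factor of $\Jac(\CC)$ by Poincar\'e reducibility, CM passes to isogeny factors, and the $j$-invariant of a CM elliptic curve is an algebraic integer. The one delicate point you flag (CM descending to factors, handled over $\bar k$ via the simple decomposition) is handled appropriately, so nothing is missing.
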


Moreover, a formula for $\mbox{Sym}^2 \chi$  similar to the one in \cite{muller-pink} can be possibly obtained for superelliptic curves by using the a basis for the space of holomorphic
differentials on $\X$ is given  in \cref{super-basis}.     
A complete discussion of this problem is intended in \cite{obus-shaska}.


\begin{footnotesize}

\begin{table}[hb]
\begin{tabular}{|l|l|l|l|l|l|l|l|}
\hline
Nr. & $\bar G$             & G&$n$  &$m$ & sig. & $\delta$ & Equation $y^n=f(x)$ \\
\hline \hline
\multicolumn{8}{c}{Genus 5} \\
\hline \hline
 2&   \multirow{2}{*}{$C_m$}    & $C_{22}$  &  2  & 11 &   11, 22   & 0   & $x^{11}+1$ \\ 
 2&                        		& $C_{22}$  &  11 & 2  &   2, 22   &  0    & $x^2+1$ \\ 
\hline
5&    \multirow{1}{*}{$D_{2m}$}  &  &  2 & 12 & 2, 4, 12     &  0 & $x^{12}-1$  \\ 
 8&                           &  &  2 & 10 & 2, 4, 20     &  0 & $x (x^{10} -1)$ \\   
\hline   
20 &   $S_4$                     &  & 2  & 0 & 3, $4^2$ & 0 & $x^{12} - 33x^8-33x^4+1$ \\  
\hline   
25 &   $A_5$                     &  & 2 &  & 2,3,10 & 0 & $x (x^{10} + 11x^5-1)$ \\  
\hline \hline
\multicolumn{8}{c}{Genus 6} \\
\hline \hline
 2&     \multirow{2}{*}{$C_m$}                     &   $C_{26}$        &  2  & 13  & $13, 26$ & 0 & $x^{13} + 1$ \\
 2&                       &   $C_{21}$        &  3  & 7  & $7, 21$ & 0  & $x^7 +1$ \\
 2&                        & $C_{20}$  &  4  & 5 &   $5, 20$   & 0   & $x^{5}+1$ \\
 2&                        & $C_{20}$     &  5  & 4  &  $4, 20$   & 0 & $x^4+1$ \\
 2&                        & $C_{21}$     &  7  & 3  & $3, 21$    & 0 & $x^3 +1$ \\
 2&                        & $C_{26}$     &  13 & 2  & $2, 26$    & 0 & $x^2 +1$ \\
\hline
 5&   \multirow{4}{*}{$D_{2m}$}                         & $G_5$ &  2 & 14  & $2, 4, 14$     &  0 & $x^{14}- 1$ \\
 5&                           & $ D_{10}\times C_2$ &  5 & 5  & $2, 5, 10$ &  0 & $x^5-1$  \\
 8&                           & $G_8$ &  2 & 12 & 2, 4, 24     &  0 & $x (x^{12} -1)$ \\
 8&                           & $ D_{12}\times C_3$ &  3 & 6 & 2, 6, 18     &  0 & $x (x^{6} -1)$ \\
 8&                           &$G_8$  &  4 & 4 & 2, 8, 16     &  0 & $x (x^{4} -1)$ \\
 8&                           & $ D_{6}\times C_5$ &  5 &3 & 2, 10, 15     &  0 & $x (x^{3} -1)$ \\
 8&                           &$ D_{4}\times C_7$  &  7 & 2 & 2, $14^2$     &  0 & $x (x^{2} -1)$ \\
\hline
  18&   $S_4$                     & $ G_{18}$ & 4  & 0 & 2, 3, 16 & 0 & $x(x^4-1)$ \\
 19&                             & $ G_{19}$ & 2  & 0 & 2, 6, 8 & 0 & $x(x^4-1)(x^8+14x^4+1)$ \\
\hline
\hline
\end{tabular}
\end{table}


\addtocounter{table}{-1}
\begin{table}[ht]
\caption{(Cont.)}
\scalebox{.9}{
\begin{tabular}{|l|l|l|l|l|l|l|l|}
\hline
Nr. & $\bar G$             & G&$n$  &$m$ & sig. & $\delta$ & Equation $y^n=f(x)$ \\
\hline \hline

\multicolumn{8}{c}{Genus 7} \\
\hline \hline
 2&      \multirow{2}{*}{$C_m$}                   & $C_{30}$     &  2  & 15  &  15, 30   & 0 & $x^{15} +1$ \\
 2&                        & $C_{24}$     &  3  & 8  & 8, 24         &   0 & $x^8+1$ \\
 2&                        & $C_{30}$     &  15  & 2  & 2, 30         &   0 & $x^2+1$ \\
\hline
 5&    \multirow{3}{*}{$D_{2m}$}                       &$G_5$  &  2 & 16  & 2, 4, 16 &  0 & $x^{16}-1$  \\
 5&                           & $D_{18}\times C_3$ &  3 & 9 & 2, 6, 9  &  0 & $x^9-1$  \\
 8&                           & $G_8$ &  2 & 14 & 2, 4, 28     &  0 & $x (x^{14} -1)$ \\
 8&                           & $D_{14}\times C_3$ &  3 & 7 & 2, 6, 21     &  0 & $x (x^{7} -1)$ \\
 8&                           & $G_8$ &  8 & 2 & 2,${16}^2$     &  0 & $x (x^{2} -1)$ \\
\hline \hline
\multicolumn{8}{c}{Genus 8} \\
\hline \hline
 2&    \multirow{1}{*}{$C_m$}                    & $C_{34}$  &  2  & 17 &   17, 34   & 0   & $x^{17}+1$ \\
 2&                        & $C_{34}$  &  17 & 2  &   2, 34   &  0    & $x^2+1$ \\
\hline
 5&  \multirow{2}{*}{$D_{2m}$}                         &$G_5$  &  2 & 18 & 2, 4, 18     &  0 & $x^{18}-1$  \\
 8&                           &$G_8$  &  2 & 16  & 2, 4, 32 &  0 & $x(x^{16}-1)$ \\
\hline
22 &   $S_4$                     &$G_{22}$  & 2  & 0 & 3, 4, 8 & 0 & $x(x^4-1)(x^{12}-33x^8-33x^4+1)$ \\
\hline \hline
\multicolumn{8}{c}{Genus 9} \\
\hline \hline
 2&   \multirow{4}{*}{$C_m$}                     & $C_{38}$  &  2  & 19 &   19, 38   & 0   & $x^{19}+1$ \\
 2&                        & $C_{30}$  &  3 & 10  &  10, 30  &  0    & $x^{10} +1$ \\
 2&                        & $C_{28}$  &  4 & 7  &  7, 28   &  0    & $x^7 +1$ \\
  2&                        & $C_{28}$  &  7 & 4  &  4, 28   & 0    & $x^4 +1$ \\
  2&                        & $C_{30}$  &  10 & 3  &  3, 30    &  0      & $x^3 +1$ \\
  2&                        & $C_{38}$  &  19 & 2  &   2, 38   &  0      & $x^2  +1$ \\
\hline 
 5&   \multirow{4}{*}{$D_{2m}$}                        &$G_5$  &  2 & 20 & 2, 4, 20     &  0 & $x^{20}-1$  \\
 5&                           &$G_5$  &  4 & 8 & 2,$8^2$     &  0 & $x^{8}-1$  \\
 8&                           & $G_8$ &  2 & 18 & 2, 4, 36     &  0 & $x (x^{18} -1)$ \\
 8&                           & $D_{18}\times C_3$ &  3 & 9 & 2, 6, 27     & 0 & $x (x^{9}-1)$ \\
   8&                           & $G_8$ &  4 & 6 & 2, 8, 24     &  0 & $x (x^{6} -1)$ \\
  8&                           & $D_6\times C_7$ &  7 & 3 & 2, 14, 21     &  0 & $x (x^{3} -1)$ \\
   8&                           & $G_8$ &  10 & 2 & 2, $20^2$     &  0 & $x (x^{2} -1)$ \\
\hline
 17&   $S_4$                     &$G_{17}$  & 4  & 0 & 2, 4, 12 & 0 & $x^{8}+14x^4+1$ \\
 21&                             &$G_{21}$  & 2  & 0 & $4^2$ 6 & 0 & $(x^{8}+14x^4+1)(x^{12}-33x^8-33x^4+1)$ \\
\hline
27 &   $A_5$                     &  & 2 &  & 2, 5, 6 & 0 & $x^{20}-228x^{15}+494x^{10}+228x^5+1$ \\
\hline \hline
\multicolumn{8}{c}{Genus 10} \\
\hline \hline
 2&    \multirow{4}{*}{$C_m$}                    & $C_{42}$  & 2 & 21  &  21, 42  &  0    & $x^{21} +1$ \\
 2&                        & $C_{33}$  &  3 & 11  &  11, 33   &  0    & $x^{11} +1$ \\
 2&                        & $C_{30}$  &  5 & 6  &  6, 30    &  0      & $x^6 +1$ \\
 2&                        & $C_{30}$  &  6 & 5  &   5, 30   &  0      & $x^5  +1$ \\
 2&                        & $C_{33}$     &  11  & 3  &  3, 33  & 0 & $x^3 +1$ \\
 2&                        & $C_{42}$     &  21  & 2  & 2, 42       &   0 & $x^2+1$ \\
\hline
 5&                           &$G_5$  &  2 & 22 & 2, 4, 22     &  0 & $x^{22}-1$  \\
 5&                           &$D_{24}\times C_3$  &  3 & 12  & 2, 6, 12   &  0& $x^{12}-1$ \\
 5&                           &$G_5$  &  6 & 6 &    2, 6, 12     &  0 & $x^6-1$  \\
 8&                           & $G_8$ &  2 & 20 & 2, 4, 40    &  0 & $x (x^{20} -1)$ \\
 8&                           & $D_{20}\times C_3$ &  3 & 10 & 2, 6, 30   & 0 & $x (x^{10} -1)$ \\
 8&                           & $D_{10}\times C_5$ &  5 & 5 & 2, 10, 25   & 0 & $x (x^{5}-1)$ \\
 8&                           & $G_8$ &  6 & 4 & 2, 12, 24     &  0 & $x (x^{4} -1)$ \\
 8&                           & $D_4\times C_{11}$ &  11 & 2 & 2, $22^2$     &  0 & $x (x^{2} -1)$ \\
\hline
 18&   $S_4$                     &$G_{18}$  & 6  & 0 & 2, 3, 24 & 0 & $x(x^4-1)$ \\
 20&                             &$S_4\times C_3$  & 3  & 0 & 3, 4, 6 & 0 & $x^{12}-33x^8-33x^4+1$ \\
\hline
25 &   $A_5$                     &$A_5\times C_3$  & 3 & 0 & 2, 3, 15 & 0 & $x(x^{10}+11x^5-1)$ \\
\hline
\end{tabular}}
\vspace{5mm}
\caption{Superelliptic curves for genus $5 \leq g \leq 10$}
\label{many-aut}
\end{table}

\end{footnotesize}

\clearpage

\section{A word on Abelian covers and further directions}

The story obviously doesn't end with superelliptic Jacobians. What is the natural way of extending the study of algebraic curves and their Jacobians? There have been many attempts to study coverings where the monodromy group is more general that a cyclic group.  The next natural groups would be dihedral groups; see \cite{fried-95}.  Another class of coverings (curves) would be the coverings when the monodromy groups is an Abelian group.  Below we briefly suggest two classes of curves which seem the natural extension of problems presented in this paper. 


\subsection{Curves with separated variables}

There is a special class of algebraic curves satisfying an equation of the form $f(x)-g(z)=0 $, where $f,g$ are polynomials with coefficients in $k$. They were first introduced by  by Fried and Macrae  in the wonderful paper \cite{fried-macrae}.  They showed   that 

(a) $f_1(x ) - g_1(z)$ divides $f(x)- g(z)$ if and only if there exists a polynomial $F$ such that $f(t)=F(f_1(t))$, $g(t)=F(g_1(t))$. 

(b) $f(x)-g(z)$ is said to be a minimal separation for $a(x,z)$ if $a(x,z)$ divides $f(x)-g(z)$ and if whenever $a(x,z)$ divides $F(x)-G(z)$  then $f(x)-g(z) $  divides $F(x) - G(z)$.

The polynomial $a(x,z)$ possesses a minimal separation if and only if there is a polynomial $F(x)-G(z)$ in $k[x,z] $  such that $a(x,z)$ divides $F(x)-G(x)$. Most of these results depend on a lemma giving a necessary and sufficient condition for an element $z\in k(x)$ to lie in $k[x]$.  The automorphism group of such curves is a degree $m$ central extension of $\Gal_k (f(x))$, where $n:= | \Gal_k (g(y))|$.  As far as we are aware, nobody has studied in detail automorphism groups of such curves.  Clearly all superelliptic curves are special classes of such curves. 

\begin{prob}
For a given genus $g\geq 2$ list all groups which occur as automorphism groups of curves with separable variables. For each group determine parametric equations of the corresponding family of curves. 
\end{prob}

For more interesting ramifications to this class of curves check \cite{fried-99}, \cite{fried-2012} and \cite{fried-gusic}. 
\subsection{Abelian covers}

Consider a curve $\X$ such that it has a covering $\pi : \X \to \P^1$ which has monodromy group an Abelian group.     Then this monodromy group  is a direct product of cyclic groups.  In this case the theory of  cyclic covers can be used to study such Abelian covers.  We simplify the setup by considering only Galois coverings.  Hence, the following setup.

Let $\X$ be an algebraic curve defined over $k$ such that   $G \embd \Aut (\X)$ is an Abelian group.  Let $G\iso G_1 \times \cdots \times G_r$ be the decomposition of $G$ into cyclic groups.  If  one of     $\X/H_i$ is a genus zero quotient space,  then  the equation of $\X$ is a superelliptic curve.  Suppose non of the $H_i$ fix a zero genus quotient.  Then we check all quotient groups ${\overline G}_i:=G/H_i$. Since $G$ is Abelian, these quotient groups act on the curves as well.  If one of these groups ${\overline G}_i$ fixes a genus zero quotient then again we are in the superelliptic case.  

Is $G$ has no subgroup which fixes a genus 0 field, then we consider all quotients $\X_i:=\X/H_i$.  They have smaller genii, therefore more manageable automorphism groups (which are also Abelian.  going down the lattice of the corresponding function fields we should be able to determine the equation of each quotient curves and therefore the equation of $\X$. 
Thus, for a given $g\geq 2$,   we have a way of   determining equation of all curves $\X$ such that $\Aut (\X)$ is an Abelian group.  To the best of our knowledge, this has not been pursued systematically  for  $g\geq 4$.
\bibliographystyle{amsplain} 
\bibliography{references}{}

\end{document}